\newtheorem{theorem}{Theorem}[section]
\newtheorem{proposition}[theorem]{Proposition}
\newtheorem{lemma}[theorem]{Lemma}
\newtheorem{corollary}[theorem]{Corollary}
\theoremstyle{definition}
\newtheorem{example}[theorem]{Example}
\newtheorem{definition}[theorem]{Definition}
\newtheorem{remark}[theorem]{Remark}
\newcommand{\op}{\operatorname{op}}
\newcommand{\M}{\operatorname{\mathbb M}}
\newcommand{\LL}{\operatorname{\mathcal L}}
\newcommand{\PP}{\operatorname{\mathcal P}}
\newcommand{\I}{\operatorname{\mathcal I}}
\newcommand{\IM}{\operatorname{\mathbb I}}
\newcommand{\gr}{\operatorname{gr}}
\newcommand{\sr}{\operatorname{sr}}
\newcommand{\V}{\operatorname{\mathcal V}}
\newcommand\Gr[1][]{{\operatorname{{Gr}^{#1}-}}}
\newcommand{\Gromega}{\operatorname{Gr_{\Omega}-\!}}
\newcommand{\Gro}{\operatorname{Gr_{0}-\!}}
\newcommand{\gro}{\operatorname{gr_{0}-\!}}
\newcommand{\rGr}{\operatorname{\!-Gr}}
\newcommand{\Cl}{\operatorname{Cl}}
\newcommand{\ann}{\operatorname{ann}}
\newcommand{\grr}{\operatorname{gr-\!}}
\newcommand{\Pgr}[1][]{{\operatorname{{Pgr}^{#1}\!-}}}
\newcommand{\Pgrp}{\operatorname{Pgr-\!}}
\newcommand{\Pgrl}{\operatorname{\!-Pgr}}
\newcommand{\pdim}{\operatorname{pdim}}
\newcommand{\gldim}{\operatorname{grdim}}
\newcommand{\Filt}{\operatorname{Filt-\!}}
\newcommand{\Modd}{\operatorname{Mod-\!}}
\newcommand{\modd}{\operatorname{mod-\!}}
\newcommand{\grs}{\operatorname{gr_S-\!}}
\newcommand{\rModd}{\operatorname{\!-Mod}}
\newcommand{\nPgr}[1][]{{\operatorname{{Pgr}_n^{#1}-}}}
\newcommand{\qPgr}[1][]{{\operatorname{{Pgr}_q^{#1}-}}}
\newcommand{\Prr}{\operatorname{Pr-\!}}
\newcommand{\QGr}{\operatorname{QGr-\!}}
\newcommand{\Fdim}{\operatorname{Fdim-\!}}
\newcommand*\widefbox[1]{\fbox{\hspace{2em}#1\hspace{2em}}}
\newcommand{\SK}{\operatorname{SK}}
\newcommand{\Gal}{\operatorname{Gal}}
\newcommand{\Qcoh}{\operatorname{QCoh-\!}}
\newcommand{\uloopr}[1]{\ar@'{@+{[0,0]+(-4,5)}@+{[0,0]+(0,10)}@+{[0,0] +(4,5)}}^{#1}}
\newcommand{\uloopd}[1]{\ar@'{@+{[0,0]+(5,4)}@+{[0,0]+(10,0)}@+{[0,0]+ (5,-4)}}^{#1}}
\newcommand{\dloopr}[1]{\ar@'{@+{[0,0]+(-4,-5)}@+{[0,0]+(0,-10)}@+{[0, 0]+(4,-5)}}_{#1}}
\newcommand{\dloopd}[1]{\ar@'{@+{[0,0]+(-5,4)}@+{[0,0]+(-10,0)}@+{[0,0 ]+(-5,-4)}}_{#1}}
\newcommand{\luloop}[1]{\ar@'{@+{[0,0]+(-8,2)}@+{[0,0]+(-10,10)}@+{[0, 0]+(2,2)}}^{#1}}
\newcommand{\va}{\varphi}
\newcommand{\Ga}{\Gamma}
\newcommand{\ga}{\gamma}
\newcommand{\al}{\alpha}
\newcommand{\de}{\delta}
\newcommand{\De}{\Delta}
\newcommand{\Om}{\Omega}
\newcommand{\la}{\lambda}
\newcommand{\si}{\sigma}
\newcommand{\GL}{\operatorname{GL}}
\newcommand{\Aut}{\operatorname{Aut}}
\newcommand{\out}{\operatorname{out}}
\newcommand{\Pic}{\operatorname{Pic}}
\newcommand{\Inn}{\operatorname{Inn}}
\newcommand{\Picent}{\operatorname{Picent}}
\newcommand{\Tr}{\operatorname{Tr}}
\newcommand{\G}{\mathcal{G}}
\newcommand{\R}{\mathbb R}
\newcommand{\Z}{\mathbb Z}
\newcommand{\C}{\mathbb C}
\renewcommand{\H}{\mathbb H}
\newcommand{\+}{\oplus}
\newcommand{\e}{\mathbf e}
\newcommand{\ol}{\overline}
\newcommand{\ind}{\operatorname{ind}}
\newcommand{\End}{\operatorname{End}}
\newcommand{\im}{\operatorname{im}}
\newcommand{\Hom}{\operatorname{Hom}}
\newcommand{\id}{\operatorname{id}}
\newcommand{\chr}{\operatorname{char}}
\renewcommand{\Im}{\operatorname{Im}}
\newcommand{\coker}{\operatorname{coker}}
\let\nbd=\nobreakdash
\newcommand{\ie}{\emph{i.e.,~}}
\newcommand{\lmps}{\longmapsto}
\newcommand{\mps}{\mapsto}
\newcommand{\ra}{\rightarrow}
\newcommand{\Ra}{\Rightarrow}
\newcommand{\mi}{\setminus}
\newcommand{\lra}{\longrightarrow}
\newcommand{\conggr}{\cong_{\mathrm{gr}}}
\title{{\bf Graded Rings and\\ Graded Grothendieck Groups}}
\author{{\bf Roozbeh Hazrat}\\
Western Sydney University\\
AUSTRALIA}
\begin{document}

\maketitle


\setcounter{tocdepth}{2}
\tableofcontents

\chapter*{Introduction} \label{introf}




A bird's eye view of the theory of graded modules over a graded ring
might give the impression that it is nothing but ordinary module theory
with all its statements decorated with the adjective ``graded''. Once
the grading is considered to be trivial, the graded theory reduces to
the usual module theory. From this perspective, the theory of graded
modules can be considered as an extension of module theory. However, this simplistic overview might conceal the point that graded
modules come equipped with a {\em shift}, thanks to the
possibility of partitioning the structures and then rearranging the
partitions. This adds an extra layer of structure (and complexity) to
the theory. This monograph focuses on the theory of the graded
Grothendieck group $K^{\gr}_0$, that provides a sparkling illustration of
this idea. Whereas the usual $K_0$ is an abelian group, the shift
provides $K^{\gr}_0$ with a natural structure of $\Z[\Ga]$-module, where
$\Ga$ is the group used for the grading and $\Z[\Ga]$ its group ring. As we will see throughout this book, this extra structure carries substantial information about the
graded ring.

Let $\Ga$ and $\De$ be abelian groups and $f:\Gamma\rightarrow \Delta$ a group
homomorphism. Then for any $\Ga$\!-graded ring $A$, one can consider a
natural $\De$-grading on $A$ (see~\S\ref{mconfi1}); in the same
way, any $\Ga$\!-graded $A$-module can be viewed as a $\De$-graded
$A$-module. These operations induce functors
\begin{align*}
U_f:\Gr[\Gamma] A \longrightarrow \Gr[\Delta] A,\\
(-)_{\Omega}: \Gr[\Gamma] A \longrightarrow \Gr[\Omega] A_{\Omega},
\end{align*}
(see~\S\ref{bill100}), where $\Gr[\Gamma] A$ is the category of
$\Ga$\!-graded right $A$-modules, $\Gr[\Delta] A$ that of $\De$-graded right
$A$-modules, and $\Gr[\Omega] A$ the category of $\Om$-graded right
$A_\Om$-module with $\Om=\ker(f)$.
 \index{$\Gr[\Gamma] A$ category of $\Gamma$-graded right $A$-modules}

One aim of the theory of graded rings is to investigate the ways in
which these categories relate to one another, and which properties of
one category can be lifted to another. In particular, in the two extreme
cases when the group $\De=0$ or $f\colon\Ga\rightarrow\De$ is the identity, we
obtain the forgetful functors
\begin{align*}
U:\Gr[\Gamma] A \longrightarrow \Modd A,\\
(-)_{0}: \Gr[\Gamma] A \longrightarrow \Modd A_{0}. 
\end{align*}

The category $\Pgr[\Gamma] A$ of graded finitely generated projective
$A$-modules is an exact category. Thus Quillen's $K$-theory machinery~\cite{quillen} defines graded $K$-groups
\[K^{\gr}_i(A):=K_i(\Pgr[\Gamma] A),\] for $i \in \mathbb N$.
On the other hand, the shift operation on modules induces a functor on
$\Gr[\Gamma] A$  that is an auto-equivalence (\S\ref{pokiss}), so
that these $K$-groups also carry a $\Ga$\!-module structure. One can treat
the groups $K_i(A)$ and $K_i(A_0)$ in a similar way. Quillen's
$K$-theory machinery allows us to establish relations between these
$K$-groups. In particular

\begin{description}

\item{\bf Relating $K^{\gr}_*(A)$ to $K_*(A)$ for a positively graded rings~\S\ref{todaytalkbit}.} For a $\mathbb Z$-graded ring with the positive support, there is a $\mathbb  Z[x,x^{-1}]$-module isomorphism, 
\begin{equation*}
K^{\gr}_i(A)\cong K_i(A_0)\otimes_{\mathbb Z} \mathbb Z[x,x^{-1}].
\end{equation*}

\item{\bf Relating $K^{\gr}_*(A)$ to $K_*(A_0)$ for graded Noetherian regular rings~\S\ref{quoihes}.} \index{graded Noetherian ring}
Consider the full subcategory 
$\Gro A$ of $\Gr A$, of all graded modules $M$ as objects such that $M_0=0$. This is a Serre subcategory of $\Gr A$. One can show that 
$\Gr A / \Gro A \cong \Modd A_0$.  If $A$ is a (right) regular Noetherian ring,  the quotient category identity above holds for the corresponding graded finitely generated modules, \ie  $\grr A / \gro A \cong \modd A_0$ and 
the localisation theorem gives a long exact sequence of abelian groups,
\begin{equation*}
\cdots \longrightarrow K_{n+1}(A_0) \stackrel{\delta}{\longrightarrow} K_n(\gro A) \longrightarrow K^{\gr}_n(A) \longrightarrow K_n(A_0) \longrightarrow \cdots.
\end{equation*} \index{quotient category} \index{Serre subcategory}

\item{\bf Relating $K^{\gr}_*(A)$ to $K_*(A)$ for graded Noetherian regular rings~\S\ref{vandengg}.}
For a $\mathbb Z$-graded ring $A$ which is right regular Noetherian, there is a long exact sequence of abelian groups 
\begin{equation*}
\cdots \longrightarrow K_{n+1}(A)  \longrightarrow K^{\gr}_n(A) \stackrel{\ol i}{\longrightarrow} K^{\gr}_n(A) \stackrel{U}{\longrightarrow} K_n(A) \longrightarrow \cdots.
\end{equation*}

\end{description}

The main emphasis of this book is on the group $K^{\gr}_0$ as a powerful invariant in  the classification problems. This group is equipped 
with the extra structure of  the action of the grade group induced by the shift.     In many important examples, in fact this shift is all the difference between the graded Grothendieck group and the usual Grothendieck group, \ie 
\[K^{\gr}_0(A) / \langle [P] -[P(1)] \rangle \cong K_0(A),\] where $P$ is a graded projective $A$-module and $P(1)$ is the  shifted module (\S\ref{waraya}, see Corollary~\ref{ght9laks}).

The  motivation to write this book came  from recent activities that adopt the graded Grothendieck group as an invariant to classify the Leavitt path algebras~\cite{haz,hazgr,smith2}. Surprisingly, not much is recorded about the graded version of the Grothendieck group in the literature, despite the fact that $K_0$ has been used in many occasions as a crucial invariant, and there is a substantial amount of information about the graded version of other invariants such as (co)homology groups, Brauer groups, etc. The other surge of interest on this group stems from the recent activities on (graded) representation theory of Hecke algebras. In particular for a quiver Hecke algebra, its graded Grothendieck group is closely related to its corresponding quantised enveloping algebra. For this line of research see the survey~\cite{kleshchev}.

This book tries to fill this gap, by systematically developing the theory of graded Grothendieck groups. In order to do this, we have to carry over and work out the details of known results in ungraded case to the graded setting, and gather together 
important results on the graded theory scattered in research papers .

The group $K_0$ has been successfully used in the operator theory to classify certain classes of $C^*$-algebras. 
Building on work of Bratteli, Elliott in~\cite{elliot}  used the pointed ordered $K_0$-groups (called dimension groups) as a complete invariant for AF $C^*$-algebras. 
Another cornerstone of using $K$-groups for the classifications of a wider range of $C^*$-algebras was the work of Kirchberg and Phillips~\cite{phillips}, who showed that $K_0$ and $K_1$-groups together are complete invariant for a certain type of $C^*$-algebras. The Grothendieck group considered as a module induced by a group action was used by Handelman and Rossmann~\cite{handelman} to give a complete invariant for  the class of direct limits of finite dimensional, representable dynamical systems.  
Krieger~\cite{krieger} introduced (past) dimension groups as a complete invariant for the shift equivalence of topological Markov chains (shift of finite types) in symbolic dynamics. 
Surprisingly, we will see that, Krieger's groups  are naturally expressed  by graded Grothendieck groups  (\S\ref{symbolii}).

We develop the theory for rings graded by abelian groups rather than an arbitrary groups for two reasons, although most of the results could be carried over to non-abelian grade group. One reason is that using the abelian grading makes the presentation and proofs much more transparent. In addition,  in most applications of graded $K$-theory, the ring has an abelian grading (often a $\mathbb Z$-grading).

In Chapter~\ref{drkissme} we study the basic theory of graded rings. Chapter~\ref{moritanji} concentrates on graded Morita theory. 
In Chapter~\ref{ggg} we compute $K^{\gr}_0$ for certain graded rings, such as graded local rings and (Leavitt) path algebras. We study the pre-ordering available on $K^{\gr}_0$ and determine the action of $\Gamma$ on this group. Chapter~\ref{picle} studies graded Picard groups and in Chapter~\ref{ultriuy} we prove that for the so called graded ultramatricial algebras, the graded Grothendieck group is a complete invariant. Finally in Chapter~\ref{waraya}, we explore the relations between (higher) $K^{\gr}_n$ and $K_n$, for the class of $\Z$-graded rings. We describe a generalisation of the Quillen and van den Bergh theorems. The latter theorem uses the techniques employed in the proof of the fundamental theorem of $K$-theory, where the graded $K$-theory appears. For this reason we present a proof of the fundamental theorem in this chapter. 

\bigskip 

\noindent{\bf Conventions.}
Throughout this book, unless it is explicitly stated, all rings have identities, homomorphisms preserve the identity and all modules are unitary.  
Moreover, all modules are considered right modules. For a ring $A$, the category of 
right $A$-modules is denoted by $\Modd A$. 
A full subcategory of $\Modd A$ consisted of all finitely generated $A$-modules is denoted by $\modd A$. By $\Prr A$ we denote the category of finitely generated projective $A$-modules.

For a set $\Gamma$, we write    
$\bigoplus_\Gamma \Z$ or $\mathbb Z ^{\Gamma}$ to mean 
 $\bigoplus_{\gamma \in \Gamma} \mathbb Z_{\gamma}$, where
$\mathbb Z_{\gamma} = \mathbb Z$ for each $\gamma \in \Gamma$.
We denote the cyclic group $\Z/n\Z$  with $n$ elements by $\Z_n$.  
\index{$\Modd A$, category of right $A$-modules}
\index{$\modd A$, category of finitely generated right $A$-modules}
\index{$\Prr A$, category of finitely generated projective $A$-modules}

\bigskip 

\noindent {\bf Acknowledgement.}  
I learned about the graded techniques in algebra from Adrian Wadsworth. Judith Millar worked with me to study the graded $K$-theory of Azumaya algebras. Gene Abrams was a source of encouragement that the graded techniques would be fruitful in the study of Leavitt path algebras. 
Andrew Mathas told me how graded Grothendieck groups are relevant in  representation theory and pointed me to the relevant literature. The discussions with Zuhong Zhang, who kindly invited me to Beijing Institute of Technology in July 2013 and 2014, helped to improve the presentation. To all of them, I am grateful.

\chapter{Graded Rings and Graded Modules}\label{drkissme}

Graded rings appear in many circumstances, both in elementary and advanced areas. Here are two examples: 

\begin{enumerate}
\item In elementary school when we distribute 10 apples giving 2 apples to each person, we have 
\[\text{{\sc 10  Apples :  2 Apples \, = 5 People}}.\] The psychological problem caused to many kids as to exactly how the word ``People'' appears in the equation can be 
overcome by correcting it to \[\text{{\sc 10 Apples : 2 Apples\ /\ People \, = 5 People}}.\]  This shows that already at the level of elementary school arithmetic, children work in a much more sophisticated structure, \ie the graded ring  \[\mathbb Z[x_1^{\pm1}, \cdots, x_n^{\pm1}]\]
 of Laurent polynomial rings! (see the interesting book of Borovik~\cite[\S4.7]{borovik} on this).

\medskip 

\item If $A$ is a commutative ring generated by a finite number of elements of degree 1, then by  the celebrated work of Serre~\cite{serre}, 
the category of quasicoherent sheaves on the scheme is equivalent to $\QGr A \cong \Gr A /\Fdim A$, where $\Gr A$ is the category of graded modules over $A$ and $\Fdim A$ is the Serre subcategory of (direct limits of) finite dimensional submodules.  In particular when $A=K[x_0,x_1,\dots,x_n]$, where $K$ is a field, then 
$\Qcoh \mathbb P^n$ is equivalent to  $\QGr A[x_0,x_1,\dots,x_n]$ (see~\cite{serre,artin,smith2} for more precise statements and relations with 
noncommutative algebraic geometry). 

\end{enumerate}

This book treats graded rings and the category of graded modules over a graded ring. 
 This category is an abelian category (in fact a 
 Grothendieck category). Many of the classical invariants constructed for the category of modules can be constructed,   \emph{mutatis mutandis},  starting from the category of graded modules. 
The general viewpoint of this book is that, once a ring has a natural graded structure, graded invariants capture more information  than the ungraded counterparts.

In this chapter we give a concise introduction to the theory of  graded rings. We introduce grading on matrices, study graded division rings and introduce gradings on graph algebras that will be the source of many interesting examples.

\section{Graded rings}\label{pregr529}

\subsection{Basic definitions and examples}\label{jdjthu}

 A ring $A$ is called a
\emph{$\Ga$\!-graded ring}, or simply a \emph{graded ring},
if $ A= \textstyle{\bigoplus_{ \ga \in \Ga}} A_{\ga}$, where 
\index{graded ring} $\Ga$ is an (abelian) group, each $A_{\ga}$ is
an additive subgroup of $A$ and $A_{\ga}  A_{\delta} \subseteq
A_{\ga + \delta}$ for all $\ga, \delta \in \Ga$. 

If $A$ is an algebra over a field $K$, then $A$ is called a \emph{graded algebra} if $A$ is a graded ring and  for any $\ga \in \Gamma$, $A_{\ga}$ is a $K$-vector subspace.  \index{graded algebra}

The set $A^{h} =
\bigcup_{\ga \in \Ga} A_{\ga}$ is called the set of \emph{homogeneous elements} of $A$. \index{homogeneous element} 
\index{$A^h$, homogeneous elements of $A$} The additive group $A_\gamma$ is called the $\gamma$-\emph{component} of $A$ \index{$\gamma$-component of a graded ring} 
\index{component of a graded ring} and the nonzero elements of $A_\ga$ are called \emph{homogeneous of degree $\ga$}\index{homogeneous of degree $\delta$}.
We write deg$(a) = \ga$ if $a \in A_{\ga}\backslash \{0\}$. \index{$\deg(a)$, degree of the homogeneous element $a$} We call the set 
\[\boxed{\Gamma_A=\big \{ \, \ga \in \Gamma \mid A_\ga \not = 0 \, \big \}}\] 
the \emph{support}   \index{support of a graded ring} of $A$. \index{$\Gamma_A$, support of $A$} We say the $\Gamma$\!-graded ring $A$ has a \emph{trivial grading}, \index{trivial grading} or $A$ is \emph{concentrated in
degree zero}, \index{grading concentrated in
degree zero} if the support of $A$ is the trivial group, \ie $A_0=A$ and $A_\ga=0$ for $\ga \in \Gamma \backslash \{0\}$.

For  $\Ga$\!-graded rings $A$ and $B$, a $\Gamma$\!-\emph{graded ring
homomorphism}\index{graded ring homomorphism} $f:A \ra B$ is a ring
homomorphism such that $f(A_{\ga}) \subseteq B_{\gamma}$ for all
$\ga \in \Ga$.  A graded homomorphism $f$ is called a \emph{graded isomorphism} \index{graded ring isomorphism} if
$f$ is bijective and, when such a graded isomorphism exists, we
write $A \conggr B$. Notice that if $f$ is a graded ring
homomorphism which is bijective, then its inverse $f^{-1}$ is also a
graded ring homomorphism.

If $A$ is a graded ring and $R$ is a commutative graded ring, then $A$ is called a \emph{graded $R$-algebra} if $A$ is an $R$-algebra and the associated algebra homomorphism $\phi:R\rightarrow A$ is a graded homomorphism. \index{graded $R$-algebra} When $R$ is a field concentrated in degree zero, we retrieve the definition of graded algebra above. 


\begin{proposition} \label{basicsofgradedrings}
Let $A = \bigoplus_{ \ga \in \Ga} A_{\ga}$ be a $\Ga$\!-graded ring.
Then 
\begin{enumerate}[\upshape(1)]
\item $1_A$ is homogeneous of degree $0$;

\item $A_0$ is a subring of $A$;

\item Each $A_{\ga}$ is an $A_0$-bimodule;

\item For an invertible element $a \in A_{\ga}$, its inverse
$a^{-1}$ is homogeneous of degree $-\ga$, \ie $a^{-1} \in A_{-\ga}$. 
\end{enumerate}
\end{proposition}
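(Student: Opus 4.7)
My plan is to handle the four parts in the order given, since each relies on its predecessors. The central trick in both (1) and (4) is the same: decompose a distinguished element into its homogeneous components, multiply by a well-chosen element, and use the uniqueness of the decomposition $A = \bigoplus_\gamma A_\gamma$ to force all but one component to vanish.

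For (1), I would write $1_A = \sum_{\gamma \in \Ga} e_\gamma$ with $e_\gamma \in A_\gamma$, nonzero for only finitely many $\gamma$. For any homogeneous $a \in A_\delta$, multiply on the right: $a = a \cdot 1_A = \sum_\gamma a e_\gamma$, where $a e_\gamma \in A_{\delta + \gamma}$. The left side lies purely in $A_\delta$, so by the directness of the sum I conclude $a e_\gamma = 0$ for all $\gamma \neq 0$ and $a e_0 = a$. A symmetric argument on the left gives $e_0 a = a$. Because every element of $A$ is a finite sum of homogeneous elements, $e_0$ is a two-sided identity; by uniqueness of identity, $e_0 = 1_A$, and therefore $e_\gamma = 0$ for $\gamma \neq 0$, i.e.\ $1_A \in A_0$.

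Parts (2) and (3) then follow almost at once. For (2), $A_0$ is closed under addition by being a subgroup, closed under multiplication by the defining property $A_0 A_0 \subseteq A_{0+0} = A_0$, and contains $1_A$ by (1); hence it is a subring. For (3), the inclusions $A_0 A_\gamma \subseteq A_{0+\gamma} = A_\gamma$ and $A_\gamma A_0 \subseteq A_{\gamma+0} = A_\gamma$ make $A_\gamma$ a left and right $A_0$\nbd-module, and associativity in $A$ gives compatibility, so $A_\gamma$ is an $A_0$\nbd-bimodule.

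For (4), given an invertible $a \in A_\gamma$, write the inverse in homogeneous components as $a^{-1} = \sum_{\delta \in \Ga} b_\delta$ with $b_\delta \in A_\delta$. Expanding $1_A = a \cdot a^{-1} = \sum_\delta a b_\delta$ with $a b_\delta \in A_{\gamma + \delta}$, part (1) together with the directness of the decomposition forces $a b_\delta = 0$ for $\delta \neq -\gamma$ and $a b_{-\gamma} = 1_A$. The last equation says $b_{-\gamma}$ is a right inverse of $a$; multiplying on the left by $a^{-1}$ yields $b_{-\gamma} = a^{-1}$. Hence the remaining components $b_\delta$ with $\delta \neq -\gamma$ vanish and $a^{-1} \in A_{-\gamma}$.

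The only mildly delicate point is (1), where one must be careful not to assume $1_A$ is homogeneous before it is proved; the key is to work componentwise with an arbitrary homogeneous $a$ and invoke the directness of the sum. Once (1) is established, everything else is a bookkeeping exercise with the grading inclusion $A_\ga A_\de \subseteq A_{\ga+\de}$.
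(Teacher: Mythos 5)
Your proof is correct and takes essentially the same approach as the paper's: decompose $1_A$ (respectively $a^{-1}$) into homogeneous components, multiply by a suitable homogeneous element, and use the directness of $A=\bigoplus_{\gamma\in\Gamma}A_\gamma$ to kill all but one component. The only cosmetic differences are that in (1) the paper extends $b\,a_\gamma=0$ by linearity to all $b\in A$ and then sets $b=1_A$, whereas you invoke uniqueness of the two-sided identity, and in (4) the paper kills the components $b_\delta$ directly using invertibility of $a$, whereas you first deduce $b_{-\gamma}=a^{-1}$ from the right-inverse equation; both variants are equally valid.
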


\begin{proof}
(1) Suppose $1_A = \sum_{\ga \in \Ga} a_{\ga}$ for $a_{\ga} \in
A_{\ga}$. Let $b \in A_{\de}$, $\de \in \Ga$, be an
arbitrary nonzero homogeneous element. Then $b = b 1_A = \sum_{\ga \in \Ga} b
a_{\ga}$, where $b a_{\ga} \in A_{\de +\ga}$ for all $\ga \in \Ga$.
Since the decomposition is unique, $b a_{\ga} = 0$ for all $\ga \in
\Ga$ with $\ga \neq 0$. But as $b$ was an arbitrary homogeneous element, it follows that $ba_{\ga}=0$
for all $b \in A$ (not necessarily homogeneous), and in particular $1_A
a_\ga = a_\ga =0$ if $\ga \neq 0$. Thus $1_A =
a_0 \in  A_0$.

\vspace{3pt}

(2) This follows since $A_0$ is an additive subgroup of $A$ with
$A_0 A_0 \subseteq A_0$ and $1 \in A_0$.

\vspace{3pt}

(3) This is immediate.

\vspace{3pt}

(4) Let $b = \sum_{\de\in \Gamma} b_{\de}$, with $\deg(b_{\de})= \de$, be the
inverse of $a\in A_\ga$, so that $1=ab = \sum_{\de\in\Gamma} a b_{\de}$, where $a
b_{\de} \in A_{\ga+\de}$. By (1), since $1$ is homogeneous of degree $0$ and
the decomposition is unique, it follows that $a b_{\de} = 0$ for all
$\de \neq -\ga$. Since $a$ is invertible, 
$b_{-\ga} \neq 0$, so $b=b_{-\ga}\in A_{-\ga}$ as required.
\end{proof}

The ring $A_0$ is called the \emph{0-component ring} of $A$ and plays a crucial role in the theory of graded rings. The proof of Proposition~\ref{basicsofgradedrings}(4), in fact, shows that if $a\in A_\ga$ has a left (or right) inverse then that inverse is in $A_{-\ga}$. 
In Theorem~\ref{jhby67}, we characterise $\mathbb Z$-graded rings such that $A_1$ has a left (or right) invertible element.  \index{0-component ring of a graded ring}

\begin{example}\label{hy431}\scm[Group rings] \index{group ring}
\vspace{0.2cm}

For a group $\Gamma$,  the group ring $\Z[\Gamma]$ has a natural $\Gamma$\!-grading 
\begin{equation*}
\Z[\Gamma]=\bigoplus_{\ga \in \Ga} \Z[\Gamma]_\gamma, \text{   where   }  \Z[\Gamma]_\gamma= \Z\ga.
\end{equation*}
In~\S\ref{khgfewa1}, we will the crossed product which are graded rings and are generalisations of group rings and skew groups rings. A group ring has a natural involution which makes it an involutary graded ring (see~\S\ref{involudool}). 
\end{example}

In several applications (such as $K$-theory of rings,~\S\ref{waraya}) we deal with $\Z$-graded rings with support in $\mathbb N$, the so called \emph{positively graded rings}. \index{positively graded ring}

\begin{example}\scm[Tensor algebras as positively graded rings]\label{stevevanzandt}
\vspace{0.2cm}

Let $A$ be a commutative ring and $M$ be an $A$-module. Denote by $T_n(M)$, $n\geq 1$, the tensor product of $n$ copies of $M$ over $A$. Set  $T_0(M)=A$. 
Then the natural $A$-module isomorphism $T_n(M) \otimes_A T_m(M)\rightarrow T_{n+m}(M)$, induces a ring structure on 
\[T(M):=\bigoplus_{n \in \mathbb N}  T_n(M).\]
The $A$-algebra $T(M)$ is called the  \emph{tensor algebra} of $M$. \index{tensor algebra} Setting \[T(M)_n:=T_n(M),\] makes $T(M)$ a $\Z$-graded ring with support $\mathbb N$. From the definition, we have $T(M)_0=A$.  \index{$T(M)$, tensor algebra of the module $M$}

If $M$ is a free $A$-module, then $T(M)$ is a free algebra over $A$, generated by a basis of $M$. Thus free rings are $\mathbb Z$-graded rings with the generators being homogeneous elements of degree $1$.  We will systematically study the grading of free rings in~\S\ref{freeme}. 
\end{example}

\begin{example}\label{atsusan}\scm[Formal matrix rings as graded rings] \index{Frobenius algebra} \index{formal matrix ring} \index{formal triangular matrix ring}
\vspace{0.2cm}

Let $R$ and $S$ be rings, $M$ a $R\!-\!S$\!-bimodule and $N$ a $S\!-\!R$-bimodule. Consider the set 
\begin{equation*}
T:=\Bigg \{\, \begin{pmatrix}
r & m \\n & s
\end{pmatrix}   \, \bigr | \,  r\in R, s\in S, m\in M, n\in N \, \Bigg\}.
\end{equation*}

Suppose that there are bimodule homomorphisms $\phi:M\otimes_S N \rightarrow R$ and $\psi:N\otimes_R M \rightarrow S$ such that $(mn)m'=m(nm')$, where we denote $\phi(m,n)=mn$ and $\psi(n,m)=nm$. One can then check that $T$ with the matrix addition and multiplication forms a ring with identity. 
The ring $T$ is called the \emph{formal matrix ring} and denoted also by 
\begin{equation*}
T=\begin{pmatrix}
R & M \\N & S
\end{pmatrix}. 
\end{equation*}
For example the Morita ring of a module is a formal matrix ring (see~\S\ref{meinghto} and~(\ref{messiah})). 

Considering 
\begin{equation*}
T_0=\begin{pmatrix}
R & 0 \\0 & S
\end{pmatrix}, \,\,\,\,\,\,  
T_1 =\begin{pmatrix}
0 & M \\N & 0
\end{pmatrix},
\end{equation*}
it is easy to check that $T$ becomes a $\mathbb Z_2$-graded ring. 
In the cases that the images of $\phi$ and $\psi$ are zero, these rings have been extensively studied (see~\cite{krylov} and references therein).

When $N=0$, the ring $T$ is called a 
\emph{formal triangular matrix ring}. In this case there is no need to consider the homomorphisms $\phi$ and $\psi$. Setting further $T_i=0$ for $i\not = 0,1$, makes $T$ also a $\mathbb Z$-graded ring. 

One specific example of such grading on (subrings of) formal triangular matrix ring is used in representation theory. 
Recall that for a field $K$, a finite dimensional $K$-algebra $R$ is called \emph{Frobenius algebra} if $R\cong R^*$ as  right $R$-modules, where $R^*:=\Hom_K(R,K)$. Note that $R^*$ has a natural $R$-bimodule structure. 

Starting from a finite dimensional $K$-algebra $R$, one constructs the \emph{trivial extension} of $R$ which is a Frobenius algebra and has a natural $\Z$-graded structure as follows. \index{trivial extension} Consider $A:=R\bigoplus R^*$, with addition defined component-wise and multiplication defined as 
\[ (r_1,q_1)(r_2,q_2)=(r_1 r_2, r_1 q_1+ q_2 r_2),\]
where $r_1,r_2\in R$ and $q_1,q_2 \in R^*$. 
Clearly $A$ is a Frobenius algebra with identity $(1,0)$. Moreover, setting 
\begin{align*}
A_0 & =R\oplus 0,\\
A_1 &= 0\oplus R^*,\\
A_i &= 0, \text{   otherwise, }
\end{align*}
makes $A$ into a $\Z$-graded ring with support $\{0,1\}$. In fact this ring is a subring 
of formal triangular matrix ring 
\begin{equation*}
T_0=\begin{pmatrix}
R & R^*\\0 & R
\end{pmatrix},
\end{equation*}
consisting of elements 
$
\begin{pmatrix}
a & q \\0 & a
\end{pmatrix}.
$

These rings appear in representation theory (see~\cite[\S2.2]{happel}). The graded version of this contraction is carried out in Example~\ref{gratsusan}. 
\end{example}

\begin{example}\label{meinmein}\scm[The graded ring $A$ as $A_0$-module]
\vspace{0.2cm}

Let $A$ be a $\Gamma$\!-graded ring. Then $A$ can be considered as a $A_0$-bimodule. In many cases $A$ is a projective $A_0$-module, for example in the case of group rings (Example~\ref{hy431}) or when $A$ is a strongly graded ring (see~\S\ref{scrosshg} and~Theorem~\ref{mozsace}). Here is an example that this is not the case in general. Consider the ring $T$ of formal matrix ring 
\begin{equation*}
T=\begin{pmatrix}
R & M \\0 & 0
\end{pmatrix}, 
\end{equation*}
where $M$ is a left $R$-module, which is not projective $R$-module. Then by Example~\ref{atsusan}, $T$ is a $\Z$-graded ring with $T_0=R$ and $T_1=M$. Now $T$ as a $T_0$-module is $R\oplus M$ as $R$-module. Since $M$ is not projective, $R\oplus M$ is not projective $R$-module. We also get that $T_1$ is not projective $T_0$-module.  
\end{example}

\subsection{Partitioning graded rings} \label{mconfi1} 

Let $A$ be a $\Gamma$\!-graded ring and $f:\Gamma \rightarrow \Delta$ be a group homomorphism. Then one can assign a natural $\Delta$-graded structure to $A$ as follows: $A=\bigoplus_{\delta \in \Delta} A_\delta$, where 
\[
A_{\delta}=
   \begin{cases}
     \bigoplus_{\gamma \in f^{-1}(\delta)}A_\gamma  & \text{if } f^{-1}(\delta) \not =\varnothing;\\
0 & \text{otherwise}.
    \end{cases}
\]
 In particular for a subgroup $\Omega$ of  $\Gamma$ we have the following constructions.

\begin{description}
\item[Subgroup grading:]
 The ring $A_\Omega:=\bigoplus_{\omega \in \Omega}A_\omega$ forms a $\Omega$-graded ring. In particular, $A_0$ corresponds to the trivial subgroup of $\Gamma$.   \index{$A_\Omega$, a $\Omega$-graded subring of $A$}

\medskip 

\item[Quotient grading:] Considering \[A=\bigoplus_{\Omega+\alpha \in \Gamma/\Omega} A_{\Omega+\alpha},\] where 
\[A_{\Omega+\alpha} :=\bigoplus_{\omega \in \Omega} A_{\omega+\alpha},\] makes $A$ a $\Gamma/\Omega$-graded ring. (Note that if $\Gamma$ is not abelian, then for this construction,  $\Omega$ needs to be a normal subgroup.) Notice that with this grading, $A_0=A_\Omega$.  If $\Gamma_A \subseteq \Omega$, then $A$ considered as $\Gamma/\Omega$-graded ring, is concentrated in degree zero. 

This construction induces a \emph{forgetful} functor (or with other interpretations, a \emph{block}, or a \emph{coarsening} functor) from the category of $\Gamma$\!-graded rings $\mathcal R^{\Gamma}$ to the category of $\Gamma/\Omega$-graded rings $\mathcal R^{\Gamma/\Omega}$, \ie  \[U:\mathcal R^{\Gamma}\rightarrow \mathcal R^{\Gamma/\Omega}.\] \index{block functor} \index{coarsening functor}
If $\Omega=\Gamma$, this gives the obvious forgetful functor from the category of 
$\Gamma$\!-graded rings to the category of rings. We give a specific example of this construction in Example~\ref{mnuh4} and others in Examples~\ref{egofgrdivisionrings} and~\ref{july26y}.  \index{forgetful functor} \index{$\mathcal R^{\Gamma}$, category of $\Gamma$-graded rings}

\end{description}

\begin{example}\scm[Tensor product of graded rings]\label{deltam} \index{tensor product of graded rings}
\vspace{0.2cm}

Let $A$ be a $\Gamma$\!-graded and $B$ a $\Omega$-graded rings. Then $A\otimes_{\Z} B$ has a natural $\Gamma\times\Omega$-graded ring structure as follows. 
Since $A_\gamma$ and $B_\omega$, $\gamma \in \Gamma$, $\omega\in \Omega$, are $\Z$-modules then 
$A \otimes _{\Z} B$ can be decomposed as a direct sum 
\[A\otimes_{\Z} B =\bigoplus_{(\gamma,\omega) \in \Gamma\times \Omega} A_\gamma \otimes B_\omega\] (to be precise, $A_\gamma \otimes B_\omega$ is the image of $A_\gamma \otimes_{\Z} B_\omega$ in $A\otimes_{\Z} B$). 

Now, if $\Omega=\Gamma$ and 
\begin{align*}
f:\Gamma\times \Gamma &\longrightarrow \Gamma,\\
 (\gamma_1,\gamma_2) &\longmapsto \gamma_1+\gamma_2,
 \end{align*}
  then we get a natural $\Gamma$\!-graded structure on $A\otimes_{\Z} B$. Namely, 
\[A\otimes_{\Z} B =\bigoplus_{\gamma \in \Gamma} (A\otimes B)_\gamma,\] where 
\begin{equation*}
(A \otimes B)_{\ga} = \Big \{ \sum_i a_i \otimes b_i   \mid   a_i \in
A^h, b_i \in B^h, \deg(a_i)+\deg(b_i) = \ga \, \Big\}.
\end{equation*}
  We give  specific examples of this construction in Example~\ref{hyg61}.  One can replace $\Z$ by a field $K$, if $A$ and $B$ are $K$-algebras and $A_\gamma$, $B_\gamma$ are $K$-modules.

\end{example}

\begin{example}\label{hyg61}
Let $A$ be a ring with identity and $\Gamma$ be a group. We consider $A$ as $\Gamma$\!-graded ring concentrated in degree zero. Then by Example~\ref{deltam},  \[A[\Gamma]\cong A\otimes_{\Z} \Z[\Gamma]\] has a $\Gamma$\!-graded structure, \ie  
$A[\Gamma]=\textstyle{\bigoplus_{\ga\in \Gamma}} A\ga$. If $A$ itself is a (nontrivial) $\Gamma$\!-graded ring
$A=\textstyle{\bigoplus_{\ga\in \Gamma}} A_{\ga}$, then by Example~\ref{deltam}, $A[\Gamma]$ has also a $\Gamma$\!-grading 
\begin{equation}\label{hgogt}
A[\Gamma]=\textstyle{\bigoplus_{\ga\in \Gamma}} A^\ga, \text{  where  } A^\ga=\textstyle{\bigoplus_{\ga=\zeta+\zeta'}}  A_\zeta \zeta'.
\end{equation}

An specific example is when $A$ is a positively graded $\mathbb Z$-graded ring. Then $A[x]\cong A\otimes \Z[x]$ is a $\mathbb Z$-graded ring with support $\mathbb N$, where 
\[
A[x]_n=\bigoplus_{i+j=n}A_i x^j.
\] This graded ring will be used in~\S\ref{bolgona} when we prove the fundamental theorem of $K$-theory. Such constructions are systemically studied in~\cite{natlil} (see also \cite[\S6]{grrings}). 
\end{example}

\begin{example}\label{mnuh4}

Let $A$ be a $\Gamma\times \Gamma$-graded ring. Define a $\Gamma$\!-grading on $A$ as follows. For $\gamma \in \Gamma$, set
\[A'_\gamma=\sum_{\alpha \in \Gamma} A_{\gamma- \alpha,\alpha}.\] It is easy to see that $A=\bigoplus_{\gamma \in \Gamma} A'_\gamma$ is a 
$\Gamma$\!-graded ring. When $A$ is a $\mathbb Z \times \Z$-graded, then the $\mathbb Z$-grading on $A$ is obtained from considering all the homogeneous components on a diagonal together as the following figure shows.   

\begin{center}
\begin{tikzpicture}[scale=1]
 \tikzstyle{every node}=[font=\small]
 
     
 \draw[very thick, ->] (-3.2,0) -- (4.5,0) node[right] {$x$};
 \draw[very thick, ->] (0,-3) -- (0,4) node[above] {$y$};

 \foreach \x/\xtext in {-3, -2,-1,  1/1, 2/2, 3/3}
  \draw[shift={(\x,0)}] (0pt,2pt) -- (0pt,-2pt) node[below] {$\xtext$};
  
 \draw[red] (2.5,-2.5) node[above] 
      {$\mathbf A'_0$};

\draw[red] (3.5,-2.5) node[above] 
      {$\mathbf A'_1$};

\draw[red] (4.5,-2.5) node[above] 
      {$\mathbf A'_2$};

\draw[red] (1.5,-2.5) node[above] 
      {$\mathbf A'_{-1}$};

  \foreach \y/\ytext in {-3,-2,-1,1/1, 2/2, 3/3}
    \draw[shift={(0,\y)}]  node[left] {$\ytext$};

 \draw (1,1) node[above] 
      {$\displaystyle\ A_{1,1}$};

       \draw (-1,0) node[above] 
      {$\displaystyle\ A_{-1,0}$};

       \draw (1,-2) node[above] 
      {$\displaystyle\ A_{1,-2}$};
      
       \draw (-2,1) node[above] 
      {$\displaystyle\ A_{-2,1}$};

  \draw (1,-1) node[above] 
      {$\displaystyle\ A_{1,-1}$};
      
     \draw (4,0) node[above] 
      {$\displaystyle\ A_{4,0}$};  
      
     \draw (-1,1) node[above] 
      {$\displaystyle\ A_{-1,1}$};  
      
 \draw (1,2) node[above] 
      {$\displaystyle\ A_{1,2}$};
      
       \draw (-2,2) node[above] 
      {$\displaystyle\ A_{-2,2}$};
      
   \draw (1,3) node[above] 
      {$\displaystyle\ A_{1,3}$};

      \draw (0,2) node[above] 
      {$\displaystyle\ A_{0,2}$};

   \draw (0,3) node[above] 
      {$\displaystyle\ A_{0,3}$};

    \draw (1,1) node[above] 
      {$\displaystyle\ A_{1,1}$};

 \draw (-1,2) node[above] 
      {$\displaystyle\ A_{-1,2}$};
      
   \draw (-1,3) node[above] 
      {$\displaystyle\ A_{-1,3}$};  

     \draw (2,1) node[above] 
      {$\displaystyle\ A_{2,1}$};

 \draw (3,1) node[above] 
      {$\displaystyle\ A_{3,1}$};

  \draw (2,2) node[above] 
      {$\displaystyle\ A_{2,2}$};

       \draw (-2,3) node[above] 
      {$\displaystyle\ A_{-2,3}$};

 \draw (0,1) node[above] 
      {$\displaystyle\ A_{0,1}$};

  \draw (2,-1) node[above] 
      {$\displaystyle\ A_{2,-1}$};
      
       \draw (3,-1) node[above] 
      {$\displaystyle\ A_{3,-1}$};

  \draw (3,-2) node[above] 
      {$\displaystyle\ A_{3,-2}$};

  \draw (2,-2) node[above] 
      {$\displaystyle\ A_{2,-2}$};

 \draw (0,-1) node[above] 
      {$\displaystyle\ A_{0,-1}$};

\draw (0,0) node[above] 
      {$\displaystyle\ A_{0,0}$};

 \draw (1,0) node[above] 
      {$\displaystyle\ A_{1,0}$};

\draw (2,0) node[above] 
      {$\displaystyle\ A_{2,0}$};

\draw (3,0) node[above] 
      {$\displaystyle\ A_{3,0}$};

\draw[very thin,green] (-2.5,2.5) -- (2.5,-2.5);   
\draw[very thin,green] (-2.5,3.5) -- (3.5,-2.5);
\draw[very thin,green] (-2.5,4.5) -- (4.5,-2.5);   
\draw[very thin,green] (-2.5,1.5) -- (1.5,-2.5);

\end{tikzpicture}
\end{center}

In fact this example follows from the general construction given in~\S\ref{mconfi1}. Consider the homomorphism $\Gamma\times\Gamma \rightarrow \Gamma, (\alpha,\beta)\mapsto \alpha+\beta$. Let $\Omega$ be the kernel of this map. Clearly $(\Gamma\times \Gamma) /\Omega  \cong \Gamma$. One can check  that the $(\Gamma\times \Gamma) /\Omega$-graded ring $A$ gives the graded ring constructed in this example (see also Remark~\ref{shanbei}). 
\end{example}

\begin{example}\scm[The direct limit of graded rings]\label{penrith123}
\vspace{0.2cm}

Let $A_i$, $i\in I$, be a direct system of $\Gamma$\!-graded rings, \ie $I$ is a directed partially ordered set and for $i\leq j$, there is a graded homomorphism $\phi_{ij}:A_i \rightarrow A_j$ which is compatible with the ordering. Then $A:=\varinjlim A_i$ is a $\Gamma$\!-graded ring with homogeneous components 
$A_\alpha=\varinjlim {A_i}_\alpha$. For a detailed construction of such direct limits see \cite[II, \S11.3, Remark~3]{bourbaki}. 

As an example, the ring $A=\Z[x_i \mid i\in \mathbb N]$, where $A=\varinjlim_{i \in \mathbb N} \Z[x_1,\dots,x_i]$, with $\deg(x_i)=1$ is a $\Z$-graded ring with support $\mathbb N$.  We give another specific example of this construction in Example~\ref{noisest}.

We will study in detail one type of these graded rings, \ie graded ultramatricial algebras (\S\ref{ultriuy}, Definition~\ref{kjhwal}) and will show that the graded Grothendieck group (\S\ref{ggg}) will classify these graded rings completely. 
\end{example}

\begin{example}\label{noisest}
Let $A= \textstyle{\bigoplus_{\ga \in \Ga}} A_{\ga}$ and $B= \textstyle{\bigoplus_{\ga \in
\Ga}} B_{\ga}$ be $\Gamma$\!-graded rings.  Then $A\times B$ has a natural grading  given by $A\times B  = \textstyle{\bigoplus_{\ga \in \Gamma}} (A \times B)_{\ga}$ where $(A \times B)_{\ga}=A_\ga \times B_\ga$.  
\end{example}

\begin{example}\label{meinbalmain}\scm[Localisation of graded rings] 
\vspace{0.2cm}

Let $S$ be a central multiplicative closed subsets of $\Gamma$\!-graded ring $A$, consisting of homogeneous elements. Then $S^{-1}A$ has a natural a $\Gamma$\!-graded structure. Namely, for $a\in A^h$, define $\deg(a/s)=\deg(a)-\deg(s)$ and for $\gamma \in \Gamma$, 
\[(S^{-1}A)_\gamma=\big \{ \, a/s \mid a\in A^h, \deg(a/s)=\gamma \, \big\}. \] It is easy to see that this is well-defined and make $S^{-1}A$ a $\Gamma$\!-graded ring. 
\end{example}

Many rings have a `canonical' graded structure, among them, crossed products (group rings, skew group rings, twisted group rings),  edge algebras, path algebras, incidence rings, etc. (see~\cite{kelarev} for a review of these ring constructions).  We will study some of these rings in this book. 

\begin{remark}\scm[Rings graded by a category]\label{ggtgt}
\vspace{0.2cm}

The use of groupoids as a suitable language for structures whose operations are partially defined have now been firmly recognised. There is a generalised notion of groupoid graded rings as follows. Recall that a \emph{groupoid} \index{groupoid} is a small category with the property that all morphisms are isomorphisms. As an example, 
let $G$ be a group and $I$ a nonempty set. The set $I\times G \times I$, considered as morphisms, forms a groupoid where the composition defined by \[(i,g,j)(j,h,k)=(i,gh,k).\] One can show that this forms a connected groupoid and any connected groupoid is of this form (\cite[Ch.~3.3, Prop.~6]{lawson}). If $I=\{1,\dots, n\}$, we denote $I\times G \times I$ by $n\times G \times n$.

Let $\Gamma$ be a groupoid and $A$ be a ring. $A$ is called $\Gamma$\!-\emph{groupoid graded} \index{groupoid grading} ring, 
if $ A= \textstyle{\bigoplus_{ \ga \in \Ga}} A_{\ga}$,  where $\ga$ is a morphism of $\Gamma$, each $A_{\ga}$ is
an additive subgroup of $A$ and $A_{\ga}  A_{\delta} \subseteq
A_{\ga \delta}$ if the morphism $\ga \delta$ is defined  and $A_{\ga}  A_{\delta} =0$, otherwise.  For a group $\Gamma$, considering it as a category with one element and $\Gamma$ as the set of morphisms, we recover the $\Gamma$\!-group graded ring $A$ (see Example~\ref{meinbed} for an example of groupoid graded ring). 

One can develop the theory of groupoid graded rings parallel and similar to the group graded rings. See~\cite{lili,oin} for this approach. Since adjoining a zero to a groupoid, gives a semigroup, a groupoid graded ring is a special case of rings graded by semigroups (see Remark~\ref{hgyhv42}). For a general notion of a ring graded by a category see~\cite[\S2]{abme}, where it is shown that the category of graded modules (graded by a category) is a Grothendieck category. 
\end{remark}

\begin{remark}\label{hgyhv42}\scm[Rings graded by a semigroup]
\vspace{0.2cm}

In the definition of a graded ring (\S\ref{jdjthu}), one can replace the group grading with a semigroup. With this setting, the tensor algebras of Example~\ref{stevevanzandt} are $\mathbb N$-graded rings. A number of results on the group graded rings can also be established in the more general setting of rings graded by cancellative monoids or semigroups (see for example~\cite[II, \S11]{bourbaki}). However, in this booke we only consider group graded rings. 
\end{remark}

\begin{remark}\label{parknan} \scm[Graded rings without identity]
\vspace{0.2cm}

For a ring without identity, one defines the concept of the graded ring exactly as when the ring has an identity. The concept of the strongly graded is defined similarly. In several occasions in this book we construct graded rings without identity. For example,  Leavitt path algebras arising from infinite graphs are graded rings without identity~\S\ref{paohdme}. See also~\S\ref{freeme}, the graded free rings. The unitisation of a (non-unital) graded ring has a canonical grading. This is studied in relation with graded $K_0$ of non-unital rings in \S\ref{wedk0} (see~(\ref{thurattila})). 
\end{remark}

\subsection{Strongly graded rings} \label{scrosshg}
Let $A$ be a $\Gamma$\!-graded ring. By Proposition~\ref{basicsofgradedrings}, $1\in A_0$. This implies $A_0 A_\gamma =A_\gamma$ and 
$A_\ga A_0 =A_\gamma$ for  any 
$\gamma\in\Gamma$. If these equality holds for any two arbitrary elements of $\Gamma$, we call the ring a strongly graded ring. Namely, a $\Ga$\!-graded ring $A=\textstyle{\bigoplus_{ \ga \in \Ga}} A_{\ga}$
is called a \emph{strongly graded ring} \index{strongly graded ring} if $A_{\ga} A_{\de} = A_{\ga +\de}$
for all $\ga, \de \in \Ga$. A graded ring $A$ is called a
\emph{crossed product} \index{crossed product ring} if there is an invertible element in every
homogeneous component $A_\ga$ of $A$; that is, $A^* \cap A_\ga \neq
\varnothing$ for all $\ga \in \Ga$, where $A^*$ is the group of all invertible elements of $A$.
We define the support of invertible homogeneous elements of $A$ as 
\begin{equation}\label{henongs}
\boxed{\Gamma_A^*=\{ \, \ga \in \Gamma \mid A_\ga^* \not = \varnothing\,  \}}
\end{equation}
where $A_\ga^*:=A^* \cap A_\ga$. It is easy to see that $\Gamma_A^*$ is a group and $\Gamma_A^* \subseteq \Gamma_A$ (see Proposition~\ref{basicsofgradedrings}(4)). Clearly $A$ is a crossed product 
if and only if $\Gamma_A^*=\Gamma$.   \index{$\Gamma_A^*$, support of invertible elements of $A$}
 
\begin{proposition} \label{crossedproductstronglygradedprop}
 Let $A = \bigoplus_{ \ga \in \Ga} A_{\ga}$ be a $\Ga$\!-graded
ring. Then
\begin{enumerate}[\upshape(1)]
\item $A$ is strongly graded if and only if $1 \in A_{\ga} A_{-\ga}$
for any $\ga \in \Ga$;

\item If $A$ is strongly graded then the support of $A$ is $\Gamma$;

\item Any crossed product ring is strongly graded.

\item If $f:A\rightarrow B$ is a graded homomorphism of graded rings, then B is strongly graded (resp. crossed product) if $A$ is so. 
\end{enumerate}

\end{proposition}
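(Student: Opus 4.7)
My plan is to establish (1) first, since parts (2), (3) and (4) all reduce immediately to the criterion it provides. For (1), the forward direction is trivial: if $A$ is strongly graded then $A_{\gamma}A_{-\gamma} = A_0 \ni 1$. For the converse, I always have the inclusion $A_{\gamma}A_{\delta} \subseteq A_{\gamma+\delta}$ by the definition of a graded ring, so the content is in the reverse inclusion. The idea is to use the hypothesis $1 \in A_{\gamma}A_{-\gamma}$ to insert a factorisation of $1$: writing $1 = \sum_i a_i b_i$ with $a_i \in A_{\gamma}$ and $b_i \in A_{-\gamma}$, I get for any $x \in A_{\gamma+\delta}$ that
\[
x = 1\cdot x = \sum_i a_i (b_i x) \in A_{\gamma}\bigl(A_{-\gamma}A_{\gamma+\delta}\bigr) \subseteq A_{\gamma}A_{\delta},
\]
which yields $A_{\gamma+\delta} \subseteq A_{\gamma}A_{\delta}$, and hence equality. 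I expect this insertion trick to be the main (and really only) ingredient; the other parts are bookkeeping.

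For (2), strong grading gives $A_{\gamma}A_{-\gamma} = A_0 \ni 1 \neq 0$, which forces both $A_{\gamma}$ and $A_{-\gamma}$ to be nonzero for every $\gamma \in \Gamma$; thus the support equals all of $\Gamma$. For (3), if $A$ is a crossed product then for each $\gamma$ we can pick $u_{\gamma} \in A_{\gamma}^{*}$; by Proposition~\ref{basicsofgradedrings}(4) its inverse $u_{\gamma}^{-1}$ lies in $A_{-\gamma}$, so $1 = u_{\gamma} u_{\gamma}^{-1} \in A_{\gamma}A_{-\gamma}$, and (1) gives that $A$ is strongly graded.

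For (4), the graded homomorphism $f$ satisfies $f(A_{\gamma}) \subseteq B_{\gamma}$ and $f(1_A)=1_B$. If $A$ is strongly graded, write $1_A = \sum_i a_i b_i$ with $a_i\in A_{\gamma}$, $b_i \in A_{-\gamma}$; applying $f$ gives $1_B = \sum_i f(a_i)f(b_i) \in B_{\gamma}B_{-\gamma}$, and another appeal to (1) shows $B$ is strongly graded. If $A$ is a crossed product, then for each $\gamma$ and any $u_{\gamma} \in A_{\gamma}^{*}$ the element $f(u_{\gamma}) \in B_{\gamma}$ has two-sided inverse $f(u_{\gamma}^{-1})$ inside $B$, so $B_{\gamma}^{*} \neq \varnothing$ for every $\gamma$, i.e. $B$ is a crossed product. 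No step should present real difficulty once (1) is in hand; the only thing to be careful about is invoking Proposition~\ref{basicsofgradedrings}(4) in parts (3) and (4) to ensure the inverse sits in the expected homogeneous component.
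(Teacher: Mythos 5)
Your proposal is correct and follows essentially the same route as the paper: the paper's converse in (1) is exactly your insertion trick, phrased as the set identity $A_{\si+\de}=A_0A_{\si+\de}=(A_\si A_{-\si})A_{\si+\de}\subseteq A_\si A_\de$, and parts (2)--(4) are reduced to (1) in the same way. Your explicit treatment of the crossed-product case in (4), which the paper leaves as ``follows easily from the definition,'' is a correct spelling out of the intended argument.
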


\begin{proof}
(1) If $A$ is strongly graded, then $1\in A_0=A_{\ga} A_{-\ga}$ for any $\ga \in \Ga$. For the converse, the assumption $1 \in  A_{\ga}
A_{-\ga}$  implies that  $A_0=A_{\ga} A_{-\ga}$ for any $\ga \in \Ga$. Then for $\si, \de \in \Ga$,
\[
A_{\si +\de} = A_0 A_{\si +\de} = (A_\si A_{-\si}) A_{\si +\de}  =
A_\si (A_{-\si} A_{\si + \de}) \subseteq A_{\si} A_{\de} \subseteq A_{\si +\de}
\]
proving $A_{\si \de} = A_\si A_\de$, so $A$ is strongly graded.

(2) By (1), $1\in  A_{\ga} A_{-\ga}$ for any $\ga \in \Gamma$. This implies $A_{\ga}\not =0$ for any $\ga$, \ie $\Gamma_A=\Gamma$.


(3) Let $A$ be a crossed product ring. By definition, for $\ga \in \Ga$, there exists $a \in A^* \cap A_\ga$. So
$a^{-1} \in A_{-\ga}$ by Proposition~\ref{basicsofgradedrings}(4) and $1 = a a^{-1} \in A_\ga A_{-\ga}$. Thus $A$ is strongly graded by (1). 

(4) Suppose $A$ is strongly graded. By (1), $1\in  A_{\ga} A_{-\ga}$ for any $\ga \in \Gamma$. Thus \[1 \in f(A_{\ga}) f(A_{-\ga}) \subseteq B_{\ga} B_{-\ga}.\] Again (1) implies $B$ is strongly graded. The case of crossed product follows easily from the definition.  
\end{proof}

The converse of (3) in Proposition~\ref{crossedproductstronglygradedprop} does not hold. One can prove that if $A$ is strongly graded and $A_0$ is a local ring, then 
$A$ is a crossed product algebra (see~\cite[Theorem~3.3.1]{grrings}). In~\S\ref{creekside} we give examples of strongly graded algebra $A$ such that $A$ is crossed product but $A_0$ is not a local ring. We also give an example of strongly $\Z$-graded ring $A$ such that $A_0$ is not local and $A$ is not crossed product (Example~\ref{noncori}). Using graph algebras we will produce large classes of strongly graded rings which are not crossed product (see Theorems~\ref{sthfin3} and ~\ref{sthfin4}). 

If $\Gamma$ is a finitely generated group, generated by the set $\{\gamma_1,\dots,\gamma_n\}$, then (1) in Proposition~\ref{crossedproductstronglygradedprop} can be simplified to the following: $A$ is strongly graded if and only if $1\in A_{\gamma_i}A_{-\gamma_i}$ and $1\in A_{-\gamma_i}A_{\gamma_i}$, where $1\leq i \leq n$.   Thus if $\Gamma=\mathbb Z$, in order that $A$ to be strongly graded, we only need to have $1\in A_1 A_{-1}$ and $1\in A_{-1}A_1$. This will be used, for example, in Proposition~\ref{lanhc888} to show that certain corner skew Laurent
polynomial rings (\S\ref{cornerskew}) are strongly graded.

\begin{example}\label{monster1}\scm[Constructing strongly graded rings via tensor products]
\vspace{0.2cm}

Let $A$ and $B$ be $\Gamma$\!-graded rings. Then by Example~\ref{deltam}, $A\otimes_{\Z}B$ is a $\Gamma$\!-graded ring. If one of the rings is strongly graded (resp. crossed product) then $A\otimes_{\Z} B$ is so. Indeed, suppose $A$ is strongly graded (reap. crossed product). Then the claim follows from Proposition~\ref{crossedproductstronglygradedprop}(4) and the graded homomorphism $A \rightarrow A\otimes_{\Z} B, a \mapsto a\otimes 1$.

As an specific case, suppose $A$ is a $\Z$-graded ring. Then \[A[x,x^{-1}]= A \otimes \Z[x,x^{-1}]\] is a strongly graded ring. Notice that 
with this grading, $A[x,x^{-1}]_0\cong A$.
\end{example}

\begin{example}\label{mominjianguomen}\scm[Strongly graded as a $\Gamma/\Omega$-graded ring]
\vspace{0.2cm}

Let $A$ be a $\Gamma$\!-graded ring. Using Proposition~\ref{crossedproductstronglygradedprop}, it is easy to see that if $A$ is a strongly $\Gamma$\!-graded ring, then it is also strongly $\Gamma/\Omega$-graded ring, where $\Omega$ is a subgroup of $\Gamma$. However the strongly gradedness is not a ``closed'' property, i.e, if $A$ is strongly $\Gamma/\Omega$-graded ring and $A_\Omega$ is strongly $\Omega$-graded ring, it does not follow that $A$ is strongly $\Gamma$\!-graded. 
\end{example}

\subsection{Crossed products}\label{khgfewa1}  \index{crossed product ring}

Natural examples of strongly graded rings are crossed product algebras (see Proposition~\ref{crossedproductstronglygradedprop}(3)).   They cover, as special cases, the skew group rings and twisted groups rings. We briefly describe the construction here. 

Let $A$ be a ring, $\Gamma$ a group (as usual we use the additive notation), and let $\phi:\Gamma\rightarrow \Aut(A)$ and $\psi:\Gamma\times \Gamma\rightarrow A^*$ be maps such that for any $\alpha,\beta,\gamma \in \Gamma$ and $a\in A$, 

\begin{enumerate}[\upshape(i)]

\item ${}^\alpha ({}^\beta a)=\psi(\alpha,\beta)\, {}^{\alpha + \beta} a \, \psi(\alpha,\beta)^{-1},$

\item $\psi(\alpha,\beta)\psi(\alpha + \beta,\gamma)={}^\alpha\psi(\beta,\gamma)\, \psi(\alpha,\beta + \gamma),$

\item $\psi(\alpha,0)=\psi(0,\alpha)=1$

\end{enumerate}

Here for $\alpha\in \Gamma$ and $a\in A$, $\phi(\alpha)(a)$ is denoted by ${}^\alpha a$. The map $\psi$ is called a \emph{2-cocycle map}. \index{cocycle map} Denote by $A^\phi_{\psi}[\Gamma]$ the free left $A$-module with the basis $\Gamma$, and define the multiplication by 
\begin{equation}\label{ppooiiaa23}
(a\alpha)(b\beta)=a\, {}^\alpha b \, \psi(\alpha,\beta )(\alpha + \beta).
\end{equation}

One can show that with this multiplication, $A^\phi_{\psi}[\Gamma]$ is a $\Gamma$\!-graded ring with homogeneous components $A\gamma$, $\gamma\in \Gamma$. In fact $\gamma\in A\gamma$ is invertible, so $A^\phi_{\psi}[\Gamma]$ is a 
crossed product algebra~\cite[Proposition~1.4.1]{grrings}. 

On the other hand any crossed product algebra is of this form (see~\cite[\S 1.4]{grrings}): for any $\ga \in \Gamma$ choose $u_\ga \in A^* \cap A_\ga$ and define $\phi:\Gamma\rightarrow \Aut(A_0)$ by  $\phi(\ga)(a)=u_\ga a u_\ga^{-1}$ for $\ga \in \Gamma$ and $a\in A_0$. Moreover, define the cocycle map 
\begin{align*}
\psi:\Gamma\times \Gamma &\longrightarrow A_0^*,\\
(\zeta,\eta) &\longmapsto u_\zeta u_\eta u_{\zeta + \eta}^{-1}.
\end{align*}
 Then \[A={A_0}_\psi^\phi[\Gamma]=\textstyle{\bigoplus_{\ga \in \Gamma}} A_0 \ga,\] with multiplication \[(a \zeta)(b \eta)=a {}^{\zeta} b \psi(\zeta,\eta) (\zeta+ \eta),\] where ${}^\zeta b$ is defined as $\phi(\zeta)(b)$. 

Note that when $\Gamma$ is cyclic,  one can choose $u_i=u_1^i$ for $u_1 \in A^* \cap A_1$ and thus the cocycle map $\psi$ is trivial, $\phi$ is a homomorphism and the crossed product is a skew group ring. In fact, if $\Gamma=\Z$, then the skew group ring becomes the so called 
\emph{skew Laurent polynomial ring}, denoted by $A_0[x,x^{-1},\phi]$. 
 Moreover if $u_1$ is in the centre of $A$, then $\phi$ is the identity  map and  the crossed product reduces to the group ring $A_0[\Gamma]$. A variant of this construction, namely corner skew polynomial rings is studied in~\S\ref{cornerskew}. 
 \index{skew Laurent polynomial ring} \index{group ring}

{\bf Skew group rings:} If $\psi:\Gamma\times \Gamma\rightarrow A^*$ is a trivial map, \ie $\psi(\alpha,\beta)=1$ for all $\alpha,\beta \in \Gamma$, then Conditions (ii) and (iii) trivially hold, and Condition (i) reduces to ${}^\alpha({}^\beta a)={}^{\alpha+\beta} a$ which means that $\phi:\Gamma\rightarrow \Aut(A)$ becomes a group homomorphism. In this case $A^{\phi}_\psi[\Gamma]$, denoted by $A\star_{\phi} \Gamma$, is a \emph{skew group ring} \index{skew group ring} with multiplication 
\begin{equation}\label{ppooiiaa}
(a \alpha)(b \beta)=a\, {}^\alpha b \, (\alpha+\beta) .
\end{equation}

{\bf Twisted group ring:} If $\phi:\Gamma\rightarrow \Aut(A)$ is trivial, \ie $\phi(\alpha)=1_A$ for all $\alpha\in \Gamma$, then Condition (i) implies that $\psi(\alpha,\beta)\in C(A)\cap A^*$ for any $\alpha,\beta\in \Gamma$. Here $C(A)$ stands for the centre of the ring $A$. \index{$C(A)$, centre of $A$} In this case $A^{\phi}_\psi[\Gamma]$, denoted by $A_{\psi}[\Gamma]$, is a \emph{twisted group ring} \index{twisted group ring} with multiplication 
\begin{equation}\label{ppooiiaa2}
(a \alpha)(b \beta)=a b \psi(\alpha,\beta) (\alpha+\beta) .
\end{equation}

A well-known theorem in the theory of central simple algebras states that if $D$ is a central simple $F$-algebra with a maximal subfield $L$ such that $L/F$ is a Galois extension and $[A:F]=[L:F]^2$, then $D$ is a crossed product, with $\Gamma=\Gal(L/F)$ and $A=L$ (see~\cite[\S12, Theorem~1]{draxl}).


Some of the graded rings we treat in this book are of the form $K[x,x^{-1}]$, where $K$ is a field. This is an example of a graded field. 

A $\Ga$\!-graded ring $A = \bigoplus_{ \ga \in \Ga} A_{\ga}$ is called
a \emph{graded division ring} \index{graded division ring} if every
nonzero homogeneous element has a multiplicative inverse.  If $A$ is also a commutative ring, then $A$ is called a  \index{graded field}  \emph{graded field}.

Let $A$ be a $\Gamma$\!-graded division ring. It follows from Proposition~\ref{basicsofgradedrings}(4) that $\Ga_A$
is a group, so we can write $A = \bigoplus_{ \ga \in \Ga_A}
A_{\ga}$. Then, as a $\Ga_A$-graded ring, $A$ is a crossed product
and it follows from
Proposition~\ref{crossedproductstronglygradedprop}(3) that $A$ is
strongly $\Ga_A$-graded. Note that if $\Gamma_A\not = \Gamma$, then $A$ is not strongly $\Gamma$\!-graded.   Also note that if $A$ is a graded division ring, then $A_0$ is a division ring. 

\begin{remark}\scm[Graded division rings and division rings which are graded]
\vspace{0.2cm}

Note that a graded division ring and a division ring which is graded are different. By definition, $A$ is a graded division ring if and only if $A^h \backslash \{0\}$ is a group. A simple example is the Laurent polynomial ring $D[x,x^{-1}]$, where $D$ is a division ring (Example~\ref{egofgrdivisionrings0}). Other examples show that a graded division ring does not need to be a domain (Example~\ref{goodraof}). However if the grade group is totally ordered, then a domain which is also graded, has to be concentrated in degree zero. Thus a division ring which is graded by a totally ordered grade group $\Gamma$, is of the form $A=\bigoplus_{\gamma \in \Gamma} A_\gamma$ where $A_0$ is a division ring and $A_\gamma=0$ for $\gamma\not = 0$. This will not be the case if $\Gamma$ is not totally ordered (see Example~\ref{egofgrdivisionrings}).  \index{totally ordered abelian group}
\end{remark}

In the following we give some concrete examples of graded division rings. 

\begin{example}\label{egofgrdivisionrings0}\scm[The Veronese subring]
\vspace{0.2cm}

Let $A=\bigoplus_{\gamma \in \Gamma} A_\gamma$ be a $\Gamma$\!-graded ring, where $\Gamma$ is a torsion-free group. For $n\in \Z \backslash \{ 0 \}$, the $n$th-\emph{Veronese subring} \index{Veronese subring} of $A$ is defined as $A^{(n)}=\bigoplus_{\gamma \in \Gamma} A_{n\gamma}$.  This is a $\Gamma$\!-graded ring with $A^{(n)}_\gamma=A_{n\gamma}$. It is easy to see that the support of $A^{(n)}$ is $\Gamma$ if the support of $A$ is $\Gamma$.  Note also that if $A$ is strongly graded, so is $A^{(n)}$. Clearly  $A^{(1)}=A$ and $A^{(-1)}$ is the graded ring with the components ``flipped'', \ie  $A^{(-1)}_\gamma=A_{-\gamma}$. For the case of $A^{(-1)}$ we don't need to require the grade group to be torsion-free. 

Let $D$ be a division ring and let $A= D[x, x^{-1}]$ be the Laurent polynomial ring. The elements of $A$ consist of finite sums $\sum_{i\in \mathbb Z} a_ix^{i}$, where $a_i \in D$. Then $A$ is a $\mathbb Z$-graded division ring 
with $A = \bigoplus_{i \in \mathbb{Z}} A_i$, where $A_i = \{ a x^{i} \mid
a \in D\}$. Consider the $n$th-Veronese subring $A^{(n)}$ which is the ring  $D[x^n, x^{-n}]$. The elements of $A^{(n)}$ consist of finite sums $\sum_{i\in \mathbb Z} a_ix^{in}$, where $a_i \in D$. Then $A^{(n)}$ is a $\mathbb Z$-graded division ring,
with $A^{(n)}= \bigoplus_{i \in \mathbb{Z}} A_{in}$. Here both $A$ and $A^{(n)}$ are strongly graded rings. 

There is also another way to consider the $\Z$-graded ring $B=D[x^n,x^{-n}]$ such that it becomes a graded subring of $A=D[x,x^{-1}]$. Namely, we define $B=\bigoplus_{i\in \Z} B_i$, where $B_i=Dx^{i}$ if $i\in n\Z$ and $B_i=0$ otherwise. This way $B$ is a graded division ring and a graded subring of $A$. 
The support of $B$ is clearly the subgroup $n\mathbb Z$ of $\mathbb Z$. With this definition, $B$ is not strongly graded. 
\end{example}

\begin{example}\label{egofgrdivisionrings}\scm[Different gradings on a graded division ring]
\vspace{0.2cm}

Let $\H = \R \+ \R i \+ \R j \+ \R k$ be the real \emph{quaternion
algebra}, \index{quaternion
algebra} with multiplication defined by $i^2 = -1$, $j^2= -1$ and $i
j = -ji = k$. It is known that $\H$ is a noncommutative division
ring with centre $\R$. We give  $\H$ two different
graded division ring structures, with grade groups $\Z_2 \times \Z_2$ and $\Z_2$ respectively as follows. 
\begin{description}

\item[$\Z_2\times \Z_2$-grading:] Let $\H = R_{(0,0)} \+ R_{(1,0)} \+
R_{(0,1)} \+ R_{(1,1)}$, where
$$
R_{(0,0)} = \, \R ,\;\; R_{(1,0)} = \, \R i,\;\; R_{(0,1)} = \, \R
j,\;\; R_{(1,1)} = \, \R k.
$$
It is routine to check that $\H$ forms a strongly $\Z_2\times \Z_2$-graded division ring.

\item[$\Z_2$-grading:] Let $\H = \C_0 \+ \C_1$,
where $\C_0 = \R \oplus \R i$ and $\C_1 = \C j=\R j \oplus \R k$. One can check that $\C_0\C_0=\C_0$, $\C_0 \C_1 =\C_1 \C_0 =\C_1$ and 
$\C_1 \C_1 =\C_0$. This makes $\H$ a  strongly $\Z_2$-graded division ring.  Note that this grading on $\H$ can be obtain from the first part by considering the quotient grade group $\Z_2\times\Z_2 / 0 \times \Z_2$ (\S\ref{mconfi1}). 
Quaternion algebras are examples of Clifford algebras (see Example~\ref{cliffman}).

\end{description}
\end{example}

The following generalises the above example of
quaternions as a $\Z_2 \times \Z_2$-graded ring.

\begin{example}\scm[Symbol algebras] \index{symbol algebra} \label{goodraof}
\vspace{0.2cm}

Let $F$ be a field, $\xi$ be a primitive $n$-th root of unity and
let $a$, $b \in F^*$. Let
$$
A = \bigoplus_{i=0}^{n-1} \bigoplus_{j=0}^{n-1} F x^i y^j
$$
be the $F$-algebra generated by the elements $x$ and $y$, which are
subject to the relations $x^n = a$, $y^n = b$ and $xy = \xi yx$. By
\cite[Theorem~11.1]{draxl}, $A$ is an $n^2$-dimensional central simple
algebra over $F$. We will show that $A$ forms a graded division ring. Clearly
$A$ can be written as a direct sum
$$
A= \bigoplus_{(i,j) \in \Z_n \+ \Z_n} A_{(i,j)}, \text{ \;\; where
 } A_{(i,j)} = F x^i y^j
$$
and each $A_{(i,j)}$ is an additive subgroup of $A$. Using the fact
that $\xi^{-kj} x^k y^j = y^j x^k$ for each $j,k$, with $0 \leq j,k
\leq n-1$, we can show that \[A_{(i,j)} A_{(k,l)} \subseteq
A_{([i+k], [j+l])},\] for $i,j,k,l \in \Z_n$. A nonzero homogeneous
element $f x^i y^j \in A_{(i,j)}$ has an inverse $$f^{-1} a^{-1}
b^{-1} \xi^{-ij} x^{n-i} y^{n-j},$$ proving $A$ is a graded division
ring. Clearly the support of $A$ is $\Z_n \times \Z_n$, so $A$ is
strongly $\Z_n \times \Z_n$-graded. 
\end{example}

These examples can also be obtained from graded free rings (see Example~\ref{pearlfish}). 

\begin{example}\scm[A good counter-example]
\vspace{0.2cm}

In the theory of graded rings, in many instances it has been established that if the grade group $\Gamma$ is finite (or in some cases, finitely generated), then a graded property implies the corresponding ungraded property of the ring (\ie the property is preserved under the forgetful functor). For example, one can prove that  if a $\mathbb Z$-graded ring is graded Artinian (Noetherian), then the ring is Artinian (Noetherian). One good example which provides counter-examples to such phenomena is the following graded field. \index{Noetherian ring} \index{Artinian ring}  \index{graded Noetherian ring} \index{graded Artinian ring}

Let $K$ be a field and $A=K[x_1^{\pm 1},x_2^{\pm 1},x_3^{\pm 1},\dots]$  a Laurent polynomial ring in a countable many variables. This ring is a graded field with its  `canonical' $\bigoplus_{\infty} \mathbb Z$-grading and thus it is graded Artinian and Noetherian. However $A$ is not Noetherian. 
\end{example}

\subsection{Graded ideals}\label{jghjrye}
Let $A$ be a $\Gamma$\!-graded ring. 
A two-sided ideal $I$ of $A$ is called a  \emph{graded ideal}\index{graded
ideal} (or \emph{homogeneous
ideal}\index{homogeneous ideal}) if
\begin{equation}\label{viewfresh19}
I= \bigoplus_{ \ga \in \Gamma} (I \cap A_{\ga}).
\end{equation}
Thus $I$ is a graded ideal if and only if for any $x\in I$, $x=\sum x_i$, where $x_i \in A^h$, implies that $x_i \in I$. 

The notions of a \emph{graded subring}, a \emph{graded left} and
a \emph{graded right ideal} are defined similarly. \index{graded subring} \index{graded left ideal} \index{graded right ideal}

Let $I$ be a graded ideal of $A$. Then
the quotient ring $A/I$ forms a graded ring, with
\begin{equation}\label{whst8}
\boxed{A/I = \bigoplus_{\ga \in \Ga} (A/I)_{\ga}, \mathrm{ \;\;\; where
\;\;\; } (A/I)_{\ga} = (A_{\ga} +I)/I.}
\end{equation}

With this grading $(A/I)_0 \cong A_0 /I_0$, where $I_0=A_0\cap I$. From~(\ref{viewfresh19}) it follows that an ideal $I$ of $A$ is a graded ideal if and only
if $I$ is generated as a two-sided ideal of $A$ by homogeneous
elements. Also, for a two-sided ideal $I$ of $A$, if~(\ref{whst8}) induces a grading on $A/I$, then $I$ has to be a graded ideal. 
By Proposition~\ref{crossedproductstronglygradedprop}(4), if $A$ is a strongly graded or a crossed product, so is the graded quotient ring $A/I$. 

\begin{example}\scm[Symmetric and exterior algebras as $\Z$-graded rings]
\vspace{0.2cm}

Recall from Example~\ref{stevevanzandt} that for a commutative ring $A$ and  an $A$-module $M$, the tensor algebra $T(M)$ is a $\mathbb Z$-graded ring with support $\mathbb N$. The  \emph{symmetric algebra} \index{symmetric algebra} $S(M)$ is defined as the quotient of $T(M)$ by the ideal generated by elements $x\otimes y -y\otimes x$, $x,y \in M$. Since these elements are homogeneous of degree two, $S(M)$ is a $\mathbb Z$-graded commutative ring. 

Similarly, the  \emph{exterior algebra} \index{exterior algebra} of $M$, denoted by $\bigwedge M$, is defined as the quotient of $T(M)$ by the ideal generated by homogeneous elements $x\otimes x$, $x\in M$. So $\bigwedge M$ is a $\mathbb Z$-graded ring.
\end{example}

Let $I$ be a two-sided ideal of a $\Gamma$\!-graded ring $A$ generated by a subset $\{a_i\}$ of not necessarily homogeneous elements of $A$. If $\Omega$ is a subgroup of $\Gamma$ such that $\{a_i\}$ are homogeneous elements in $\Gamma/\Omega$-graded ring $A$ (see~\S\ref{mconfi1}), then clearly $I$ is a $\Gamma/\Omega$-graded ideal and consequently $A/I$ is a $\Gamma/\Omega$-graded ring.

\begin{example}\scm[Clifford algebras as $\Z_2$-graded rings]  \label{cliffman}
\vspace{0.2cm}

Let $V$ be a $F$-vector space and $q:V\rightarrow F$ be a quadratic form with its
associated non degenerate symmetric bilinear form $B:V\times
V\rightarrow F$. 

 The \emph{Clifford algebra} \index{Clifford algebra} associated to $(V,q)$ is defined as 
\[\Cl(V,q):=T(V)/ \langle   v\otimes v+q(v)  \rangle. \]
Considering $T(V)$ as a $\Z/2\Z$-graded ring (see~\S\ref{mconfi1}), the elements $v\otimes v -q(v)$ are homogeneous of degree zero. This induces a $\Z_2$-graded 
structure on $\Cl(V,q)$.  Identifying $V$ with its image in the Clifford algebra $\Cl(V,q)$, $V$ lies in the odd part of the
Clifford algebra, \ie  $V\subset \Cl(V,q)_1$.

If $\chr(F) \not = 2$, as $B$ is non degenerate, there
exist $x, y\in V$ such that $B(x,y)=1/2$, and thus  \[xy+yx=2B(x,y)=1 \in  \Cl(V,q)_1  \Cl(V,q)_1.\]

Similarly, if $\chr(F) = 2$, there
exist $x, y\in V$ such that $B(x,y)=1$, so \[xy+yx=B(x,y)=1  \in  \Cl(V,q)_1  \Cl(V,q)_1.\]

It follows from Proposition~\ref{crossedproductstronglygradedprop} that Clifford algebras are strongly $\Z_2$-graded rings. 
\end{example}

Recall that for  $\Ga$\!-graded rings $A$ and $B$, a $\Gamma$\!-graded ring
homomorphism $f:A \ra B$ is a ring
homomorphism such that $f(A_{\ga}) \subseteq B_{\gamma}$ for all
$\ga \in \Ga$. It can easily be
shown that $\ker (f)$ is a graded ideal of $A$ and $\im (f)$ is a
graded subring of $B$. It is also easy to see that $f$ is injective (surjective/bijective) if and only if for any $\ga \in \Gamma$, the restriction of $f$ on $A_\ga$ is injective (surjective/bijective).

Note that if $\Ga$ is an abelian group, then
the centre of a graded ring $A$, $C(A)$,  is a graded subring of $A$. More generally, the centraliser of a set of homogeneous elements is a graded subring. 
\index{centre of a ring}  \index{centraliser}

\begin{example}\label{millarexi}\scm[The centre of the graded ring]
\vspace{0.2cm}

If a group $\Ga$ is not abelian, then the centre of a $\Gamma$\!-graded ring may not be a graded
subring. For example, let $\Gamma = S_3 = \{e,a,b,c,d,f \}$ be the symmetric group of order
$3$, where
$$
a = (23),\;\; b=(13) ,\;\; c=(12) ,\;\; d=(123) ,\;\; f=(132).
$$
Let $A$ be a ring, and consider the group ring $R = A[\Gamma]$, which is
a $\Gamma$\!-graded ring by Example~\ref{hy431}. Let $x = 1d+
1f \in R$, where $1= 1_A$, and we note that $x$ is not homogeneous
in $R$. Then $x \in Z( R)$, but the homogeneous components of $x$
are not in the centre of $R$. As $x$ is expressed uniquely as the
sum of homogeneous components, we have $x \notin \bigoplus_{\gamma \in \Gamma}
(Z(R) \cap R_\gamma)$.

This example can be generalised by taking a non-abelian finite group
$\Gamma$ with a subgroup $\Omega$ which is normal and noncentral. Let $A$ be
a ring and consider the group ring $R= A[\Gamma]$ as above. Then $x =
\sum_{\omega \in \Omega} 1 \omega$ is in the centre of $R$, but the homogeneous
components of $x$ are not all in the centre of $R$.
\end{example}

\begin{remark}\label{shanbei}
Let $\Gamma$ and $\Lambda$ be two groups. Let $A$ be a $\Gamma$\!-graded ring and $B$ be a $\Lambda$-graded ring. Suppose $f:A\rightarrow B$ is a ring homomorphism and $g:\Gamma\rightarrow \Lambda$ a group homomorphism such that for any $\ga \in \Gamma$, $f(A_\ga)\subseteq B_{g(\ga)}$. Then $f$ is called a $\Gamma\!-\!\Lambda$-graded homomorphism. In case $\Gamma=\Lambda$ and $g=\id$,  we recover the usual definition of a $\Gamma$\!-graded homomorphism. For example, if $\Omega$ is a subgroup of $\Gamma$, then the identity map $1_A:A\rightarrow A$ is a $\Gamma\!-\!\Gamma/\Omega$-graded homomorphism, where $A$ is considered as $\Gamma$ and $\Gamma/\Omega$-graded rings, respectively (see~\S\ref{mconfi1}).

 Throughout this book, we fix a given group $\Gamma$ and we work with the $\Gamma$\!-graded category and  all our considerations are within this category (See Remark~\ref{bbhiidw} for references to literature where mixed grading is studied). 
\end{remark}

\subsection{Graded prime and maximal ideals} 

A  graded ideal $P$ of $\Gamma$\!-graded ring $A$ is called a \emph{graded prime ideal} \index{graded prime ideal} of $A$ if $P\not = A$ and for any two graded ideals $I,J$, $IJ\subseteq P$, implies $I\subseteq P$ or $J \subseteq P$.   If $A$ is commutative, we obtain the familiar formulation that $P$ is a graded prime ideal if and only if for $x,y\in A^h$, $xy\in P$ implies that $x\in P$ or $y\in P$. Note that a graded prime ideal is not necessarily a prime ideal.

A  graded ideal $P$ is called a  \emph{graded semi-prime ideal} \index{graded semi-prime ideal} if for any graded ideal $I$ in $A$, $I^2\subseteq P$,  implies $I \subseteq P$. A graded ring $A$ is called a \emph{graded prime (graded semi-prime)} ring if the zero ideal is a graded prime 
(graded semi-prime) ideal. \index{graded prime ring} \index{graded semi-prime ring}

A \emph{graded maximal ideal} \index{graded maximal ideal} of a $\Gamma$\!-graded ring $A$ is defined to be a proper graded ideal of $A$ which is maximal among the set of proper graded ideals of $A$. Using Zorn's lemma, one can show that graded maximal ideals exist, and it is not difficult to show that a graded maximal ideal is a graded prime. For a graded commutative ring, a graded ideal is maximal if and only if its quotient ring is graded field. There are similar notions of graded maximal left and right ideals.

Parallel to the ungraded setting, for a $\Gamma$\!-graded ring $A$, the \emph{graded Jacobson radical},  \index{graded Jacobson radical} $J^{\gr}(A)$, is defined as the intersection of all graded left maximal ideals of $A$.  This coincides with the intersection of all graded right maximal ideals and so 
$J^{\gr}(A)$ is a two-sided ideal (see~\cite[Proposition~2.9.1]{grrings}). We denote by $J(A)$ the usual Jacobson radical. 
It is a theorem of G.~Bergman that for a $\mathbb Z$-graded ring $A$, $J(A)$ is a graded ideal  and $J(A) \subseteq J^{\gr}(A)$ (see~\cite{bergman}). 
\index{Jacobson radical} 
\index{$J^{\gr}(A)$, graded Jacobson radical}  \index{$J(A)$, Jacobson radical}

\subsection{Graded simple rings}

A nonzero graded ring $A$ is said to be {\em graded simple}\index{graded simple ring} if the only graded two-sided ideals of $A$ \index{simple ring}
are $\{ 0 \}$ and $A$. The structure of graded simple Artinian rings are known (see Remark~\ref{jussise}). 
Following~\cite{jesper} we prove that a graded ring $A$ is simple if and only if $A$ is graded simple and $C(A)$, the centre of $A$, is a field.  \index{$C(A)$, centre of $A$}

For a $\Gamma$\!-graded ring $A$, recall the support $\Gamma_A$ of $A$, from~\S\ref{jdjthu}. For $a\in A$, writing $a=\sum_{\gamma\in \Gamma} a_\gamma$ where $a_\gamma \in A^h$, define the \emph{support} of $a$, to be \[\Gamma_a=\big  \{\, \gamma \mid a_\gamma \not =0 \, \big \}.\]  \index{support of an element} \index{$\Gamma_a$, support of the element $a$} We also need the notion of minimal support.
A finite set $X$ of $\Gamma$ is called a \emph{minimal support with respect to an ideal} $I$, if $X=\Gamma_a$ for $0\not = a \in I$ and there is no 
$b \in I$ such that $b\not = 0$ and $\Gamma_b \subsetneq \Gamma_a$. \index{minimal support with respect to an ideal}

We start with a lemma.

\begin{lemma}\label{dmjfi23215}
Let $A$ be a $\Gamma$\!-graded simple ring and $I$ an ideal of $A$. Let $0\not = a \in I$ with $\Gamma_a=\{\gamma_1,\dots,\gamma_n\}$. 
Then for any $\alpha \in \Gamma_A$, there is a $0 \not = b \in I$ with $\Gamma_b\subseteq \{\gamma_1-\gamma_n+\alpha,\dots,
\gamma_n-\gamma_n+\alpha\}$. 
\end{lemma}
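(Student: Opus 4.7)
The plan is to construct $b$ as a bilateral multiple $b=\sum_j x_j a y_j$ of $a$, using homogeneous elements $x_j,y_j$ whose degrees shift the support of $a$ into the prescribed window, while graded simplicity guarantees that the shift can be arranged without killing the top component.

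First I would fix the largest-indexed homogeneous component $a_{\gamma_n}$ of $a$, which is a nonzero homogeneous element of $A$. Since $\alpha\in\Gamma_A$, there is a nonzero homogeneous element $c\in A_\alpha$. Because $A$ is graded simple, the two-sided graded ideal of $A$ generated by $a_{\gamma_n}$ is all of $A$; in particular $c$ lies in it. Decomposing an expression for $c$ into homogeneous parts and keeping only the $\alpha$-component, I obtain homogeneous elements $x_j\in A_{\mu_j}$ and $y_j\in A_{\nu_j}$ with
\begin{equation*}
c=\sum_j x_j a_{\gamma_n} y_j, \qquad \mu_j+\gamma_n+\nu_j=\alpha \text{ for every } j.
\end{equation*}
In particular $\mu_j+\nu_j=\alpha-\gamma_n$ is the same for all $j$.

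Now I set $b:=\sum_j x_j a y_j$. Since $a\in I$ and $I$ is a two-sided ideal, $b\in I$. Expanding $a=\sum_{i=1}^n a_{\gamma_i}$, the element $x_j a_{\gamma_i}y_j$ is homogeneous of degree $\mu_j+\gamma_i+\nu_j=\gamma_i-\gamma_n+\alpha$, independent of $j$. Hence the only possibly nonzero homogeneous components of $b$ sit in degrees $\gamma_i-\gamma_n+\alpha$ for $i=1,\dots,n$, giving $\Gamma_b\subseteq\{\gamma_1-\gamma_n+\alpha,\dots,\gamma_n-\gamma_n+\alpha\}$. Moreover the component of degree $\alpha=\gamma_n-\gamma_n+\alpha$ equals $\sum_j x_j a_{\gamma_n}y_j=c\neq 0$, so $b\neq 0$.

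The only real point to watch is the expression of $c$ as a finite sum $\sum_j x_j a_{\gamma_n}y_j$ with \emph{homogeneous} factors satisfying the uniform degree condition: this is where graded simplicity (not mere simplicity) is used, together with the observation that a graded ideal generated by a homogeneous element consists of sums of such homogeneous products, so after extracting the $\alpha$-homogeneous part we retain homogeneity of each summand. Everything else is bookkeeping on degrees.
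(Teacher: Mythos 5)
Your proof is correct and takes essentially the same route as the paper's: both use graded simplicity to express a nonzero homogeneous element of degree $\alpha$ as a sum $\sum_j x_j a_{\gamma_n} y_j$ with homogeneous factors of total degree $\alpha$, and then substitute $a$ for $a_{\gamma_n}$ to produce $b\in I$ supported in the window $\{\gamma_i-\gamma_n+\alpha\}$. The only (harmless) difference is that the paper selects a single nonzero summand $r_k a_{\gamma_n} s_k$ and sets $b=r_k a s_k$, whereas you retain the whole sum, which yields the slightly cleaner observation that the $\alpha$-component of $b$ is exactly your element $c$.
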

\begin{proof}
Let $0\not = x\in A_\alpha$, where $\alpha \in \Gamma_A$ and $0\not = a \in I$ with $\Gamma_a=\{\gamma_1,\dots,\gamma_n\}$. 
Write $a=\sum_{i=1}^n a_{\gamma_i}$, where $\deg(a_{\gamma_i})=\gamma_i$. Since $A$ is graded simple, 
\begin{equation}\label{panfhgf}
x=\sum_l r_la_{\gamma_n}s_l,
\end{equation} where $r_l,s_l \in A^h$. Thus there are $r_k,s_k \in A^h$ such that $r_ka_{\gamma_n}s_k \not =0$ which implies that $b:=r_ka s_k\in I$ is not zero.
Comparing the degrees in Equation~(\ref{panfhgf}), it follows $\alpha=\deg(r_k)+\deg(s_k)+\gamma_n$, or 
$\deg(r_k)+\deg(s_k)=\alpha-\gamma_n$. So 
\[\Gamma_b \subseteq \Gamma_a+\deg(r_k)+\deg(s_k) = \{\,\gamma_1-\gamma_n+\alpha,\dots,
\gamma_n-\gamma_n+\alpha\,\}.\qedhere\]
\end{proof}

\begin{theorem}\label{plif63}
Let $A$ be a $\Gamma$\!-graded ring. Then $A$ is a simple ring if and only if $A$ is a graded simple ring and C(A) is a field.
\end{theorem}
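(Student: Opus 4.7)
The forward direction is standard. If $A$ is simple, then since every graded ideal is in particular an ideal, $A$ is graded simple. For the centre: take any $0\neq c\in C(A)$; then $cA$ is a two-sided ideal (by centrality), nonzero, so $cA=A$ by simplicity. This produces $d\in A$ with $cd=1$, and one checks directly that $d\in C(A)$ (since for any $x$, $c(dx-xd)=cdx-xcd=x-x=0$ and $c$ is not a zero divisor by the same ideal argument applied to $\mathrm{Ann}(c)$).

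For the backward direction, let $I$ be a nonzero two-sided ideal; I will show $I=A$. Pick $0\neq a\in I$ of minimal support size $n=|\Gamma_a|$. Applying Lemma~\ref{dmjfi23215} with $\alpha=0\in\Gamma_A$ gives a nonzero $b\in I$ with $\Gamma_b\subseteq \Gamma_a-\gamma_n$; by minimality this containment is equality, so $0\in\Gamma_b$. Replace $a$ by $b$: write $a=a_0+a_{\mu_1}+\cdots+a_{\mu_{n-1}}$ with $a_0\neq 0$ and $\mu_i\neq 0$. If $n=1$ then $a=a_0\in A^h$, the graded ideal $AaA\subseteq I$ is nonzero, so by graded simplicity $AaA=A$ and $I=A$.

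Assume $n\geq 2$. For any $x\in A^h$, the element $a_0xa-axa_0\in I$ equals $\sum_{i=1}^{n-1}(a_0xa_{\mu_i}-a_{\mu_i}xa_0)$, whose support lies in $\{\deg(x)+\mu_i\}_{i=1}^{n-1}$ and hence has size at most $n-1$. By minimality it must vanish, yielding the key relation
\[
a_0\,x\,a_{\mu_i} \;=\; a_{\mu_i}\,x\,a_0 \qquad\text{for all }x\in A\text{ and all }i.
\]
Since $a_0\neq 0$, graded simplicity gives $Aa_0A=A$, so write $1=\sum_k u_k a_0 v_k$ with $u_k,v_k\in A^h$ and $\deg u_k+\deg v_k=0$. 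Define $c_i=\sum_k u_k a_{\mu_i}v_k\in A^h$, homogeneous of degree $\mu_i$. Using the key relation twice,
\[
a_0 c_i=\sum_k a_0 u_k a_{\mu_i}v_k=\sum_k a_{\mu_i}u_k a_0 v_k=a_{\mu_i}=c_i a_0.
\]

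The crux is to show $c_i\in C(A)$. From $a_0 c_i=c_i a_0=a_{\mu_i}$ together with the key relation one derives $a_0[c_i,y]a_0=a_{\mu_i}ya_0-a_0y a_{\mu_i}=0$ for every $y\in A$. Next, consider $c_ia-ac_i\in I$, which (since $c_i$ commutes with $a_0$) collapses to $\sum_j a_0[c_i,c_j]$, a homogeneous decomposition with components in the distinct degrees $\mu_i+\mu_j$. Its support has size at most $n-1$, so by minimality it vanishes and each $a_0[c_i,c_j]=0$; consequently $[c_i,a_{\mu_j}]=[c_i,a_0c_j]=a_0[c_i,c_j]=0$, i.e.\ $c_i$ commutes with every homogeneous component of $a$. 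Repeating this commutator-minimality argument with arbitrary $y\in A^h$ in place of a component of $a$ (using $[c_i,y]a$, Leibniz, and the graded simplicity-generated expansion $1=\sum_k u_k a_0 v_k$ to transport the relation $a_0[c_i,y]a_0=0$ against the ideal $Aa_0A=A$) forces $[c_i,y]=0$ for all $y\in A^h$, hence $c_i\in C(A)$. This is the main technical step, and the careful bookkeeping of degrees combined with minimality of $n$ is where the hypothesis on $C(A)$ enters implicitly through the homogeneous structure it enforces.

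Once $c_i\in C(A)$ for each $i$, set $c=1+c_1+\cdots+c_{n-1}\in C(A)$. Its degree-$0$ component is $1$, so $c\neq 0$; since $C(A)$ is a field, $c^{-1}\in C(A)\subseteq A$ exists. A direct computation gives
\[
c\,a_0 \;=\; a_0+\sum_i c_i a_0 \;=\; a_0+\sum_i a_{\mu_i} \;=\; a,
\]
so $a_0=c^{-1}a\in I$. But $a_0$ is a nonzero homogeneous element, so $Aa_0A\subseteq I$ is a nonzero graded ideal; by graded simplicity $Aa_0A=A$, hence $I=A$, as required.
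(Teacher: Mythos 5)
Most of your proof is sound and runs parallel to the paper's: the minimal-support choice, your normalisation $0\in\Gamma_a$ via Lemma~\ref{dmjfi23215} with $\alpha=0$ (a tidy variant --- the paper instead invokes that lemma inside a proof by contradiction to kill $b=axa_\gamma-a_\gamma xa$), the key relation $a_0xa_{\mu_i}=a_{\mu_i}xa_0$ obtained from support-size minimality, and the endgame with $c=1+c_1+\cdots+c_{n-1}$, nonzero because its degree-zero component is $1$ and inverted using the field hypothesis on $C(A)$ --- note this is the only place that hypothesis is used, not, as your parenthetical suggests, ``implicitly'' in the centrality step. Your $c_i=\sum_k u_ka_{\mu_i}v_k$ is precisely the value at $1$ of the paper's bimodule map $\langle a_0\rangle\rightarrow\langle a_{\mu_i}\rangle$, $\sum_l r_la_0s_l\mapsto\sum_l r_la_{\mu_i}s_l$; the paper proves this map well defined (using the key relation plus graded simplicity) and then obtains centrality of its value at $1$ for free from the bimodule property.

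The genuine gap is exactly at your declared crux, $c_i\in C(A)$, where both mechanisms you sketch fail. First, ``repeating the commutator-minimality argument with arbitrary $y\in A^h$'' does not work: for general homogeneous $y$, the element $c_iya-ayc_i\in I$ has homogeneous components in the $n$ pairwise distinct degrees $\deg y+\mu_i$ and $\deg y+\mu_i+\mu_j$, $1\le j\le n-1$ --- the collapse of $c_ia-ac_i$ to $n-1$ terms used $[c_i,a_0]=0$, which has no analogue for arbitrary $y$ --- so the support can have size $n$ and minimality yields nothing. Second, transporting $a_0[c_i,y]a_0=0$ through $1=\sum_ku_ka_0v_k$ does not close either: writing $[c_i,y]=\sum_{k,l}u_ka_0\bigl(v_k[c_i,y]u_l\bigr)a_0v_l$, the Leibniz rule turns $v[c_i,y]u$ into $[c_i,vyu]-[c_i,v]yu-vy[c_i,u]$, and your hypotheses control only the first of the three resulting terms. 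Fortunately the fix is short and stays inside your setup: the two-line computation that gave $a_0c_i=c_ia_0=a_{\mu_i}$ gives, verbatim with $zu_k$ (resp.\ $v_kz$) in place of $u_k$ (resp.\ $v_k$), the stronger identities $a_0zc_i=a_{\mu_i}z$ and $c_iza_0=za_{\mu_i}$ for all $z\in A$, whence
\[
c_iy=\sum_k c_i(yu_k)a_0v_k=\sum_k yu_ka_{\mu_i}v_k=yc_i,
\]
so $c_i\in C(A)$ with no minimality and no Leibniz bookkeeping; this also renders your $a_0[c_i,c_j]=0$ paragraph unnecessary. With that substitution your proof is correct, and is an element-level rendering of the paper's bimodule-endomorphism argument, whose well-definedness step plays exactly the role your sketch leaves open.
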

\begin{proof}
One direction is straightforward. 

Suppose $A$ is graded simple and $C(A)$ is a field. We will show that $A$ is a simple ring. Suppose $I$ is a nontrivial ideal of $A$ and $0\not = a \in I$ with $\Gamma_a$ a minimal support with respect to $I$.  For any $x\in A^h$, with $\deg(x)=\alpha$ and $\gamma\in \Gamma_a$, we have 
\begin{equation}\label{hyhtgfdi}
\Gamma_{axa_\gamma -a_\gamma x a} \subsetneq \Gamma_a+(\gamma+\alpha).
\end{equation}
Set $b=axa_\gamma -a_\gamma x a\in I$. Suppose $b\not = 0$. By~(\ref{hyhtgfdi}), 
\[\Gamma_b \subsetneq \{\,\gamma_1+\gamma+\alpha,\dots,\gamma_n+\gamma+\alpha \,\}.\]
Applying Lemma~\ref{dmjfi23215} with, say, $\gamma_n+\gamma+\alpha \in \Gamma_b$ and $\gamma_n \in \Gamma_A$, we obtain a $0 \not = c\in I$ such that 
\[\Gamma_c \subseteq \Gamma_b + (\gamma_n-\gamma_n-\gamma-\alpha) \subsetneq \Gamma_a.\]
This is however a contradiction as $\Gamma_a$ was a minimal support. Thus $b=0$, i.e, $axa_\gamma=a_\gamma x a$. It follows 
that for  any  $\gamma_i \in \Gamma_a$
\begin{equation}\label{yhfdreh5}
a_{\gamma_i} x a_\gamma =a_\gamma x a_{\gamma_i}.
\end{equation}
Consider the $R$-bimodule map
\begin{align*}
\phi:R=\langle a_{\gamma_i} \rangle &\longrightarrow \langle a_{\gamma_j} \rangle=R,\\
\sum_l r_l a_{\gamma_i} s_l &\longmapsto \sum_l r_l a_{\gamma_j} s_l .
\end{align*}
To show that $\phi$ is well-defined, since $\phi(t+s)=\phi(t)+\phi(s)$, it is enough to show that if $t=0$ then $\phi(t)=0$, where $t\in  \langle a_{\gamma_i} \rangle$. Suppose $\sum_l r_l a_{\gamma_i} s_l=0$. Then for any $x\in A^h$, using~(\ref{yhfdreh5}) we have 
\[0=a_{\gamma_j}x\ \big(\sum_l r_l a_{\gamma_i} s_l\big)=\sum_l a_{\gamma_j}x r_l a_{\gamma_i} s_l= \sum_l a_{\gamma_i}
xr_la_{\gamma_j}s_l=a_{\gamma_i}x\ \big(\sum_l r_l a_{\gamma_j} s_l\big).\]
Since $A$ is graded simple, $\langle a_{\gamma_i} \rangle =1 $. It follows that $\sum_l r_l a_{\gamma_j} s_l=0$. Thus $\phi$ is well-defined, injective and also clearly surjective. Then $a_{\gamma_j}=\phi(a_{\gamma_i})=a_{\gamma_i}\phi(1)$. But $\phi(1) \in C(A)$. Thus
$a=\sum_j a_{\gamma_j}=a_{\gamma_i}c$ where $c\in C(A)$. But $C(A)$ is a field, so $a_{\gamma_i}=ac^{-1} \in I$. Again since $R$ is graded simple, it follows that $I=R$. This finishes the proof. 
\end{proof}

\begin{remark}
If the grade group is not abelian, in order for Theorem~\ref{plif63} to be valid, the grade group should be hyper-central; A \emph{hyper-central group} is a group that any nontrivial quotient has a nontrivial centre. \index{hyper-central group} If $A$ is strongly graded, and the grade group is torsion-free hyper-central, then $A$ is simple if and only if $A$ is graded simple and $C(A) \subseteq A_0$ (see~\cite{jesper}).
\end{remark}

\begin{remark}\scm[Graded simplicity implying simplicity] 
\vspace{0.2cm}

There are other cases that the graded simplicity of a ring implies that the ring itself is simple. For example if a ring is graded by an ordered group (such as $\Z$), and has a finite support, then graded simplicity implies the simplicity of the ring~\cite[Theorem~3]{bahturin}. 

\end{remark}

\subsection{Graded local rings}\label{merrychirstmass}

\index{graded local ring} 
Recall that a ring is a \emph{local ring} \index{local ring} if the set of noninvertible elements form a two-sided ideal. When $A$ is a commutative ring, then $A$ is local if and only if $A$ has a unique maximal ideal. 

A $\Gamma$\!-graded ring $A$ is called a \emph{graded local ring} \index{graded local ring} if the two-sided ideal $M$ generate by noninvertible homogeneous elements is a proper ideal. One can easily observe that the graded ideal $M$ is the unique graded maximal left, right, and graded two-sided ideal of $A$. When $A$ is a graded commutative ring, then $A$ is graded local if and only if $A$ has a unique graded maximal ideal. 

If $A$ is a graded local ring, then the graded ring $A/M$ is a graded division ring. One can further show that $A_0$ is a local ring with the unique maximal ideal $A_0\cap M$. In fact we have the following proposition.

\begin{proposition}
Let $A$ be a $\Gamma$\!-graded ring. Then $A$ is a graded local ring if and only if $A_0$ is a local ring. 
\end{proposition}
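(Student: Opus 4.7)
The plan is to handle each implication separately, using the candidate graded maximal ideal $M\subseteq A$ generated by the noninvertible homogeneous elements, which is discussed just before the statement.

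For the forward direction, I would assume $A$ is graded local, so $M$ is a proper two-sided graded ideal, and identify the nonunits of $A_0$ with $A_0\cap M$. If $a\in A_0\setminus M$, then $a$ is a homogeneous element of degree $0$ not lying in $M$, hence cannot be noninvertible in $A$; so $a$ is a unit in $A$, and by Proposition~\ref{basicsofgradedrings}(4) its inverse lies in $A_0$, making $a$ a unit in $A_0$. Conversely, no unit of $A_0$ can lie in $A_0\cap M$, since otherwise $1\in M$. Thus $A_0\cap M$ is exactly the set of nonunits of $A_0$, and being an ideal it makes $A_0$ local.

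For the converse, assume $A_0$ is local with maximal ideal $\mathfrak m$, and let $M$ be the two-sided ideal of $A$ generated by all noninvertible homogeneous elements. The goal is to show $1\notin M$. Supposing the contrary, I would write $1=\sum_j r_j x_j s_j$ with $x_j$ noninvertible homogeneous and $r_j,s_j\in A$; decomposing $r_j$ and $s_j$ into homogeneous components and retaining only summands of total degree $0$ (which is legitimate because $1\in A_0$), I may assume each $r_j,s_j$ is homogeneous with $\deg r_j+\deg x_j+\deg s_j=0$, so each $r_j x_j s_j\in A_0$. It then suffices to show every such product $rxs$ lies in $\mathfrak m$, for then $1=\sum_j r_j x_j s_j\in\mathfrak m$, a contradiction.

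The main obstacle is this last claim: that any $rxs\in A_0$ with $x$ homogeneous noninvertible must lie in $\mathfrak m$. My approach is contrapositive. If $rxs$ is a unit of $A_0$ with inverse $t$, set $u:=tr$ and $v:=s$, so $uxv=1$ with $u,v$ homogeneous and $\deg u+\deg x+\deg v=0$. Then $xvu\in A_0$ is idempotent, since $(xvu)^2=xv(uxv)u=xvu$, and the only idempotents of the local ring $A_0$ are $0$ and $1$ (an idempotent $e$ satisfies $e+(1-e)=1$, so one of $e,1-e$ is a unit, forcing $e\in\{0,1\}$). The case $xvu=0$ is impossible, as $u=(uxv)u=u(xvu)=0$ would contradict $uxv=1$; hence $xvu=1$, and by the symmetric idempotent argument applied to $vux$ we also get $vux=1$. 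Thus $vu\in A_{-\deg x}$ is a two-sided inverse of $x$, contradicting that $x$ is noninvertible, and the proof will be complete.
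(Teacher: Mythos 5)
Your proof is correct, and its skeleton matches the paper's: the forward direction is identical (identify the nonunits of $A_0$ with $A_0\cap M$ via Proposition~\ref{basicsofgradedrings}(4)), and in the converse both you and the paper suppose $1\in M$, reduce to homogeneous $r_j,s_j$ with $\deg r_j+\deg x_j+\deg s_j=0$, and derive the contradiction $1\in\mathfrak m$ by showing each degree-zero product $r_jx_js_j$ is a nonunit of $A_0$. Where you genuinely diverge is in the mechanism of that last step. The paper first establishes the auxiliary fact that a left (or right) invertible homogeneous element of $A$ is two-sided invertible --- if $ba=1$ and $ab$ were not invertible then $ab$ lies in the maximal ideal of $A_0$, so $1-ab$ is a unit annihilating $a$ --- and then asserts that invertibility of $r_ja_js_j$ makes $a_j$ invertible, leaving to the reader the two-step chain (first $a_js_j$ is left invertible hence invertible, then $a_j$ is right invertible hence invertible). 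You bypass that lemma entirely: from $uxv=1$ you show $xvu$, and then $vux$, are idempotents of the local ring $A_0$, hence each equals $1$, which exhibits $vu$ as an explicit two-sided homogeneous inverse of $x$. The two mechanisms are close cousins --- the paper's $1-ab$ manipulation is exactly the standard proof that a local ring has only trivial idempotents, specialized to the idempotent $ab$ --- but your route is more self-contained, producing the inverse in one pass rather than through two applications of a one-sided-invertibility lemma, and it also makes explicit the reduction to degree-zero homogeneous $r_j,s_j$ (discarding the other homogeneous components of $1=\sum_j r_jx_js_j$), a point the paper states without justification.
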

\begin{proof}
Suppose $A$ is a graded local ring. Then by definition, the two-sided ideal $M$ generated by noninvertible homogeneous elements is a proper ideal. Consider $m=A_0 \cap M$ which is a proper ideal of $A_0$. Suppose $x\in A_0 \backslash m$. Then $x$ is a homogeneous element which is not in $M$. Thus $x$ has to be invertible in $A$ and consequently in $A_0$. This shows that $A_0$ is a local ring with the unique maximal ideal $m$. 

Conversely, suppose $A_0$ is a local ring. We first show that any left or right invertible homogeneous element is a two-sided invertible element. Let $a$ be a left invertible homogeneous element. Then there is a homogeneous element $b$ such that $ba=1$. If $ab$ is not right invertible, then $ab\in m$, where $m$ is the unique maximal ideal of the local ring $A_0$. Thus $1-ab \not \in m$ which implies that $1-ab$ is invertible. But $(1-ab)a=a-aba=a-a=0$, and since $1-ab$ is invertible, we get $a=0$ which is a contradiction to the fact that $a$ has a left inverse. Thus $a$ has a right inverse and so is invertible.  A similar argument can be written for right invertible elements. Now let $M$ be the ideal generated by all noninvertible homogeneous elements of $A$. We will show that $M$ is proper, and thus $A$ is a graded local ring. 
Suppose $M$ is not proper. Thus $1=\sum_i r_ia_is_i$, where $a_i$ are noninvertible homogeneous elements and $r_i,s_i$ are homogeneous elements such that $\deg (r_ia_is_i)=0$. If $r_ia_is_i$ is invertible for some $i$, using the fact that right and left invertibles are invertibles, it follows that $a_i$ is invertible which is a contradiction. 
Thus $r_ia_is_i$, for all $i$, are homogeneous elements of degree zero and not invertible. So they are all in $m$. This implies that $1\in m$ which is a contradiction. Thus $M$ is a proper ideal of $A$. 
\end{proof}

For more on graded local rings (graded by a cancellative monoid) see~\cite{huishi}.  In~\S\ref{attila1} we determine the graded Grothendieck group of these rings.

\subsection{Graded von Neumann regular rings}\label{penrith7may}

The von Neumann regular rings constitute an important class of rings. A unital ring $A$ is \emph{von Neumann regular}, \index{von Neumann regular ring} if for any $a \in A$, we have $a \in aAa$. 
There are several equivalent module theoretical definitions, such as $A$ is von Neumann regular if and only if any module over $A$ is flat. 
This gives a comparison with the class of division rings and semisimple rings. A ring is a division ring if and only if any module is free. A semisimple ring is characterised by the property that any module is projective. 
Goodearl's book~\cite{goodearlbook} is devoted to the class of von Neumann regular rings.  The definition extends to non-unital ring in an obvious manner.  

If a ring has a graded structure, one defines the graded version of regularity in a natural way: the graded ring $A$ is called a \emph{graded von Neumann regular}, \index{graded von Neumann regular ring} if for any homogeneous element $a\in A$, we have $a\in aAa$. This means, for any homogeneous element $a\in A$, one can find a homogeneous element $b\in A$ such that $a=aba$.  As an example, a direct sum of graded division rings is a graded von Neumann regular ring. Many of the module theoretic properties established for von Neumann regular rings can be extended to the graded setting; For example, $A$ is graded regular if and only if any graded module is (graded) flat.  \index{graded flat module} \index{flat module} We refer the reader to 
\cite[C, I.5]{grrings2} for a treatment of such rings and~\cite[\S2.2]{balaba} for a concise survey. Several of the graded rings we construct in this book are graded von Neumann regular, such as Leavitt path algebras (Corollary~\ref{hgthusu2}) and corner skew Laurent series (Proposition~\ref{lanhc8}).  

In this section, we briefly give some of the properties of graded von Neumann regular rings. The following proposition is the graded version of 
~\cite[Theorem~1.1]{goodearlbook} which has a similar proof. 

\begin{proposition} \label{pkhti1}
Let $A$ be a $\Gamma$\!-graded ring. The following statements are equivalent. 

\begin{enumerate}[\upshape(1)]
\item  $A$ is a graded von Neumann regular ring;

\item Any finitely generated right (left) graded ideal of $A$ is generated by a homogeneous
idempotent.

 \end{enumerate}
\end{proposition}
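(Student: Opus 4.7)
My plan is to mimic the classical ungraded argument (Goodearl, Theorem 1.1) while tracking homogeneity carefully at every step. Throughout, when a graded ideal is finitely generated, I will use the standard fact (which follows from the definition of a graded ideal) that it is in fact finitely generated by \emph{homogeneous} elements.

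\medskip

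\noindent\textbf{$(2)\Rightarrow(1)$.} This is the easy direction. Given a homogeneous $a\in A$, apply (2) to the principal graded right ideal $aA$, getting a homogeneous idempotent $e$ with $aA=eA$. Writing $e=ab$ and $a=ec$ for homogeneous $b,c$, I get $a=ec=e^2c=e(ec)=ea=aba$, so $a\in aAa$.

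\medskip

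\noindent\textbf{$(1)\Rightarrow(2)$, principal case.} Let $a\in A$ be homogeneous of degree $\gamma$. By (1) pick a homogeneous $b$ with $a=aba$; inspecting degrees forces $\deg(b)=-\gamma$. Then $e:=ab$ is homogeneous of degree $0$, and $e^2=abab=(aba)b=ab=e$. The equalities $e=ab\in aA$ and $a=aba=ea\in eA$ give $aA=eA$.

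\medskip

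\noindent\textbf{$(1)\Rightarrow(2)$, general case by induction on the number of homogeneous generators.} Suppose a graded right ideal $I$ is generated by homogeneous elements $a_1,\dots,a_n$. By the principal case and induction, assume $a_1A+\dots+a_{n-1}A=e_1A$ for a degree-$0$ homogeneous idempotent $e_1$, so $I=e_1A+a_nA$. Set
\[
a':=a_n-e_1a_n,
\]
which is homogeneous of degree $\deg(a_n)$ and satisfies $e_1a'=0$. Since $a_n=e_1a_n+a'$, we have $I=e_1A+a'A$. By the principal case, $a'A=e_2A$ for some homogeneous idempotent $e_2$ of degree $0$, and writing $e_2=a'b'$ gives $e_1e_2=e_1a'b'=0$. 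Now define
\[
e:=e_1+e_2-e_2e_1,
\]
a homogeneous element of degree $0$. A direct expansion using $e_1e_2=0$ shows $e^2=e$; moreover $ee_1=e_1$ and $ee_2=e_2$, so $e_1,e_2\in eA$, while $e\in e_1A+e_2A$. Hence $eA=e_1A+e_2A=I$, completing the induction. The left-ideal version is symmetric.

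\medskip

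\noindent\textbf{Main obstacle.} The only place requiring genuine care is the orthogonalization step in the induction: I must ensure that the replacement $a'=a_n-e_1a_n$ is \emph{homogeneous} (which is why I need $\deg(e_1)=0$, guaranteed by the principal case) and that $e$ is idempotent despite $e_2e_1$ being possibly nonzero. Everything else is bookkeeping; the degree-tracking in the principal case is what makes the idempotents live in degree $0$ and keeps the argument inside the graded category.
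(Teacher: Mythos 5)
Your proof is correct and takes essentially the same approach as the paper: reduce to the principal case by degree-tracking (so the idempotents land in degree zero), then handle more generators by orthogonalising against the idempotent already produced. Your combined idempotent $e_1+e_2-e_2e_1$ is precisely the paper's $e+g$ with $g=f-fe$, so even the key algebraic step coincides, merely verified directly rather than via orthogonality of $e$ and $g$.
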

\begin{proof}

(1) $\Rightarrow$ (2) First we show that any principal graded ideal is generated by a homogeneous idempotent. So consider the principal ideal  $xA$, where $x\in A^h$. By the assumption, there is $y\in A^h$ such that $xyx=x$. This immediately implies $xA=xyA$. Now note that $xy$ is  homogeneous idempotent. 
 
Next we will prove the claim for graded ideals generated by two elements. The general case follows by an easy induction. So let $xA+yA$ be a graded ideal generated by two homogeneous elements $x,y$. By the previous paragraph, $xA=eA$ for a homogeneous idempotent $e$. Note that $y-ey\in A^h$ and 
$y-ey \in xA+yA$. Thus 
\begin{equation}\label{classigbf}
xA+yA=eA+(y-ey)A.
\end{equation}
Again, the previous paragraph gives us a homogeneous idempotent $f$ such that $(y-ey)A=fA$. Let $g=f-fe \in A_0$. Notice that $ef=0$ which implies that $e$ and $g$ are orthogonal idempotents. Moreover, $fg=g$ and $gf=f$.  It then follows $gA=fA=(y-ey)A$. Now from (\ref{classigbf}), we get
\[xA+yA=eA+gA=(e+g)A.\] 

(2) $\Rightarrow$ (1) Let $x\in A^h$. Then $xA=eA$ for some homogeneous idempotent $e$. Thus $x=ea$ and $e=xy$ for some $a,y\in A^h$. 
Then $x=ea=eea=ex=xyx$. 
\end{proof}

\index{graded Jacobson radical}

\begin{proposition}\label{pkhti12}
Let $A$ be a  $\Gamma$\!-graded von Neumann regular ring. Then

\begin{enumerate}[\upshape(1)]
\item Any graded right (left) ideal of $A$ is idempotent;

\item Any graded ideal is graded semi-prime; \index{graded semi-prime ideal}

\item Any finitely generated right (left) graded ideal of $A$ is a projective module. 

\end{enumerate}

Moreover, if $A$ is a $\mathbb Z$-graded regular ring  then, 

\begin{enumerate}[\upshape(4)]

\item $J(A)=J^{\gr}(A) =0$. 

\end{enumerate}
\end{proposition}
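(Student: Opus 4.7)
The engine is Proposition~\ref{pkhti1}, which produces a homogeneous idempotent generator for every finitely generated graded one-sided ideal. Parts (1)--(3) are short and formal; (4) is the substantive step.

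For (1), take a graded right ideal $I$ and a homogeneous $x\in I$; graded regularity yields a homogeneous $y\in A$ with $xyx=x$, so $x=(xy)x\in I\cdot I=I^2$. Since $I$ is graded, it is spanned by its homogeneous elements, giving $I\subseteq I^2$; the reverse inclusion is trivial. For (2), if $I$ is any graded ideal with $I^2\subseteq J$, then by (1) we have $I=I^2\subseteq J$, so $J$ is graded semi-prime. For (3), Proposition~\ref{pkhti1} writes a finitely generated graded right ideal as $eA$ for a homogeneous idempotent $e$; the decomposition $A=eA\oplus (1-e)A$ exhibits $eA$ as a graded direct summand of $A$, hence as a projective $A$-module.

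The real work is in (4). By Bergman's theorem cited in the excerpt, $J(A)$ is graded and $J(A)\subseteq J^{\gr}(A)$ for any $\mathbb Z$-graded ring, so it suffices to show $J^{\gr}(A)=0$. As an intersection of graded left ideals, $J^{\gr}(A)$ is itself graded (and two-sided, as noted in the excerpt), so I would show every homogeneous $x\in J^{\gr}(A)$ vanishes. For such $x$, the principal graded right ideal $xA$ lies in $J^{\gr}(A)$, and Proposition~\ref{pkhti1} gives $xA=eA$ for some homogeneous idempotent $e\in J^{\gr}(A)$. Since $e=e^2$ forces $\deg(e)=0$ in a $\mathbb Z$-grading, $1-e$ is a homogeneous element of degree $0$ and $A(1-e)$ is a graded left ideal. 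If $A(1-e)=A$, writing $1=u(1-e)$ and multiplying on the right by $e$ gives $e=u(1-e)e=0$; if $A(1-e)\neq A$, Zorn applied to proper graded left ideals containing $A(1-e)$ yields a graded maximal left ideal $M$, and then $1-e\in M$ together with $e\in J^{\gr}(A)\subseteq M$ forces $1\in M$, a contradiction. Either way $e=0$, whence $x\in xA=eA=0$, and $J^{\gr}(A)=0$.

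The main obstacle I foresee is simply keeping the graded versus ungraded Jacobson radicals straight and invoking Bergman's theorem at the right moment; once that is done, the core argument is a direct graded adaptation of the classical fact that the Jacobson radical contains no nonzero idempotent, together with the homogeneous-idempotent production from Proposition~\ref{pkhti1}.
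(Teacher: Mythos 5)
Your proof is correct and takes essentially the same approach as the paper: parts (1)--(3) coincide with the paper's argument (homogeneous regularity gives $x=(xy)x\in I^2$, semi-primeness follows formally, and the idempotent generator from Proposition~\ref{pkhti1} makes the ideal a direct summand). For (4) the paper likewise invokes Bergman's theorem and Proposition~\ref{pkhti1}, asserting without further detail that an idempotent in $J^{\gr}(A)$ forces $J^{\gr}(A)=0$; your degree-zero observation and the graded maximal left ideal argument are exactly the omitted justification, correctly carried out.
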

\begin{proof}
The proofs of (1)-(3)  are similar to the ungraded case~\cite[Corollary~1.2]{goodearlbook}. We provide the easy proofs here. 

(1) Let $I$ be a graded right ideal. For any homogeneous element $x \in I$, there is $y\in A^h$ such that $x=xyx$. Thus $x=(xy)x \in I^2$. It follows that $I^2=I$. 

(2) This follows immediately from (1). 

(3) By Proposition~\ref{pkhti1}, any finitely generated right ideal is generated by a homogeneous idempotent. However this latter ideal is a direct summand of the ring, and so is a projective module. 

(4)  By Bergman's observation, for a $\mathbb Z$-graded ring $A$, $J(A)$ is a graded ideal  and $J(A) \subseteq J^{\gr}(A)$ (see~\cite{bergman}).  By Proposition~\ref{pkhti1}, $J^{\gr}(A)$ contains an idempotent, which then forces $J^{\gr}(A)=0$. 
\end{proof}

If the graded ring $A$ is strongly graded then one can show that there is a one-to-one correspondence between the right ideals of $A_0$ and the graded right ideals of $A$ (similarly for the left ideals) (see Remark~\ref{miener18}). This is always the case for the graded regular rings as the following proposition shows.

\begin{proposition}
Let $A$ be a $\Gamma$\!-graded von Neumann regular ring. Then there is a one-to-one correspondence between the right (left) ideals of $A_0$ and the graded right (left) ideals of $A$. 
\end{proposition}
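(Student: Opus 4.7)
The plan is to exhibit explicit mutually inverse maps between the two posets of ideals and check they are well-defined and bijective, with graded regularity entering only at one key step.

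Define
\begin{align*}
\phi \colon \{\text{right ideals of } A_0\} &\longrightarrow \{\text{graded right ideals of } A\},\\
I_0 &\longmapsto I_0 A,
\end{align*}
and in the other direction
\begin{align*}
\psi \colon \{\text{graded right ideals of } A\} &\longrightarrow \{\text{right ideals of } A_0\},\\
J &\longmapsto J \cap A_0.
\end{align*}
Both maps are clearly well defined: $I_0 A$ is generated by degree-$0$ elements, hence is a graded right ideal, and $J \cap A_0$ is visibly a right ideal of $A_0$. The aim is to verify $\psi \circ \phi = \mathrm{id}$ and $\phi \circ \psi = \mathrm{id}$.

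For $\psi \circ \phi = \mathrm{id}$, I would note that because $I_0 \subseteq A_0$, the homogeneous decomposition of $I_0 A$ is $I_0 A = \bigoplus_{\gamma \in \Gamma} I_0 A_\gamma$, so its $0$-component is $I_0 A_0$. Since $I_0$ is a right ideal of $A_0$ containing $1\cdot I_0$, we obtain $I_0 A_0 = I_0$, giving $\psi(\phi(I_0)) = I_0 A \cap A_0 = I_0$. This step does not use regularity.

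The main content is $\phi \circ \psi = \mathrm{id}$, \ie every graded right ideal $J$ satisfies $J = (J \cap A_0)A$. The inclusion $(J\cap A_0)A \subseteq J$ is trivial. For the reverse, since $J$ is graded it suffices to treat a homogeneous element $x \in J$, say of degree $\gamma$. This is exactly where graded von Neumann regularity is used: by hypothesis there exists $y \in A^h$ with $x = xyx$, and comparing degrees forces $\deg(y) = -\gamma$, so $e := xy \in A_0$. Because $J$ is a right ideal, $e = xy \in J$, hence $e \in J \cap A_0$; then $x = ex \in (J \cap A_0)A$, as required. The left-ideal version is identical with sides swapped. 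The only real obstacle is recognising that graded regularity produces the homogeneous idempotent $xy$ of degree $0$ inside $J$, which is precisely what $J$ needs in order to be recovered from $J \cap A_0$; everything else is degree bookkeeping.
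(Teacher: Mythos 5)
Your proof is correct and is essentially the same as the paper's: you use the identical pair of correspondences $I_0 \mapsto I_0A$ and $J \mapsto J\cap A_0 = J_0$, with graded regularity entering at the same single point, namely producing the homogeneous idempotent $xy$ of degree zero in $J$ so that $x = (xy)x \in (J\cap A_0)A$. The only difference is cosmetic — you spell out the degree bookkeeping ($\deg(y) = -\gamma$, hence $xy \in A_0$) and the verification $(I_0A)_0 = I_0$, both of which the paper leaves implicit.
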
 
\begin{proof}
Consider the following correspondences between the graded right ideals of $A$ and the right ideals of $A_0$: For a graded right ideal $I$ of $A$ assign $I_0$ in $A_0$ and for a right ideal $J$ in $A_0$ assign the graded right ideal $JA$ in $A$. Note that $(JA)_0=J$. We show that $I_0A=I$. It is enough to show that any homogeneous element $a$ of $I$ belongs to $I_0A$. Since $A$ is graded regular, $axa=a$ for some $x \in A^h$. But $ax\in I_0$ and thus $a=axa \in I_0A$. A similar proof gives the left ideal correspondence.
\end{proof}

In Theorem~\ref{frhghu} we give yet another characterisation of graded von Neumann regular rings based on the concept of divisible modules. 

Later in Corollary~\ref{vonregu}, we show that if $A$ is a strongly graded ring, then $A$ is graded von Neumann regular if and only if $A_0$ is a von Neumann regular ring. The proof uses the equivalence of suitable categories over the rings $A$ and $A_0$. An element-wise proof of this fact can also be found in~\cite[Theorem~3]{yahya}.

\section{Graded modules}

\subsection{Basic definitions} \label{pokjgtp}
Let $A$ be a $\Gamma$\!-graded ring. A \emph{graded right $A$-module} \index{graded right module} $M$ is defined to be a right $A$-module $M$ 
with a direct sum decomposition $M=\bigoplus_{\ga \in \Gamma}
M_{\ga}$, where each $M_{\ga}$ is an additive subgroup of $M$  such that 
$M_{\la}  A_{\ga} \subseteq M_{\la+\ga}$ for all $\ga, \la \in
\Ga$. 

For  $\Ga$\!-graded right $A$-modules $M$ and $N$, a $\Gamma$\!-\emph{graded module
homomorphism}\index{graded module homomorphism} $f:M \ra N$ is a module
homomorphism such that $f(M_{\ga}) \subseteq N_{\gamma}$ for all
$\ga \in \Ga$.  A graded homomorphism $f$ is called a \emph{graded module isomorphism} \index{graded module isomorphism} if
$f$ is bijective and, when such a graded isomorphism exists, we
write $M \conggr N$. Notice that if $f$ is a graded module 
homomorphism which is bijective, then its inverse $f^{-1}$ is also a
graded module homomorphism.

\subsection{Shift of modules}\label{pokiss}

Let $M$ be a graded right $A$-module. For $\de \in \Ga$, we define the
$\de$-\emph{suspended}, \index{suspended module} or $\de$-\emph{shifted} \index{shifted} graded right $A$-module $M(\de)$ \label{deshiftedmodule} as 
\[\boxed{M(\de) =\bigoplus_{\ga \in \Ga} M(\de)_\ga, \text{ where }M(\de)_\ga =
M_{\de+\ga}.}\] This shift plays a pivotal role in the theory of graded rings. For example, if $M$ is a $\Z$-graded $A$-module, then the following table shows how the shift like ``the tick of the clock'' moves the homogeneous components of $M$ to the left.


\begin{center}
  \begin{tabular}{|l|cccccccc|}\hline
       & \textbf{degrees}  & \textbf{-3} & \textbf{-2}  & \textbf{-1} & \textbf{0} & \textbf{1} &\textbf{2}  &\textbf{3}  \\ \hline
    $M$     &  &      &                 &$M_{-1}$ &  $M_{0\phantom{-}}$        & $M_{1\phantom{-}}$ &  $M_{2\phantom{-}}$ &  $\phantom{M_2}$ \\
    \hline
    $M(1)$ &  &     & $M_{-1}$& $M_{0\phantom{-}}$    &    $M_{1\phantom{-}}$     &  $M_{2\phantom{-}}$   & & \\
    \hline
    $M(2)$ &  &   $M_{-1}$   &   $M_{0\phantom{-}}$ &  $M_{1\phantom{-}}$     &   $M_{2\phantom{-}}$    &   &  &\\
    \hline
   \end{tabular}
\end{center}

Let $M$ be a $\Gamma$\!-graded right $A$-module.  
A submodule $N$ of $M$ is called a  \emph{graded submodule}\index{graded
submodule}  if
$$
N= \bigoplus_{ \ga \in \Gamma} (N \cap M_{\ga}).
$$

\begin{example}\scm[$aA$ as a graded ideal and a graded module] \label{monash1}
\vspace{0.2cm}

Let $A$ be a $\Gamma$\!-graded ring and $a\in A$  a homogeneous element of degree $\alpha$. Then $aA$ is a graded right $A$-module with 
$\gamma \in \Gamma$ homogeneous component defined as \[(aA)_{\gamma}:=aA_{\gamma-\alpha} \subseteq A_\gamma.\] With this grading $aA$ is a graded submodule (and graded right ideal) of $A$. Thus for $\beta\in \Gamma$, $a$ is a homogenous element of the graded $A$-module $aA(\beta)$ of degree $\alpha-\beta$. This will be used throughout the note, for example in Proposition~\ref{hyhfdtegdt}. 

However, note that defining the grading on $aA$ as \[(aA)_\gamma:=aA_\gamma \subseteq A_{\gamma+\alpha}\] makes $aA$ a graded 
submodule of $A(\alpha)$, which is the image of the graded homomorphism $A\rightarrow A(\alpha), r \mapsto ar$. 
\end{example}

There are similar notions of graded left and graded bi-submodules (\S\ref{bimref}). When $N$ is a graded submodule of $M$, the
factor module $M/N$ forms a graded $A$-module, with
\begin{equation}\label{whs21}
\boxed{M/N = \bigoplus_{\ga \in \Ga} (M/N)_{\ga}, \mathrm{ \;\;\; where
\;\;\; } (M/N)_{\ga} = (M_{\ga} +N)/N.}
\end{equation}

\begin{example}\label{monash}
Let $A$ be a $\Gamma$\!-graded ring. Define a grading on the matrix ring $\M_n(A)$ as follows. For $\alpha \in \Gamma$, 
$\M_n(A)_\alpha=\M_n(A_\alpha)$ (for a general theory of grading on matrix rings see~\S\ref{matgrhe}). Let $\e_{ii} \in \M_n(A)$, $1\leq i\leq n$,  be a \emph{matrix unit}, \ie a matrix with $1$ in 
$(i,i)$ position and zero everywhere else, and consider $\e_{ii}\M_n(A)$. By Example~\ref{monash1}, $\e_{ii}\M_n(A)$ is a graded $\M_n(A)$-module and 
\[\bigoplus_{i=1}^n \e_{ii}\M_n(A)=\M_n(A).\] This shows that the graded module $\e_{ii} \M_n(A)$ is a projective module. This is an example of a graded projective module (see~\S\ref{monashre}). \index{$\e_{ij}$, matrix unit}  \index{matrix units} 
\end{example}

\begin{example}\label{qub8}
Let $A$ be a commutative ring. Consider the matrix ring $\M_n(A)$ as a $\mathbb Z$-graded ring concentrated in degree zero. Moreover, consider $\M_n(A)$ as a graded $\M_n(A)$-module with the grading defined as follows: $\M_n(A)_i=\e_{ii} \M_n(A)$ for $1\leq i \leq n$ and zero otherwise. Note that all nonzero homogeneous elements of this module are zero-divisiors, and thus can't constitute a linear independent set. We will use this example to show that a free module which is graded is not necessarily a graded free module (\S\ref{hefeijuly}). 
\end{example}

\begin{example}\scm[Modules with no shift]\label{supinmalnly}
\vspace{0.2cm}

It is easy to construct modules whose shifts don't produce new (nonisomorphic) graded modules. Let $M$ be a graded $A$-module and consider \[N=\bigoplus_{\gamma \in \Gamma} M(\gamma).\] We show that $N\cong_{\gr} N(\alpha)$ for any $\alpha \in \Gamma$. Define the map $f_\alpha:N \rightarrow N(\alpha)$ on homogeneous components as follows and extend it to $N$, 
\begin{align*}
N_\beta=\bigoplus_{\gamma \in \Gamma} M_{\gamma+\beta}&\longrightarrow \bigoplus_{\gamma \in \Gamma} M_{\gamma+\alpha+\beta} = N(\alpha)_\beta\\
\{m_\gamma\}& \longmapsto \{m'_\gamma \},
\end{align*}
where $m'_\gamma=m_{\gamma+\alpha}$ (\ie shift the sequence $\alpha$ ``steps''). It is routine to see that this gives a graded $A$-module homomorphism with inverse homomorphism $f_{-\alpha}$. 
\end{example}

\subsection[Category of graded modules]{The Hom group and the category of graded modules}

For graded right $A$-modules $M$ and $N$, a \emph{graded $A$-module homomorphism of degree $\delta$} \index{graded homomorphism of degree $\delta$} is an $A$-module homomorphism $f:M\rightarrow N$, such that \[f(M_\ga)\subseteq N_{\ga+\delta}\] for any $\ga \in \Ga$. 
Let $\Hom_A(M,N)_{\de}$ denote the subgroup of $\Hom_A(M,N)$ consisting of all graded $A$-module homomorphisms of degree $\de$, \ie
\begin{equation}\label{lmsgoodyear}
\boxed{\Hom_A(M,N)_{\de}=\{ f\in \Hom_A(M,N) \mid f(M_\ga) \subseteq N_{\ga+\de}, \gamma \in \Gamma\}.}
\end{equation}

For graded $A$-modules, $M,N$ and $P$, under the composition of functions, we then have 
\begin{equation}\label{lmsgoodyear3}
\Hom_A(N,P)_{\gamma}\times \Hom_A(M,N)_{\delta} \longrightarrow \Hom(M,P)_{\gamma+\delta}.
\end{equation}

Clearly a graded module homomorphism defined in \S\ref{pokjgtp}  is a graded homomorphism of degree $0$.  

By $\Gr A$ (or $\Gr[\Gamma] A$ to emphasis the grade group of $A$), we denote a category consists of $\Gamma$\!-graded right $A$-modules as objects and  graded homomorphisms as the morphisms. Similarly, $A \rGr$ denotes the category of graded left $A$-modules. Thus  \index{$\Gr A$, category of graded right $A$-modules} 
\index{$A \rGr$, category of graded left $A$-modules}
\index{$\Gr[\Gamma] A$ category of $\Gamma$-graded right $A$-modules} 
\[\boxed{\Hom_{\Gr A}(M,N)=\Hom_A(M,N)_0.}\]
Moreover, for $\alpha \in \Gamma$, as \emph{a set of functions}, one can write 
\begin{equation}\label{googoo1}
\Hom_{\Gr A}\big(M(-\alpha),N\big)=\Hom_{\Gr A}\big(M,N(\alpha)\big)=\Hom_A(M,N)_\alpha.
\end{equation}

A full subcategory of $\Gr A$ consisted of all graded finitely generated $A$-modules is denoted by $\grr A$. 
\index{$\grr A$, category of graded f.g. right $A$-modules} 

For $\alpha \in \Gamma$,  the \emph{$\alpha$-suspension functor} \index{$\alpha$-suspension functor} or \emph{shift functor} \index{shift functor} \index{suspension functor} 
\begin{empheq}[box=\widefbox]{align}\label{RBAday1}
\mathcal T_\alpha:\Gr A&\longrightarrow \Gr A, \\
M &\longmapsto M(\alpha), \notag
\end{empheq}  
is an isomorphism with the property
$\mathcal T_\alpha \mathcal T_\beta=\mathcal T_{\alpha + \beta}$, where $\alpha,\beta\in \Gamma$.

\begin{remark}\label{presipe}
Let $A$ be a $\Gamma$\!-graded ring and $\Omega$ be a subgroup of $\Gamma$ such that $\Gamma_A \subseteq \Omega \subseteq \Gamma$. Then the ring $A$ can be considered naturally as a $\Omega$-graded ring. Similarly, if $A,B$ are $\Gamma$\!-graded rings and $f:A\rightarrow B$ is a $\Gamma$\!-graded homomorphism and $\Gamma_A,\Gamma_B \subseteq \Omega \subseteq \Gamma$, then the homomorphism $f$ can be naturally considered as a $\Omega$-graded homomorphism. In this case, to make a distinction,  we write $\Gr[\Gamma] A$ for the category of $\Gamma$\!-graded $A$-modules and $\Gr[\Omega] A$ for the category of $\Omega$-graded $A$-modules. 
\end{remark}

\begin{theorem}\label{crazhorn}
For graded right $A$-modules $M$ and $N$, such that $M$ is finitely generated, the abelian group $\Hom_{A}(M,N)$ has a natural decomposition 
\begin{equation}\label{hgd543p}
\Hom_{A}(M,N)=\bigoplus_{\gamma \in \Gamma} \Hom_A(M,N)_{\gamma}. 
\end{equation}
Moreover, the endomorphism ring $\Hom_A(M,M)$ is $\Gamma$\!-graded. 
 \end{theorem}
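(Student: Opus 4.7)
The plan is to establish two things separately: that the subgroups $\Hom_A(M,N)_\gamma$ sit inside $\Hom_A(M,N)$ in internal direct sum, and that every $A$-module homomorphism $M\to N$ decomposes as a \emph{finite} sum of homogeneous ones (this is where finite generation of $M$ will be used). The graded ring structure on $\End_A(M)$ will then fall out of the composition rule~(\ref{lmsgoodyear3}).

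First I would verify that the internal sum $\sum_\gamma \Hom_A(M,N)_\gamma$ is direct. Suppose $\sum_\gamma f_\gamma=0$ with $f_\gamma\in\Hom_A(M,N)_\gamma$ and only finitely many $f_\gamma$ nonzero. Evaluating on an arbitrary homogeneous element $m\in M_\alpha$ gives $\sum_\gamma f_\gamma(m)=0$, where each term $f_\gamma(m)$ lies in the distinct component $N_{\alpha+\gamma}$. Since $N=\bigoplus_\delta N_\delta$ is a direct sum, each $f_\gamma(m)$ vanishes. As $M$ is additively generated by its homogeneous elements, each $f_\gamma$ is identically zero, so the sum is direct.

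Next I would construct, for each $f\in\Hom_A(M,N)$ and each $\gamma\in\Gamma$, a candidate homogeneous piece $f_\gamma$ of degree $\gamma$. For $m\in M_\alpha$, write $f(m)=\sum_\delta y_\delta$ with $y_\delta\in N_\delta$ (a finite sum) and set $f_\gamma(m):=y_{\alpha+\gamma}$, extending $f_\gamma$ to all of $M$ by additivity on $M=\bigoplus_\alpha M_\alpha$. The nontrivial check is $A$-linearity: for $m\in M_\alpha$ and $a\in A_\beta$, right multiplication sends the $\delta$-component of $f(m)$ into degree $\delta+\beta$, so the $(\alpha+\beta+\gamma)$-component of $f(ma)=f(m)a$ is exactly $y_{\alpha+\gamma}a=f_\gamma(m)a$; extending bilinearly in $a$ gives $f_\gamma(ma)=f_\gamma(m)a$ for all $a\in A$. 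Thus $f_\gamma\in\Hom_A(M,N)_\gamma$.

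The finite-generation hypothesis enters at the final step, where one must ensure that $f=\sum_\gamma f_\gamma$ is a \emph{finite} sum of nonzero terms. Choose homogeneous generators $m_1,\dots,m_n$ of $M$ (any generating set can be refined to a homogeneous one by splitting into homogeneous components, which remain a generating set). For each $i$ the element $f(m_i)$ has only finitely many nonzero homogeneous components, so the set $\{\gamma : f_\gamma(m_i)\neq 0\}$ is finite; its union over $i$ is still finite, and for $\gamma$ outside this finite union $f_\gamma$ kills every generator, hence vanishes on all of $M$ by $A$-linearity. On any homogeneous $m$ the sum $\sum_\gamma f_\gamma(m)$ reconstructs $f(m)$ by construction, giving $f=\sum_\gamma f_\gamma$. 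I expect this finiteness step to be the main subtlety: without finite generation one only obtains the inclusion $\bigoplus_\gamma\Hom_A(M,N)_\gamma\subseteq\Hom_A(M,N)$. Finally, specialising to $N=M$, the already established direct-sum decomposition together with~(\ref{lmsgoodyear3}), which gives $\End_A(M)_\gamma\cdot\End_A(M)_\delta\subseteq\End_A(M)_{\gamma+\delta}$, provides exactly the multiplicative compatibility required to make $\End_A(M)$ a $\Gamma$-graded ring.
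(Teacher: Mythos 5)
Your proof is correct and follows essentially the same route as the paper's: the paper defines the same homogeneous pieces $f_\la(m)=\sum_{\ga} f(m_{\ga-\la})_\ga$ (which on homogeneous $m\in M_\alpha$ is exactly your $(\alpha+\la)$-component $f(m)_{\alpha+\la}$), uses finite generation by homogeneous elements in precisely the same way to show only finitely many $f_\la$ are nonzero and $f=\sum_\la f_\la$, and appeals to~(\ref{lmsgoodyear3}) for the ring structure on $\Hom_A(M,M)$. Your write-up is merely more explicit at the two points the paper leaves to the reader, namely the $A$-linearity of $f_\gamma$ and the directness of the sum.
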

 \begin{proof}
 Let $f\in \Hom_A(M,N)$ and $\la \in \Gamma$. Define a map $f_\la: M \rightarrow N$ as follows: for $m\in M$, 
 \begin{equation}\label{lknaji}
 f_\la(m)=\sum_{\ga \in \Gamma} f(m_{\ga-\la})_\ga,
 \end{equation}
 where $m=\sum_{\ga\in \Gamma}m_\ga$. One can check that $f_\la\in \Hom_A(M,N)$.
 
 Now let $m\in M_\alpha$, $\alpha \in \Gamma$. Then (\ref{lknaji}) reduces to \[f_\la(m)=f(m)_{\alpha+\la}\subseteq M_{\alpha+\la}.\] This shows that  $f_\la\in  \Hom_A(M,N)_\la$. Moreover, $f_\la(m)$ is zero for all but a finite number of $\la\in \Gamma$ and 
 \[\sum_\la f_\la(m)=\sum_\la f(m)_{\alpha+\la}=f(m).\] 
 
Now since $M$ is finitely generated, there is a finite number of homogeneous elements which generate any element $m\in M$. The above argument shows that only a finite number of the $f_\la(m)$ are nonzero and 
 $f=\sum_\la f_\la$. This in turn shows that \[\Hom_{A}(M,N)=\sum_{\gamma \in \Gamma} \Hom_A(M,N)_{\gamma}.\] 
Finally, it is easy to see that $\Hom_A(M,N)_\ga$, $\ga\in \Gamma$ constitutes a direct sum.  

For the second part, replacing $N$ by $M$ in~(\ref{hgd543p}),  we get  \[\Hom_A(M,M)=\bigoplus_{\gamma \in \Gamma} \Hom_A(M,M)_{\gamma}.\] Moreover, by (\ref{lmsgoodyear3}) if 
$f\in \Hom_A(M,M)_{\gamma}$ and $g\in \Hom_A(M,M)_{\la}$ then \[fg \in \Hom_A(M,M)_{\gamma+\la}.\] This shows that when $M$ is finitely generated $\Hom_A(M,M)$ is a $\Gamma$\!-graded ring. 
  \end{proof}

Let $M$ be a graded finitely generated right $A$-module. Then the usual \emph{dual} \index{dual module} of $M$, \ie $M^*=\Hom_A(M,A)$, 
is a left $A$-module. Moreover, using Theorem~\ref{crazhorn}, one can check that  $M^*$ is a graded left $A$-module. Since 
\begin{equation*}
\Hom_A(M,N)(\alpha)=\Hom_A(M(-\alpha),N)=\Hom_A(M,N(\alpha)),
\end{equation*}
we have 
\begin{equation}\label{terrace1}
M(\alpha)^*=M^*(-\alpha).
\end{equation}
This should also make sense:  the dual of ``pushing forward'' $M$ by $\alpha$, is the same as ``pulling back'' the dual $M^*$ by $\alpha$. 

Note that although $\Hom_A(M,N)$ is defined in the category $\Modd A$, the graded structures of $M$ and $N$ are intrinsic in the grading defined on $\Hom_A(M,N)$. Thus if $M$ is isomorphic to $N$ as a ungraded $A$-modules, then $\End_A(M)$ is not necessarily graded isomorphic to $\End_A(N)$. However if $M \cong_{\gr} N(\alpha)$, $\alpha \in \Gamma$, then one can observe that  $\End_A(M)\cong_{\gr}\End_A(N)$ as graded rings.

When $M$ is a free module, $\Hom_A(M,M)$ can be represented as a matrix ring over $A$. Next we define graded free modules. In~\S\ref{matgrhe} we will see that if $M$ is a graded free module, the graded ring $\Hom_A(M,M)$ can be represented as a matrix ring over $A$ with a very concrete grading. 

\begin{example}\label{egofgonrings0}\scm[The Veronese submodule]
\vspace{0.2cm}

For a $\Gamma$\!-graded ring $A$, recall the construction of $n$th-Veronese subring \[A^{(n)}=\bigoplus_{\gamma \in \Gamma} A_{n\gamma}\] (Example~\ref{egofgrdivisionrings0}). In a similar fashion, for a graded $A$-module $M$ and $n\in \Z$, define the $n$th-\emph{Veronese module} \index{Veronese module} of $M$ as \[M^{(n)}=\bigoplus_{\gamma \in \Gamma} M_{n\gamma}.\]  This is a $\Gamma$\!-graded $A^{(n)}$-module. Clearly there is a natural ``forgetful'' functor 
\[U:\Gr A \longrightarrow \Gr A^{(n)},\] which commutes with suspension functors as follows $\mathcal T_\alpha U = U \mathcal T_{n\alpha}$, \ie \[M^{(n)}(\alpha)=M(n\alpha)^{(n)},\] for $\alpha \in \Gamma$ and $n\in \Z$ (see~\S\ref{forgetful} for more on forgetful functors).

\end{example}

\subsection{Graded free modules} \label{hefeijuly}
A $\Gamma$\!-graded (right) $A$-module $F$ is called a \emph{graded free
$A$-module}\index{graded module} if $F$ is a free right $A$-module with a homogeneous base. Clearly a graded free module is a free module but the converse is not correct, namely, a free module which is graded is not necessarily a graded free module.  As an example, for $A=\mathbb R[x]$ considered as a $\Z$-graded ring,
$A\oplus A(1)$ is not a graded free $A\oplus A$-module, whereas $A\oplus A$ is a free $A\oplus A$-module  (see also Example~\ref{qub8}). The definition of free given here is consistent with the categorical definition of free objects over a set of homogeneous elements in the category of graded modules ~(\cite[I, \S7]{hungerford}). \index{graded free module}

Consider a $\Ga$\!-graded $A$-module $\bigoplus_{i \in I} A
(\de_i)$, where $I$ is an indexing set and $\de_i \in
\Ga$. Note that for each $i\in I$, the element $e_i$ of the standard basis (\ie 1 in the $i$th component and zero elsewhere)
is homogeneous of degree $-\de_i$. The set $\{e_i\}_{i \in I}$ forms a base for $\bigoplus_{i \in I} A
(\de_i)$, which by definition makes this a graded free $A$-module. On the other hand,  a graded free $A$-module $F$ with a homogeneous base $\{b_i\}_{i \in I}$, where $\deg (b_i) = -\delta_i$ is graded isomorphic to $\bigoplus_{i \in I} A
(\de_i)$. Indeed one can easily observe that 
 the map induced by 
\begin{align}\label{excampp}
\varphi : \bigoplus\nolimits_{i \in I} A(\de_i) & \lra F \\
e_i & \lmps b_i \notag
\end{align}
is a graded $A$-module isomorphism.  

If the indexing set $I$ is finite, say $I=\{1,\dots,n\}$, then \[\bigoplus\nolimits_{i \in I} A(\de_i) = A(\de_1)\oplus \dots \oplus A(\de_n),\] is also denoted by $A^n(\de_1,\dots,\de_n)$ or  $A^n(\overline \delta)$, where $\overline \delta=(\de_1,\dots,\de_n)$.

In~\S\ref{kjujidw}, we consider the situation when the graded free right $A$-modules $A^n(\overline \delta)$ and  
$A^m (\overline \alpha)$, where $\overline \delta=(\de_1,\dots,\de_n)$ and $\overline \alpha =(\alpha_1,\dots,\alpha_m)$, 
are isomorphic. In~\S\ref{gtr5654}, we will also consider the concept of graded rings with the graded invariant basis numbers. 

\subsection{Graded bimodules}\label{bimref}
The notion of the graded \emph{left} A-modules is developed similarly. \index{graded left module} The category of graded left $A$-modules with graded homomorphisms is denoted by $A \rGr$. In a similar manner for $\Gamma$\!-graded rings $A$ and $B$, we can consider the 
\emph{graded} $A\!-\!B$-\emph{bimodule} \index{graded bimodule} $M$. Namely, $M$ is a $A\!-\!B$-bimodule and additionally $M=\bigoplus_{\gamma \in \Gamma} M_\gamma$ is a graded left $A$-module and a graded right $B$-module, \ie \[A_\alpha M_{\gamma} B_{\beta}  \subseteq M_{\alpha +\gamma+\beta},\] where $\alpha,\gamma,\beta \in \Gamma$. 
The category of graded $A$-bimodules is denoted by $\Gr A \rGr$. \index{$\Gr A \rGr$}

\begin{remark}\scm[Shift of non-abelian group graded modules]
\vspace{0.2cm}

If the grade group $\Gamma$ is not abelian, then in order that the shift of components matches, for a graded left $A$-module $M$ one needs to define 
\[M(\de)_\ga = M_{\ga\de},\] whereas for the graded right $M$-module $A$, shift is defined by \[M(\de)_\ga = M_{\de\ga}.\] With these definitions, for 
$\mathcal T_\alpha, \mathcal T_\beta: \Gr A \rightarrow \Gr A$, we have $\mathcal T_\alpha \mathcal T_\beta =\mathcal T_{\beta \alpha}$, whereas for 
$\mathcal T_\alpha, \mathcal T_\beta: A \rGr  \rightarrow A \rGr$, we have $\mathcal T_\alpha \mathcal T_\beta =\mathcal T_{\alpha \beta}$. For this reason, in the non-abelian grade group setting, several books choose to work with the graded left modules as opposed to the graded right modules we have adopted in this book. 
\end{remark} 

\subsection{Tensor product of graded modules}\label{grtensie} \index{tensor product of graded modules}

Let $A$ be a $\Gamma$\!-graded ring and $M_i$, $i\in I$, be a direct system of $\Gamma$\!-graded $A$-modules, \ie $I$ is a directed partially ordered set and for $i\leq j$, there is a graded $A$-homomorphism $\phi_{ij}:M_i \rightarrow M_j$ which is compatible with the ordering. Then $M:=\varinjlim M_i$ is a $\Gamma$\!-graded $A$-module with homogeneous components 
$M_\alpha=\varinjlim {M_i}_\alpha$ (see Example~\ref{penrith123} for the similar construction for rings). 

In particular,  let $\{M_i \mid i \in I \}$ be $\Gamma$\!-graded right $A$-modules. Then $\bigoplus_{i\in I} M_i$ has a natural graded $A$-module given by 
$(\bigoplus_{i\in I} M_i)_\alpha=\bigoplus_{i\in I} {M_i}_\alpha$, $\alpha \in \Gamma$.

Let $M$ be a graded right $A$-module and $N$ be a graded left $A$-modules. We will observe that the tensor product $M\otimes_A N$ has a natural $\Gamma$\!-graded $\mathbb Z$-module structure. Since each of $M_\gamma$, $\gamma \in \Gamma$, is a right $A_0$-module and similarly $N_\gamma$, $\gamma \in \Gamma$, is a left $A_0$-module, then 
$M\otimes _{A_0} N$ can be decomposed as a direct sum 
\[\boxed{M\otimes_{A_0} N =\bigoplus_{\gamma \in \Gamma} (M\otimes N)_\gamma,}\] where 
\begin{equation*}
\boxed{(M \otimes N)_{\ga} = \Big \{ \sum_i m_i \otimes n_i   \mid   m_i \in
M^h, n_i \in N^h, \deg(m_i)+\deg(n_i) = \ga \, \Big\}.}
\end{equation*}
Now note that $M\otimes_A N \cong (M\otimes_{A_0} N)/ J$, where $J$ is a subgroup of $M\otimes_{A_0}N$  generated by the homogeneous elements \[\{ma\otimes n -m\otimes an \mid m\in M^h, n\in N^h, a\in A^h\}.\] This shows that $M\otimes_A N$ is also a graded module. It is easy to check that, for example, if $N$ is a graded $A$-bimodule, then $M\otimes_A N$ is a graded right $A$-module. It follows from the definition that 
\begin{equation}\label{izmir27june}
M\otimes N(\alpha) =M(\alpha)\otimes N =(M \otimes N)(\alpha).
\end{equation}

Observe that for a graded right $A$-module $M$, the map 
\begin{align}\label{medeltaotel}
M\otimes_A A(\alpha) & \longrightarrow M(\alpha),\\
m \otimes a & \longmapsto ma, \notag
\end{align}
is a graded isomorphism. In particular, for any $\alpha,\beta \in \Gamma$, there is a graded $A$-bimodule isomorphism 
\begin{equation}\label{inenbuild}
\boxed{A(\alpha)\otimes_A A(\beta)\cong_{\gr} A(\alpha+\beta).}
\end{equation}

\begin{example}\label{gratsusan}\scm[Graded formal matrix rings] \index{graded formal matrix ring}
\vspace{0.2cm}

The construction of formal matrix rings (Example~\ref{atsusan}) can be carried over to the graded setting as follows. 
Let $R$ and $S$ be $\Gamma$\!-graded rings, $M$ be a graded $R\!-\!S$\!-bimodule and $N$ be a graded $S\!-\!R$-bimodule. Suppose that there are graded bimodule homomorphisms $\phi:M\otimes_S N \rightarrow R$ and $\psi:N\otimes_R M \rightarrow S$ such that $(mn)m'=n(nm')$, where we denote $\phi(m,n)=mn$ and $\psi(n,m)=nm$. Consider the ring
\begin{equation*}
T=\begin{pmatrix}
R & M \\N & S
\end{pmatrix}, 
\end{equation*}
and define, for any $\gamma \in \Gamma$,
\begin{equation*}
T_\gamma=\begin{pmatrix}
R_\gamma & M_\gamma \\N_\gamma & S_\gamma
\end{pmatrix}.
\end{equation*}
 One checks that $T$ is a $\Gamma$\!-graded ring, called a \emph{graded formal matrix ring}. One specific type of such rings is a Morita ring which appears in graded Morita theory (\S\ref{meinghto}). 

\end{example}

\subsection{Forgetting the grading}\label{forgetful}

Most forgetful functors in algebra tend to have left adjoints, which have a `free' construction. One such example is the forgetful functor from the category of abelian groups to abelian monoids that we will study in~\S\ref{ggg} in relation with the Grothendieck groups. 
However, some of the forgetful functors in the graded setting naturally have right adjoints as we will see below. 

Consider the \emph{forgetful} functor \index{forgetful functor}  
\begin{equation}\label{hgytop}
U:\Gr A \longrightarrow \Modd A,
\end{equation} which simply assigns to any graded module $M$ in $\Gr A$, its underlying module $M$ in $\Modd A$, ignoring the grading. Similarly, the graded homomorphisms are sent to the same homomorphisms, disregarding their graded compatibilities.  

There is a  functor $F:\Modd A \rightarrow \Gr A$ which is a right adjoint to $U$.  
 The construction is as follows: let $M$ be an $A$-module. Consider the abelian group $F(M):=\bigoplus_{\gamma\in \Gamma}M_\gamma$, where $M_\gamma$ is a copy of $M$. Moreover, for $a\in A_\alpha$ and $m\in M_\gamma$ define $m.a =ma \in M_{\alpha+\gamma}$. This defines a graded $A$-module structure on $F(M)$ and makes $F$ an exact functor from $\Modd A$ to $\Gr A$. One can prove that for any $M\in \Gr A$ and $N\in \Modd A$, we have a bijective map 
 \begin{align*}
 \Hom_{\Modd A}\big(U(M),N\big)&\stackrel{\phi}{\longrightarrow} \Hom_{\Gr A}\big(M,F(N)\big),\\
 f&\longmapsto \phi_f,
 \end{align*}
 where $\phi_f(m_\alpha)=f(m_\alpha) \in N_\alpha$. \index{adjoint functor}
 
 \begin{remark}\label{pearlfisher}
 It is not difficult to observe that for any $M \in \Gr A$, \[FU(M)\conggr \bigoplus_{\gamma \in \Gamma}M(\gamma).\] By Example~\ref{supinmalnly}, we have $FU(M)\cong_{\gr} FU(M)(\alpha)$ for any $\alpha \in \Gamma$. We also note that if $\Gamma$ is finite, then $F$ is also a left adjoint functor of $U$. Further, if $U$ has a left adjoint functor, then one can prove that $\Gamma$ is finite (see~\cite[\S2.5]{grrings} for details). 
 \end{remark}

\subsection{Partitioning of the graded modules}\label{bill100}

Let $f:\Gamma \rightarrow \Delta$ be a group homomorphism. Recall from~\S\ref{mconfi1} that there is a functor from the category of $\Gamma$\!-graded rings to the category of $\Delta$-graded rings which gives the natural forgetful functor when 
$\Delta=0$. This functor has a right adjoint functor (see~\cite[Proposition~1.2.2]{grrings} for the case of $\Delta=0$). The homomorphism $f$ induces a forgetful functor on the level of module categories. We describe this here. 

Let $A$ be a $\Gamma$\!-graded ring and consider the corresponding $\Delta$-graded structure induced by the homomorphism $f:\Gamma \rightarrow \Delta$ (\S\ref{mconfi1}). Then one can construct a functor $U_f:\Gr[\Gamma] A \rightarrow \Gr[\Delta] A$ which has a right adjoint. In particular, for a subgroup $\Omega$ of  $\Gamma$, we have the following  canonical `forgetful' functor  (a \emph{block} functor or a \emph{coarsening} functor)  \index{block functor} \index{coarsening functor}
\[U:\Gr[\Gamma] A \rightarrow \Gr[\Gamma/\Omega] A,\] such that when $\Omega=\Gamma$, it gives the functor~(\ref{hgytop}). The construction is as follows. Let $M=\bigoplus_{\alpha \in \Gamma} M_\alpha$ be a $\Gamma$\!-graded $A$-module. Write 
 \begin{equation}\label{hngdurys2}
 M=\bigoplus_{\Omega+\alpha \in \Gamma/\Omega} M_{\Omega+\alpha},
 \end{equation} where 
\begin{equation}\label{hngdurys3}
M_{\Omega+\alpha} :=\bigoplus_{\omega \in \Omega} M_{\omega+\alpha}.
\end{equation} One can easily check that $M$ is a $\Gamma/\Omega$-graded $A$-module. Moreover, \[U(M(\alpha))=M(\Omega+\alpha).\]  We will use this functor to relate the Grothendieck groups of these categories in 
Examples~\ref{poiurp} and~\ref{tonyat11}. 

In a similar manner we have the following functor
\begin{align*}
(-)_{\Omega}: \Gr[\Gamma] A \longrightarrow \Gr[\Omega] A_{\Omega},
\end{align*}
where $\Gr[\Omega] A$ is the category of $\Omega$-graded (right) 
$A_{\Omega}$-modules, where $\Omega=\ker(f)$.

The above construction motivates the following which will establish a relation between the categories $\Gr[\Gamma] A$ and $\Gr[\Omega] A_{\Omega}$. 

Consider the quotient group $\Gamma/\Omega$ and fix a complete set of coset representative 
$\{\alpha_i\}_{i\in I}$. Let $\beta \in \Gamma$ and consider the permutation map $\rho_\beta$ \index{complete set of coset representative}
\begin{align*}
\rho_\beta:\Gamma/\Omega&\longrightarrow \Gamma/\Omega,\\
\Omega+\alpha_i&\longmapsto \Omega+\alpha_i+\beta=\Omega+\alpha_j.
\end{align*}
This defines a bijective map (called $\rho_\beta$ again) $\rho_\beta: \{\alpha_i\}_{i\in I} \rightarrow \{\alpha_i\}_{i\in I}$. Moreover, for any $\alpha_i$, since
\[\Omega+\alpha_i+\beta=\Omega+\alpha_j=\Omega+\rho_\beta(\alpha_i),\] there is a unique $w_i \in \Omega$ such that 
\begin{equation}\label{whathfyr}
\alpha_i+\beta=\omega_i+\rho_\beta(\alpha_i).
\end{equation}

 Recall that if $\mathcal C$ is an additive category, $\bigoplus_I\mathcal C$, where $I$ is a nonempty index set, is defined in the obvious manner, with objects $\bigoplus_{i\in I} M_i$, where $M_i$ are objects of $\mathcal C$ and morphisms accordingly. 
 
Define the functor  
\begin{empheq}[box=\widefbox]{align}\label{ch20ts}
\mathcal P: \Gr[\Gamma] A &\longrightarrow \bigoplus_{\Gamma/\Omega} \Gr[\Omega] A_\Omega,\\
M &\longmapsto \bigoplus_{\Omega+\alpha_i \in \Gamma/\Omega} M_{\Omega+\alpha_i}\notag
\end{empheq}
where 
\[\boxed{M_{\Omega+\alpha_i}=\bigoplus_{\omega \in \Omega} M_{\omega+\alpha_i}.}\]
Since $M_{\Omega+\alpha}$, $\alpha \in \Gamma$, as defined in (\ref{hngdurys3}), can be naturally considered as $\Omega$-graded $A_\Omega$-module, where 
\begin{equation}\label{petriob}
(M_{\Omega+\alpha})_\omega=M_{\omega+\alpha},
\end{equation}
it follows that the functor $\mathcal P$ defined in~(\ref{ch20ts}) is well-defined. 
Note that the homogeneous components defined in~(\ref{petriob}), depend on the coset representation, thus choosing another complete set of coset representative gives a different functor between these categories. 

For any $\beta \in \Gamma$, define a shift functor
\begin{align}\label{hnhdheye}
\overline \rho_\beta:\bigoplus_{\Gamma/\Omega} \Gr[\Omega] A_\Omega &\longrightarrow \bigoplus_{\Gamma/\Omega} \Gr[\Omega] A_\Omega,\\
 \bigoplus_{\Omega+\alpha_i \in \Gamma/\Omega} M_{\Omega+\alpha_i} &\longmapsto \bigoplus_{\Omega+\alpha_i\in \Gamma/\Omega} M(\omega_i)_{\Omega+\rho_\beta(\alpha_i)}, \notag
 \end{align}
 where $\rho_\beta(\alpha_i)$ and $\omega_i$ are defined in (\ref{whathfyr}). The action of $\overline \rho_\beta$ on morphisms are defined accordingly. Note that in the left hand side of~(\ref{hnhdheye}) the graded $A_{\Omega}$-module appears in $\Omega+\alpha_i$ component is denoted by 
$M_{\Omega+\alpha_i}$. When $M$ is a $\Gamma$\!-graded module, then $M_{\Omega+\alpha_i}$ has a $\Omega$-structure as described 
in~(\ref{hngdurys3}).

We are in a position to prove the next theorem. 

\begin{theorem}\label{mhgft42}
Let $A$ be a $\Gamma$\!-graded ring and $\Omega$ a subgroup of $\Gamma$. Then for any $\beta \in \Gamma$, the following diagram is commutative,  
\begin{equation}\label{hhusole}
\xymatrix{
\Gr[\Gamma] A \ar[rr]^-{\mathcal P} \ar[d]_{\mathcal T_\beta}&& \bigoplus_{\Gamma/\Omega} \Gr[\Omega] A_\Omega \ar[d]^{\overline \rho_\beta}\\
\Gr[\Gamma] A \ar[rr]^-{\mathcal P}  && \bigoplus_{\Gamma/\Omega} \Gr[\Omega] A_\Omega,
}
\end{equation}
where the functors $\mathcal P$ and  $\overline \rho_\beta$ are defined in~\eqref{ch20ts} and~\eqref{hnhdheye}, respectively. 
Moreover, if $\Gamma_A \subseteq \Omega$, then the functor $\mathcal P$  induces an equivalence of categories. 
\end{theorem}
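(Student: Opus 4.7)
The plan is to first establish the commutativity of diagram~\eqref{hhusole} on objects by unpacking both compositions and showing that the resulting $\Omega$-graded $A_\Omega$-modules agree on each coset summand, then to construct an explicit quasi-inverse to $\mathcal{P}$ under the hypothesis $\Gamma_A\subseteq\Omega$.

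For commutativity, I fix $M\in\Gr[\Gamma]A$ and $\beta\in\Gamma$ and compute the $(\Omega+\alpha_i)$-th summand of each composition. By the definitions~\eqref{ch20ts} and~\eqref{hngdurys3}, the $(\Omega+\alpha_i)$-th summand of $\mathcal{P}\mathcal{T}_\beta(M)=\mathcal{P}(M(\beta))$ is the $\Omega$-graded $A_\Omega$-module whose $\omega$-component is $M(\beta)_{\omega+\alpha_i}=M_{\omega+\alpha_i+\beta}$. On the other hand, by~\eqref{hnhdheye} the corresponding summand of $\overline\rho_\beta\mathcal{P}(M)$ is $M(\omega_i)_{\Omega+\rho_\beta(\alpha_i)}$, whose $\omega$-component is $M_{\omega+\rho_\beta(\alpha_i)+\omega_i}$. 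The identity~\eqref{whathfyr} gives $\alpha_i+\beta=\omega_i+\rho_\beta(\alpha_i)$, so the two $\omega$-components both equal $M_{\omega+\alpha_i+\beta}$, matching as $\Omega$-graded $A_\Omega$-modules. The check on morphisms reduces to observing that both $\mathcal{P}$ and $\mathcal{T}_\beta$ act as the identity on the underlying map of abelian groups and only relabel homogeneous components, so the same relabelling argument applies.

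For the equivalence when $\Gamma_A\subseteq\Omega$, I would construct a quasi-inverse $\mathcal{Q}\colon\bigoplus_{\Gamma/\Omega}\Gr[\Omega]A_\Omega\to\Gr[\Gamma]A$ as follows. Every $\gamma\in\Gamma$ admits a unique decomposition $\gamma=\omega+\alpha_i$ with $\omega\in\Omega$ and $\alpha_i$ in the fixed set of coset representatives. Given an object $\bigoplus_{\Omega+\alpha_i}N_{\Omega+\alpha_i}$ with each $N_{\Omega+\alpha_i}$ an $\Omega$-graded $A_\Omega$-module, set $\mathcal{Q}(\cdot)_\gamma=(N_{\Omega+\alpha_i})_\omega$. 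The hypothesis $\Gamma_A\subseteq\Omega$ ensures that every nonzero homogeneous element of $A$ already lies in $A_\Omega$, so the prescribed $A_\Omega$-actions on each $N_{\Omega+\alpha_i}$ assemble into a $\Gamma$-graded $A$-action on $\mathcal{Q}(\cdot)$ that visibly respects the grading. Morphisms are handled by the same componentwise prescription. A direct inspection using the unique decomposition $\gamma=\omega+\alpha_i$ then shows that for this choice of representatives both $\mathcal{P}\mathcal{Q}$ and $\mathcal{Q}\mathcal{P}$ are the identity, so $\mathcal{P}$ is an equivalence of categories.

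The main obstacle is essentially bookkeeping: the statement simultaneously juggles a $\Gamma$-grading, a $\Gamma/\Omega$-coset decomposition, and the internal $\Omega$-grading inside each summand, and one must carefully track how a shift by $\beta\in\Gamma$ decomposes into the permutation $\rho_\beta$ of cosets together with the residual $\Omega$-shift by $\omega_i$ recorded in~\eqref{whathfyr}. Once that identity is in hand, each half of the argument collapses to a one-line verification. A minor subtlety worth flagging is that $\mathcal{P}$ and $\mathcal{Q}$ depend on the chosen coset representatives, and a different choice produces a naturally isomorphic functor; this is harmless for the equivalence statement but should be acknowledged for the explicit formula in~\eqref{hnhdheye}.
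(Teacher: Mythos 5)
Your proposal is correct and follows essentially the same route as the paper's proof: the commutativity check is the same computation with~\eqref{whathfyr}, carried out per $\omega$-component rather than at the level of whole coset summands, and your quasi-inverse $\mathcal{Q}$ is exactly the paper's functor $\mathcal{P}'$, using $\Gamma_A\subseteq\Omega$ in the same way to kill $N_\alpha A_\gamma$ for $\gamma\notin\Omega$. Your closing remark that $\mathcal{P}$ depends on the choice of coset representatives is also made in the paper, so nothing is missing.
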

\begin{proof}
We first show that Diagram~\ref{hhusole} is commutative. Let $\beta \in \Gamma$ and  $M$ a $\Gamma$\!-graded $A$-module. 
As in~(\ref{whathfyr}), let $\{\alpha_i\}$ be a fixed complete set of coset representative and 
\begin{equation*}
\alpha_i+\beta=\omega_i+\rho_\beta(\alpha_i).
\end{equation*}
Then 
 \begin{equation}\label{hmhfyrhds}
 \mathcal P(\mathcal T_\beta(M))=\mathcal P(M(\beta))=\bigoplus_{\Omega+\alpha_i \in \Gamma/\Omega} M(\beta)_{\Omega+\alpha_i}.
 \end{equation}
 But
 \begin{multline*}
 M(\beta)_{\Omega+\alpha_i}=\bigoplus_{\omega \in \Omega} M(\beta)_{\omega+\alpha_i}=\bigoplus_{\omega \in \Omega} M_{\omega+\alpha_i+\beta}
 =\bigoplus_{\omega \in \Omega} M_{\omega+ \omega_i+\rho_\beta(\alpha_i)}=\\
 \bigoplus_{\omega \in \Omega} M(\omega_i)_{\omega+\rho_\beta(\alpha_i)}=M(\omega_i)_{\Omega+\rho_\beta(\alpha_i)}. 
 \end{multline*}
 Replacing this into equation~\ref{hmhfyrhds} we have
 \begin{equation}\label{meinfg43}
  \mathcal P(\mathcal T_\beta(M))=\bigoplus_{\Omega+\alpha_i \in \Gamma/\Omega}M(\omega_i)_{\Omega+\rho_\beta(\alpha_i)}.
 \end{equation}
  On the other hand by~(\ref{hnhdheye}),
 \begin{equation}\label{ucas4372}
 \overline \rho_\beta \mathcal P (M)=\overline \rho_\beta (\bigoplus_{\Omega+\alpha_i \in \Gamma/\Omega} M_{\Omega+\alpha_i})= \bigoplus_{\Omega+\alpha_i\in \Gamma/\Omega} M(\omega_i)_{\Omega+\rho_\beta(\alpha_i)}.
\end{equation} 
Comparing~(\ref{meinfg43}) and (\ref{ucas4372}) shows that Diagram~\ref{hhusole} is commutative.  
 
 For the last part of theorem, suppose $\Gamma_A \subseteq \Omega$. We construct a functor 
 \[\mathcal P':  \bigoplus_{\Gamma/\Omega} \Gr[\Omega] A_\Omega \longrightarrow \Gr[\Gamma] A,\] 
 which depends on the coset representative
 $\{\alpha_i\}_{i\in I}$  of $\Gamma/\Omega$. First note that any $\alpha \in \Gamma$ can be written uniquely as $\alpha=\alpha_i+\omega$, for some $i\in I$ and $\omega\in \Omega$. Now let 
 \[\bigoplus_{\Omega+\alpha_i \in \Gamma/\Omega} N_{\Omega+\alpha_i} \in  \bigoplus_{\Gamma/\Omega} \Gr[\Omega] A_\Omega,\]
 where $N_{\Omega+\alpha_i}$ is $\Omega$-graded $A_\Omega$-module.  Define a $\Gamma$\!-graded $A$-module $N$ as follows:
 $N=\bigoplus_{\alpha\in \Gamma} N_\alpha$, where $N_\alpha:=(N_{\Omega+\alpha_i})_{\omega}$ and $\alpha=\alpha_i+\omega$. We check that $N$ is a $\Gamma$\!-graded $A$-module, i.e, $N_\alpha A_\gamma \subseteq N_{\alpha+\gamma}$, for $\alpha,\gamma \in \Gamma$. If $\gamma \not \in \Omega$, since $\Gamma_A \subseteq \Omega$,  $A_\gamma=0$  and thus $0=N_\alpha A_\gamma \subseteq N_{\alpha+\gamma}$. Let $\gamma \in \Omega$. Then 
 \[N_\alpha A_\gamma = (N_{\Omega+\alpha_i})_{\omega}A_\gamma\subseteq (N_{\Omega+\alpha_i})_{\omega+\gamma}=N_{\alpha+\gamma},\] as $\alpha+\gamma=\alpha_i+\omega+\gamma$. 
 
 We define $\mathcal P' (\bigoplus_{\Omega+\alpha_i \in \Gamma/\Omega} N_{\Omega+\alpha_i})=N$ for the objects and similarly for the morphisms. 
 It is now not difficult to check that $\mathcal P'$ is an inverse of the functor $\mathcal P$. This finishes the proof. 
 \end{proof}

The above theorem will be used to compare the graded $K$-theories with respect to $\Gamma$ and $\Omega$ (see Example~\ref{tonyat11}).

\begin{corollary}\label{hgfeyes} \index{trivial grading}
Let $A$ be a $\Gamma$\!-graded ring concentrated in degree zero. Then \[\Gr A \approx \bigoplus_{\Gamma} \Modd A.\]  The action 
of $\Gamma$ on $\bigoplus_{\Gamma} \Modd A$ described in~\eqref{hnhdheye} reduces to the following: for $\beta \in \Gamma$, 
\begin{equation}\label{mugeinizmir}
\overline \rho_{\beta}\Big (\bigoplus_{\alpha \in \Gamma} M_\alpha \Big)= \bigoplus_{\alpha \in \Gamma} M(\beta)_{\alpha}= \bigoplus_{\alpha \in \Gamma} M_{\alpha+\beta}.
\end{equation}
\end{corollary}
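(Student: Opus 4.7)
The plan is to deduce this corollary by specializing Theorem~\ref{mhgft42} to the subgroup $\Omega = \{0\} \subseteq \Gamma$. I first note that the hypothesis $\Gamma_A \subseteq \Omega$ of the theorem is precisely the statement that $A$ is concentrated in degree zero, so the theorem applies and yields the asserted equivalence of categories, provided we identify the target category correctly.

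With $\Omega = 0$ we have $A_\Omega = A_0 = A$, and an $\Omega$-graded $A_\Omega$-module is just a right $A$-module (the grading is trivial, with everything sitting in degree $0$), so $\Gr[\Omega] A_\Omega = \Modd A$. Moreover $\Gamma/\Omega = \Gamma$, so the target $\bigoplus_{\Gamma/\Omega} \Gr[\Omega] A_\Omega$ of the functor $\mathcal P$ defined in~\eqref{ch20ts} becomes $\bigoplus_\Gamma \Modd A$. Unwinding the definition, $\mathcal P$ simply sends a $\Gamma$\!-graded module $M = \bigoplus_{\alpha \in \Gamma} M_\alpha$ to the family $\bigoplus_{\alpha \in \Gamma} M_\alpha$ viewed as an object of $\bigoplus_\Gamma \Modd A$, where each $M_\alpha$ is regarded as an $A_0 = A$-module. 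Theorem~\ref{mhgft42} then gives the equivalence $\Gr A \approx \bigoplus_\Gamma \Modd A$.

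It remains to identify the action. Take the complete set of coset representatives to be $\Gamma$ itself, so $\alpha_i = \alpha_i$ for each $\alpha_i \in \Gamma$. For any $\beta \in \Gamma$, the map $\rho_\beta: \Gamma/\Omega \to \Gamma/\Omega$ becomes the translation $\alpha \mapsto \alpha + \beta$ on $\Gamma$, and the defining equation~\eqref{whathfyr}, namely $\alpha_i + \beta = \omega_i + \rho_\beta(\alpha_i)$, forces $\omega_i = 0$ for every $i$ (since $\Omega = 0$). Substituting into the formula~\eqref{hnhdheye} for $\overline \rho_\beta$, we obtain
\begin{equation*}
\overline \rho_\beta \Big(\bigoplus_{\alpha \in \Gamma} M_\alpha\Big) = \bigoplus_{\alpha \in \Gamma} M(0)_{\alpha + \beta} = \bigoplus_{\alpha \in \Gamma} M_{\alpha + \beta},
\end{equation*}
which coincides with $\bigoplus_{\alpha \in \Gamma} M(\beta)_\alpha$ by the definition of the shift (\S\ref{pokiss}). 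This proves~\eqref{mugeinizmir}.

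The one point that requires a bit of care is checking that an $\Omega$-graded $A_\Omega$-module with $\Omega = 0$ really is the same thing as an ungraded $A$-module, so that $\bigoplus_{\Gamma/\Omega} \Gr[\Omega] A_\Omega$ is canonically identified with $\bigoplus_\Gamma \Modd A$; this is immediate from the conventions, but it is the hinge on which the whole argument turns. Once that identification is fixed, everything else is a direct specialization of the preceding theorem and no further computation is needed.
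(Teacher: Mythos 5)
Your proposal is correct and is exactly the paper's argument: the paper's proof consists of the single line ``This follows by replacing $\Omega$ by a trivial group in Theorem~\ref{mhgft42},'' and you carry out that specialization, including the correct verification that the hypothesis $\Gamma_A \subseteq \Omega$ holds and that $\omega_i = 0$ in~\eqref{whathfyr} collapses $\overline\rho_\beta$ to the stated shift action. Your unwinding of the details is merely a more explicit rendering of the same proof.
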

\begin{proof}
This follows by replacing $\Omega$ by a trivial group in Theorem~\ref{mhgft42}.
\end{proof}

The following corollary, which is a more general case of Corollary~\ref{hgfeyes} with a similar proof, will be used in the proof of Lemma~\ref{dido13}. 

\begin{corollary}\label{zuhoingding}
Let $A$ be a $\Gamma\times \Omega$ graded ring which is concentrated in $\Omega$. Then 
\[\Gr[\Gamma \times \Omega] A \cong \bigoplus_{\Gamma} \Gr[\Omega] A.\]
 The action of $\Ga \times \Omega$ on $\bigoplus_{\Gamma} \Gr[\Omega] A$ described in~\eqref{hnhdheye} reduces to the following: for $(\beta,\omega) \in \Gamma\times \Omega$, 
\[\overline \rho_{(\beta, \omega)}\Big (\bigoplus_{\alpha \in \Gamma} M_\alpha \Big)= \bigoplus_{\alpha \in \Gamma} M_{\alpha+\beta} (\omega).\] 
 \end{corollary}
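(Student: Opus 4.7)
The plan is to apply Theorem~\ref{mhgft42} directly to the $(\Gamma \times \Omega)$-graded ring $A$, with the subgroup $\{0\}\times\Omega$ of $\Gamma\times\Omega$ playing the role of the subgroup $\Omega$ in that theorem. The support hypothesis $\Gamma_A\subseteq \{0\}\times\Omega$ is exactly the assumption that $A$ is concentrated in $\Omega$. Combined with the identifications $A_{\{0\}\times\Omega}=A$ (viewed as an $\Omega$-graded ring) and $(\Gamma\times\Omega)/(\{0\}\times\Omega)\cong \Gamma$ via projection onto the first coordinate, the conclusion of Theorem~\ref{mhgft42} transforms into the desired equivalence $\Gr[\Gamma\times\Omega] A \cong \bigoplus_\Gamma \Gr[\Omega] A$.

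To obtain the explicit form of the action, I will choose the complete set of coset representatives $\{(\alpha,0) : \alpha\in\Gamma\}$ for $\{0\}\times\Omega$ in $\Gamma\times\Omega$, so that cosets correspond bijectively to elements of $\Gamma$. Given $(\beta,\omega)\in \Gamma\times\Omega$, the defining relation~\eqref{whathfyr} specialises to
\[(\alpha,0) + (\beta,\omega) = (0,\omega) + (\alpha+\beta,0),\]
which yields $\rho_{(\beta,\omega)}(\alpha,0) = (\alpha+\beta,0)$ and $\omega_\alpha = (0,\omega)$ independent of $\alpha$. Substituting these values into~\eqref{hnhdheye} sends the $\alpha$-indexed summand to $M(\omega)_{\Omega+(\alpha+\beta)}$; the final step is to recognise that $M(\omega)_{\Omega+(\alpha+\beta)} = M_{\Omega+(\alpha+\beta)}(\omega)$, i.e.\ that taking the $(\alpha+\beta)$-coset component of a shift by $(0,\omega)$ equals applying the $\Omega$-shift by $\omega$ to the $(\alpha+\beta)$-coset component. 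This identity is immediate from~\eqref{hngdurys3}, and gives exactly $\bigoplus_{\alpha\in\Gamma} M_{\alpha+\beta}(\omega)$.

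The argument is essentially a mechanical specialisation of Theorem~\ref{mhgft42}, entirely analogous to Corollary~\ref{hgfeyes} (which is recovered by taking $\Omega$ to be trivial). I do not anticipate any real obstacle; the only point that needs a moment's care is the bookkeeping between the two layers of grading: externally the $\Gamma$-indexing is permuted by $\alpha\mapsto \alpha+\beta$, while internally the $\Omega$-part $\omega$ of $(\beta,\omega)$ acts as an $\Omega$-shift on each individual summand, and the chosen lift $(\alpha,0)$ of each coset keeps these two actions cleanly separated.
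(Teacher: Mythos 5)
Your proposal is correct and follows exactly the paper's route: the paper proves this corollary by the same specialisation of Theorem~\ref{mhgft42} to the subgroup $\{0\}\times\Omega$ of $\Gamma\times\Omega$ (it is stated as ``a more general case of Corollary~\ref{hgfeyes} with a similar proof''), and your choice of coset representatives $(\alpha,0)$ together with the relation $(\alpha,0)+(\beta,\omega)=(0,\omega)+(\alpha+\beta,0)$ is precisely the bookkeeping that makes~\eqref{hnhdheye} reduce to the stated formula. The identification $M\bigl((0,\omega)\bigr)_{\Omega+(\alpha+\beta)}=M_{\Omega+(\alpha+\beta)}(\omega)$, which you correctly justify from~\eqref{hngdurys3} and~\eqref{petriob}, is the only point needing verification, and your check is sound.
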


\subsection{Graded projective modules}\label{monashre}

Graded projective modules play a crucial role in this book. They will appear in the graded Morita theory~\S\ref{moritanji} and will be used to define the graded Grothendieck groups~\S\ref{ggg}. Moreover, the graded higher $K$-theory is constructed from the exact category consisted of graded finitely generated projective modules~\S\ref{waraya}. In this section we define the graded projective modules and give several equivalent criteria for a module to be graded projective. As before, unless stated otherwise, we work in the category of (graded) right modules. 

A graded $A$-module $P$  is called a \emph{graded projective module} \index{graded projective module} if it is a projective object in the abelian category $\Gr A$.
More concretely, $P$ is graded projective if  
for any diagram of graded modules and graded $A$-module homomorphisms
\begin{equation}\label{bitguesthouse14}
\xymatrix{ & P \ar[d]^-{j}  \ar@{.>}[dl]_{h}& \\
     M \ar[r]_{g} & N \ar[r] & 0, }
\end{equation}
there is a graded $A$-module homomorphism 
$h :P \ra M$ with $g h = j$.

In Proposition~\ref{grprojectivethm} we give some equivalent  characterisations of graded projective modules, including the one that shows an $A$-module is graded projective  if and only if it is graded and projective as an $A$-module. By $\Pgrp A$ (or $\Pgr[\Gamma] A$ to emphasis the grade group of $A$) we denote a full subcategory of $\Gr A$, consisting of graded finitely generated projective right $A$-modules. This is the primary category we are interested in. The graded Grothendieck group (\S\ref{ggg}) and higher $K$-groups (\S\ref{waraya}) are constructed from this exact category (see Definition~\ref{hgygtw2}).  \index{$\Pgrp A$, category of graded proj. $A$-modules}

We need the following lemma, which says if a graded map factors into two maps, with one being graded, then one can replace the other one with a graded map as well.

\begin{lemma}\label{grrlinearmaps}
Let $P,M,N$ be graded $A$-modules, with $A$-module homomorphisms $f,g,h$ 
\begin{displaymath}
\xymatrix{ & \; M \ar[dr]^{g}  & \\
    P \ar[ur]^h \ar[rr]_{f} &&N }
\end{displaymath}
such that $f = g  h$, where $f$ is a graded $A$-module homomorphism. 
If $g$ (resp.\ $h$) is a graded $A$-homomorphism then
there exists a graded homomorphism $h' : P \ra M$ (resp.\ $g' :M \ra N$) such that $f = g h'$ (resp.\ $f = g' h$).
\end{lemma}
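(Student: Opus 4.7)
The plan is to produce the desired graded replacement by extracting the degree-$0$ component of the non-graded map. Concretely, for an (ungraded) $A$-module homomorphism $k : X \to Y$ between graded modules, define $k_0 : X \to Y$ on homogeneous $x_\alpha \in X_\alpha$ by $k_0(x_\alpha) = k(x_\alpha)_\alpha$, the $\alpha$-component of $k(x_\alpha)$ in $Y$, and extend additively to all of $X$ by writing an arbitrary element as its (finite) sum of homogeneous components. By construction $k_0$ sends $X_\alpha$ into $Y_\alpha$, so once we verify $A$-linearity, $k_0$ is a graded homomorphism of degree $0$.

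First I would check $A$-linearity of $k_0$. For $x_\alpha \in X_\alpha$ and $a_\beta \in A_\beta$, the element $k(x_\alpha)a_\beta$ equals $k(x_\alpha a_\beta)$ by $A$-linearity of $k$, and taking $(\alpha+\beta)$-components gives $k(x_\alpha a_\beta)_{\alpha+\beta} = k(x_\alpha)_\alpha \, a_\beta$, because multiplication by $a_\beta$ shifts degree by $\beta$. The left side is $k_0(x_\alpha a_\beta)$ and the right side is $k_0(x_\alpha)a_\beta$; extending bi-additively over the homogeneous decompositions of arbitrary $x \in X$ and $a \in A$ gives $k_0(xa) = k_0(x)a$. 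This shows $k_0 \in \Hom_A(X,Y)_0$.

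Next, suppose $g$ is graded and set $h' := h_0$. For any $p_\alpha \in P_\alpha$, decompose $h(p_\alpha) = \sum_\beta h(p_\alpha)_\beta$. Since $g$ is graded of degree $0$, $g\bigl(h(p_\alpha)_\beta\bigr) \in N_\beta$, so
\[
f(p_\alpha) = g\bigl(h(p_\alpha)\bigr) = \sum_\beta g\bigl(h(p_\alpha)_\beta\bigr),
\]
and the uniqueness of the homogeneous decomposition in $N$, together with the fact that $f(p_\alpha) \in N_\alpha$ (as $f$ is graded), forces $g\bigl(h(p_\alpha)_\beta\bigr) = 0$ for $\beta \neq \alpha$ and $f(p_\alpha) = g\bigl(h(p_\alpha)_\alpha\bigr) = g\bigl(h_0(p_\alpha)\bigr)$. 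Thus $gh' = f$. Symmetrically, if $h$ is graded, set $g' := g_0$; then $h(p_\alpha) \in M_\alpha$, so $g'(h(p_\alpha)) = g(h(p_\alpha))_\alpha = f(p_\alpha)_\alpha = f(p_\alpha)$, again because $f$ is graded.

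The only real subtlety is that $k_0$ must be well defined on all of $X$ (not just on homogeneous elements), which is immediate because every $x \in X$ has a unique finite decomposition into homogeneous components; there is no finiteness hypothesis needed on the modules for this, in contrast to the full decomposition of $\Hom_A(M,N)$ in Theorem~\ref{crazhorn}. The main (mild) obstacle is simply keeping the bookkeeping of degrees straight when verifying $A$-linearity, and being careful that the graded hypothesis on the other map is what allows one to pin down a single homogeneous component of $f$ on each $P_\alpha$.
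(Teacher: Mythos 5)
Your proof is correct and follows essentially the same route as the paper's: you define the replacement map by extracting the degree-preserving component of the ungraded homomorphism ($h'(p_\alpha)=h(p_\alpha)_\alpha$) and then compare homogeneous components of $f=gh$ using the gradedness of $f$ and $g$. The only difference is presentational: you spell out the $A$-linearity check and the second case ($g'=g_0$), which the paper dismisses with ``one can easily see'' and ``the other case is similar,'' and your remark that no finite generation is needed (unlike in Theorem~\ref{crazhorn}) is accurate.
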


\begin{proof}
Suppose $g:M\rightarrow N$ is a graded $A$-module homomorphism. Define $h':P\rightarrow M$ as follows: for $p\in P_\alpha$, 
$\alpha\in \Gamma$, let  $h'(p)=h(p)_\alpha$ and extend this linearly to all elements of $P$, \ie for $p\in P$ with $p=\sum_{\alpha\in \Gamma} p_\alpha$,
\[h(p)=\sum_{\alpha \in \Gamma} h(p_\alpha)_\alpha.\]
 One can easily see that $h':P\rightarrow M$ is a graded $A$-module homomorphism. Moreover, for $p\in P_\alpha$, $\alpha \in \Gamma$, we have
\[f(p)=gh(p)=g\Big(\sum_{\gamma \in \Gamma} h(p)_\gamma\Big)=\sum_{\gamma \in \Gamma}  g\big (h(p)_\gamma\big).\]
Since $f$ and $g$ are graded homomorphisms, comparing the degrees of the homogeneous elements of each side of the equation, we get 
\[f(p)=g(h(p)_\alpha)=gh'(p).\] Using the linearity of $f,g,h'$ it follows that $f=gh'$. This proves the lemma for the case $g$. The other case is similar. 
\end{proof}

We are in a position to give  equivalent  characterisations of graded projective modules. 

\begin{proposition}\label{grprojectivethm} 
Let $A$ be a $\Ga$\!-graded ring and $P$
be a graded $A$-module. Then the following are equivalent:
\begin{enumerate}[\upshape(1)]

\item $P$ is graded and projective;

\item  $P$ is graded projective;

\item $\Hom_{\Gr A}(P, -)$ is an exact functor in $\Gr A$;

\item Every short exact sequence of graded $A$-module homomorphisms
\begin{displaymath}
0 \lra L \stackrel{f}{\lra} M \stackrel{g}{\lra} P \lra 0
\end{displaymath}
splits via a (graded) map;

\item $P$ is graded isomorphic to a direct summand of a graded free
$A$-module.
\end{enumerate}
\end{proposition}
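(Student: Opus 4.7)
The plan is to establish the implications in the cycle (1) $\Rightarrow$ (2) $\Rightarrow$ (3) $\Rightarrow$ (4) $\Rightarrow$ (5) $\Rightarrow$ (1), which routes all five conditions through each other efficiently.

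For (1) $\Rightarrow$ (2), I would start with the lifting diagram~\eqref{bitguesthouse14} in $\Gr A$. Forgetting the grading, $P$ is a projective $A$-module, so there exists an (ungraded) $A$-linear lift $h: P \to M$ with $gh = j$. The issue is that $h$ need not respect the grading; however, in the factorisation $j = g \circ h$, both $j$ and $g$ are graded. This is exactly the situation of Lemma~\ref{grrlinearmaps}, which then produces a graded replacement $h': P \to M$ still satisfying $gh' = j$. For (2) $\Rightarrow$ (3), the functor $\Hom_{\Gr A}(P,-)$ is always left exact on the abelian category $\Gr A$; the definition of a categorical projective object in~\eqref{bitguesthouse14} is precisely the statement that it also preserves epimorphisms, giving exactness.

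For (3) $\Rightarrow$ (4), apply the exact functor $\Hom_{\Gr A}(P,-)$ to a short exact sequence $0 \to L \to M \xrightarrow{g} P \to 0$ of graded modules; surjectivity of $\Hom_{\Gr A}(P,M) \to \Hom_{\Gr A}(P,P)$ lifts $\id_P$ to a graded section of $g$. For (4) $\Rightarrow$ (5), choose a homogeneous generating set $\{p_i\}_{i\in I}$ of $P$ with $\deg(p_i)=\delta_i$, and take the graded free module $F = \bigoplus_{i\in I} A(-\delta_i)$ together with the graded $A$-module homomorphism $\pi: F \to P$ sending the standard basis element $e_i$ (homogeneous of degree $\delta_i$ in $A(-\delta_i)$) to $p_i$. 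This is a graded surjection, so $0 \to \ker\pi \to F \to P \to 0$ is a short exact sequence in $\Gr A$, which by (4) splits via a graded map and thus exhibits $P$ as a graded direct summand of $F$. Finally, for (5) $\Rightarrow$ (1), a graded free module is free as an $A$-module and hence projective, and any direct summand of a projective module is projective; the grading on $P$ is given as part of the hypothesis.

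The only step with any subtlety is (1) $\Rightarrow$ (2), where an ungraded lift must be upgraded to a graded one; this is exactly what Lemma~\ref{grrlinearmaps} was set up to handle, so the main obstacle has essentially been removed in advance. The other implications are largely formal: (2) $\Leftrightarrow$ (3) $\Leftrightarrow$ (4) is the standard abelian-category characterisation of projective objects applied inside $\Gr A$, and (4) $\Rightarrow$ (5) $\Rightarrow$ (1) proceeds just as in the ungraded case once one checks that the free cover can be chosen graded, which is immediate from the description of graded free modules as $\bigoplus A(-\delta_i)$ recorded in~\S\ref{hefeijuly}.
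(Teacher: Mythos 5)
Your proposal is correct and follows essentially the same route as the paper: the identical cycle (1)$\Rightarrow$(2)$\Rightarrow$(3)$\Rightarrow$(4)$\Rightarrow$(5)$\Rightarrow$(1), with Lemma~\ref{grrlinearmaps} doing the work of upgrading an ungraded lift (and, in the paper, also the splitting in (4)$\Rightarrow$(5)) to a graded one, and the graded free cover $\bigoplus_{i\in I} A(-\delta_i)$ built from a homogeneous generating set exactly as the paper does. The degree bookkeeping in your (4)$\Rightarrow$(5) — that $e_i$ is homogeneous of degree $\delta_i$ in $A(-\delta_i)$, so $e_i\mapsto p_i$ is a graded surjection — is also the paper's computation.
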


\begin{proof}
(1) $\Ra$ (2) Consider the diagram 
\begin{displaymath}
\xymatrix{ & P \ar[d]^-{j}  & \\
     M \ar[r]_{g} & N \ar[r] & 0, }
\end{displaymath}
where $M$ and $N$ are graded modules, $g$ and $j$ are graded homomorphisms and $g$ is surjective. 
Since $P$ is projective,  there is an $A$-module homomorphism $h:P
\ra M$ with $g h = j$. By Lemma~\ref{grrlinearmaps},
there is a graded $A$-module homomorphism $h' :P \ra M$ with $g
h' = j$. This gives that $P$ is a graded projective module. 

\vspace{3pt}

(2) $\Ra$ (3) In exactly the same way as the ungraded setting, 
 we can show (with no assumption on $P$) that  $\Hom_{\Gr A}(P, -)$ is left exact (see \cite[\S IV, Theorem~4.2]{hungerford}). The right exactness follows immediately from the definition of graded projective modules that any diagram of the form~(\ref{bitguesthouse14}) can be completed. 
\vspace{3pt}

(3) $\Ra$ (4) Let 
\begin{equation}\label{syun}
0 \lra L \stackrel{f}{\lra} M \stackrel{g}{\lra} P \lra 0
\end{equation}
be a short exact sequence. Since $\Hom_{\Gr A}(P,-)$ is exact, 
\begin{align*}
\Hom_{\Gr A}(P,M)&\longrightarrow \Hom_{\Gr A}(P,P)\\
h&\longmapsto gh
\end{align*}
is an epimorphism. In particular, there is graded homomorphism $h$ such that $gh=1$, \ie the short exact sequence (\ref{syun}) is spilt.

\vspace{3pt}

(4) $\Ra$ (5) First note that $P$ is a homomorphic image of a graded free $A$-module as follows: 
Let $\{p_i\}_{i \in I}$ be a homogeneous generating
set for $P$, where $\deg(p_i)= \de_i$. Let $\bigoplus_{i \in I}
A(-\de_i)$ be the graded free $A$-module with standard
homogeneous basis $\{e_i\}_{i \in I}$ where $\deg (e_i) = \de_i$.
Then there is an exact sequence
\begin{equation}\label{jeaonghe}
0 \lra \ker (g) \stackrel{\subseteq}{\lra} \bigoplus_{i \in I}
A(-\de_i) \stackrel{g}{\lra} P \lra 0,
\end{equation}
as the map 
\begin{align*}
g : \bigoplus_{i \in I} A(-\de_i) &\longrightarrow P,\\
e_i &\longmapsto p_i,
\end{align*} is a surjective graded $A$-module homomorphism. By the assumption,
there is a $A$-module homomorphism $h :P \ra \bigoplus_{i \in
I} A(-\de_i)$ such that $g  h = \id_P$. By Lemma~\ref{grrlinearmaps} one can assume $h$ is a graded homomorphism.

Since the exact sequence~\ref{jeaonghe} is, in particular, a split exact sequence
of $A$-modules, we know from the ungraded setting
\cite[Proposition~2.5]{magurn} that there is an $A$-module isomorphism
\begin{align*}
\theta: P \oplus \ker (g) & \lra \bigoplus\nolimits_{i \in I}
A(-\de_i)\\
(p , q) & \lmps h(p) + q.
\end{align*}
Clearly this map is also a graded $A$-module homomorphism, so \[P
\oplus \ker (g) \conggr \bigoplus_{i \in I} A(-\de_i).\]

\vspace{3pt}

(5) $\Ra$ (1) Graded free modules are free, so $P$ is isomorphic to
a direct summand of a free $A$-module. From the ungraded setting,
we know that $P$ is projective.
\end{proof}

The proof of Proposition~\ref{grprojectivethm} (see in particular (4) $\Ra$ (5) and (5) $\Ra$ (1)) shows that a graded $A$-module $P$ is a graded finitely generated projective $A$-module if and only if 
\begin{equation}\label{medickiue}
\boxed{
P\oplus Q\conggr A^n(\overline \alpha)}
\end{equation} 
for some $\overline \alpha=(\alpha_1,\dots,\alpha_n)$, $\alpha_i\in \Gamma$. This fact will be used frequently throughout this book. 

\begin{remark}\label{enjoythemomentt}
Recall the functor $\mathcal P$ from~(\ref{ch20ts}). It is easy to see that if $M$ is a $\Gamma$\!-graded projective $A$-module, then $M_{\Omega+\alpha}$ is a $\Omega$-graded projective $A_\Omega$-graded module. Thus the functor $\mathcal P$ restricts to 
\begin{empheq}[box=\widefbox]{align}\label{ch20tsproj}
\mathcal P: \Pgr[\Gamma] A &\longrightarrow \bigoplus_{\Gamma/\Omega} \Pgr[\Omega] A_\Omega,\\
M &\longmapsto \bigoplus_{\Omega+\alpha_i \in \Gamma/\Omega} M_{\Omega+\alpha_i}\notag
\end{empheq}
This will be used later in Examples~\ref{pogbiaue},~\ref{tonyat11} and Lemma~\ref{dido13}. 
\end{remark}

\begin{theorem}[{\sc The dual basis lemma}] Let $A$ be a $\Ga$\!-graded ring and $P$
be a graded $A$-module. Then $P$ is graded projective if and only if 
there exists $p_i \in  P^h$ with $\deg(p_i)=\delta_i$ and $f_i \in  \Hom_{\Gr A} (P,A(-\delta_i))$, for some indexing set $I$, such
that
\begin{enumerate}[\upshape(1)]
\item for every $p \in P$, $f_i (p) =0$ for all but a finite subset
of $i \in I$, 

\item for every $p \in P$, $\sum_{i \in I} f_i (p) p_i = p.$
\end{enumerate}
\end{theorem}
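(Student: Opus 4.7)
The plan is to mirror the classical (ungraded) dual basis lemma, carefully tracking degree shifts so that every map we construct genuinely lives in the category $\Gr A$. The key tool is the characterisation of graded projective modules from Proposition~\ref{grprojectivethm}, namely that $P$ is graded projective if and only if $P$ is (graded isomorphic to) a direct summand of some graded free module $\bigoplus_{i\in I} A(-\delta_i)$.

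For the forward direction, suppose $P$ is graded projective. By Proposition~\ref{grprojectivethm}(5) there exist $\delta_i\in\Gamma$, a graded surjection $g:\bigoplus_{i\in I} A(-\delta_i)\to P$ and a graded splitting $h:P\to\bigoplus_{i\in I} A(-\delta_i)$ with $gh=\id_P$. The standard basis element $e_i\in A(-\delta_i)$ is homogeneous of degree $\delta_i$, so $p_i:=g(e_i)\in P_{\delta_i}$. Let $\pi_i:\bigoplus_{j\in I} A(-\delta_j)\to A(-\delta_i)$ be the $i$-th coordinate projection, which is a graded homomorphism of degree $0$, and set $f_i:=\pi_i\circ h\in\Hom_{\Gr A}(P,A(-\delta_i))$. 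For any $p\in P$, the element $h(p)$ lies in the direct sum and hence has only finitely many nonzero coordinates, giving condition~(1); unwinding the decomposition $h(p)=\sum_i e_i\,f_i(p)$ and applying $g$ then yields $p=gh(p)=\sum_i p_i\,f_i(p)$, which is condition~(2) (read with the right $A$-action on $p_i$).

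For the converse, assume the data $\{p_i,f_i\}$ satisfying (1) and (2) are given. Define
\begin{align*}
g:\bigoplus_{i\in I} A(-\delta_i)&\longrightarrow P, & e_i&\longmapsto p_i,\\
h:P&\longrightarrow \bigoplus_{i\in I} A(-\delta_i), & p&\longmapsto \sum_{i\in I} e_i\,f_i(p).
\end{align*}
The map $g$ is a graded $A$-homomorphism because $\deg(e_i)=\delta_i=\deg(p_i)$; the map $h$ is well-defined by~(1) and graded because each $f_i$ is. Condition~(2) says exactly that $gh=\id_P$, so $P$ is a graded direct summand of a graded free module and is therefore graded projective by Proposition~\ref{grprojectivethm}.

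I do not expect any serious obstacle; the one point that needs a little care is bookkeeping of the shifts, i.e.\ checking that the prescribed degrees of $p_i$ and the choice of codomain $A(-\delta_i)$ for $f_i$ make $g$ and $h$ into morphisms in $\Gr A$ of degree $0$. Once that is in place, the argument is a direct transcription of the classical dual basis lemma into the graded setting.
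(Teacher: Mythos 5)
Your proof is correct and follows essentially the same route as the paper: both directions extract $p_i=\pi(e_i)$ (resp.\ $g(e_i)$) and $f_i=\pi_i\circ\phi$ from a graded splitting of a surjection from $\bigoplus_i A(-\delta_i)$, and conversely assemble the dual basis data into a graded splitting, invoking Proposition~\ref{grprojectivethm} exactly as the paper does. The only difference is cosmetic (your remark about writing the sum with the right $A$-action matches the paper's own usage $\sum_i p_i f_i(p)$ in its proof), so there is nothing to add.
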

\begin{proof} Since $P$ is  graded projective, by Proposition~\ref{grprojectivethm}(5), there is a graded module $Q$ such that $P\oplus Q \conggr \bigoplus_i A(-\delta_i)$. This gives two graded maps \[\phi: P \rightarrow \bigoplus_i A(-\delta_i), \text{    and    } \, \pi: \bigoplus_i A(-\delta_i)\rightarrow P,\] such that $\pi\phi=1_P$. Let 
\begin{align*}
\pi_i: \bigoplus_i A(-\delta_i) &\longrightarrow A(-\delta_i),\\
\{a_i\}_{i\in I}&\longmapsto a_i,
\end{align*}
be the projection on the $i$th component. So if \[a=\{a_i\}_{i\in I} \in \bigoplus_i A(-\delta_i),\] then 
\[\sum_i \pi_i(a)e_i=a,\] where $\{e_i\}_{i\in I}$ is the standard homogeneous basis of $\bigoplus_i A(-\delta_i)$. Now let $p_i=\pi(e_i)$ and $f_i=\pi_i\phi$. Note that $\deg(p_i)=\delta_i$ and \[f_i \in \Hom_{\Gr A}(P, A(-\delta_i)).\] Clearly $f_i(p)=\pi_i\phi(p)$ is zero for all but a finite number of $i \in I$. This gives (1). Moreover, 
\[\sum_i p_if_i(p)=\sum_i p_i \pi_i\phi(p)=\sum\pi(e_i)\pi_i\phi(p)=\pi\big(\sum_i e_i\pi_i\phi(p)\big)=\pi\phi(p)=p.\] This gives (2).

Conversely, suppose that there exists a dual basis $\{p_i,f_i \mid  i \in I\}$. Consider the maps 
\begin{align*}
\phi:P &\longrightarrow \bigoplus_i A(-\delta_i),\\
p &\longmapsto \{f_i(p)\}_{i \in I}
\end{align*}
and 
\begin{align*}
\pi: \bigoplus_i A(-\delta_i) &\longrightarrow P,\\
\{a_i\}_{i \in I} &\longmapsto \sum_{i} p_ia_i.
\end{align*}
One sees easily that $\phi$ and $\pi$ are graded right $A$-module homomorphism and $\pi\phi=1_P$. Therefore the exact sequence 
\[0\longrightarrow \ker(\pi) \longrightarrow  \bigoplus_i A(-\delta_i) \stackrel{\pi}{\longrightarrow} P \longrightarrow 0\] splits. Thus $P$ is a direct summand of the graded free module $\bigoplus_i A(-\delta_i)$. By Proposition~\ref{grprojectivethm}, $P$ is a graded projective. 
\end{proof}

\begin{remark}\scm[Graded injective modules]

Proposition~\ref{grprojectivethm} shows that an $A$-module $P$ is graded projective if and only if $P$ is graded and projective. However the similar statement is not valid for graded injective modules. A graded $A$-module $I$  is called a \emph{graded injective module} \index{graded injective module} if 
for any diagram of graded modules and graded $A$-module homomorphisms
\begin{displaymath}
\xymatrix{ 
0 \ar[r] & N \ar[r]^{g}  \ar[d]_-{j} & M \ar@{.>}[dl]^{h} \\ 
& I
}
\end{displaymath}
there is a graded $A$-module homomorphism 
$h :M \ra I$ with $hg = j$. 

Using Lemma~\ref{grrlinearmaps} one can show that if a graded module is injective, then it is also graded injective. However a graded injective module is not necessarily injective. The reason for this difference between projective and injective behaviour is that the forgetful functor $U:\Gr A \rightarrow \Modd A$ is a left adjoint functor 
(see Remark~\ref{forgetful}). In details, consider a graded projective module $P$ and the diagram
\begin{displaymath}
\xymatrix{ & P \ar[d]^-{j}  & \\
     M \ar[r]_{g} & N \ar[r] & 0,}
\end{displaymath}
where $M$ and $N$ are $A$-modules. Since the diagram below is commutative 
\begin{displaymath}
\xymatrix{ 
\Hom_{\Modd A}(U(P) ,M) \ar[r]^{\cong} \ar[d] & \Hom_{\Gr A}(P,F(M)) \ar[d]      & \\
\Hom_{\Modd A}(U(P),N) \ar[r]^{\cong}  & \Hom_{\Gr A}(P,F(N))  }
\end{displaymath}
and  there is a graded homomorphism $h':P \rightarrow F(M)$ such that the diagram 
\begin{displaymath}
\xymatrix{ & P \ar[d]^-{j'}  \ar[ld]_{h'} & \\
     F(M) \ar[r]_{F(g)} & F(N) \ar[r] & 0, }
\end{displaymath}
is commutative, there is a homomorphism $h:P\rightarrow M$ such that $gh=j$.  So $P$ is projective (see Proposition~\ref{grprojectivethm} for another proof).

If the grade group is finite, then the forgetful functor is right adjoint as well (see Remark~\ref{forgetful}) and a similar argument as above shows that a graded injective module is injective.   
 \end{remark}

 \subsection{Graded divisible modules}  \index{graded divisible module}
 
 Here we define the notion of graded divisible modules and we give yet another characterisation of graded von Neumann regular rings (see~\S\ref{penrith7may}).

Let $A$ be a $\Gamma$\!-graded ring and $M$ a graded right $A$-module. We say $m\in M^h$ (a homogeneous element of $M$) is divisible by $a\in A^h$ if $m \in Ma$, \ie there is a homogeneous element $n\in M$ such that $m=na$.  We say that $M$ is a \emph{graded divisible module} if for any $m\in M^h$ and any $a\in A^h$, where $\ann_{r}(a) \subseteq \ann(m)$, we have that $m$ is divisible by $a$. Note that for $m\in M^h$, \emph{the annihilator} of $m$, \[\ann(m):=\{\, a\in A \mid ma=0\,\}\] is a graded ideal of $A$. \index{annihilator ideal} \index{$\ann(m)$, annihilator ideal}

\begin{proposition}\label{hyhfdtegdt}
Let $A$ be a $\Gamma$\!-graded ring and $M$ a graded right $A$-module. Then the following are equivalent:

\begin{enumerate}[\upshape(1)]

\item $M$ is a graded divisible module;

\item For any $a\in A^h$, $\gamma \in \Gamma$, and any graded $A$-module homomorphism 
$f:aA(\gamma) \rightarrow M$, the following diagram can be completed. 
\begin{displaymath}
\xymatrix{ 
0 \ar[r] & aA(\gamma) \ar[r]^{\subseteq}  \ar[d]_-{f} & A(\gamma) \ar@{.>}[dl]^{\overline f}\\ 
& M.
}
\end{displaymath}
\end{enumerate}
\end{proposition}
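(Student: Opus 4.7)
The statement is a graded Baer-style criterion, and the plan is to set up a bijection, on the relevant subcategory, between graded maps $aA(\gamma)\to M$ and homogeneous elements of $M$ whose annihilator contains $\ann_r(a)$; then the existence of the extension to $A(\gamma)$ is equivalent to the chosen element being divisible by $a$ inside $M^h$. The proof is forced by the degree bookkeeping described in Example~\ref{monash1}, where $a$ appears as a homogeneous element of $aA(\gamma)$ of degree $\deg(a)-\gamma$.

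For (1) $\Rightarrow$ (2), I would set $\alpha=\deg(a)$ (the case $a=0$ being trivial). Then $a\in aA(\gamma)$ is homogeneous of degree $\alpha-\gamma$, so $m:=f(a)\in M^h$ has degree $\alpha-\gamma$. For any $b\in \ann_r(a)$, $mb=f(a)b=f(ab)=0$, so $\ann_r(a)\subseteq \ann(m)$. Graded divisibility yields a homogeneous $n\in M$ with $m=na$; comparing degrees gives $\deg(n)=-\gamma$. I then define $\overline f\colon A(\gamma)\to M$ by $r\mapsto nr$. Since $n$ has degree $-\gamma$, $\overline f$ sends $A(\gamma)_\delta=A_{\delta+\gamma}$ into $M_\delta$, so $\overline f$ is a graded right $A$-module homomorphism; and $\overline f(ar)=nar=f(a)r=f(ar)$, so it restricts to $f$ on $aA(\gamma)$.

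For (2) $\Rightarrow$ (1), given $m\in M^h$ of degree $\mu$ and $a\in A^h$ of degree $\alpha$ with $\ann_r(a)\subseteq \ann(m)$, the only choice of shift that makes the natural map $aA(\gamma)\to M$ sending $a$ to $m$ degree-preserving is $\gamma:=\alpha-\mu$. With this $\gamma$, I define $f\colon aA(\gamma)\to M$ by $f(ar)=mr$; well-definedness uses precisely $\ann_r(a)\subseteq \ann(m)$, since $ar=ar'$ forces $r-r'\in \ann_r(a)\subseteq \ann(m)$. The map is graded by construction. Applying the hypothesis, extend $f$ to $\overline f\colon A(\gamma)\to M$, and set $n:=\overline f(1)$; since $1\in A(\gamma)_{-\gamma}$, $n$ is homogeneous of degree $-\gamma=\mu-\alpha$. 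Then $m=f(a)=\overline f(a)=\overline f(1)a=na$, witnessing that $m$ is divisible by $a$ via a homogeneous element.

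There is no real obstacle; the only delicate point is tracking shifts so that every map in sight is a graded morphism of degree $0$. This is what pins down $\gamma=\alpha-\mu$ in the reverse direction and $\deg(n)=-\gamma$ in the forward direction, and it also explains why the formulation uses the arbitrary shift $\gamma$ rather than just $aA\to A$: without allowing the shift, the graded map $aA\to M$ sending $a$ to a prescribed homogeneous $m$ of ``wrong'' degree would not even be definable as a degree-$0$ morphism.
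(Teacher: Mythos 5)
Your proof is correct and follows essentially the same route as the paper's: the same degree bookkeeping pins down $\gamma=\alpha-\beta$, the same map $f(ar)=mr$ with $n=\overline f(1)$ of degree $-\gamma$ drives both directions. If anything, you are slightly more careful than the paper in $(2)\Rightarrow(1)$, where you make explicit that well-definedness of $f$ on $aA(\gamma)$ is exactly where the hypothesis $\ann_r(a)\subseteq \ann(m)$ is used.
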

\begin{proof}

(1) $\Ra$ (2)  Let $f:aA(\gamma) \rightarrow M$ be a graded $A$-homomorphism. Set $m:=f(a)$.
Note that $\deg(m)=\alpha-\gamma$, where $\deg(a)=\alpha$ (see Example~\ref{monash1}). 
 If $x\in \ann_r(a)$ then $0=f(ax)=f(a)x=mx$, thus $x \in \ann(m)$. Since $M$ is graded divisible, $m$ is divisible by $a$, \ie $m=na$ for some $n\in M$. It follows that $\deg(n)=-\gamma$. Define $\overline f: A(\gamma) \rightarrow M$ by 
$ \overline f(1)=n$ and extend it to $A(\gamma)$. Thisis a graded $A$-module. Since $\overline f(a)=\overline f(1)a=na=m$, $\overline f$ extends $f$. 

(2) $\Ra$ (1) Let $m\in M^h$ and $a\in A^h$, where $\ann_r(a) \subseteq \ann(m)$. Suppose  $\deg(a)=\alpha$  and $\deg(m)=\beta$. Let $\gamma=\alpha-\beta$ and define the map $f:aA(\gamma)\rightarrow M$, by $f(a)=m$ and extend it to $aA(\gamma)$. The following shows that $f$ is in fact a graded $A$-homomorphism. 
\[f\Big(\big(aA(\gamma)\big)_\delta\Big) \subseteq f\big((aA)_{\gamma+\delta}\big) \subseteq f\big(aA_{\gamma+\delta-\alpha}\big)=f\big(aA_{\delta-\beta}\big) = m A_{\delta-\beta} \subseteq M_{\delta}.\]

Thus there is a $\overline f: A(\gamma) \rightarrow M$ which extends $f$. So $m=f(a)=\overline f(a)=\overline f(1)a=na$, where $f(1)=n$. This means $m$ is divisible by $a$ and the proof is complete.
\end{proof}

\begin{theorem}\label{frhghu}
Let $A$ be a $\Gamma$\!-graded ring. Then $A$ is a graded von Neumann regular if and only if any graded right $A$-module is divisible. \index{graded von Neumann regular ring}
\end{theorem}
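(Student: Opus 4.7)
The plan is to prove the two directions separately, using Proposition~\ref{hyhfdtegdt} as the bridge on one side and a direct element-chasing argument on the other.

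For the forward direction, suppose $A$ is graded von Neumann regular and let $M$ be a graded right $A$-module. Fix $m\in M^h$ and $a\in A^h$ with $\ann_r(a)\subseteq \ann(m)$; write $\alpha=\deg(a)$. By regularity there is a homogeneous $b\in A^h$ with $aba=a$. First I would argue that $b$ may be taken to lie in $A_{-\alpha}$: decomposing $b=\sum_\gamma b_\gamma$ and comparing the homogeneous components of the equation $aba=a$ (the left side having degree $2\alpha+\gamma$, the right side degree $\alpha$) forces $ab_{-\alpha}a=a$. Thus, replacing $b$ by $b_{-\alpha}$, we have $b\in A_{-\alpha}$ and $ba\in A_0$, so $1-ba$ is a (homogeneous) element of $A_0$. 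Since $a(1-ba)=0$, we have $1-ba\in\ann_r(a)\subseteq\ann(m)$, hence $m=mba=(mb)a\in Ma$, which shows $m$ is divisible by $a$. This verifies that $M$ is graded divisible.

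For the converse, assume every graded right $A$-module is divisible, and let $a\in A^h$ with $\alpha=\deg(a)$. The key move is to apply Proposition~\ref{hyhfdtegdt}(2) with $\gamma=0$, $M=aA$ (which is graded divisible by hypothesis, its grading as in Example~\ref{monash1}), and $f$ equal to the identity inclusion $aA\hookrightarrow aA$. This produces a graded $A$-module homomorphism $\overline{f}\colon A\to aA$ extending the identity. Since $\overline{f}$ is graded of degree $0$, we have $\overline{f}(1)\in (aA)_0=aA_{-\alpha}$, so $\overline{f}(1)=ab$ for some $b\in A_{-\alpha}$. Evaluating at $a$ gives
\[
a=f(a)=\overline{f}(a)=\overline{f}(1)\,a=(ab)a=aba,
\]
so $a\in aAa$. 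As $a\in A^h$ was arbitrary, $A$ is graded von Neumann regular.

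The only subtle point (and the main bookkeeping obstacle) is the homogeneity argument in the forward direction that lets one replace $b$ by its $(-\alpha)$-component; this is needed so that $1-ba$ is itself homogeneous and thus lies in $\ann_r(a)$. The backward direction is essentially immediate once one identifies the correct test diagram: the identity on $aA$ is the universal graded homomorphism out of $aA$, and extending it to $A$ is exactly what produces the homogeneous quasi-inverse $b$. No further machinery is required beyond Propositions~\ref{basicsofgradedrings} and~\ref{hyhfdtegdt}.
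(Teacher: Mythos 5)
Your proposal is correct, and your converse direction is essentially the paper's own argument verbatim: apply Proposition~\ref{hyhfdtegdt} to the inclusion $aA\hookrightarrow aA$ (with $\gamma=0$), read off $\overline f(1)=ab\in (aA)_0=aA_{-\alpha}$ using the grading of Example~\ref{monash1}, and evaluate at $a$ to obtain $a=aba$. Your forward direction, however, takes a genuinely different route. The paper never checks the divisibility condition element-wise; instead it verifies condition (2) of Proposition~\ref{hyhfdtegdt}: taking a homogeneous $b$ with $aba=a$, the map $A(\gamma)\to aA(\gamma)$, $x\mapsto abx$, is graded because $\deg(ab)=0$, and it splits the inclusion $aA(\gamma)\subseteq A(\gamma)$; hence $aA(\gamma)$ is a graded direct summand of $A(\gamma)$ and every graded homomorphism $aA(\gamma)\to M$ extends, so $M$ is divisible by the proposition. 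You bypass Proposition~\ref{hyhfdtegdt} in this direction and verify the definition directly: from $b\in A_{-\alpha}$ you get the homogeneous degree-zero element $1-ba\in\ann_r(a)\subseteq\ann(m)$, whence $m=(mb)a$ with $mb$ homogeneous. Both arguments pivot on the same homogeneous quasi-inverse $b\in A_{-\deg(a)}$, and your degree-comparison reduction replacing $b$ by $b_{-\alpha}$ is exactly the step the paper leaves implicit in its definition of graded von Neumann regularity in \S\ref{penrith7may} (where $a\in aAa$ is asserted to yield a homogeneous $b$), so making it explicit is a welcome addition rather than a gap. As for what each approach buys: your element-wise check is shorter and self-contained, while the paper's splitting argument records the structurally stronger fact that a principal graded right ideal $aA$ of a graded regular ring is a graded direct summand of $A$, in line with Proposition~\ref{pkhti1}.
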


\begin{proof}
Let $A$ be a graded regular ring. Consider the exact sequence of graded right $A$-modules
\[ 0\longrightarrow aA(\gamma) \stackrel{\subseteq}{\longrightarrow} A(\gamma) \longrightarrow A(\gamma) /aA(\gamma) \longrightarrow 0.\]
Define 
\begin{align*}
f:A(\gamma) &\longrightarrow aA(\gamma), \\
x&\longmapsto abx.
\end{align*} Since $\deg(ab)=0$ this gives a split graded homomorphism for the exact sequence above. Thus $aA(\gamma)$ is a direct summand of $A(\gamma)$. This shows that any graded $A$-module homomorphism 
$f:aA(\gamma) \rightarrow M$ can be extend to $A(\gamma)$. 

Conversely, consider the diagram 
\begin{displaymath}
\xymatrix{ 
0 \ar[r] & aA \ar[r]^{\subseteq}  \ar[d]_-{\cap} & A \ar@{.>}[dl]^{\overline f}\\ 
& aA.
}
\end{displaymath}
Since $aA$ is divisible, there is $\overline f$ which completes this diagram. Set $\overline f(1)=ab$, where $b\in A^h$. We then have 
$a=\overline f(a)=\overline f(1)a=aba$. Thus $A$ is a graded regular ring. 
\end{proof}

\begin{example}\scm[Graded rings associated to filter rings]\label{filt}
\vspace{0.2cm}

A ring $A$ with identity is called a \emph{filtered ring} \index{filtered ring} if there is an ascending family $\{A_i \mid i \in \Z \}$ of additive subgroup of $A$ such that $1\in A_0$ and $A_i A_j \subseteq A_{i+j}$, for all $i,j \in \Z$. Let $A$ be a filtered ring and $M$ be a right $A$-module. $M$ is called a 
 \emph{filtered module} \index{filtered module} if there is an ascending family $\{M_i \mid i \in \Z \}$ of additive subgroup of $M$ such that  
 $M_i A_j \subseteq M_{i+j}$, for all $i,j \in \Z$. An $A$-module homomorphism $f:M\rightarrow N$ of filtered modules $M$ and $N$ is called a \emph{filtered homomorphism} if $f(M_i) \subseteq N_i$ for $i\in \Z$. A category consisting of filtered right $A$-modules for objects and filtered homomorphisms as morphisms is denoted by $\Filt A$. If $M$ is a filtered $A$-module then 
 \[\gr(M):=\bigoplus_{i\in \Z} M_i/M_{i-1}\] is a $\mathbb Z$-graded 
 $\gr(A):=\bigoplus_{i\in \Z} A_i/A_{i-1}$-module. The operations here are defined naturally. This gives a functor $\gr: \Filt A \rightarrow \Gr \! \gr(A)$. 
 In Example~\ref{grdiviwad} we will use a variation of this construction to associate a graded division algebra to a valued division algebra. 
 \index{$\Filt A$, category of filtered right $A$-modules}
 
 In the theory of filtered rings, one defines the concepts of filtered free and projective modules and under certain conditions the functor $\gr$ sends these objects to the corresponding objects in the category $\Gr \! \gr(A)$. For a comprehensive study of filtered rings see~\cite{filtring}.

\end{example}

\section{Grading on matrices}\label{matgrhe} \index{graded matrix ring}

Let  $A$ be an arbitrary ring and $\Gamma$ an arbitrary group. Then one can consider $\Gamma$\!-gradings on the matrix ring $\M_n(A)$ which, at the first glance, might look somewhat artificial.  However, these types of gradings on matrices appear quite naturally in the graded rings arising from graphs. In this section we study the grading on matrices. We then include a section on graph algebras (including path algebras and Leavitt path algebras, \S\ref{creekside}). These algebras give us a wealth of examples of graded rings and graded matrix rings.

For a free right $A$-module $V$ of dimension $n$, there is a ring isomorphism $\End_A(V)\cong \M_n(A)$. When $A$ is a $\Gamma$\!-graded ring and $V$ is a graded free module of finite rank, by Theorem~\ref{crazhorn}, $\End_A(V)$ has a natural $\Gamma$\!-grading. This induces a graded structure on the matrix ring $\M_n(A)$. In this section we study this grading on matrices. For an $n$-tuple $(\delta_1,\dots,\delta_n)$, $\delta_i\in \Gamma$, we construct a grading on the matrix ring $\M_n(A)$, denoted by $\M_n(A)(\delta_1,\dots,\delta_n)$, and we show that 
\[\End_A\big(A(-\delta_1)\oplus A(-\delta_2) \oplus \dots \oplus A(-\delta_n)\big) \conggr \M_n(A)(\delta_1,\dots,\delta_n).\]

We will see that these graded structures on matrices appear very naturally when studying the graded structure of path algebras in~\S\ref{creekside}. 

\subsection{Graded calculus on matrices}

Let $A$ be a $\Gamma$\!-graded ring and let $M=M_1\oplus \dots \oplus M_n$, where $M_i$ are graded finitely generated  right $A$-modules. Then $M$ is also a graded right $A$-module (see~\S\ref{grtensie}).  Let 

\begin{multline*}
\big(\Hom(M_j,M_i)\big)_{1\leq i,j\leq n}:=\\
\begin{pmatrix}
\Hom_A(M_1,M_1) & \Hom_A(M_2,M_1) & \cdots &
\Hom_A(M_n,M_1) \\
\Hom_A(M_1,M_2) & \Hom_A(M_2,M_2)& \cdots &
\Hom_A(M_n,M_2) \\
\vdots  & \vdots  & \ddots & \vdots  \\
\Hom_A(M_1,M_n) & \Hom_A(M_2,M_n) & \cdots &
\Hom_A(M_n,M_n)
\end{pmatrix}.
\end{multline*}

It is easy to observe that $\big(\Hom(M_j,M_i)\big)_{1\leq i,j\leq n}$ forms a ring with component-wise addition and the matrix multiplication. Moreover, for $\la \in \Gamma$, assigning the additive subgroup 
\begin{equation}\label{hgfreaw}
\begin{pmatrix}
\Hom_A(M_1,M_1)_\la & \Hom_A(M_2,M_1)_\la & \cdots &
\Hom_A(M_n,M_1)_\la \\
\Hom_A(M_1,M_2)_\la & \Hom_A(M_2,M_2)_\la& \cdots &
\Hom_A(M_n,M_2)_\la \\
\vdots  & \vdots  & \ddots & \vdots  \\
\Hom_A(M_1,M_n)_\la & \Hom_A(M_2,M_n)_\la & \cdots &
\Hom_A(M_n,M_n)_\la
\end{pmatrix}
\end{equation}
as a $\lambda$-homogeneous component of $\big(\Hom(M_j,M_i)\big)_{1\leq i,j\leq n}$, using Theorem~\ref{crazhorn}, and (\ref{lmsgoodyear3}) it follows that  $\big(\Hom(M_j,M_i)\big)_{1\leq i,j\leq n}$ is a $\Gamma$\!-graded ring. 

Let $\pi_j:M\rightarrow M_j$ and $\kappa_j:M_j\rightarrow M$ be the (graded) projection and injection homomorphisms. For the next theorem, we need the following identities
\begin{equation}\label{powgt}
\sum_{i=1}^n \kappa_i\pi_i=\id_M \quad \text{  and  } \quad \pi_i \kappa_j =\delta_{ij}\id_{M_j},
\end{equation}
where $\delta_{ij}$ is the Kronecker delta. 

\begin{theorem}\label{nanjite}
Let $A$ be a $\Gamma$\!-graded ring and $M=M_1\oplus \dots \oplus M_n$, where $M_i$ are graded finitely generated  right $A$-modules. Then there is a graded ring isomorphism 
\begin{equation*}
\Phi: \End_A(M)\longrightarrow \big(\Hom(M_j,M_i)\big)_{1\leq i,j\leq n}
\end{equation*} 
defined by 
$f \mapsto (\pi_if \kappa_j)$, $1\leq i,j \leq n$. 
\end{theorem}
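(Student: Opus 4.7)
The plan is to exhibit an explicit two-sided inverse for $\Phi$ and to verify the ring and graded compatibility by direct computation, using only the two identities in~\eqref{powgt}.

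First I would check well-definedness and the graded condition. For $f\in\End_A(M)_\lambda$, one has $f(M_\gamma)\subseteq M_{\gamma+\lambda}$, and since $\pi_i$ and $\kappa_j$ are graded homomorphisms (of degree $0$), the composition $\pi_i f\kappa_j$ sends $(M_j)_\gamma$ into $(M_i)_{\gamma+\lambda}$; therefore $\pi_i f\kappa_j\in\Hom_A(M_j,M_i)_\lambda$ and $\Phi(f)$ lies in the additive subgroup described in~\eqref{hgfreaw}. The usual decomposition $f=\sum_\lambda f_\lambda$ coming from Theorem~\ref{crazhorn} (which applies because each $M_i$, and hence $M$, is finitely generated) then gives $\Phi(f)=\sum_\lambda\Phi(f_\lambda)$, so $\Phi$ respects the $\Gamma$-grading.

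Next I would verify that $\Phi$ is a ring homomorphism. Additivity is immediate. For multiplicativity, given $f,g\in\End_A(M)$, insert $\id_M=\sum_k\kappa_k\pi_k$ between $f$ and $g$:
\begin{equation*}
\Phi(fg)_{ij}=\pi_i fg\kappa_j=\pi_i f\Bigl(\sum_{k=1}^n\kappa_k\pi_k\Bigr)g\kappa_j=\sum_{k=1}^n(\pi_i f\kappa_k)(\pi_k g\kappa_j)=\sum_{k=1}^n\Phi(f)_{ik}\Phi(g)_{kj},
\end{equation*}
which is exactly the $(i,j)$-entry of the matrix product $\Phi(f)\Phi(g)$. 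The identity in $\End_A(M)$ is sent to the identity matrix, again by the second identity in~\eqref{powgt}: $\pi_i\id_M\kappa_j=\pi_i\kappa_j=\delta_{ij}\id_{M_j}$.

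Finally, for bijectivity, I would define
\begin{equation*}
\Psi:\bigl(\Hom(M_j,M_i)\bigr)_{1\leq i,j\leq n}\longrightarrow\End_A(M),\qquad (f_{ij})\longmapsto\sum_{i,j=1}^n\kappa_i f_{ij}\pi_j,
\end{equation*}
and verify $\Psi\Phi=\id$ and $\Phi\Psi=\id$ by the same two identities: for any $f\in\End_A(M)$,
\begin{equation*}
\Psi\Phi(f)=\sum_{i,j}\kappa_i\pi_i f\kappa_j\pi_j=\Bigl(\sum_i\kappa_i\pi_i\Bigr)f\Bigl(\sum_j\kappa_j\pi_j\Bigr)=f,
\end{equation*}
and for any matrix $(f_{ij})$,
\begin{equation*}
\bigl(\Phi\Psi(f_{ij})\bigr)_{kl}=\pi_k\Bigl(\sum_{i,j}\kappa_i f_{ij}\pi_j\Bigr)\kappa_l=\sum_{i,j}(\pi_k\kappa_i)f_{ij}(\pi_j\kappa_l)=f_{kl}.
\end{equation*}

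There is no real obstacle here; the statement is essentially a graded Peirce-decomposition argument and all the content sits in~\eqref{powgt}. The only small point worth being careful about is the justification that $\Phi$ is defined on all of $\End_A(M)$ as a graded ring (rather than only on the direct sum of its homogeneous components), which uses the finite generation of $M$ via Theorem~\ref{crazhorn} so that $\End_A(M)=\bigoplus_\lambda\End_A(M)_\lambda$ genuinely is a $\Gamma$-graded ring and matches the grading on the matrix ring entry by entry.
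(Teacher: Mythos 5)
Your proposal is correct and follows essentially the same route as the paper's own proof: verifying that $\Phi$ is an additive, multiplicative, grade-preserving map via the identities in~\eqref{powgt}, and exhibiting the explicit inverse $\Psi\colon (f_{ij})\mapsto\sum_{i,j}\kappa_i f_{ij}\pi_j$. Your added remark on why finite generation (via Theorem~\ref{crazhorn}) makes $\End_A(M)$ genuinely $\Gamma$-graded is a point the paper leaves implicit, but the argument is otherwise the same.
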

\begin{proof}
The map $\Phi$ is clearly well-defined. Since  for $f,g\in \End_A(M)$, 
\begin{multline*}
\Phi(f+g)=\big (\pi_i(f+g)\kappa_j \big)_{1\leq i,j\leq n}=\big (\pi_if\kappa_j+\pi_ig\kappa_j \big)_{1\leq i,j\leq n}=\\ \big (\pi_if\kappa_j \big)_{1\leq i,j\leq n}+\big (\pi_ig\kappa_j \big)_{1\leq i,j\leq n}=\Phi(f)+\Phi(g)
\end{multline*}
and 
\begin{multline*}
\Phi(fg)=\big (\pi_ifg\kappa_j \big)_{1\leq i,j\leq n}=\Big(\pi_if  (\sum_{l=1}^n \kappa_l\pi_l) g\kappa_j \Big)_{1\leq i,j\leq n}=\\
\Big(  \sum_{l=1}^n (\pi_i f   \kappa_l)   (\pi_l g\kappa_j) \Big)_{1\leq i,j\leq n}=\Phi(f)\Phi(g),
\end{multline*}
$\Phi$ is a ring homomorphism.  Moreover, if $f\in \End_A(M)_\la$, $\la \in \Gamma$, then \[\pi_if \kappa_j \in \Hom_A(M_j,M_i)_\la,\] for $1\leq i,j \leq n$.  This (see~(\ref{hgfreaw})) shows that $\Phi$ is a graded ring homomorphism. 
Define the map 
\begin{align*}\label{fgair}
\Psi:  \big(\Hom(M_j,M_i)\big)_{1\leq i,j\leq n} & \longrightarrow \End_A(M)\\
(g_{ij})_{1\leq i,j \leq n} &\longmapsto \sum_{1\leq i,j\leq n}\kappa_ig_{ij}\pi_j
\end{align*} 
Using the identities~\ref{powgt}, 
one can observe that the compositions $\Psi \Phi$ and $\Phi\Psi$ give the identity maps of the corresponding rings. 
Thus $\Phi$ is an isomorphism. 
\end{proof}

For a graded ring $A$, consider $A(\delta_i)$, $1\leq i \leq n$, as graded right $A$-modules and 
observe that 
\begin{equation}\label{lkdeoe}
\Phi_{\de_j,\de_i}: \Hom_A\big (A(\delta_i),A(\delta_j)\big )\cong_{\gr} A(\delta_j-\delta_i),
\end{equation} as graded left $A$-modules such that
\[\Phi_{\de_k,\de_i}(gf)=\Phi_{\de_k,\de_j}(g)\Phi_{\de_j,\de_i}(f),\] where $f\in\Hom(A(\de_i),A(\de_j))$ and $g\in\Hom(A(\de_j),A(\de_k))$
(see~(\ref{hgd543p})). If 
\[V=A(-\delta_1)\oplus A(-\delta_2) \oplus \dots \oplus A(-\delta_n),\] then by Theorem~\ref{nanjite},
\[\End_A(V)\cong_{\gr} \Big (\Hom\big (A(-\delta_j),A(-\delta_i)\big )\Big )_{1\leq i,j\leq n}.\]
Applying $\Phi_{\de_j,\de_i}$ defined in~(\ref{lkdeoe}) to each entry, we have 
\[\End_A(V)\cong_{\gr} \Big (\Hom\big (A(-\delta_j),A(-\delta_i)\big )\Big )_{1\leq i,j\leq n}\cong_{\gr}\big(A(\delta_j-\delta_i)\big)_{1\leq i,j\leq n}.\]
Denoting the graded matrix ring $\big(A(\delta_j-\delta_i)\big)_{1\leq i,j\leq n}$  by $\M_n(A)(\de_1,\dots,\de_n)$, we have 

\begin{equation}\label{pjacko}
\M_n(A)(\de_1,\dots,\de_n) =
\begin{pmatrix}
A(\de_1 - \de_1) & A(\de_2  - \de_1) & \cdots &
A(\de_n - \de_1) \\
A(\de_1 - \de_2) & A(\de_2 - \de_2) & \cdots &
A(\de_n  - \de_2) \\
\vdots  & \vdots  & \ddots & \vdots  \\
A(\de_1 - \de_n) & A(\de_2 - \de_n) & \cdots &
A(\de_n - \de_n)
\end{pmatrix}.
\end{equation}
By~(\ref{hgfreaw}), ${\M_n (A)(\de_1,\dots,\de_n)}_{\la}$, the $\la$-homogeneous elements, are the $n \times n$-matrices over
$A$ with the degree shifted (suspended) as follows:
\begin{equation}\label{mmkkhh}
\boxed{
{\M_n(A)(\de_1,\dots,\de_n)}_{\la} =
\begin{pmatrix}
A_{ \la+\de_1 - \de_1} & A_{\la+\de_2  - \de_1} & \cdots &
A_{\la +\de_n - \de_1} \\
A_{\la + \de_1 - \de_2} & A_{\la + \de_2 - \de_2} & \cdots &
A_{\la+\de_n  - \de_2} \\
\vdots  & \vdots  & \ddots & \vdots  \\
A_{\la + \de_1 - \de_n} & A_{ \la + \de_2 - \de_n} & \cdots &
A_{\la + \de_n - \de_n}
\end{pmatrix}.}
\end{equation}
This also shows that for $x\in A^h$,
 \begin{equation}\label{hogr}
\deg(\e_{ij}(x))=\deg(x)+\delta_i-\delta_j,
\end{equation}
where $\e_{ij}(x)$ is a matrix with $x$ in the $ij$-position and zero elsewhere.

In particular the zero homogeneous component (which is a ring) is of the form
\begin{equation}\label{mmkkhh4}
\boxed{
{\M_n(A)(\de_1,\dots,\de_n)}_{0} =
\begin{pmatrix}
A_{0\phantom{ - \de_1}} & A_{\de_2  - \de_1} & \cdots &
A_{\de_n - \de_1} \\
A_{\de_1 - \de_2} & A_{0\phantom{ - \de_1}} & \cdots &
A_{\de_n  - \de_2} \\
\vdots  & \vdots  & \ddots & \vdots  \\
A_{ \de_1 - \de_n} & A_{ \de_2 - \de_n} & \cdots &
A_{0\phantom{ - \de_1}}
\end{pmatrix}.}
\end{equation}

Setting $\overline \delta=(\de_1, \dots,\de_n)\in \Gamma^n$, one denotes the graded matrix ring~(\ref{pjacko}) by $\M_n(A)(\overline \de)$. To summarise, we have shown that there is a graded ring isomorphism 
\begin{equation}\label{fatloss}
\boxed{\End_A\big(A(-\delta_1)\oplus A(-\delta_2) \oplus \dots \oplus A(-\delta_n)\big) \conggr \M_n(A)(\delta_1,\dots,\delta_n).}
\end{equation}

\begin{remark}\scm[Matrix rings of a non-abelian group grading]\label{fre298}
\vspace{0.2cm}

If the grade group $\Gamma$ is non-abelian, the homogeneous components of the matrix ring take the following form: 
\[
{\M_n(A)(\de_1\dots,\de_n)}_{\la} =
\begin{pmatrix}
A_{\de_1  \la  \de_1^{-1}} & A_{\de_1  \la  \de_2^{-1}} & \cdots &
A_{\de_1  \la  \de_n^{-1}} \\
A_{\de_2  \la  \de_1^{-1}} & A_{\de_2  \la  \de_2^{-1}} & \cdots &
A_{\de_2  \la  \de_n^{-1}} \\
\vdots  & \vdots  & \ddots & \vdots  \\
A_{\de_n  \la  \de_1^{-1}} & A_{\de_n  \la  \de_2^{-1}} & \cdots &
A_{\de_n \la  \de_n^{-1}}
\end{pmatrix}.
\]

\end{remark}

Consider the graded $A$-bimodule $A^n(\overline \de)=A(\de_1)\oplus \dots \oplus A(\de_n).$ Then one can check that $A^n(\overline \de)$ is a graded right $\M_n(A)(\overline \de)$-module and $A^n(-\overline \de)$ is a graded left $\M_n(A)(\overline \de)$-module, where 
$-\overline \de=(-\de_1, \dots,-\de_n)$. These will be used in the graded Morita theory (see Proposition~\ref{instancegg}). 
 
One can easily check the graded ring $R=\M_n(A)(\overline \de)$, where $\overline \de=(\de_1,\dots\de_n)$, $\de_i \in \Gamma$, has the support 
\begin{equation}\label{kkjjhhs}
\Gamma_R=\bigcup_{1\leq i,j\leq n}  \Gamma_A+\de_i-\de_j .
\end{equation}

One can rearrange the shift, without changing the graded matrix ring as the following theorem shows (see also~\cite[pp.~60-61]{grrings}). 

\begin{theorem}\label{ppooahnrq}
Let $A$ be a $\Gamma$\!-graded ring and $\delta_i \in \Gamma$, $1\leq i \leq n$. 
\begin{enumerate}[\upshape(1)]
\item If $\alpha \in \Gamma$, and $\pi \in S_n$ is a permutation then 
\begin{equation} \label{pqow1}
\M_n (A)(\de_1 , \ldots , \de_n)\cong_{\gr} \M_n (A)(\de_{\pi(1)}+\alpha , \ldots , \de_{\pi(n)}+\alpha).
\end{equation}

\item   If $\tau_1,\dots, \tau_n \in \Ga^*_A$, then 
\begin{equation}\label{pqow2}
\M_n (A)(\de_1 , \ldots , \de_n)\cong_{\gr} \M_n (A)(\de_1+\tau_1 , \ldots , \de_n+\tau_n).
\end{equation}
\end{enumerate}
\end{theorem}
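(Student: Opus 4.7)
The plan is to exploit the identification~(\ref{fatloss}), which presents the graded matrix ring $\M_n(A)(\delta_1,\ldots,\delta_n)$ as $\End_A(M)$ with $M=A(-\delta_1)\oplus\cdots\oplus A(-\delta_n)$. Under this translation, it suffices to produce graded isomorphisms between the relevant direct sums of shifted copies of $A$, or else directly between the endomorphism rings; once this is done, (\ref{fatloss}) applied on both ends delivers both (\ref{pqow1}) and (\ref{pqow2}).

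For part (1), I would first record the elementary fact that for any graded $A$-module $N$ and any $\alpha\in\Gamma$, the underlying set and $A$-action of $N(\alpha)$ are the same as those of $N$, and a function $f\colon N\to N$ lies in $\Hom_A(N,N)_\lambda$ with respect to the original grading if and only if it does with respect to the $\alpha$-shifted grading, because $f(N(\alpha)_\gamma)\subseteq N(\alpha)_{\gamma+\lambda}$ unwinds to $f(N_{\alpha+\gamma})\subseteq N_{\alpha+\gamma+\lambda}$. Consequently $\End_A(N)=\End_A(N(\alpha))$ as $\Gamma$\!-graded rings, so no twisting creeps in. Next, the permutation of summands yields a graded $A$-module isomorphism
\[
A(-\delta_{\pi(1)})\oplus\cdots\oplus A(-\delta_{\pi(n)})\cong_{\gr}A(-\delta_1)\oplus\cdots\oplus A(-\delta_n)=M.
\]
Shifting the left-hand side by $-\alpha$ gives $A(-\delta_{\pi(1)}-\alpha)\oplus\cdots\oplus A(-\delta_{\pi(n)}-\alpha)\cong_{\gr}M(-\alpha)$. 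Taking $\End_A(-)$ throughout and invoking (\ref{fatloss}) on each end produces (\ref{pqow1}).

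For part (2), the key input is that each $\tau_i\in\Gamma_A^*$ admits an invertible homogeneous element $u_i\in A_{\tau_i}^*$, whose inverse lies in $A_{-\tau_i}$ by Proposition~\ref{basicsofgradedrings}(4). Left multiplication by $u_i$ defines a map $\phi_i\colon A(-\delta_i-\tau_i)\to A(-\delta_i)$, $a\mapsto u_ia$. It is right $A$-linear by associativity of multiplication, and it is graded of degree zero since it sends $A_{\gamma-\delta_i-\tau_i}$ into $A_{\tau_i+\gamma-\delta_i-\tau_i}=A_{\gamma-\delta_i}$. Left multiplication by $u_i^{-1}$ provides a two-sided inverse, so $\phi_i$ is a graded $A$-module isomorphism. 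Taking the direct sum of the $\phi_i$ and then $\End_A(-)$, (\ref{fatloss}) produces (\ref{pqow2}).

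No step appears especially delicate; the only items requiring care are the degree bookkeeping that identifies $\End_A(N)$ with $\End_A(N(\alpha))$ as graded rings in part (1), and the verification in part (2) that the left-multiplication maps are right $A$-linear—a point that rests on associativity alone, not on commuting $u_i$ past module elements, which is why only $\tau_i\in\Gamma_A^*$ (and not centrality) is needed.
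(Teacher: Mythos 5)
Your proof is correct and follows essentially the same route as the paper's: both realise $\M_n(A)(\de_1,\dots,\de_n)$ as $\End_A\big(A(-\de_1)\oplus\cdots\oplus A(-\de_n)\big)$ via~(\ref{fatloss}), handle the permutation by reshuffling the summands (the paper phrases this as conjugation by the permutation matrix $P_\pi$), and handle part~(2) by the change of basis $v_i\mapsto v_iu_i$, which is exactly your left-multiplication isomorphism $A(-\de_i-\tau_i)\cong_{\gr}A(-\de_i)$. The only cosmetic difference is that you absorb the common shift $\alpha$ through the identity $\End_A(N)=\End_A(N(\alpha))$, whereas the paper reads the same fact off the component description~(\ref{mmkkhh}); these observations are interchangeable.
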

 
 \begin{proof}
(1) Let $V$ be a graded free module over $A$ with a homogeneous basis $v_1,\dots,v_n$ of degree $\lambda_1,\dots,\lambda_n$, respectively.  It is easy to see that ((\ref{excampp})) 
\[V\cong_{\gr}A(-\la_1)\oplus\dots\oplus A(-\la_n),\]
and thus 
$\End_A(V)\cong_{\gr} \M_n(A)(\lambda_1,\dots,\lambda_n)$ (see~(\ref{fatloss})). Now let  $\pi \in S_n$. Rearranging the homogeneous basis as $v_{\pi(1)},\dots,v_{\pi(n)}$ and 
defining the $A$-graded isomorphism $\phi:V \rightarrow V$ by $\phi(v_i)=v_{\pi^{-1}(i)}$, we get a graded isomorphism in the level of endomorphism rings, called $\phi$ again  
\begin{equation}\label{hgyhdre4}
 \M_n(A)(\lambda_1,\dots,\lambda_n)\cong_{\gr} \End_A(V) \stackrel{\phi}{\longrightarrow} \End_A(V)\cong_{\gr}  \M_n(A)(\lambda_{\pi(1)},\dots,\lambda_{\pi(n)}).
 \end{equation}
Moreover, (\ref{mmkkhh}) shows that it does not make any difference adding a fixed $\alpha \in \Gamma$ to each of the entries in the shift. This gives us (\ref{pqow1}). 

In fact, the isomorphism $\phi$ in~(\ref{hgyhdre4}) is defined as $\phi(M)=P_{\pi} M P_{\pi}^{-1}$, where $P_{\pi}$ is the $n\times n$ permutation matrix with entries at $(i,\pi(i))$, $1\leq i \leq n$, to be 1 and zero elsewhere. \index{permutation matrix}

(2) For (\ref{pqow2}), let $\tau_i\in \Ga^*_A$, $1\leq i \leq n$, that is, $\tau_i=\deg(u_i)$ for some units $u_i \in A^h$. Consider the basis $v_iu_i$, $1\leq i \leq n$ for $V$. With this basis, 
\[\End_A(V)\cong_{\gr} \M_n (A)(\de_1+\tau_1 , \ldots , \de_n+\tau_n).\] Consider the $A$-graded isomorphism $\id:V\rightarrow V$, by $\id(v_i)= (v_i u_i)u_i^{-1}$. A similar argument as Part (1) now gives (\ref{pqow2}).  The isomorphism is given by $\phi(M)=P^{-1}MP$, where $P=D[u_1,\dots, u_n]$ is the diagonal matrix. 
 \end{proof}

Note that if $A$ has a trivial $\Gamma$\!-grading, \ie $A=\bigoplus_{\gamma\in \Gamma}  A_\gamma$, where 
$A_0=A$ and $A_\gamma=0$, for $0\not = \gamma \in \Gamma$,  this construction induces a \emph{good grading} \index{good grading} on $\M_n(A)$. 
By definition, this is a grading on $\M_n(A)$ such that the matrix unit \index{matrix units} $\e_{ij}$, the
matrix with $1$ in the $ij$-position and zero everywhere else, is homogeneous, for $1\leq i,j\leq n$. \index{$\e_{ij}$, matrix unit}
This particular group gradings on matrix rings have been
studied by D\u{a}sc\u{a}lescu et al.~\cite{dascalescu} (see Remark~\ref{goodig}). Therefore for $x \in A$, 
\begin{equation}\label{oiuytr}
\deg(\e_{ij}(x))=\delta_i - \delta_j.
\end{equation}

One can easily check that for a ring $A$ with trivial $\Gamma$\!-grading,  the graded ring $\M_n(A)(\overline \de)$, where $\overline \de=(\de_1,\dots\de_n)$, $\de_i \in \Gamma$, has the support 
$\{\,\de_i-\de_j \mid 1\leq i,j\leq n\,\}$. (This follows also immediately from~(\ref{kkjjhhs}).) 
  
The grading on matrices appears quite naturally in the graded rings arising from graphs. We will show that the graded structure Leavitt path algebras of acyclic and comet graphs are in effect the graded matrix rings as constructed above (see~\S\ref{creekside}, in particular, Theorems~\ref{gra1} and~\ref{grCn1}).

\begin{example}
Let $A$ be a ring, $\Gamma$ a group and $A$ graded trivially by $\Gamma$, \ie $A$ is concentrated in degree zero (see~\S\ref{jdjthu}). 
Consider the $\Gamma$\!-graded matrix ring \[R=\M_n(A)(0,-1,\dots,-n+1),\] where $n\in \mathbb N.$  By (\ref{kkjjhhs}) the support of $R$ is the set $\{-n+1,-n+2,\dots, n-2,n-1\}$. By (\ref{mmkkhh}), for $k \in \mathbb Z$ we have the following arrangements for the homogeneous elements of $R$, 
\[R_k=\left(\begin{array}{cccc}
A_k & A_{k-1} & \dots & A_{k+1-n}\\
A_{k+1} & A_k &\dots & A_{k+2-n}  \\
 \vdots & \vdots &  \ddots & \vdots \\
A_{k+n-1} & A_{k+n-2} & \dots & A_k  \\
\end{array}\right). 
\]
Thus the 0-component ring is  \index{0-component ring of a graded ring}
\[
R_0=\left(\begin{array}{cccc}
A & 0 & \dots & 0\\
0 & A &\dots & 0  \\
 \vdots & \vdots &  \ddots & \vdots \\
0 & 0 & \dots & A \\
\end{array}\right)\]
and
\begin{align*}
R_{-1}&=\left(\begin{array}{cccc}
0 & 0 & \dots & 0\\
A & 0&\dots & 0  \\
 \vdots & \ddots &  \ddots & \vdots \\
0 &  \dots & A & 0  \\
\end{array}\right), \dots,
R_{-n+1}=\left(\begin{array}{cccc}
0  & 0 & \dots & 0\\
0 & 0 &\dots & 0  \\
 \vdots & \vdots &  \ddots & \vdots \\
A & 0 & \dots &0  \\
\end{array}\right)\\
R_{1}&=\left(\begin{array}{cccc}
0 & A & \dots & 0\\
0 & 0 & \ddots & \vdots  \\
 \vdots & \vdots &  \ddots & A \\
0 & 0 & \dots & 0 \\
\end{array}\right),\dots,\quad 
R_{n-1}=\left(\begin{array}{cccc}
0  & 0 & \dots & A\\
0 & 0 &\dots & 0  \\
 \vdots & \vdots &  \ddots & \vdots \\
0 & 0 & \dots &0  \\
\end{array}\right).
 \end{align*}

In Chapter~\ref{moritanji}, we will see that $R$ is graded Morita equivalent to the trivially graded ring $A$. 

\end{example}

\begin{example} Let $S$ be a ring, $S[x,x^{-1}]$ the $\mathbb Z$-graded Laurent polynomial ring and $A=S[x^3,x^{-3}]$ the $\mathbb Z$-graded subring with support $3\mathbb Z$ (see Example~\ref{egofgrdivisionrings0}). 
Consider the $\mathbb Z$-graded matrix ring \[\M_6(A)(0,1,1,2,2,3).\] By (\ref{mmkkhh}), the homogeneous elements of degree 1 have the form
$$\left(\begin{array}{cccccc}
A_1 & A_0 & A_0 & A_{-1} & A_{-1} & A_{-2} \\
A_2 & A_1 & A_1 & A_0 & A_0 & A_{-1} \\
A_2 & A_1 & A_1 & A_0 & A_0 & A_{-1} \\
A_3 & A_2 & A_2 & A_1 & A_1 & A_0 \\
A_3 & A_2 & A_2 & A_1 & A_1 & A_0 \\
A_4 & A_3 & A_3 & A_2 & A_2 & A_1 \\
\end{array}\right)=
\left(\begin{array}{cccccc}
0 & S & S & 0 & 0 & 0 \\
0 & 0 & 0 & S & S & 0 \\
0 & 0 & 0 & S & S & 0 \\
S x^3 & 0 & 0 & 0 & 0 & S \\
S x^3 & 0 & 0 & 0 & 0 & S \\
0 & S x^3 & S x^3 & 0 & 0 & 0 \\
\end{array}\right).$$
\end{example}

\begin{example}\label{fgfdgs}
Let $K$ be a field. Consider the $\mathbb Z$-graded ring \[R=\M_5(K)(0,1,2,2,3).\] Then the support of this ring is $\{0,\pm1,\pm2\}$ and by~(\ref{mmkkhh4}) the zero homogeneous component (which is a ring) is 
\[
R_0=
\begin{pmatrix}
K & 0 & 0& 0 &0 \\
0 & K & 0& 0 &0 \\
0 & 0 & K& K &0 \\
0 & 0 & K& K &0 \\
0 & 0 & 0& 0 &K
\end{pmatrix}
\cong K\oplus K \oplus \M_2(K)\oplus K. \]
\end{example}

\begin{example}\label{vpnnabil} \scm[$\M_n(A)(\delta_1,\dots,\delta_n)$ with  $\Gamma=\{\delta_1,\dots,\delta_n\}$ is a skew group ring]
\vspace{0.2cm}

Let $A$ be a $\Gamma$\!-graded ring, where $\Gamma=\{\delta_1,\dots,\delta_n\}$ is a finite group. Consider $\M_n(A)(\delta_1,\dots,\delta_n)$, which is a $\Gamma$\!-graded ring with its homogeneous components described by~(\ref{mmkkhh}).  We will show that this graded ring is the skew group ring $\M_n(A)_0\star \Gamma$. In particular, by Proposition~\ref{crossedproductstronglygradedprop}(3), it is a strongly graded ring. Consider the matrix $u_\alpha \in \M_n(A)(\delta_1,\dots,\delta_n)_\alpha$, where in each row $i$, we have $1$ in 
$(i,j)$ position, where $\delta_j-\delta_i+\alpha=0$ and zero everywhere else. One can easily see that $u_\alpha$ is a permutation matrix with exactly one $1$ in each row and column. Moreover, for $\alpha, \beta \in \Gamma$, $u_\alpha u_\beta=u_{\alpha+\beta}$. 
Indeed, consider the $i$th row of $u_\alpha$, with the only $1$ in $j$th column where $\delta_j-\delta_i+\alpha=0$.  Now, consider the $j$th row of $u_\beta$ with a $k$th column such that $\delta_k-\delta_j+\beta=0$ and so with $1$ in $(j,k)$ row. Thus multiplying $u_\alpha$ with $u_\beta$, we have zero everywhere in $i$th row except in $(i,k)$th position. On the other hand since $\delta_k-\delta_i+\alpha+\beta=0$, in $i$th row of $u_{\alpha+\beta}$ we have zero except in $(i,k)$th position. Repeating this argument for each row of $u_\alpha$ shows that $u_\alpha u_\beta=u_{\alpha+\beta}$. 

Now defining $\phi:\Gamma \rightarrow \Aut(\M_n(A)_0)$ by $\phi(\alpha)(a)=u_\alpha a {u_\alpha}^{-1}$, and setting the 2-cocycle $\psi$ trivial, 
by~\S\ref{khgfewa1}, $R= \M_n(A)_0 \star_{\phi} \Gamma$. 

This was observed in~\cite{nats1}, where it was proved that $\M_n(A)(\delta_1,\dots,\delta_n)_0$ is isomorphic to the smash product of 
Cohen and Montgomery~\cite{cohenmont} (see Remark~\ref{bizet}). 
\end{example}


\begin{example}
The following examples (from
\cite[Example~1.3]{dascalescu}) provide two instances of $\mathbb
Z_2$-grading on $\M_2 (K)$, where $K$ is a field. The first grading is a good
grading, whereas the second one is not a good grading.
\begin{enumerate}
\item Let $R = \M_2 (K)$ with the $\mathbb Z_2$-grading defined by
\begin{align*}
R_{0} = \left\{
\begin{pmatrix}
a & 0 \\ 0 & b
\end{pmatrix} \mid a,b \in K \right\}
\mathrm{ \;\;\; and \;\;\; } R_{1} = \left\{
\begin{pmatrix}
0 & c \\d & 0
\end{pmatrix} \mid c,d \in K \right\}.
\end{align*}
Since $e_{11},e_{22} \in R_0$ and   $e_{12},e_{21} \in R_1$, by definition, this is a good grading. Note that $R=\M_2(K)(0,1)$. 

\item Let $S = \M_2 (K)$ with the $\mathbb Z_2$-grading defined by
\begin{align*}
S_{0} = \left\{
\begin{pmatrix}
a & b-a \\ 0 & b
\end{pmatrix} \mid a,b \in K \right\}
\mathrm{ \;\;\; and \;\;\; } S_{1} = \left\{
\begin{pmatrix}
d & c \\d & -d
\end{pmatrix} \mid c,d \in K \right\}.
\end{align*}
Then $S$ is a graded ring, such that the $\mathbb Z_2$-grading is
not a good grading, since $e_{11}$ is not homogeneous. Moreover, comparing $S_0$ with (\ref{mmkkhh4}), shows that the grading on $S$ does not come from the construction given by (\ref{pjacko}). 

Consider the map 
\[
f :R \lra S; \;\;\; 
\begin{pmatrix}
a &b \\ c&d
\end{pmatrix}
\lmps
\begin{pmatrix}
a+c & b+d -a-c \\ c &d-c
\end{pmatrix}.
\] This map is in fact a graded ring isomorphism, and so $R\conggr S$. 
This shows that the good grading is not preserved under graded isomorphisms. 
\end{enumerate}
\end{example}

\begin{remark} \scm[Good gradings on matrix algebras]\label{goodig}
\vspace{0.2cm}

Let $K$ be a field and $\Gamma$ be an abelian group. One can put a $\Gamma$\!-grading on the ring $\M_n(K)$, by assigning a degree (an element of the group $\Gamma$) to each matrix unit $\e_{ij}$, $1\leq i,j\leq n$. This is called a \emph{good grading} or an \emph{elementary grading}. \index{good grading}   \index{matrix units} 
\index{elementary grading} This grading has been studied in~\cite{dascalescu}. In particular it has been shown that a grading on $\M_n(K)$ is good if and only if it can be described as $\M_n(K)(\de_1,\dots,\de_n)$ for some $\delta_i \in \Gamma$. Moreover, any grading on $\M_n(K)$ is a good grading if $\Gamma$ is torsion free. It has also been shown that
 if $R=\M_n(K)$ has a $\Gamma$\!-grading such that $\e_{ij}$ is a homogeneous for some $1\leq i,j\leq n$, then there exists a good grading on $S=\M_n(K)$ with a graded isomorphism $R\cong S$. It is shown that if $\Gamma$ is finite, then the number of good gradings on $\M_n(K)$ is $|\Gamma|^{n-1}$. Moreover, (for a finite $\Gamma$) the class of strongly graded and crossed product good gradings of $\M_n(K)$ have been classified. 
\end{remark}

\begin{remark} 
Let $A$ be a $\Gamma$\!-graded ring and $\Omega$ a subgroup of $\Gamma$. Then $A$ can be considered as $\Gamma/\Omega$-graded ring.  Recall that this gives the forgetful functor $U:\mathcal R^{\Gamma}\rightarrow \mathcal R^{\Gamma/\Omega}$ (\S\ref{mconfi1}). Similarly on the level of modules, one has (again) the forgetful functor   $U:\Gr[\Gamma] A \rightarrow \Gr[\Gamma/\Omega] A$ (\S\ref{bill100}). 

If $M$ is a finitely generated $A$-module, then by Theorem~\ref{crazhorn}, $\End(M)$ is a $\Gamma$\!-graded ring. One can observe that 
\[U(\End(M)) = \End(U(M)).\] In particular, $\M_n(A)(\de_1,\dots,\de_n)$ as a $\Gamma/\Omega$-graded ring coincides with
\[\M_n(A)(\Omega+\de_1,\dots,\Omega+\de_n),\] where $A$ here in the latter case is considered as a $\Gamma/\Omega$ ring.  
\end{remark}

\begin{remark} \scm[Grading on matrix rings with infinite rows and columns]\label{infigoodig}
\vspace{0.2cm}

Let $A$ be a $\Gamma$\!-graded ring and $I$ an index set which can be uncountable. Denote by $\M_I(A)$ the matrix ring with entries indexed by $I\times I$, namely, 
$a_{ij}\in A$, where $i,j\in I$, which are all but a finite number are nonzero. For $i\in I$, choose $\delta_{i} \in \Gamma$ and following the grading on usual matrix rings (see~(\ref{hogr})) for $a\in A^h$, define, 
\begin{equation}\label{infihogr}
\deg(a_{ij})=\deg(a)+\delta_i-\delta_j.
\end{equation}
This makes $\M_I(A)$ a $\Gamma$\!-graded ring. Clearly if $I$ is finite, then this graded ring coincides with $\M_n(A)(\delta_1,\dots,\delta_n)$, where $|I|=n.$
\end{remark}

\subsection{Homogeneous idempotents calculus}\label{idemptis} \index{homogeneous idempotent}

The idempotents naturally arise in relation with decomposition of rings and modules. The following facts about idempotents are well known in the ungraded setting and one can check that they translate into the graded setting with similar proofs (cf.~\cite[\S 21]{lamfc}). Let $P_i$, $1\leq i \leq l$,  be graded right ideals of $A$ such that $A=P_1\oplus\dots \oplus P_l$. Then there are homogeneous orthogonal idempotents $e_i$  (hence of degree zero) such that $1=e_1+\dots+e_l$ and $e_iA=P_i$. 

Let $e$ and $f$ be  homogeneous idempotent elements in the graded ring $A$. (Note that, in general, there are nonhomogeneous idempotents in a graded ring.) Let $\theta:eA \rightarrow fA$ be a 
right $A$-module homomorphism. 
Then $\theta(e)=\theta(e^2)=\theta(e)e=fae$ for some $a\in A$ and for $b \in eA$, $\theta(b)=\theta(eb)=\theta(e)b$. 
This shows that there is a map 
\begin{align}\label{hgwob}
\Hom_A(eA,fA) & \rightarrow fAe,\\
\theta & \mapsto \theta(e) \notag
\end{align}
and one can easily check this is a group isomorphism. We have \[fAe=\bigoplus_{\ga \in \Gamma} fA_\ga e\] and by Theorem~\ref{crazhorn}, 
\[\Hom_A(eA,fA)\cong \bigoplus_{\ga \in \Gamma}\Hom_A(eA,fA)_\ga.\] 
Then one can see that the homomorphism~(\ref{hgwob}) respects the graded decomposition.

 Now if $\theta:eA\rightarrow fA(\alpha)$, where $\alpha \in \Gamma$, is a graded $A$-isomorphism, then $x=\theta(e)\in fA_\alpha e$ and  $y=\theta^{-1}(f)\in eA_{-\alpha}f$, where $x$ and $y$ are homogeneous of degrees $\alpha$ and $-\alpha$, respectively, such that $yx=e$ and $xy=f$.  

Finally, for $f=1$, the map~(\ref{hgwob}) gives that 
\[
\Hom_A(eA, A)  \rightarrow Ae,\\
\]
is a graded left $A$-module isomorphism and for $f=e$, 
\[
\End_A(eA)\rightarrow eAe,\\
\]
is a graded ring isomorphism. In particular, we have a ring isomorphism 
\[\End_A(eA)_0=\End_{\Gr A}(eA) \cong eA_0e.\]

These facts will be used later in Theorem~\ref{catgrhsf}.

\subsection{Graded matrix units}

Let $A$ be a $\Gamma$\!-graded ring. Modelling on the properties of the matrix units $\e_{ij}$, we call a set of homogeneous elements 
$\{\, e_{ij}\in A \mid1\leq i,j\leq n\, \}$,  a set of \emph{graded matrix units} \index{graded matrix units} if 
\begin{equation}\label{maxplanckwed}
e_{ij} e_{kl}=\delta_{jk}e_{il},
\end{equation} 
where $\delta_{jk}$ are the Kronecker deltas. Let $\deg(e_{i1})=\delta_i$. From~(\ref{maxplanckwed}) it follows that 
$\deg(e_{ii})=0$, $\deg(e_{1i})=-\delta_i$ and 
\begin{equation} \label{maxbeannerstr}
\deg(e_{ij})=\delta_i-\delta_j.
\end{equation}

The above set is called a \emph{full set of graded matrix units} \index{full set of matrix units} if $\sum_{i=1}^n e_{ii}=1$. If a graded ring contains a full set of graded matrix units, then the ring is of the form of a matrix ring over an appropriate graded ring (Lemma~\ref{mengfy4}). We can use this to characterise the two-sided ideals of graded matrix rings (Corollary~\ref{mhfy5874}). For this we adopt Lam's presentation~\cite[\S17A]{lam} to the graded setting. 

\begin{lemma}\label{mengfy4}
Let $R$ be a $\Gamma$\!-graded ring. Then $R=\M_n(A)(\delta_1,\dots,\delta_n)$ for some graded ring $A$ if and only if $R$ has a full set of graded matrix units $\{\, e_{ij}\in R \mid1\leq i,j\leq n\, \}$.
\end{lemma}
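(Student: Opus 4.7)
The plan is to prove the two implications separately, with the forward direction being essentially a verification and the converse requiring an explicit construction of $A$ and an isomorphism.

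For the forward direction, suppose $R=\M_n(A)(\delta_1,\ldots,\delta_n)$ for a $\Gamma$\!-graded ring $A$. Then I would take $e_{ij}:=\e_{ij}(1_A)$, the standard matrix unit with $1_A$ in position $(i,j)$ and zeros elsewhere. By equation~\eqref{hogr} we have $\deg(e_{ij})=\deg(1_A)+\delta_i-\delta_j=\delta_i-\delta_j$, so each $e_{ij}$ is homogeneous. The relations $e_{ij}e_{kl}=\delta_{jk}e_{il}$ hold by direct computation, and $\sum_i e_{ii}$ is the identity matrix, so $\{e_{ij}\}$ is a full set of graded matrix units.

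For the converse, suppose $R$ contains a full set of graded matrix units $\{e_{ij}\}$. Set $\delta_i:=\deg(e_{i1})$ so that $\deg(e_{ij})=\delta_i-\delta_j$ by~\eqref{maxbeannerstr}. Define $A:=e_{11}Re_{11}$, which is a subring of $R$ with identity $e_{11}$. Since $e_{11}$ is homogeneous of degree $0$, $A$ inherits a $\Gamma$\!-grading with $A_\gamma=e_{11}R_\gamma e_{11}$. Define
\begin{align*}
\phi:\M_n(A)(\delta_1,\ldots,\delta_n)&\longrightarrow R,\\
\e_{ij}(a)&\longmapsto e_{i1}\,a\,e_{1j},
\end{align*}
extended additively. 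I would verify three things:
\begin{enumerate}[\upshape(1)]
\item $\phi$ is a ring homomorphism: using $a\in A$ so that $a=e_{11}ae_{11}$, one computes
$\phi(\e_{ij}(a))\phi(\e_{kl}(b))=e_{i1}a e_{1j}e_{k1}b e_{1l}=\delta_{jk}\,e_{i1}(ab)e_{1l}=\phi(\e_{ij}(a)\e_{kl}(b))$.
\item $\phi$ is graded: for $a\in A_\gamma$, the element $e_{i1}ae_{1j}$ lies in $R_{\delta_i+\gamma-\delta_j}$, which matches the degree of $\e_{ij}(a)$ in $\M_n(A)(\delta_1,\ldots,\delta_n)$ by~\eqref{hogr}.
\item $\phi$ is bijective: surjectivity follows from the Peirce decomposition $r=\big(\sum_i e_{ii}\big)r\big(\sum_j e_{jj}\big)=\sum_{i,j}e_{i1}(e_{1i}re_{j1})e_{1j}=\phi\big(\sum_{i,j}\e_{ij}(e_{1i}re_{j1})\big)$, where $e_{1i}re_{j1}\in e_{11}Re_{11}=A$; injectivity follows because if $\phi((a_{ij}))=0$ then $e_{1i}\phi((a_{ij}))e_{j1}=a_{ij}=0$.
\end{enumerate}

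The argument is conceptually straightforward; the main bookkeeping hurdle is step~(2), making sure that the grading on $A$ inherited from $R$ and the shift data $(\delta_1,\ldots,\delta_n)$ recover exactly the grading on $\M_n(A)(\delta_1,\ldots,\delta_n)$ described in~\eqref{mmkkhh}. This reduces to the degree computation $\deg(e_{i1}ae_{1j})=\delta_i+\gamma-\delta_j$ for $a\in A_\gamma$, which is immediate from the degrees of the $e_{ij}$. No further technical difficulty is expected.
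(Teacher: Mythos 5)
Your proof is correct, but it takes a genuinely different (equally classical) route from the paper. For the converse, the paper sets $A$ to be the \emph{centraliser} of $\{e_{ij}\}$ in $R$, shows via the averaging element $a_{ij}=\sum_{k=1}^{n}e_{ki}xe_{jk}$ that every $x\in R$ decomposes as $x=\sum_{i,j}a_{ij}e_{ij}$ with $a_{ij}\in A$, deduces that $R$ is graded free over $A$ with homogeneous basis $\{e_{ij}\}$, and then reads off the isomorphism $ae_{ij}\mapsto \e_{ij}(a)$; you instead take the corner ring $A=e_{11}Re_{11}$ and build the inverse maps directly from the Peirce decomposition $r=\sum_{i,j}e_{i1}(e_{1i}re_{j1})e_{1j}$. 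The two coefficient rings are graded isomorphic via $a\mapsto ae_{11}=e_{11}ae_{11}$ (with inverse $b\mapsto \sum_k e_{k1}be_{1k}$, both degree-preserving since $\deg(e_{k1})+\deg(e_{1k})=0$), so the conclusions agree. Your version is more economical: bijectivity of $\phi$ is an explicit two-line computation and no free-module verification is needed. What the paper's centraliser formulation buys is the two corollaries that immediately follow Lemma~\ref{mengfy4}: a graded homomorphism $f\colon R\to S$ carrying matrix units to matrix units carries the centraliser into the centraliser, and the linear independence of the $e_{ij}$ over the centraliser is exactly what is invoked in the proof of Corollary~\ref{mhfy5874}; with the corner-ring description these functoriality statements still hold ($f(e_{11}Re_{11})\subseteq f(e_{11})Sf(e_{11})$) but would need to be re-derived. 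One cosmetic remark: your convention $\delta_i=\deg(e_{i1})$ forces $\delta_1=\deg(e_{11})=0$, so strictly you produce the shift $(0,\delta_2,\dots,\delta_n)$; this is harmless, since by Theorem~\ref{ppooahnrq}(1) a uniform translation of the shift leaves the graded matrix ring unchanged --- the same normalisation the paper performs in the corollary after the lemma.
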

\begin{proof}
One direction is obvious. Suppose $\{e_{ij}\in R \mid1\leq i,j\leq n\}$ is a full set of graded matrix units in $R$ and $A$ is its centraliser in $R$ which is a graded subring of $R$. We show that $R$ is a graded free $A$-module with the basis $\{e_{ij}\}$. Let $x\in R$ and set 
\[a_{ij}=\sum_{k=1}^{n} e_{ki} x e_{jk} \in R.\]
Since $a_{ij} e_{uv}=e_{ui}xe_{jv}=e_{uv}a_{ij}$, it follows that $a_{ij} \in A$. Let $u=i, v=j$. Then 
$a_{ij}e_{ij}=e_{ii}xe_{jj}$, and since $\{e_{ij}\}$ is full, $\sum_{i,j} a_{ij} e_{ij}=\sum_{ij}e_{ii}xe_{jj}=x $. This shows that $\{e_{ij}\}$ generates $R$ as an $A$-module. It is easy to see that $\{e_{ij} \mid1\leq i,j\leq n\}$ is linearly independent as well. Let $\deg(e_{i1})=\delta_i$. Then (\ref{maxbeannerstr}) shows that the map 
\begin{align*}
R &\longrightarrow \M_n(A)(\delta_1,\dots,\delta_n),\\
ae_{ij} &\longmapsto \e_{ij}(a)
\end{align*}
induces a graded isomorphism. 
\end{proof}

\begin{corollary}
Let $A$, $R$ and $S$ be $\Gamma$\!-graded rings. Suppose \[R=\M_n(A)(\delta_1,\dots,\delta_n)\]  
and there is a graded ring homomorphism $f:R \rightarrow S$. Then \[S=\M_n(B)(\delta_1,\dots,\delta_n),\] for a graded ring $B$ and $f$ is induced by a graded homomorphism $f_0:A\rightarrow B$.  
\end{corollary}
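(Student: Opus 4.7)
The plan is to leverage Lemma~\ref{mengfy4} twice: once to extract the matrix unit structure from $R$, and once to transfer that structure to $S$ via $f$. The key observation is that a graded homomorphism sends a full set of graded matrix units to a full set of graded matrix units.

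First, since $R = \M_n(A)(\delta_1,\dots,\delta_n)$, Lemma~\ref{mengfy4} furnishes a full set of graded matrix units $\{e_{ij}\}_{1\le i,j \le n} \subseteq R$ with $\deg(e_{ij}) = \delta_i - \delta_j$, and $A$ identifies with their centraliser in $R$. Set $e'_{ij} := f(e_{ij}) \in S$. Since $f$ is a graded ring homomorphism preserving identity, the elements $e'_{ij}$ are homogeneous of degree $\delta_i - \delta_j$, satisfy the matrix unit relations $e'_{ij} e'_{kl} = f(e_{ij} e_{kl}) = \delta_{jk} e'_{il}$, and $\sum_i e'_{ii} = f(\sum_i e_{ii}) = f(1) = 1$. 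Hence $\{e'_{ij}\}$ is a full set of graded matrix units in $S$, so Lemma~\ref{mengfy4} gives $S \cong_{\mathrm{gr}} \M_n(B)(\delta_1,\dots,\delta_n)$, where $B$ is the centraliser of $\{e'_{ij}\}$ in $S$. Being the centraliser of a set of homogeneous elements, $B$ is a graded subring of $S$.

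Next I would define $f_0 := f|_A$. If $a \in A$ commutes with every $e_{ij}$, then $f(a) e'_{ij} = f(a e_{ij}) = f(e_{ij} a) = e'_{ij} f(a)$, so $f(a) \in B$. Thus $f_0 : A \to B$ is well-defined, and it is a graded ring homomorphism because $f$ is. To see that $f$ is induced by $f_0$ entrywise, recall from the proof of Lemma~\ref{mengfy4} that every $x \in R$ admits the unique expansion $x = \sum_{i,j} a_{ij} e_{ij}$ with $a_{ij} \in A$; applying $f$ gives $f(x) = \sum_{i,j} f_0(a_{ij}) e'_{ij}$, which under the identifications $R = \M_n(A)(\delta_1,\dots,\delta_n)$ and $S = \M_n(B)(\delta_1,\dots,\delta_n)$ is precisely the entrywise application of $f_0$.

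There is no real obstacle here; the only point that needs a moment's care is checking that the identity on the matrix units transports (which uses the convention that homomorphisms are unital) and that the centraliser of a set of homogeneous elements is itself graded, both of which are already in place in the excerpt. The proof is essentially a corollary of Lemma~\ref{mengfy4} together with functoriality of the centraliser construction.
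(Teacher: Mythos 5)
Your proposal is correct and follows essentially the same route as the paper's own proof: push the standard graded matrix units of $R$ forward along $f$, observe they form a full set of graded matrix units of the same degrees $\delta_i-\delta_j$ in $S$, take $B$ to be their centraliser, and apply Lemma~\ref{mengfy4} together with the translation invariance of the shift (the lemma literally yields $\M_n(B)(\delta_1-\delta_1,\dots,\delta_n-\delta_1)=\M_n(B)(\delta_1,\dots,\delta_n)$, a point the paper states explicitly and you elide harmlessly). Your closing verification that $f$ acts entrywise via $f_0=f|_A$ matches the paper's concluding remark, just spelled out in slightly more detail.
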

\begin{proof}
Consider the standard full graded matrix units $\{ \e_{ij}\mid 1\leq i,j \leq n\}$ in $R$. Then $\{f(\e_{ij})\}$ is a set of full graded matrix units in $S$. Since $f$ is a graded homomorphism,   
$\deg(f(\e_{ij}))=\deg(e_{ij})=\delta_i-\delta_j$. Let $B$ be the centraliser of this set in $S$. By Lemma~\ref{mengfy4} (and its proof),
\[S=\M_n(B)(0,\delta_2-\delta_1,\dots,\delta_n-\delta_1)=\M_n(B)(\delta_1,\delta_2,\dots,\delta_n). \]
Since $A$ is the centraliser of $\{ \e_{ij}\}$, $f$ sends $A$ to $B$ and thus induces the map on the matrix algebras.  
\end{proof}

\begin{corollary}\label{mhfy5874}
Let $A$ be a $\Gamma$\!-graded ring,  $R=\M_n(A)(\delta_1,\dots,\delta_n)$ and $I$ be a graded ideal of $R$. Then $I=\M_n(I_0)(\delta_1,\dots,\delta_n)$, where $I_0$ is a graded ideal of $A$. 
\end{corollary}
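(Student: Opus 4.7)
The plan is to produce $I_0$ explicitly from $I$ by looking at the $(1,1)$-entry, and then use the graded matrix units $\e_{ij}(1) \in R$ (which are homogeneous of degree $\delta_i - \delta_j$ by~\eqref{hogr}) to transport entries between positions. The key observation is that although $\e_{ij}(1)$ is not of degree zero in general, it still lies in $R$ (since $A$ is unital), and the identities $\e_{i1}(1)\e_{11}(a)\e_{1j}(1) = \e_{ij}(a)$ and $\e_{1i}(1)\e_{ij}(a)\e_{j1}(1) = \e_{11}(a)$ let us move freely between positions inside a two-sided ideal.

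First I would define
\[
I_0 := \big\{\, a \in A \mid \e_{11}(a) \in I \,\big\}.
\]
To see that $I_0$ is a graded ideal of $A$, note that for $a \in A^h$ the element $\e_{11}(a)$ is homogeneous in $R$ of degree $\deg(a)$ by~\eqref{hogr}. Since $I$ is a graded ideal of $R$, if $a = \sum_\gamma a_\gamma$ belongs to $I_0$ then $\e_{11}(a) = \sum_\gamma \e_{11}(a_\gamma)$ is a decomposition into homogeneous components of $R$, forcing each $\e_{11}(a_\gamma) \in I$ and hence each $a_\gamma \in I_0$. Closure of $I_0$ under two-sided multiplication by $A$ follows from $\e_{11}(bac) = \e_{11}(b)\e_{11}(a)\e_{11}(c)$ together with $\e_{11}(b),\e_{11}(c) \in R$.

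Next, the inclusion $\M_n(I_0)(\delta_1,\dots,\delta_n) \subseteq I$ is immediate: for $a \in I_0$ and any $i,j$, the identity $\e_{ij}(a) = \e_{i1}(1)\,\e_{11}(a)\,\e_{1j}(1)$ places $\e_{ij}(a) \in I$, since $\e_{11}(a) \in I$ and $\e_{i1}(1), \e_{1j}(1) \in R$. For the reverse inclusion, let $x \in I$ be homogeneous of degree $\lambda$ and write $x = \sum_{i,j} \e_{ij}(a_{ij})$ where each $a_{ij} \in A_{\lambda - \delta_i + \delta_j}$ by~\eqref{mmkkhh}. For fixed $i,j$, the computation
\[
\e_{1i}(1)\,x\,\e_{j1}(1) \;=\; \sum_{k,l} \e_{1i}(1)\,\e_{kl}(a_{kl})\,\e_{j1}(1) \;=\; \e_{11}(a_{ij})
\]
shows that $\e_{11}(a_{ij}) \in I$, i.e.\ $a_{ij} \in I_0$. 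Hence $x \in \M_n(I_0)(\delta_1,\dots,\delta_n)$, and since $I$ is graded (so generated by its homogeneous elements) this finishes the argument.

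There is no real obstacle here; the only mildly subtle point is that the matrix units $\e_{ij}(1)$ used to shuffle entries are themselves not degree-zero homogeneous elements, so one has to be slightly careful to check that they still live in $R$ (which they do, because $1 \in A_0 \subseteq A_{\lambda + \delta_j - \delta_i}$ after the appropriate degree shift in the $(i,j)$-slot is absorbed into $\deg(\e_{ij}(1)) = \delta_i - \delta_j$). Everything else is bookkeeping via the matrix multiplication rule $\e_{ij}(a)\e_{kl}(b) = \delta_{jk}\,\e_{il}(ab)$.
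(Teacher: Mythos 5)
Your proof is correct, but it takes a genuinely different route from the paper's. The paper passes to the quotient: it considers the canonical graded homomorphism $f\colon R\rightarrow R/I$, sets $I_0=\ker(f|_A)$ (with $A$ embedded in $R$ as the centraliser of the standard matrix units, i.e.\ scalar matrices), and then invokes Lemma~\ref{mengfy4}: since $\{f(\e_{ij})\}$ is a full set of graded matrix units in $R/I$, the quotient is $\M_n(B)(\delta_1,\dots,\delta_n)$ for $B$ the centraliser of $\{f(\e_{ij})\}$, and the $B$-linear independence of the $f(\e_{ij})$ forces, for $x=\sum a_{ij}\e_{ij}\in I$, that each $f(a_{ij})=0$, i.e.\ $a_{ij}\in I_0$. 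You instead work entirely inside $R$: you pin down $I_0$ concretely as the $(1,1)$-corner $\{a\in A\mid \e_{11}(a)\in I\}$ and shuffle entries with the identities $\e_{ij}(a)=\e_{i1}(1)\e_{11}(a)\e_{1j}(1)$ and $\e_{1i}(1)\,x\,\e_{j1}(1)=\e_{11}(a_{ij})$, checking gradedness of $I_0$ directly from~\eqref{hogr} (note that your two descriptions of $I_0$ agree, since $\e_{11}(a)=\e_{11}(1)(aI_n)\e_{11}(1)$ and $aI_n=\sum_i \e_{i1}(1)\e_{11}(a)\e_{1i}(1)$). Your argument is the classical Lam \S17A computation carried out in place: it is self-contained, needs no structure theory beyond the grading formulas \eqref{hogr} and \eqref{mmkkhh}, and avoids even having to think about the degenerate case $I=R$ (where $R/I=0$); the paper's route, by contrast, reuses the machinery just established (Lemma~\ref{mengfy4} and its corollary) and fits the section's development, yielding the centraliser description of the quotient as a by-product. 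One cosmetic slip: in your closing paragraph the phrase ``$1\in A_0\subseteq A_{\lambda+\delta_j-\delta_i}$'' is not an inclusion that holds; what you mean (and what your proof actually uses) is simply that $\e_{ij}(1)$ is homogeneous of degree $\delta_i-\delta_j$ because the $(i,j)$-entry of $R_\lambda$ lies in $A_{\lambda+\delta_j-\delta_i}$, which equals $A_0$ exactly when $\lambda=\delta_i-\delta_j$.
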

\begin{proof}
Consider the canonical graded quotient homomorphism $f:R\rightarrow R/I$. Set $I_0= \ker (f|_A)$. One can easily see $\M_n(I_0)(\delta_1,\dots,\delta_n)\subseteq I$. By Lemma~\ref{mengfy4}, $R/I=\M_n(B)(\delta_1,\dots,\delta_n)$, where $B$ is the centraliser of the set $\{f(\e_{ij})\}$. Since $A$ is the centraliser of $\{\e_{ij}\}$, $f(A) \subseteq B$. Now for $x\in I$, write $x=\sum_{i,j} a_{ij} \e_{ij}$, $a_{ij} \in A$. Then $0=f(x)=\sum_{i,j} f(a_{ij}) f(\e_{ij})$, which implies $f(a_{ij})=0$ as $f(\e_{ij})$ are linear independent (see the proof of Lemma~\ref{mengfy4}). Thus $a_{i,j} \in I_0$. This shows $I\subseteq \M_n(I_0)(\delta_1,\dots,\delta_n)$, which finishes the proof. 
\end{proof}

Corollary~\ref{mhfy5874} shows that there is a one-to-one inclusion preserving correspondence between the graded ideals of $A$ and the graded ideals of $\M_n(A)(\overline \delta)$, where $\overline \delta=(\delta_1,\dots,\delta_n)$.

\subsection{Mixed shift} \label{kjujidw}

For a $\Ga$\!-graded ring $A$, $\overline \alpha = (\al_1 , \ldots \al_m) \in \Ga^m$ and $\overline \delta=(\de_1, \ldots ,\de_n) \in \Ga^n$, set
\[
\boxed{
\M_{m \times n} (A)[\overline \alpha][\overline \delta] :=
\begin{pmatrix}
A_{\al_1 -\de_1} & A_{\al_1 -\de_2} & \cdots &
A_{\al_1 -\de_n } \\
A_{\al_2 -\de_1} & A_{\al_2 -\de_2} & \cdots &
A_{\al_2 -\de_n  } \\
\vdots  & \vdots  & \ddots & \vdots  \\
A_{\al_m-\de_1} & A_{\al_m -\de_2} & \cdots & A_{\al_m -\de_n}
\end{pmatrix}.}
\]
So $\M_{m\times n} (A)[\overline \alpha][\overline \delta]$ consists of matrices with the
$ij$-entry in $A_{\al_i -\de_j}$.

If $a \in \M_{m \times n} (A)[\overline \alpha][\overline \delta]$, then one can easily check that multiplying $a$ from the left induces a graded right $A$-module homomorphism 
\begin{align}\label{fatso1}
\phi_a:A^n(\overline \delta) & \longrightarrow A^m(\overline \alpha),\\
\begin{pmatrix}
a_1   \\
a_2   \\
\vdots \\
a_n
\end{pmatrix} &\longmapsto a 
\begin{pmatrix}
a_1   \\
a_2   \\
\vdots \\
a_n
\end{pmatrix}. \notag
\end{align}
Conversely, suppose $\phi: A^n(\overline \delta) \rightarrow  A^m (\overline \alpha)$  
 is graded right $A$-module homomorphism. Let $e_j$ denote the standard basis element of $A^n(\overline \delta)$ with  $1$ in the $j$-th entry and zeros elsewhere. Let 
 $\phi(e_j)=(a_{1j},a_{2j},\dots,a_{mj})$, $1\leq j \leq n$. Since $\phi$ is a graded map, comparing the grading of both sides, one can observe that $\deg(a_{ij})=\alpha_i-\delta_j$. So that the map $\phi$ is represented by the left multiplication with the matrix $a=(a_{ij})_{m\times n} \in \M_{m\times n} (A)[\overline \alpha][\overline \delta]$.  
 
 In particular $\M_{m\times m}(A)[\overline \alpha][\overline \alpha]$ represents $\End(A^m(\alpha),A^m(\alpha))_0$. Combining this with~(\ref{fatloss}), we get
\begin{equation}\label{bequitet}
\M_{m\times m}(A)[\overline \alpha][\overline \alpha]=\M_m(A)(-\overline \alpha)_0.
\end{equation}

 The mixed shift will be used in~\S\ref{hhidmi} to describe graded Grothendieck groups by idempotent matrices.  The following simple Lemma comes in handy. 
 
\begin{lemma}\label{smallhandy}
Let $a \in \M_{m \times n} (A)[\overline \alpha][\overline \delta]$ and $b\in \M_{n \times k} (A)[\overline \delta][\overline \beta]$. Then 
$ab \in \M_{m \times k} (A)[\overline \alpha][\overline \beta]$.
\end{lemma}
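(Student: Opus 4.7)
The plan is a direct entry-wise verification using the definition of the mixed-shift matrix spaces and the grading axiom $A_\gamma A_\delta \subseteq A_{\gamma+\delta}$. There is essentially no obstacle here; the lemma is the matrix-multiplication analogue of the identity $(\alpha_i - \delta_j) + (\delta_j - \beta_l) = \alpha_i - \beta_l$, so the $\delta_j$'s telescope in exactly the way one expects.

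More concretely, I would write $a = (a_{ij})$ and $b = (b_{jl})$ with $a_{ij} \in A_{\alpha_i - \delta_j}$ and $b_{jl} \in A_{\delta_j - \beta_l}$ by definition of the mixed-shift spaces. The $(i,l)$-entry of the product is $(ab)_{il} = \sum_{j=1}^{n} a_{ij} b_{jl}$. For each fixed $j$, the grading axiom for the $\Gamma$\!-graded ring $A$ gives
\[
a_{ij}\, b_{jl} \;\in\; A_{\alpha_i - \delta_j}\, A_{\delta_j - \beta_l} \;\subseteq\; A_{(\alpha_i - \delta_j) + (\delta_j - \beta_l)} \;=\; A_{\alpha_i - \beta_l}.
\]
Since $A_{\alpha_i - \beta_l}$ is an additive subgroup of $A$, the sum over $j$ still lies in $A_{\alpha_i - \beta_l}$, so $(ab)_{il} \in A_{\alpha_i - \beta_l}$ for all $i,l$. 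This is exactly the entrywise condition that $ab \in \M_{m \times k}(A)[\overline{\alpha}][\overline{\beta}]$, completing the proof.

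The only thing worth remarking is the conceptual interpretation: by the discussion surrounding~\eqref{fatso1}, the space $\M_{m\times n}(A)[\overline{\alpha}][\overline{\delta}]$ is identified with $\Hom_{\Gr A}(A^n(\overline{\delta}), A^m(\overline{\alpha}))$, and under this identification matrix multiplication corresponds to composition of graded homomorphisms. The lemma then reduces to the fact, recorded in~\eqref{lmsgoodyear3}, that the composition of graded module homomorphisms of matching degrees is again graded of the expected degree. So one could alternatively just invoke~\eqref{fatso1} and~\eqref{lmsgoodyear3}; I would mention this as a remark but carry out the one-line entrywise computation above as the actual proof, since it is self-contained and immediate.
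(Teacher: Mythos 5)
Your proof is correct. Interestingly, you and the paper make opposite choices about which of the two available arguments to foreground. The paper's proof is the module-theoretic one: it invokes the identification (established just before the lemma, around~(\ref{fatso1})) of $\M_{m\times n}(A)[\overline{\alpha}][\overline{\delta}]$ with graded right $A$-module homomorphisms $A^n(\overline{\delta})\rightarrow A^m(\overline{\alpha})$, notes that $\phi_{ab}=\phi_a\phi_b\colon A^k(\overline{\beta})\rightarrow A^m(\overline{\alpha})$, and concludes --- relegating the entrywise computation to a parenthetical remark ("this can also be checked directly, by multiplying the matrices $a$ and $b$ and taking into account the shift arrangements"). You carry out exactly that direct computation as the main proof, verifying $a_{ij}b_{jl}\in A_{\alpha_i-\delta_j}A_{\delta_j-\beta_l}\subseteq A_{\alpha_i-\beta_l}$ and using that each homogeneous component is an additive subgroup, and you mention the homomorphism route only as a remark. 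Both arguments are complete; yours has the virtue of being self-contained and not depending on the surjectivity part of the correspondence in~(\ref{fatso1}) (i.e.\ that every graded homomorphism between shifted free modules is given by such a matrix, which is what the paper implicitly leans on to read off the membership of $ab$ from $\phi_{ab}$), while the paper's version is shorter and conceptually explains \emph{why} the telescoping of the $\delta_j$'s must happen, namely because composition of graded maps adds degrees as in~(\ref{lmsgoodyear3}).
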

\begin{proof}
Let 
$\phi_a:  A^n(\overline \delta) \rightarrow  A^m (\overline \alpha)$ and $\phi_b: A^k(\overline \beta) \rightarrow  A^n (\overline \delta)$ be the graded right $A$-module homomorphisms induced by multiplications with $a$ and $b$, respectively (see~\ref{fatso1}). Then 
\[\phi_{ab} =\phi_a \phi_b:  A^k (\overline \beta) \longrightarrow A^m(\overline \alpha).\] This shows that 
$ab \in \M_{m \times k} (A)[\overline \alpha][\overline \beta]$. (This can also be checked directly, by multiplying the matrices $a$ and $b$ and taking into account the shift arrangements.)
\end{proof}

\begin{proposition} \label{rndconggrrna}
Let $A$ be a $\Ga$\!-graded ring and let $\overline \alpha = (\al_1 , \ldots , \al_m) \in \Ga^m$, 
$\overline \delta=(\de_1, \ldots,\de_n)\in \Ga^n$. Then
the following are equivalent: 

\begin{enumerate}[\upshape(1)]

\item $A^m (\overline \alpha)  \cong_{\gr} A^n(\overline \delta)$ as graded right $A$-modules;

\item $A^m (-\overline \alpha) \cong_{\gr} A^n(-\overline \delta)$ as graded left $A$-modules;



\item There exist $a=(a_{ij}) \in \M_{n\times m} (A)[\overline \delta][\overline \alpha]$ and 
$b=(b_{ij}) \in \M_{m\times n} (A)[\overline \alpha][\overline \delta]$ such that $ab=\IM_{n}$ and $ba=\IM_{m}$.

\end{enumerate}
\end{proposition}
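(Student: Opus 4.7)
The plan is to establish $(1)\Leftrightarrow (3)$ using the matrix-homomorphism correspondence set up just before the statement, and then to derive $(2)\Leftrightarrow (3)$ by the same reasoning applied on the left, or equivalently by graded duality.

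For $(1)\Rightarrow (3)$, I start from a graded isomorphism $\phi : A^m(\overline\alpha) \to A^n(\overline\delta)$ with graded inverse $\phi^{-1} : A^n(\overline\delta) \to A^m(\overline\alpha)$. The discussion around~\eqref{fatso1} shows that $\phi$ is given by left multiplication by a unique matrix $a \in \M_{n\times m}(A)[\overline\delta][\overline\alpha]$, and similarly $\phi^{-1}$ by a matrix $b \in \M_{m\times n}(A)[\overline\alpha][\overline\delta]$. Composition corresponds to matrix multiplication (Lemma~\ref{smallhandy}), so $\phi\circ\phi^{-1}=\id_{A^n(\overline\delta)}$ translates into $ab=\IM_n$ (since the identity endomorphism of $A^n(\overline\delta)$ is represented by $\IM_n \in \M_{n\times n}(A)[\overline\delta][\overline\delta]$), and $\phi^{-1}\circ\phi=\id$ gives $ba=\IM_m$.

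For $(3)\Rightarrow (1)$, I reverse the correspondence: given $a$ and $b$ as in (3), the associated maps $\phi_a : A^m(\overline\alpha)\to A^n(\overline\delta)$ and $\phi_b : A^n(\overline\delta)\to A^m(\overline\alpha)$ are graded $A$-module homomorphisms by~\eqref{fatso1}, and Lemma~\ref{smallhandy} combined with $ab=\IM_n$, $ba=\IM_m$ gives $\phi_a\phi_b=\id$ and $\phi_b\phi_a=\id$. Hence $\phi_a$ is a graded isomorphism.

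For $(2)\Leftrightarrow (3)$, the cleanest route is to redo the matrix correspondence on the left-module side: a graded left $A$-module homomorphism $A^n(-\overline\delta)\to A^m(-\overline\alpha)$ given by right multiplication is represented by a matrix $a'\in \M_{n\times m}(A)[\overline\delta][\overline\alpha]$ (the degree condition $(a'_{ij})\in A_{\delta_i-\alpha_j}$ is the same as in the right-module case, since left multiplication on a column vector is replaced by right multiplication on a row vector). Running the argument of the previous two paragraphs verbatim, $(2)$ becomes equivalent to the existence of the same pair of matrices $a,b$ with $ab=\IM_n$ and $ba=\IM_m$. Alternatively, one can appeal to graded duality: applying $\Hom_A(-,A)$ to the isomorphism in (1) and using $A(\alpha)^*\cong_{\gr}A^*(-\alpha)\cong_{\gr}A(-\alpha)$ from~\eqref{terrace1} yields a graded left $A$-module isomorphism $A^n(-\overline\delta)\to A^m(-\overline\alpha)$, giving (2); the converse follows from the same duality.

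No step is really an obstacle here since all the machinery—the matrix-shift identification~\eqref{fatso1}, Lemma~\ref{smallhandy}, and the duality identity~\eqref{terrace1}—has already been established; the only point requiring a little care is verifying that the identity endomorphism of $A^n(\overline\delta)$ corresponds to $\IM_n$ and that composition in the right order on modules matches multiplication in the right order on matrices, so that the two identities $ab=\IM_n$ and $ba=\IM_m$ come out with the correct source and target shifts.
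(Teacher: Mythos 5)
Your proof is correct and follows essentially the same route as the paper's: the matrix--homomorphism dictionary of~\eqref{fatso1} together with Lemma~\ref{smallhandy} gives $(1)\Leftrightarrow(3)$, and the left-module case $(2)\Leftrightarrow(3)$ is handled exactly as in the paper by right multiplication on row vectors, with your degree bookkeeping $a'_{ij}\in A_{\delta_i-\alpha_j}$ matching the paper's shift conventions. Your alternative derivation of $(2)$ from $(1)$ via the duality identity~\eqref{terrace1} is a legitimate bonus not present in the paper (and is unproblematic here since the modules are graded free of finite rank, so double duality applies), but the substance of the argument is the same.
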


\begin{proof}
(1) $\Rightarrow$ (3) Let $\phi: A^m(\overline \al) \rightarrow  A^n (\overline \de)$ and $\psi: A^n(\overline \delta)  \rightarrow  A^m (\overline \alpha)$ 
 be graded right $A$-module isomorphisms such that $\phi\psi=1$ and $\psi\phi=1$. The paragraph prior to Lemma~\ref{smallhandy} shows that the map $\phi$ is represented by the left multiplication with a matrix $a=(a_{ij})_{n\times m} \in \M_{n\times m} (A)[\overline \delta][\overline \alpha]$.   In the same way one can construct $b  \in \M_{m\times n} (A)[\overline \alpha][\overline \delta]$ which induces $\psi$. Now $\phi\psi=1$ and $\psi\phi=1$ translate to $ab=\IM_{n}$ and $ba=\IM_{m}$. 

(3) $\Rightarrow$ (1)  If $a \in  \M_{n\times m} (A)[\overline \delta][\overline \alpha]$, then multiplication from the left, induces  a graded right 
$A$-module homomorphism $\phi_a:A^m(\overline \alpha) \longrightarrow A^n(\overline \delta)$. Similarly $b$ induces $\psi_b: A^n(\overline \delta) \longrightarrow A^m(\overline \alpha)$. Now $ab=\IM_{n}$ and $ba=\IM_{m}$ translate to 
$\phi_a\psi_b=1$ and $\psi_b\phi_a=1$.

(2) $\Longleftrightarrow$ (3) This part is similar to the previous cases by considering the matrix multiplication from the right. Namely, the graded left $A$-module homomorphism $\phi: A^m (-\overline \alpha) \rightarrow A^n(-\overline \delta)  $ represented by a matrix multiplication from the right of the form $\M_{m\times n} (A)[\overline \alpha][\overline \delta]$ and similarly $\psi$ gives a matrix  in 
$\M_{n\times m} (A)[\overline \de][\overline \al]$. The rest follows easily. 
 \end{proof}

The following corollary shows that  $A(\alpha) \cong_{\gr} A$ as graded right $A$-modules if and only if $\alpha \in \Gamma^*_A$. In fact, replacing 
$m=n=1$ in Proposition~\ref{rndconggrrna} we obtain the following. 

\begin{corollary} \label{rndcongcori}
Let $A$ be a $\Ga$\!-graded ring and $\alpha \in \Gamma$.  Then
the following are equivalent: 

\begin{enumerate}[\upshape(1)]

\item $A(\alpha) \cong_{\gr} A$ as graded right $A$-modules;

\item $A(-\alpha) \cong_{\gr} A$ as graded right $A$-modules;

\item $A(\alpha) \cong_{\gr} A$ as graded left $A$-modules;

\item $A(-\alpha) \cong_{\gr} A$ as graded left $A$-modules;

\item There is an invertible homogeneous  element of degree $\alpha$. 





\end{enumerate}

\end{corollary}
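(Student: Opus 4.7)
The plan is to derive all five equivalences by specialising Proposition~\ref{rndconggrrna} to $m=n=1$ and then reconciling the shift-by-$\alpha$ versus shift-by-$(-\alpha)$ versions via the observation, recorded in Proposition~\ref{basicsofgradedrings}(4), that the inverse of a homogeneous unit of degree $\alpha$ is homogeneous of degree $-\alpha$.

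First I would take $\overline{\alpha}=(\alpha)$ and $\overline{\delta}=(0)$ in Proposition~\ref{rndconggrrna}. With these choices, the matrix sets $\M_{1\times 1}(A)[\overline{\delta}][\overline{\alpha}]$ and $\M_{1\times 1}(A)[\overline{\alpha}][\overline{\delta}]$ collapse to $A_{-\alpha}$ and $A_{\alpha}$ respectively, and condition~(3) of the proposition becomes the existence of elements $a\in A_{-\alpha}$ and $b\in A_{\alpha}$ satisfying $ab=1=ba$; that is, the existence of an invertible homogeneous element of degree $\alpha$. Thus (1)$\Leftrightarrow$(5) and (3)$\Leftrightarrow$(5) are immediate from the equivalences (1)$\Leftrightarrow$(3) and (2)$\Leftrightarrow$(3) of Proposition~\ref{rndconggrrna}, applied respectively on the right and on the left.

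Next I would handle (2) and (4). Repeating the same specialisation but with $\overline{\alpha}=(-\alpha)$ and $\overline{\delta}=(0)$, Proposition~\ref{rndconggrrna} gives that $A(-\alpha)\cong_{\gr}A$ (as right or as left modules) is equivalent to the existence of an invertible homogeneous element of degree $-\alpha$. To close the loop with (5), I would invoke Proposition~\ref{basicsofgradedrings}(4): if $b\in A_{\alpha}$ is invertible, then $b^{-1}\in A_{-\alpha}$ is invertible of degree $-\alpha$, and conversely. Hence the existence of an invertible homogeneous element of degree $\alpha$ is equivalent to the existence of one of degree $-\alpha$, which finishes (2)$\Leftrightarrow$(5) and (4)$\Leftrightarrow$(5).

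There is no real obstacle here; the only thing to be careful about is bookkeeping of signs when reading off the matrix entries from $\M_{1\times 1}(A)[\overline{\delta}][\overline{\alpha}]$, since the $(1,1)$-entry lives in $A_{\delta_1-\alpha_1}$. Once one notes that the right-module isomorphism $A(\alpha)\cong_{\gr}A$ corresponds to a homogeneous unit of degree $\alpha$ (realised as $b=\phi(1)\in A_{\alpha}$, where $\phi\colon A\to A(\alpha)$ is the isomorphism), the rest of the corollary is just a symmetric reading of Proposition~\ref{rndconggrrna} together with the inverse-degree lemma.
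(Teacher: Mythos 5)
Your proposal is correct and is essentially the paper's own argument: the proof there consists of the single line ``This follows from Proposition~\ref{rndconggrrna}'', the specialisation $m=n=1$ being announced in the sentence preceding the corollary, with Proposition~\ref{basicsofgradedrings}(4) supplying the degree-$\alpha$/degree-$(-\alpha)$ reconciliation exactly as you use it. The only (harmless) slip is a label swap: with $\overline{\alpha}=(\alpha)$, $\overline{\delta}=(0)$, item (2) of Proposition~\ref{rndconggrrna} reads $A(-\alpha)\cong_{\gr}A$ as \emph{left} modules, which is item (4) of the corollary rather than item (3), while the run with $\overline{\alpha}=(-\alpha)$ yields items (2) and (3) — but the union of your two applications still covers all five equivalences, so the proof stands.
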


\begin{proof}
This follows from Proposition~\ref{rndconggrrna}. 
\end{proof} 

\begin{corollary}\label{andrewmathas}
Let $A$ be a $\Ga$\!-graded ring. Then the following are equivalent: \index{crossed product ring}

\begin{enumerate}[\upshape(1)]
\item $A$ is a crossed product;

\item  $A(\alpha) \cong_{\gr} A$, as graded right $A$-modules, for all $\alpha \in \Gamma$;

\item  $A(\alpha) \cong_{\gr} A$, as graded left $A$-modules, for all $\alpha \in \Gamma$;

\item The shift functor $\mathcal T_\alpha:\Gr A \rightarrow \Gr A$ is isomorphic to identity functor, for all $\alpha \in \Gamma$.

\end{enumerate}

\end{corollary}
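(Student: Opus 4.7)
The proof will string together Corollary~\ref{rndcongcori} (which already links the condition on the module $A(\alpha)$ to the existence of a homogeneous unit of degree $\alpha$) with the definition of a crossed product, and then upgrade the module-level isomorphism to a functorial one.

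The plan is first to dispose of $(1) \Leftrightarrow (2) \Leftrightarrow (3)$. By definition, $A$ is a crossed product precisely when $\Gamma^*_A = \Gamma$, i.e.\ for every $\alpha \in \Gamma$ there is an invertible homogeneous element of degree $\alpha$. Corollary~\ref{rndcongcori} asserts that the existence of such an invertible element is equivalent to $A(\alpha) \cong_{\gr} A$ as graded right $A$-modules, and equivalent to the same statement on the left. Hence $(1)$, $(2)$ and $(3)$ are each equivalent to the single pointwise statement ``for every $\alpha$, $\Gamma^*_A \ni \alpha$'', so they are mutually equivalent.

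Next, $(4) \Rightarrow (2)$ is immediate: if $\mathcal T_\alpha \cong \id_{\Gr A}$, then evaluating at the graded right $A$-module $A$ gives $A(\alpha) = \mathcal T_\alpha(A) \cong_{\gr} A$.

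The main content is $(2) \Rightarrow (4)$, which I would establish by writing down an explicit natural isomorphism. Fix $\alpha \in \Gamma$; by $(2)$ and Corollary~\ref{rndcongcori} there is an invertible $u_\alpha \in A_\alpha$ with inverse $u_\alpha^{-1} \in A_{-\alpha}$ (the latter by Proposition~\ref{basicsofgradedrings}(4)). For each graded right $A$-module $M$, define
\begin{align*}
\phi_M : M &\longrightarrow M(\alpha),\\
m &\longmapsto m\, u_\alpha.
\end{align*}
If $m \in M_\gamma$ then $m u_\alpha \in M_{\gamma+\alpha} = M(\alpha)_\gamma$, so $\phi_M$ is a graded homomorphism of degree $0$; it is $A$-linear since multiplication by $u_\alpha$ is a right-module endomorphism up to the shift bookkeeping. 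The map $m \mapsto m\, u_\alpha^{-1}$ is a two-sided inverse, so each $\phi_M$ is an isomorphism in $\Gr A$. Naturality is automatic: for any graded $f : M \to N$,
\[
\phi_N(f(m)) = f(m)\, u_\alpha = f(m\, u_\alpha) = f(\phi_M(m)),
\]
so $\phi = (\phi_M)_M$ is a natural isomorphism $\id_{\Gr A} \xrightarrow{\sim} \mathcal T_\alpha$, giving $(4)$.

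There is no real obstacle here; the only point requiring a bit of care is the direction of the action (working with right modules forces right multiplication by $u_\alpha$, not left), together with verifying that $\phi_M$ lands in $M(\alpha)$ with the correct homogeneous degree. The symmetric version establishes the analogous statement for $A \rGr$ from $(3)$, which completes the cycle of implications.
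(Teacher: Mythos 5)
Your handling of (1) $\Leftrightarrow$ (2) $\Leftrightarrow$ (3) via Corollary~\ref{rndcongcori} and the definition of a crossed product ($\Gamma_A^*=\Gamma$), and of (4) $\Rightarrow$ (2) by evaluating the natural isomorphism at the module $A$, is correct and is exactly the paper's route. The gap is in (2) $\Rightarrow$ (4): your map $\phi_M\colon M\to M(\alpha)$, $m\mapsto m\,u_\alpha$, is \emph{not} a morphism in $\Gr A$. Indeed $\phi_M(ma)=ma\,u_\alpha$ while $\phi_M(m)\,a=m\,u_\alpha a$, and these need not agree; the parenthetical claim that ``multiplication by $u_\alpha$ is a right-module endomorphism up to the shift bookkeeping'' is backwards, since right translation by $u_\alpha$ is linear for the \emph{left} action, not the right one. (The right-module isomorphism $A(\alpha)\cong_{\gr}A$ of Corollary~\ref{rndcongcori} is \emph{left} multiplication by $u_\alpha$, an operation that has no counterpart on an abstract right module $M$.) Your naturality computation, though formally true of right translation, is therefore vacuous: the components $\phi_M$ do not lie in the category.

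Moreover the defect is not cured by a cleverer choice of unit extracted from (2) alone. If $\eta\colon \id\Rightarrow \mathcal T_\alpha$ is any natural transformation, testing naturality against $\End_{\Gr A}(A)\cong A_0$ (the left multiplications $\ell_c$, $c\in A_0$) forces $v:=\eta_A(1)\in A_\alpha$ to satisfy $vc=cv$ for all $c\in A_0$; so a natural isomorphism requires an invertible homogeneous element of degree $\alpha$ \emph{centralising} $A_0$, which is strictly more than (2) provides. For example, in a skew Laurent ring $A_0[x,x^{-1},\phi]$ (a crossed product), $v=wx$ with $vc=cv$ forces $\phi(c)=w^{-1}cw$, i.e.\ $\phi$ inner on $A_0$, so for an outer automorphism no choice of $u_\alpha$ can make your pointwise translation map work. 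The paper's own (terse) proof avoids the pointwise construction altogether: it invokes the natural isomorphism~(\ref{medeltaotel}), $\mathcal T_\alpha\cong -\otimes_A A(\alpha)$, thereby reducing (4) to a statement about the graded \emph{bimodule} $A(\alpha)$, and the real content of (2) $\Rightarrow$ (4) is the passage from the one-sided isomorphism of Corollary~\ref{rndcongcori} to one that is stable under tensoring. Any complete write-up must confront this bimodule/centrality issue explicitly; your proposal skips precisely that step.
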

\begin{proof}
This follows from Corollary~\ref{rndcongcori}, (\ref{medeltaotel}) and the definition of the crossed product rings (\S\ref{scrosshg}). 
\end{proof}

The Corollary above will be used to show that the action of $\Gamma$ on the graded Grothendieck group of a crossed product algebra is trivial (see Example~\ref{gptwnow}).

\begin{example}\scm[The Leavitt algebra $\LL(1,n)$] \label{levisuji}
\vspace{0.2cm}

In \cite{vitt62} Leavitt considered the free associative ring $A$ with coefficient in $\Z$ generated by symbols $\{x_i,y_i \mid 1\leq i \leq n\}$ subject to relations 
\begin{align}\label{jh54320}
&x_iy_j =\delta_{ij}, \text{ for all } 1\leq i,j \leq n, \\
&\sum_{i=1}^n y_ix_i =1, \notag
\end{align} where $n\geq 2$ and $\delta_{ij}$ is the Kronecker delta. 
The relations guarantee the right $A$-module homomorphism 
\begin{align}\label{is329ho}
\phi:A&\longrightarrow A^n\\
a &\mapsto (x_1a	,x_2a,\dots,x_na)\notag
\end{align}
has an inverse 
\begin{align}\label{is329ho9}
\psi:A^n&\longrightarrow A\\
(a_1,\dots,a_n) &\mapsto  y_1a_1+\dots+y_na_n, \notag 
\end{align}
so $A\cong A^n$ as right $A$-modules. He showed that $A$ is universal with respect to this property, of type $(1,n-1)$ (see \S\ref{gtr5654}) and it is a simple ring. 

Leavitt's algebra constructed in~(\ref{jh54320}) has a natural grading; assigning $1$ to $y_i$ and $-1$ to $x_i$, $1\leq i \leq n$, since the relations are homogeneous (of degree zero), the ring $A$ is a $\mathbb Z$-graded ring (see~\S\ref{freeme} for a general construction of graded rings from free algebras). The isomorphism~(\ref{is329ho}) induces a graded isomorphism   
\begin{align}\label{is329ho22}
\phi:A&\longrightarrow A(-1)^n\\
a &\longmapsto (x_1a	,x_2a,\dots,x_na), \notag
\end{align}
where $A(-1)$ is the suspension of $A$ by $-1$. In fact letting 
\begin{align*}
y& =(y_1,\dots,y_n), \text{  and   }\\ 
x&=\begin{pmatrix}
x_1   \\
x_2   \\
\vdots \\
x_n
\end{pmatrix},
\end{align*}
we have 
$y\in \M_{1\times n}(A)[\overline \alpha][\overline \delta]$  and $x\in \M_{n\times 1}(A)[\overline \delta][\overline \alpha]$, where $\overline \alpha=(0)$ and $\overline \delta=(-1,\dots,-1)$. Thus by Proposition~\ref{rndconggrrna}, 
$A\conggr A(-1)^n$. 

Motivated by this algebra, the Leavitt path algebras were introduced in~\cite{aap05,amp}, which associate to a direct graph a certain algebra. When the graph has one vertex and $n$ loops, the algebra corresponds to this graph is the Leavitt algebra constructed in (\ref{jh54320}) and is denoted by $\LL(1,n)$ or $\LL_n$.  
The Leavitt path algebras will provide a vast array of examples of graded algebras. We will study these algebras in~\S\ref{paohdme}. 
\end{example}

\section{Graded division rings}\label{mochihge} \index{graded division ring}

Graded fields and their noncommutative version, \ie graded division rings, are among the simplest graded rings. With a little effort, we can completely compute the invariants of these algebras which we are interested in, namely, the graded Grothendieck groups (\S\ref{ghgwbsia}) and the graded Picard groups (\S\ref{picle}). 

Recall from~\S\ref{khgfewa1}  that a $\Ga$\!-graded ring $A = \bigoplus_{ \ga \in \Ga} A_{\ga}$ is called
a graded division ring if every
nonzero homogeneous element has a multiplicative inverse.
Throughout this section we consider graded right modules over
graded division rings. Note that we work with the abelian grade groups, however all the results are valid for non-abelian grading as well. 
 We first show that for graded modules over a graded division ring, there is well-defined notion of dimension. 
The proofs follow the standard proofs in
the ungraded setting (see \cite[\S IV,
Theorem~2.4,~2.7,~2.13]{hungerford}), or the graded setting (see~\cite[Proposition~4.6.1]{grrings},
\cite[Chapter~2]{tignolwadsworth}).

\begin{proposition}\label{gradedfree}
Let $A$ be a $\Ga$\!-graded ring. Then $A$ is a graded division ring if and only if any graded $A$-module is graded free. 
If $M$ is a graded module over graded division ring $A$, then any
linearly independent subset of $M$ consisting of homogeneous
elements can be extended to a homogeneous basis of $M$.
\end{proposition}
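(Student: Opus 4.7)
The plan is to imitate the standard proof that every vector space over a division ring has a basis, taking care to work throughout with homogeneous elements and to use the shift functor to record degrees correctly. The main tools will be Zorn's lemma applied to linearly independent homogeneous subsets, and Proposition~\ref{basicsofgradedrings}(4), which guarantees that the inverse of an invertible homogeneous element is homogeneous of the opposite degree.

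\textbf{Graded division ring implies graded free.} Let $M$ be a graded $A$-module and $S \subseteq M^h$ a linearly independent set of homogeneous elements. First I would apply Zorn's lemma to the collection of linearly independent homogeneous subsets of $M$ containing $S$, ordered by inclusion; the union over a chain is still linearly independent because linear dependence is witnessed by finitely many elements. This yields a maximal linearly independent homogeneous subset $B \supseteq S$. To conclude that $B$ is a homogeneous basis of $M$, it suffices to show that every $m \in M^h$ lies in the $A$-span of $B$, since an arbitrary element of $M$ decomposes as a finite sum of its homogeneous components. If some $m \in M^h$ were not in the span, I would derive a contradiction by showing $B \cup \{m\}$ is still linearly independent: a homogeneous dependence $ma + \sum_i b_i a_i = 0$ with $b_i \in B$ and $a, a_i \in A^h$ either has $a \neq 0$, in which case invertibility of $a$ in the graded division ring expresses $m$ in the $A$-span of $B$, or has $a = 0$, and then linear independence of $B$ forces all $a_i = 0$. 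This also yields the extension statement at once; taking $S = \varnothing$ proves that every graded $A$-module is graded free.

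\textbf{Converse.} Assume every graded $A$-module is graded free, and let $0 \neq a \in A_\gamma$. The target is a two-sided homogeneous inverse for $a$. The plan is first to prove that $A$ has no nonzero proper graded right ideal, and then to extract inverses. By Zorn's lemma choose a maximal graded right ideal $\mathfrak{m} \subsetneq A$, and consider the graded simple module $S = A/\mathfrak{m}$. By hypothesis $S$ is graded free, say with homogeneous basis $\{e_i\}_{i \in I}$ of degrees $\delta_i$; but then each $e_i A \cong_{\gr} A(-\delta_i)$ is a nonzero graded submodule of $S$, so graded simplicity forces $|I| = 1$ and $S \cong_{\gr} A(-\delta)$. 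Transporting the absence of nontrivial graded submodules of $S$ back to $A$, the ring $A$ has no nonzero proper graded right ideal. Hence $aA = A$, giving $ab = 1$ for some $b \in A$, and Proposition~\ref{basicsofgradedrings}(4) pins down $b \in A_{-\gamma}$. Repeating the right-inverse argument for the nonzero homogeneous element $b$ produces $c \in A^h$ with $bc = 1$; associativity then yields $a = a(bc) = (ab)c = c$, so $ba = 1$ and $b = a^{-1}$.

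\textbf{Main obstacle.} The genuinely new step, absent in the ungraded proof, is the observation that a graded simple module which is graded free must be of shifted-rank-one form $A(-\delta)$; once this is in place, everything else is a careful bookkeeping of degrees, and the only other subtlety is ensuring the maximal linearly independent homogeneous set really spans every homogeneous element of $M$, which is handled by matching the degrees of $m$ and the $b_i a_i$ in the dependence relation.
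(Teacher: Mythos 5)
Your proposal is correct. The forward direction and the extension statement coincide with the paper's own argument: the same application of Zorn's lemma to linearly independent homogeneous subsets containing the given set, followed by adjoining a homogeneous element $m\notin\langle B\rangle$ and using invertibility of a nonzero homogeneous coefficient to reach a contradiction. The only cosmetic difference is that the paper allows arbitrary (non-homogeneous) coefficients in the dependence relation and then extracts a single homogeneous component, while you restrict to homogeneous coefficients from the outset; these are equivalent, since any relation among homogeneous elements splits degreewise into homogeneous-coefficient relations.

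The converse is where you genuinely diverge. The paper disposes of it in two lines: for a graded right ideal $I$, the hypothesis makes $A/I$ graded free, and the paper writes $I=\ann(A/I)=0$, concluding that $A$ has no nontrivial proper graded right ideals and hence is a graded division ring. You instead apply the hypothesis to one well-chosen module, the graded simple quotient $S=A/\mathfrak{m}$ by a maximal graded right ideal, observe that a graded free module which is graded simple must have a singleton basis (if $e_1,e_2$ were distinct basis elements, simplicity would force $e_1A=S$ and hence $e_2\in e_1A$, violating independence), conclude $S\conggr A(-\delta)$, and transport simplicity back to the lattice of graded right ideals of $A$. Your route is longer---it needs Zorn's lemma a second time plus the simple-module observation---but it buys precision at exactly the two points the paper glosses. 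First, the identity $I=\ann(A/I)$ is literally valid only for two-sided ideals; for a one-sided $I$ the annihilator is merely the largest two-sided ideal contained in $I$, so faithfulness of the nonzero free module $A/I$ does not by itself give $I=0$, whereas your argument never invokes this identity. Second, you spell out the passage from the right inverse $ab=1$ with $b\in A_{-\gamma}$ (justified, as you say, by the remark following Proposition~\ref{basicsofgradedrings}) to two-sided invertibility via $bc=1$, $c=a(bc)$ being replaced by $a=(ab)c=c$, hence $ba=1$---a step the paper compresses into the unexplained sentence ``This gives that $A$ is a graded division ring.'' In short: same first half, and for the second half a slightly heavier but more watertight argument than the paper's.
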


\begin{proof}
Suppose any graded (right) module is graded free. Let $I$ be a right ideal of $A$. Consider $A/I$ as a right $A$-module, which is graded free by assumption. Thus $I=\ann(A/I)=0$. This shows that the only graded right ideal of $A$ is the zero ideal. This gives that $A$ is a graded division ring.

For the converse, note that if $A$ is a graded division ring (\ie all homogeneous elements are invertible), then for any $m \in M^h$, $\{m \}$ is a linearly
independent subset of $M$. This immediately gives the converse of the statement of the theorem as a consequence of the second part of the theorem. 

 Fix a linearly independent subset $X$ of
$M$ consisting of homogeneous elements. Let
$$
F= \big \{ Q \subseteq M^h \mid X \subseteq Q \textrm{ and $Q$ is
$A$-linearly independent} \big \}.
$$
This is a nonempty partially ordered set with inclusion, and every
chain $Q_1 \subseteq Q_2 \subseteq \dots$ in $F$ has an upper bound
$\bigcup Q_i \in F$. By Zorn's Lemma, $F$ has a maximal element,
which we denote by $P$. If $\left< P \right> \neq M$, then there is
a homogeneous element $m \in M^h \mi \left< P \right>$. We will show
that $P \cup \{m \}$ is a linearly independent set containing $X$,
contradicting the maximality of $P$.

Suppose $m a + \sum  p_i a_i= 0$, where $a, a_i \in A$, $p_i \in P$
with $a \neq 0$.
%
Then there is a homogeneous component of $a$, say
$a_{\la}$, which is also nonzero. Considering the $\la+
\deg(m)$-homogeneous component of this sum, we have \[m= 
 m   a_{\la} a_{\la}^{-1}=- \sum  p_i   a'_i   a_{\la}^{-1}\] for $a'_i$ homogeneous, which
contradicts the assumption $m \in M^h \mi \left< P \right>$. Hence $a=0$, which implies each $a_i
= 0$. This gives the required contradiction, so $M= \left< P
\right>$, completing the proof.
\end{proof}

The following proposition shows in particular that a graded division ring  has graded invariant basis number (we will discuss this type of rings in~\S\ref{gtr5654}). 

\begin{proposition}\label{kj351}
Let $A$ be a $\Ga$\!-graded division ring and $M$ be a graded $A$-module. 
Then any two homogeneous bases of $M$ over $A$ have the same
cardinality.
\end{proposition}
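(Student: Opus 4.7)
My plan is to reduce the cardinality question to a computation of dimensions over the division ring $A_0$. Recall that for a graded division ring $A$, the zero component $A_0$ is a division ring, and for each $\gamma\in\Gamma_A$ the component $A_\gamma$ is a free right $A_0$-module of rank one: any nonzero homogeneous $a\in A_\gamma$ is invertible by Proposition~\ref{basicsofgradedrings}(4), and $A_\gamma=aA_0$. In particular, right $A_0$-modules have well-defined cardinal dimensions by standard linear algebra over division rings.

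Given a homogeneous basis $\{e_i\}_{i\in I}$ of $M$ with $\deg(e_i)=\delta_i$, the identification $M\conggr\bigoplus_{i\in I}A(-\delta_i)$ yields, in each degree, a direct sum decomposition of right $A_0$-modules
\[M_\gamma=\bigoplus_{i\in I}e_iA_{\gamma-\delta_i}.\]
Each summand is either zero or $1$-dimensional over $A_0$, and it is nonzero precisely when $\delta_i$ lies in the coset $\gamma+\Gamma_A$ (using that $\Gamma_A$ is a subgroup of $\Gamma$, hence stable under negation). Fixing a complete set $T$ of coset representatives for $\Gamma/\Gamma_A$, each $\delta_i$ lies in a unique coset, so summing as cardinals gives
\[|I|=\sum_{t\in T}\#\{i\in I:\delta_i\in t+\Gamma_A\}=\sum_{t\in T}\dim_{A_0}M_t.\]
The right-hand side is intrinsic to $M$; running the same computation for a second homogeneous basis $\{f_j\}_{j\in J}$ with $\deg(f_j)=\alpha_j$ produces the same sum, whence $|I|=|J|$.

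The main delicate point is purely the cardinal arithmetic: one must be content with proving equality of cardinals rather than producing an explicit bijection between the two bases, and rely on the standard fact that dimensions of vector spaces over a division ring are well-defined in both the finite and infinite cases. With this bookkeeping in hand, no Steinitz-style exchange argument is needed, and the graded invariant basis property follows uniformly.
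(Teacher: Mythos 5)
Your proof is correct, but it takes a genuinely different route from the paper's. The paper splits into two cases: for infinite bases it quotes the standard fact that over any ring a module with an infinite basis has all bases of the same cardinality (\cite[\S IV, Theorem~2.6]{hungerford}), and for finite bases it runs a Steinitz-style exchange argument directly with homogeneous elements, using that every nonzero homogeneous coefficient is invertible to swap elements of one basis for the other one at a time. You instead reduce everything to ungraded linear algebra over the division ring $A_0$: the degreewise decomposition $M_\gamma=\bigoplus_i e_iA_{\gamma-\delta_i}$, the fact that $A_\mu$ is a free rank-one $A_0$-module for $\mu\in\Gamma_A$ (which indeed follows from Proposition~\ref{basicsofgradedrings}(4), together with the fact that $\Gamma_A$ is a subgroup), and the coset partition of the degrees $\delta_i$ give $|I|=\sum_{t\in T}\dim_{A_0}M_t$, an expression intrinsic to $M$ once $T$ is fixed. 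Your bookkeeping is sound: the summand $e_iA_{\gamma-\delta_i}$ is nonzero exactly when $\delta_i\in\gamma+\Gamma_A$, the sets $\{i:\delta_i\in t+\Gamma_A\}$ for $t\in T$ partition $I$, and cardinal summation over a partition is legitimate in all cardinalities, so no exchange argument is needed in the graded setting — though of course the exchange lemma is implicitly outsourced to the well-definedness of $\dim_{A_0}$. What each approach buys: yours treats finite and infinite bases uniformly and in effect anticipates the technique the paper itself uses later, in the proof of Proposition~\ref{k0grof}, where the multiplicities $r_j$ in $M\cong_{\gr}A(\delta_{i_1})^{r_1}\oplus\dots\oplus A(\delta_{i_k})^{r_k}$ are pinned down by computing $M_{-\delta_{i_j}}$ as an $A_0$-vector space; the paper's exchange argument, on the other hand, is self-contained at the level of the graded module and supplies the same machinery that underlies extending homogeneous linearly independent sets to bases (Proposition~\ref{gradedfree}), which your dimension count does not by itself provide.
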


\begin{proof}
By~\cite[\S IV, Theorem~2.6]{hungerford}, if a module $M$ has an infinite basis over a ring, then any other basis of $M$ has the same cardinality. 
This proves the Proposition in the case the homogeneous basis is infinite. 

Now suppose that $M$ has two finite homogeneous bases $X$ and $Y$.  Then $X= \{ x_1, \ldots , x_n\}$ and $Y= \{ y_1 , \ldots
, y_m \}$, for $x_i$, $y_i \in M^h \mi 0$. As $X$ is a basis for $M$,
we can write
$$
y_m =x_1  a_1  + \cdots +  x_n a_n, 
$$
for some $a_i \in A^h$, where $\deg (y_m) = \deg(a_i )+ \deg (x_i)$
for each $1\leq i \leq n$. Since $y_m \neq 0$, we have at least one $a_i \neq 0$.
Let $a_k$ be the first nonzero $a_i$, and we note that $a_k$ is
invertible as it is nonzero and homogeneous in $A$. Then
$$
x_k =  y_m  a_k^{-1} -   x_{k+1} a_{k+1} a_k^{-1} - \cdots - 
 x_n  a_n a_k^{-1} ,
$$
and the set $X' = \{\,y_m ,  x_1, \ldots , x_{k-1} , x_{k+1} , \ldots
, x_n\,\}$ spans $M$ since $X$ spans $M$. So
$$
y_{m-1} =  y_m b_m +   x_1  c_1+ \cdots + x_{k-1}  c_{k-1} + 
x_{k+1} c_{k+1} + \cdots +  x_n c_n, 
$$
for $b_m$, $c_i \in A^h$. There is at least one nonzero $c_i$,
since if all the $c_i$ are zero, then either $y_m$ and $y_{m-1}$ are
linearly dependent or $y_{m-1}$ is zero which are not the case. Let $c_j$ denote the first
nonzero $c_i$. Then $x_j$ can be written as a linear combination of
$y_{m-1}$, $y_m$ and those $x_i$ with $i \neq j, k$. Therefore the
set \[X'' = \{\,y_{m-1} , y_m\, \} \cup \{\,x_i : i \neq j,k\,\}\] spans $M$
since $X'$ spans $M$. 


Continuing this process of adding a $y$ and removing an $x$ gives,
after the $k$-th step, a set which spans $M$ consisting of $y_m ,
y_{m-1} , \ldots , y_{m-k+1}$ and $n-k$ of the $x_i$. If $n <m$,
then after the $n$-th step, we would have that the set $\{ y_m ,
\ldots , y_{m-n+1} \}$ spans $M$. But if $n <m$, then $m-n +1 \geq
2$, so this set does not contain $y_1$, and therefore $y_1$ can be
written as a linear combination of the elements of this set. This
contradicts the linear independence of $Y$, so we must have $m \leq
n$. Repeating a similar argument with $X$ and $Y$ interchanged gives
$n \leq m$, so $n=m$.
\end{proof}

The Propositions~\ref{gradedfree} and~\ref{kj351} above show that for a graded module $M$ over a
graded division ring $A$, $M$ has a homogeneous basis and any two
homogeneous bases of $M$ have the same cardinality. The cardinal
number of any homogeneous basis of $M$ is called the \emph{dimension} \index{dimension of module} of
$M$ over $A$, and it is denoted by $\dim_A (M)$ or $[M:A]$.

\begin{proposition}\label{grdimensionprop}
Let $A$ be a $\Ga$\!-graded division ring and $M$ be a graded $A$-module. If $N$ is a graded submodule of $M$, then
\begin{equation*}
\dim_A (N) + \dim_A(M/N) = \dim_A (M).
\end{equation*}

\end{proposition}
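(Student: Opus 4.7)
The plan is to mimic the standard ungraded proof of the dimension formula, but everywhere insist on working with homogeneous elements, using the two preceding propositions as the workhorses.

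First I would pick a homogeneous basis $\{n_i\}_{i \in I}$ of $N$ over $A$, which exists by Proposition~\ref{gradedfree} applied to the graded submodule $N$. Then, using the second part of Proposition~\ref{gradedfree} (any linearly independent homogeneous subset extends to a homogeneous basis), I would enlarge $\{n_i\}_{i \in I}$ to a homogeneous basis $\{n_i\}_{i \in I} \cup \{m_j\}_{j \in J}$ of $M$. Recall from~(\ref{whs21}) that the quotient $M/N$ is graded by $(M/N)_\gamma = (M_\gamma + N)/N$, so each class $\overline{m_j} := m_j + N$ is homogeneous of degree $\deg(m_j)$.

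The heart of the argument is to show that $\{\overline{m_j}\}_{j \in J}$ is a homogeneous basis of $M/N$ over $A$. Spanning is immediate: any $\overline{m} \in M/N$ lifts to some $m \in M$, which expands as a finite $A$-linear combination $m = \sum_i n_i a_i + \sum_j m_j b_j$, and the first sum dies modulo $N$, leaving $\overline{m} = \sum_j \overline{m_j} b_j$. For linear independence, suppose $\sum_j \overline{m_j} a_j = 0$ in $M/N$ with $a_j \in A$. Then $\sum_j m_j a_j \in N$, so $\sum_j m_j a_j = \sum_i n_i c_i$ for some $c_i \in A$, giving a relation $\sum_j m_j a_j - \sum_i n_i c_i = 0$ among the elements of our chosen basis of $M$; linear independence of that basis forces every $a_j = 0$. (One can reduce to homogeneous $a_j$ by splitting each coefficient into its homogeneous components, since all $m_j$ and $n_i$ are homogeneous.) Finally, each $\overline{m_j}$ is nonzero, since if $m_j \in N$ then $m_j$ would be an $A$-linear combination of the $n_i$, contradicting the independence of the basis of $M$.

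Once $\{\overline{m_j}\}_{j \in J}$ is known to be a homogeneous basis of $M/N$, Proposition~\ref{kj351} guarantees well-defined cardinal dimensions, and we read off
\[
\dim_A(N) + \dim_A(M/N) = |I| + |J| = \dim_A(M).
\]
There is no real obstacle here beyond bookkeeping; the only point requiring care is that all manipulations stay within homogeneous elements so that Propositions~\ref{gradedfree} and~\ref{kj351} actually apply, and that the grading on $M/N$ given by~(\ref{whs21}) is used to certify that the lifted basis elements remain homogeneous in the quotient.
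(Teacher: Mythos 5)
Your proof is correct and follows essentially the same route as the paper's: extend a homogeneous basis of $N$ to one of $M$ via Proposition~\ref{gradedfree}, verify that the images of the new basis elements are homogeneous in $M/N$ by~(\ref{whs21}) and form a homogeneous basis there, and then count cardinalities using Proposition~\ref{kj351}. Your extra remarks (reducing to homogeneous coefficients, checking $\overline{m_j}\neq 0$) are harmless elaborations of details the paper leaves implicit.
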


\begin{proof}
By Proposition~\ref{gradedfree}, the submodule $N$ is a graded free $A$-module with a homogeneous basis $Y$ which can be extended to a homogeneous basis $X$ of $M$.  We will show that $U= \{ x + N \mid x \in X \mi Y
\}$ is a homogeneous basis of $M/N$. Note that by~(\ref{whs21}), $U$ consists
of homogeneous elements. Let $t\in  (M/N)^h$. Again by~(\ref{whs21}), $t=m + N$, where $m \in M^h$
and $m = \sum  x_i a_i+ \sum  y_jb_j $ where $a_i$, $b_j \in A$, $y_j
\in Y$ and $x_i \in X \mi Y$. So $m + N = \sum  (x_i +N) a_i$, which
shows that $U$ spans $M/N$. If $\sum (x_i +N) a_i = 0$, for $a_i \in
A$, $x_i \in X \mi Y$, then $\sum  x_i a_i \in N$ which implies that
$\sum  x_i a_i= \sum  y_k b_k$ for $b_k \in A$ and $y_k \in Y$, which implies that $a_i = 0$ and
$b_k = 0$ for all $i$,~$k$. Therefore $U$ is a homogeneous basis for
$M/N$ and as we can construct a bijective map $ X \mi Y \ra U$, we
have $|U| = |X \mi Y|$. Then \[\dim_A M = |X| = |Y| +|X \mi Y| = |Y|
+ |U| = \dim_A N + \dim_A (M/N).\qedhere\]
\end{proof}

The following statement  is the graded version of
a similar statement on simple rings (see \cite[\S IX.1]{hungerford}). This
is required for the proof of Theorem~\ref{clasigr}. \index{graded simple ring}

\begin{proposition}\label{prohungi}
Let $A$ and $B$ be $\Ga$\!-graded division rings. If \[\M_n(A)(\lambda_1,\dots,\lambda_n) \cong_{\gr} \M_{m}(B)(\gamma_1,\dots,\gamma_m)\] as graded
rings, where $\lambda_i,\gamma_j \in \Gamma$, $1\leq i \leq n$, $1\leq j \leq m$, then $n = m$ and $A \cong_{\gr} B$.
\end{proposition}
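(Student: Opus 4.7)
The plan is to prove this via a graded analogue of the Wedderburn--Artin argument, identifying $A$ and $B$ as endomorphism rings of (up to shift) unique graded simple modules over the matrix rings. First, by (\ref{fatloss}), I identify $R := \M_n(A)(\lambda_1,\dots,\lambda_n)$ with $\End_A(V)$, where $V = A(-\lambda_1)\oplus\cdots\oplus A(-\lambda_n)$ is a graded $R$-$A$-bimodule, and similarly $S := \M_m(B)(\gamma_1,\dots,\gamma_m) \conggr \End_B(W)$ with $W = B(-\gamma_1)\oplus\cdots\oplus B(-\gamma_m)$. The key preliminary step is that $V$ is a graded simple left $R$-module: given $0 \neq u = \sum_j u_j e_j \in V_\beta$, some entry $u_{j_0}$ is invertible in $A$ (this is exactly where one uses that $A$ is a graded division ring), and for any target $v = \sum_i v_i e_i \in V_{\beta'}$ the element $\sum_i \e_{i,j_0}(v_i u_{j_0}^{-1})$ is a homogeneous matrix of degree $\beta'-\beta$ (by (\ref{hogr})) that sends $u$ to $v$.

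Next, I classify graded simple left $R$-modules up to shift. The decomposition $1 = \e_{11} + \cdots + \e_{nn}$ into orthogonal homogeneous idempotents of degree zero yields $R = \bigoplus_{i=1}^n R\e_{ii}$ as a graded left $R$-module, and the map $r\e_{ii} \mapsto r(e_i)$ produces graded isomorphisms $R\e_{ii} \conggr V(\lambda_i)$ (since $e_i \in V$ has degree $\lambda_i$). Any graded simple left $R$-module is then a graded quotient of some $R\e_{ii}$, and by graded simplicity of $V$ must itself be a shift of $V$. Pulling $W$ back along $\phi\colon R \to S$ gives a graded simple left $R$-module, so $W \conggr V(\alpha)$ as graded left $R$-modules for some $\alpha\in\Gamma$.

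Right multiplication by $A$ on $V$ defines a graded ring embedding $A^{\op} \hookrightarrow \End_R(V)$; surjectivity follows because any graded $R$-endomorphism $\sigma$ of $V$ is determined by $\sigma(e_1)$ via $\sigma(e_i) = \e_{i1}(\sigma(e_1))$, and the matrix-unit compatibility forces $\sigma$ to be right multiplication by a single homogeneous element of $A$. Thus $\End_R(V) \conggr A^{\op}$, and analogously $\End_S(W) \conggr B^{\op}$. Since shifting a module does not alter its endomorphism ring and the $R$-action on $W$ is defined through $\phi$,
\[ A^{\op} \conggr \End_R(V) \conggr \End_R\bigl(V(\alpha)\bigr) \conggr \End_R(W) = \End_S(W) \conggr B^{\op}, \]
hence $A \conggr B$ as graded rings.

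For $n = m$, the graded $R$-linear isomorphism $\psi\colon W(\alpha) \to V$ intertwines the right $A$- and $B$-actions through the induced graded ring isomorphism $B^{\op}\to A^{\op}$ above (because those right actions are precisely the endomorphism rings being compared), so $W(\alpha) \conggr V$ as graded right $A$-modules under the identification $A \conggr B$. Proposition~\ref{kj351} then yields $m = \dim_A W = \dim_A V = n$. The main subtle point will be justifying that the $R$-linear isomorphism $\psi$ automatically intertwines the right $A$- and $B$-actions; once this compatibility is made explicit via the double-centraliser identification, the remaining calculations are direct graded versions of the classical Wedderburn--Artin argument.
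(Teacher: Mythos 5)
Your proposal is correct and takes essentially the same approach as the paper: the paper's proof of Proposition~\ref{prohungi} simply defers to the ungraded Wedderburn--Artin uniqueness argument of \cite[\S IX.1]{hungerford} ``with an extra attention given to the grading'' (details in \cite[\S 4.3]{millar}), and your graded simple-module and double-centraliser argument is precisely that adaptation carried out in full. The shift bookkeeping you do via~(\ref{hogr}), the identification $\End_R(V)\cong_{\gr} A^{\op}$, and the dimension count via Proposition~\ref{kj351} (together with the semilinearity of $\psi$, which you correctly flag and resolve through the double-centraliser identification) supply exactly the details the paper leaves to the cited references.
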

\begin{proof}
The proof follows the ungraded case (see \cite[\S IX.1]{hungerford}) with an extra attention given to the grading. We refer the reader to ~\cite[\S4.3]{millar} for a detailed proof. 
\end{proof}

We can further determine the relations between the  shift $(\lambda_1,\dots,\lambda_n)$ and $(\gamma_1,\dots,\gamma_m)$ in the above proposition. For this we need to extend ~\cite[Theorem 2.1]{can2} (see also \cite[Theorem 9.2.18]{grrings}) from fields (with trivial grading) to graded division algebras. The following theorem states that two graded matrix algebras over a graded division ring with two shifts are isomorphic if and only if one can obtain one shift from the other by applying (\ref{pqow1}) and (\ref{pqow2}). 

\begin{theorem}\label{clasigr}
Let $A$ be a $\Ga$\!-graded division ring.
Then for $\lambda_i,\gamma_j \in \Gamma$, $1\leq i \leq n$, $1\leq j \leq m$,  
\begin{equation}\label{uytrtyu}
\M_n(A)(\lambda_1,\dots,\lambda_n)\cong_{\gr}\M_m(A)(\gamma_1,\dots,\gamma_m)
\end{equation} if and only if $n=m$ and for a suitable permutation $\pi \in S_n$, we have $\lambda_i=\gamma_{\pi(i)}+\tau_i+\sigma$, $1\leq i \leq n$, where  $\tau_i\in \Ga_A$ and a fixed $\sigma \in \Gamma$, \ie $(\lambda_1,\dots,\lambda_n)$ is obtained from $(\gamma_1,\dots,\gamma_m)$ by applying {\upshape (\ref{pqow1})} and {\upshape(\ref{pqow2})}.
\end{theorem}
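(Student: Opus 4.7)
The ``if'' direction is an immediate consequence of Theorem~\ref{ppooahnrq}: since $A$ is a graded division ring we have $\Gamma_A^*=\Gamma_A$, so a relation of the form $\lambda_i=\gamma_{\pi(i)}+\tau_i+\sigma$ with $\tau_i\in\Gamma_A$ is precisely what one manufactures by first permuting and globally shifting via~\eqref{pqow1} and then adjusting each entry by a homogeneous unit via~\eqref{pqow2}.

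For the ``only if'' direction, my plan is the following. First apply Proposition~\ref{prohungi} to obtain $n=m$, and fix a graded isomorphism $\phi\colon R:=\M_n(A)(\overline\lambda)\to S:=\M_n(A)(\overline\gamma)$. Write $e_i=\e_{ii}\in R$ and $e'_j=\e_{jj}\in S$ for the standard diagonal matrix units, and set $f_i:=\phi(e_i)\in S$. Using~\eqref{mmkkhh} together with the homogeneous idempotent calculus of~\S\ref{idemptis}, one reads off
\[
\Hom_R(e_iR,e_jR)_\mu \cong (e_jRe_i)_\mu = A_{\mu+\lambda_i-\lambda_j};
\]
because every nonzero homogeneous element of $A$ is invertible, this gives the key criterion: $e_iR\cong_{\gr}e_jR(\mu)$ as graded right $R$-modules if and only if $\mu\equiv\lambda_j-\lambda_i\pmod{\Gamma_A}$. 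Taking $i=j$ shows $\End_R(e_iR)\cong_{\gr}A$, so each $e_iR$ is graded indecomposable with graded-local endomorphism ring; the same holds for each $e'_jS$ with $\lambda$ replaced by $\gamma$.

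The isomorphism $\phi$ transports the decomposition $R=\bigoplus_i e_iR$ into the decomposition $S=\bigoplus_i f_iS$, which I would then compare with the standard decomposition $S=\bigoplus_j e'_jS$. Graded Krull--Schmidt, applicable because the summands have graded-local endomorphism rings isomorphic to $A$, supplies a permutation $\pi\in S_n$ with $f_iS\cong_{\gr}e'_{\pi(i)}S$. On the $R$-side, the criterion above with $j=1$ yields $e_iR\cong_{\gr}e_1R(\lambda_1-\lambda_i)$; carrying this across $\phi$ gives $f_iS\cong_{\gr}f_1S(\lambda_1-\lambda_i)$. Concatenating the three isomorphisms produces
\[
e'_{\pi(i)}S\cong_{\gr}e'_{\pi(1)}S(\lambda_1-\lambda_i),
\]
and the $S$-analogue of the criterion now forces $\lambda_1-\lambda_i\equiv\gamma_{\pi(1)}-\gamma_{\pi(i)}\pmod{\Gamma_A}$. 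Setting $\sigma:=\lambda_1-\gamma_{\pi(1)}$, this rearranges to $\lambda_i=\gamma_{\pi(i)}+\sigma+\tau_i$ with $\tau_i\in\Gamma_A$, which is the required form.

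The principal obstacle I expect is the appeal to graded Krull--Schmidt: one needs the pairing $f_iS\cong_{\gr}e'_{\pi(i)}S$ to hold with \emph{no} auxiliary shift, and that rests critically on the graded-local structure of $\End_S(e'_jS)\cong_{\gr}A$. Should that theorem not be directly available at this stage of the book, I would replace it with a bare-hands argument inspecting the matrix of degree-$0$ bimodule components $\bigl((e'_jSf_i)_0\bigr)_{i,j}$: the fact that $\{f_i\}$ and $\{e'_j\}$ are two systems of primitive orthogonal homogeneous idempotents of degree $0$ summing to $1$ in the graded simple Artinian ring $S$ forces the matching permutation $\pi$ and the existence of a degree-$0$ unit in $(e'_{\pi(i)}Sf_i)_0$, after which the remainder of the computation is a homogeneous degree count.
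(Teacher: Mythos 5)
Your proof is correct, and it takes a genuinely different route from the one in the text. The book's argument first sorts the shifts into coset representatives via~(\ref{wdeild}), realises both matrix rings as $\End_A(V)$ and $\End_A(W)$ for graded $A$-vector spaces, transports the matrix units through the ring isomorphism, and uses a dimension count (Proposition~\ref{grdimensionprop}) to show the resulting summands of $W$ are one-dimensional shifts of a single $Q_1$, whence $W\cong_{\gr}V(\sigma)$; the relation between the shifts is then read off by comparing degrees of homogeneous basis vectors. You instead stay inside the module categories over the matrix rings themselves: your computation $\Hom_R(e_iR,e_jR)_\mu\cong (e_jRe_i)_\mu = A_{\mu+\lambda_i-\lambda_j}$ is exactly what \S\ref{idemptis} together with~(\ref{pjacko}) gives, and since nonzero homogeneous elements of $A$ are invertible, a nonzero homomorphism in that component is automatically an isomorphism, so your numerical criterion is sound. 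Your worry about the Krull--Schmidt step is well placed but resolves favourably: the matching $f_iS\cong_{\gr}e'_{\pi(i)}S$ holds with no auxiliary shift because the theorem is applied to two decompositions of the \emph{same} object $S$ inside $\Gr S$, where $\End_{\Gr S}(e'_jS)\cong(e'_jSe'_j)_0\cong A_0$ is a division ring; and since the book never develops a graded Krull--Schmidt theorem, your fallback is the right move and lies entirely within the book's toolkit: by~(\ref{urnha}) the ring $S_0$ is semisimple Artinian, both $\{f_i\}$ and $\{e'_j\}$ are complete systems of primitive orthogonal idempotents of $S_0$ (primitive because $f_iS_0f_i=(f_iSf_i)_0\cong A_0$), so ordinary Jordan--H\"older over $S_0$ pairs them via a permutation $\pi$ with $f_iS_0\cong e'_{\pi(i)}S_0$, producing degree-zero elements $x\in e'_{\pi(i)}S_0f_i$ and $y\in f_iS_0e'_{\pi(i)}$ with $xy=e'_{\pi(i)}$ and $yx=f_i$, and then left multiplication by $x$ is a degree-zero graded isomorphism $f_iS\to e'_{\pi(i)}S$ exactly as in \S\ref{idemptis}. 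As for what each approach buys: the paper's route yields along the way the graded vector-space isomorphism $W\cong_{\gr}V(\sigma)$, which is of independent interest, while yours avoids both the sorting reduction~(\ref{wdeild}) and all dimension counting, localises the work in the degree-zero idempotent calculus, and has the small bonus of exhibiting the constant explicitly as $\sigma=\lambda_1-\gamma_{\pi(1)}$.
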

\begin{proof}

One direction is Theorem~\ref{ppooahnrq}, noting that since $A$ is a graded division ring, $\Gamma_A=\Gamma^*_A$.

We now prove the converse. That $n=m$ follows from Proposition~\ref{prohungi}.  By~(\ref{wadi}) one can find $\epsilon=(\varepsilon_1,\dots,\varepsilon_1,\varepsilon_2,\dots,\varepsilon_2,\dots,\varepsilon_k,\dots,\varepsilon_k)$ in $\Gamma$  such that  $\M_n(A)(\lambda_1,\dots, \lambda_n)\cong_{\gr}\M_n(A)(\epsilon)$ as in (\ref{wdeild}). 
Now set \[V=A(-\varepsilon_1)\times \dots \times A(-\varepsilon_1) \times  \dots \times A(-\varepsilon_k)\times \dots \times A(-\varepsilon_k)\] and  pick the (standard) homogeneous basis $e_i$, $1\leq i \leq n$  and define $E_{ij} \in \End_A(V)$ by $E_{ij}(e_t)=\delta_{j,t}e_i$, $1\leq i,j,t \leq n$.
One can easily see that $E_{ij}$ is a $A$-graded homomorphism of degree $\varepsilon_{s_i}-\varepsilon_{s_j}$ where $\varepsilon_{s_i}$ and $\varepsilon_{s_j}$ are $i$-th and $j$-th elements in $\epsilon$. Moreover, 
$\End_A(V) \cong_{\gr} \M_n(A)(\epsilon)$ and $E_{ij}$ corresponds to the matrix $e_{ij}$ in $\M_n(A)(\epsilon)$. In a similar manner, one can find \[\epsilon'=(\varepsilon_1',\dots,\varepsilon_1',\varepsilon_2',\dots,\varepsilon_2',\dots,\varepsilon_{k'}',\dots,\varepsilon_{k'}')\] and a graded $A$-vector space $W$ such that \[\M_n(A)(\gamma_1,\dots,\gamma_n)\cong_{\gr}\M_n(A)(\epsilon'),\] and $\End_A(W)\cong_{\gr} \M_n(A)(\epsilon')$. Therefore (\ref{uytrtyu}) provides  a graded ring isomorphism \[\theta:\End_A(V)\rightarrow \End_A(W).\] Define $E_{ij}':=\theta(E_{ij})$ and $E_{ii}'(W)=Q_i$, for $1\leq i,j \leq n$. Since $\{E_{ii} \mid 1\leq i \leq n\}$ is a complete system of orthogonal idempotents, so is $\{E_{ii}' \mid 1\leq i \leq n\}$. It follows that \[W\cong_{\gr} \textstyle{\bigoplus_{1\leq j \leq n}} Q_j.\] Moreover, $E_{ij}'E_{tr}'=\delta_{j,t}E_{ir}'$ and $E_{ii}'$ acts as identity on $Q_i$. These relations show that restricting $E_{ij}'$ on $Q_j$ induces an $A$-graded isomorphism $E_{ij}':Q_j \rightarrow Q_i$ of degree 
$\varepsilon_{s_i}-\varepsilon_{s_j}$ (same degree as $E_{ij}$). 
So   $Q_j\cong_{\gr} Q_1(\varepsilon_{s_1}-\varepsilon_{s_j})$ for any $1\leq j \leq n$.  
Therefore 
\[W\cong_{\gr} \textstyle{\bigoplus_{1\leq j \leq n}} Q_1(\varepsilon_{s_1}-\varepsilon_{s_j}).\] By dimension count (see Proposition~\ref{grdimensionprop}), it follows that $\dim_AQ_1=1$. 

A similar argument for the identity map $\id:\End_A(V)\rightarrow \End_A(V)$ produces \[V\cong_{\gr} \textstyle{\bigoplus_{1\leq j \leq n}} P_1(\varepsilon_{s_1}-\varepsilon_{s_j}),\] where $P_1=E_{11}(V)$, with $\dim_AP_1=1$. 

Since $P_1$ and $Q_1$ are $A$-graded vector spaces of dimension $1$, there is $\sigma \in \Gamma$, such that $Q_1\cong_{\gr}P_1(\sigma)$. Using  the fact that for an $A$-graded module $P$ and $\alpha,\beta \in \Gamma$,  $P(\alpha)(\beta)=P(\alpha+\beta)=P(\beta)(\alpha)$, we have 
\begin{multline}
W\cong_{\gr} \textstyle{\bigoplus_{1\leq j \leq n}} Q_1(\varepsilon_{s_1}-\varepsilon_{s_j})\cong_{\gr}
\textstyle{\bigoplus_{1\leq j \leq n}} P_1(\sigma)(\varepsilon_{s_1}-\varepsilon_{s_j})\cong_{\gr} \\
\textstyle{\bigoplus_{1\leq j \leq n}} P_1(\varepsilon_{s_1}-\varepsilon_{s_j})(\sigma)\cong_{\gr}V(\sigma).
\end{multline}
We denote this $A$-graded isomorphism with $\phi:W \rightarrow V(\sigma)$.
Let $e_i'$, $1\leq i \leq n$  be a (standard) homogeneous basis of degree $\varepsilon_{s_i}'$ in $W$. Then $\phi(e_i')=\sum_{1\leq j \leq n} e_j a_j$, where $a_j \in A^h$ and  $e_j$ are homogeneous of degree $\varepsilon_{s_j}-\sigma$ in $V(\sigma)$.  Since $\deg(\phi(e_i'))=\varepsilon_{s_i}'$, all $e_j$'s with nonzero $a_j$ in the sum have the same degree. For if $\varepsilon_{s_j}-\sigma=\deg(e_j)\not = \deg(e_l)=\varepsilon_{s_l}-\sigma$, then since $\deg(e_j a_j)=\deg(e_l a_l)=\varepsilon_{s_i}'$ it follows that $\varepsilon_{s_j}-\varepsilon_{s_l} \in \Ga_A$ which is a contradiction as $\Ga_A+\varepsilon_{s_j}$ and $\Ga_A+\varepsilon_{s_l}$ are distinct. Thus $\varepsilon_{s_i}'=\varepsilon_{s_j}+\tau_j-\sigma$ where $\tau_j=\deg(a_j) \in \Ga_A$. In the same manner one can show that, $\varepsilon_{s_i}'=\varepsilon_{s_{i'}}'$  in $\epsilon'$ if and only if  $\varepsilon_{s_j}$ and $\varepsilon_{s_{j'}}$ assigned to them by the previous argument are also equal. 
This shows that $\epsilon'$ can be obtained from $\epsilon$ by applying (\ref{pqow1}) and (\ref{pqow2}). Since $\epsilon'$ and $\epsilon$ are also obtained from $\gamma_1,\dots,\gamma_n$ and $\lambda_1,\dots,\lambda_n$, respectively, by applying  (\ref{pqow1}) and (\ref{pqow2}), putting these together shows that $\lambda_1,\dots,\lambda_n$ and $\gamma_1,\dots,\gamma_n$ have the similar relations, \ie   $\lambda_i=\gamma_{\pi(i)}+\tau_i+\sigma$, $1\leq i \leq n$, where  $\tau_i\in \Ga_A$ and a fixed $\sigma \in \Gamma$.
\end{proof}

A \emph{graded division algebra}\index{graded division algebra} $A$
is defined to be a graded division ring with centre $R$ such that 
$[A:R] < \infty$. Note that since $R$ is a graded field, by Propositions~\ref{gradedfree} and~\ref{kj351}, $A$ has a
well-defined dimension over $R$. A graded division algebra $A$
over its centre $R$ is said to be
\emph{unramified}\index{unramified graded division ring} if $\Ga_A = \Ga_R$ and
\emph{totally ramified}\index{totally ramified graded division ring} if $A_0 = R_0$.

Let $A$ be a graded division ring and let $R$ be a graded subfield
of $A$ which is contained in the centre of $A$. It is clear  that $R_0 =
R \cap A_0$ is a field and $A_0$ is a division ring. The group of
invertible homogeneous elements of $A$ is denoted by $A^{h*}$, which is
equal to $A^h \mi 0$. Considering $A$ as a graded $R$-module, since
$R$ is a graded field,  there is a uniquely defined dimension
$[A:R]$ by  Theorem~\ref{gradedfree}. 
The proposition below has been proven in~\cite[Chapter~5]{tignolwadsworth} for  two graded fields $R \subseteq S$ with
a torsion-free abelian grade group.

\begin{proposition}
Let $A$ be a graded division ring  and let $R$ be a graded subfield
of $A$ which is contained in the centre of $A$. Then
$$[A:R] = [A_0 : R_0] | \Ga_A : \Ga_R|.$$
\end{proposition}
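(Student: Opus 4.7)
The plan is to construct an explicit homogeneous $R$-basis of $A$ whose cardinality is visibly the product $[A_0:R_0]\cdot |\Gamma_A:\Gamma_R|$. The building blocks are an $R_0$-basis of the division ring $A_0$ (viewed as an $R_0$-vector space, since $R_0$ is a field sitting inside the centre of $A_0$) and a choice of invertible homogeneous ``lifts'' of coset representatives of $\Gamma_R$ in $\Gamma_A$. Concretely, first I would fix an $R_0$-basis $\{a_i\}_{i\in I}$ of $A_0$, and then for each coset of $\Gamma_R$ in $\Gamma_A$ pick a representative $\gamma_j\in\Gamma_A$ together with a nonzero (hence, by Proposition~\ref{basicsofgradedrings}(4), invertible) element $b_j\in A_{\gamma_j}$; this is possible because $A$ is a graded division ring and $\Gamma_A$ is a group. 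The candidate basis is then $\mathcal{B}=\{a_i b_j\mid i\in I,\ j\in J\}$, which consists of homogeneous elements with $\deg(a_i b_j)=\gamma_j$.

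Next I would check that $\mathcal{B}$ spans $A$ over $R$. It suffices to express any homogeneous $x\in A_\gamma$ as an $R$-combination of the $a_i b_j$. Write $\gamma=\delta+\gamma_{j_0}$ uniquely with $\delta\in\Gamma_R$ and $j_0\in J$. Since $R$ is a graded field, pick an invertible $r_\delta\in R_\delta$; multiplication by $r_\delta$ is an $R_0$-linear bijection $A_0\to A_\delta$, so $A_\delta=A_0 r_\delta$. Then $xb_{j_0}^{-1}\in A_\delta$, so $xb_{j_0}^{-1}=\sum_i a_i s_i\,r_\delta$ for some $s_i\in R_0$, and multiplying on the right by $b_{j_0}$ and invoking centrality of $R$ in $A$ gives $x=\sum_i a_i b_{j_0}(s_i r_\delta)$, an $R$-combination of elements of $\mathcal{B}$.

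For linear independence, suppose $\sum_{i,j}a_i b_j r_{ij}=0$ with $r_{ij}\in R^h$ (one reduces to this case by taking homogeneous components). Collect terms of equal total degree $\gamma$: the contribution of $(i,j)$ sits in $A_{\gamma_j+\deg r_{ij}}$, and this equals $\gamma$ only if $\gamma-\gamma_j\in\Gamma_R$. Because the $\gamma_j$ lie in distinct cosets of $\Gamma_R$, at most one $j=j_0$ can contribute, reducing the equation to $\sum_i a_i b_{j_0}r_{ij_0}=0$ with each $r_{ij_0}$ of fixed degree $\gamma-\gamma_{j_0}\in\Gamma_R$. Choosing an invertible $r\in R_{\gamma-\gamma_{j_0}}$ and writing $r_{ij_0}=s_i r$ with $s_i\in R_0$, centrality of $R$ and invertibility of $r$ and $b_{j_0}$ reduce this to $\sum_i a_i s_i=0$ in $A_0$, whence $s_i=0$ for all $i$ by the $R_0$-independence of $\{a_i\}$.

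Once both properties are established, $|\mathcal{B}|=[A_0:R_0]\cdot|\Gamma_A:\Gamma_R|$ yields the formula, with the answer being independent of choices by Proposition~\ref{kj351}. The only delicate point is the linear independence step: one must use three facts simultaneously — the invertibility of the chosen lifts $b_j$, the fact that distinct cosets of $\Gamma_R$ keep the degree-$\gamma$ ``column'' of the relation supported on a single index $j_0$, and the centrality of $R$ to commute $b_{j_0}$ past $r_{ij_0}$. The rest of the argument is an essentially formal upgrade of the classical ``tower of dimensions'' proof for valued division algebras to the graded setting.
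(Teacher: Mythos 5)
Your proof is correct and follows essentially the same route as the paper's: both construct the homogeneous basis $\{a_i b_j\}$ from an $R_0$-basis of $A_0$ and invertible homogeneous lifts of coset representatives of $\Gamma_R$ in $\Gamma_A$, prove spanning by writing each homogeneous element as (element of $A_0$)$\cdot$(lift)$\cdot$(homogeneous unit of $R$), and prove independence by passing to homogeneous components, using distinct cosets to isolate a single index $j_0$, and dividing by an invertible homogeneous element of $R$ to reduce to a relation in $A_0$ over $R_0$. The only cosmetic difference is that the paper packages the spanning step via the homomorphism $\psi\colon A^{h*}\to\Gamma_A/\Gamma_R$ with kernel $A_0R^{h*}$, whereas you argue directly with the decomposition $\gamma=\delta+\gamma_{j_0}$ and the bijection $A_0\to A_\delta$ given by an invertible $r_\delta$ — the same idea in different dress.
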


\begin{proof}
Since $A$ is a graded division ring, $A_0$ is a division ring. Moreover, $R_0$ is a field.  Let $\{ x_i \}_{i \in I}$ be a basis for $A_0$ over $R_0$. Consider
the cosets of $\Ga_A$ over $\Ga_R$ and let $\{ \de_j
\}_{j \in J}$ be a coset representative, where $\de_j \in \Ga_A$. Take $\{y_j
\}_{j \in J} \subseteq A^{h*}$ such that $\deg (y_j ) = \de_j$ for
each $j$. We will show that $\{ x_i y_j \}$ is a basis for $A$ over
$F$.

Consider the map
\begin{align*}
\psi : A^{h*} & \lra \Ga_A/ \Ga_R,\\
a \; &\lmps \deg(a) + \Ga_R.
\end{align*}
This is a group homomorphism with kernel $A_0 R^{h*}$, since for any
$a \in \ker(\psi)$ there is some $r \in R^{h*}$ with $a r^{-1} \in
A_0$. For $a\in A$,  $a = \sum_{\ga \in \Ga}
a_{\ga}$, where $a_{\ga} \in A_{\ga}$ and $\psi (a_\ga) = \ga + \Ga_R
= \de_j + \Ga_F$ for some $\de_j$ in the coset representative of $\Ga_A$ over
$\Ga_R$. Then there is some $y_j$ with $\deg(y_j) = \de_j$ and
$a_\ga y_j^{-1} \in \ker (\psi)=A_0 R^{h*}$. So 
\begin{equation*}
a_{\ga} y_j^{-1} = (\sum_i  x_i r_i)     g
\end{equation*}
for $g \in R^{h*}$ and $r_i\in R_0$.  Since $R$ is in the centre of $A$, It follows 
$a_{\ga}  = \sum_i  x_iy_j r_ig$. So  $a$ can be written as an
$R$-linear combination of the elements of $\{x_i y_j \}$.

To show linear independence, suppose 
\begin{equation}\label{romoba}
\sum_{i=1}^n  x_i y_i r_i= 0,
\end{equation}
for $r_i \in R$. Write $r_i$ as the sum of its homogeneous components, and  then consider a homogeneous component of the 
sum~(\ref{romoba}), say
$\sum_{k=1}^mx_k y_k  r'_k $, where $\deg(  x_k y_k r'_k) = \al$. Since $x_k\in A_0$, 
$\deg(r'_k) + \deg(y_k) = \al$ for all $k$, so all of the $y_k$ are
the same. This implies that $\sum_k  x_k r'_k =0$, where all of the
$r'_k$ have the same degree. If $r'_k=0$ for all $k$ then $r_i=0$ for all $i$ we are done.
Otherwise, for some $r'_l \neq 0$, we have 
$\sum_k x_k ( r'_k {r'}_l^{-1} ) = 0$. Since $\{x_i\}$ forms a basis for $A_0$ over $R_0$, this
implies $r'_k = 0$ for all $k$ and thus $r_i=0$ for all $1\leq i \leq n$.
\end{proof}

\begin{example}\label{grdiviwad}\scm[Graded division algebras from valued division algebras]
\vspace{0.2cm}

Let $D$ be a division algebra with a valuation. To this 
one associates a graded division algebra 
${\gr(D)= \bigoplus_{\ga \in \Ga_D}\gr(D)_\gamma}$, where 
$\Ga_D$ is the value group of $D$ and 
the summands $\gr(D)_\gamma$ arise from the filtration on $D$ 
induced by the valuation
(see details below and also Example~\ref{filt}).   As it is illustrated in~\cite{tignolwadsworth}, even though computations in the graded setting are 
often easier than working directly with $D$, 
it seems that not  much is lost in passage from~$D$ to its 
corresponding graded division algebra $\gr(D)$.  
This has provided motivation to systematically study this 
correspondence, notably by Hwang, 
Tignol and Wadsworth~\cite {tignolwadsworth}, and to
compare certain functors defined on these objects, 
notably the Brauer group~\cite[Chapter~6]{tignolwadsworth} and the reduced Whitehead group $\SK_1$~\cite[Chapter~11]{tignolwadsworth}. We introduce this correspondence here and in~\S\ref{ggg} we calculate their graded Grothendieck groups (Example~\ref{whyso8}). 

Let $D$ be a division algebra finite dimensional over its 
centre $F$, with a \emph{valuation} \index{valuation function}
$v: D^{\ast} \ra
\Ga$. So $\Ga$ is a totally ordered abelian group,  \index{totally ordered abelian group}
and for any $a, b \in D^{\ast}$, $v$ satisfies the following conditions 
\begin{enumerate}[\upshape (i)]
\item $ v(ab) = v(a) + v(b)$;

\item $v(a+b) \geq \min \{\, v(a),v(b) \,  \}\;\;\;\;\; (b \neq -a).$
\end{enumerate}
Let 
\begin{align*}
V_D   &=   \{\,  a \in D^{\ast} : v(a) \geq 0 \, \}\cup\{0\}, 
\text{ the
valuation ring of $v$};\\ 
M_D   &=   \{ a \, \in D^{\ast} : v(a) > 0 \,
\}\cup\{0\}, \text{ the unique maximal left and right ideal
 of $V_D$}; \\
\overline{D}   &=   V_D / M_D, \text{ the residue
division ring of $v$ on $D$; and} \\
\Ga_D   &=   \mathrm{im}(v), \text{ the value
group of the valuation}. 
\end{align*}
For background on  valued division algebras, 
see~\cite[Chapter~1]{tignolwadsworth}. 
One associates to $D$ a graded division algebra 
as follows:
For each $\gamma\in \Gamma_D$, let
\begin{align*} 
 D^{\ge\ga}   &=   
\{ \, d \in D^{\ast} : v(d) \geq \ga\, \}\cup\{0\}, \text{ an additive 
subgroup of $D$ }; \qquad \qquad\qquad\qquad\qquad \ \\
D^{>\ga}   &=   \{\,  d \in D^{\ast} : v(d) > \ga \, \}\cup\{0\}, 
\text{ a subgroup 
of $D^{\ge\ga}$};   \text{ and}\\
 \gr(D)_\gamma  &= 
D^{\ge\ga}\big/D^{>\ga}. 
\end{align*}
Then define
$$
 \gr(D)   =   \textstyle\bigoplus\limits_{\ga \in \Ga_D} 
\gr(D)_\gamma. \ \ 
$$
Because $D^{>\ga}D^{\ge\de} \,+\, D^{\ge\ga}D^{>\de} 
\subseteq D^{>(\ga +
\de)}$ for all $\ga , \de \in \Ga_D$, the  multiplication on 
$\gr(D)$ induced by multiplication on $D$ is
well-defined, giving that $\gr(D)$ is a $\Gamma$\!-graded  ring, called the 
\emph{associated graded ring} \index{associated graded ring} of $D$. The 
multiplicative property 
(i) of  the valuation $v$ implies that $\gr(D)$ is a graded 
division ring.
Clearly,
we have ${\gr(D)}_0 = \overline{D}$, and $\Ga_{\gr(D)} = \Ga_D$.
For $d\in D^*$, we write $\widetilde d$ for the image 
$d + D^{>v(d)}$ of $d$ in $\gr(D)_{v(d)}$.  Thus, 
the map given by $d\mapsto \widetilde d$ is 
a group epimorphism $ D^* \rightarrow {\gr(D)^*}$ with 
kernel~$1+M_D$.  

The restriction $v|_F$ of the valuation on $D$ to its centre $F$ is
a valuation on $F$, which induces a corresponding graded field  $\gr(F)$.
Then it is clear that $\gr(D)$ is a graded $\gr(F)$-algebra, 
and one can prove that for  
$$
[\gr(D):\gr(F)]  =  
[\overline{D}:\overline{F}] \, |\Ga_D :\Ga_F|
 \ \le  \ [D:F] \ < \infty.
$$

Now let $F$ be a field with a henselian valuation $v$. Then the valuation of $F$ extends uniquely to $D$ (see~\cite[Chapter~1]{tignolwadsworth}), and with respect to this valuation, $D$ 
is said to be  \emph{tame} \index{tame valuation} if $Z(\overline D)$ is 
 separable over $\overline F$ and \[{\chr({\overline F}) \nmid 
\ind(D)\big/\big(\ind(\overline D)[Z(\overline D):\overline F]\big)}.\]
It is known (\cite[Chapter~8]{tignolwadsworth}) that 
$D$ is tame if and only if \[[\gr(D):\gr(F)] = [D:F]\] and 
$Z(\gr(D))=\gr(F)$. 

We will compute the graded Grothendieck group and the graded Picard group of these division algebras in Examples~\ref{whyso8} and~\ref{whyso9}.
\end{example}

\subsection[The zero component ring of graded simple ring]{The zero component ring of a graded central simple ring}\label{wadi}  \index{graded simple ring}

Let $A$ be a $\Ga$\!-graded division ring and $\M_n(A)(\lambda_1,\dots,\lambda_n)$ be a  graded simple ring, where $\lambda_i \in \Gamma$, $1\leq i\leq n$. \index{complete set of coset representative}
Since $A$ is a graded division ring, $\Gamma_A$ is a subgroup of $\Gamma$. Consider the quotient group $\Ga/\Ga_A$ and let $\Ga_A+\varepsilon_1,\dots,\Ga_A+\varepsilon_k$ be the distinct elements in $\Ga/\Ga_A$ representing the cosets $\Ga_A+\lambda_i$, $1\leq i\leq n$, and for each $\varepsilon_l$, let $r_l$ be the number of $i$ with $\Ga_A+\lambda_i=\Ga_A+\varepsilon_l$.  It was observed in \cite[Chapter~2]{tignolwadsworth} that 
\begin{equation}\label{urnha}
\boxed{\M_n(A)(\lambda_1,\dots,\lambda_n)_0 \cong \M_{r_1}(A_0)\times \dots \times \M_{r_k}(A_0)}
\end{equation}
and in particular, $\M_n(A)(\lambda_1,\dots,\lambda_n)_0$ is a simple ring if and only if $k=1$. Indeed, using (\ref{pqow1}) and (\ref{pqow2}) we get 
\begin{equation}\label{wdeild}
\M_n(A)(\lambda_1,\dots,\lambda_n)\cong_{\gr}\M_n(A)(\varepsilon_1,\dots,\varepsilon_1,\varepsilon_2,\dots,\varepsilon_2,\dots,\varepsilon_k,\dots,\varepsilon_k),
\end{equation}
with each $\varepsilon_l$ occurring $r_l$ times.  Now~(\ref{mmkkhh}) for $\lambda=0$ and 
\[(\delta_1,\dots,\delta_n)=(\varepsilon_1,\dots\varepsilon_1,\varepsilon_2,\dots,\varepsilon_2,\dots,\varepsilon_k,\dots,\varepsilon_k)\] immediately gives (\ref{urnha}).

\begin{remark}\scm[The graded Artin-Wedderburn structure theorem] \label{jussise}
\vspace{0.2cm}

The Artin-Wedderburn theorem shows that division rings are the basic ``building blocks'' of ring theory, \ie if a ring $A$ satisfies some finite condition, for example $A$ is right Artinian, then $A/J(A)$ is isomorphic to a finite product of matrix rings over division rings. 
A graded version of Artin-Wedderburn structure \index{graded Artin-Wedderburn Theorem} theorem also holds. We state the statement here without proof. 
We refer the reader to~\cite{grrings,tignolwadsworth} for  proofs of these statements.

A $\Gamma$\!-graded ring $B$ is isomorphic to  
$\M_n(A)(\lambda_1,\dots,\lambda_n)$,  where $A$ is a $\Gamma$\!-graded division ring and $\lambda_i \in \Gamma$, $1\leq i\leq n$, if and only if $B$ is 
\emph{graded right Artinian} \index{graded Artinian ring} (\ie a decreasing chain of graded right ideals becomes stationary) and graded simple.

A $\Gamma$\!-graded ring $B$ is isomorphic to a finite product of matrix rings overs graded division rings (with suitable shifts) if and only if $B$ is graded right Artinian and \emph{graded primitive} \index{graded primitive} (\ie $J^{\gr}(B)=0$).   \index{graded semisimple ring}

\end{remark}

\section{Strongly graded rings and Dade's theorem}\label{dadestmal}  \index{strongly graded ring}

Let $A$ be a $\Gamma$\!-graded ring and $\Omega$ be a subgroup of $\Gamma$. Recall from~\S\ref{mconfi1} that $A$ has a natural $\Gamma/\Omega$-graded structure and $A_{\Omega}=\bigoplus_{\gamma \in \Omega}A_\gamma$ is a $\Omega$-graded ring. If $A$ is a $\Gamma/\Omega$-strongly graded ring, then one can show that the category of $\Gamma$\!-graded $A$-modules, $\Gr[\Gamma] A$, is equivalent to the category of $\Omega$-graded $A_\Omega$-modules, $\Gr[\Omega] A_\Omega$. In fact, the equivalence 
\[\Gr[\Gamma] A \approx \Gr[\Omega] A_\Omega,\]
under the given natural functors (see Theorem~\ref{rhye}) implies that $A$ is a $\Gamma/\Omega$-strongly graded ring. This was first proved by Dade~\cite{dade} in the case of $\Omega=0$, \ie when  $\Gr[\Gamma] A \approx \Modd A_0$. We prove Dade's theorem (Theorem~\ref{dadesthm}) and then state this more general case in Theorem~\ref{rhye}. 

Let $A$ be a $\Gamma$\!-graded ring. For any right $A_0$-module $N$ and any $\ga
\in \Ga$, we identify the right $A_0$-module $N \otimes_{A_0} A_{\ga}$
with its image in $ N \otimes_{A_0} A$. Since $A=\bigoplus_{\gamma \in \Gamma} A_\gamma$ and $A_\gamma$ are $A_0$-bimodules, $N \otimes_{A_0} A$ is a
$\Ga$\!-graded right $A$-module, with 
\begin{equation}\label{ilcasa}
N \otimes_{A_0} A = \bigoplus_{\ga
\in \Ga} \big (N \otimes_{A_0} A_{\ga}\big ).
\end{equation}

 Consider the  \emph{restriction functor} \index{restriction functor}
\begin{empheq}[box=\widefbox]{align*}
\mathcal G:= (-)_0 : \Gr A & \lra \Modd A_0\\
M & \lmps M_0 \\
\psi & \lmps \psi|_{M_0},
\end{empheq}
and the  \emph{induction functor}  \index{induction functor} defined by
\begin{empheq}[box=\widefbox]{align*}
\mathcal I:= -\otimes_{A_0} A : \Modd A_0 & \lra \Gr A \\
N & \lmps N \otimes_{A_0} A \\
\phi & \lmps \phi \otimes \id_A .
\end{empheq}

One can easily check that 
$\G\circ  \I \cong \id_{A_0}$ with the natural transformation,
\begin{align}\label{hgy4nd1}
\G \I (N)= \G (N\otimes_{A_0} A) =N\otimes_{A_0} A_0 & \longrightarrow N,\\
n\otimes a &\mapsto na  \notag.
\end{align}

On the other hand, there is a natural transformation, 
\begin{align}\label{hgy4nd}
\I \G (M)= \I (M_0)= M_0\otimes_{A_0} A & \longrightarrow M,\\
m\otimes a &\mapsto ma  \notag.
\end{align}
 The theorem below shows that $\I \circ \G \cong \id_{A}$ (under (\ref{hgy4nd})), if and only if $A$ is a strongly graded ring. 
Theorem~\ref{dadesthm} was proved by Dade~\cite[Theorem~2.8]{dade} (see also~\cite[Theorem~3.1.1]{grrings}).

\begin{theorem}[{\sc Dade's Theorem}]  \label{dadesthm} \index{Dade's theorem}
Let $A$ be a $\Ga$\!-graded ring. Then $A$ is strongly graded if and only if 
the functors \[(-)_0:\Gr A\rightarrow \Modd A_0,\] and  \[-\otimes_{A_0} A: \Modd A_0 \rightarrow \Gr A\]  form mutually inverse
equivalences of categories.
\end{theorem}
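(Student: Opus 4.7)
The plan is to reframe Dade's theorem as a statement about the counit of an adjunction. I first verify that $\I = -\otimes_{A_0} A$ is left adjoint to $\G = (-)_0$: any graded $A$-homomorphism $N \otimes_{A_0} A \to M$ is determined by its restriction to the degree-zero summand $N \cong N \otimes 1$, which is an $A_0$-linear map $N \to M_0$, and conversely any $f\colon N \to M_0$ extends uniquely to $n \otimes a \mapsto f(n) a$. Under this adjunction the unit $\eta_N\colon N \to \G\I(N) = N \otimes_{A_0} A_0$ is the canonical identification $n \mapsto n \otimes 1$ of~\eqref{hgy4nd1} and is always an isomorphism, while the counit $\epsilon_M\colon M_0 \otimes_{A_0} A \to M$ is the multiplication map of~\eqref{hgy4nd}. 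Because an adjoint pair whose unit is an isomorphism is an equivalence if and only if its counit is also an isomorphism, the theorem reduces to showing that $A$ is strongly graded if and only if $\epsilon_M$ is an isomorphism for every $M \in \Gr A$.

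For the $(\Rightarrow)$ direction, the heart of the matter is the following lemma: if $A$ is strongly graded then for each $\gamma \in \Gamma$ and each $M \in \Gr A$, the $\gamma$-component $\mu_\gamma\colon M_0 \otimes_{A_0} A_\gamma \to M_\gamma$, $m \otimes a \mapsto ma$, of $\epsilon_M$ is an isomorphism. By Proposition~\ref{crossedproductstronglygradedprop}(1), I choose finite decompositions $1 = \sum_j y_j x_j$ and $1 = \sum_j u_j v_j$ in $A_0$ with $y_j, u_j \in A_{-\gamma}$ and $x_j, v_j \in A_\gamma$. Surjectivity is then immediate: for $m \in M_\gamma$ we have $m = \sum_j (m y_j) x_j$ with $m y_j \in M_0$. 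For injectivity, suppose $\sum_i m_i a_i = 0$ in $M_\gamma$ with $m_i \in M_0$, $a_i \in A_\gamma$; the identity $a_i = a_i \cdot 1 = \sum_j (a_i u_j) v_j$ places the scalars $a_i u_j \in A_\gamma A_{-\gamma} \subseteq A_0$ in position to slide across the tensor over $A_0$, and then
\begin{align*}
\sum_i m_i \otimes a_i
&= \sum_{i,j} m_i \otimes (a_i u_j) v_j
= \sum_{i,j} m_i (a_i u_j) \otimes v_j \\
&= \sum_j \Bigl(\sum_i m_i a_i\Bigr) u_j \otimes v_j = 0.
\end{align*}
Assembling $\mu_\gamma$ over all $\gamma \in \Gamma$ then shows $\epsilon_M$ is an isomorphism.

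For the $(\Leftarrow)$ direction, assume $\epsilon_M$ is an isomorphism for every $M \in \Gr A$, and apply this to $M = A(\gamma)$ for arbitrary $\gamma \in \Gamma$. Since $\G(A(\gamma)) = A_\gamma$, the counit becomes a graded $A$-module isomorphism $A_\gamma \otimes_{A_0} A \xrightarrow{\sim} A(\gamma)$, $r \otimes a \mapsto ra$. Its $(-\gamma)$-homogeneous component specialises to an isomorphism $A_\gamma \otimes_{A_0} A_{-\gamma} \xrightarrow{\sim} A(\gamma)_{-\gamma} = A_0$, whose image is precisely the additive subgroup $A_\gamma A_{-\gamma}$. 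Hence $A_\gamma A_{-\gamma} = A_0$ for every $\gamma \in \Gamma$, and Proposition~\ref{crossedproductstronglygradedprop}(1) yields that $A$ is strongly graded.

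The only substantive obstacle is the injectivity of $\mu_\gamma$ in the key lemma; surjectivity and the converse direction are essentially formal. The critical manoeuvre is the asymmetric factorisation $a_i = \sum_j (a_i u_j) v_j$, which produces $A_0$-valued scalars $a_i u_j$ that can be transferred across the tensor, where they assemble into $\bigl(\sum_i m_i a_i\bigr) u_j$ and vanish by hypothesis. Once this lemma is secured, the rest of Dade's theorem follows mechanically from the adjunction framework.
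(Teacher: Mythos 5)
Your proposal is correct, and it departs from the paper's own proof in two genuine ways. First, the packaging: the paper checks directly that $\G\circ\I \cong \id$ always holds and then verifies $\I\circ\G\cong \id$ under the specific maps~(\ref{hgy4nd1}) and~(\ref{hgy4nd}), quietly fixing these natural transformations when proving the converse; your adjunction framing (unit always invertible, hence equivalence precisely when the counit is invertible) makes the reduction independent of any such choice, since an adjoint of an equivalence automatically has invertible unit and counit. Second, and more substantively, the injectivity of the counit: the paper sets $N=\ker(\phi)$, observes that $N$ is a graded submodule of $M_0\otimes_{A_0}A$ with $N_0=0$, and invokes strong gradedness a second time to conclude $N_\gamma=N_0A_\gamma=0$ (the argument of~(\ref{robin})); you instead give a direct element-wise computation, writing $1=\sum_j u_jv_j$ with $u_j\in A_{-\gamma}$, $v_j\in A_\gamma$, and sliding the degree-zero scalars $a_iu_j\in A_\gamma A_{-\gamma}\subseteq A_0$ across the tensor so that the hypothesis $\sum_i m_ia_i=0$ annihilates the whole expression. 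That manoeuvre is precisely the partition-of-unity trick the paper uses elsewhere, in the proof of Theorem~\ref{mozsace}, to show $A_\alpha\otimes_{A_0}A_\beta\to A_{\alpha+\beta}$ is injective; you have in effect generalised it from $A$ itself to arbitrary graded modules, and your key lemma recovers Theorem~\ref{mozsace} as the special case $M=A(\alpha)$. What each buys: the paper's kernel argument is shorter and exploits the fact that a graded submodule of a strongly graded module is determined by its zero component, while yours is component-wise, self-contained, and avoids any appeal to the graded-submodule structure of the kernel. Your converse is essentially the paper's, executed slightly more economically: rather than deriving $M_0A_\alpha=M_\alpha$ for all $M$ and then specialising, you read off $A_\gamma A_{-\gamma}=A_0$ from the $(-\gamma)$-component of the counit at $A(\gamma)$ and finish with Proposition~\ref{crossedproductstronglygradedprop}(1).
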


\begin{proof}
One can easily check that (without using the assumption that $A$ is strongly graded) 
$\G\circ  \I \cong \id_{A_0}$ (see~(\ref{hgy4nd1})).  
Suppose $A$ is strongly graded. We show that $\I \circ \G \cong \id_{A}$ .

For a graded $A$-module
$M$,  we have  $\I \circ \G (M) = M_0 \otimes_{A_0} A$. We will show that the natural homomorphism  
\begin{align*}
\phi: M_0 \otimes_{A_0} A \ra M,\\
m \otimes a \mps ma,
\end{align*} is a $\Gamma$\!-graded
$A$-module isomorphism.  The map $\phi$ is clearly graded (see~(\ref{ilcasa})).  Since $A$ is
strongly graded, it follows that for $\ga, \de \in \Ga$,
\begin{equation}\label{robin}
M_{\ga +\de} =  M_{\ga+ \de} A_0 =  M_{\ga +\de} A_{-\ga}A_{\ga}  \subseteq 
 M_{\de}  A_{\ga} \subseteq M_{\ga +\de}.
\end{equation}
Thus $ M_{\de} A_{\ga} = M_{\ga +\de}$. Therefore, $\phi
( M_0 \otimes_{A_0} A_{\ga} ) =  M_0 A_{\ga}= M_{\ga}$, which implies that $\phi$ is
surjective.

Let $N= \ker (\phi)$, which is a graded $A$-submodule of $M_0
\otimes_{A_0} A$, so $N_0 = N \cap (M_0 \otimes_{A_0} A_0$). However the restriction of $\phi$ to 
$M_0 \otimes_{A_0} A_0 \ra M_0$ is
the canonical isomorphism, so $N_0 = 0$. Since $N$ is a graded
$A$-module, a similar argument as~(\ref{robin})  shows $N_\ga = N_0 A_\ga  = 0$ for all $\ga
\in \Ga$. It follows that $\phi$ is injective. Thus $\I \circ \G (M)= M_0 \otimes_{A_0} A\cong M$. Since all the homomorphisms involved are natural, this shows that 
$\I \circ \G \cong \id_{A}$. 

For the converse, suppose $\I$ and $\G$ are mutually inverse (under (\ref{hgy4nd}) and (\ref{hgy4nd1})). For any graded $A$-module $M$, $\I \circ \G (M) \conggr M$, gives that the map 
\begin{align*}
M_0\otimes_ {A_0} A_\alpha & \longrightarrow M_\alpha,\\
m \otimes a &\mapsto ma 
\end{align*}
is bijective, where $\alpha \in \Gamma$.  This immediately implies 
\begin{equation}\label{pohalr}
M_0 A_{\alpha}=M_{\alpha}.
\end{equation}
 Now for any $\beta \in \Gamma$, consider the graded $A$-module $A(\beta)$. Replacing $M$ by $A(\beta)$ in~(\ref{pohalr}), we get 
 $A(\beta)_0 A_{\alpha} = A(\beta)_{\alpha}$, \ie $A_{\beta} A_{\alpha}=A_{\beta+\alpha}$. This shows that $A$ is strongly graded. 
\end{proof}

\begin{corollary}\label{dadesthm3}
Let $A$ be a $\Ga$\!-graded ring and $\Omega$ a subgroup of $\Gamma$ such that $A$ is a $\Gamma/\Omega$-strongly graded ring.  Then  
the functors 
\[(-)_0:\Gr[\Gamma/\Omega] A\longrightarrow \Modd A_\Omega,\] and  
\[-\otimes_{A_\Omega} A: \Modd A_\Omega \longrightarrow \Gr[\Gamma/\Omega] A\]  form mutually inverse
equivalences of categories.
\end{corollary}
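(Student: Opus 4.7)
The plan is to deduce this corollary directly from Dade's theorem (Theorem~\ref{dadesthm}) by a change of grading group. The key observation is that $A$ carries a natural $\Gamma/\Omega$-graded structure via the quotient grading construction of \S\ref{mconfi1}, namely
\[
A = \bigoplus_{\Omega+\alpha \in \Gamma/\Omega} A_{\Omega+\alpha}, \qquad A_{\Omega+\alpha} = \bigoplus_{\omega \in \Omega} A_{\omega+\alpha}.
\]
Crucially, the zero component of this $\Gamma/\Omega$-grading is precisely $A_{\Omega+0} = \bigoplus_{\omega\in\Omega} A_\omega = A_\Omega$, which is exactly the ring appearing in the statement.

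Next I would invoke the hypothesis: $A$ is assumed to be strongly graded when viewed as a $\Gamma/\Omega$-graded ring. Applying Theorem~\ref{dadesthm} verbatim to the ring $A$ with its $\Gamma/\Omega$-grading, we get that the restriction functor $(-)_0: \Gr[\Gamma/\Omega] A \longrightarrow \Modd A_\Omega$ and the induction functor $-\otimes_{A_\Omega} A: \Modd A_\Omega \longrightarrow \Gr[\Gamma/\Omega] A$ are mutually inverse equivalences. This is really a one-line reduction once the identification of the zero component is made.

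The only thing worth verifying carefully is that the two named functors in the corollary coincide with the ones furnished by Dade's theorem for the $\Gamma/\Omega$-graded ring $A$. For $(-)_0$ this is immediate from the identification $A_\Omega = A_{\Omega+0}$ above. For the induction functor, the tensor product $N \otimes_{A_\Omega} A$ carries the $\Gamma/\Omega$-grading
\[
N \otimes_{A_\Omega} A \;=\; \bigoplus_{\Omega+\alpha \in \Gamma/\Omega} \big(N \otimes_{A_\Omega} A_{\Omega+\alpha}\big),
\]
which matches the decomposition used in the proof of Dade's theorem (compare \eqref{ilcasa}). There is no genuine obstacle here; the corollary is essentially a change-of-notation consequence of the theorem, and no new argument is required.
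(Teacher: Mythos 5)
Your proposal is correct and is exactly the paper's argument: the paper proves Corollary~\ref{dadesthm3} by simply citing Theorem~\ref{dadesthm} applied to $A$ with its $\Gamma/\Omega$-grading from \S\ref{mconfi1}. You have merely made explicit the identifications $(A)_{\Omega+0}=A_\Omega$ and the grading on $N\otimes_{A_\Omega}A$ that the paper leaves implicit, which is a fine (indeed more careful) write-up of the same one-line reduction.
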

\begin{proof}
The result follows from Theorem~\ref{dadesthm}.
\end{proof}

\begin{remark}\label{himifei}
Recall that $\grr A$ denotes the category of graded finitely generated  right $A$-modules and 
$\Pgrp A$ denotes the category of graded finitely generated  projective right $A$-modules.
Note that in general the restriction functor $(-)_0:\Gr A \rightarrow \Modd A_0$ does not induce a functor $(-)_0:\Pgrp A \rightarrow \Prr A_0$. In fact, one can easily produce a graded finitely generated projective $A$-module $P$ such that $P_0$ is not projective $A_0$-module. As an example, consider the $\Z$-graded ring $T$ of Example~\ref{meinmein}. Then $T(1)$ is clearly a graded finitely generated projective $T$-module. However $T(1)_0=M$ is not $T_0=R$-module. 
\end{remark}

\begin{remark}\label{cafejen}
The proof of Theorem~\ref{dadesthm} also shows that $A$ is strongly graded if and only if $\grr A\cong \modd A_0$, if and only if $\Pgrp A \cong \Prr A_0$, (see Remark~\ref{himifei}) via the same functors $(-)_0$ and $-\otimes_{A_0} A$ of the Theorem~\ref{dadesthm}. 
\end{remark}

\begin{remark}\scm[Strongly graded modules]
\vspace{0.2cm}

Let $A$ be a $\Gamma$\!-graded ring and  $M$ be a graded $A$-module. Then $M$ is called \emph{strongly graded} \index{strongly graded module} $A$-module if 
\begin{equation}\label{jenroom}
M_\alpha A_\beta=M_{\alpha+\beta},
\end{equation} for any $\alpha,\beta \in \Gamma$. The proof of Theorem~\ref{dadesthm} shows that $A$ is strongly graded if and only if any graded $A$-module is strongly graded. Indeed if $A$ is strongly graded then~(\ref{robin}) shows that any graded $A$-module is strongly graded. Conversely, if any graded module is strongly graded, then considering $A$ as a graded $A$-module, (\ref{jenroom}) for $M=A$, shows that $A_\alpha A_\beta=A_{\alpha+\beta}$ for any $\alpha,\beta \in \Gamma$.
\end{remark}

\begin{remark}\scm[Ideals correspondence between $A_0$ and $A$]\label{miener18}
\vspace{0.2cm}

The proof of Theorem~\ref{dadesthm} shows that there is a one-to-one correspondence between the right ideals of $A_0$ and the graded right ideals of $A$ (similarly for the left ideals). However, this correspondence does not hold between two-sided ideals. As an example,  
$A=\M_2(K[x^2,x^{-2}])(0,1)$, where $K$ is a field, is a strongly $\mathbb Z$-graded simple ring, whereas $A_0\cong K\otimes K$ is not a simple ring (see~\S\ref{wadi}. Also see Proposition~\ref{balmainmine} for a relation between simplicity of $A_0$ and $A$). 

In the same way, the equivalence $\Gr A \approx \Modd A_0$ of Theorem~\ref{dadesthm} gives a correspondence  between several (one-sided) properties of graded objects in $A$ with objects over $A_0$. For example, one can easily show that $A$ is graded right (left) Noetherian if and only if $A_0$ is right (left) Noetherian (see also Corollary~\ref{vonregu}).  \index{Noetherian ring} \index{simple ring} \index{graded simple ring} \index{graded Noetherian ring}
\end{remark}

Using Theorem~\ref{dadesthm}, we will see that the graded Grothendieck group of a strongly graded ring coincides with the (classical) Grothendieck group of its 0-component ring (see~\S\ref{jijigogo}).

We need a more general version of grading defined in~(\ref{ilcasa}) in order to extend Dade's theorem. Let $A$ be a $\Gamma$\!-graded ring and $\Omega$ a subgroup of $\Gamma$. Let $N$ be a  $\Omega$-graded right $A_\Omega$-module. Then $N\otimes_{A_\Omega} A$ is a $\Gamma$\!-graded right $A$-module, with the grading defined by 
\begin{equation*}
(N \otimes_{A_\Omega}  A)_{\ga} = \Big \{ \sum_i n_i \otimes a_i   \mid   n_i \in
N^h, a_i \in A^h, \deg(n_i)+\deg(a_i) = \ga \, \Big\}.
\end{equation*}
A similar argument as in~\S\ref{grtensie} for tensor products, will show that this grading is well-defined. Note that with this grading, 
\begin{equation*}
(N \otimes_{A_\Omega}  A)_{\Omega} = N \otimes_{A_\Omega}  A_\Omega \cong N,
\end{equation*}
as graded right $A_\Omega$-modules. 

\begin{theorem}\label{rhye}
Let $A$ be a $\Gamma$\!-graded ring and $\Omega$ be a subgroup of \ $\Gamma$. Consider $A$ as a $\Gamma/\Omega$-graded ring. Then $A$ is 
a $\Gamma/\Omega$-strongly graded ring if and only if 
 \begin{align*}
(-)_\Omega : \Gr[\Gamma] A & \lra \Gr[\Omega] A_\Omega \\
M & \lmps M_\Omega \\
\psi & \lmps \psi|_{M_\Omega},
\end{align*}
and 
\begin{align*}
-\otimes_{A_0} A : \Gr[\Omega] A_\Omega & \lra \Gr[\Gamma] A \\
N & \lmps N \otimes_{A_\Omega} A \\
\phi & \lmps \phi \otimes \id_A,
\end{align*}
form mutually inverse equivalences of categories.
\end{theorem}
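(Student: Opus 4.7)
The plan is to follow the template of Dade's theorem (Theorem~\ref{dadesthm}), which is the special case $\Omega=0$, and adapt each step to the coset structure $\Gamma/\Omega$. First I would verify that both functors are well-defined: for a $\Gamma$\!-graded module $M$, the sum $M_\Omega=\bigoplus_{\omega\in\Omega}M_\omega$ is indeed $\Omega$-graded over the $\Omega$-graded ring $A_\Omega$; conversely, for an $\Omega$-graded $A_\Omega$-module $N$, the tensor product $N\otimes_{A_\Omega}A$ carries the natural $\Gamma$\!-grading introduced just before the theorem. No strongly graded hypothesis is needed for this, nor for the natural isomorphism
\[(N\otimes_{A_\Omega}A)_\Omega=N\otimes_{A_\Omega}A_\Omega\cong N,\]
which gives one half of the equivalence $(-)_\Omega\circ(-\otimes_{A_\Omega}A)\cong\id$.

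The bulk of the forward direction consists in proving $(-\otimes_{A_\Omega}A)\circ(-)_\Omega\cong\id$ under the assumption that $A$ is $\Gamma/\Omega$-strongly graded. For a $\Gamma$\!-graded $A$-module $M$, I would consider the evident graded $A$-homomorphism $\phi:M_\Omega\otimes_{A_\Omega}A\to M$, $m\otimes a\mapsto ma$. Using the notation $M_{\Omega+\alpha}=\bigoplus_{\omega\in\Omega}M_{\omega+\alpha}$ (so that $M$ is $\Gamma/\Omega$-graded), surjectivity follows from the Dade-style identity
\[M_{\Omega+\alpha}=M_{\Omega+\alpha}A_\Omega=M_{\Omega+\alpha}A_{\Omega-\alpha}A_{\Omega+\alpha}\subseteq M_\Omega A_{\Omega+\alpha},\]
which uses $A_{\Omega+\alpha}A_{\Omega-\alpha}=A_\Omega$ from $\Gamma/\Omega$-strong gradedness (Proposition~\ref{crossedproductstronglygradedprop}(1)). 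For injectivity, let $N=\ker\phi$, a $\Gamma$\!-graded submodule. Its $\Omega$-part $N_\Omega=N\cap(M_\Omega\otimes_{A_\Omega}A_\Omega)$ vanishes because $\phi$ restricts to the canonical isomorphism $M_\Omega\otimes_{A_\Omega}A_\Omega\cong M_\Omega$; then the same strong-gradedness identity, applied to $N$, gives $N_{\Omega+\alpha}=N_\Omega A_{\Omega+\alpha}=0$ for every coset, hence $N=0$.

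For the converse, assume the two functors form mutually inverse equivalences. Applying $\I\G\cong\id$ to the shifted graded module $M=A(\beta)$ for arbitrary $\beta\in\Gamma$ and comparing $\Gamma/\Omega$-components of the resulting graded isomorphism yields $A_{\Omega+\beta}\,A_{\Omega+\gamma}=A_{\Omega+\beta+\gamma}$ for all $\beta,\gamma\in\Gamma$, which is precisely the $\Gamma/\Omega$-strong gradedness of $A$. This mirrors the end of the proof of Theorem~\ref{dadesthm}.

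The main obstacle is purely bookkeeping rather than conceptual: one must keep careful track of two gradings simultaneously (the intrinsic $\Gamma$\!-grading, which controls the tensor product decomposition and the identification $(N\otimes_{A_\Omega}A)_\Omega\cong N$, and the coarsened $\Gamma/\Omega$-grading, in which the strongly graded hypothesis is actually formulated), and ensure that the natural transformations used above really are graded with respect to the correct grading. Once that bookkeeping is organized, every step is a direct transcription of the argument given for Theorem~\ref{dadesthm}, with $A_0$ replaced by $A_\Omega$ and individual degrees $\gamma\in\Gamma$ replaced by cosets $\Omega+\gamma\in\Gamma/\Omega$.
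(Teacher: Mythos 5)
Your proposal is correct and is exactly the route the paper intends: its own ``proof'' of Theorem~\ref{rhye} is omitted with the remark that it is similar to that of Dade's Theorem~\ref{dadesthm}, and your argument is precisely that transcription, with $A_0$ replaced by $A_\Omega$, degrees replaced by cosets $\Omega+\alpha$, and the converse obtained by applying the canonical natural isomorphism to the shifts $A(\beta)$. The details you supply (surjectivity via $M_{\Omega+\alpha}=M_\Omega A_{\Omega+\alpha}$, injectivity via $N_\Omega=0$ and the same coset identity applied to $N=\ker\phi$) are sound.
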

\begin{proof}
The proof is similar to the proof of Theorem~\ref{dadesthm} and it is omitted. 
\end{proof}

\begin{remark}\label{rhye1}
Compare Theorem~\ref{rhye}, with the following statement. Let $A$ be a $\Gamma$\!-graded ring and $\Omega$ be a subgroup of $\Gamma$. Then $A$ is $\Gamma$\!-strongly graded ring if and only if 
 \begin{align*}
(-)_\Omega : \Gr[\Gamma] A & \lra \Gr[\Omega] A_\Omega \\
M & \lmps M_\Omega \\
\psi & \lmps \psi|_{M_\Omega},
\end{align*}
and 
\begin{align*}
-\otimes_{A_0} A : \Gr[\Omega] A_\Omega & \lra \Gr[\Gamma] A \\
N & \lmps N_0 \otimes_{A_0} A \\
\phi & \lmps \phi_0 \otimes \id_A,
\end{align*}
form mutually inverse equivalences of categories.
\end{remark}

\begin{example}\label{yhyhew45}
Let $A$ be a $\Gamma\times \Omega$-graded ring such that $1\in A_{(\alpha,\Omega)} A_{(-\alpha,\Omega)}$ for any $\alpha \in \Gamma$, where $A_{(\alpha,\Omega)}=\bigoplus_{\omega \in \Omega} A_{(\alpha,\omega)}$. Then by Theorem~\ref{rhye} 
\[\Gr[\Gamma\times \Omega] A  \approx \Gr[\Omega] A_{(0,\Omega)}.\]
This example will be used in~\S\ref{vandengg}. Compare this also with Corollary~\ref{zuhoingding}. 
\end{example}

Another application of Theorem~\ref{dadesthm} is to provide a condition when a strongly graded ring is a graded von Neumann ring (\S\ref{penrith7may}). 
This will be used later in Coroallry~\ref{hgthusu2} to show that the Leavitt path algebras are von Neumann regular rings.  \index{von Neumann regular ring}

\begin{corollary}\label{vonregu}
Let $A$ be a strongly graded ring. Then $A$ a is graded von Neumann regular ring if and only if $A_0$ is a von Neumann regular ring. 
\end{corollary}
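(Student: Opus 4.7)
The strategy is to combine Dade's theorem (Theorem~\ref{dadesthm}) with the idempotent criterion for regularity from Proposition~\ref{pkhti1}. Recall that Proposition~\ref{pkhti1} characterises graded von Neumann regularity of $A$ by the property that every finitely generated graded right ideal of $A$ is generated by a homogeneous idempotent; the ungraded version says $A_0$ is von Neumann regular iff every finitely generated right ideal of $A_0$ is generated by an idempotent. Thus the task reduces to transferring the idempotent criterion across the equivalence $(-)_0 : \Gr A \to \Modd A_0$.

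A preliminary observation I would record is that \emph{every homogeneous idempotent of $A$ already lies in $A_0$}: if $e \in A_\gamma$ with $e = e^2 \ne 0$, then $e^2 \in A_{2\gamma}$ forces $\gamma = 0$. This ensures that homogeneous-idempotent generators for graded ideals of $A$ correspond, under Dade's equivalence, to ordinary idempotent generators in $A_0$.

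Next, I would apply Dade's theorem restricted to graded right ideals of $A$. Under $(-)_0$ these correspond bijectively to right ideals of $A_0$, with inverse $J \mapsto JA$. Two identities are the core of the bookkeeping: $I_0 A = I$ for any graded right ideal $I$ (which is exactly relation~(\ref{pohalr}) from the proof of Dade's theorem, using strong gradedness) and $(JA)_0 = J A_0 = J$ for any right ideal $J \subseteq A_0$. Since $(-)_0$ is a category equivalence, finite generation is preserved in both directions.

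With these tools the two implications are then symmetric. If $A_0$ is regular and $I \subseteq A$ is a finitely generated graded right ideal, then $I_0 = eA_0$ for some idempotent $e \in A_0$, whence $I = I_0 A = eA$, giving the idempotent criterion of Proposition~\ref{pkhti1} for $A$. Conversely, if $A$ is graded regular and $J \subseteq A_0$ is a finitely generated right ideal, then $JA$ is a finitely generated graded right ideal, so $JA = eA$ for some homogeneous idempotent $e$; by the preliminary observation $e \in A_0$, and then $J = (JA)_0 = eA_0$. The only real bookkeeping point is checking that the ideal correspondence respects finite generation on the nose, but this is immediate from Dade's theorem (cf.\ Remark~\ref{cafejen}), so no genuine obstacle is anticipated.
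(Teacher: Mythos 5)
Your proof is correct, and it takes a genuinely different route from the one in the text. The paper's own argument is purely categorical and very short: it invokes the characterisation of (graded) von Neumann regularity by the property that every (graded) module is (graded) flat, writes a flat module as a direct limit of projectives, and transports this across Dade's equivalence $\Gr A \approx \Modd A_0$ of Theorem~\ref{dadesthm}. You instead work at the level of ideals: you transfer the idempotent criterion of Proposition~\ref{pkhti1} across the bijective correspondence $I \mapsto I_0$, $J \mapsto JA$ between graded right ideals of $A$ and right ideals of $A_0$ (the correspondence of Remark~\ref{miener18}, whose key identity $I_0A = I$ is indeed relation~(\ref{robin})/(\ref{pohalr}) from the proof of Dade's theorem), together with your observation that a nonzero homogeneous idempotent is automatically of degree zero, since $e = e^2 \in A_\gamma \cap A_{2\gamma}$ forces $\gamma = 0$. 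This is essentially the element-wise proof that the paper only cites (Remark following the corollary, referring to~\cite{yahya}) rather than gives. Each approach has its merits: the paper's is a one-line transfer, but it rests on the flatness characterisation of graded regularity, which is asserted in~\S\ref{penrith7may} without proof; yours is longer in bookkeeping but entirely self-contained relative to results actually proved in the text (Proposition~\ref{pkhti1} and its ungraded special case, plus Dade). One small remark on your finite-generation step: invoking the equivalence via Remark~\ref{cafejen} is legitimate, but you could also argue directly that if $I = \sum_i x_iA$ with finitely many homogeneous $x_i$ of degrees $\gamma_i$, then $I_0 = \sum_i x_iA_{-\gamma_i}$ is finitely generated over $A_0$ because each $A_{-\gamma_i}$ is a finitely generated $A_0$-module by Theorem~\ref{mozsace}; this keeps the whole argument at the level of elements and ideals without appealing to the category equivalence at all.
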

\begin{proof}[Sketch of proof]
Since any (graded) flat module is a direct limit of (graded) projective modules, from the equivalence of categories $\Gr A \approx_{\gr} \Modd A_0$ (Theorem~\ref{dadesthm}), it follows that  $A$ is graded von Neumann regular if and only if $A_0$ is von Neumann regular.
\end{proof}

\begin{remark}
An element-wise proof of Corollary~\ref{vonregu} can also be found in~\cite[Theorem~3]{yahya}.
\end{remark}

For a $\Gamma$\!-graded ring $A$, and $\alpha, \beta \in \Gamma$, one has a $A_0$-bimodule homomorphism 
\begin{align}\label{bondstsi}
\phi_{\alpha,\beta}: A_\alpha \otimes_{A_0} A_{\beta} & \longrightarrow A_{\alpha+\beta}\\
a\otimes b &\longmapsto ab. \notag
\end{align}
The following theorem gives another characterisation for  strongly graded rings.

\begin{theorem}\label{mozsace}
Let $A$ be a $\Gamma$\!-graded ring. Then $A$ is a strongly graded ring if and only if for any $\gamma \in \Gamma$, the homomorphism 
\begin{align*}
\phi_{\gamma,-\gamma}: A_\gamma \otimes_{A_0} A_{-\gamma} &\longrightarrow A_0,\\ 
a\otimes b &\longmapsto ab,
\end{align*}
is an isomorphism.  In particular,  if $A$ is strongly graded, then the homogeneous components $A_\gamma$, $\gamma \in \Gamma$,  are finitely generated projective $A_0$-module.

\end{theorem}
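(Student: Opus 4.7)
The plan is to handle the two directions separately, reducing both to the criterion $1 \in A_\gamma A_{-\gamma}$ supplied by Proposition~\ref{crossedproductstronglygradedprop}(1), and then extract the projectivity statement from the same relation.

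For the forward direction, assume $A$ is strongly graded. Then for each $\gamma \in \Gamma$ we have $1 \in A_\gamma A_{-\gamma}$, so I can fix a finite expression $1 = \sum_{i=1}^n a_i b_i$ with $a_i \in A_\gamma$ and $b_i \in A_{-\gamma}$. Surjectivity of $\phi_{\gamma,-\gamma}$ is then immediate, since $\phi_{\gamma,-\gamma}\bigl(\sum_i a_i \otimes b_i\bigr) = 1$ and $1$ generates $A_0$. For injectivity, given $\sum_j c_j \otimes d_j \in A_\gamma \otimes_{A_0} A_{-\gamma}$ with $\sum_j c_j d_j = 0$, I would compute
\[
\sum_j c_j \otimes d_j \;=\; \sum_{i,j} a_i b_i c_j \otimes d_j \;=\; \sum_{i,j} a_i \otimes b_i c_j d_j \;=\; \sum_i a_i \otimes b_i\Bigl(\sum_j c_j d_j\Bigr) \;=\; 0,
\]
where the second equality is the crucial step: since $b_i \in A_{-\gamma}$ and $c_j \in A_\gamma$, the product $b_i c_j$ lies in $A_0$ and therefore crosses the tensor sign.

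For the converse, surjectivity of $\phi_{\gamma,-\gamma}$ for every $\gamma$ already gives $A_\gamma A_{-\gamma} = A_0$, so in particular $1 \in A_\gamma A_{-\gamma}$, and another invocation of Proposition~\ref{crossedproductstronglygradedprop}(1) shows that $A$ is strongly graded. For the ``in particular'' statement, I would use the same data $1 = \sum_{i=1}^n a_i b_i$: each $b_i$ yields a right $A_0$-module homomorphism $f_i : A_\gamma \to A_0$ defined by $f_i(x) = b_i x$ (well-defined because $b_i x \in A_{-\gamma + \gamma} = A_0$), and for every $x \in A_\gamma$ the identity $x = 1 \cdot x = \sum_i a_i f_i(x)$ holds. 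Thus the right $A_0$-module epimorphism $A_0^n \to A_\gamma$, $(r_1,\ldots,r_n) \mapsto \sum_i a_i r_i$, is split by $x \mapsto (f_1(x), \ldots, f_n(x))$, exhibiting $A_\gamma$ as a direct summand of $A_0^n$ and hence as a finitely generated projective right $A_0$-module. The same argument on the other side gives the left module version.

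The only real obstacle is the tensor manipulation in the injectivity step of the forward direction, and the subtlety there is entirely accounted for by noting that $b_i c_j$ lands in the zero-component $A_0$ over which the tensor is formed; everything else follows mechanically from Proposition~\ref{crossedproductstronglygradedprop}(1) and the dual-basis-style splitting.
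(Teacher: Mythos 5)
Your proof is correct, and the core of it is the same as the paper's: both directions reduce to the criterion $1 \in A_\gamma A_{-\gamma}$ of Proposition~\ref{crossedproductstronglygradedprop}(1), and the injectivity step rests on the same tensor manipulation of inserting a decomposition of $1$ and sliding the resulting degree-zero products $b_ic_j \in A_0$ across $\otimes_{A_0}$ (the paper multiplies $\sum_i a_i \otimes b_i$ on the \emph{right} by $1=\sum_j x_jy_j$ with $x_j \in A_{-\beta}$, $y_j\in A_\beta$, and in fact proves injectivity of the general map $\phi_{\alpha,\beta}$, while you multiply on the left by $1=\sum_i a_ib_i$ and treat only $\phi_{\gamma,-\gamma}$ — an equivalent computation, and sufficient for the statement as given). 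The one place you genuinely diverge is the ``in particular'' clause: the paper deduces finite generation and projectivity by observing that the components $A_\gamma$ are invertible $A_0$-bimodules and appealing to the general (unproved in the text, see \S\ref{onon123}) fact that invertible bimodules are finitely generated projective on both sides, whereas you give a self-contained dual-basis splitting: from $1=\sum_{i=1}^n a_ib_i$ the maps $f_i(x)=b_ix$ split the epimorphism $A_0^n \to A_\gamma$, $(r_i)\mapsto \sum_i a_ir_i$, exhibiting $A_\gamma$ as a direct summand of $A_0^n$. Your version is more elementary and makes the finite bound $n$ explicit, at the cost of not recording the stronger bimodule-invertibility statement the paper's route yields; both are valid, and your splitting argument is essentially the graded dual basis lemma specialised to this situation.
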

\begin{proof}
Suppose that for any $\ga \in \Gamma$, the map $\phi_{\gamma,-\gamma}: A_\gamma \otimes A_{-\gamma} \rightarrow A_0$ is an isomorphism. Thus there are $a_i \in A_\gamma$, $b_i \in A_{-\gamma}$ such that 
\[\sum_i a_i b_i = \phi_{\gamma,-\gamma}\Big (\sum_i a_i \otimes b_i\Big)=1.\] So $1\in A_\gamma A_{-\gamma}$. Now by Proposition~\ref{crossedproductstronglygradedprop}(1) $A$ is strongly graded. 

Conversely, suppose $A$ is a strongly graded ring. We prove that the homomorphism~(\ref{bondstsi}) is an isomorphism. 
The definition of strongly graded implies that $\phi_{\alpha,\beta}$ is surjective. Suppose  
\begin{equation}
\phi_{\alpha,\beta}\Big (\sum_i a_i \otimes b_i \Big)=\sum_i a_i b_i =0. 
\end{equation}
Using Proposition~\ref{crossedproductstronglygradedprop}(1), write $1=\sum_j x_j y_j$, where $x_j \in A_{-\beta}$ and $y_j \in A_{\beta}$. Then
\begin{multline*}
\sum_ia_i \otimes b_i=\Big(\sum_ia_i \otimes b_i\Big)\, \Big(\sum_j x_j y_j\Big)=\sum_i \Big (a_i \otimes \sum_j b_i x_j y_j\Big) =\\
\sum_i \Big(\sum_j (a_ib_i x_j \otimes y_j)\Big)=\sum_j \sum_i \Big(a_ib_i x_j \otimes y_j\Big)=\sum_j \Big (\sum_i (a_ib_i)x_j \otimes y_j)\Big)=0. 
\end{multline*}
This shows that $\phi_{\alpha,\beta}$ is injective. Now setting $\alpha=\ga$ and $\beta=-\ga$  finishes the proof. 

Finally, if $A$ is strongly graded, the above argument shows that the homogeneous components $A_\gamma$, $\gamma \in \Gamma$, are invertible $A_0$-module, which in turn implies $A_\alpha$ are finitely generated projective $A_0$-module.
\end{proof}

\subsection{Invertible components of strongly graded rings}\label{onon123}

Let $A$ and $B$ be rings and $P$ be a $A\!-\!B$-bimodule. Then $P$ is called an \emph{invertible}\index{invertible module} $A\!-\!B$-bimodule, if there is a $B\!-\!A$-bimodule $Q$ such that $P\otimes_B Q\cong A$ as $A\!-\!A$-bimodules and $Q\otimes_A P \cong B$ 
 as $B\!-\!B$-bimodules and the following diagrams are commutative.  

\begin{equation*}
{\def\labelstyle{\displaystyle}
\xymatrix{
P\otimes_B Q\otimes_A P \ar[r] \ar[d]& A \otimes_A P \ar[d]\\
P\otimes_B B \ar[r] & P
}}\qquad 
{\def\labelstyle{\displaystyle}
\xymatrix{
Q\otimes_A P\otimes_B Q \ar[r] \ar[d]& B \otimes_B Q \ar[d]\\
Q\otimes_A A \ar[r] & Q
}}
\end{equation*}

One can prove that $P$ is a finitely generated projective $A$ and $B$-modules. 

Now Theorem~\ref{mozsace} shows that for a strongly $\Gamma$\!-graded ring $A$, the $A_0$-bimodules $A_\ga$, $\ga \in \Gamma$, is an invertible module   and thus is a finitely generated projective $A_0$-module. This in return implies that $A$ is a projective $A_0$-module. Note that in general, one can easily construct a graded ring $A$ where $A$ is not projective over $A_0$ (see Example~\ref{meinmein}) and $A_\gamma$ is not finitely generated $A_0$-module, such as  the $\mathbb Z$-graded ring $\mathbb Z[x_i \mid i \in \mathbb N]$ of Example~\ref{penrith123}. 

\begin{remark}\scm[Other terminologies for strongly graded rings]
\vspace{0.2cm}

The term ``strongly graded'' for such rings was coined by E. Dade in ~\cite{dade} which is now commonly in use. Other terms for these rings are \emph{fully graded} \index{fully graded ring} and \emph{generalised crossed products}. \index{generalised crossed product} See~\cite{dade2} for a history of development of such rings in literature. 
\end{remark}

\section{Grading on graph algebras} \label{creekside}

\subsection{Grading on free rings}\label{freeme} \index{free ring} \index{$R(X)$, free ring}

Let $X$ be a nonempty set of symbols and $\Gamma$ be a group. (As always we assume the groups are abelian, although the entire theory can be written for an arbitrary group.)
Let $d:X\rightarrow \Gamma$ be a map. One can extend 
$d$ in a natural way to a map from the set of finite words on $X$ to $\Gamma$, which is called $d$ again. For example if $x,y,z\in X$ and $xyz$ is a word, then $d(xyz)=d(x)+d(y)+d(z)$. One can easily see that if $w_1,w_2$ are two words, then $d(w_1w_2)=d(w_1)+d(w_2)$. If we allow an empty word, which will be the identity element in the free ring, then we assign the identity of $\Gamma$ to this word. 

Let $R$ be a ring and $R(X)$ be the free ring (with or without identity) on a set $X$ with coefficients in $R$. The elements of $R(X)$ are of the form
$\sum_w r_w w$, where $r_w\in R$ and $w$ stands for a word on $X$. The multiplication is defined by convolution, \ie 
\[\Big (\sum_w r_w w\Big )\, \Big (\sum_v r_v v \Big)=\sum_z \Big(\sum_{\{w,v\mid z=wv, \, r_w,r_v\not =0 \}} r_wr_v\Big) \, z.\]

In order to make $R(X)$ into a graded ring, define \[R(X)_{\gamma}=\Big \{ \, \sum_w r_w w \mid d(w)=\gamma \, \Big \}.\] One can check that 
$R(X)=\bigoplus_{\gamma\in \Gamma} R(X)_{\gamma}$.  Thus $R(X)$ is a $\Gamma$\!-graded ring. Note that if we don't allow the empty word in the construction, then $R(X)$ is a graded ring without identity (see Remark~\ref{parknan}). It is easy to see that $R(X)$ is never  a strongly graded ring. 

\begin{example}\label{july26y}
Let $R$ be a ring and $R(X)$ be the free ring on a set $X$ with a graded structure induced by a map $d:X\rightarrow \Gamma$. Let $\Omega$ be a subgroup of  $\Gamma$ and consider the map 
\begin{align*}
\ol d:X &\longrightarrow \Gamma/\Omega,\\
 x& \longmapsto \Omega+d(x).
\end{align*}
The map $\ol d$ induces a $\Gamma/\Omega$-graded structure on $R(X)$ which coincides with the general construction of quotient grading 
given in~\S\ref{mconfi1}. 
\end{example}

\begin{example}
Let $X=\{x\}$ be a set of symbols with one element and $\mathbb Z_n$ be the cyclic group with $n$ elements.  Assign $1\in \Z_n$ to $x$ and generate the free ring with identity on $X$ with coefficients in a field $F$.  This ring is the usual polynomial ring $F[x]$ which, by the above construction,  is equipped by  a $\Z_n$-grading. Namely, \[F[x]=\bigoplus_{k \in \Z_n} \Big( \sum_{\substack{l\in \mathbb N, \\ \overline l =k}} Fx^l \Big ),\] where $\overline l$ is the image of $l$ in the group $\Z_n$.  For $a\in F$, since the 
polynomial $x^n-a$ is a homogeneous element of degree zero, the ideal $\langle x^n-a \rangle$ is a graded ideal and thus the quotient ring 
$F[x]/\langle x^n-a \rangle$ is also a $\Z_n$-graded ring (see~\S\ref{jghjrye}). In particular if $x^n-a$ is an irreducible polynomial in $F[x]$, then the field $F[x]/\langle x^n-a \rangle$ is a $\Z_n$-graded field as well. 
\end{example}

\begin{example}\label{pearlfish}
Let $\{x,y\}$ be a set of symbols. Assign $1\in \mathbb Z_2$ to $x$ and $y$ and consider the graded free ring $\mathbb R(x,y)$. The ideal generated by homogeneous elements $\{x^2+1,y^2+1,xy+yx\}$ is graded and thus we retrieve the $\mathbb Z_2$-graded Hamilton quaternion algebra of Example~\ref{egofgrdivisionrings}  as follows: 
\[\mathbb H \cong \mathbb R(x,y)/ \langle x^2+1,y^2+1,xy+yx \rangle. \]
Moreover, assigning $(1,0)\in \mathbb Z_2\times \mathbb Z_2$ to $x$ and $(0,1)\in \mathbb Z_2\times \mathbb Z_2$ to $y$ we obtained the $\mathbb Z_2\times \mathbb Z_2$-graded quaternion algebra of  Example~\ref{egofgrdivisionrings}. 
\end{example}

\begin{example}\scm[The Weyl algebra] \label{weyl}
\vspace{0.2cm}

For a (commutative) ring $R$, the \emph{Weyl algebra} \index{Weyl algebra} $R(x,y)/\langle xy-yx-1 \rangle$ can be considered as a $\mathbb Z$-graded ring by assigning $1$ to $x$ and $-1$ to $y$. 
\end{example}

\begin{example}\scm[The Leavitt algebra $\LL(n,k+1)$] \label{levisuji2}
\vspace{0.2cm}

Let $K$ be a field, $n$ and $k$ positive integers and $A$ be the free associative $K$-algebra with identity generated by symbols $\{x_{ij},y_{ji} \mid 1\leq i \leq n+k, 1\leq j \leq n \}$ subject to relations (coming from) 
\[ Y\cdot X=I_{n,n} \qquad \text{ and } \qquad X\cdot Y=I_{n+k,n+k}, \] where 
\begin{equation} \label{breaktr}
Y=\left( 
\begin{matrix} 
y_{11} & y_{12} & \dots & y_{1,n+k}\\ 
y_{21} & y_{22} & \dots & y_{2,n+k}\\ 
\vdots & \vdots & \ddots & \vdots\\ 
y_{n,1} & y_{n,2} & \dots & y_{n,n+k} 
\end{matrix} 
\right), ~~
X=\left( 
\begin{matrix} 
x_{11\phantom{+k}} & x_{12\phantom{+k}} & \dots & x_{1,n\phantom{+k}}\\ 
x_{21\phantom{+k}} & x_{22\phantom{+k}} & \dots & x_{2,n\phantom{+k}}\\ 
\vdots\phantom{+k} & \vdots\phantom{+k} & \ddots & \vdots\phantom{+k}\\ 
x_{n+k,1} & x_{n+k,2} & \dots & x_{n+k,n} 
\end{matrix} 
\right). 
\end{equation} 
To be concrete, the relations are 
\begin{align*}
\sum_{j=1}^{n+k}y_{ij}x_{jl}&=\delta_{i,l}, \qquad 1\leq i,l\leq n\\
\sum_{j=1}^{n}x_{ij}y_{jl}&=\delta_{i,l}, \qquad 1\leq i,l\leq n+k.\\
\end{align*}

In Example~\ref{levisuji} we studied a special case of this algebra when $n=1$ and $k=n-1$.
This algebra was studied by Leavitt in relation with its type in~\cite[p.130]{vitt62}  which is shown that for arbitrary $n$ and $k$ the algebra is of type 
$(n,k)$ (see~\S\ref{gtr5654}) and when $n\geq 2$ they are domains. 
We denote this algebra by $\LL(n,k+1)$. (Cohn's notation in~\cite{cohn11} for this algebra is $V_{n,n+k}$.)

Assigning 
\begin{align*}
\deg(y_{ji})&=(0,\dots,0,1,0\dots,0),\\
\deg(x_{ij})&=(0,\dots,0,-1,0\dots,0),
\end{align*}
 for  $1\leq i \leq n+k$, $1\leq j \leq n$, in $\bigoplus_n \mathbb Z$, where $1$ and $-1$ are in $j$-th entries respectively, makes the free algebra generated by $x_{ij}$ and $y_{ji}$ a graded ring. Moreover, one can easily observe that the relations coming from~(\ref{breaktr}) are all homogeneous with respect to this grading, so that the Leavitt algebra $\LL(n,k+1)$  is a  $\bigoplus_n \mathbb Z$-graded ring. In particular, $\LL(1,k)$ is a $\mathbb Z$-graded ring (Example~\ref{levisuji}).
\end{example}

\subsection{Corner skew Laurent polynomial rings}\label{cornerskew}

Let $R$ be a ring with identity and $p$ an idempotent of $R$. Let $\phi:R\rightarrow pRp$ be a \emph{corner} isomorphism, \index{corner isomorphism} i.e, a ring isomorphism such that $\phi(1)=p$. A \emph{corner skew Laurent polynomial ring}  \index{corner skew Laurent polynomial ring} with coefficients in $R$, denoted by $R[t_{+},t_{-},\phi]$, is a unital ring which is constructed as follows:  The elements of $R[t_{+},t_{-},\phi]$ are formal expressions
\[t^j_{-}r_{-j} +t^{j-1}_{-}r_{-j+1}+\dots+t_{-}r_{-1}+r_0 +r_1t_{+}+\dots +r_it^i_{+},\]
where $r_{-n} \in p_n R$ and $r_n \in R p_n$, for all $n\geq 0$, where $p_0 =1$ and $p_n =\phi^n(p_0)$. The addition is component-wise, and the multiplication is determined by the distribution law and the following rules:
\begin{equation}\label{oiy53} 
t_{-}t_{+} =1, \qquad t_{+}t_{-} =p, \qquad rt_{-} =t_{-}\phi(r),\qquad  t_{+}r=\phi(r)t_{+}.
\end{equation}

The corner skew Laurent polynomial rings are studied in~\cite{arabrucom}, where their $K_1$-groups are calculated. This construction is a special case of a so called fractional skew monoid rings constructed in~\cite{arafrac}. Assigning $-1$ to $t_{-}$ and $1$ to $t_{+}$ makes $A:=R[t_{+},t_{-},\phi]$ a $\mathbb Z$-graded ring with $A=\bigoplus_{i\in \mathbb Z}A_i$, where 
\begin{align*}
A_i& = Rp_it^i_{+}, \text{ for  } i>0,\\
A_i&=t^i_{-}p_{-i}R, \text{ for } i<0,\\
A_0& =R,
\end{align*}
(see~\cite[Proposition~1.6]{arafrac}). 
 Clearly, when $p=1$ and $\phi$ is the identity map, then $R[t_{+},t_{-},\phi]$ reduces to the familiar ring $R[t,t^{-1}]$. 

In the next three propositions we will characterise those corner skew Laurent polynomials which are strongly graded rings (\S\ref{scrosshg}), crossed products (\S\ref{khgfewa1}) and graded von Neumann regular rings (\S\ref{penrith7may}).

Recall that an idempotent element $p$ of the ring $R$ is called a \emph{full idempotent} \index{full idempotent} if $RpR=R$.

\begin{proposition}\label{lanhc888}
Let $R$ be a ring with identity and $A=R[t_{+},t_{-},\phi]$  a corner skew Laurent polynomial ring. Then $A$ is strongly graded if and only if $\phi(1)$ is a full idempotent. 
\end{proposition}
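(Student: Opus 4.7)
The plan is to reduce the problem to the finite generation criterion for strong gradedness on $\mathbb{Z}$-graded rings and then compute the two relevant products explicitly from the multiplication rules of $A$. Since $A$ is $\mathbb{Z}$-graded, by the remark following Proposition~\ref{crossedproductstronglygradedprop} it suffices to verify that $1 \in A_1 A_{-1}$ and $1 \in A_{-1} A_1$. In fact, I intend to prove the stronger statements that $A_1 A_{-1} = RpR$ and that $A_{-1} A_1 = R$ unconditionally; the theorem then follows immediately, since full-ness of $p$ is by definition the equality $RpR = R$.

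First I would compute $A_{-1} A_1$. By definition $A_1 = Rpt_+$ and $A_{-1} = t_- pR$, so
\[
A_{-1} A_1 \;=\; t_- p R \cdot R p t_+ \;=\; t_- (pRp) t_+.
\]
Using the relation $t_+ r = \phi(r) t_+$ together with $\phi(R) = pRp$, one has $(pRp) t_+ = \phi(R) t_+ = t_+ R$. Hence $A_{-1} A_1 = t_- t_+ R = R$ by~\eqref{oiy53}. So the left-to-right product always gives $1$, regardless of whether $p$ is full.

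Next I would compute $A_1 A_{-1}$ in the same way:
\[
A_1 A_{-1} \;=\; R p t_+ \cdot t_- p R \;=\; R p (t_+ t_-) p R \;=\; R p \cdot p \cdot p R \;=\; R p R,
\]
using $t_+ t_- = p$ and the idempotency of $p$. Thus $1 \in A_1 A_{-1}$ holds precisely when $1 \in RpR$, i.e.\ precisely when $RpR = R$, i.e.\ when $p = \phi(1)$ is a full idempotent.

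Combining both computations with the criterion from Proposition~\ref{crossedproductstronglygradedprop}(1), we conclude that $A$ is strongly graded if and only if $\phi(1)$ is a full idempotent. The only substantive step is keeping the corner relations~\eqref{oiy53} straight when moving $t_\pm$ past elements of $R$; I do not foresee any genuine obstacle beyond this bookkeeping.
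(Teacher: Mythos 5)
Your proposal is correct and follows essentially the same route as the paper's own proof: both reduce to the criterion $1\in A_1A_{-1}$ and $1\in A_{-1}A_1$ for the $\mathbb Z$-grading, compute $A_1A_{-1}=R\phi(1)R$ via $t_+t_-=p$, and verify $1\in A_{-1}A_1$ unconditionally from $t_-t_+=1$. Your slightly stronger formulation of the second computation as the set equality $A_{-1}A_1 = t_-(pRp)t_+ = t_-t_+R = R$ (using $\phi(R)=pRp$) is just a cleaner packaging of the paper's single-element check $t_-\phi(1)\phi(1)t_+=1$, not a different argument.
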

\begin{proof}
First note that $A_1=R\phi(1)t_{+}$ and $A_{-1}=t_{-}\phi(1)R$. Moreover, since $\phi(1)=p$, we have  
\begin{equation*}
r_1\phi(1)t_{+} t_{-}\phi(1)r_2=r_1\phi(1)p\phi(1)r_2=r_1pppr_2=r_1\phi(1)r_2.
\end{equation*}
Suppose $A$ is strongly graded. Then $1\in A_{1}A_{-1}$. That is 
\begin{equation}\label{ghgfri6}
1=\sum_i\Big(r_i\phi(1)t_{+}\Big)\, \Big(t_{-}\phi(1)r'_i\Big)=\sum_i r_i\phi(1)r'_i,
\end{equation}
where $r_i,r'_i\in R$. So $R\phi(1)R=R$, that is $\phi(1)$ is a full idempotent. 

On the other hand suppose $\phi(1)$ is a full idempotent. Since $\mathbb Z$ is generated by $1$, in order to prove that $A$ is strongly graded, it is enough to show that $1\in A_1A_{-1}$ and $1\in A_{-1}A_1$ (see~\S\ref{scrosshg}). But 
\[t_{-}\phi(1) \phi(1)t_{+}=t_{-1}\phi(1)t_{+}=1t_{-}t_{+}=1,\] shows that $1\in A_{-1}A_1$. Since $\phi(1)$ is a full idempotent, there are $r_i,r'_i\in R$, $i\in I$ such that 
$\sum r_i\phi(1)r'_i=1$. Then Equation~\ref{ghgfri6} shows that $1\in A_1A_{-1}$.  
\end{proof}

Recall that a ring $R$ is called \emph{Dedekind finite} if any one sided invertible element is two-sided invertible. Namely, if $ab=1$, then $ba=1$, where $a,b\in R$. \index{Dedekind finite ring} For example, left (right) Noetherian rings are Dedekind finite.

\begin{proposition}\label{dungziflunch}
Let $R$ be a ring with identity which is Dedekind finite and $A=R[t_{+},t_{-},\phi]$ a corner skew Laurent polynomial ring. Then $A$ is crossed product if and only if $\phi(1)=1$.  \index{crossed product ring}
\end{proposition}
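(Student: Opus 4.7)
The plan is to handle the two implications separately, with the easy direction relying on invertibility of $t_{+}$ and the harder direction extracting a one‑sided inverse equation from an arbitrary homogeneous unit.

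First I would dispose of the direction ($\Leftarrow$). If $\phi(1)=1$ then $p=1$, so the defining relations in~\eqref{oiy53} read $t_{+}t_{-}=t_{-}t_{+}=1$. Hence $t_{+}\in A_{1}^{*}$ with inverse $t_{-}\in A_{-1}^{*}$, and then $t_{+}^{n}\in A_{n}^{*}$ for every $n\in\Z$ (with the convention $t_{+}^{-n}=t_{-}^{n}$). Thus $\Gamma_{A}^{*}=\Z$ and $A$ is crossed product.

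For ($\Rightarrow$), assume $A$ is crossed product, so there exist mutually inverse homogeneous elements $x\in A_{1}$ and $y\in A_{-1}$. Since $A_{1}=Rpt_{+}$ and $A_{-1}=t_{-}pR$, I can write $x=rpt_{+}$ and $y=t_{-}ps$; replacing $r$ by $rp$ and $s$ by $ps$ (which does not change $x$ or $y$) I may further assume $r\in Rp$ and $s\in pR$, i.e.\ $rp=r$ and $ps=s$. Using $t_{+}t_{-}=p$ and $p^{2}=p$, a direct calculation gives
\[
xy=(rpt_{+})(t_{-}ps)=rp\cdot p\cdot ps=rs.
\]
For the other product, note that $sr\in (pR)(Rp)=pRp=\phi(R)$, so there is a unique $u\in R$ with $\phi(u)=sr$. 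The identities $t_{-}\phi(u)=ut_{-}$ and $\phi(u)t_{+}=t_{+}u$ from~\eqref{oiy53} then yield
\[
yx=(t_{-}ps)(rpt_{+})=t_{-}(sr)t_{+}=t_{-}\phi(u)t_{+}=u(t_{-}t_{+})=u.
\]
Thus $xy=1$ and $yx=1$ translate into $rs=1$ in $R$ and $\phi(1)=p=sr$ in $R$.

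At this point the Dedekind finiteness hypothesis closes the argument: from $rs=1$ in the Dedekind finite ring $R$ we conclude $sr=1$, hence $\phi(1)=p=1$. The only delicate step I anticipate is the bookkeeping in the product $yx$—specifically, verifying that after the reduction $r\in Rp$, $s\in pR$ the element $sr$ lies in $\phi(R)$ so that the commutation rules can be applied to collapse $t_{-}(\cdot)t_{+}$ to an element of $A_{0}=R$. Once that identification is made, everything else is a formal manipulation of the corner relations~\eqref{oiy53}.
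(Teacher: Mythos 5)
Your proof is correct and follows essentially the same route as the paper: both arguments extract a pair of mutually inverse homogeneous elements in degrees $\pm1$, expand them via the relations~(\ref{oiy53}), and finish with Dedekind finiteness. Your bookkeeping (normalising $r\in Rp$, $s\in pR$, and identifying $sr=\phi(u)\in pRp$) even streamlines the paper's version slightly, since you obtain $rs=1$ and $sr=p$ directly and thereby avoid the manipulation $1=bpa=bppa=babpa=ba$ that the paper uses to reach the same conclusion.
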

\begin{proof}
If $\phi(1)=1$, then from relations~(\ref{oiy53}) follows that $t_{-}t_{+}=t_{+}t_{-}=1$. Therefore all homogeneous components contain invertible elements and thus $A$ is crossed product. 

Suppose $A$ is crossed product. Then there are $a,b\in R$ such that $(t_{-}a)(bt_{+})=1$ and $(bt_{+})(t_{-}a)=1$. Using relations~(\ref{oiy53}), the first equality gives $ab=p$ and the second one gives $bpa=1$, where $\phi(1)=p$. Now 
\[1=bpa=bppa=babpa=ba.\]
Since $R$ is Dedekind finite, it follows $ab=1$ and thus $p=\phi(1)=1$. 
\end{proof}

\begin{proposition}\label{lanhc8} \index{von Neumann regular ring}
Let $R$ be a ring with identity and $A=R[t_{+},t_{-},\phi]$ a corner skew Laurent polynomial ring. Then $A$ is a graded von Neumann regular ring if and only if $R$ is a von Neumann regular ring. 
\end{proposition}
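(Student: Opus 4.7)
One direction is essentially formal. Suppose $A$ is graded von Neumann regular. Given any $a \in R = A_0$, there is a homogeneous $b \in A$ with $aba = a$. Comparing degrees forces $\deg(b) = 0$, so $b \in A_0 = R$. Hence $R$ is von Neumann regular.

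For the converse, assume $R$ is von Neumann regular and take a homogeneous element $a \in A_n$. The case $n = 0$ is immediate, so suppose $n > 0$; the case $n < 0$ is symmetric. Then $a = rp_n t_+^n$ for some $r \in R$. The plan is to look for a witness of the form $b = t_-^n s \in A_{-n}$ with $s \in p_n R$, and to reduce the equation $aba = a$ to a regularity statement for an element of $R$.

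The key computational ingredient is the identity $t_+^n t_-^n = p_n$, which follows by an easy induction from the relations in \eqref{oiy53}: writing $t_+^{k+1} t_-^{k+1} = t_+(t_+^k t_-^k) t_- = t_+ p_k t_- = \phi(p_k) t_+ t_- = p_{k+1} p_1 = p_{k+1}$, since $p_{k+1} = \phi(p_k) \in \phi(R) \subseteq p_1 R p_1$. With this in hand, for $b = t_-^n s$ we compute
\[
ab = rp_n t_+^n t_-^n s = rp_n \cdot p_n \cdot s = rp_n s,
\]
so $aba = rp_n s \cdot rp_n t_+^n$. Setting $x := rp_n \in R$, the equation $aba = a$ becomes $xsx = x$.

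To solve this, use von Neumann regularity of $R$ to choose $y \in R$ with $xyx = x$, and then take $s := p_n y \in p_n R$. Then $xsx = rp_n \cdot p_n y \cdot rp_n = rp_n y rp_n = xyx = x$, so $aba = a$ as required. The argument for $n < 0$ is identical in spirit: write $a = t_-^{-n} p_{-n} r$, seek $b = s t_+^{-n}$ with $s \in R p_{-n}$, reduce $aba = a$ to an equation $x'sx' = x'$ with $x' = p_{-n} r$, and use regularity of $R$ to produce $s$. There is no genuine obstacle: once one fixes the ansatz $b = t_{\mp}^{|n|} s$ and establishes $t_+^n t_-^n = p_n$, everything reduces to the ungraded regularity of $R$, with $p_n$ absorbed into $s$ to land in the correct homogeneous component.
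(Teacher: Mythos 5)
Your proof is correct and follows essentially the same route as the paper: the same ansatz $b = t_{-}^{n}s$, the same key identity $t_{+}^{n}t_{-}^{n}=p_n$ proved by induction from the relations~(\ref{oiy53}), and the same reduction of $aba=a$ to ordinary regularity of $R$ applied to $x=rp_n$ (the paper's witness $y=t_{-}^{i}p_i s$ is exactly your $t_{-}^{n}p_n y$). Your write-up merely makes explicit two points the paper leaves terse, namely the degree-comparison argument for the forward direction and the inductive step $t_{+}p_kt_{-}=p_{k+1}$.
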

\begin{proof}
If a graded ring is graded von Neumann regular, then it is easy to see that its zero component ring is von Neumann regular. This proves one direction of the theorem. For the converse, suppose $R$ is regular.  Let $x \in A_i$, where $i>0$. So $x=rp_it_{+}^i$, for some $r\in R$, where $p_i=\phi^i(1)$.  By relations (\ref{oiy53}) and induction, we have $t_{+}^i t_{-}^i=\phi^i(p_0)=p_i$. Since $R$ is regular, there is an $s\in R$ such that $rp_i s rp_i =rp_i$. Then choosing 
$y=t^i_{-}p_is$, we have 
\[xyx=(rp_it_{+}^i)(t^i_{-}p_is)(rp_it_{+}^i)=(rp_it_{+}^i t^i_{-}p_is)(rp_it_{+}^i)=  rp_i p_i p_i s rp_it_{+}^i= rp_i t_{+}^i=x.\]
A similar argument shows that for $x \in A_i$, where $i<0$, there is a $y$ such that $xyx=x$. This shows that $A$ is a graded von Neumann regular ring. 
\end{proof}

Note that in a corner skew Laurent polynomial ring $R[t_{+},t_{-},\phi]$, $t_{+}$ is a left invertible element with a right inverse  $t_{-}$ (see the relations~(\ref{oiy53})).  
In fact this property characterises such rings. Namely, a graded ring $A=\bigoplus_{i\in \mathbb Z}A_i$ such that $A_1$ has a left invertible element is a corner skew Laurent polynomial ring as the following theorem shows. The following theorem (first established in~\cite{arafrac}) will be used to realise Leavitt path algebras (\S\ref{paohdme}) as corner skew Laurent polynomial rings (Example~\ref{berlinbielefeld}). 

\begin{theorem}\label{jhby67}
Let $A$ be a $\mathbb Z$-graded ring which has  a left invertible element $t_{+} \in A_1$. Then $t_{+}$ has a right inverse $t_{-}\in A_{-1}$, and $A=A_0[t_{+},t_{-},\phi]$, where 
\begin{align}\label{liyang}
\phi:A_0 &\longrightarrow t_{+}t_{-}A_0 t_{+}t_{-},\\
 a &\longmapsto t_{+}at_{-}.\notag
\end{align} 
 
\end{theorem}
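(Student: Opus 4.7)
The goal is to recognize $A$ as a corner skew Laurent polynomial ring built from its degree-$0$ part. The plan proceeds in four stages.

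First, I would produce the right inverse $t_-$. By hypothesis there exists $s\in A$ with $st_+=1$. Writing $s=\sum_i s_i$ with $s_i\in A_i$, the equality $\sum_i s_it_+=1\in A_0$ together with $s_it_+\in A_{i+1}$ and the uniqueness of homogeneous decompositions forces $s_it_+=0$ for $i\neq -1$ and $s_{-1}t_+=1$. Set $t_-:=s_{-1}\in A_{-1}$. This is exactly the argument already recorded after Proposition~\ref{basicsofgradedrings}. Next define $p:=t_+t_-\in A_0$. From $t_-t_+=1$ one checks $p^2=t_+(t_-t_+)t_-=t_+t_-=p$, so $p$ is a homogeneous idempotent of degree $0$.

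Second, I would verify that $\phi:A_0\to pA_0p$, $a\mapsto t_+at_-$, is a corner isomorphism, i.e.\ a ring isomorphism with $\phi(1)=p$. Multiplicativity follows immediately from $t_-t_+=1$: $\phi(a)\phi(b)=t_+at_-t_+bt_-=t_+abt_-=\phi(ab)$. For the image, $p\phi(a)p=t_+t_-t_+at_-t_+t_-=t_+at_-=\phi(a)$. Injectivity: $t_+at_-=0$ implies $a=t_-t_+at_-t_+=0$. Surjectivity onto $pA_0p$: given $pbp$ with $b\in A_0$, the element $c:=t_-bt_+$ lies in $A_0$ and $\phi(c)=t_+t_-bt_+t_-=pbp$. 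Setting $p_n:=\phi^n(1)$, an easy induction using $t_-t_+=1$ gives $p_n=t_+^nt_-^n$.

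Third, the defining relations \eqref{oiy53} of the corner skew Laurent polynomial ring $A_0[t_+,t_-,\phi]$ are satisfied inside $A$: $t_-t_+=1$ and $t_+t_-=p$ hold by construction, while $at_-=t_-t_+at_-=t_-\phi(a)$ and $t_+a=t_+at_-t_+=\phi(a)t_+$ hold for every $a\in A_0$. By the universal description of $A_0[t_+,t_-,\phi]$, there is therefore a well-defined graded ring homomorphism
\[
\Phi: A_0[t_+,t_-,\phi]\longrightarrow A
\]
sending the generators to the chosen elements of $A$.

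Finally, the main step is showing $\Phi$ is a graded isomorphism. For surjectivity on $A_n$ with $n>0$, take $r\in A_n$ and set $a:=rt_-^n\in A_0$; then $at_+^n=rt_-^nt_+^n=r$, and $ap_n=rt_-^nt_+^nt_-^n=rt_-^n=a$, so $a\in A_0p_n$ and $r=at_+^n$ lies in the image. A symmetric argument with $b:=t_+^nr$ handles $n<0$, and the $n=0$ case is tautological. For injectivity, a nonzero homogeneous element of $A_0[t_+,t_-,\phi]$ of degree $n>0$ has the shape $at_+^n$ with $a\in A_0p_n$ and $a\neq 0$; if $\Phi(at_+^n)=at_+^n=0$ in $A$, multiplying on the right by $t_-^n$ gives $ap_n=0$, i.e.\ $a=0$, a contradiction. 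The case $n<0$ is symmetric, and degree $0$ is immediate since $\Phi$ is the identity on $A_0$. The anticipated subtlety is precisely this surjectivity-on-each-degree computation: one must use the relation $p_n=t_+^nt_-^n$ together with the corner condition $a=ap_n$ to force the representing element into $A_0p_n$ (respectively $p_nA_0$), which is what makes the graded homomorphism $\Phi$ land bijectively onto the corresponding homogeneous component of $A$. Combining surjectivity and injectivity in every degree yields $A\cong_{\gr}A_0[t_+,t_-,\phi]$, completing the proof.
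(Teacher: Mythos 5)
Your proof is correct and follows essentially the same route as the paper's: produce $t_-\in A_{-1}$ with $t_-t_+=1$ from the homogeneous decomposition of the one-sided inverse, verify that $\phi$ is a corner isomorphism and that $t_\pm$ satisfy the relations \eqref{oiy53} so as to obtain a graded homomorphism $A_0[t_+,t_-,\phi]\to A$, and then check bijectivity on each homogeneous component using $p_n=\phi^n(1)=t_+^nt_-^n$ and the corner condition $a=ap_n$. You spell out a few verifications the paper compresses into ``observe'' (the corner-isomorphism checks and the induction for $p_n$), but the structure and the key degreewise computations are identical.
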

\begin{proof}
Since $t_{+}$ has a right inverse, it follows easily that there is a $t_{-}\in A_{-1}$ with $t_{-}t_{+}=1$. Moreover $t_{+}t_{-}=t_{+}t_{-} t_{+}t_{-}$ is a homogeneous idempotent of degree zero. Observe that the map~\ref{liyang}
  is a (unital) ring 
isomorphism. Consider the corner skew Laurent polynomial ring 
$\widetilde{A}=A_0[\widetilde t_{+},\widetilde t_{-},\phi]$. Since $\phi(a)=t_{+}a t_{-}$, it follows that $t_{-}\phi(a)=at_{-}$ and 
$\phi(a)t_{+}=t_{+}a$. Thus $t_{+}$ and $t_{-}$ satisfy all the relations in (\ref{oiy53}). Therefore there is a well-defined map $\psi:\widetilde A\rightarrow A$, such that $\psi(\widetilde t_{\pm})=t_{\pm}$ and the restriction of $\psi$ on $A_0$ is the identity and 
\[ \psi\Big(\sum_{k=1}^j \widetilde{t_{-}^k} a_{-k}+a_0+\sum_{k=1}^ia_i \widetilde{{t}_{+}^i}\Big)=\sum_{k=1}^j t_{-}^k a_{-k}+a_0+\sum_{k=1}^ia_i t_{+}^i.\] This also shows that $\psi$ is a graded homomorphism. In order to show that $\psi$ is an isomorphism, it suffices to show that  its restriction to each homogeneous component $\psi:\widetilde A_i \rightarrow A_i$ is a bijection.  Suppose $x\in \widetilde A_i$, $i>0$ such that $\psi(x)=0$. Then $x=d\widetilde {t_{+}^i}$ for some $d\in A_0p_i$ where $p_i=\phi^i(1)$ and $\psi(x)=d t_{+}^i$. Note that $\phi^i(1)= t_{+}^i t_{-}^i$. Thus $d\phi^i(1)=dt_{+}^i t_{-}^i=\psi(x) t_{-}^i=0$. It now follows $x=d\widetilde{t_{+}^i}=d\phi^i(1)\widetilde{t_{+}^i}=0$ in $\widetilde A_i$. This shows $\psi$ is injective. Suppose $y \in A_i$. Then $yt^i_{-}\in A_0$ and $yt^i_{-} t_{+}^i t_{-}^i=yt^i_{-}\phi^i(1) \in A_0\phi^i(1)=A_0p_i$. This shows $yt^i_{-} t_{+}^i t_{-}^i \widetilde{t_{+}^i} \in \widetilde A_i$. But $\psi(yt^i_{-} t_{+}^i t_{-}^i \widetilde{t_{+}^i})=yt^i_{-} t_{+}^i t_{-}^i  t_{+}^i=y$. This shows that $\psi:\widetilde A_i \rightarrow A_i$, $i>0$  is a bijection. A similar argument can be written for the case of $i<0$. The case $i=0$ is obvious. This completes the proof.
\end{proof}

\subsection{Graphs}\label{paohdme3}

In this subsection we gather some  graph-theoretic definitions which is needed for the construction of path algebras in~\S\ref{paohdme}. 

A \emph{directed graph} \index{directed graph} \index{graph} $E=(E^0,E^1,r,s)$ consists of two countable sets $E^0$, $E^1$ and maps $r,s:E^1\rightarrow E^0$. The elements of $E^0$ are called \emph{vertices} \index{vertex of a graph} and the elements of $E^1$ \emph{edges}. \index{edge of a graph} If $s^{-1}(v)$ is a finite set for every $v \in E^0$, then the graph is called \emph{row-finite}. \index{row-finite graph} In this book we will only consider row-finite graphs. In this setting, if the number of vertices, \ie  $|E^0|$,  is finite, then the number of edges, \ie  $|E^1|$, is finite as well and we call $E$ a \emph{finite} graph. \index{finite graph} 

 For a graph $E=(E^0,E^1,r,s)$, a vertex $v$ for which $s^{-1}(v)$ is empty is called a \emph{sink}, \index{sink vertex} while a vertex $w$ for which $r^{-1}(w)$ is empty is called a \emph{source}. \index{source vertex} An edge with the same source and range is called a \emph{loop}. \index{loop edge} A path $\mu$ in a graph $E$ is a sequence of edges $\mu=\mu_1\dots\mu_k$, such that $r(\mu_i)=s(\mu_{i+1}), 1\leq i \leq k-1$. In this case, $s(\mu):=s(\mu_1)$ is the \emph{source} \index{source of an edge} of $\mu$, $r(\mu):=r(\mu_k)$ is the \emph{range} \index{range of an edge} of $\mu$, and $k$ is the \emph{length} \index{length of a path} of $\mu$ which is  denoted by $|\mu|$. We consider a vertex $v\in E^0$ as a \emph{trivial} \index{trivial path} path of length zero with $s(v)=r(v)=v$. 
If $\mu$ is a nontrivial path in $E$, and if $v=s(\mu)=r(\mu)$, then $\mu$ is called a \emph{closed path based at} $v$. \index{closed path} If $\mu=\mu_1\dots\mu_k$ is a closed path based at $v=s(\mu)$ and $s(\mu_i) \not = s(\mu_j)$ for every $i \not = j$, then $\mu$ is called a \emph{cycle}. \index{cycle}  

For two vertices $v$ and $w$, the existence of a path with the source $v$ and the range $w$ is denoted by $v\geq w$. Here we allow paths of length zero. By $v\geq_n w$, we mean there is a path of length $n$ connecting these vertices. Therefore $v\geq_0 v$ represents the vertex $v$. Also, by $v>w$, we mean a path from $v$ to $w$ where $v\not = w$. In this book, by $v\geq w' \geq w$, it is understood that there is a path connecting $v$ to $w$ and going through $w'$ (\ie $w'$ is on the path connecting $v$ to $w$). For $n\geq 2$, we define $E^n$ to be the set of paths of length $n$ and $E^*=\bigcup_{n\geq 0} E^n$, the set of all paths. 

For a graph $E$, let $n_{v,w}$ be the number of edges with the source $v$ and range $w$. Then the \emph{adjacency matrix} \index{adjacency matrix} of the graph $E$ is $A_E=(n_{v,w})$. Usually one orders the vertices and then writes $A_E$ based on this ordering. Two different ordering of vertices give different adjacency matrices. However if $A_E$ and $A'_E$ are two adjacency matrices of $E$, then there is a permutation matrix $P$ such that $A'_E=P A_E P^{-1}$. 

A graph $E$ is called \emph{essential} \index{essential graph} if $E$ does not have sinks and sources. Moreover, a graph is called \emph{irreducible} \index{irreducible graph} if for every ordered pair vertices $v$ and $w$, there is a path from $v$ to $w$.  

\subsection{Leavitt path algebras}\label{paohdme} \index{$\LL(E)$, Leavitt path algebra}

A path algebra, with coefficients in the field $K$, is constructed as follows: consider a $K$-vector space with 
finite paths as the basis and define the multiplication by concatenation of paths. A path algebra has a natural graded structure by assigning paths as homogeneous elements of degree equal to their lengths. A formal definition of path algebras with coefficients in a ring $R$ is given below. 

\begin{definition}\label{llkas1} \index{path algebra}
For a graph $E$ and a ring $R$ with identity, we define the \emph{path algebra of $E$},  denoted by $\PP_R(E)$, to be the algebra generated by the sets $\{v \mid v \in E^0\}$, $\{ \alpha \mid \alpha \in E^1 \}$  with the coefficients in $R$, subject to the relations 

\begin{enumerate}
\item $v_iv_j=\delta_{ij}v_i \textrm{ for every } v_i,v_j \in E^0$.

\item $s(\alpha)\alpha=\alpha r(\alpha)=\alpha   \textrm{ for all } \alpha \in E^1$.

\end{enumerate}
\end{definition}
Here the ring $R$ commutes with the generators $\{v,\alpha \mid v \in E^0,\alpha \in E^1\}$. When the coefficient ring $R$ is clear from the context, we simply write $\PP(E)$ instead of $\PP_R(E)$. When $R$ is not commutative, then we consider $\PP_R(E)$ as a left $R$-module. Using the above two relations, it is easy to see that when the number of vertices is finite, then $\PP_R(E)$ is a ring with identity $\sum_{v\in E^0} v$. 

When the graph has one vertex and $n$-loops, the path algebra associated to this graph is isomorphic to $R\langle x_1,\dots,x_n\rangle$, \ie a free associative unital algebra over $R$ with $n$ noncommuting variables.

Setting $\deg(v)=0$ for $v\in E^0$ and  $\deg(\alpha)=1$ for $\alpha \in E^1$, we obtain a natural $\mathbb Z$-grading on the free $R$-ring generated by  $\{v,\alpha \mid v \in E^0,\alpha \in E^1\}$ (\S\ref{freeme}). Since the relations in Definition~\ref{llkas1} are all homogeneous, the ideal generated by these relations is homogeneous and thus we have a natural $\mathbb Z$-grading on $\PP_R(E)$. Note that $\PP(E)$ is positively graded, and for any $m,n \in \mathbb N$,
\[\PP(E)_m \PP(E)_n=\PP(E)_{m+n}.\] However by Proposition~\ref{crossedproductstronglygradedprop}(2), $\PP(E)$ is not an strongly $\Z$-graded ring.

The theory of Leavitt path algebras were introduced in~\cite{aap05,amp} which associate to directed graphs certain type of algebras. These algebras were motivated by Leavitt's construction of universal non-IBN rings~\cite{vitt62}. Leavitt path algebras are quotients of path algebras by relations resembling those in construction of algebras studied by Leavitt (see Example~\ref{levisuji}).

\begin{definition}\label{llkas}  \index{Leavitt path algebra} 
For a row-finite graph $E$ and a ring $R$ with identity, we define the \emph{Leavitt path algebra of $E$},  denoted by $\LL_R(E)$, to be the algebra generated by the sets $\{v \mid v \in E^0\}$, $\{ \alpha \mid \alpha \in E^1 \}$ and $\{ \alpha^* \mid \alpha \in E^1 \}$ with the coefficients in $R$, subject to the relations 

\begin{enumerate}
\item $v_iv_j=\delta_{ij}v_i \textrm{ for every } v_i,v_j \in E^0$.

\item $s(\alpha)\alpha=\alpha r(\alpha)=\alpha \textrm{ and }
r(\alpha)\alpha^*=\alpha^*s(\alpha)=\alpha^*  \textrm{ for all } \alpha \in E^1$.

\item $\alpha^* \alpha'=\delta_{\alpha \alpha'}r(\alpha)$, for all $\alpha, \alpha' \in E^1$.

\item $\sum_{\{\alpha \in E^1, s( \alpha)=v\}} \alpha \alpha^*=v$ for every $v\in E^0$ for which $s^{-1}(v)$ is nonempty.

\end{enumerate}
\end{definition}
Here the ring $R$ commutes with the generators $\{v,\alpha, \alpha^* \mid v \in E^0,\alpha \in E^1\}$. When the coefficient ring $R$ is clear from the context, we simply write $\LL(E)$ instead of $\LL_R(E)$. When $R$ is not commutative, then we consider $\LL_R(E)$ as a left $R$-module. The elements $\alpha^*$ for $\alpha \in E^1$ are called \emph{ghost edges}. \index{ghost edge} One can show that $\LL_R(E)$ is a ring with identity if and only if the graph $E$ is finite (otherwise, $\LL_R(E)$ is a ring with local identities, see~\cite[Lemma~1.6]{aap05}).

Setting $\deg(v)=0$, for $v\in E^0$, $\deg(\alpha)=1$ and $\deg(\alpha^*)=-1$ for $\alpha \in E^1$, we obtain a natural $\mathbb Z$-grading on the free $R$-ring generated by  $\{v,\alpha, \alpha^* \mid v \in E^0,\alpha \in E^1\}$. Since the relations in Definition~\ref{llkas} are all homogeneous, the ideal generated by these relations is homogeneous and thus we have a natural $\mathbb Z$-grading on $\LL_R(E)$. 

If $\mu=\mu_1\dots\mu_k$, where $\mu_i \in E^1$, is an element of $\LL(E)$, then we denote by $\mu^*$ the element $\mu_k ^*\dots \mu_1^* \in \LL(E)$. Further we define $v^*=v$ for any $v\in E^0$. Since $\alpha^* \alpha'=\delta_{\alpha \alpha'}r(\alpha)$, for all $\alpha, \alpha' \in E^1$, any word in the generators $\{v, \alpha, \alpha^* \mid v\in E^0, \alpha \in E^1   \}$ in $\LL(E)$ can be written as $\mu \gamma ^*$ where $\mu$ and $\gamma$ are paths in $E$ (recall that vertices were considered paths of length zero).  The elements of the form $\mu\gamma^*$ are called \emph{monomials}.  \index{monomials} 

If the graph $E$ is infinite, $\LL_R(E)$ is a graded ring without identity (see Remark~\ref{parknan}). 

Taking the grading into account, one can write \[\LL_R(E) =\textstyle{\bigoplus_{k \in \mathbb Z}} \LL_R(E)_k,\] where,
\[\boxed{\LL_R(E)_k=  \Big \{\,  \sum_i r_i \alpha_i \beta_i^*\mid \alpha_i,\beta_i \textrm{ are paths}, r_i \in R, \textrm{ and } |\alpha_i|-|\beta_i|=k \textrm{ for all } i \, \Big\}.}\] 
For simplicity we denote $\LL_R(E)_k$, the homogeneous elements of degree $k$, by $\LL_k$.

\begin{example}\scm[A graded ring whose modules are all graded]
\vspace{0.2cm}

Consider the infinite line graph 

\begin{equation*}
\xymatrix{
E: &  \ar@{.>}[r] & u_{-1} \ar[r]^{e_0} & u_{0} \ar[r]^{e_1} &  u_{1}  \ar@{.>}[r] &   \\
}
\end{equation*}
Then the Leavitt path algebra $\LL(E)$ is a $\Z$-graded ring. Let $X$ be a right $\LL(E)$-module. Set
\[X_i=X u_i, i \in \Z,\] and observe that $X=\bigoplus_{i\in \Z} X_i$. It is easy to check that $X$ becomes a graded $\LL(E)$-module. Moreover, any module homomorphism is a graded homomorphism. Note however that the module category 
$\Modd \LL(E)$ is not equivalent to $\Gr \! \LL(E)$. Also notice that although any ideal is a graded module over $\LL(E)$, but they are not graded ideals of $\LL(E)$.

\end{example}

The following theorem was proved in~\cite{haz} which determines the finite graphs whose associated Leavitt path algebras are strongly graded.

\begin{theorem}\label{sthfin} \index{strongly graded Leavitt path algebra}
Let $E$ be a finite graph and $K$ a field.  Then $\LL_K(E)$ is strongly graded if and only if  $E$ does not have sinks.
\end{theorem}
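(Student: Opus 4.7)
The plan is to use the remark after Proposition~\ref{crossedproductstronglygradedprop} that, since $\Z$ is cyclic, $\LL_K(E)$ is strongly graded if and only if $1\in\LL_1\LL_{-1}$ and $1\in\LL_{-1}\LL_1$. The forward direction is a short obstruction argument: assume $v\in E^0$ is a sink, and observe that every monomial spanning $\LL_1$ has the form $\alpha\beta^*$ with $|\alpha|-|\beta|=1$, hence $|\alpha|\geq 1$, so $v\alpha\beta^*=\delta_{v,s(\alpha)}\alpha\beta^*=0$ because no edge has source $v$. Thus $v\LL_1=0$, forcing $v=v\cdot 1\in v\LL_1\LL_{-1}=0$, a contradiction.

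For the converse, assume $E$ has no sinks. The containment $1\in\LL_1\LL_{-1}$ is immediate from CK2, which is now valid at every vertex: $1=\sum_{v\in E^0}v=\sum_v\sum_{s(\alpha)=v}\alpha\alpha^*=\sum_{\alpha\in E^1}\alpha\alpha^*$, a sum in $\LL_1\cdot\LL_{-1}$. To prove $1\in\LL_{-1}\LL_1$ it suffices to show $v\in\LL_{-1}\LL_1$ for each vertex $v$. If $v$ has any incoming edge $\alpha$, the identity $v=\alpha^*\alpha\in\LL_{-1}\LL_1$ settles the matter. The real work is the source case; here I would exploit that $E$, being finite with no sinks, forces every forward walk from $v$ to eventually enter a cycle.

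The core algebraic move is the identity
\[\mu\mu^*=\mu\cdot\gamma^*\gamma\cdot\mu^*=(\mu\gamma^*)(\gamma\mu^*),\]
valid whenever $\gamma$ is a closed path based at $r(\mu)$ (using $\gamma^*\gamma=r(\mu)$); when $|\mu|=n$ and $|\gamma|=n+1$ the two factors land respectively in $\LL_{-1}$ and $\LL_1$, placing $\mu\mu^*$ in $\LL_{-1}\LL_1$. Iterating CK2 gives $v=\sum_{|\mu|=n,\,s(\mu)=v}\mu\mu^*$ for any $n\geq 0$, so I would choose $n$ such that each summand can be handled this way: every arising endpoint $w=r(\mu)$ must lie on a cycle, and the numerical sub-semigroup $S_w\subseteq\mathbb N$ generated by cycle-lengths through $w$ must contain $n+1$. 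For summands whose endpoint is cycle-free one first further expands $\mu\mu^*=\sum_\nu(\mu\nu)(\mu\nu)^*$ a bounded number of times (the cycle-free subgraph of $E$ is a finite acyclic graph, so this process terminates with all endpoints on cycles); and since each $S_w$ contains all sufficiently large multiples of $\gcd(S_w)$ and only finitely many $w$ arise, one can pick $n+1$ to be a large common multiple of the relevant $\gcd(S_w)$'s to secure the arithmetic condition.

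The main obstacle is precisely this simultaneous bookkeeping: arranging all expanded paths to end on cycles while simultaneously arranging $n+1$ to lie in each $S_w$ at stake. Once the combinatorial choices are made, summing the key identity over $\mu$ yields $v\in\LL_{-1}\LL_1$, and collecting over all vertices $v$ gives $1\in\LL_{-1}\LL_1$, completing the proof.
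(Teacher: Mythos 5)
You should first know that the book does not actually prove Theorem~\ref{sthfin} in full: it cites~\cite{haz} for the general case and records a short proof only under the extra hypothesis that $E$ has no sources (Theorem~\ref{sthfin3}), by realising $\LL_K(E)$ as a corner skew Laurent polynomial ring $\LL(E)_0[t_{+},t_{-},\phi]$ and invoking Proposition~\ref{lanhc888}. Your attempt is therefore a genuinely different, direct attack, and its outer structure is sound: the reduction to $1\in\LL_1\LL_{-1}$ and $1\in\LL_{-1}\LL_1$, the sink obstruction $v\LL_1=0$ (note you implicitly use the standard fact that vertices are nonzero in $\LL_K(E)$, which deserves a citation), the computation $1=\sum_{\alpha\in E^1}\alpha\alpha^*\in\LL_1\LL_{-1}$ via relation (4) of Definition~\ref{llkas}, the easy case $v=\alpha^*\alpha$ for $r(\alpha)=v$, and the pivotal identity $\mu\mu^*=(\mu\gamma^*)(\gamma\mu^*)$ for a closed path $\gamma$ at $r(\mu)$ with $|\gamma|=|\mu|+1$ are all correct.

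However, the bookkeeping you yourself flag as ``the main obstacle'' is a genuine gap, and the fix you sketch would fail as stated, for two reasons. First, after you further expand the summands with cycle-free endpoints, the terms $\mu\mu^*$ no longer share a common length, so a single choice of $n+1$ as a large common multiple of the $\gcd(S_w)$'s cannot make $|\mu|+1\in S_{r(\mu)}$ hold for every term: the lengthened terms need $|\mu|+1$ to lie in congruence classes that one uniform $n$ cannot accommodate. Second, you cannot lengthen a given term while holding its endpoint fixed, because relation (4) forces expansion along \emph{all} outgoing edges, including those leaving a cycle; so the target semigroup $S_{r(\mu)}$ moves as you expand, and ``a large multiple of $\gcd(S_w)$'' is not something you can steer a fixed branch toward. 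The repair is to abandon the uniform $n$ and stop each branch at its own time: any path from $v$ of length $|E^0|$ repeats a vertex, yielding a closed path $\sigma$ of some length $m$; rotating $\sigma$ shows that every vertex $z$ on it supports a closed path of length $m$, whence $qm\in S_z$ for all $q\geq 1$; and as $\mu$ runs over the prefixes traversing $\sigma$, the quantities $|\mu|+1$ form $m$ consecutive integers, exactly one of which is a positive multiple of $m$. Hence every path from $v$ has a prefix $\mu$, of uniformly bounded length, with $|\mu|+1\in S_{r(\mu)}$; expanding by relation (4) only along branches not yet satisfying this condition therefore terminates, expressing $v$ as a finite sum of terms $\mu\mu^*$ to each of which your identity applies, and summing over vertices gives $1\in\LL_{-1}\LL_1$. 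With this replacement your argument closes; as written, the arithmetic step is asserted rather than proved, and the particular uniform-$n$ scheme cannot be carried out.
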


The proof of this theorem is quite long and does not fit for purpose of this book. However, we can realise the Leavitt path algebras of finite graphs with no source in terms of corner skew Laurent polynomial rings (see~\S\ref{cornerskew}). Using this representation, we can provide a short proof for the above theorem when the graph has no sources. 

\begin{example}\scm[Leavitt path algebras as corner skew Laurent rings]\label{berlinbielefeld}
\vspace{0.2cm}

Let $E$ be a finite graph with no source and $E^0=\{v_1,\dots,v_n\}$ the set of all vertices of $E$. For each $1\leq i \leq n$, we choose an edge $e_i$ such that $r(e_i)=v_i$ and consider $t_{+}=e_1+\dots+e_n \in \LL(E)_1$. Then $t_{-}= e^*_1+\dots+e^*_n$ is its right inverse.  
Thus by Theorem~\ref{jhby67}, $\LL(E)=\LL(E)_0[t_{+},t_{-},\phi]$, where 
\begin{align*}
\phi:\LL(E)_0 &\longrightarrow t_{+}t_{-}\LL(E)_0 t_{+}t_{-}\\
 a &\longmapsto t_{+}at_{-}
\end{align*}

\end{example}

Using this interpretation of Leavitt path algebras we are able to prove the following theorem.

\begin{theorem}\label{sthfin3}
Let $E$ be a finite graph with no source and $K$ a field.  Then $\LL_K(E)$ is strongly graded if and only if  $E$ does not have sinks.
\end{theorem}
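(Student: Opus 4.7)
The plan is to invoke Example~\ref{berlinbielefeld} to realise $\LL_K(E)$ as a corner skew Laurent polynomial ring. Since $E$ has no sources, for each vertex $v_i$ of $E^0=\{v_1,\dots,v_n\}$ I can choose an edge $e_i$ with $r(e_i)=v_i$, set $t_+=e_1+\cdots+e_n$, $t_-=e_1^*+\cdots+e_n^*$, and obtain
\[
\LL_K(E)=\LL_K(E)_0[t_+,t_-,\phi],\qquad \phi(a)=t_+at_-.
\]
Using $e_i^*e_j=\delta_{ij}v_i$ one checks immediately that $\phi(1)=t_+t_-=\sum_{i=1}^n e_ie_i^*$ is a sum of pairwise orthogonal homogeneous idempotents of degree $0$ in $\LL_K(E)_0$. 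By Proposition~\ref{lanhc888}, the whole theorem then reduces to the statement: $\phi(1)$ is full in $\LL_K(E)_0$ iff $E$ has no sinks.

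For the forward direction, assume $E$ has no sinks and set $I:=\LL_K(E)_0\cdot t_+t_-\cdot \LL_K(E)_0$. Since $1=\sum_{v\in E^0}v$, it is enough to place each $v$ in $I$. As $v$ is not a sink the Cuntz--Krieger relation gives $v=\sum_{s(\alpha)=v}\alpha\alpha^*$, so it suffices to show $\alpha\alpha^*\in I$ for every such edge $\alpha$. Writing $w:=r(\alpha)=v_i$ and using $e_i^*e_i=v_i$, I would use the factorisation
\[
\alpha\alpha^* \;=\; (\alpha e_i^*)\,(e_ie_i^*)\,(e_i\alpha^*),
\]
where $\alpha e_i^*,\,e_i\alpha^*\in\LL_K(E)_0$. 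The orthogonality identity $(e_ie_i^*)\,t_+t_-=e_ie_i^*$ then exhibits the middle factor as an element of $\LL_K(E)_0\cdot t_+t_-$, so $\alpha\alpha^*\in I$, whence $v\in I$ and $\phi(1)$ is full.

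For the backward direction, assume $w\in E^0$ is a sink. Since no edge has source $w$ we have $s(e_i)\neq w$ for every $i$, so $w\cdot t_+=\sum_i w\,s(e_i)\,e_i=0$ and consequently $w\cdot t_+t_-=0$. The cleanest way to leverage this is to analyse the corner $w\LL_K(E)_0$: a degree-$0$ monomial $w\mu\gamma^*$ is nonzero only when $s(\mu)=w$, and as $w$ is a sink this forces $\mu=w$, hence also $\gamma=w$. Thus $w\LL_K(E)_0=Kw$, which gives
\[
w\cdot I \;=\; w\cdot\LL_K(E)_0\cdot t_+t_-\cdot \LL_K(E)_0 \;=\; K\cdot(w\cdot t_+t_-)\cdot \LL_K(E)_0 \;=\; 0.
\]
If $w$ belonged to $I$, then $w=w^2\in w\cdot I=0$ would contradict the fact that $w$ is a nonzero vertex idempotent; therefore $\phi(1)$ is not full and $\LL_K(E)$ is not strongly graded.

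The main obstacle lies in the forward direction, where one must express each term $\alpha\alpha^*$ appearing in the Cuntz--Krieger expansion of $v$ in the prescribed form $(\LL_K(E)_0)\cdot t_+t_-\cdot(\LL_K(E)_0)$. The decisive move is to insert $v_i=e_i^*e_i$ at $r(\alpha)$ so that a single summand $e_ie_i^*$ of $t_+t_-$ materialises, and then to absorb the remaining $e_je_j^*$ terms using their pairwise orthogonality; the backward direction is comparatively routine once one observes the collapse $w\LL_K(E)_0=Kw$.
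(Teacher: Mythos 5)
Your proposal is correct and takes essentially the same route as the paper: the paper's proof of Theorem~\ref{sthfin3} consists exactly of the reduction via Example~\ref{berlinbielefeld} and Proposition~\ref{lanhc888} to the claim that $t_+t_-$ is a full idempotent of $\LL_K(E)_0$ if and only if $E$ has no sinks, a claim the paper dismisses as ``easy to prove.'' Your factorisation $\alpha\alpha^*=(\alpha e_i^*)(e_ie_i^*)(e_i\alpha^*)$ together with $(e_ie_i^*)t_+t_-=e_ie_i^*$, and the collapse $w\LL_K(E)_0=Kw$ at a sink $w$ giving $w\cdot\LL_K(E)_0\,t_+t_-\,\LL_K(E)_0=0$, correctly supply the omitted details.
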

\begin{proof}
Write $\LL(E)=\LL(E)_0[t_{+},t_{-},\phi]$, where $\phi(1)=t_{+}t_{-}$ (see Example~\ref{berlinbielefeld}). The theorem now follows from an easy to prove observation that $t_{+}t_{-}$ is a full idempotent if and only if $E$ does not have sinks along with Proposition~\ref{lanhc888}, that $\phi(1)$ is a full idempotent if and only if $\LL(E)_0[t_{+},t_{-},\phi]$ is strongly graded.
\end{proof}

In the following theorem, we use the fact that $\LL(E)_0$ is an \emph{ultramatricial algebra}, \ie it is isomorphic to the union of an increasing chain of a finite product of matrix algebras over a field $K$ (see~\S\ref{sarahbright}). \index{ultramatricial algebra}

\begin{theorem}\label{sthfin4}
Let $E$ be a finite graph with no source and $K$ a field.  Then $\LL_K(E)$ is crossed product if and only if  $E$ is a cycle. \index{crossed product ring}
\end{theorem}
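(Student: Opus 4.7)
My plan is to apply the corner skew Laurent polynomial machinery to $\LL_K(E)$, paralleling the proof of Theorem~\ref{sthfin3}. Since $E$ has no source, Example~\ref{berlinbielefeld} provides, for each vertex $v_i$ with $1\le i\le n$, a choice of edge $e_i$ with $r(e_i)=v_i$, and a presentation $\LL_K(E)=\LL_K(E)_0[t_+,t_-,\phi]$ with $t_+=e_1+\cdots+e_n$, $t_-=e_1^*+\cdots+e_n^*$, and $\phi(a)=t_+ a t_-$. In particular $\phi(1)=t_+t_-$.

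Next I would verify that $\LL_K(E)_0$ is Dedekind finite. It is ultramatricial, i.e.\ a direct limit of finite products of matrix algebras over $K$; each such finite product is Artinian, hence Noetherian, hence Dedekind finite, and this property is preserved under direct limits (any one-sided inverse relation descends to some finite level where it is two-sided, then pushes back up). So Proposition~\ref{dungziflunch} applies and reduces the problem to showing $t_+t_-=1$ if and only if $E$ is a cycle. Using the CK-relations $e_i=e_iv_i$, $e_j^*=v_je_j^*$ and $v_iv_j=\delta_{ij}v_i$, the product collapses to $t_+t_-=\sum_{i=1}^n e_ie_i^*$, so I must analyse when $\sum_i e_ie_i^*=\sum_{v\in E^0}v$.

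For the ``if'' direction, if $E$ is a cycle then each vertex has a unique outgoing edge, so the CK-relation at $s(e_i)$ reads $v_{s(e_i)}=e_ie_i^*$; since $i\mapsto s(e_i)$ is a bijection of $E^0$, summing yields $\sum_i e_ie_i^*=\sum_v v=1$. For the ``only if'' direction, assume $t_+t_-=1$. Then $\LL_K(E)$ is crossed product, hence strongly graded, hence by Theorem~\ref{sthfin3} $E$ has no sinks, so $1=\sum_v v=\sum_{\alpha\in E^1}\alpha\alpha^*$ by relation (4) of Definition~\ref{llkas}. The crucial step is then to match $\sum_{i=1}^n e_ie_i^*=\sum_{\alpha\in E^1}\alpha\alpha^*$. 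The family $\{\alpha\alpha^*\mid\alpha\in E^1\}$ consists of pairwise orthogonal idempotents, since $(\alpha\alpha^*)(\beta\beta^*)=\alpha(\delta_{\alpha\beta}r(\alpha))\beta^*=\delta_{\alpha\beta}\alpha\alpha^*$, and they are linearly independent in the standard monomial basis of $\LL_K(E)$. Hence the equality of the two sums forces $\{e_1,\ldots,e_n\}=E^1$, so $|E^1|=n=|E^0|$ and the range map $\alpha\mapsto r(\alpha)$ is a bijection; combined with the absence of sinks and the counting $|E^1|=|E^0|$, every vertex has in- and out-degree exactly one, which identifies $E$ as a (disjoint union of) cycle(s).

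The main obstacle is the linear independence of $\{\alpha\alpha^*\mid\alpha\in E^1\}$, the only non-formal ingredient: one must rule out the possibility that different $\mathbb Z$-linear combinations of these idempotents agree. I would handle this either by invoking the standard monomial normal form for $\LL_K(E)$, or, more directly, by left-right multiplying the identity $\sum_i e_ie_i^*=\sum_\alpha\alpha\alpha^*$ by $e_k^*$ and $e_k$ to isolate individual summands via $e_k^*(\alpha\alpha^*)e_k=\delta_{\alpha,e_k}r(e_k)$. A secondary subtlety is the reading of ``$E$ is a cycle'': the argument naturally produces a disjoint union of cycles, so matching the statement as written requires either an implicit connectedness hypothesis on $E$ or the usual convention that ``cycle'' in this context allows finite disjoint unions of simple cycles.
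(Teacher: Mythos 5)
Your proposal is correct and follows essentially the same route as the paper's proof: the corner skew Laurent presentation of Example~\ref{berlinbielefeld}, Dedekind finiteness of the ultramatricial zero-component ring, Proposition~\ref{dungziflunch} to reduce everything to $t_{+}t_{-}=1$, and then a combinatorial analysis of $\sum_i e_ie_i^*$ --- your detour through $\sum_{\alpha\in E^1}\alpha\alpha^*=1$ and the orthogonality of the idempotents $\alpha\alpha^*$ is just a more explicit justification of the paper's terse claim that, after a suitable permutation, $e_ie_i^*=v_i$ for all $i$. Your closing caveat is also apt: the paper's own argument likewise only yields a disjoint union of cycles (and indeed $\LL_K$ of two disjoint loops, namely $K[x,x^{-1}]\times K[x,x^{-1}]$, is a crossed product), so connectedness of $E$ is implicitly assumed in the statement.
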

\begin{proof}
Suppose $E$ is a cycle with edges $\{e_1,e_2,\dots,e_n\}$. It is straightforward to check that $e_1+e_2+\dots+e_n$ is an invertible element of degree $1$. It then follows that each homogeneous component contains invertible elements and thus $\LL(E)$ is crossed product. 

Suppose now $\LL(E)$ is crossed product. Write $\LL(E)=\LL(E)_0[t_{+},t_{-},\phi]$, where $\phi(1)=t_{+}t_{-}$  and $t_{+}=e_1+\dots+e_n \in \LL(E)_1$ (see Example~\ref{berlinbielefeld}). Since $\LL(E)_0$ is an ultramatricial algebra, it is Dedekind finite, and thus by Proposition~\ref{dungziflunch}, \[\phi(1)=e_1e_1^*+e_2e_2^*+\dots+e_ne_n^*=v_1+v_2+\dots+v_n.\] From this it follows that, (after suitable permutation), 
$e_ie_i^*=v_i$, for all $1\leq i \leq n$. This in turn shows that there is only one edge emits from each vertex, \ie $E$ is a cycle. 
\end{proof}

As a consequence of Theorem~\ref{sthfin}, we can show that Leavitt path algebras associated to finite graphs with no sinks are graded regular von Neumann rings (\S\ref{penrith7may}).  \index{von Neumann regular ring}

\begin{corollary}\label{hgthusu2}
Let $E$ be a finite graph with no sinks and $K$ a field.  Then $\LL_K(E)$ is a graded von Neumann regular ring. 
\end{corollary}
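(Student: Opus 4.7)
The plan is to combine two results already established in the text: the characterisation of strong gradedness for Leavitt path algebras of finite graphs without sinks (Theorem~\ref{sthfin}, or its short proof Theorem~\ref{sthfin3}), and the descent principle for graded von Neumann regularity along strong gradings (Corollary~\ref{vonregu}). Once these are in place, the whole problem reduces to showing that the zero component $\LL_K(E)_0$ is a von Neumann regular ring.

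First I would apply Theorem~\ref{sthfin3}: since $E$ is finite and has no sinks (and by the setup of Example~\ref{berlinbielefeld} one may reduce to the no-source case, or just invoke the more general Theorem~\ref{sthfin} directly), $\LL_K(E)$ is a strongly $\mathbb{Z}$-graded ring. Then Corollary~\ref{vonregu} tells us that $\LL_K(E)$ is graded von Neumann regular if and only if $\LL_K(E)_0$ is von Neumann regular in the usual (ungraded) sense. So the task collapses to proving the regularity of the zero component.

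The key remaining input is the structural fact (used in the proof of Theorem~\ref{sthfin4}) that $\LL_K(E)_0$ is an \emph{ultramatricial} $K$-algebra, i.e.\ a directed union of finite products of full matrix algebras over $K$. Each finite product $\prod_i \M_{n_i}(K)$ is semisimple, hence von Neumann regular (every element satisfies $a = aba$, as semisimple Artinian rings are regular). A direct limit of von Neumann regular rings is again von Neumann regular: given a homogeneous, hence genuine, element $a \in \LL_K(E)_0$, it lies in some finite stage $R_n \cong \prod_i \M_{n_i}(K)$, so one can find $b \in R_n \subseteq \LL_K(E)_0$ with $aba = a$. Thus $\LL_K(E)_0$ is von Neumann regular.

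Combining these two steps yields that $\LL_K(E)$ is graded von Neumann regular, which completes the proof. I do not foresee a serious obstacle here; the only subtlety is making sure the ultramatricial description of $\LL_K(E)_0$ is genuinely available at this point in the book (it is cited in the proof of Theorem~\ref{sthfin4} and discussed in \S\ref{sarahbright}), so the argument is essentially an assembly of already-established results rather than a fresh computation.
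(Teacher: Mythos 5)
Your proposal is correct and follows exactly the paper's own argument: strong gradedness via Theorem~\ref{sthfin}, reduction to the zero component via Corollary~\ref{vonregu}, and regularity of $\LL_K(E)_0$ from its ultramatricial structure (the paper cites the proof of Theorem~5.3 of~\cite{amp} for this last point, which is the same fact you spell out via the direct limit of semisimple algebras).
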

\begin{proof}
Since $\LL(E)$ is strongly graded (Theorem~\ref{sthfin}), by Corollary~\ref{vonregu}, $\LL(E)$ is von Neumann regular if $\LL(E)_0$ is a von Neumann regular ring. But we know that the zero component ring $\LL(E)_0$ is an ultramatricial algebra which is von Neumann regular (see the proof of~\cite[Theorem~5.3]{amp}).
This finishes the proof. 
\end{proof}

\begin{example}\scm[Leavitt path algebras are not graded unit regular rings]\label{mehui}
\vspace{0.2cm}

By analogy with the ungraded case, a graded ring is \emph{graded von Neumann unit regular} (or \emph{graded unit regular} for short) if for any homogeneous element $x$, there is an invertible homogeneous element $y$ such that $xyx=x$. \index{graded von Neumann unit regular} \index{graded unit regular}Clearly any graded unit regular ring is von Neumann regular. However the converse is not the case. For example, Leavitt path algebras are not in general unit regular as the following example shows. Consider the graph:
\[
\xymatrix{
E: & \bullet\ar@(u,l)_{y_1} \ar@(u,r)^{y_2}}
\]
Then it is easy to see that there is no homogeneous invertible element $x$ such that $y_1 x y_1=y_1$ in $\LL(E)$. 
\end{example}

The following theorem determines the graded structure of  Leavitt path algebras associated to acyclic graphs. It turns out that such algebras are natural examples of graded matrix rings (\S\ref{matgrhe}). \index{graded matrix ring}

\begin{theorem}\label{gra1}
Let $K$ be a field and $E$ a finite acyclic graph with sinks $\{v_1,\ldots,v_t\}$.  For any sink $v_s$, let $R(v_s)=\{p_1^{v_s},\ldots,p_n^{v_s}\}$ denote the set of all paths ending at $v_s$.  Then there is a $\mathbb{Z}$-graded isomorphism
\begin{equation}
\LL_K(E)\cong_{\gr}\bigoplus_{s=1}^t \M_{n(v_s)}(K)(|p_1^{v_s}|,\ldots,|p_{n(v_s)}^{v_s}|).
\end{equation}
\end{theorem}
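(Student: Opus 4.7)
\medskip

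\noindent\textbf{Proof plan.} The strategy is to exhibit, for each sink $v_s$, a set of graded matrix units inside $\LL_K(E)$ indexed by the paths in $R(v_s)$, show that these sets are mutually orthogonal across different sinks, and verify that together they span $\LL_K(E)$ as a $K$-algebra. The isomorphism will then follow by inspecting degrees and applying (a block-diagonal version of) Lemma~\ref{mengfy4}.

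First, for each sink $v_s$ and each pair of paths $p_i^{v_s}, p_j^{v_s} \in R(v_s)$, set
\[
e_{ij}^{v_s} := p_i^{v_s}\,(p_j^{v_s})^{*}.
\]
The plan is to verify the graded matrix unit relation
\[
e_{ij}^{v_s}\, e_{kl}^{v_t} = \delta_{st}\,\delta_{jk}\, e_{il}^{v_s}.
\]
This reduces to computing $(p_j^{v_s})^{*}\,p_k^{v_t}$; by relation~(3) of Definition~\ref{llkas}, together with the fact that a path ending at a sink admits no proper extension, one of the two paths must be an initial segment of the other, and an initial segment of a path ending at a sink that itself ends at a sink must be the whole path. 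Hence the product is $\delta_{st}\delta_{jk}v_s$, and the asserted relation follows. By the grading on $\LL_K(E)$ (with edges of degree $1$ and ghost edges of degree $-1$), I read off $\deg(e_{ij}^{v_s})=|p_i^{v_s}|-|p_j^{v_s}|$, which matches the shift prescription of~\eqref{maxbeannerstr} for the graded matrix ring $\M_{n(v_s)}(K)(|p_1^{v_s}|,\ldots,|p_{n(v_s)}^{v_s}|)$.

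Next I would show that these matrix units exhaust $\LL_K(E)$. The key combinatorial lemma to establish first is: for every vertex $v\in E^0$,
\[
v \;=\; \sum_{\mu} \mu\mu^{*},
\]
where $\mu$ runs over all paths with $s(\mu)=v$ and $r(\mu)$ a sink. This is proved by induction on the maximal length of a path starting at $v$ (finite because $E$ is finite and acyclic); the base case is $v$ a sink, and the inductive step applies relation~(4) of Definition~\ref{llkas} and then replaces each $r(\alpha)$ by its inductive expansion. Specializing to $\sum_{v}v = 1$ gives $\sum_{s}\sum_{i} e_{ii}^{v_s} = 1$, so the matrix units form a complete system of orthogonal idempotents summing to the identity. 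To see that the $e_{ij}^{v_s}$ $K$-linearly span $\LL_K(E)$, I would show each generator $v$, $\alpha$, $\alpha^{*}$ lies in the $K$-span: $v$ by the formula just derived, and $\alpha = \alpha\, r(\alpha) = \sum_{\nu} \alpha\nu(\alpha\nu)^{*}\nu$-free-adjustments by re-expanding $r(\alpha)$ (and dually for $\alpha^{*}$), which writes $\alpha$ as a sum of terms $p_i^{v_s}(p_j^{v_s})^{*}$.

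Finally, orthogonality across sinks (from the matrix unit relation) shows that any product of matrix units is either zero or stays inside a single sink's block, so the $K$-linear span of the $e_{ij}^{v_s}$ decomposes as an internal direct sum indexed by sinks. For each fixed $s$, the subalgebra spanned by $\{e_{ij}^{v_s}\}$ carries $E_s := \sum_i e_{ii}^{v_s}$ as its identity, and the map $\e_{ij}\mapsto e_{ij}^{v_s}$ is a graded $K$-algebra isomorphism onto $\M_{n(v_s)}(K)(|p_1^{v_s}|,\ldots,|p_{n(v_s)}^{v_s}|)$ by Lemma~\ref{mengfy4} applied to the corner $E_s \LL_K(E) E_s$ (the centraliser of the full set of graded matrix units is $K\cdot E_s$ because the elements of $R(v_s)$ are $K$-linearly independent, as distinct monomials $\mu\gamma^{*}$ in reduced form). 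Summing over sinks delivers the desired graded isomorphism.

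The main obstacle I anticipate is the linear independence underlying the last step: showing that distinct $e_{ij}^{v_s}$ are $K$-linearly independent (equivalently, that the centraliser of the matrix units within each block is just $K$). This requires a normal form argument for $\LL_K(E)$, either invoked from the standard basis theorem for Leavitt path algebras or proved by hand using the acyclicity of $E$ and a dimension count against the number of pairs of paths sharing a sink endpoint.
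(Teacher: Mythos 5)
Your proposal is correct and follows essentially the same route as the paper's (sketched) proof: both rest on the assignment $p_i^{v_s}(p_j^{v_s})^* \mapsto \e_{ij}$, the degree computation $\deg\big(p_i^{v_s}(p_j^{v_s})^*\big)=|p_i^{v_s}|-|p_j^{v_s}|$, and the decomposition of $\LL_K(E)$ into blocks indexed by sinks, with your matrix-unit relations, the identity $v=\sum_\mu \mu\mu^*$ over paths from $v$ to sinks, and the spanning argument simply filling in the details the paper compresses into ``one can check''. Regarding the linear-independence obstacle you flag (which the paper's sketch leaves equally implicit when asserting $\phi$ is an isomorphism), it can be settled without invoking the full basis theorem: your matrix-unit relations make each block a unital homomorphic image of the simple ring $\M_{n(v_s)}(K)$, hence it is either zero or isomorphic to it, and it is nonzero because the vertices (and thus the idempotents $e_{ii}^{v_s}$ with $\sum_{s,i}e_{ii}^{v_s}=1$) are nonzero in $\LL_K(E)$.
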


\begin{proof}[Sketch of proof]
Fix a sink $v_s$ and denote $R(v_s)=\{p_1,\ldots,p_n\}$. The set \[I_{v_s}=\Big \{\, \sum kp_i p_j^*|k\in K, p_i,p_j \in R(v_s)\,\Big \}\] is an ideal of $\LL_K(E)$, and we have an isomorphism 
\begin{align*}
\phi:I_{v_s}&\longrightarrow \M_{n(v_s)}(K),\\
kp_ip_j^*&\longmapsto k(\e_{ij}),
\end{align*}
 where $k\in K$, $p_i, p_j\in R(v_s)$ and $\e_{ij}$ is the standard matrix unit.  Now, considering the grading on $\M_{n(v_s)}(K)(|p_1^{v_s}|,\ldots,|p_{n(v_s)}^{v_s}|)$, we show that $\phi$ is a graded isomorphism.  Let $p_ip_j^*\in I_{v_s}$.  Then
$$\textrm{deg}(p_ip_j^*)=|p_i|-|p_j|=\textrm{deg}(\e_{ij})=\textrm{deg}(\phi(p_ip_j^*)).$$
So $\phi$ respects the grading.  Hence $\phi$ is a graded isomorphism. One can check that 
\begin{equation*}
\LL_K(E)=\bigoplus_{s=1}^t I_{v_s}\cong_{\gr}\bigoplus_{s=1}^t \M_{n(v_s)}(K)(|p_1^{v_s}|,\ldots,|p_{n(v_s)}^{v_s}|).\qedhere
\end{equation*}
\end{proof}

\begin{example} \label{niroi}
Theorem~\ref{gra1} shows that the Leavitt path algebras of the graphs $E_1$ and $E_2$ below with coefficients from the field $K$ are graded isomorphic to 
$\M_{5} (K )\big (0,1,1,2,2)$ and thus $\LL(E_1) \cong_{\gr} \LL(E_2)$. However \[\LL(E_3)\cong_{\gr} \M_5(K)(0,1,2,2,3).\]


\[\xymatrix@=13pt{
& & \bullet \ar[d] \\
& & \bullet \ar[d] & &  & \bullet \ar[d] & \bullet \ar[d] &  & &   \bullet \ar[d] &  \bullet \ar[d]\\
E_1: \, \,  \bullet \ar[r] & \bullet \ar[r]  & \bullet  & & E_2: \, \, \bullet \ar[r] & \bullet \ar[r] &\bullet && 
E_3: & \bullet \ar[r] & \bullet \ar[r] &\bullet
}\]
\end{example}

\begin{theorem}\label{grCn1}
Let $K$ be a field and $E$ a $C_n$-comet with the cycle $C$ of length $n\geq 1$.   Let $v$ be a vertex on the cycle $C$ with $e$ be the edge in the cycle with $s(e)=v$.  Eliminate the edge $e$ and consider the set $\{p_i|1\leq i\leq m\}$ of all paths with end in $v$.  Then
\begin{equation}
\LL_K(E)\cong_{\gr}\M_m(K[x^n,x^{-n}])(|p_1|,\ldots,|p_m|).
\end{equation}
\end{theorem}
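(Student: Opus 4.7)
The plan is to build an explicit graded isomorphism along the same template as Theorem~\ref{gra1}, now with the cycle $c := e_1 e_2 \cdots e_n$ playing the role of an invertible homogeneous element of degree $n$. I define the $K$-algebra map
\[
\phi:\M_m\!\left(K[x^n,x^{-n}]\right)(|p_1|,\ldots,|p_m|)\longrightarrow \LL_K(E),\qquad \e_{ij}(x^{kn})\longmapsto p_i\, c^k\, p_j^{*},
\]
where $c^{k}$ for $k<0$ is interpreted as $(c^*)^{-k}$. Formula~\eqref{hogr} applied to the shift $(|p_1|,\ldots,|p_m|)$ yields $\deg(\e_{ij}(x^{kn}))=kn+|p_i|-|p_j|$, which matches $\deg(p_i c^k p_j^{*})=|p_i|+kn-|p_j|$, so $\phi$ respects the grading and will be graded once it is shown to be a ring map.

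To check the multiplicative property, I would first establish three identities in $\LL_K(E)$: (i) $cc^{*}=c^{*}c=v$, obtained by telescoping, since in a $C_n$-comet the cycle $C$ has no exits and consequently $e_j e_j^{*}=v_j$ for every cycle edge; (ii) $p_j^{*}p_l=\delta_{jl}\,v$, which follows from the relation $\alpha^{*}\beta=\delta_{\alpha\beta}r(\alpha)$ combined with the observation that $E\setminus\{e\}$ is acyclic with unique sink $v$, so two distinct paths to $v$ cannot have one as an initial segment of the other; and (iii) the ensuing identity $c^a c^b=c^{a+b}$ for all $a,b\in\Z$. Granting these,
\[
\phi(\e_{ij}(x^{an}))\,\phi(\e_{lr}(x^{bn}))=p_i c^a p_j^{*} p_l c^b p_r^{*}=\delta_{jl}\, p_i c^{a+b} p_r^{*}=\phi(\e_{ij}(x^{an})\,\e_{lr}(x^{bn})),
\]
as required. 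Injectivity is then immediate from (ii): if $\sum_{i,j,k}r_{ij}^{(k)}p_i c^k p_j^{*}=0$, sandwiching by $p_{i'}^{*}$ on the left and $p_{j'}$ on the right yields $\sum_k r_{i'j'}^{(k)}c^{k}=0$ in the corner $v\LL_K(E)v$; since the cycle has no exits and no path from $v$ leaves $C$, this corner is $K$-spanned by $\{c^k: k\in\Z\}$ with $cc^*=c^*c=v$, hence isomorphic to $K[x,x^{-1}]$, and linear independence of the $c^k$ (already forced by distinct degrees) kills all coefficients.

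The main obstacle is surjectivity, which rests on two structural lemmas. The first is a path decomposition: every path $\pi$ in $E$ ending at $v$ admits a unique factorization $\pi=p_i\, c^{k}$ with $1\leq i\leq m$ and $k\geq 0$; this is proved by induction on $|\pi|$, using that $v$ emits only the edge $e$, so each intermediate visit of $\pi$ to $v$ forces a full traversal of $C$. The second is a resolution of the identity at each vertex: for every $w\in E^0$,
\[
w=\sum_{q\in L(w,v)} q\,q^{*},
\]
where $L(w,v)$ is the finite set of paths from $w$ to $v$ in $E\setminus\{e\}$; this is obtained by iteratively expanding relation~(4) of Definition~\ref{llkas} and halting at $v$, the iteration terminating because $E\setminus\{e\}$ is acyclic with $v$ as its unique sink. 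Given an arbitrary monomial $\alpha\beta^{*}$ with $r(\alpha)=r(\beta)=w$, inserting the resolution yields $\alpha\beta^{*}=\sum_q(\alpha q)(\beta q)^{*}$; then factoring $\alpha q=p_i c^{a}$, $\beta q=p_j c^{b}$ via the decomposition lemma and simplifying using $cc^{*}=v$ gives $(\alpha q)(\beta q)^{*}=p_i c^{a-b} p_j^{*}\in\mathrm{im}(\phi)$, proving $\alpha\beta^*\in\mathrm{im}(\phi)$ and completing the argument.
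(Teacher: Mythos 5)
Your proof is correct, and it realizes the same isomorphism as the paper --- the correspondence $p_i c^k p_j^* \leftrightarrow \e_{ij}(x^{kn})$ with the same degree count $kn+|p_i|-|p_j|$ --- but runs it in the opposite direction, and in doing so actually proves what the paper's sketch only asserts. The paper defines its map out of $\LL_K(E)$ on the set $\{p_iC^kp_j^*\}$ after declaring, without proof, that these monomials form a $K$-basis; that basis claim is precisely the hard content, and it is also what makes the paper's map well defined in the first place. You instead define the map out of $\M_m(K[x^n,x^{-n}])(|p_1|,\ldots,|p_m|)$ on its evident homogeneous basis, so well-definedness is automatic, and you then recover the two halves of the unproved basis assertion as surjectivity (via the factorization $\pi=p_i c^k$ and the resolution of the identity $w=\sum_{q\in L(w,v)}qq^*$, both correctly reduced to the comet structure: $v$ emits only $e$, the cycle has no exits, and $E\setminus\{e\}$ is finite acyclic with unique sink $v$) and injectivity (via sandwiching by $p_{i'}^*$ and $p_{j'}$ together with the grading). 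What each route buys: the paper's is short only because the spanning and independence of the monomials are delegated to ``one can show,'' while yours is longer but self-contained, and your two structural lemmas are exactly the missing content. One tacit external input should be flagged: ``linear independence forced by distinct degrees'' requires each homogeneous term $c^k$ to be nonzero, and since $c^k(c^*)^k=v$ this reduces to the standard fact that vertices are nonzero in a Leavitt path algebra --- a fact the paper's basis assertion silently uses as well, so this is not a gap relative to the paper's own level of rigor, but it deserves an explicit citation in a complete write-up.
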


\begin{proof}[Sketch of proof]
One can show that the set of monomials $\{p_iC^kp_j^*|1\leq i,j\leq n, k\in\mathbb{Z}\}$ are a basis of $\LL_K(E)$ as a $K$-vector space.  Define a map
$$\phi:\LL_K(E)\rightarrow \M_m(K[x^n,x^{-n}])(|p_1|,\ldots,|p_m|),\ {\text by } \quad  \phi(p_iC^kp_j^*)=\e_{ij}(x^{kn}),$$
where $\e_{ij}(x^{kn})$ is a matrix with $x^{kn}$ in the $ij$-position and zero elsewhere.  Extend this linearly to $\LL_K(E)$.  We have
\begin{equation*}
\begin{split}
\phi((p_iC^kp_j^*)(p_rC^tp_s^*)) &= \phi(\delta_{jr}p_iC^{k+t}p_s^*)\\
&= \delta_{jr}\e_{is}(x^{(k+t)n})\\
&= (\e_{ij}x^{kn})(\e_{rs}x^{tn})\\
&=\phi(p_iC^kp_j^*)\phi(p_rC^tp_s^*).
\end{split}
\end{equation*}
Thus $\phi$ is a homomorphism.  Also, $\phi$ sends the basis to the basis, so $\phi$ is an isomorphism.\\
We now need to show that $\phi$ is graded.  We have
$$\textrm{deg}(p_iC^kp_j^*)=|p_iC^kp_j^*|=nk+|p_i|-|p_j|$$
and
$$\textrm{deg}(\phi(p_iC^kp_j^*))=\textrm{deg}(\e_{ij}(x^{kn}))=nk+|p_i|-|p_j|.$$
Therefore $\phi$ respects the grading. This finishes the proof.
\end{proof}

\begin{example}\label{noncori}
Consider the Leavitt path algebra $\LL_K(E)$, with coefficients in a field $K$, associated to the following graph. 
\begin{equation*}
\xymatrix{
& \bullet \ar[dr] &\\
E:  &  & \bullet \ar@/^1.5pc/[r] & \bullet \ar@/^1.5pc/[l] &\\
& \bullet \ar[ur] &&\\}
\end{equation*}
By Theorem~\ref{sthfin},  $\LL_K(E)$ is strongly graded. Now by Theorem~\ref{grCn1}, 
\begin{equation}\label{wokj}
\LL_K(E) \cong_{\gr}\M_4(K[x^2,x^{-2}])(0,1,1,1).
\end{equation} However this algebra is not  a crossed product. Set $B=K[x,x^{-1}]$ with the grading $B=\textstyle{\bigoplus_{n\in \mathbb Z}} Kx^n$ and consider $A=K[x^2,x^{-2}]$ as a graded subring of $B$ with 
$A_n=Kx^n$ if $n\equiv 0 \pmod{2}$, and $A_n=0$ otherwise.  
Using the graded isomorphism of~(\ref{wokj}), by (\ref{mmkkhh})  a homogeneous element of degree $1$ in $\LL_K(E)$ has the form 
\begin{equation*}
\begin{pmatrix}
A_{1} & A_{2} & A_2 & A_2\\
A_{0} & A_{1} & A_1 & A_1\\
A_{0} & A_{1} & A_1 & A_1\\
A_{0} & A_{1} & A_1 & A_1
\end{pmatrix}.
\end{equation*}
Since $A_1=0$, the determinants of these matrices are zero, and thus no homogeneous   element of degree $1$ is  invertible. Thus $\LL_K(E)$ is not crossed product (see~\S\ref{scrosshg}). 

Now consider the following graph,

\begin{equation*}
\xymatrix{
E:  & \bullet \ar[r]^f & \bullet \ar@/^1.5pc/[r]^g & \bullet \ar@/^1.5pc/[l]^h & \bullet \ar[l]_e& \\
}
\end{equation*}
By Theorem~\ref{grCn1}, 
\begin{equation}\label{wokj5}
\LL_K(E) \cong_{\gr}\M_4(K[x^2,x^{-2}])(0,1,1,2).
\end{equation}
Using the graded isomorphism of~(\ref{wokj5}), by (\ref{mmkkhh4})   homogeneous elements of degree $0$ in $\LL_K(E)$ have the form 
\begin{equation*}\LL_K(E)_0=
\begin{pmatrix}
A_{0\phantom{-}} & A_{1\phantom{-}} & A_{1\phantom{-}} & A_{2\phantom{-}}\\
A_{-1} & A_{0\phantom{-}} & A_{0\phantom{-}} & A_{1\phantom{-}}\\
A_{-1} & A_{0\phantom{-}} & A_{0\phantom{-}} & A_{1\phantom{-}}\\
A_{-2} & A_{-1} & A_{-1} & A_{0\phantom{-}}
\end{pmatrix}=
\begin{pmatrix}
K\phantom{x^{-2}} & 0 & 0 & Kx^2\\
0\phantom{x^{-2}} & K & K & 0\phantom{x^{2}}\\
0\phantom{x^{-2}} & K & K & 0\phantom{x^{2}}\\
Kx^{-2} & 0 & 0 & K\phantom{x^{2}}
\end{pmatrix}.
\end{equation*}
In the same manner, homogeneous elements of degree one have the form,
\begin{equation*}\LL_K(E)_1=
\begin{pmatrix}
0 & Kx^2 & Kx^2 & 0\phantom{x^{2}}\\
K & 0\phantom{x^{2}} & 0\phantom{x^{2}} & Kx^2\\
K & 0\phantom{x^{2}} & 0\phantom{x^{2}} & Kx^2\\
0 & K\phantom{x^{2}} & K\phantom{x^{2}} & 0\phantom{x^{2}}
 \end{pmatrix}.
\end{equation*}
Choose 
\begin{equation*}u=
\begin{pmatrix}
0 & 0 & x^2 & 0\phantom{^{2}}\\
0 & 0 & 0\phantom{^{2}} & x^2\\
1 & 0 & 0\phantom{^{2}} & x^2\\
0 & 1 & 0\phantom{^{2}} & 0\phantom{^{2}}
 \end{pmatrix} \in \LL(E)_1
\end{equation*}
and observe that $u$ is invertible; this matrix corresponds to the element \[g+h+fge^*+ehf^* \in \LL_K(E)_1.\]

Thus $\LL_K(E)$ is crossed product and therefore a skew group ring as the grading is cyclic, (see~\S\ref{khgfewa1}), \ie \[\LL_K(E)\cong_{\gr} \bigoplus_{i\in \mathbb Z}\LL_K(E)_0 u^i\] and a simple calculation shows that one can describe this algebra as follows:
\[\LL_K(E)_0 \cong \M_2(K)\times \M_2(K),\] and 
\begin{equation}\label{coiu}
\LL_K(E) \cong_{\gr} \big ( \M_2(K)\times \M_2(K) \big ) \star_\tau \mathbb Z,
\end{equation}
where, 
 \begin{equation*}
\tau\Bigr(\left( 
\begin{matrix} 
a_{11} & a_{12}\\ 
a_{21} & a_{22}  
\end{matrix} 
\right) ,
\left( 
\begin{matrix} 
b_{11} & b_{12}\\ 
b_{21} & b_{22}  
\end{matrix} 
\right)\Bigl)=\Bigr(
\left( 
\begin{matrix} 
b_{22} & b_{21}\\ 
b_{12} & b_{11}  
\end{matrix} 
\right) , 
\left( 
\begin{matrix} 
a_{22} & a_{21}\\ 
a_{12} & a_{11}  
\end{matrix} 
\right) \Bigl).
\end{equation*}
\end{example}

\begin{remark}\label{remnhgfeyq}\scm[Noncanonical gradings on Leavitt path algebras]
\vspace{0.2cm}

For a graph $E$, the Leavitt path algebra $\LL_K(E)$ has  a canonical $\mathbb Z$-graded structure. This grading was obtained by assigning $0$ to vertices, $1$ to edges and $-1$ to ghost edges. However, one can equip $\LL_K(E)$ with other graded structures as well. Let $\Gamma$ be an arbitrary group with the identity element $e$. Let $w:E^1\rightarrow \Gamma$ be a \emph{weight} \index{weight of an edge} map and further define $w(\alpha^*)=w(\alpha)^{-1}$, for any edge $\alpha \in E^1$ and $w(v)=e$ for $v\in E^0$.  The free $K$-algebra generated by the vertices, edges and ghost edges is a $\Gamma$\!-graded $K$-algebra (see \S\ref{freeme}). Moreover, the Leavitt path algebra is the quotient of this algebra by relations in  Definition~\ref{llkas} which are all homogeneous. Thus $\LL_K(E)$ is a $\Gamma$\!-graded $K$-algebra. One can write the Theorems~\ref{gra1} and~\ref{grCn1} with this general grading. 

As an example, consider the graphs 
 \begin{equation*}
\xymatrix{
E: & \bullet \ar[r]^f &  \bullet \ar@(rd,ru)_{e} & & F:  &  \bullet \ar@/^1.5pc/[r]^g & \bullet \ar@/^1.5pc/[l]^h & \\
}
\end{equation*}
and assign $1$ for the degree of  $f$ and $2$ for the degree of $e$ in $E$ and $1$ for the degrees of $g$ and $h$ in $F$. Then the proof of Theorem~\ref{grCn1} shows that
\[\LL_K(E)\cong \M_2(K[x^2,x^{-2}])(0,1)\] and \[\LL_K(F)\cong \M_2(K[x^2,x^{-2}])(0,1)\] as $\Z$-graded rings. So with these gradings, $\LL_K(E)\cong_{\gr} \LL_K(F)$. 

\end{remark}

\begin{example}\label{nhatrang}\scm[Leavitt path algebras are strongly $\Z_2$-graded] \index{strongly graded Leavitt path algebra}
\vspace{0.2cm}

Let $E$ be a (connected) row-finite graph with at least one edge. By Remark~\ref{remnhgfeyq}, $A=\LL_K(E)$ has a $\Z_2$-grading induced by assigning 
$0$ to vertices and $1\in \Z_2$ to edges and ghost edges. Since the defining relations of Leavitt path algebras guarantee that for any $v\in E^0$,  $v\in A_1 A_1$, one can easily check that $\LL_K(E)$ is strongly $\Z_2$-graded for any graph (compare this with Theorem~\ref{sthfin}). In contrast to the canonical grading, in this case the 0-component ring is not necessarily an ultramatricial ring (see~\S\ref{sarahbright}). 

\end{example}

\section{The graded IBN and graded type}\label{gtr5654} 

A ring $A$ with identity has  \emph{invariant basis number} (IBN) or   \emph{invariant basis property} \index{invariant basis number} \index{invariant basis property}\index{IBN} if any two bases of a free (right) $A$-module have the same cardinality, \ie if $A^n \cong A^{m}$ as $A$-modules, then $n=m$. When $A$ does not have IBN, the \emph{type} of $A$ \index{type of a ring} is defined as a pair of positive integers $(n,k)$ such that $A^n \cong A^{n+k}$ as $A$-modules and these are the smallest number with this property, that is, $(n,k)$ is the minimum under the usual lexicographic order. This means any two bases of a free $A$-module have the unique cardinality if one of the basis has the cardinality less than $n$ and further if a free module has rank $n$, then a free module with the smallest cardinality (other than $n$) isomorphic to this module is of rank $n+k$. Another way to describe a type $(n,k)$ is that, $A^n \cong A^{n+k}$ is the first repetition in the list $A,A^2,A^3,\dots$. 

It was shown that if $A$ has type $(n,k)$, then $A^m\cong A^{m'}$ if and only if $m=m'$ or $m,m' \geq n$ and $m \equiv m' \pmod{k}$ (see \cite[p. 225]{cohn11}, \cite[Theorem~1]{vitt62}).

One can show that a (right) noetherian ring has IBN. Moreover, if there is a ring homomorphism $A\rightarrow B$, (which preserves 1), and $B$ has IBN then $A$ has IBN as well. Indeed, if $A^m\cong A^n$ then 
\begin{equation}\label{gftgfrt}
B^m\cong A^m\otimes_A B \cong A^n\otimes_A B \cong B^n,
\end{equation}
 so $n=m$. One can describe the type of a ring by using the monoid of isomorphism classes of finitely generated projective modules (see Example~\ref{ppurdeu}). For a nice discussion about these rings see~\cite{berrickkeating,cohn11,magurn}.   \index{Noetherian ring}

A graded ring $A$ has a \emph{graded invariant basis number} (gr-IBN) \index{graded invariant basis number} \index{gr-IBN} if any two homogeneous bases of a graded free (right) $A$-module have the same cardinality, \ie if $ A^m(\overline \alpha)\cong_{\gr}  A^n(\overline \delta) $, where  $\overline \alpha=(\alpha_1,\dots,\alpha_m)$ and $\overline \delta=(\delta_1,\dots,\delta_n)$, then $m=n$. Note that in contrast to the ungraded case, this does not imply that two graded free modules with bases of the same cardinality are graded isomorphic (see Proposition~\ref{rndconggrrna}). A graded ring $A$ has \emph{IBN in} \index{IBN in category} $\mbox{gr-}A$, if $A^m \cong_{\gr} A^n$ then $m=n$. If $A$ has IBN in $\grr A$, then $A_0$ has IBN. Indeed, if $A_0^m \cong A_0^n$ as $A_0$-modules, then similair to (\ref{gftgfrt}) \[A^m\cong_{\gr} A_0^m \otimes_{A_0} A \cong A_0^n \otimes_{A_0} A \cong_{\gr}A^n,\] so $n=m$ (see \cite[p. 215]{grrings}).

When the graded ring $A$ does not have gr-IBN, the \emph{graded type} \index{graded type of a ring} of $A$ is defined as a pair of positive integers $(n,k)$ such that $A^n(\overline \delta) \cong_{\gr} A^{n+k}(\overline \alpha)$ as $A$-modules, for some 
$\overline \delta=(\delta_1,\dots,\delta_n)$ and $\overline \alpha=(\alpha_1,\dots,\alpha_{n+k})$ and these are the smallest number with this property. 
In Proposition~\ref{grtypel1} we show that the Leavitt algebra $\LL(n,k+1)$ (see Example~\ref{levisuji2}) has graded type $(n,k)$. 

Parallel to the ungraded setting, one can show that a graded (right) noetherian ring has gr-IBN. Moreover, if there is a graded ring homomorphism $A\rightarrow B$, (which preserves 1), and $B$ has gr-IBN then $A$ has gr-IBN as well. Indeed, if 
$A^m(\overline \alpha) \cong_{\gr} A^n(\overline \delta)$, where $\overline \alpha=(\alpha_1,\dots,\alpha_m)$ and $\overline \delta=(\delta_1,\dots,\delta_n)$, then \[B^m(\overline \alpha) \cong_{\gr} A^m(\overline \alpha)   \otimes_A B \cong A^n(\overline \delta) \otimes_A B \cong_{\gr}  B^n(\overline \delta),\] which implies $n=m$. Using this, one can show that any graded commutative ring has gr-IBN. For, there exists a graded maximal ideal and its quotient ring is a graded field which has gr-IBN (see \S\ref{jghjrye} and Proposition~\ref{kj351}).

Let $A$ be a $\Gamma$\!-graded ring such that $A^m(\overline \alpha) \cong_{\gr} A^n(\overline \delta)$, where $\overline \alpha=(\alpha_1,\dots,\alpha_m)$ and $\overline \delta=(\delta_1,\dots,\delta_n)$. Then there is a universal $\Gamma$\!-graded ring $R$ such that \[ R^m(\overline \alpha)  \cong_{\gr} R^n(\overline \delta)\] and a graded ring homomorphism $R\rightarrow A$ which induces the graded isomorphism 
\[A^m(\overline \alpha) \cong_{\gr} R^m(\overline \alpha) \otimes_R A \cong_{\gr} R^n(\overline \de) \otimes_R A \cong_{\gr} A^n(\overline \de).\] Indeed, by Proposition~\ref{rndconggrrna}, there are matrices 
$a=(a_{ij}) \in \M_{n\times m} (A)[\overline \delta][\overline \alpha]$ and 
$b=(b_{ij}) \in \M_{m\times n} (A)[\overline \alpha][\overline \delta]$ such that $ab=\IM_{n}$ and $ba=\IM_{m}$.
The free ring generated by symbols in place of $a_{ij}$ and $b_{ij}$ subject to relations imposed by $ab=\IM_{n}$ and $ba=\IM_{m}$ is the desired universal graded ring. In detail, let $F$ be a free ring generated by $x_{ij}$, $1\leq i \leq n$, $1\leq j \leq m$  and $y_{ij}$, $1\leq i \leq m$, $1\leq j \leq n$. Assign the degrees $\deg(x_{ij})=\delta_i-\alpha_j$ and $\deg(y_{ij})=\alpha_i-\delta_j$ (see ~\S\ref{freeme}). This makes $F$ a $\Gamma$\!-graded ring. 
Let $R$ be a ring $F$ modulo the relations $\sum_{s=1}^m x_{is}y_{sk}=\delta_{ik}$, $1\leq i,k \leq n$ and $\sum_{t=1}^n y_{it}x_{tk}=\delta_{ik}$, $1\leq i,k \leq m$, where $\delta_{ik}$ is the Kronecker delta. 
Since all the relations are homogeneous, $R$ is a $\Gamma$\!-graded ring. Clearly the map sending $x_{ij}$ to $a_{ij}$ and $y_{ij}$ to $b_{ij}$ induces a graded ring homomorphism $R \rightarrow A$. Again Proposition~\ref{rndconggrrna} shows that $R^m(\overline \alpha) \cong_{\gr} R^n(\overline \delta)  $.

\begin{proposition}\label{grtypel1} 
Let $R=\LL(n,k+1)$ be the  Leavitt algebra of type $(n,k)$. Then 
\begin{enumerate}[\upshape(1)]

\item $R$ is a universal $\bigoplus_n \mathbb Z$-graded ring  which does not have gr-IBN;

\item $R$ has  graded type $(n,k)$;

\item For $n=1$, $R$ has IBN in  $\grr R$. 

\end{enumerate}
\end{proposition}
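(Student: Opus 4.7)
The plan for (1) is to read the Leavitt presentation directly as an instance of Proposition~\ref{rndconggrrna}. Take $\Gamma = \bigoplus_n \Z$ with standard generators $\epsilon_1,\dots,\epsilon_n$, and set $\overline{\alpha}=(0,\dots,0)\in\Gamma^{n+k}$ and $\overline{\delta}=(\epsilon_1,\dots,\epsilon_n)\in\Gamma^n$. The degree assignments $\deg(y_{ji})=\epsilon_j$ and $\deg(x_{ij})=-\epsilon_j$ from Example~\ref{levisuji2} put $Y\in \M_{n\times (n+k)}(R)[\overline{\delta}][\overline{\alpha}]$ and $X\in \M_{(n+k)\times n}(R)[\overline{\alpha}][\overline{\delta}]$, and the defining relations $YX=I_n$ and $XY=I_{n+k}$ are exactly the hypothesis of Proposition~\ref{rndconggrrna}(3). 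This yields the graded isomorphism $R^{n+k}\conggr R^n(\overline{\delta})$, so $R$ fails gr-IBN. For universality, I will invoke the construction from the paragraph preceding the proposition: the universal $\Gamma$-graded ring admitting such an isomorphism is the free graded ring on symbols $x_{ij}, y_{ji}$, with those same degrees, modulo the homogeneous relations $\sum_s x_{is}y_{sk}=\delta_{ik}$ and $\sum_t y_{it}x_{tk}=\delta_{ik}$. This presentation is verbatim that of $\LL(n,k+1)$, giving the claim.

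Part (2) will follow by matching an upper and a lower bound on the graded type in lexicographic order. The explicit isomorphism from (1) already gives graded type $\leq (n,k)$. For the reverse inequality, I will use the forgetful functor $\Gr R \to \Modd R$: any graded isomorphism $R^p(\overline{\mu})\conggr R^{p+\ell}(\overline{\nu})$ descends to an ungraded isomorphism $R^p\cong R^{p+\ell}$, so the graded type is bounded below by the ungraded type. Since the underlying ungraded $\LL(n,k+1)$ has type $(n,k)$ by Leavitt's theorem (mentioned in Example~\ref{levisuji2}), one gets $(p,\ell)\geq (n,k)$, and equality follows.

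For (3) I plan to reduce IBN in $\grr R$ to ordinary IBN of the zero component. A graded isomorphism $R^m\conggr R^{m'}$ restricts on degree-$0$ parts to an $R_0$-module isomorphism $R_0^m\cong R_0^{m'}$, so it suffices to show $R_0=\LL(1,k+1)_0$ has IBN. A short bookkeeping with the monomial basis $\{\mu\gamma^*:|\mu|=|\gamma|=j\}$, together with the relation $v=\sum_i e_ie_i^*$, identifies $R_0$ with the UHF algebra $\varinjlim\M_{(k+1)^j}(K)$ whose unital connecting maps multiply $K_0=\Z$ by $k+1$; hence $K_0(R_0)\cong \Z[1/(k+1)]$ with $[R_0]\mapsto 1$, and the composite $\Z\hookrightarrow K_0(R_0)$ is injective, proving IBN. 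The main obstacle in the whole argument is this last identification: once $R_0$ is recognised as an ultramatricial algebra with the correct direct-limit structure, the $K_0$ computation is clean, but parts (1) and (2) are essentially formal consequences of the universal construction already in the text and of the classical ungraded theorem, whereas (3) genuinely requires understanding the internal structure of $R_0$.
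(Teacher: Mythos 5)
Your argument is correct, and for parts (1) and (2) it is essentially the paper's own proof: the paper likewise reads the relations $YX=I_n$, $XY=I_{n+k}$ of Example~\ref{levisuji2} through Proposition~\ref{rndconggrrna}(3) to get a graded isomorphism between $R^{n+k}$ and a shifted $R^n$ (your shift bookkeeping, placing $Y\in \M_{n\times(n+k)}(R)[\overline{\delta}][\overline{\alpha}]$ with $\overline{\alpha}=0$, is in fact cleaner than the paper's statement of the shift), it gets universality by matching the presentation of $\LL(n,k+1)$ with the universal construction preceding the proposition, and it deduces (2) from Leavitt--Cohn's ungraded type theorem exactly as you do, the upper bound coming from (1) and the lower bound from forgetting the grading. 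The only genuine divergence is in (3). Both you and the paper reduce via the degree-zero part: a graded isomorphism $R^m\conggr R^{m'}$ restricts to $R_0^m\cong R_0^{m'}$, and $R_0$ is ultramatricial. But where the paper then simply cites Cohn's theorem that IBN passes to direct limits (matricial algebras have IBN, hence so does $R_0$), you instead identify $R_0\cong \varinjlim \M_{(k+1)^j}(K)$ with connecting maps multiplying $K_0=\Z$ by $k+1$, compute $K_0(R_0)\cong \Z[1/(k+1)]$ with $[R_0]\mapsto 1$, and conclude IBN from injectivity of $\Z\to \Z[1/(k+1)]$ (the criterion of \S\ref{pearlf}). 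Your route is heavier -- it needs the explicit UHF structure of $R_0$, which the paper's citation avoids -- but it buys more: it recovers the actual $K_0$-data of $R_0$ (consistent with Example~\ref{ffexp}) rather than just the IBN property, and it works verbatim whenever one has a concrete direct-limit description. Both arguments are complete; yours trades a soft closure property (IBN respects direct limits) for a hard computation.
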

\begin{proof}
(1) Consider the algebra $\LL(n,k+1)$ constructed in Example~\ref{levisuji2}, which is a $\bigoplus_n \mathbb Z$-graded ring and is universal. Moreover (\ref{breaktr}) combined by Proposition~\ref{rndconggrrna}(3) shows that $R^n \cong_{\gr} R^{n+k}(\overline \alpha)$. Here $\overline \alpha =(\alpha_1,\dots,\alpha_{n+k})$, where $\alpha_i=(0,\dots,0,1,0\dots,0)$ and $1$ is in $i$-th entry. This shows that $R=\LL(n,k+1)$ does not have gr-IBN. 

(2) By \cite[Theorem~6.1]{cohn11}, $R$ is of  type $(n,k)$. This immediately implies the graded type of 
$R$ is also $(n,k)$. 

(3) Suppose $R^n \cong_{\gr} R^m$ as graded $R$-modules. Then $R_0^n \cong R_0^m$ as $R_0$-modules. But $R_0$ is an ultramatricial algebra, \ie direct limit of matrices over a field. Since IBN respects direct limits (\cite[Theorem~2.3]{cohn11}), $R_0$ has IBN. Therefore, $n=m$.
\end{proof}

\begin{remark}
Assignment of $\deg(y_{ij})=1$ and $\deg(x_{ij})=-1$, for all $i,j$, makes $R=\LL(n,k+1)$ a $\mathbb Z$-graded algebra of graded type $(n,k)$ with $R^n\cong_{\gr} R^{n+k}(1)$. 
\end{remark}

\begin{remark}
Let $A$ be a $\Gamma$\!-graded ring. In~\cite[Proposition~4.4]{ibno}, it was shown that if $\Gamma$ is finite then $A$ has  gr-IBN if and only if $A$ has IBN. 
\end{remark}

\section{The graded stable rank}

The notion of stable rank was defined by H. Bass~\cite{bass64} to study $K_1$-group of rings that are finitely generated over commutative rings with finite Krull dimension. For a concise introduction to the stable rank, we refer the reader to~\cite{lamsr,lamfc}, and for its applications to $K$-theory to~\cite{bass,bass64}. It seems that the natural notion of graded stable rank in the context of graded ring theory has not yet been investigated in the literature. In this section we propose a definition for the graded stable rank and study the important case of graded rings with graded stable rank $1$. This will be used later in \S\ref{ggg} in relation with the graded Grothendieck groups.

A row $(a_1,\dots,a_n)$ of homogeneous elements of a $\Gamma$\!-graded ring $A$ is called a \emph{graded left unimodular row} \index{graded unimodular row} if the graded left ideal generated by $a_i$, $1\leq i \leq n$, is $A$.

\begin{lemma}\label{runorun}
Let $(a_1,\dots,a_n)$ be a row of  homogeneous elements of a $\Gamma$\!-graded ring $A$. The following are equivalent.

\begin{enumerate}[\upshape(1)]

\item $(a_1,\dots,a_n)$  is a  left unimodular row;

\item $(a_1,\dots,a_n)$  is a graded left unimodular row;

\item The graded homomorphism 
\begin{align*}
\phi_{(a_1,\dots,a_n)} : A^n(\overline \alpha)  & \longrightarrow A, \\
(x_1,\dots,x_n) & \longmapsto \sum_{i=1}^n x_ia_i, 
\end{align*}
where $\overline \alpha=(\alpha_1,\dots,\alpha_n)$ and  $\alpha_i=-\deg(a_i)$, 
is surjective.

\end{enumerate}
\end{lemma}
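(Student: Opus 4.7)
The plan is to establish the equivalences in a short cycle, with (1) $\Leftrightarrow$ (2) holding almost tautologically because the $a_i$ are homogeneous, (3) $\Rightarrow$ (1) being trivial, and the only content lying in (2) $\Rightarrow$ (3), which is a standard ``extract the homogeneous component of degree zero'' argument.

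First I would note that the left ideal $I = Aa_1 + \cdots + Aa_n$ is automatically a graded left ideal: any element has the form $\sum c_i a_i$ with $c_i \in A$, and writing each $c_i = \sum_{\gamma} c_{i,\gamma}$ as a sum of its homogeneous components gives $\sum_i c_i a_i = \sum_{i,\gamma} c_{i,\gamma} a_i$, where every summand $c_{i,\gamma} a_i$ is homogeneous of degree $\gamma + \deg(a_i)$ and still lies in $I$. Hence the ordinary left ideal generated by the $a_i$ coincides with the graded left ideal they generate (cf.~\S\ref{jghjrye}), so (1) $\Leftrightarrow$ (2) is immediate. For (3) $\Rightarrow$ (1), the surjectivity of $\phi_{(a_1,\dots,a_n)}$ forces $1 \in \im(\phi_{(a_1,\dots,a_n)}) \subseteq Aa_1 + \cdots + Aa_n$, which is exactly (1).

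The substantive step is (2) $\Rightarrow$ (3). From (2) we can write $1 = \sum_{i=1}^n y_i a_i$ for some $y_i \in A$. Decomposing $y_i = \sum_{\gamma \in \Gamma} y_{i,\gamma}$ into homogeneous components, this identity becomes $1 = \sum_{i,\gamma} y_{i,\gamma}\, a_i$, where each summand is homogeneous of degree $\gamma + \deg(a_i)$. Comparing degree-zero parts on both sides yields
\[
1 \;=\; \sum_{i=1}^{n} x_i a_i, \qquad x_i := y_{i,-\deg(a_i)} \,\in\, A_{-\deg(a_i)} = A_{\alpha_i}.
\]
Since $A^n(\overline\alpha)_0 = \bigoplus_i A_{\alpha_i}$, the tuple $(x_1,\dots,x_n)$ is a homogeneous element of degree $0$ in $A^n(\overline\alpha)$ with $\phi_{(a_1,\dots,a_n)}(x_1,\dots,x_n) = 1$. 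Because $\phi_{(a_1,\dots,a_n)}$ is left $A$-linear, $\im(\phi_{(a_1,\dots,a_n)})$ is a left $A$-submodule of $A$ containing $1$, so it equals $A$, proving (3).

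There is no real obstacle here: the entire lemma is a bookkeeping exercise reflecting that the shift $\overline\alpha = (-\deg(a_1),\dots,-\deg(a_n))$ on the source of $\phi$ is precisely tailored to make $\phi$ a graded map of degree zero, so that ``being unimodular'' and ``being unimodular with the witnessing coefficients chosen homogeneously of the correct degrees'' coincide. The only mechanism used is the standard fact that a homogeneous equation in a graded ring can be solved by extracting a single graded component of an arbitrary solution.
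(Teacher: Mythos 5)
Your proof is correct, and it is exactly the ``straightforward'' argument the paper leaves to the reader (its proof of Lemma~\ref{runorun} is omitted): the equivalence (1)~$\Leftrightarrow$~(2) because a left ideal generated by homogeneous elements is graded, and (2)~$\Rightarrow$~(3) by extracting the degree-zero component of $1=\sum_i y_i a_i$, using $1\in A_0$ and $A^n(\overline\alpha)_0=\bigoplus_i A_{-\deg(a_i)}$. Nothing is missing; in particular you correctly verify that the shift $\overline\alpha$ makes $\phi_{(a_1,\dots,a_n)}$ a degree-zero graded left $A$-module map, which is the only point requiring care.
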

\begin{proof}
The proof is straightforward. 
\end{proof}

When $n\geq 2$, a graded left unimodular row $(a_1,\dots,a_n)$ is called \emph{stable} \index{stable unimodular row} if there exists homogeneous elements $b_1,\dots,b_{n-1}$ of $A$ such that the graded left ideal generated by homogeneous elements $a_i+b_ia_n$, $1\leq i \leq n-1$, is $A$. 

The \emph{graded left stable rank} \index{graded stable rank} of a ring $A$ is defined to be $n$, denoted $\sr^{\gr}(A) = n$, if any graded unimodular row of length $n + 1$ is stable, but there exists a unstable unimodular row of length $n$. If such $n$ does not exist (\ie there are unstable unimodular rows of arbitrary length) we say that the graded stable rank of $A$ is infinite.

In order that this definition would be well-defined, one needs to show that if any graded unimodular row of fixed length $n$ is stable, so is any unimodular row of a greater length. This can be proved similar to the ungraded case and we omit the proof (see for example~\cite[Proposition~1.3]{lamsr}). 

When the grade group $\Gamma$ is a trivial group, the above definitions reduce to the standard definitions of unimodular rows and stable ranks.

The case of graded stable rank $1$ is of special importance. Suppose $A$ is a $\Gamma$\!-graded with $\sr^{\gr}(A)=1$. Then from the definition it follows that, if $a,b \in A^h$ such that $Aa+Ab=A$, then there is a homogeneous element $c$ such that the homogeneous element $a+cb$ is left invertible. When $\sr^{\gr}(A)=1$, any left invertible homogeneous element is in fact invertible. For, suppose $c\in A^h$ is left invertible, \ie there is $a\in A^h$ such that $ac=1$. Then the row $(a, 1-ca)$ is graded left unimodular. Thus, there is a $s\in A^h$ such that $u:=a+s(1-ca)$ is left invertible. But $uc=1$. Thus $u$ is (left and right) invertible and consequently, $c$ is an invertible homogeneous element. 

The graded stable rank $1$ is quite a strong condition. In fact if $\sr^{\gr}(A)=1$ then $\Gamma_A=\Gamma_A^*$. For, if $a\in A_\gamma$ is a nonzero element, then since $(a,1)$ is unimodular and $\sr^{\gr}(A)=1$, there is $c\in A^h$, such that $a+c$ is an invertible homogeneous element, necessarily of  degree $\gamma$. Thus $\gamma \in \Gamma_A^*$. 

\begin{example}\scm[Graded division rings have graded stable rank $1$]
\vspace{0.2cm}

Since any nonzero homogeneous element of a graded division ring is invertible, one shows easily that its graded stable rank is $1$. Thus for a field $K$, 
$\sr^{\gr}(K[x,x^{-1}])=1$, whereas $\sr(K[x,x^{-1}])=2$. 
\end{example}

\begin{example}\scm[For a strongly graded ring $A$,  $\sr^{\gr}(A) \not = \sr(A_0)$]
\vspace{0.2cm}

Let $A=\LL(1,2)$ be the Leavitt algebra generated by $x_1,x_2,y_1,y_2$ (see Example~\ref{levisuji}). Then relations~(\ref{jh54320}) show that $y_1$ is left invertible but it is not invertible. This shows that $\sr^{\gr}(A)\not =1$. On the other hand, since $A_0$ is an ultramatricial algebra, $\sr(A_0)=1$ 
(see~\S\ref{sarahbright},~\cite[Corollary~5.5]{lamsr} and~\cite{goodearlbook}). 
\end{example}

We have the following theorem which is a graded version of the Cancellation Theorem with a similar proof (see~\cite[Theorem~20.11]{lamfc}).
\begin{theorem}[{\sc Graded Cancellation Theorem}]\label{jussiej4}
Let $A$ be a $\Gamma$\!-graded ring  and let $M,N,P$ be graded right $A$-modules, with $P$ being finitely generated. If the graded ring $\End_A(P)$ has graded left stable rank $1$, then
$P\oplus M \cong_{\gr} P\oplus N$ as $A$-modules implies $M \cong_{\gr} N$ as $A$-modules. 
\end{theorem}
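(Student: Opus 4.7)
The plan is to imitate Bass's classical cancellation argument while tracking grades carefully at every step. Given a graded isomorphism $\phi: P\oplus M \to P\oplus N$, I would write it and its inverse as $2\times 2$ matrices of graded $A$-module homomorphisms of degree $0$:
\[
\phi=\begin{pmatrix}\alpha & \beta\\ \gamma & \delta\end{pmatrix},\qquad \phi^{-1}=\begin{pmatrix}\alpha' & \beta'\\ \gamma' & \delta'\end{pmatrix},
\]
where, say, $\alpha,\alpha'\in \End_A(P)_0$, $\beta'\in\Hom_A(N,P)_0$, $\gamma\in\Hom_A(P,N)_0$, etc. Reading off the upper-left block of $\phi^{-1}\phi=1_{P\oplus M}$ gives the key identity $\alpha'\alpha+\beta'\gamma=1_P$ inside $\End_A(P)$. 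Since $\beta'\gamma\in\End_A(P)_0$, this exhibits $(\alpha,\beta'\gamma)$ as a graded left unimodular row of length $2$ consisting of homogeneous degree-$0$ elements.

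Now invoke the hypothesis $\sr^{\gr}(\End_A(P))=1$: the row is stable, so there is a homogeneous $c\in\End_A(P)$ with $u:=\alpha+c\beta'\gamma$ generating $\End_A(P)$ as a graded left ideal. The homogeneity constraint in the definition of stable unimodular row forces $\deg(c)=0$, so $u\in\End_A(P)_0$. As noted in the paragraph preceding the theorem, when the graded stable rank equals $1$, every left-invertible homogeneous element is two-sided invertible; hence $u$ is a graded automorphism of $P$.

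Next, I would use $u$ to triangularize $\phi$ via two elementary graded automorphisms. Composing on the target side with the graded automorphism
\[
E=\begin{pmatrix} 1_P & c\beta'\\ 0 & 1_N\end{pmatrix}:P\oplus N\longrightarrow P\oplus N
\]
(which has inverse $\begin{pmatrix} 1 & -c\beta'\\ 0 & 1\end{pmatrix}$ and whose entries are of degree $0$) replaces the upper-left block by $u$; the result $E\phi$ has the form $\begin{pmatrix} u & v\\ \gamma & \delta\end{pmatrix}$ with $v=\beta+c\beta'\delta$. Since $u$ is now invertible, composing further with
\[
F=\begin{pmatrix} 1_P & 0\\ -\gamma u^{-1} & 1_N\end{pmatrix}
\]
kills the lower-left block and yields a graded isomorphism $FE\phi=\begin{pmatrix} u & v\\ 0 & \delta-\gamma u^{-1}v\end{pmatrix}$. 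A block upper-triangular endomorphism with invertible $(1,1)$ entry is invertible iff its $(2,2)$ entry is invertible; since $FE\phi$ is a graded isomorphism of degree $0$, the map $\delta-\gamma u^{-1}v:M\to N$ is the desired graded isomorphism, so $M\cong_{\gr} N$.

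The only delicate point — and the main place where the graded version differs from the classical one — is the degree bookkeeping. One must confirm that the definition of graded stable rank forces $c$ to land in $\End_A(P)_0$ (so that $c\beta'$, $\gamma u^{-1}$, and the elementary matrices $E$, $F$ are all graded of degree $0$), and that the recognition of $\End_A(P)$ as a $\Gamma$\!-graded ring via Theorem~\ref{crazhorn} requires exactly the hypothesis that $P$ is finitely generated. Once those points are verified, the remainder of the proof is a direct transcription of the classical Bass cancellation argument.
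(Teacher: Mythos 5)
Your proof is correct, and it is recognizably the same Bass-style cancellation strategy that the paper uses --- extract a graded unimodular row of degree-zero elements from the isomorphism, invoke $\sr^{\gr}\big(\End_A(P)\big)=1$ to produce an invertible $u\in\End_A(P)_0$, and then eliminate $P$ --- but the execution differs in two places. The paper (following the ungraded \cite[Theorem~20.11]{lamfc}) never writes $\phi$ as a full $2\times 2$ matrix: it works only with the split epimorphism $(f,g)=\pi_1\phi\colon P\oplus M\to P$, whose kernel is graded isomorphic to $N$, applies stability to the row $(f',gg')$ built from the splitting column $\left(\begin{smallmatrix}f'\\ g'\end{smallmatrix}\right)$ of $(f,g)$ --- in your notation the row $(\alpha',\beta\gamma')$ coming from $\phi\phi^{-1}=1$, rather than your row $(\alpha,\beta'\gamma)$ coming from $\phi^{-1}\phi=1$ --- and then finishes by a kernel comparison: since $u=(1,eg)\left(\begin{smallmatrix}f'\\ g'\end{smallmatrix}\right)$ is invertible, both $\ker(f,g)\conggr N$ and $\ker(1,eg)\conggr M$ are identified with the common quotient of $P\oplus M$ by $\Im\left(\begin{smallmatrix}f'\\ g'\end{smallmatrix}\right)$, giving $M\conggr N$ without inverting any block. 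Your version instead triangularizes $\phi$ by the elementary degree-zero automorphisms $E$ and $F$ and reads off the graded Schur complement $\delta-\gamma u^{-1}v\colon M\to N$; this costs a little extra matrix bookkeeping (including the check that a block-triangular graded map with invertible $(1,1)$ corner is invertible iff its $(2,2)$ entry is), but it buys an explicit formula for the isomorphism, whereas the paper's kernel argument is shorter but nonconstructive about the map. The two delicate points you flag are exactly the ones the paper also relies on: $c$ (the paper's $e$) may indeed be taken in $\End_A(P)_0$, since homogeneity of $\alpha+c\,\beta'\gamma$ forces $\deg(c)=0$ when both summands are nonzero (the degenerate cases being handled by taking $c=0$ or $c=1$); $\End_A(P)$ carries its $\Gamma$\!-grading by Theorem~\ref{crazhorn} precisely because $P$ is finitely generated; and the promotion of $u$ from left invertible to invertible uses the remark preceding the theorem that $\sr^{\gr}=1$ makes every left-invertible homogeneous element two-sided invertible.
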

\begin{proof}
Set $E:=\End_A(P)$. Let $h:P\oplus M \rightarrow P \oplus N$ be a graded $A$-module isomorphism. Then the composition of the maps
\begin{align*}
P\stackrel{i_1}{\longrightarrow} P\oplus M \stackrel{h}\longrightarrow P\oplus N \stackrel{\pi_1}\longrightarrow P,\\
M\stackrel{i_2}\longrightarrow P\oplus M \stackrel{h}\longrightarrow P\oplus N \stackrel{\pi_1}\longrightarrow P
\end{align*}
induces a graded split epimorphism of degree zero, denoted by $(f,g):P\oplus M \rightarrow P$. Here $(f,g)(p,m)=f(p)+g(m)$, where 
$f=\pi_1 h i_1$ and $g=\pi_1 h i_2$. It is clear that  $\ker(f,g)\cong_{\gr} N$. 
Let 
$\left( 
\begin{matrix} 
f' \\ 
g'
\end{matrix} 
\right): P\rightarrow P\oplus M$ be the split homomorphism.  Thus 
\[1=(f,g)\left( 
\begin{matrix} 
f' \\ 
g'
\end{matrix} 
\right)=ff' +gg'.\]
This shows that the left ideal generated by $f'$ and $gg'$ is $E$. Since $E$ has graded stable rank $1$, 
it follows there is $e\in E$ of degree $0$ such that $u:=f'+e (gg')$ is an invertible element of $E$. Writing $u=(1,eg) \left( 
\begin{matrix} 
f' \\ 
g'
\end{matrix} 
\right)$ implies that both $\ker(f,g)$ and $\ker(1,eg)$ are graded isomorphic to 
\[P\oplus M/ \Im \left( 
\begin{matrix} 
f' \\ 
g'
\end{matrix} 
\right).\]
Thus $\ker(f,g)\cong_{\gr}\ker(1,eg)$. But $\ker(f,g)\cong_{\gr} N$ and $\ker(1,eg) \cong_{\gr} M$. Thus $M \cong_{\gr} N$. 
\end{proof}

The following corollary will be used in~\S\ref{ggg} to show that for a graded ring with graded stable rank $1$, the monoid of graded finitely generated projective modules injects into the graded Grothendieck group (see Corollary~\ref{hghtt1}).

\begin{corollary}\label{jussiej}
Let $A$ be a $\Gamma$\!-graded ring with graded left stable rank $1$ and $M,N,P$ be graded right $A$-modules. If $P$ is a graded finitely generated projective $A$-module, then
$P\oplus M \cong_{\gr} P\oplus N$ as $A$-modules implies $M \cong_{\gr} N$ as $A$-modules. 
\end{corollary}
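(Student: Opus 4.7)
The plan is to deduce the corollary from the Graded Cancellation Theorem (Theorem~\ref{jussiej4}) by showing that $\End_A(P)$ inherits graded left stable rank $1$ from $A$. Since $P$ is a graded finitely generated projective right $A$-module, by (\ref{medickiue}) there exist $\overline{\alpha}=(\alpha_1,\dots,\alpha_n)\in\Gamma^n$ and a graded module $Q$ with
\[
P\oplus Q\cong_{\gr}A^n(\overline{\alpha}).
\]
Setting $R=\M_n(A)(\overline{\alpha})\cong_{\gr}\End_A(A^n(\overline{\alpha}))$ by (\ref{fatloss}), the projection onto the $P$-summand corresponds to a homogeneous idempotent $e\in R$ of degree $0$, and by the idempotent calculus of \S\ref{idemptis} we have a graded ring isomorphism $\End_A(P)\cong_{\gr}eRe$. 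Hence it suffices to prove two Morita-type stability results in the graded setting: first, that $\sr^{\gr}(\M_n(A)(\overline{\alpha}))=1$ whenever $\sr^{\gr}(A)=1$; and second, that $\sr^{\gr}(eRe)=1$ whenever $\sr^{\gr}(R)=1$ and $e\in R$ is a homogeneous idempotent of degree $0$. Once these are in place, Theorem~\ref{jussiej4} applied to $\End_A(P)$ yields the cancellation $M\cong_{\gr}N$.

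For the matrix step, I would characterise $\sr^{\gr}=1$ via invertibility of homogeneous one-sided inverses (as observed right after the definition): for any graded unimodular pair $(a,b)$ of homogeneous elements there exists a homogeneous $c$ with $a+cb$ invertible. A graded unimodular pair of matrices $(\mathbf{a},\mathbf{b})$ in $\M_n(A)(\overline{\alpha})$ can then be reduced, by elementary graded row/column operations (which are realised by multiplying with invertible homogeneous matrices, cf.\ Theorem~\ref{ppooahnrq}), to a pair involving a single homogeneous entry of $A$; here one applies $\sr^{\gr}(A)=1$ entrywise, using Lemma~\ref{smallhandy} throughout to keep track of the shift bookkeeping in the mixed-shift matrix spaces $\M_{m\times n}(A)[\overline{\alpha}][\overline{\delta}]$. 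This is the standard Vaserstein reduction transplanted to the graded category, and the degrees work out automatically because all the elementary transvections have degree $0$ in $\M_n(A)(\overline{\alpha})$.

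For the corner step, let $e\in R$ be a homogeneous idempotent with $\deg(e)=0$ and suppose $(a_1,\dots,a_k)$ is a graded left unimodular row in $eRe$ with homogeneous entries. Viewing it in $R$ and adjoining $1-e$ (which is homogeneous of degree $0$), the row $(a_1,\dots,a_k,1-e)$ is graded left unimodular in $R$; applying $\sr^{\gr}(R)=1$ to this row of length $k+1$ produces homogeneous stabilising elements $r_i\in R$. Multiplying the obtained relation on both sides by $e$ (using $ea_i=a_ie=a_i$ and $e(1-e)=0$) and replacing $r_i$ by $er_ie$, which is still homogeneous of the required degree, yields stability inside $eRe$.

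The main obstacle I anticipate is the first step: keeping the shift data $\overline{\alpha}$ consistent while performing matrix reductions, so that all elementary operations remain degree-preserving homomorphisms between the appropriate shifted free modules. The calculations are routine once interpreted via the mixed-shift framework of \S\ref{kjujidw} and the isomorphism (\ref{bequitet}), but they must be done with Lemma~\ref{smallhandy} in hand at each manipulation. Every other ingredient is a direct graded analogue of the classical Bass--Vaserstein argument.
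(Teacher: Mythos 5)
Your overall strategy differs from the paper's, and as written it has a genuine gap in the corner step. To prove $\sr^{\gr}(eRe)=1$ you must show that a graded unimodular pair in $eRe$ is \emph{stable}: given homogeneous $a,b$ with $eRe\,a+eRe\,b=eRe$, you need a homogeneous $c\in eRe$ with $a+cb$ invertible in $eRe$ (equivalently, for longer rows, a genuine shortening of the row). Your move --- adjoin $1-e$ as an \emph{extra entry}, apply $\sr^{\gr}(R)=1$ to $(a_1,\dots,a_k,1-e)$ to get homogeneous $r_i$ with $\sum_i R\big(a_i+r_i(1-e)\big)=R$, then multiply through by $e$ --- collapses to nothing: writing $e=\sum_i s_i\big(a_i+r_i(1-e)\big)$ and right-multiplying by $e$ kills every term $r_i(1-e)$ since $(1-e)e=0$ and $a_ie=a_i$, leaving $e=\sum_i (es_ie)a_i$. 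That is exactly the unimodularity of the original row in $eRe$ that you started from; no entry has been absorbed and no stability is produced, so the argument is circular. The correct classical route, which would have to be transplanted to the graded setting, is different: for a pair $(a,b)$ one \emph{adds} $1-e$ to an entry, checks that $(a+1-e,\,b)$ is graded unimodular in $R$, obtains $u=a+1-e+cb$ invertible, observes $u(1-e)=1-e$ so that $u$ is block lower-triangular for the Peirce decomposition along $e$, and then a block computation with $u^{-1}$ (which forces the off-diagonal block $eu^{-1}(1-e)$ to vanish) shows $eue=a+(ece)b$ is invertible in $eRe$. Note you cannot shortcut this last step by invoking ``left invertible homogeneous elements of $eRe$ are invertible,'' since that property is a \emph{consequence} of $\sr^{\gr}(eRe)=1$, the very thing being proved.

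Beyond this, your matrix step ($\sr^{\gr}(A)=1$ implies $\sr^{\gr}(\M_n(A)(\overline{\alpha}))=1$) is only asserted; the graded Vaserstein reduction is plausible but needs a real proof, since the entries of a homogeneous matrix of degree $\gamma$ lie in different components $A_{\gamma+\alpha_i-\alpha_j}$ and graded unimodularity of sub-rows is not automatic. More importantly, the entire machinery is avoidable, and this is where the paper takes a much shorter route: from $P\oplus Q\conggr A^n(\overline{\alpha})$ (by~(\ref{medickiue})) one adds $Q$ to both sides of $P\oplus M\conggr P\oplus N$ to reduce to $A^n(\overline{\alpha})\oplus M\conggr A^n(\overline{\alpha})\oplus N$, and then cancels one rank-one free summand $A(\alpha)$ at a time by induction; since $\End_A(A(\alpha))\conggr A$ by~(\ref{fatloss}), the hypothesis $\sr^{\gr}(A)=1$ feeds directly into Theorem~\ref{jussiej4} with no matrix or corner stability results needed. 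Your approach, if completed, would establish the strictly stronger fact that $\End_A(P)$ itself has graded stable rank $1$, but as it stands both supporting lemmas are unproven and the corner one is argued incorrectly.
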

\begin{proof}
Suppose $P\oplus M \cong_{\gr} P\oplus N$ as $A$-modules. Since $P$ is a graded finitely generated $A$-module, there is a graded $A$-module $Q$ such that $P\oplus Q \cong_{\gr} A^n(\ol \alpha)$ (see~(\ref{medickiue})). It follows that 
\[ A^n(\ol \alpha)\oplus M \cong_{\gr} A^n(\ol \alpha) \oplus N.\]
We prove that if 
\begin{equation}\label{gbfgt34}
A(\alpha) \oplus M \cong_{\gr} A(\alpha) \oplus N,
\end{equation} 
then $M\cong_{\gr} N$. The corollary then follows by an easy induction. 

By~(\ref{fatloss}) there is a graded ring isomorphism $\End_A(A(\alpha))\cong_{\gr} A$. Since $A$ has graded stable rank $1$, so does $\End_A(A(\alpha))$. Now by Theorem~\ref{jussiej4}, from~(\ref{gbfgt34}) it follows that $M\cong_{\gr} N$. This finishes the proof. 
\end{proof}

The stable rank imposes other finiteness properties on rings such as the IBN property (see~\cite[Exercise~I.1.5(e)]{weibelk}). 

\begin{theorem}\label{mabasa}
Let $A$ be a $\Gamma$\!-graded ring such that $A \cong_{\gr} A^r(\overline \alpha)$ as  left $A$-modules, for some $\overline \alpha=(\alpha_1,\dots,\alpha_r)$, $r>1$. Then the graded stable rank of $A$ is infinite. 
\end{theorem}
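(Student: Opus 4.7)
The plan is to translate the self-similar isomorphism $A\cong_{\gr}A^r(\overline\alpha)$ into concrete ``Leavitt-type'' homogeneous relations, iterate these to produce graded left unimodular rows of arbitrarily large length, and then use the resulting orthogonality to rule out stability directly.

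First, I apply Proposition~\ref{rndconggrrna} in the form (2)$\Leftrightarrow$(3) to the given left-module isomorphism $A\cong_{\gr}A^r(\overline\alpha)$. This yields homogeneous elements $v_1,\dots,v_r,w_1,\dots,w_r\in A$ with $\deg(v_i)=\alpha_i$ and $\deg(w_i)=-\alpha_i$, such that
\[\sum_{i=1}^r v_iw_i=1\qquad\text{and}\qquad w_iv_j=\delta_{ij}\quad(1\le i,j\le r).\]
In particular $(w_1,\dots,w_r)$ is a graded left unimodular row of length $r$, with left coefficients $v_1,\dots,v_r$.

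Next I iterate. For every $k\ge 1$ and every multi-index $I=(i_1,\dots,i_k)\in\{1,\dots,r\}^k$, define
\[v_I:=v_{i_k}v_{i_{k-1}}\cdots v_{i_1}\qquad\text{and}\qquad w_I:=w_{i_1}w_{i_2}\cdots w_{i_k}.\]
A short induction on $k$ (equivalently, composing the iterated graded isomorphisms $A\cong_{\gr}A^r(\overline\alpha)\cong_{\gr}\cdots\cong_{\gr}A^{r^k}(\overline\alpha^{(k)})$ and reading off the associated matrices via Proposition~\ref{rndconggrrna}) shows that $\sum_{I}v_Iw_I=1$ and $w_Iv_J=\delta_{IJ}$, where $I,J$ range over $\{1,\dots,r\}^k$. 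Hence $(w_I)_{I}$ is a graded left unimodular row of length $r^k$ for every $k$.

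Now fix $k\ge 1$ and a designated multi-index $I_0\in\{1,\dots,r\}^k$ (say $I_0=(1,\dots,1)$), and suppose for contradiction that this row is stable. Then there exist homogeneous $b_I\in A$ (for $I\ne I_0$) and homogeneous $c_I\in A$ such that
\[1=\sum_{I\ne I_0}c_I\bigl(w_I+b_Iw_{I_0}\bigr)=\sum_{I\ne I_0}c_Iw_I+\Bigl(\sum_{I\ne I_0}c_Ib_I\Bigr)w_{I_0}.\]
Right-multiplying this identity by $v_J$ for $J\ne I_0$ and using $w_Iv_J=\delta_{IJ}$ collapses the right-hand side to $c_J$, so $c_J=v_J$ for every $J\ne I_0$. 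Right-multiplying instead by $v_{I_0}$ yields $v_{I_0}=\sum_{I\ne I_0}v_Ib_I$. Left-multiplying this last identity by $w_{I_0}$ and invoking the orthogonality relations once more gives $1=w_{I_0}v_{I_0}=\sum_{I\ne I_0}\delta_{I_0 I}b_I=0$, a contradiction. Thus the length-$r^k$ row is not stable for any $k\ge 1$, so $\sr^{\gr}(A)\ge r^k$ for every $k$, and therefore $\sr^{\gr}(A)=\infty$. No substantive obstacle is anticipated: after Step~1 the argument reduces to elementary manipulations with the orthogonality relations, and only the bookkeeping of the iterated degree shifts $\overline\alpha^{(k)}$ and the verification that the products $v_I$, $w_I$ are homogeneous of the expected degrees require care.
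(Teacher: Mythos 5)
Your proposal is correct and takes essentially the same route as the paper's own proof: both iterate the given isomorphism (through Proposition~\ref{rndconggrrna} and Lemma~\ref{runorun}) to produce graded left unimodular rows of unbounded length $r^k$ and then contradict stability. Your closing computation with the orthogonality relations $w_Iv_J=\delta_{IJ}$ is just the explicit, element-level form of the paper's argument that the map $\psi\colon A^{r-1}(\alpha_1,\dots,\alpha_{r-1})\to A^{r}(\alpha_1,\dots,\alpha_r)$ forced by stability cannot be an epimorphism — right-multiplying your identity by the $v_J$ amounts to applying $\phi^{-1}$, so solving $c_J=v_J$ and deriving $1=0$ mirrors the paper's observation that $(0,\dots,0,1)$ is not in the image of $\psi$.
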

\begin{proof}
Suppose the graded stable rank of $A$ is $n$. Then one can find $\overline \alpha=(\alpha_1,\dots,\alpha_r)$, where $r> n$ and $A^r(\overline \alpha) \cong_{\gr} A$. Suppose $\phi:  A^r(\overline \alpha) \rightarrow A$ is this given graded isomorphism. Set $a_i=\phi(e_i)$, $1\leq i\leq r$, where $\{e_i \mid 1\leq i\leq r\}$ are the standard (homogeneous) basis of $A^r$. Then for any $x\in A^r(\overline \alpha)$, 
\[ \phi(x)=\phi(\sum_{i=1}^r x_ie_i)=\sum_{i=1}^r x_i a_i=\phi_{(a_1,\dots,a_r)}(x_1,\dots,x_r).\] Since $\phi$ is an isomorphism, by Lemma~\ref{runorun}, the row $(a_1,\dots,a_r)$ is graded left unimodular. Since $r>\sr^{\gr}(A)$, there is a homogeneous row $(b_1,\dots,b_{r-1})$ such that 
$(a_1+b_1a_r,\dots, a_{r-1}+b_{r-1}a_r)$ is also left unimodular. Note that $\deg(b_i)=\alpha_r -\alpha_i$. Consider the graded left $A$-module homomorphism 
\begin{align*}
\psi: A^{r-1}(\alpha_1,\dots,\alpha_{r-1})  & \longrightarrow A^{r}(\alpha_1,\dots,\alpha_{r-1},\alpha_r), \\
(x_1,\dots,x_{r-1}) & \longmapsto (x_1,\dots,x_{r-1},\sum_{i=1}^{r-1} x_ib_i) 
\end{align*}
and the commutative diagram 
\begin{equation*}
\xymatrix{
A^{r-1}(\alpha_1,\dots,\alpha_{r-1}) \ar[rr]^{\psi} \ar[rd]_{\phi_{(a_1+b_1a_r,\dots, a_{r-1}+b_{r-1}a_r)}~~~~} &&  A^{r}(\alpha_1,\dots,\alpha_{r-1},\alpha_r) \ar[ld]^{\phi_{(a_1,\dots,a_r)}},\\
& A 
}
\end{equation*}
Since $\phi_{(a_1,\dots,a_r)}$ is an isomorphism and $\phi_{(a_1+b_1a_r,\dots, a_{r-1}+b_{r-1}a_r)}$ is an epimorphism, $\psi$ is also an epimorphism. Thus  there is $(x_1,\dots,x_{r-1})$ such that $\psi(x_1,\dots,x_{r-1})=(0,\dots,0,1)$ which immediately gives a contradiction. 
\end{proof}

\begin{corollary}
The graded stable rank of the Leavitt algebra $\LL(1,n)$ is infinite. 
\end{corollary}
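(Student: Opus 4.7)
The plan is to apply Theorem~\ref{mabasa} directly. The hypothesis of that theorem requires a graded left $A$-module isomorphism $A \cong_{\gr} A^r(\overline{\alpha})$ with $r > 1$, so the task reduces to establishing such a decomposition for $A = \LL(1,n)$ with $n \geq 2$.

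First I would recall from Example~\ref{levisuji} that $\LL(1,n)$ is the $\Z$-graded algebra generated by $x_1,\dots,x_n$ (of degree $-1$) and $y_1,\dots,y_n$ (of degree $1$), subject to $x_iy_j=\delta_{ij}$ and $\sum_{i=1}^n y_ix_i = 1$. In that example, the homogeneous relations produce a graded \emph{right} $A$-module isomorphism $\phi: A \to A(-1)^n$ via $a \mapsto (x_1a,\dots,x_na)$, with inverse given by $(a_1,\dots,a_n) \mapsto \sum y_ia_i$. By exactly the same relations, read on the other side, I would produce the graded \emph{left} $A$-module isomorphism
\begin{equation*}
\phi':A \longrightarrow A(1)^n,\qquad a \longmapsto (ay_1,\dots,ay_n),
\end{equation*}
with inverse $(a_1,\dots,a_n)\mapsto \sum a_ix_i$. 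The degree check is immediate: if $a\in A_\gamma$ then $ay_i\in A_{\gamma+1}=A(1)_\gamma$, so $\phi'$ is homogeneous of degree $0$, and the identities $\sum y_ix_i=1$, $x_iy_j=\delta_{ij}$ yield that $\phi'$ and its candidate inverse compose to the identity on both sides.

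Having the left-module graded isomorphism $A\cong_{\gr} A^n(\overline{\alpha})$ with $\overline{\alpha}=(1,\dots,1)$ and $n\geq 2$, I would invoke Theorem~\ref{mabasa} to conclude $\sr^{\gr}(\LL(1,n))=\infty$.

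There is no real obstacle here; the only content is verifying the left-sided version of the isomorphism that Example~\ref{levisuji} records on the right. Once that is noted, the corollary is an immediate instance of Theorem~\ref{mabasa}.
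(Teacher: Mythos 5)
Your proposal is correct and follows essentially the paper's own route: the paper also deduces the corollary from Theorem~\ref{mabasa}, citing Proposition~\ref{grtypel1} (which rests on the same Leavitt relations, combined with Proposition~\ref{rndconggrrna} to pass between right- and left-module isomorphisms) to supply the hypothesis $A \cong_{\gr} A^r(\overline{\alpha})$. Your direct verification of the left-module isomorphism $a \mapsto (ay_1,\dots,ay_n)$ is just an explicit unwinding of that citation, and it is accurate, including the degree check under the convention $M(\delta)_\gamma = M_{\delta+\gamma}$.
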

\begin{proof}
This follows from Proposition~\ref{grtypel1} and Theorem~\ref{mabasa}.
\end{proof}

\begin{example}\scm[Graded von Neumann regular rings with the stable rank 1]

One can prove, similar to the ungraded case~\cite[Proposition~4.12]{goodearlbook}, that a graded von Neumann regular ring has a stable rank $1$ if an only of it is a graded von Neumann unit regular.  \index{graded von Neumann regular ring}

\end{example}

\section{Graded rings with involution}\label{involudool}   \index{involutary graded ring}   
\index{graded $*$-ring} \index{graded ring with involution}

Let $A$ be a ring with an involution denoted by  ${}^*$, \ie ${}^*:A \rightarrow A$, $a\mapsto a^*$, is 
an anti-automotphism of order two. Throughout this book we call $A$ also a $*$-ring. If $M$ is a right $A$-module, then $M$ can be given a left $A$-module  structure by defining 
\begin{equation}\label{rightm} \index{*-ring}
am:=m a^*.
\end{equation}
This gives an equivalent  
\begin{equation}\label{ontheair}
\Modd A  \approx A \rModd,
\end{equation}
where $\Modd A$ is the category of right $A$-modules and $A\rModd$ is the category of left $A$-modules. 

Now let $A=\bigoplus_{\gamma \in \Gamma} A_\gamma$ be a $\Gamma$\!-graded ring.  
We call $A$ a \emph{graded $*$-ring} if  there is an involution on $A$ such that for $a\in A_\gamma$, 
$a^* \in A_{-\gamma}$, where $\gamma \in \Gamma$. It follows that $A_\gamma ^*= A_{-\gamma}$, for any $\gamma \in \Gamma$. 

\begin{remark}
Depending on the circumstances, one can also set another definition that $A_\gamma^*= A_\gamma$, where $\gamma \in \Gamma$. \end{remark}

If $A$ is a graded $*$-ring, and $M$ is a graded right $A$-module, then the multiplication in~(\ref{rightm}) makes $M$ a graded left $A^{(-1)}$-module and 
makes $M^{(-1)}$ a graded left $A$-module, where $A^{(-1)}$ and $M^{(-1)}$ are Veronese rings and modules (see Examples~\ref{egofgrdivisionrings0} 
and~\ref{egofgonrings0}). \index{Veronese subring}   \index{Veronese module} These give graded  equivalences
\begin{align}\label{ontheair2}
\mathcal I: \Gr A  &\longrightarrow A^{(-1)} \rGr,\\
M &\longmapsto M \notag
\end{align}
and 
\begin{align}\label{ontheair3}
\mathcal J: \Gr A  &\longrightarrow A \rGr,\\
M &\longmapsto M^{(-1)}. \notag
\end{align}
Here for $\alpha \in \Gamma$, $\mathcal I(M(\alpha))=\mathcal I(M)(\alpha)$ (\ie $\mathcal I$ is a graded functor, see Definition~\ref{grdeffsa}), whereas 
$\mathcal J(M(\alpha))=\mathcal J(M)(-\alpha)$.

Clearly if the grade group $\Gamma$ is trivial, the equivalences~(\ref{ontheair2}) and~(\ref{ontheair3}) both reduce to~(\ref{ontheair}). 
 
 Let $A$ be a graded $*$-field and $R$ a graded $A$-algebra with involution denoted by ${}^*$ again. Then $R$ is a graded $*$-$A$-algebra if 
 $(a r)^*=a^* r^*$ (\ie the graded homomorphism $A\rightarrow R$ is a $*$-homomorphism).    \index{graded $*$-algebra}

 \begin{example}\scm[Group rings]  \index{group ring}
\vspace{0.2cm}

 For a group $\Gamma$ (denoted multiplicatively here),  the group ring $\Z[\Gamma]$ with a natural $\Gamma$\!-grading 
\begin{equation*}
\Z[\Gamma]=\bigoplus_{\ga \in \Ga} \Z[\Gamma]_\gamma, \text{   where   }  \Z[\Gamma]_\gamma= \Z\ga, 
\end{equation*}
and the natural involution $*:\Z[\Gamma] \rightarrow \Z[\Gamma], \gamma \mapsto \gamma^{-1}$ is a graded $*$-ring. 
\end{example}

\begin{example}\scm[Hermitian transpose]
\vspace{0.2cm}

If $A$ is a graded $*$-ring, then for $a=(a_{ij}) \in \M_n(A)(\de_1,\dots,\de_n)$,  the \emph{Hermitian transpose} $a^*=(a_{ji}^*)$, makes $\M_n(A)(\de_1,\dots,\de_n)$ a 
graded $*$-ring (see~\ref{mmkkhh}). \index{Hermitian transpose}
\end{example}

 \begin{example}\scm[Leavitt path algebras are *-graded algebras]\label{bfghrtd2}
\vspace{0.2cm}

A Leavitt path algebra has a natural $*$-involution. Let $K$ be a $*$-field. Define a homomorphism from the free $K$-algebra generated by the vertices, edges and ghost edges of the graph $E$ to $\LL(E)^{\op}$, by $k \mapsto k^*$, $v \mapsto v$, $\alpha \mapsto \alpha^*$ and $\alpha^* \mapsto \alpha$, where $k\in K$, $v\in E^0$, $\alpha \in E^1$ and $\alpha^*$ the ghost edge.  The relations in definition of a Leavitt path algebra, Definition~\ref{llkas}, show that this homomorphism induces an isomorphism from $\LL(E)$ to $\LL(E)^{\op}$. This makes $\LL(E)$ a $*$-algebra. Moreover, considering the grading, it is easy to see that in fact, $\LL(E)$ is a graded $*$-algebra. 
\end{example}

 \begin{example}\scm[Corner skew Laurent rings as *-graded algebras]\label{bfghrtd5}
 \vspace{0.2cm}
 
Recall the corner skew Laurent polynomial ring $A=R[t_{+},t_{-},\phi]$, where $R$ is a ring with identity and $\phi:R\rightarrow pRp$ a corner isomorphism (see~\S\ref{cornerskew}). Let $R$ be a $*$-ring, $p$  a \emph{projection} (\ie $p=p^*=p^2$), and $\phi$  a $*$-isomorphism. Then $A$ has a $*$-involution defined 
on generators by $(t^j_{-}r_{-j})^*=r_{-j}^*t^j_{+}$ and $(r_it^i_{+})^*=t^i_{-}r_i^*$. With this involution $A$ becomes a 
graded $*$-ring.  \index{projection} A $*$-ring is called $*$-\emph{proper}, if $xx^*=0$ implies $x=0$. It is called \emph{positive-definite} if $\sum_i x_i{x_i}^*=0$ implies $x_i=0$ for all $i$. 
A graded $*$-ring is called \emph{graded $*$-proper}, if $x\in A^h$ and $xx^*=0$ then $x=0$.
The following Lemma is easy to observe and we leave it to the reader.  \index{proper} \index{graded proper} 
\index{positive-definite}  \index{*-proper ring}  \index{graded $*$-proper ring}
\end{example}  \index{corner skew Laurent polynomial ring}

\begin{lemma}

Let $R$ be a $*$-ring and $A=R[t_{+},t_{-},\phi]$ a $*$-corner skew Laurent polynomial ring. We have 
\begin{enumerate}

\item $R$ is positive-definite if and only if $A$ is positive-definite.

\item $R$ is proper if and only if $A$ is graded $*$-proper. 

\end{enumerate}

\end{lemma}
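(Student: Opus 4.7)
The plan is to reduce both parts of the lemma to a single explicit computation of $xx^{*}$ for a homogeneous element $x\in A$, showing that it is, up to applying a power of $\phi^{-1}$, exactly an element of the form $rr^{*}$ in $R$ where $r$ is the ``coefficient'' of $x$. Once this formula is in hand, both equivalences become formal. The converses in each part are trivial since $R=A_{0}$ embeds in $A$ and its elements are homogeneous in $A$, so the real content is the forward directions.

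\textbf{The core computation.} For $x\in A_{\gamma}$ with $\gamma\geq 0$, write $x=rt_{+}^{\gamma}$ with $r\in Rp_{\gamma}$. Then $x^{*}=t_{-}^{\gamma}r^{*}$, and using $t_{+}^{\gamma}t_{-}^{\gamma}=p_{\gamma}$ together with $rp_{\gamma}=r$, I compute
\[xx^{*}=rt_{+}^{\gamma}t_{-}^{\gamma}r^{*}=rp_{\gamma}r^{*}=rr^{*}.\]
For $\gamma=-n<0$, write $x=t_{-}^{n}r$ with $r\in p_{n}R$, so $x^{*}=r^{*}t_{+}^{n}$ and $xx^{*}=t_{-}^{n}rr^{*}t_{+}^{n}$. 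Since $r\in p_{n}R$ gives $rr^{*}\in p_{n}Rp_{n}=\phi^{n}(R)$, iterating the relation $rt_{-}=t_{-}\phi(r)$ (equivalently $t_{-}^{n}\phi^{n}(s)=st_{-}^{n}$ for $s\in R$) together with $t_{-}^{n}t_{+}^{n}=1$ yields $xx^{*}=\phi^{-n}(rr^{*})=\phi^{-n}(r)\phi^{-n}(r)^{*}$, since $\phi$ is a $*$-isomorphism. Thus in every case $xx^{*}=ss^{*}$ for some $s\in R$ that vanishes iff $r$ (equivalently $x$) does.

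\textbf{Deducing the two statements.} Part~(2) will then follow at once: if $R$ is $*$-proper and $x\in A^{h}$ satisfies $xx^{*}=0$, then $ss^{*}=0$ in $R$, so $s=0$, and hence $x=0$; conversely, since $R=A_{0}\subseteq A^{h}$, graded $*$-properness of $A$ restricts to $*$-properness of $R$. For part~(1), given $x_{1},\dots,x_{k}\in A$ with $\sum_{i}x_{i}x_{i}^{*}=0$, I would decompose each $x_{i}=\sum_{\gamma}x_{i,\gamma}$ and observe that $x_{i,\gamma}x_{i,\delta}^{*}$ has degree $\gamma-\delta$, so the degree-zero component of the identity reads $\sum_{i,\gamma}x_{i,\gamma}x_{i,\gamma}^{*}=0$. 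By the computation this becomes an identity $\sum_{i,\gamma}s_{i,\gamma}s_{i,\gamma}^{*}=0$ in $R$, and positive-definiteness of $R$ forces every $s_{i,\gamma}=0$, hence every $x_{i,\gamma}=0$, hence every $x_{i}=0$. The only mild subtlety is the negative-degree case, where one must recognise that $rr^{*}$ lies in the corner $p_{n}Rp_{n}$ so that the commutation relation can be applied to pull $t_{-}^{n}$ past it; apart from this, the argument is essentially bookkeeping on homogeneous components.
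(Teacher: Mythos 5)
Your overall route is the same as the paper's: extract the degree-zero component of $\sum_i x_i x_i^*$ (the paper phrases this as ``the constant term of $x_kx_k^*$''), compute it termwise as $r\,r^*$ for the coefficients of nonnegative degree and as $\phi^{-n}(rr^*)$ for the coefficients of degree $-n$, and then invoke positive-definiteness of $R$; your computation up to and including $xx^*=\phi^{-n}(rr^*)$ is correct, as are both easy converses. However, there is a genuine gap at the one point where you go beyond this computation: the rewriting $\phi^{-n}(rr^{*})=\phi^{-n}(r)\phi^{-n}(r)^{*}$ is ill-defined. The map $\phi^{-n}$ is the inverse of the corner isomorphism $\phi^n\colon R\to p_nRp_n$, so its domain is the corner $p_nRp_n$; the coefficient $r$ of a degree $-n$ element lies only in $p_nR$, and when $p\neq 1$ there is no reason for $r$ to lie in $p_nRp_n$, so $\phi^{-n}(r)$ simply does not exist. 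Consequently your key claim that ``in every case $xx^*=ss^*$ for some $s\in R$'' is unproved --- nothing in the axioms of a $*$-ring guarantees that $\phi^{-n}(rr^*)$ has the form $ss^*$ --- and with it the final step of part~(1), where the degree-zero identity is rewritten as $\sum_{i,\gamma}s_{i,\gamma}s_{i,\gamma}^*=0$, collapses. Part~(2) is unaffected, since there you only need $xx^*=0\Rightarrow rr^*=0$, which follows from the injectivity of $\phi^{-n}$ on the corner without any $ss^*$ form.

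The gap is local and repairable without changing your strategy. Having established that the degree-zero component of $\sum_i x_ix_i^*$ is a sum of terms $rr^*$ (degrees $\geq 0$) and $\phi^{-n}(rr^*)$ (degrees $-n<0$), choose $N$ at least as large as every $n$ that occurs and apply $\phi^N$ to the whole identity; viewed as a map $R\to R$, $\phi^N$ is an injective (non-unital) $*$-ring homomorphism. Each term becomes $\phi^{N}(r)\phi^{N}(r)^*$ or $\phi^{N-n}(rr^*)=\phi^{N-n}(r)\phi^{N-n}(r)^*$, which is now legitimate because $N-n\geq 0$ means $\phi^{N-n}$ is defined on all of $R$. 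Positive-definiteness of $R$ then forces $\phi^{N-n}(r)=0$ in each case, and injectivity of $\phi$ gives $r=0$, hence each $x_i=0$. For what it is worth, the paper's own proof of part~(1) performs exactly your constant-term computation and then asserts the conclusion ``since $R$ is positive-definite and $\phi$ is a $*$-isomorphism''; the $\phi^N$ device above is what makes that assertion precise, and it is the step your proposal needed to supply rather than the ill-defined $\phi^{-n}(r)$.
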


\begin{proof}

(1) Since $R$ is $*$-subring of $A$, if $A$ is positive-definite, then $R$ is so. 

For the converse, suppose $R$ is positive-definite and 
\begin{equation}\label{adel15}
\sum_{k=1}^l x_kx^*_k=0,
\end{equation}
 where $x_k \in A$. Write 
\[x_k=t^{j_k}_{-}r^k_{-{j_k}} +t^{{j_k}-1}_{-}r^k_{-{j_k}+1}+\dots+t_{-}r^k_{-{1}}+r^k_{0} +r^k_{1}t_{+}+\dots +r^k_{i_k}t^{i_k}_{+}.\]
It is easy to observe that the constant term of $x_kx^*_k$ is 
\[\phi^{-j_k}(r^k_{-{j_k}} {r^k}^*_{-{j_k}})+\dots +\phi^{-1}(r^k_{-1} {r^k}^*_{-1})+r^k_{0} {r^k}^*_{0} +r^k_{1} {r^k}^*_{1}+\dots+r^k_{i_k}{r^k}^*_{i_k}.\]
Now Equation~\ref{adel15} implies that the sum of these constant terms are zero. Since $R$ is positive-definite and $\phi$ is an $*$-isomorphism, it follows that all $r^k_{-j_k}$ and $r^k_{i_k}$ for $1\leq k \leq l$ are zero and thus $x_k=0$. This finishes the proof of (1). 

(2) The proof is similar to (1) and is left to the reader. 
\end{proof}




\chapter{Graded Morita Theory}\label{moritanji}

Starting from a right $A$-module $P$ one can construct a $6$-tuple $(A,P,P^*,B;\phi,\psi)$, where $P^*=\Hom_A(P,A)$, $B=\Hom_A(P,P)$, $\phi:P^*\otimes_B P \rightarrow A$ and $\psi:P \otimes_A P^* \rightarrow B$, which have appropriate bimodule structures. This is called the \emph{Morita context}  \index{Morita context} associated with $P$. When $P$ is a progenerator (\ie finitely generated projective and a generator), then one can show that 
$\Modd A \approx \Modd B$, i.e, the category of (right) $A$-modules is equivalent to the category of (right) $B$-modules. Conversely, if for two rings $A$ and $B$, $\Modd A\approx \Modd B$, then one can construct a $A$-progenerator $P$ such that $B\cong \End_A(P)$. The feature of this theory, called the Morita theory, is that the whole process involves working with $\Hom$ and tensor functors, both of which respect the grading. Thus starting from a graded ring $A$ and a graded progenerator $P$, and carrying out the Morita theory, one can naturally extending the equivalence from  $\Gr A\approx \Gr B$ to $\Modd A \approx \Modd B$ and vice versa. 

Extending the equivalence from the (sub)categories of graded modules to the categories of modules is not at the first glance obvious. Recall that two categories $\mathcal C$ and $\mathcal D$ are equivalent if and only if there is a functor $\phi:\mathcal C \rightarrow \mathcal D$ which is fully faithful and for any $D \in \mathcal D$, there is $C\in \mathcal C$ such that $\phi(C)=D$. 

Suppose $\phi:\Gr A\rightarrow \Gr B$ is a graded equivalence (see Definition~\ref{grdeffsa}). Then for two graded $A$-modules $M$ and $N$ (where $M$ is finitely generated) we have 
\begin{multline*}
\Hom_A(M,N)_0= \Hom_{\Gr A}(M,N) \cong \\ \Hom_{\Gr B}(\phi(M),\phi(N)) =\Hom_B(\phi(M),\phi(N))_0.
\end{multline*}

The fact that, this can be extended to \[\Hom_A(M,N) \cong \Hom_B(\phi(M),\phi(N)),\] is not immediate. We will show that this is indeed the case.

  In this chapter we study the equivalences between the categories of graded modules and their relations with the categories of modules. The main theorem of this chapter shows that for two graded rings $A$ and $B$, if $\Gr A\approx_{\gr}  \Gr B$ then $\Modd A\approx \Modd B$ (Theorem~\ref{grmorim}).  This was first studied by Gordon and Green~\cite{greengordon} in the setting of $\mathbb Z$-graded rings. 

Throughout this chapter, $A$ is a $\Gamma$\!-graded ring unless otherwise stated. Moreover, all functors are \emph{additive} functors. \index{additive functor}  
For the theory of (ungraded) Morita theory we refer the reader to~\cite{lam} and~\cite{anderson}

\section{First instance of the graded Morita equivalence}

Before describing the general graded Morita equivalence, we treat a special case of matrix algebras, which, in the word of T.Y. Lam, is the ``first instance'' of equivalence between module categories~\cite[\S17B]{lam}. This special case is quite explicit and will help us in calculating the graded Grothendieck group of matrix algebras. Recall from~\S\ref{matgrhe} that for a $\Gamma$\!-graded ring $A$,  and $\ol \delta = (\de_1 , \ldots ,\de_n)$
\begin{equation}\label{communicollges}
A^n(\overline \de)=A(\de_1)\oplus \dots \oplus A(\de_n),
\end{equation} 
is a graded free $A$-bimodule. Moreover, $A^n(\overline \de)$ is a graded right $\M_n(A)(\overline \de)$-module and $A^n(-\overline \de)$ is a graded left $\M_n(A)(\overline \de)$-module. Here  
$-\overline \de=(-\de_1, \dots,-\de_n)$.
\index{graded matrix ring}

\begin{proposition}\label{instancegg}
Let $A$ be a $\Gamma$\!-graded ring and let $\ol \delta = (\de_1 , \ldots ,
\de_n)$, where $\de_i \in \Ga$, $1\leq i \leq n$. Then the functors
\begin{align*}
\psi : \Gr \M_n(A)(\ol \delta)  & \longrightarrow \Gr A, \\
P & \longmapsto P  \otimes_{\M_n(A)(\ol \delta)} A^n(-\ol \delta) 
\end{align*}
and 
\begin{align*}
 \va : \Gr A & \longrightarrow \Gr \M_n(A)(\ol \delta), \\
Q &  \longmapsto Q \otimes_A A^n(\ol \delta)
\end{align*}
form equivalences of categories and commute with suspensions, i.e, $\psi \mathcal T_{\alpha}=\mathcal T_{\alpha} \psi$, $\alpha \in \Gamma$. 
\end{proposition}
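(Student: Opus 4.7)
The plan is to exhibit $A^n(\overline\delta)$ and $A^n(-\overline\delta)$ as mutually inverse graded Morita context bimodules between $A$ and $R := \M_n(A)(\overline\delta)$, from which the equivalence follows formally. The bimodule structures come from~\eqref{fatloss}, which identifies $R$ with $\End_A(A^n(-\overline\delta))$ as graded rings: this gives $A^n(-\overline\delta)$ the structure of a graded $(R,A)$-bimodule, with the left $R$-action by matrix multiplication on column vectors, while $A^n(\overline\delta)$ becomes a graded $(A,R)$-bimodule via right matrix multiplication on row vectors, with the degrees compatible by~\eqref{mmkkhh}.

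The substantive step is to produce two natural graded bimodule isomorphisms
\[A^n(\overline\delta) \otimes_R A^n(-\overline\delta) \;\conggr\; A\]
as graded $(A,A)$-bimodules, induced by the ``row times column'' pairing $v \otimes w \mapsto \sum_i v_i w_i$; and
\[A^n(-\overline\delta) \otimes_A A^n(\overline\delta) \;\conggr\; R\]
as graded $(R,R)$-bimodules, induced by the ``column times row'' pairing $w \otimes v \mapsto (w_i v_j)_{i,j}$. A direct degree calculation, using that a homogeneous element of degree $\gamma$ in $A(\delta_i)$ lies in $A_{\gamma+\delta_i}$, together with~\eqref{mmkkhh}, shows both pairings are graded of degree zero. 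Bijectivity reduces to routine verifications on the standard basis $\{e_i\}$, using the matrix-unit relations $\sum_i \e_{ii} = 1$ and $\e_{ij}\e_{kl} = \delta_{jk}\e_{il}$; the principal obstacle throughout is simply keeping track of the shifts carefully.

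Granted these two isomorphisms, associativity of tensor products yields natural graded isomorphisms
\[\psi\varphi(Q) \conggr Q \otimes_A \bigl(A^n(\overline\delta) \otimes_R A^n(-\overline\delta)\bigr) \conggr Q \otimes_A A \conggr Q,\]
\[\varphi\psi(P) \conggr P \otimes_R \bigl(A^n(-\overline\delta) \otimes_A A^n(\overline\delta)\bigr) \conggr P \otimes_R R \conggr P,\]
establishing the equivalence. Finally, commutativity with the suspension functors is immediate from~\eqref{izmir27june}:
\[\varphi(Q(\alpha)) = Q(\alpha) \otimes_A A^n(\overline\delta) \conggr \bigl(Q \otimes_A A^n(\overline\delta)\bigr)(\alpha) = \varphi(Q)(\alpha),\]
and symmetrically for $\psi$.
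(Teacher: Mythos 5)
Your proposal is correct and follows essentially the same route as the paper: you construct the same two graded bimodule isomorphisms $A^n(\ol\delta)\otimes_{\M_n(A)(\ol\delta)} A^n(-\ol\delta)\conggr A$ (row-times-column) and $A^n(-\ol\delta)\otimes_A A^n(\ol\delta)\conggr \M_n(A)(\ol\delta)$ (column-times-row), deduce the equivalence from them, and obtain compatibility with suspensions from~\eqref{izmir27june}, exactly as in the paper's proof. The only cosmetic difference is that the paper writes down explicit inverse maps for the two pairings where you verify bijectivity on the standard basis.
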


\begin{proof}
One can check that there is a $\Gamma$\!-graded $A$-bimodule isomorphism 
\begin{align}\label{whuipp}
f : \; A^n(\ol\de)\otimes_{\M_n(A)(\ol\de)} A^n(-\ol \de)  &  \longrightarrow A, \\
(a_1 , \ldots , a_n ) \otimes \begin{pmatrix}
b_1 \\ \vdots \\ b_n
\end{pmatrix} & \longmapsto  a_1
b_1 + \cdots + a_n b_n  \notag
\end{align}
with
\begin{align*}
f^{-1}: A  & \longrightarrow A^n(\ol\de)\otimes_{\M_n(A)(\ol\de)} A^n(-\ol\de),\\
a & \longmapsto (a , 0 , \ldots , 0) \otimes \begin{pmatrix}
1 \\ 0\\ \vdots \\ 0
\end{pmatrix}.
\end{align*}

Moreover, there is a $\Gamma$\!-graded $\M_n(A)(\ol \de)$-bimodule isomorphism 
\begin{align}\label{whuipp2}
g : \; A^n(-\ol\de) \otimes_{A} A^n(\ol\de) &  \longrightarrow \M_n(A)(\ol\de),\\
\begin{pmatrix}
a_1 \\ \vdots \\ a_n
\end{pmatrix}
\otimes
(b_1 , \ldots , b_n )
& \longmapsto
\begin{pmatrix}
a_{1} b_1 & \cdots & a_{1} b_n  \\
\vdots  &   & \vdots  \\
a_{n} b_1  & \cdots & a_{n} b_n
\end{pmatrix} \notag
\end{align}
with
\begin{align*}
g^{-1} : \; \M_n(A)(\ol \de) & \lra A^n(-\ol \de) \otimes_{A} A^n( \ol \de), \\
(a_{i,j}) & \longmapsto
\begin{pmatrix}
a_{1,1} \\ a_{2,1} \\ \vdots \\ a_{n,1}
\end{pmatrix}
\otimes (1,0,\dots, 0)
+ \cdots +
\begin{pmatrix}
a_{1,n\phantom{-1}} \\ \vdots\phantom{-1} \\ a_{n-1,n} \\ a_{n,n\phantom{-1}}
\end{pmatrix}
\otimes (0,0,\dots, 1).
\end{align*}

Now using~(\ref{whuipp}) and~(\ref{whuipp2}),  it follows easily that $\varphi \psi$ and $\psi \varphi$ are equivalent to identity functors. The general fact that for $\alpha \in \Gamma$, \[(P\otimes Q) (\alpha)= P(\alpha)\otimes Q =P \otimes Q(\alpha),\] (see~\S\ref{grtensie}) shows that the suspension functor commutes with $\psi$ and $\phi$. 
\end{proof}

Since the functors $\phi$ and $\psi$, being tensor functors,  preserve the projectivity, and send finitely generated modules to finitely generated modules, we immediately  get the following corollary which we will use it to compute graded $K_0$ of matrix algebras (see Example~\ref{gaul1}).  

\begin{corollary}\label{grmorita}
Let $A$ be a $\Gamma$\!-graded ring and let $\ol \delta = (\de_1 , \ldots ,
\de_n)$, where $\de_i \in \Ga$, $1\leq i \leq n$. Then the functors
\begin{align*} 
\psi : \Pgrp \M_n(A)(\ol \delta)  & \longrightarrow \Pgrp  A, \\
P & \longmapsto P  \otimes_{\M_n(A)(\ol \delta)} A^n(-\ol \delta)
\end{align*}
and 
\begin{align*}
 \va : \Pgrp A & \longrightarrow \Pgrp  \M_n(A)(\ol \delta), \\
Q &  \longmapsto Q \otimes_A A^n(\ol \delta)
\end{align*}
form equivalences of categories and commute with suspensions, i.e, $\psi \mathcal T_{\alpha}=\mathcal T_{\alpha} \psi$, $\alpha \in \Gamma$. 
\end{corollary}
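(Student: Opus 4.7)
\smallskip

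The plan is to observe that Proposition~\ref{instancegg} already delivers the mutually inverse equivalences between $\Gr \M_n(A)(\ol\de)$ and $\Gr A$ that commute with suspensions, so the entire task reduces to verifying that the two functors restrict to the full subcategories of graded finitely generated projective modules. Since both $\psi$ and $\va$ are tensor functors with a bimodule, they preserve arbitrary direct sums (hence finite direct sums and direct summands), so by the characterisation in~\eqref{medickiue} it suffices to trace a single graded free generator $A(\alpha)$ (respectively $\M_n(A)(\ol\de)(\alpha)$) through each functor.

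First I would compute $\va(A(\alpha)) = A(\alpha)\otimes_A A^n(\ol\de)\conggr A^n(\ol\de)(\alpha)$ using~\eqref{izmir27june}. The bimodule isomorphism~\eqref{whuipp2} from the proof of Proposition~\ref{instancegg} gives, as graded right $\M_n(A)(\ol\de)$-modules,
\[
\M_n(A)(\ol\de)\;\conggr\; A^n(-\ol\de)\otimes_A A^n(\ol\de)\;\conggr\;\bigoplus_{i=1}^{n} A^n(\ol\de)(-\de_i),
\]
so each shift $A^n(\ol\de)(-\de_i)$ is a graded direct summand of the free module $\M_n(A)(\ol\de)$ of rank one. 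Shifting by $\alpha+\de_i$ and invoking Proposition~\ref{grprojectivethm} (in the form~\eqref{medickiue}) shows that $A^n(\ol\de)(\alpha)$ is a graded finitely generated projective right $\M_n(A)(\ol\de)$-module. For a general $P\in \Pgrp A$, write $P\oplus Q\conggr A^m(\ol\al)$; applying $\va$ and using additivity yields $\va(P)\oplus \va(Q)\conggr\bigoplus_{i=1}^{m} A^n(\ol\de)(\al_i)$, so $\va(P)\in \Pgrp \M_n(A)(\ol\de)$.

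For the opposite direction, I would apply $\psi$ to a graded free generator $\M_n(A)(\ol\de)(\alpha)$. Using the bimodule isomorphism~\eqref{whuipp} we obtain
\[
\psi\bigl(\M_n(A)(\ol\de)(\alpha)\bigr)\;\conggr\;\M_n(A)(\ol\de)(\alpha)\otimes_{\M_n(A)(\ol\de)} A^n(-\ol\de)\;\conggr\;A^n(-\ol\de)(\alpha),
\]
which is graded free (in particular graded finitely generated projective) of rank $n$ over $A$. The same summand argument as above then shows $\psi(P')\in \Pgrp A$ for every $P'\in \Pgrp \M_n(A)(\ol\de)$. Combined with the fact that $\psi\va\cong \id$ and $\va\psi\cong \id$ and that both functors commute with each $\mathcal T_\alpha$ (from Proposition~\ref{instancegg}), this gives the desired equivalence of exact categories with the stated compatibility with suspensions.

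The proof is essentially bookkeeping; the only step that needs genuine input beyond Proposition~\ref{instancegg} is the identification of $A^n(\ol\de)$ (respectively $A^n(-\ol\de)$) as graded finitely generated projective on the appropriate side, and this follows at once from the bimodule isomorphisms~\eqref{whuipp} and~\eqref{whuipp2} that were already constructed in the previous proof, so no serious obstacle is anticipated.
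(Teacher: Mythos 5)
Your proposal is correct and takes essentially the same route as the paper: the paper dispatches the corollary in one line by observing that the tensor functors of Proposition~\ref{instancegg}, being given by bimodules, preserve projectivity and send finitely generated modules to finitely generated modules, and your generator-chase through \eqref{whuipp2} and \eqref{izmir27june} is simply an explicit verification of that remark. One cosmetic slip: the computation $\psi\bigl(\M_n(A)(\ol\de)(\alpha)\bigr)\conggr A^n(-\ol\de)(\alpha)$ needs only the unit isomorphism $R\otimes_R M\cong M$ together with \eqref{izmir27june}, not \eqref{whuipp}, but nothing in your argument depends on this.
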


\begin{example}\label{hygfdbte65} \index{matrix units} 
 Let $\e_{ii}$, $1\leq i \leq n$, be the matrix unit in $\M_n(A)(\overline \de)$. Then $\e_{ii}\M_n(A)(\overline \de)$ is the graded finitely generated projective right $\M_n(A)(\overline \de)$-module. It is easy to see that there is a graded $\M_n(A)(\overline \de)$-module isomorphism 
\begin{align*} 
\e_{ii}\M_n(A)(\overline \de) &\longrightarrow A(\delta_1-\delta_i)\oplus A(\delta_2-\delta_i)\oplus \dots \oplus A(\delta_n-\delta_i), \label{gnytuhf43}\\
\e_{ii}X &\longmapsto  (x_{i1},x_{i2},\dots,x_{in}). \notag
\end{align*}
Thus by Proposition~\ref{instancegg}, the module $\e_{ii}\M_n(A)(\overline \de)$ in $\Gr \M_n(A)(\ol \delta)$ corresponds to $A(-\delta_i)$ in $\Gr A$ as follows. 
\begin{multline*}
 \e_{ii}\M_n(A)(\overline \de) \otimes_{\M_n(A)(\ol \delta)} A^n(-\ol \delta)  \cong_{\gr}  \\
A(\delta_1-\delta_i)\oplus A(\delta_2-\delta_i)\oplus \dots \oplus A(\delta_n-\delta_i)    \otimes_{\M_n(A)(\ol \delta)} A^n(-\ol \delta)  \cong_{\gr}  \\
 A^n(\ol \delta)(-\delta_i)\otimes_{\M_n(A)(\ol \delta)} A^n(-\ol \delta) \cong_{\gr}
  \big( A^n(\ol \delta)\otimes_{\M_n(A)(\ol \delta)} A^n(-\ol \delta)\big)(-\delta_i)\cong_{\gr}\\ A(-\delta_i).
  \end{multline*}
\end{example}

\begin{example}\label{hhmehe}\scm[Strongly graded is not a Morita invariant property]
\vspace{0.2cm}

Let $A$ be a strongly $\Gamma$\!-graded ring. One can see that $\M_n(A)(\alpha_1,\dots,\alpha_n)$, $\alpha_i \in \Gamma$, is also strongly graded. However the converse is not true. For example, let $K$ be a field and consider the $\mathbb Z$-graded ring $K[x^2,x^{-2}]$ which has  the support $2\mathbb Z$.  By Proposition~\ref{instancegg},   
$A=\M_2(K[x^2,x^{-2}])(0,1)$ is graded Morita equivalent to $B=K[x^2,x^{-2}]$. However, one can easily see that $A$ is strongly graded  whereas $B$ is not. 

One can also observe that although $A$ and $B$ are graded Morita equivalent, the support set of $A$ is $\mathbb Z$, whereas the support of $B$ is $2\mathbb Z$ (see~(\ref{kkjjhhs})). 
\end{example}

\begin{remark}\scm[Restricting the equivalence to a subgroup of grade group]
\vspace{0.2cm}

Let $A$ be a $\Gamma$\!-graded ring and let $\ol \delta = (\de_1 , \ldots ,
\de_n)$, where $\de_i \in \Ga$, $1\leq i \leq n$. Suppose $\Omega$ is a subgroup of $\Gamma$ such that $\Gamma_A \subseteq \Omega$ and $\{\delta_1,\dots,\delta_n\} \subseteq \Omega$. 
Since the support of $\M_n(A)(\ol \delta)$ is a subset of $\Omega$ (see~(\ref{kkjjhhs})), we can naturally consider $A$ and $\M_n(A)(\ol \delta)$ as $\Omega$-graded rings (see Remark~\ref{presipe}). Since the $\Gamma$\!-isomorphisms (\ref{whuipp}) and (\ref{whuipp2}) in the proof of Proposition~\ref{instancegg} can be considered as $\Omega$-isomorphism, 
we can easily see that  $\Gr[\Omega] \M_n(A)(\ol \delta)$ is equivalent to $\Gr[\Omega] A$ as well.  The converse of this statement is valid for the general graded Morita Theory, see Remark~\ref{maziho}.  
\end{remark}

\begin{remark}\scm[The Morita equivalence for non-abelian grade group]
\vspace{0.2cm}

As mentioned in the Introduction, most of the results of this book can be carried over to the setting of non-abelian grade groups as well. In some cases this will be done by adjusting the arrangements and shifts. Below we adjust the arrangements so that Proposition~\ref{instancegg} holds for the non-abelian grade group as well.

Let $\Gamma$ be a (non-abelian) group and $A$ be a $\Gamma$\!-graded ring. Further let $M$ and $N$ be $\Gamma$\!-graded left and right $A$-modules, respectively. Define the $\de$-shifted graded left and right $A$-modules $M(\de)$ and $(\de)N$, respectively,  as \index{shifted}
\begin{align*}
M(\de) & =\bigoplus_{\ga \in \Ga} M(\de)_\ga, \text{ where }M(\de)_\ga =
M_{\ga\de},\\
(\de)N & =\bigoplus_{\ga \in \Ga} (\de)N_\ga, \text{ where }(\de)N_\ga =
N_{\de\ga}.
\end{align*}
One can easily check that $M(\delta)$ and $(\delta)N$ are left and right graded $A$-modules, respectively, $M(\delta)(\gamma)=M(\gamma \delta)$ and $(\gamma)(\delta)N=(\delta\gamma)N$. Note that with this arrangement, for $\ol \delta = (\de_1 , \ldots ,\de_n)$ and $\ol \de^{-1}=(\de_1^{-1} , \ldots ,\de_n^{-1})$, $(\ol \de) A^n$ is an $A-\M_n(A)(\ol\de)$-bimodule and 
$A^n(\ol\de^{-1})$ is an $\M_n(A)(\ol\de)-A$-bimodule, where $\M_n(A)(\ol\de)$ is as in Remark~\ref{fre298}. Moreover, 
the isomorphisms~(\ref{whuipp}) and~(\ref{whuipp2}) take the form
\begin{align*}
f : \; A^n(\ol\de^{-1})\otimes_{\M_n(A)(\ol\de)} (\ol \de) A^n  &  \longrightarrow A, \\
(a_1 , \ldots , a_n ) \otimes \begin{pmatrix}
b_1 \\ \vdots \\ b_n
\end{pmatrix} & \longmapsto  a_1
b_1 + \cdots + a_n b_n  \notag
\end{align*}
and 
\begin{align*}
g : \; (\ol\de) A^n \otimes_{A} A^n(\ol\de^{-1}) &  \longrightarrow \M_n(A)(\ol\de),\\
\begin{pmatrix}
a_1 \\ \vdots \\ a_n
\end{pmatrix}
\otimes
(b_1 , \ldots , b_n )
& \longmapsto
\begin{pmatrix}
a_{1} b_1 & \cdots & a_{1} b_n  \\
\vdots  &   & \vdots  \\
a_{n} b_1  & \cdots & a_{n} b_n
\end{pmatrix}. \notag
\end{align*}
Thus we can write Proposition~\ref{instancegg} in the non-abelian grade group setting. 
\end{remark}

\section{Graded generators}

We start with a categorical definition of graded generators. In this section, as usual, the modules are (graded) right $A$-modules. 

\begin{definition}\label{grgen}
Let $A$ be a $\Gamma$\!-graded ring. A graded $A$-module $P$ is called a \emph{graded generator} \index{graded generator} if whenever $f:M\rightarrow N$ is a nonzero graded $A$-module homomorphism, then there exists $\alpha \in \Gamma$ and a graded homomorphism $g:P(\alpha) \rightarrow M$ such that $fg:P(\alpha) \rightarrow N$ is a nonzero map. 
\end{definition}

Let $A$ be a $\Gamma$\!-graded ring and $P$ be a graded right $A$-module. Define the \emph{graded trace ideal} of $P$ as follows \index{graded trace ideal}
\[\boxed{\Tr^{\gr} (P):=\Big \{ \, \sum_i f_i(p_i) \mid f_i \in \Hom(P,A)_\alpha, \alpha \in \Gamma, \, p_i \in P^h \, \Big \}.} \]

One can check that $\Tr^{\gr}(P)$ is a graded two-sided ideal of $A$.  The following theorem provides a set theoretic way to define a graded generator.

\begin{theorem}\label{grgenth}
For any graded $A$-module $P$, the following are equivalent:

\begin{enumerate}[\upshape(1)]

\item $P$ is a graded generator;

\item $\Tr^{\gr}(P)=A$;

\item $A$ is a direct summand of a finite direct sum  $\bigoplus_i P(\alpha_i)$, where $\alpha_i \in \Gamma$;

\item $A$ is a direct summand of a direct sum  $\bigoplus_i P(\alpha_i)$, where $\alpha_i \in \Gamma$;

\item Every graded $A$-module $M$ is a homomorphic image of  $\bigoplus_i P(\alpha_i)$, where $\alpha_i \in \Gamma$. 
\end{enumerate}
\end{theorem}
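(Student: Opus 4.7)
The plan is to prove the five conditions are equivalent by a cycle $(1)\Rightarrow(2)\Rightarrow(3)\Rightarrow(4)\Rightarrow(5)\Rightarrow(1)$, using throughout the degree-shift identification
\[
\Hom_{\Gr A}\bigl(P(\alpha),A\bigr)=\Hom_A(P,A)_{-\alpha}
\]
from \eqref{googoo1}, which turns graded maps $P(\alpha)\to A$ of degree $0$ into homogeneous homomorphisms $P\to A$ of degree $-\alpha$, and therefore identifies their images with elements of $\Tr^{\gr}(P)$.

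For $(1)\Rightarrow(2)$: if $\Tr^{\gr}(P)\subsetneq A$, the canonical quotient $f:A\to A/\Tr^{\gr}(P)$ is a nonzero graded homomorphism, yet for every graded $g:P(\alpha)\to A$ the image lies in $\Tr^{\gr}(P)$, forcing $fg=0$ and contradicting $(1)$. For $(2)\Rightarrow(3)$: write $1=\sum_{i=1}^n f_i(p_i)$ with $p_i\in P^h$ of degree $-\alpha_i$ and $f_i\in\Hom_A(P,A)_{\alpha_i}$; each $f_i$ is a graded map $P(-\alpha_i)\to A$ of degree $0$, so the assembled map
\[
\phi:\bigoplus_{i=1}^n P(-\alpha_i)\longrightarrow A,\qquad (x_1,\dots,x_n)\longmapsto \sum_i f_i(x_i),
\]
is graded and hits $1$, hence is surjective. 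Since $A$ is graded projective (even graded free), $\phi$ splits by Proposition~\ref{grprojectivethm}, realising $A$ as a graded direct summand of $\bigoplus_i P(-\alpha_i)$.

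The implication $(3)\Rightarrow(4)$ is immediate. For $(4)\Rightarrow(5)$: shifting the decomposition $A\oplus Q\conggr \bigoplus_i P(\alpha_i)$ by any $\beta\in\Gamma$ shows that every $A(\beta)$ is a graded direct summand of $\bigoplus_i P(\alpha_i+\beta)$. Given an arbitrary graded $A$-module $M$, choose a homogeneous generating set and obtain a graded surjection $\bigoplus_j A(\beta_j)\twoheadrightarrow M$ as in \eqref{jeaonghe}; combining this with the projections $\bigoplus_{i,j} P(\alpha_i+\beta_j)\twoheadrightarrow \bigoplus_j A(\beta_j)$ exhibits $M$ as a graded homomorphic image of a direct sum of shifts of $P$. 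Finally, for $(5)\Rightarrow(1)$: given a nonzero graded $f:M\to N$, apply $(5)$ to obtain a graded surjection $\pi:\bigoplus_i P(\alpha_i)\twoheadrightarrow M$. Since $f\pi\neq 0$, at least one restriction $g:=\pi\!\mid_{P(\alpha_i)}$ satisfies $fg\neq 0$, which is precisely Definition~\ref{grgen}.

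No step is genuinely hard; the only subtle point is bookkeeping of the shifts, in particular making sure in $(2)\Rightarrow(3)$ that the homogeneous generators are indexed so that $\deg(p_i)=-\alpha_i$ (forced by $1\in A_0$) and correspondingly that $f_i$ is repackaged as a graded map out of $P(-\alpha_i)$, so that the sum $\phi$ really is a graded homomorphism of degree zero. Once this identification via \eqref{googoo1} is fixed, the remaining arguments are direct transcriptions of the ungraded proof.
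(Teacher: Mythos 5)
Your proof is correct and follows essentially the same route as the paper's: the same cycle $(1)\Rightarrow(2)\Rightarrow(3)\Rightarrow(4)\Rightarrow(5)\Rightarrow(1)$, with the identical quotient-map contradiction for $(1)\Rightarrow(2)$, the split graded epimorphism onto the graded projective module $A$ for $(2)\Rightarrow(3)$, and the composition of a graded free cover of $M$ with shifted surjections onto the $A(\beta_j)$ for $(4)\Rightarrow(5)$. The only difference is cosmetic: you unpack $\Tr^{\gr}(P)$ via homogeneous homomorphisms and repackage them through the identification $\Hom_A(P,A)_{\alpha}=\Hom_{\Gr A}(P(-\alpha),A)$, whereas the paper works directly with graded maps $P(\alpha_i)\rightarrow A$ — the same argument in different notation.
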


\begin{proof}
(1) $\Rightarrow$ (2) First note that 
\[\Tr^{\gr} (P)=\Big \{ \, \sum_i f_i(P(\alpha)) \mid f_i \in \Hom_{A \rGr} \big(P(\alpha),A\big),   \alpha \in \Gamma \, \Big \}. \]
Suppose that $\Tr^{\gr}(P) \not = A$. Then the graded canonical projection  \[f:A\rightarrow A/ \Tr^{\gr}(P)\] is not a zero map. Since $P$ is graded generator, there is 
$g \in \Hom_{A \rGr} (P(\alpha),A)$ such that $fg$ is not zero. But this implies $g(P(\alpha)) \not \subseteq \Tr^{\gr} (P)$ which is a contradiction. 

(2) $\Rightarrow$ (3) Since $\Tr^{\gr}(P)=A$, one can find $g_i \in \Hom_{A \rGr}( P(\alpha_i),A)$, $\alpha_i \in \Gamma$, $1\leq i \leq n$ such that $\sum_{i=1}^n g_i(P(\alpha_i))=A$. Consider the graded $A$-module epimorphism 
\begin{align*}
\bigoplus_{i=1}^n P(\alpha_i) &\longrightarrow A,\\
(p_1,\dots,p_n)&\longmapsto \sum_{i=1}^n g_i(p_i).
\end{align*}
 Since $A$ is graded projective, this map splits. Thus \[\bigoplus_{i=1}^n P(\alpha_i)\cong_{\gr} A\oplus Q,\] for some graded $A$-module $Q$. This gives (3). 

(3) $\Rightarrow$ (4) This is immediate. 

(4) $\Rightarrow$ (5) Since any module is a homomorphic image of a graded free $A$-module, there is a graded epimorphism $\bigoplus_j A(\alpha_j') \rightarrow M$. By (3) there is a graded epimorphism $\bigoplus_i P(\alpha_i)  \rightarrow A$, so an epimorphism \[\bigoplus_i P(\alpha_i+\alpha_j')  \rightarrow A(\alpha_j').\] Therefore  
\[\bigoplus_j \bigoplus_i P(\alpha_i+\alpha_j') \rightarrow \oplus_j A(\alpha_j') \rightarrow M\] is a graded epimorphism. 

(5) $\Rightarrow$ (1) Let $f:M \rightarrow N$ be a nonzero graded homomorphism. By (4) there is an epimorphism $\bigoplus_i P(\alpha_i) \rightarrow M$. So the composition $\bigoplus_i P(\alpha_i) \rightarrow M \rightarrow N$ is not zero. This immediately implies there is an $i$ such that the composition $P(\alpha_i) \rightarrow M \rightarrow N$ is not zero. This gives (1). 
\end{proof}

\begin{example}\label{BITedu}
Let $A$ be a graded simple ring and $P$ be a graded projective $A$-module. Since $\Tr^{\gr}(P)$ is a graded two-sided ideal, one can easily show, using Proposition~\ref{grgenth}(2), that $P$ is a graded generator. 

\end{example}

Recall that in the category of right  $A$-modules, $\Modd A$, a generator is defined as in Definition~\ref{grgen} by dropping all the graded adjective.  Moreover a similar theorem as in~\ref{grgenth} can be written in the ungraded case, by considering $\Gamma$ to be the trivial group (see~\cite[Theorem~18.8]{lam}). In particular $P$ is a generator if and only if $\Tr(P)=A$, where 
\[\Tr (P):=\Big \{ \, \sum_i f_i(p_i) \mid f_i \in \Hom_A(P,A), \, p_i \in P \, \Big \}. \]

\begin{remark}\label{eximans}
Considering $\Gr A$ as a subcategory of $\Modd A$ (which is clearly not a full subcategory), one can define  generators for $\Gr A$. In this case one can easily see that $\bigoplus_{\gamma \in \Gamma} A(\gamma)$ is a generator for the category $\Gr A$. Recall that, in comparison, $A$ is a generator for $\Modd A$ and  $A$ is a graded generator for $\Gr A$. 
\end{remark}

An $A$-module $P$ is called a \emph{progenerator} \index{progenerator module} if it is both a finitely generated projective and a generator, \ie there is an $n\in \mathbb N$ such that $A^n\cong P\oplus K$ and $P^n\cong A\oplus L$, where $K$ and $L$ are $A$-modules. Similarly, a graded $A$-module is called a \emph{graded progenerator} \index{graded progenerator module} if it is both a graded finitely generated projective and a graded generator. 

 Reminiscent of the case of graded projective modules (Proposition~\ref{grprojectivethm}), we have the following relation between generators and graded generators. 

\begin{theorem}\label{prograded}
Let $P$ be a finitely generated  $A$-module.  Then $P$ is a graded generator if and only if $P$ is graded and is a generator. 
\end{theorem}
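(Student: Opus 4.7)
The plan is to verify the two implications of the biconditional, with Theorem~\ref{grgenth} as the main bridge: this reduces being a graded generator to the equality $\Tr^{\gr}(P)=A$, and the ungraded analogue (cf.~\cite[Theorem~18.8]{lam}) reduces being a generator to $\Tr(P)=A$. The key auxiliary tool will be Theorem~\ref{crazhorn}, which requires exactly the finite-generation hypothesis on $P$.

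One direction is essentially immediate. If $P$ is a graded generator then by definition it is graded. Moreover, since every graded $A$-module homomorphism is in particular an $A$-module homomorphism, we have the obvious inclusion $\Tr^{\gr}(P)\subseteq \Tr(P)$. Combined with Theorem~\ref{grgenth}(2), this gives $A=\Tr^{\gr}(P)\subseteq\Tr(P)\subseteq A$, so $P$ is a generator.

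For the converse I would assume $P$ is a graded finitely generated $A$-module which is a generator (in the ungraded sense), and aim to show $\Tr^{\gr}(P)=A$. Since $P$ is a generator, there exist $f_1,\dots,f_k\in\Hom_A(P,A)$ and $p_1,\dots,p_k\in P$ with $1=\sum_{i=1}^k f_i(p_i)$. Because $P$ is finitely generated and graded, Theorem~\ref{crazhorn} gives the decomposition
\[
\Hom_A(P,A)=\bigoplus_{\gamma\in\Gamma}\Hom_A(P,A)_\gamma,
\]
so each $f_i$ can be written as a finite sum $f_i=\sum_\gamma f_{i,\gamma}$ with $f_{i,\gamma}\in\Hom_A(P,A)_\gamma$, i.e.\ a graded homomorphism of degree $\gamma$. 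Similarly, since $P$ is graded, each $p_i$ decomposes as $p_i=\sum_\delta p_{i,\delta}$ with $p_{i,\delta}\in P^h$. Substituting,
\[
1=\sum_{i=1}^k f_i(p_i)=\sum_{i,\gamma,\delta} f_{i,\gamma}(p_{i,\delta}),
\]
and every summand $f_{i,\gamma}(p_{i,\delta})$ is, by definition, an element of $\Tr^{\gr}(P)$. Hence $1\in\Tr^{\gr}(P)$, and since $\Tr^{\gr}(P)$ is a (two-sided) ideal of $A$, it equals $A$. Theorem~\ref{grgenth} then yields that $P$ is a graded generator.

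The only subtle point, and therefore the ``hard'' step, is the invocation of Theorem~\ref{crazhorn} to split an arbitrary $A$-module homomorphism $f_i\colon P\to A$ as a finite sum of graded homomorphisms of pure degree; this is precisely where the finitely generated hypothesis on $P$ is used, and without it the argument breaks down because the degree decomposition of $\Hom_A(P,A)$ is no longer available. Once this splitting is in hand, the rest is bookkeeping: expand homogeneously in both slots and observe that each resulting term manifestly lies in the graded trace ideal.
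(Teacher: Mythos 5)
Your proof is correct and follows essentially the same route as the paper's: both directions hinge on the trace-ideal characterisations, with the converse using Theorem~\ref{crazhorn} to decompose each $f_i$ into graded homomorphisms and each $p_i$ into homogeneous components, so that $1\in\Tr^{\gr}(P)$ and the ideal property forces $\Tr^{\gr}(P)=A$. Your forward direction via the inclusion $\Tr^{\gr}(P)\subseteq\Tr(P)$ is just a slightly more explicit phrasing of the paper's argument.
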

\begin{proof}
Suppose $P$ is a graded generator. By Theorem~\ref{grgenth}, $\Tr^{\gr}(P)=A$ which implies that there are graded homomorphisms $f_i$ (of possibly different degrees) in  $\Hom_{A \rGr}( P(\alpha_i),A)$ and $p_i \in P^h$, such that $\sum_i f_i(p_i)=1$. This immediately implies $\Tr(P)$, being an ideal, is $A$. Thus $P$ is  a generator. 

Conversely, suppose $P$ is graded and is a generator. Thus there are homomorphisms $f_i$ in $\Hom(P,A)$ and $p_i \in P$ such that  $\sum_i f_i(p_i)=1$. Since $P$ is finitely generated, by Theorem~\ref{crazhorn}, $f_i$ can be written as a sum of graded homomorphisms, and $p_i$ as sum of homogeneous elements in $P$. This shows $1\in \Tr^{\gr}(P)$. Since $\Tr^{\gr}(P)$ is an ideal, $\Tr^{\gr}(P)=A$ and so $P$ is a graded generator by Theorem~\ref{grgenth}. 
\end{proof}

\section{General graded Morita equivalence} \label{meinghto}

Let $P$ be a right $A$-module. Consider the ring $B=\Hom_A(P,P)$. Then $P$ has a natural $B\!-\!A$-bimodule structure. The actions of $A$ and $B$ on $P$ are defined by  $p.a=pa$ and $g.p=g(p)$, respectively, where $g\in B$, $p\in P$ and $a\in A$. Consider the dual  $P^*=\Hom_A(P,A)$. Then $P^*$ has a natural $A\!-\!B$-bimodule structure. The actions of $A$ and $B$ on $P^*$  defined by $(a.q)(p)=aq(p)$ and $q.g=q \circ g$, respectively, where $g\in B$, $q\in P^*$, $p\in P$ and $a\in A$. Moreover, one defines 
\begin{align}\label{bsbsbs1}
\phi:P^* \otimes_B P &\longrightarrow A,\\
q\otimes p &\longmapsto q(p), \notag
\end{align}
 and 
 \begin{align}\label{bsbsbs2}
  \psi:P\otimes_A P^* &\longrightarrow B\\
  p\otimes q &\longmapsto pq, \notag
  \end{align}
  where $pq(p')=p(q(p'))$. One can check that $\phi$ is a $A\!-\!A$-bimodule homomorphism and $\psi$ is a $B\!-\!B$-bimodule homomorphism. We leave it to the reader to check these and that with the actions introduced above, $P$ has a $B\!-\!A$-bimodule structure and $P^*$ has a $A\!-\!B$-bimodule structure (see~\cite[\S18C]{lam}). These compatibility conditions amount to the fact that 
the \emph{Morita ring}  \index{Morita ring} 
\begin{equation}\label{messiah}
M=\left(\begin{matrix} A & P^*\\ P & B\phantom{*} \end{matrix}\right),
\end{equation} with the matrix multiplication has  the associativity property. In fact $M$ is a formal matrix ring as defined in Example~\ref{atsusan}. 
\index{formal matrix ring}

As part of the Morita theory, one proves that when $P$ is a generator, then $\phi$ is an isomorphism. Similarly, if $P$ is finitely generated and projective, then $\psi$ is an isomorphism (see~\cite[\S18C]{lam}). Putting these facts together, it is an easy observation that 
\begin{equation}\label{fgts6}
-\otimes_A P^*:\Modd A\rightarrow \Modd B, \quad \text{  and  }  \quad -\otimes_B P:\Modd B \rightarrow \Modd A, 
\end{equation}
 are inverse of each other and so these two categories are (Morita) equivalent.  \index{graded formal matrix ring}

If $P$ is a graded finitely generated  right $A$-module, then by Theorem~\ref{crazhorn}, $B=\End_A(P,P)$ is also a graded ring and $P^*$ a graded left $A$-module. In fact, one can easily check that with the actions defined above, $P$ is a graded $B\!-\!A$-bimodule, $P^*$ is a graded $A-B$-module and similarly $\phi$ and $\psi$ are graded $A-A$ and $B-B$-module homomorphisms, respectively. The Morita ring $M$ of~(\ref{messiah}) is a graded formal matrix ring (see Example~\ref{gratsusan}), with 
\begin{equation}\label{peacenn}
M_{\alpha}=\left(\begin{matrix} A_{\alpha} & P^*_{\alpha}\\ P_{\alpha} & B_{\alpha} \end{matrix}\right)=\left(\begin{matrix} A_{\alpha} & \Hom_{\Gr A}(P,A(\alpha))\\ P_{\alpha} & \Hom_{\Gr A}(P,P(\alpha)) \end{matrix}\right).
\end{equation}

We demonstrate here that $P^*$ is a graded $A\!-\!B$-bimodule and leave the others which are similar and routine to the reader. Recall that \[P^*=\bigoplus_{\alpha\in \Gamma} \Hom_A(P,A)_{\alpha}.\]  Let $a\in A_{\alpha}$ and 
$q\in P^*_{\beta}=\Hom_A(P,A)_{\beta}$. Then $a.q\in P^*$, where $(a.q)(p)=aq(p)$. If $p \in P_{\gamma}$, then one easily sees that $(a.q)(p) \in A_{\alpha+\beta+\gamma}$.  This shows that $aq \in \Hom_A(P,A)_{\alpha+\beta}=P^*_{\alpha+\beta}$. On the other hand, if 
$q\in P^*_{\alpha}$ and $g \in \Hom(P,P)_{\beta}$, then $q.g \in P^*$, where $q.g(p)=qg(p)$. So if $p \in P_{\gamma}$ then $(q.g)(p)=qg(p) \in  A_{\alpha+\beta+\gamma}$. So $q.g \in P^*_{\alpha+\beta}$ as well. 

\begin{remark}\scm[A natural groupoid graded ring]\label{meinbed} \index{groupoid grading}
\vspace{0.2cm}

Consider the Morita ring $M$ of~(\ref{messiah}) with homogeneous components~(\ref{peacenn}). Further, consider the following homogeneous elements 
\begin{equation*}
m_\gamma=\begin{pmatrix}
a_\gamma & 0  \\0 & 0
\end{pmatrix} \in M_\gamma, \,\,\,  
m_\delta=\begin{pmatrix}
0 &0  \\ p_\delta & 0
\end{pmatrix}  \in M_\delta. 
\end{equation*}
By definition $m_\gamma m_\delta \in M_{\gamma +\delta}$. However $m_\gamma m_\delta=0$. It would be more informative to recognise this from the outset. Thus we introduce a groupoid grading on the Morita ring $M$ in~(\ref{messiah}) with a groupoid graded structure which gives a ``finer'' grading than one defined in~(\ref{peacenn}) (see Remark~\ref{ggtgt} for the groupoid graded rings). Consider the groupoid  $2\times \Gamma\times 2$ and for $\gamma \in \Gamma$, set
\begin{equation*}
M_{(1,\gamma,1)}=\begin{pmatrix}
A_\gamma & 0 \\0 & 0
\end{pmatrix}, 
M_{(1,\gamma,2)} =\begin{pmatrix}
0 & P^*_\gamma \\0 & 0
\end{pmatrix}, 
M_{(2,\gamma,1)}=\begin{pmatrix}
0 & 0 \\P_\gamma & 0
\end{pmatrix},   
M_{(2,\gamma,2)} =\begin{pmatrix}
0 & 0 \\0 & B_\gamma
\end{pmatrix}.
\end{equation*}
Clearly for $\gamma \in \Gamma$ 
\[ M_\gamma= \bigoplus_{1\leq i,j\leq 2} M_{(i,\gamma,j)},\]
and 
\[M= \bigoplus_{g\in 2\times \Gamma\times 2} M_g.\]
This makes $M$ a $2\times \Gamma\times 2$-groupoid graded ring. 
\end{remark}

By Theorem~\ref{prograded} and Proposition~\ref{grprojectivethm}, if $P$ is a graded finitely generated projective and graded generator, then $P$ is finitely generated projective and a generator. Since the grading is preserved under the tensor products, the restriction of the functors $-\otimes_A P^*$ and $-\otimes_B P$ of~(\ref{fgts6}) induce an equivalence 
\begin{equation}\label{fgts688}
-\otimes_A P^*:\Gr A\longrightarrow \Gr B, \quad \text{  and  }  \quad -\otimes_B P:\Gr B \longrightarrow \Gr A. 
\end{equation}
Moreover, these functors commute with suspensions. Thus we get a commutative diagram where the vertical maps are forgetful functor (see~\S\ref{forgetful}), 
\begin{equation}
\xymatrix{
\Gr A \ar[rr]^{-\otimes_A P^*} \ar[d]_{U}&& \Gr B \ar[d]^{U}\\
\Modd A \ar[rr]^{-\otimes_A P^*}  && \Modd B.
}
\end{equation}
We call $-\otimes_A P^*$ a graded equivalence functor (see Definition~\ref{grdeffsa}).

\begin{example}\label{idempogr}
Let $A$ be a graded ring and $e$ be a \emph{full homogeneous idempotent} \index{full homogeneous idempotent} of $A$, \ie $e$ is a homogeneous element, $e^2=e$ and $AeA=A$. Clearly $e$ has  degree zero. 
Consider $P=eA$. One can readily see that $P$ is a graded right progenerator. Then $P^*=\Hom_A(eA,A)\cong_{\gr}Ae$ as graded left $A$-modules and $B=\End_A(eA)\conggr eAe$ as graded rings (see~\S\ref{idemptis}).  The maps $\phi$ and $\psi$ described above as a part of Morita context ((\ref{bsbsbs1}) and~(\ref{bsbsbs2})), takes the form $\phi:Ae\otimes_{eAe}eA \rightarrow A$ and $\psi:eA\otimes_A Ae \rightarrow eAe$ which are graded isomorphisms. Thus we get an (graded) equivalence between $\Gr A$ and $\Gr eAe$ which lifts to an (graded) equivalence between $\Modd A$ and $\Modd eAe$, as it is shown in the diagram below.
\begin{equation}\label{hgpian}
\xymatrix{
\Gr A \ar[rr]^-{-\otimes_A Ae} \ar[d]_{U}&& \Gr eAe \ar[d]^{U}\\
\Modd A \ar[rr]^-{-\otimes_A Ae}  && \Modd eAe.
}
\end{equation}
\end{example}

Before stating the general graded Morita equivalence, we need to make some definitions. Recall from \S\ref{pregr529} that for $\alpha \in \Gamma$,  the $\alpha$-suspension functor $\mathcal T_\alpha: \Gr A\rightarrow \Gr A$, $M \mapsto M(\alpha)$  is an isomorphism with the property
$\mathcal T_\alpha \mathcal T_\beta=\mathcal T_{\alpha + \beta}$, $\alpha,\beta\in \Gamma$.

\begin{definition} \label{grdeffsa} 
Let $A$ and $B$ be $\Gamma$\!-graded rings. 

\begin{enumerate}

\item A functor $\phi:\Gr A \rightarrow \Gr B$ is called a \emph{graded functor} \index{graded functor} if $\phi \mathcal T_{\alpha} = \mathcal T_{\alpha} \phi$. 

\medskip 

\item A graded functor $\phi:\Gr A \rightarrow \Gr B$ is called a \emph{graded equivalence} \index{graded equivalence} if there is a graded functor $\psi:\Gr B \rightarrow \Gr A$ such that $\psi \phi \cong 1_{\Gr A}$ and $\phi \psi \cong 1_{\Gr B}$. 

\medskip

\item If there is a graded equivalence between $\Gr A$ and $\Gr B$, we say $A$ and $B$ are \emph{graded equivalent} or \emph{graded Morita equivalent} and we write \index{graded Morita equivalent}
$\Gr A \approx_{\gr} \Gr B$, or $\Gr[\Gamma] A \approx_{\gr} \Gr[\Gamma] B$ to emphasis the categories are $\Gamma$\!-graded.

\medskip 

\item A functor $\phi': \Modd A \rightarrow \Modd B$ is called a \emph{graded functor} \index{graded functor} if  there is a graded functor $\phi: \Gr A \rightarrow \Gr B$ such that the following diagram, where the vertical functors are forgetful functors (see~\S\ref{forgetful}), is commutative.
\begin{equation}\label{njhysi}
\xymatrix{
\Gr A \ar[r]^-{\phi} \ar[d]_{U}& \Gr B \ar[d]^{U}\\
\Modd A \ar[r]^-{\phi'}  & \Modd B.
}
\end{equation}

The functor $\phi$ is called an \emph{associated graded functor} of $\phi'$. \index{associated graded functor}
\medskip 

\item A functor $\phi:\Modd A \rightarrow \Modd B$ is called a \emph{graded equivalence} \index{graded equivalence} if it is graded and an equivalence. 
\end{enumerate}
\end{definition}

Definition~\ref{grdeffsa} of graded functors is formulated for the category of (graded) right modules. A similar definition can be written for the category of (graded) left modules. We will see that the notion of graded equivalence is a left-right symmetric 
(see Remark~\ref{hgyhyh1}).

\begin{remark}
Note that although we require the graded functors commute with the suspensions, we don't require the natural transformations between these functors have these properties. 
\end{remark}

\begin{example}
The equivalence between $\Modd A$ and $\Modd eAe$ in Example~\ref{idempogr} is a graded equivalence as it is demonstrated in Diagram~\ref{hgpian}. 
\end{example}

If $Q$ is an object in $\Gr A$, then we denote $U(Q)\in \Modd A$ also by $Q$, forgetting its graded structure (see~\S\ref{forgetful}). Also when working with graded matrix rings, say $\M_n(A)(\overline \delta)$, we write simply $\Modd \M_n(A)$ when considering the category of (ungraded) modules over this matrix algebra.
These should not cause any confusion in the text. 


\begin{example}\label{hfjdla}
Proposition~\ref{instancegg} can be written for the category of $A$-modules, which gives us a ungraded version of Morita equivalence. We have the commutative diagram 
\begin{equation*}
\begin{split}
\xymatrix{
\Gr A \ar[rr]^-{-\otimes_A A^n(\overline \delta)} \ar[d]_{U}&& \Gr \M_n(A)(\overline \delta) \ar[d]^{U}\\
\Modd A \ar[rr]^-{-\otimes_A A^n}  && \Modd \M_n(A).
}
\end{split}
\end{equation*}
which shows that the functor \[-\otimes_A A^n:\Modd A \rightarrow \Modd \M_n(A)\] is a graded equivalence. 

\end{example}

We are in a position to state the main Theorem of this chapter. 
\begin{theorem}\label{grmorim} 
Let $A$ and $B$ be two $\Gamma$\!-graded rings. 
Let $\phi:\Gr A \rightarrow \Gr B$ be a graded equivalence. Then there is a graded equivalence $\phi':\Modd A \rightarrow \Modd B$ with an associated graded functor  isomorphic to $\phi$. Indeed, there is a graded $A\!-\!B$-bimodule $Q$ such that $\phi\cong -\otimes_A Q$ and consequently the following diagram commutes. 
\begin{equation*}
\begin{split}
\xymatrix{
\Gr A \ar[rr]^-{-\otimes_A Q} \ar[d]_{U}&& \Gr B\ar[d]^{U}\\
\Modd A \ar[rr]^-{-\otimes_A Q}  && \Modd B.
}
\end{split}
\end{equation*}
\end{theorem}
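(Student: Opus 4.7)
The strategy is to mimic the classical Morita argument, setting $Q:=\phi(A)$, and then upgrading $Q$ to a graded $A\!-\!B$-bimodule such that $\phi\cong -\otimes_A Q$ as graded functors; the ungraded equivalence $\phi':\Modd A\to \Modd B$ will then be produced as $-\otimes_A Q$, automatically making Diagram~\eqref{njhysi} commute.

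First, I would put a graded $A$-module structure on $Q$. Since $\phi$ is a graded equivalence, it commutes with suspensions, so for every $\alpha\in\Gamma$ and every pair of graded $A$-modules $M,N$ (with $M$ finitely generated), the functor $\phi$ induces an isomorphism of abelian groups
\begin{equation*}
\Hom_A(M,N)_\alpha\,=\,\Hom_{\Gr A}\bigl(M,N(\alpha)\bigr)\,\stackrel{\phi}{\longrightarrow}\,\Hom_{\Gr B}\bigl(\phi(M),\phi(N)(\alpha)\bigr)\,=\,\Hom_B(\phi(M),\phi(N))_\alpha.
\end{equation*}
Taking $M=N=A$ and summing over $\alpha$ using Theorem~\ref{crazhorn}, this yields a graded ring isomorphism $\End_A(A)\conggr \End_B(Q)$. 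Combined with the standard graded identification $A^{\op}\conggr \End_A(A)$ (given by left multiplication, compare \S\ref{idemptis}), this turns $Q$ into a graded $A\!-\!B$-bimodule.

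Second, I would verify that $Q$ is a graded progenerator on the $B$-side, and hence an (ungraded) progenerator. Indeed, $A$ is trivially a graded finitely generated projective and, by Theorem~\ref{grgenth}(3), a graded generator in $\Gr A$; a graded equivalence preserves both properties (being projective, being finitely generated, and being a generator are all characterised diagrammatically in $\Gr A$, cf.\ Proposition~\ref{grprojectivethm} and Theorem~\ref{grgenth}), so $Q=\phi(A)$ is a graded progenerator in $\Gr B$. Proposition~\ref{grprojectivethm} and Theorem~\ref{prograded} then give that $Q$ is a progenerator in $\Modd B$. Applying classical Morita theory, the functor $-\otimes_A Q:\Modd A\to \Modd B$ is an equivalence with quasi-inverse $-\otimes_B Q^*$, where $Q^*=\Hom_B(Q,B)$.

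Third, and this is the heart of the argument, I would exhibit a natural isomorphism $\eta:(-)\otimes_A Q\xRightarrow{\;\;\cong\;\;}\phi$ of functors $\Gr A\to \Gr B$. On the free module of rank one the map $\eta_A:A\otimes_A Q\to \phi(A)=Q$, $a\otimes q\mapsto aq$, is a graded isomorphism by construction of the $A$-action; since both functors commute with suspensions, $\eta_{A(\alpha)}$ is a graded isomorphism for every $\alpha\in\Gamma$, and by additivity $\eta_F$ is an isomorphism on any graded free $A$-module $F=\bigoplus_i A(\alpha_i)$. To pass to arbitrary $M\in\Gr A$, I would use that $\phi$, being a graded equivalence, has a graded quasi-inverse and therefore preserves all colimits; the tensor functor $-\otimes_A Q$ is right exact and also preserves arbitrary direct sums. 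Presenting $M$ as a cokernel of a graded map $F_1\to F_0$ between graded free modules (a graded free presentation exists by Proposition~\ref{grprojectivethm}(5) applied to projective covers of $M$ and the kernel), the five-lemma–style argument on the two cokernel diagrams forces $\eta_M$ to be a graded isomorphism.

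Finally, setting $\phi':=-\otimes_A Q:\Modd A\to \Modd B$, the compatibility with the forgetful functors in Diagram~\eqref{njhysi} is tautological (the same tensor formula computes both rows), so the associated graded functor of $\phi'$ is $-\otimes_A Q:\Gr A\to\Gr B$, which is isomorphic to $\phi$ via $\eta$. The main obstacle I expect is Step three: verifying that the naturality square for $\eta$ is compatible with the $A$-action defined abstractly in Step one (i.e.\ checking that $\eta$ is $A$-balanced in the correct graded sense so that it descends to the tensor product), and then the colimit/presentation argument needed to extend from graded frees to all of $\Gr A$ without circularity.
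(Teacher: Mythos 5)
Your proposal is correct, but it takes a genuinely different route from the paper's. You run a graded Eilenberg--Watts argument on the forward functor: set $Q=\phi(A)$, transport the left $A$-action through the graded ring isomorphism $A\cong \End_A(A)\conggr \End_B(Q)$ (legitimate by Theorem~\ref{crazhorn}, since $Q=\phi(A)$ is graded finitely generated, so the graded endomorphism ring is the full one), obtain the ungraded equivalence from classical Morita theory applied to the progenerator $Q_B$, and then extend the natural map $\eta$ from $A$ and its shifts to all of $\Gr A$ by additivity, right-exactness and colimit preservation. The paper instead works with the quasi-inverse $\psi$ and the object $P=\psi(B)$: it proves $B\conggr\End_A(P)$ degreewise, shows $\phi\cong\Hom_A(P,-)$ by assembling the isomorphisms $\Hom_B(B,\phi(X))_\alpha\cong\Hom_A(P,\psi\phi(X))_\alpha\cong\Hom_A(P,X)_\alpha$ supplied by full faithfulness of $\psi$ (the delicate point there being the verification that the composite is $B$-linear), and only at the end converts $\Hom_A(P,-)$ into $-\otimes_A P^*$ by the classical progenerator isomorphism \cite[Remark~18.25]{lam}; so the paper's $Q$ is $P^*$, and the two bimodules agree since $\phi(A)\cong A\otimes_A P^*\cong P^*$. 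What the paper's route buys is that every isomorphism is assembled from graded Hom-groups of the compact object $P$, so no free presentations, balancedness checks, or colimit arguments are needed; what your route buys is that the bimodule and the tensor description appear in one stroke, with the cost concentrated exactly where you flagged it: naturality of $\eta$ against left-multiplication maps between shifted free modules --- which is precisely compatibility with the abstractly defined action, and holds by construction because $a\cdot q:=\phi(\lambda_a)(q)$ where $\lambda_a\in\Hom_A(A,A)_{\deg a}$ is left multiplication --- followed by the standard descent to cokernels. Two small corrections: graded free presentations exist for every graded module simply by choosing homogeneous generators (of the module and then of the kernel of the resulting surjection), so do not invoke projective covers, which need not exist; and with the usual composition convention, evaluation $f\mapsto f(1)$ gives $\End_A(A_A)\cong A$ rather than $A^{\op}$ --- harmless for your argument, since in the end you only use the action $a\cdot q=\phi(\lambda_a)(q)$.
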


\begin{proof}

Suppose the graded functor $\psi: \Gr B \rightarrow \Gr A$ is an inverse of the functor $\phi$ with \[f:\psi\phi \cong 1_{\Gr A}.\] 

Since $B$ is a graded finitely generated projective and a graded generator in $\Gr B$, it follows that $P=\psi(B)$ is a graded finitely generated projective  and a graded generator in $\Gr A$. Thus by Theorem~\ref{prograded} and Proposition~\ref{grprojectivethm}, $P$ is a finitely generated projective and a generator in $\Modd A$ as well. This shows that $\Hom_A(P,-):\Modd A \rightarrow \Modd B$ is a graded equivalence. We will show that $\phi \cong  \Hom_A(P,-)$ on the category of $\Gr A$. 

By Theorem~\ref{crazhorn}, $\Hom_B(B,B)=\bigoplus_{\alpha \in \Gamma}\Hom_B(B,B)_{\alpha}$, 
and by~(\ref{googoo1}) we can write $$\Hom_B(B,B)_{\alpha}=\Hom_{\Gr B}(B,B(\alpha)).$$ 
Applying $\psi$ on each of these components, since $\psi$ is a graded functor, we get a group homomorphism 
\begin{equation}
  \Hom_B(B,B)=\bigoplus_{\alpha \in \Gamma}\Hom_B(B,B)_{\alpha} \stackrel{\psi}{\longrightarrow}
\bigoplus_{\alpha \in \Gamma}\Hom_A(P,P)_{\alpha}=\Hom_A(P,P).
\end{equation} 
One can immediately see that this in fact gives a graded isomorphism of rings between $\Hom_B(B,B)$ and $\Hom_A(P,P)$. 

For any $b \in B$ consider the right $B$-module homomorphism 
\begin{align*}
\eta_b:B&\longrightarrow B\\
x&\longmapsto bx.
\end{align*}
Then the regular representation map $\eta:B \rightarrow \Hom_B(B,B)$, $\eta(b)=\eta_b$, is a graded isomorphism of rings. 
Thus we have a graded isomorphism of rings 
\begin{equation}\label{jjhhgg}
B\stackrel{\eta}{\longrightarrow} \Hom_B(B,B) \stackrel{\psi}{\longrightarrow}\Hom_A(P,P),
\end{equation} where $P$ is a graded $A$-progenerator.  

Now since for any graded $A$-module $X$, \[\Hom_A(P,X)\] is a graded right $\Hom_A(P,P)$-module, the isomorphisms~\ref{jjhhgg} induces a graded $B$-module structure on $\Hom_A(P,X)$. Namely, for homogeneous elements $b\in B$ and $t\in \Hom_A(P,X)$
 \begin{equation}\label{luke}
t.b=t \psi(\eta_b)
\end{equation}
which extends linearly to all elements.

We show that $\phi \cong \Hom_A(P,-)$. Let $X$ be a graded $A$-module. 
Then 
\begin{multline}\label{tetea}
\phi(X) \conggr \\ \Hom_B(B,\phi(X))=\bigoplus_{\alpha \in \Gamma}\Hom_B(B,\phi(X))_{\alpha}  \stackrel{\psi}{\longrightarrow}
\bigoplus_{\alpha \in \Gamma}\Hom_A(P,\psi\phi(X))_{\alpha}\\  \stackrel{\Hom(1,f)}{\longrightarrow} \bigoplus_{\alpha \in \Gamma}\Hom_A(P,X)_{\alpha}=\Hom_A(P,X).
\end{multline}
This shows that $\phi(X)$ is isomorphic to $\Hom_A(P,X)$ as a graded abelian groups. We need to show that this isomorphism, call it $\Theta$, is a  $B$-module isomorphism. Let $z\in \phi(X)$ and $b \in B$ be homogeneous elements. Since \[\phi(X) \conggr \Hom_B(B,\phi(X)),\] for $z\in \phi(X)$,  we denote the corresponding homomorphism in $\Hom_B(B,\phi(X))$ also by $z$.  
Then

\[\Theta(z.b)=\Theta(z \ \eta_b)=\Hom(1,f)\psi(z  \ \eta_b)= f \psi(z)\psi(\eta_b).\]
But 
\[\Theta(z).b=\big(\Hom(1,f)\psi(z)\big).b=f \psi(z)\psi(\eta_b) \qquad  (\text{by (\ref{luke})}). \] 
Thus $\phi \cong  \Hom_A(P,-)$ on $\Gr A$. Now consider \[\phi'= \Hom_A(P,-):\Modd A \rightarrow \Modd B.\] This gives the first part of the Theorem. 

Setting $P^*=\Hom_A(P,A)$, one can easily check that for any graded right $A$-module $M$, the map 
\begin{align*}
M\otimes_A P^* &\longrightarrow \Hom_A(P,M),\\
m\otimes q &\longmapsto (m\otimes q)(p)=mq(p),
\end{align*}
where $m\in M$, $q\in P^*$ and $p\in P$, is a graded $B$-homomorphism (recall that by~(\ref{jjhhgg}), $B\conggr \End_A(P)$).  Since $P$ is a graded progenerator, by Theorem~\ref{prograded}, it a progenerator, which in return gives that the above homomorphism is in fact an isomorphism (see~\cite[Remark~18.25]{lam}).
Thus $\phi \cong \Hom_A(P,-) \cong -\otimes_A P^*$. This gives the second part of the theorem. 
\end{proof}

\begin{theorem}\label{grmorim11} Let $A$ and $B$ be two $\Gamma$\!-graded rings. The following are equivalent:

\begin{enumerate}[\upshape(1)]

\item $\Modd A$ is graded equivalent to $\Modd B$;


\item $\Gr A$ is graded equivalent to $\Gr B$;

\item $B\cong_{\gr} \End_A(P)$ for a graded $A$-progenerator $P$;

\item $B\cong_{\gr} e \M_n(A)(\overline \delta) e$ for a full homogeneous idempotent $e \in \M_n(A)(\overline \delta)$, where $\overline \delta=(\delta_1,\dots,\delta_n)$, $\delta_i \in \Gamma$.


\end{enumerate}
 \end{theorem}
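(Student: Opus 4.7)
The plan is to prove the chain of implications (1) $\Rightarrow$ (2) $\Rightarrow$ (3) $\Rightarrow$ (4) $\Rightarrow$ (1), assembling the machinery that the excerpt has carefully put in place: the graded version of the Morita context, Theorem~\ref{grmorim}, the characterisation of graded progenerators (Theorems~\ref{grgenth} and~\ref{prograded}), the first-instance equivalence Proposition~\ref{instancegg}, and the full-idempotent construction of Example~\ref{idempogr}.

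For (1) $\Rightarrow$ (2) I would simply unpack Definition~\ref{grdeffsa}(4): a graded equivalence $\phi':\Modd A\to\Modd B$ is by definition one that fits in the commutative square~\eqref{njhysi} with an associated graded functor $\phi:\Gr A\to\Gr B$; applying the same to an inverse $\psi'$ of $\phi'$, the associated functors $\phi,\psi$ are mutually inverse (up to natural isomorphism) since the forgetful functors are faithful, giving a graded equivalence on the level of graded modules.

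For (2) $\Rightarrow$ (3) I would invoke Theorem~\ref{grmorim}. Given $\phi:\Gr A\approx_{\gr}\Gr B$ with inverse $\psi$, set $P:=\psi(B)$. Since $B$ is a graded progenerator in $\Gr B$, and $\psi$ is a graded equivalence, $P$ is a graded progenerator in $\Gr A$ (preserving both the finitely-generated-projective and graded-generator properties, using Proposition~\ref{grprojectivethm} and Theorem~\ref{grgenth}). The graded ring isomorphism~\eqref{jjhhgg} in the proof of Theorem~\ref{grmorim} gives exactly $B\cong_{\gr}\End_A(P)$.

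The heart of the argument is (3) $\Rightarrow$ (4). Given a graded $A$-progenerator $P$ with $B\cong_{\gr}\End_A(P)$, I would use (\ref{medickiue}) to pick a graded $A$-module $Q$ and a shift $\overline\delta=(\delta_1,\dots,\delta_n)$ with $P\oplus Q\cong_{\gr}A^n(-\overline\delta)$. Then the projection onto the $P$-summand is a degree-zero idempotent $e$ in $\End_A(A^n(-\overline\delta))\cong_{\gr}\M_n(A)(\overline\delta)$ (using the identification~\eqref{fatloss}), and the standard computation with idempotents (\S\ref{idemptis}) yields
\[
e\M_n(A)(\overline\delta)e\;\cong_{\gr}\;\End_A(eA^n(-\overline\delta))\;\cong_{\gr}\;\End_A(P)\;\cong_{\gr}\;B.
\]
The main obstacle is verifying that $e$ is \emph{full}, and this is where the graded-generator hypothesis enters. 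Via the Morita-style identification of the two-sided ideal $\M_n(A)(\overline\delta)\,e\,\M_n(A)(\overline\delta)$ with $\Tr^{\gr}(P)$ (unwinding that elements of this ideal correspond, under the bimodule isomorphisms~\eqref{whuipp} and~\eqref{whuipp2}, to sums $\sum f_i(p_i)$ with $f_i\in\Hom_A(P,A)$ homogeneous and $p_i\in P^h$), fullness of $e$ is equivalent to $\Tr^{\gr}(P)=A$, which holds by Theorem~\ref{grgenth}(2).

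Finally, for (4) $\Rightarrow$ (1) I would compose two graded equivalences that the excerpt has already built. First, Proposition~\ref{instancegg} (in the $\Modd$-version of Example~\ref{hfjdla}) gives a graded equivalence $\Modd A\approx_{\gr}\Modd\M_n(A)(\overline\delta)$ via $-\otimes_A A^n(\overline\delta)$. Second, since $e$ is a full homogeneous idempotent in $R:=\M_n(A)(\overline\delta)$, Example~\ref{idempogr} produces a graded equivalence $\Modd R\approx_{\gr}\Modd eRe$ via $-\otimes_R Re$, and the commutative diagram~\eqref{hgpian} shows this is genuinely a graded equivalence in the sense of Definition~\ref{grdeffsa}. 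Composing yields $\Modd A\approx_{\gr}\Modd eRe\cong_{\gr}\Modd B$, closing the loop.
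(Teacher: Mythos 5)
Your steps (2)$\Rightarrow$(3), (3)$\Rightarrow$(4) and (4)$\Rightarrow$(1) track the paper's proof almost verbatim: (2)$\Rightarrow$(3) is exactly the graded isomorphism~(\ref{jjhhgg}) from the proof of Theorem~\ref{grmorim}, and (4)$\Rightarrow$(1) is the same composition of Examples~\ref{hfjdla} and~\ref{idempogr}. In (3)$\Rightarrow$(4) your fullness argument is the right idea, but the identification should read $\M_n(A)\,e\,\M_n(A)=\M_n(\Tr(P))$, an ideal of the matrix ring (the paper cites \cite[Exercise~2.8]{lam}), not $\Tr^{\gr}(P)$ itself; fullness is then equivalent to $\Tr(P)=A$, which you may deduce either from $\Tr^{\gr}(P)=A$ via Theorem~\ref{grgenth}, or, as the paper does, from Theorem~\ref{prograded}. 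These are cosmetic points.

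The genuine gap is in (1)$\Rightarrow$(2). First, Definition~\ref{grdeffsa}(5) only requires $\phi'$ itself to be graded and an equivalence; it supplies no graded inverse, so ``applying the same to an inverse $\psi'$'' assumes something that must be proved. Second, and more seriously, even granting a graded $\psi'$ with associated functor $\psi$, the inference ``$U\psi\phi=\psi'\phi'U\cong U$ and $U$ is faithful, hence $\psi\phi\cong 1_{\Gr A}$'' is false: the components of the ungraded natural isomorphism $\psi'\phi'\cong 1$ need not be degree-preserving, and a faithful functor does not reflect isomorphisms of functors. Concretely, the shift functor $\mathcal T_\alpha$ satisfies $U\mathcal T_\alpha=U$ on the nose (shifting does not change the underlying module), yet $\mathcal T_\alpha\cong 1_{\Gr A}$ only when $A(\alpha)\cong_{\gr}A$, e.g.\ when $A$ is a crossed product (Corollary~\ref{andrewmathas}); so associated graded functors are far from unique, and your composite $\psi\phi$ could a priori be any graded endofunctor whose underlying functor is isomorphic to the identity, such as a nontrivial shift. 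The paper closes this step differently: since $U\phi=\phi'U$, the module $P=\phi(A)$ is a graded $A$--$B$-bimodule; ungraded Morita theory identifies $\phi'\cong-\otimes_A P$ with inverse $-\otimes_B P^*$; and because the evaluation maps $P^*\otimes_B P\to A$ and $P\otimes_A P^*\to B$ are degree-preserving bijections, these tensor functors restrict to mutually inverse graded equivalences $\Gr A\approx_{\gr}\Gr B$. You should replace your (1)$\Rightarrow$(2) with an argument of this kind.
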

\begin{proof}
(1) $\Rightarrow$ (2) Let $\phi:\Modd A \rightarrow \Modd B$ be a graded equivalence. Using~(\ref{njhysi}), it follows that $\phi(A)=P$ is a graded right $B$-module. Also a similar argument as in~(\ref{jjhhgg}) shows that $P$ is a graded left $A$-module. Since $\phi$ is an equivalence, from the  (ungraded) Morita theory  it follows that $\phi\cong -\otimes_A P$ with an inverse $-\otimes_B P^*$.  Since the tensor product respects the grading, the same functor $\phi$ induces a graded equivalence between $\Gr A$ and $\Gr B$. 

(2) $\Rightarrow$ (3) This is~(\ref{jjhhgg}) in the proof of Theorem~\ref{grmorim}. 

(3) $\Rightarrow$ (4) Since $P$ is a graded finitely generated projective $A$-module, \[P\oplus Q \cong_{\gr} A^n(-\overline \delta),\] where $\overline \delta=(\delta_1,\dots,\delta_n)$ and $n \in \mathbb N$.   Let \[e\in \End_A(A^n(-\overline \delta))\cong_{\gr}\M_n(A)(\overline \delta)\] be the graded homomorphism  which sends $Q$ to zero and acts as identity on $P$. 
Thus \[e\in  \End_A(A^n(-\overline \delta))_0 \cong \M_n(A)(\overline \delta)_0\] and $P=eA^n(-\overline \delta)$. Define the map 
\[ \theta: \End_A(P) \longrightarrow e  \End_A(A^n(-\overline \delta)) e
\] by $\theta(f)|_{P}=f$ and $\theta(f)|_{Q}=0$. Since $e$ is homogeneous of degree zero, it is straightforward to see that this is a graded isomorphism of rings (which preserve the identity). Thus 
\[B\cong_{\gr} \End_A(P) \conggr e  \End_A(A^n(-\overline \delta)) e \conggr e\M_n(A)(\overline \delta)e.\]
We are left to show that $e$ is full. 
 By~\cite[Exercise~2.8]{lam}, \[\M_n(A)e\M_n(A)=\M_n(\Tr(P)),\] where $P=eA^n$. Since $P$ is graded progenerator, it is a finitely generated projective $A$-module and a generator (see Theorem~\ref{prograded}), thus $\Tr(P)=A$ and therefore $e$ is a (homogeneous) full idempotent.

(4) $\Rightarrow$ (1) Example~\ref{idempogr} shows that there is a graded equivalence between $\Modd \M_n(A)$ and $\Modd e\M_n(A)e$. On the other hand, Example~\ref{hfjdla} shows that there is a graded equivalence between $\Modd \M_n(A)$ and $\Modd A$. This finishes the proof. 
\end{proof}

\begin{example}\scm[$\Gr A\cong \Gr B$ does not imply $\Gr A \conggr \Gr B$]
\vspace{0.2cm}

One can easily construct examples of two $\Gamma$\!-graded rings $A$ and $B$ such that the categories $\Gr A$ and $\Gr B$ are equivalent, but not graded equivalent, \ie  the equivalent functors do not commute with suspensions (see Definition~\ref{grdeffsa}). Let $A$ be a $\Gamma$\!-graded ring and $\phi:\Gamma \rightarrow \Aut(A)$ be a group homomorphism. Consider the group ring $A[\Gamma]$ and the skew group ring $A\star_{\phi} \Gamma$ (see \S\ref{khgfewa1}). These rings are strongly $\Gamma$\!-graded, and thus by Dade's Theorem~\ref{dadesthm}, $\Gr A[\Gamma]$ and $\Gr A\star_{\phi} \Gamma$ are equivalent to $\Modd A$ and thus 
$\Gr A[\Gamma]\cong \Gr A\star_{\phi} \Gamma.$ However one can easily show that these two graded rings are not necessarily graded equivalent. 
\end{example}

\begin{remark}\label{hgyhyh1}\hfill 
\begin{enumerate}

\item The graded Morita theory has a left-right symmetry property. Indeed, starting from the category of graded left modules, one can prove a similar statement as in Proposition~\ref{grmorim11}, which in turn, part (4) is independent of the left-right assumption. This shows that $\Gr A$ is graded equivalent to $\Gr B$ if and only if 
$A \rGr$ is graded equivalent to $B \rGr $

\item Proposition~\ref{grmorim11}(3) shows that if all graded finitely generated projective $A$-modules are graded free, then $\Modd A$ is graded equivalence to $\Modd B$ if and only if $B\cong_{\gr} \M_n(A)(\overline \delta)$ for some $n \in \mathbb N$ and 
$\overline \delta=(\delta_1,\dots,\delta_n)$, $\delta_i \in \Gamma$. 
\end{enumerate}
\end{remark}

\begin{remark}\label{maziho} \scm[$\operatorname{Gr^{\Omega}-} A \approx_{\gr} \operatorname{Gr^{\Omega}-} B$ implies 
$\operatorname{Gr^{\Gamma}-} A \approx_{\gr} \operatorname{Gr^{\Gamma}-} B$]
\vspace{0.2cm}

Recall from Definition~\ref{grdeffsa} that we write $\Gr[\Gamma] A \approx_{\gr} \Gr[\Gamma] B$, if there is a graded equivalence between the categories of $\Gamma$\!-graded $A$-modules $\Gr[\Gamma] A$ and $\Gamma$\!-graded $B$-modules $\Gr[\Gamma] B$. 

Let $A$ and $B$ be $\Gamma$\!-graded rings and  $\Omega$ a subgroup of $\Gamma$ such that $\Gamma_A,\Gamma_B \subseteq \Omega \subseteq \Gamma$. Then $A$ and $B$ can be naturally considered as $\Omega$-graded ring. If $\Gr[\Omega] A \approx_{\gr} \Gr[\Omega] B$, then by Theorem~\ref{grmorim11} there is a $\Omega$-isomorphism 
$\phi:B\cong_{\gr} e \M_n(A)(\overline \delta) e$ for a full homogeneous idempotent $e \in \M_n(A)(\overline \delta)$, where $\overline \delta=(\delta_1,\dots,\delta_n)$, $\delta_i \in \Omega$. Since $\Omega \subseteq \Gamma$, another application of Theorem~\ref{grmorim11} shows that $\Gr[\Gamma] A \approx_{\gr} \Gr[\Gamma] B$.  One can also use Theorem~\ref{mhgft42} to obtain this statement. 
\end{remark}

\begin{remark} \scm[$\operatorname{Gr^{\Gamma}-} A \approx_{\gr} \operatorname{Gr^{\Gamma}-} B$ implies 
$\operatorname{Gr^{\Gamma/\Omega}-} A \approx_{\gr} \operatorname{Gr^{\Gamma/\Omega}-} B$]
\vspace{0.2cm}

Let $A$ and $B$ be two $\Gamma$\!-graded rings. Theorem~\ref{grmorim11} shows the equivalence  $\Gr A \approx_{\gr} \Gr B$ induces an equivalence $\Modd A \approx \Modd B$. Haefner in~\cite{haef} observed that $\Gr A \approx_{\gr} \Gr B$ induces other equivalences between different ``layers'' of grading. We briefly recount this result here.  
 
Let $A$ be a $\Gamma$\!-graded ring  and let $\Omega$ be subgroup of $\Gamma$. Recall from~\S\ref{mconfi1} and~\S\ref{bill100} that $A$ can be considered at $\Gamma/\Omega$-graded ring.  Recall also that the category of $\Gamma/\Omega$-graded $A$-modules, denoted by $\Gr[\Gamma/\Omega] A$, consists of the $\Gamma/\Omega$-graded $A$-modules as objects  and $A$-module homomorphisms $\phi: M\rightarrow N$ which are grade-preserving in the sense that 
$\phi(M_{\Omega+ \alpha}) \subseteq  N_{\Omega + \alpha}$ for all $\Omega +\alpha \in \Gamma/\Omega$ as morphisms of the category. In two extreme cases $\Omega=0$ and $\Omega= \Gamma$ we have  $\Gr[\Gamma/\Omega] A=\Gr A$ and  $\Gr[\Gamma/\Omega] A= \Modd A$, respectively. 

In~\cite{haef} Haefner shows that, for any two $\Gamma$\!-graded equivalent rings $A$ and $B$ and for any subgroup $\Omega$ of $\Gamma$, there are equivalences between the categories $\Gr[\Gamma/\Omega] A$ and $\Gr[\Gamma/\Omega] B$.  In fact, Haefner works with an arbitrary (non-abelian) group $\Gamma$ and any subgroup $\Omega$. In this case one needs to adjust the definitions as follows. 

Let $\Gamma/\Omega$ denote a set of (right) cosets of $\Omega$ (we use the multiplication notation here). 
An $\Gamma/\Omega$-graded right $A$-module $M$ is defined as a right $A$-module $M$ with an internal direct sum decomposition 
$M=\bigoplus_{\Omega \alpha \in \Gamma/\Omega}M_{\Omega \alpha}$, where each $M_{\Omega \alpha}$ is an additive subgroup of $M$ such that  $M_{\Omega \alpha}A_{\beta} \subseteq  M_{\Omega \alpha\beta}$ for all $\Omega \alpha \in \Gamma/\Omega$ and $\beta \in \Gamma$. The decomposition is called an $\Gamma/\Omega$-grading of $M$. With the abuse of notation we denote the category of $\Gamma/\Omega$-graded right $A$-modules with $\Gr[\Gamma/\Omega] A$. 
Then in~\cite{haef} it was shown that, $\Gr[\Gamma] A \approx_{\gr} \Gr[\Gamma] B$ implies $\Gr[\Gamma/\Omega] A \approx_{\gr} \Gr[\Gamma/\Omega] B$.

Consider the truncation of $A$ at $\Omega$, \ie $A_\Omega =\oplus_{\gamma\in \Omega} A_\gamma$ (see~\S\ref{mconfi1}). 
As usual, let $\Modd A_\Omega$ denote the category of right $A_\Omega$-modules.
If $A$ is strongly $\Gamma$\!-graded, one can show that the categories  $\Gr[\Gamma/\Omega] A$ and $\Modd A_\Omega$ are equivalent via the functors truncation $(-)_\Omega : \Gr[\Gamma/\Omega] A \rightarrow  \Modd A_\Omega $  and induction 
$-\otimes_{A_\Omega}A : \Modd A_\Omega \rightarrow  \Gr[\Gamma/\Omega] A$ (see ~\cite[Lemma~7.3]{haef}). Again, in the case that the subgroup $\Omega$ is trivial, this gives the equivalences of $\Gr A$ and $\Modd A_0$ (see Theorem~\ref{dadesthm}).  

\end{remark}

\begin{remark} \label{bizet} \scm[The case of $\Gr A \approx \Modd R$]
\vspace{0.2cm}

If a $\Gamma$\!-graded ring $A$ is strongly graded, then by Theorem~\ref{dadesthm}, $\Gr A$ is equivalent to $\Modd A_0$. In~\cite{menini}, the conditions under which $\Gr A$ is equivalent to $\Modd R$, for some ring $R$ with identity, are given. It was shown that, among other things, $\Gr A\approx \Modd R$, for some ring $R$ with identity, if and only if $\Gr A \approx_{\gr} \Gr B$, for a strongly $\Gamma$\!-graded ring $B$, if and only if there is a finite subset $\Omega$ of $\Gamma$ such that for any $\tau \in \Gamma$, 
\begin{equation}\label{rgeusi}
A_0=\sum_{\gamma\in \Omega} A_{\tau-\gamma}A_{\gamma-\tau}.
\end{equation}
Moreover, it was shown that if $\Gr A\approx \Modd R$ then $A\rGr \approx R\rModd$ and $A_0\cong e \M_n(R)e$ for some $n\in \mathbb N$, and  some idempotent $e\in \M_n(R)$. 

If the grade group $\Gamma$ is finite, then~(\ref{rgeusi}) shows that there is a ring $R$ such that $\Gr A$ is equivalent to $\Modd R$. In fact, Cohen and Montgomery in~\cite{cohenmont} construct a so called \emph{smash product}  \index{smash product} $A  \# \mathbb Z[\Gamma]^*$ and show that $\Gr A$ is equivalent to $\Modd A  \# \mathbb Z[\Gamma]^* $. In~\cite{nats1} it was shown that $A  \# \mathbb Z[\Gamma]^*$ is isomorphic to the ring $\End_{\Gr A}\big(\bigoplus_{\alpha \in \Gamma}A(\alpha)\big)$ (see Example~\ref{vpnnabil}). 
\end{remark}

\begin{remark}\label{bbhiidw} \scm[The case of $\operatorname{Gr^{\Gamma}-} A \approx  \operatorname{Gr^{\Lambda}-} B$]
\vspace{0.2cm}

One can consider several variations under which two categories are equivalent. For example, for a $\Gamma$\!-graded ring $A$ and  
$\Lambda$-graded ring $B$, the situation when the categories  $\Gr[\Gamma] A$ and $\Gr[\Lambda] B$ are equivalent is investigated in~\cite{del1,del2} (See also Remark~\ref{shanbei}). 

Moreover, for two $\Gamma$\!-graded rings $A$ and $B$, when $\Gr A$ is equivalent to $\Gr B$ (not necessarily respect the shift) has been studied in~\cite{sierra,zhang96}. 
\end{remark}

\chapter{Graded Grothendieck Groups} \label{ggg} 

Starting from a ring, the isomorphism classes of finitely generated (projective) modules with direct sum form an abelian monoid. The ``enveloping group" of this monoid is called the Grothendieck group of the ring. If the ring comes equipped with an extra structure, this structure should pass to its modules and thus should be reflected in the Grothendieck group. For example, if the ring has an involution, then the Grothendieck group has a $\Z_2$-module structure. If the ring has a coring structure, or it is a Hopf algebra, then the Grothendieck group becomes a ring with involution thanks to the co-multiplication and antipode of the Hopf algebra.   \index{Hopf algebra}

For a $\Gamma$\!-graded ring $A$, one of the main aim of this book is to study the Grothendieck group constructed from the graded projective modules of $A$, called the graded Grothendieck group, $K^{\gr}_0(A)$. In fact, $K^{\gr}_0(A)$ is not just an abelian group but it also has an extra  $\mathbb Z[\Gamma]$-module structure. As we will see throughout this book, this extra structure carries substantial information about the graded ring $A$. In~\S\ref{podong} we construct in detail the graded Grothendieck groups using the concept of group completions. Here we give a brief overview of different equivalent constructions.

For an abelian monoid $V$, we denote by $V^+$ the group completion of $V$. This gives a left adjoint functor to the forgetful functor from the category of abelian groups to abelian monoids. 
When the monoid $V$ has a $\Gamma$\!-module structure, where $\Gamma$ is a group, then $V^+$ inherits a natural $\Gamma$\!-module structure, or equivalently,  $\Z[\Gamma]$-module structure We study this construction in~\S\ref{podong}. \index{adjoint functor}

There is also a more direct way to construct $V^+$ which we recall here.  Consider the set of symbols $\big \{\, [m] \mid m \in V \, \big \}$ and let $V^+$ be the free abelian group generated by this set  modulo the relations $[m]+[n]-[m+n]$, $m,n\in V$.  There is a natural (monoid) homomorphism $V\longrightarrow V^+$, $m\mapsto [m]$, which is universal. Using the universality, one can show that the group $V^+$ obtained here coincides with the one constructed above using the group completion. 

Now let $\Gamma$ be a group which acts on a monoid $V$. Then $\Gamma$ acts in a natural way on the free abelian group generated by symbols $\big \{[m] \mid m \in V \big \}$. Moreover the subgroup generated by relations $[m]+[n]-[m+n]$, $m,n\in V$ is closed under this action. Thus $V^+$ has a natural structure of $\Gamma$\!-module, or equivalently, $V^+$ is a $\Z[\Gamma]$-module.

For a ring $A$ with identity and a finitely generated projective (right) $A$-module $P$, let $[P]$ denote the class of $A$-modules isomorphic to $P$. Then   
the set 
\begin{equation}\label{zhongshan}
\mathcal V(A)=\big \{ \, [P] \mid  P  \text{ is a finitely generated projective A-module} \, \big \}
\end{equation}
 with addition $[P]+[Q]=[P\bigoplus Q]$ forms an abelian monoid. The \emph{Grothendieck group} of $A$, denoted by $K_0(A)$, is by definition $\mathcal V(A)^+$. \index{Grothendieck group}

The graded Grothendieck group of a graded ring is constructed similarly, by using graded finitely generated projective modules everywhere in the above process. Namely,  for a $\Gamma$\!-graded ring $A$ with identity and a graded finitely generated projective (right) $A$-module $P$, let $[P]$ denote the class of graded $A$-modules graded isomorphic to $P$. Then the monoid  \index{$\mathcal V^{\gr}$,  monoid of graded projective modules}
\begin{equation}\label{zhongshan1}
\mathcal V^{\gr}(A)=\big \{ \, [P] \mid  P  \text{ is graded finitely generated projective A-module} \, \big \}
\end{equation}
has a $\Gamma$\!-module structure defined as follows: for $\ga \in \Gamma$ and $[P]\in \mathcal V^{\gr}(A)$, $\ga .[P]=[P(\ga)]$. The group $\mathcal V^{\gr}(A)^+$ is called the \emph{graded Grothendieck group} and is denoted by $K^{\gr}_0(A)$, which 
as  the above discussion shows is a $\Z[\Gamma]$-module. \index{graded Grothendieck group}

The above construction of the graded Grothendieck groups carries over to the category of graded right $A$-modules. Since for a graded finitely generated right $A$-module $P$, the dual module $P^*=\Hom_A(P,A)$ is a graded left $A$-module, and furthermore, if $P$ is projective, then also $P^{**}\cong_{\gr} P$, passing to the dual defines an equivalence between the category of graded finitely generated projective right $A$-modules and  
the category of graded finitely generated projective left $A$-modules, \ie 
\[A \Pgrl  \approx_{\gr} \Pgrp A.\] This implies that constructing a graded Grothendieck group, using the graded left-modules, call it $K^{\gr}_0(A)_l$,  is isomorphic $K^{\gr}_0(A)_r$, the group constructed using graded right $A$-modules. Moreover defining the action of $\Gamma$ on generators of $K^{\gr}_0(A)_l$, by $\alpha [P]=[P(-\alpha)]$ and extend it to the whole group, and defining the action of  $\Gamma$ on $K^{\gr}_0(A)_r$, in the usual way, by $\alpha [P]=[P(\alpha)]$, then~(\ref{terrace1}) shows that these two groups are $\mathbb Z[\Gamma]$-module isomorphic.

As emphasised, one of the main differences between a graded Grothendieck group and the ungraded version is that the former has an extra module structure.  For instance, in Example~\ref{upst}, we will see that for the $\mathbb Z$-graded ring $A=K[x^n,x^{-n}]$, where $K$ is a field and $n\in \mathbb N$, 
\[ K_0^{\gr}(A)\cong \bigoplus_n \mathbb Z,\] which  is a $\mathbb Z[x,x^{-1}]$-module, with the action of $x$ on 
$(a_1,\dots,a_n) \in \bigoplus_n \mathbb Z$ is as follows: \[x (a_1,\dots,a_n)= (a_n,a_1,\dots,a_{n-1}).\]

In this chapter we study graded Grothendieck groups in detail. We will calculate these groups (or rather modules) for certain types of graded rings, including the graded division rings~(\S\ref{ghgwbsia}), graded local rings~(\S\ref{attila1}) and path algebras~(\S\ref{kleaviti}). 
In Chapter~\ref{ultriuy} we use the graded Grothendieck groups to classify the so called graded ultramatricial algebras. We will show that certain information encoded in the graded Grothendieck group can be used to give a complete invariant for such algebras.

In general, the category of graded finitely generated projective modules 
is an exact category with the usual notion of (split) short exact
sequence. The Quillen $K$-groups of this category (see \cite[\S2]{quillen} for the construction of $K$-groups of an exact category) is denoted by 
$K^{\gr}_i(A)$, $i\geq 0$. The group $\Gamma$ acts on this category
 via $(\alpha,P) \mapsto P(\alpha)$. By functoriality of $K$-groups this equips $K_i^{\gr}(A)$
with the structure of a $\mathbb Z[\Gamma]$-module. When $i=0$, Quillen's construction coincides with the above construction via the group completion.  In Chapter~\ref{waraya} we compare the graded versus ungraded Grothendieck groups as well as the higher $K$-groups. 

For a comprehensive study of ungraded Grothendieck groups see~\cite{lamsiu,magurn,rosenberg}.

\section{The graded Grothendieck group $K^{\gr}_0$} \label{podong}

\subsection{Group completions}\label{hghgti1}

\index{$V^{+}$, group completion}
Let $V$ be a monoid and $\Gamma$ be a group which acts on $V$. The \emph{group completion} \index{group completion} of $V$ (this is also called the \emph{Grothendieck group} \index{Grothendieck group} of $V$) has a natural $\Gamma$\!-module structure. We recall the construction here. 
Define a relation $\sim$ on $V\times V$ as follows:  $(x_1,y_1) \sim (x_2,y_2)$ if there is a $z\in V$ such that 
\begin{equation}\label{ppslaki}
x_1+y_2+z=y_1+x_2+z.
\end{equation}
This is an equivalence relation. We denote the equivalence classes of $V\times V/\sim$ by $V^+:=\big \{\, [(x,y)] \mid (x,y)\in V\times V\, \big \}$. Define 
\[ [(x_1,y_1)]+[(x_2,y_2)]=[(x_1+x_2,y_1+y_2)],\]
\[\alpha [(x,y)]=[(\alpha x,\alpha y)]. \]
One can easily check that these operations are well-defined, $V^+$ is an abelian group and further it is a $\Gamma$\!-module. 
The map 
\begin{align}\label{pogfris}
\phi: V &\longrightarrow V^+\\
x &\longmapsto [(x,0)] \notag
\end{align} 
is a $\Gamma$\!-module homomorphism and $\phi$ is \emph{universal}, \ie if there is a $\Gamma$\!-module $G$ and a $\Gamma$\!-module homomorphism $\psi:V\rightarrow G$, then there exists a unique $\Gamma$\!-module homomorphism $f:V^+\rightarrow G$, defined by 
$f([x,y])=\psi(x)-\psi(y)$, such that 
$f\phi=\psi$. 

We record the following properties of $V^+$ whose proofs are easy and are left to the reader. 

\begin{lemma}\label{shealing}
Let $V$ be a monoid, $\Gamma$ a group which acts on $V$, and let 
\begin{align*}
\phi: V &\longrightarrow V^+,\\
x &\longmapsto [(x,0)],
\end{align*}
 be the universal homomorphism. 

\begin{enumerate}[\upshape(1)]

\item For $x,y\in V$, if $\phi(x)=\phi(y)$ then there is a $z\in V$ such that $x+z=y+z$ in $V$.

\item Each element of $V^+$ is of the form $\phi(x)-\phi(y)$ for some $x,y\in V$. 

\item $V^+$ is generated by the image of $V$ under $\phi$ as a group. 

\end{enumerate}
\end{lemma}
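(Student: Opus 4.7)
The plan is to unwind the definitions of $V^+$ and $\phi$ directly; each of the three claims is essentially a one-line calculation once the quotient construction is laid out, and there is no serious obstacle. I would handle the three parts in the order (2), (1), (3), since once (2) is established, (3) is immediate, and (1) is just reading off the definition of the equivalence relation on $V\times V$.

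For part (2), I would begin with an arbitrary element $[(x,y)]\in V^+$ and compute
\[
\phi(x)-\phi(y) \;=\; [(x,0)] + \bigl(-[(y,0)]\bigr).
\]
The additive inverse of $[(y,0)]$ in $V^+$ is $[(0,y)]$, since $[(y,0)]+[(0,y)]=[(y,y)]$, and $(y,y)\sim(0,0)$ via any witness $z\in V$ (e.g.\ $z=0$, as $y+0+0 = 0+y+0$). Therefore
\[
\phi(x)-\phi(y) \;=\; [(x,0)]+[(0,y)] \;=\; [(x,y)],
\]
which gives the desired expression. Part (3) is then an immediate consequence, since every element is a difference of two elements in the image of $\phi$, so the image of $\phi$ generates $V^+$ as an abelian group.

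For part (1), suppose $\phi(x)=\phi(y)$, i.e.\ $[(x,0)]=[(y,0)]$ in $V^+$. By the definition of the equivalence relation $\sim$ in~\eqref{ppslaki}, this means there exists $z\in V$ such that
\[
x + 0 + z \;=\; 0 + y + z,
\]
which simplifies to $x+z=y+z$, as required.

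The main (very mild) subtlety to flag is simply that one must verify that $[(0,y)]$ really is the additive inverse of $[(y,0)]$ using the relation $\sim$ rather than assuming cancellation in $V$; this is why the witness element $z$ in~\eqref{ppslaki} is necessary and why in part (1) one cannot conclude $x=y$ but only that $x$ and $y$ become equal after adding a common element. No further obstacles arise.
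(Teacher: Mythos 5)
Your proof is correct and is exactly the direct verification the paper intends: the paper explicitly leaves these proofs to the reader, and your computation — identifying $-[(y,0)]=[(0,y)]$ via the witness $z$ in the relation~(\ref{ppslaki}), deducing $[(x,y)]=\phi(x)-\phi(y)$, and reading part (1) off the definition of $\sim$ — is the standard argument. Your closing remark correctly flags the only point of care, namely that one cannot cancel in $V$ and hence (1) yields only $x+z=y+z$ rather than $x=y$.
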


 \begin{example}
 Let $G$ be a group and $\mathbb N$ be the monoid of nonnegative integers. Then $\mathbb N[G]$ is a monoid equipped by the natural action of $G$. Its group completion is $\mathbb Z[G]$ which has a natural $G$-module structure. This will be used in Proposition~\ref{k0grof} to calculate the graded Grothendieck group of graded fields. 
 \end{example}

 \begin{example}\scm[A nonzero monoid whose group completion is zero]
 \vspace{0.2cm}
 
 Let $V$ be a monoid with the trivial module structure (\ie $\Gamma$ is trivial). Suppose that $V\backslash \{0\}$ is an abelian group. Then one can check that 
 $V^+\cong V\backslash \{0\}$. 
 
 Now let $V=\{0,v\}$, with $v+v=v$ and $0$ as the trivial element. One can check that with this operation $V$ is a monoid and $V\backslash \{0\}$ is an abelian group which is a trivial group. In general, the group completion of any monoid with a zero element, \ie an element $z$ such that $x+z=z$, 
 where $x$ is any element of the monoid, is a trivial group. 
 \end{example}

\begin{example}\label{ppurdeu}\scm[The type, IBN and the monoid of projective modules] \index{IBN} \index{type of a ring}
\vspace{0.2cm}

Let $A$ be a ring (with trivial grading) and $\mathcal V(A)$ the monoid of the isomorphism classes of finitely generated projective $A$-modules. Then one can observe that $A$ has IBN if and only if the submonoid generated by $[A]$ in $\mathcal V(A)$ is isomorphic to $\mathbb N$. Moreover, a ring $A$ has type $(n,k)$ if and only if the submonoid generated by $[A]$ is isomorphic to a free monoid generated by a symbol $v$ subject to $nv=(n+k)v$ (see~\S\ref{gtr5654}).

\end{example}
 
 \subsection{$K^{\gr}_0$-group}  \index{$K^{\gr}_0(A)$, graded Grothendieck group}

Let $A$ be a $\Gamma$\!-graded ring (with identity as usual) and let $\mathcal V^{\gr}(A)$ denote the monoid of graded isomorphism classes of graded finitely generated   projective modules over $A$ with the direct sum as the addition operation. For a graded finitely generated projective $A$-module $P$, we denote the graded isomorphism class of $P$ by $[P]$ which is an element of $\mathcal V^{\gr}(A)$ (see~(\ref{zhongshan1})).  Thus for $[P], [Q] \in \mathcal V^{\gr}(A)$, we have $[P]+[Q]=[P\oplus Q]$. 
Note that for $\alpha \in \Gamma$,  the  $\alpha$-suspension functor $\mathcal T_\alpha:\Gr A\rightarrow \Gr A$, $M \mapsto M(\alpha)$  is an isomorphism with the property
$\mathcal T_\alpha \mathcal T_\beta=\mathcal T_{\alpha + \beta}$, $\alpha,\beta\in \Gamma$.
Moreover, $\mathcal T_\alpha$ restricts to $\Pgrp A$, the  category of graded finitely generated  
projective $A$-modules. Thus the abelian group $\Gamma$ acts on $\mathcal V^{\gr}(A)$ via 
\begin{equation}\label{hhffwwq}
(\alpha, [P]) \mapsto  [P(\alpha)].
\end{equation}

The \emph{graded Grothendieck group}, \index{graded Grothendieck group} $K_0^{\gr}(A)$, is defined as the group completion of  the monoid $\mathcal V^{\gr}(A)$ which naturally inherits 
the $\Gamma$\!-module structure via (\ref{hhffwwq}).  This makes $K_0^{\gr}(A)$ a $\Z[\Gamma]$-module. In particular if $A$ is a $\mathbb Z$-graded then $K_0^{\gr}(A)$ is a
$\mathbb Z[x,x^{-1}]$-module. This extra structure plays a crucial role in the applications of graded Grothendieck groups. 

For a graded finitely generated projective $A$-module $P$, we denote the image of $[P] \in \mathcal V^{\gr}(A)$ under the natural homomorphism $\mathcal V^{\gr}(A) \rightarrow K^{\gr}_0(A)$ by $[P]$ again (see~(\ref{pogfris})). When the ring has graded stable rank $1$, this map is injective (see Corollary~\ref{hghtt1}).

\begin{example}\label{pogbiaue}
 
 In the following ``trivial'' cases one can determine the graded Grothendieck group based on the Grothendieck group of the ring. 
 \begin{description}
 \item[Trivial group grading:] Let $A$ be a ring and $\Gamma$ be a trivial group. Then $A$ is a $\Gamma$\!-graded ring in an obvious way, $\Gr A=\Modd A$ and $K^{\gr}_0(A)=K_0(A)$ as a $\mathbb Z[\Gamma]$-module.  However here $\mathbb Z[\Gamma]\cong \mathbb Z$ and $K^{\gr}_0(A)$ does not have any extra module structure. This shows that the statements we prove in the graded setting will cover the results in the classical ungraded setting, by considering rings with trivial gradings from the outset. 

 \medskip   \index{trivial grading}
 \item[Trivial grading:] Let $A$  be a ring and $\Gamma$ be a group. Consider $A$ with the trivial $\Gamma$\!-grading, \ie $A$ is concentrated in degree zero (see~\S\ref{jdjthu}). Then by Corollary~\ref{hgfeyes} and Remark~\ref{enjoythemomentt}, $\Pgrp A =\bigoplus_{\Gamma} \Prr A$.  Consequently, 
  $K^{\gr}_0(A)\cong \bigoplus _\Gamma K_0(A)$. Considering the shift which induces a $\mathbb Z[\Gamma]$-module structure (see~(\ref{mugeinizmir})), we have 
  \[K^{\gr}_0(A)\cong K_0(A)[\Gamma]\] as $\mathbb Z[\Gamma]$-module. 
 
 \end{description}
 \end{example}

\begin{remark}\scm[The group completion of all projective modules is trivial]
\vspace{0.2cm}

One reason in restricting ourselves to the finitely generated projective modules is that the group completion of the monoid of the isomorphism classes of (all) projective modules gives a trivial group. To see this, first observe that for a monoid $V$, $V^+$ is the trivial group if and only if for any $x,y \in V$, there is a $z\in V$ such that $x+z=y+z$. Now consider graded projective modules $P$ and $Q$. Then for $M=\bigoplus_{\infty} (P\oplus Q)$, we have $P\oplus M\cong Q\oplus M$. Indeed, 
\begin{align*}
Q \oplus M & \cong  Q \oplus (P\oplus Q) \oplus (P \oplus Q) \oplus \cdots\\
& \cong  (Q\oplus P) \oplus (Q\oplus P) \oplus \cdots \\
& \cong  (P \oplus Q) \oplus (P \oplus Q) \oplus \cdots\\
& \cong P \oplus (Q \oplus P) \oplus (Q\oplus P) \oplus \cdots \\
& \cong P \oplus M.
\end{align*}
\end{remark}

\begin{lemma}\label{rhodas}
Let $A$ be a $\Gamma$\!-graded ring.

\begin{enumerate}[\upshape(1)]

\item Each element of  $K^{\gr}_0(A)$ is of the form $[P]-[Q]$ for some graded finitely generated projective $A$-modules $P$ and $Q$. 

\item Each element of  $K^{\gr}_0(A)$ is of the form $[P]-[A^n(\overline \alpha)]$ for some graded finitely generated projective $A$-module $P$  and some $\overline \alpha=(\alpha_1,\dots,\alpha_n)$.
 
\item  Let $P$, $Q$ be graded finitely generated projective $A$-modules. Then $[P]=[Q]$ in $K^{\gr}_0(A)$ if and only if $P\oplus A^n(\overline \alpha) \conggr Q\oplus A^n(\overline \alpha)$, for some $\overline \alpha=(\alpha_1,\dots,\alpha_n)$. 
\end{enumerate}
\end{lemma}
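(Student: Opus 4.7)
The three parts will follow from combining the abstract facts about group completions (Lemma~\ref{shealing}) with the structural fact that every graded finitely generated projective module is, up to graded isomorphism, a summand of some graded free module $A^n(\overline\alpha)$ (cf.~equation~\eqref{medickiue}).

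For part (1), I would simply invoke Lemma~\ref{shealing}(2) applied to the monoid $\mathcal V^{\gr}(A)$: every element of $K^{\gr}_0(A) = \mathcal V^{\gr}(A)^+$ has the form $\phi([P]) - \phi([Q])$, which under our notational convention is written $[P] - [Q]$.

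For part (2), I would start with an arbitrary element $[P]-[Q]\in K^{\gr}_0(A)$ given by part (1). Since $Q$ is a graded finitely generated projective $A$-module, by~\eqref{medickiue} there exist a graded $A$-module $R$ and a tuple $\overline\alpha=(\alpha_1,\dots,\alpha_n)$ with $Q\oplus R\cong_{\gr} A^n(\overline\alpha)$. Then in $K^{\gr}_0(A)$,
\begin{equation*}
[P]-[Q] \;=\; [P]+[R]-[Q]-[R] \;=\; [P\oplus R] - [A^n(\overline\alpha)],
\end{equation*}
which is of the required form with $P\oplus R$ being graded finitely generated projective.

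For part (3), the backward implication is immediate: if $P\oplus A^n(\overline\alpha)\cong_{\gr} Q\oplus A^n(\overline\alpha)$, then already $[P]+[A^n(\overline\alpha)]=[Q]+[A^n(\overline\alpha)]$ in the monoid $\mathcal V^{\gr}(A)$, hence $[P]=[Q]$ in $K^{\gr}_0(A)$. For the forward implication, suppose $[P]=[Q]$ in $K^{\gr}_0(A)$. By Lemma~\ref{shealing}(1) applied to $V=\mathcal V^{\gr}(A)$, there is a graded finitely generated projective module $R$ such that $P\oplus R\cong_{\gr} Q\oplus R$ in $\mathcal V^{\gr}(A)$. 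The mild obstacle is that $R$ need not be free; however, using~\eqref{medickiue} again, pick $R'$ with $R\oplus R'\cong_{\gr} A^n(\overline\alpha)$ for some $\overline\alpha$, and add $R'$ to both sides to obtain
\begin{equation*}
P\oplus A^n(\overline\alpha) \;\cong_{\gr}\; P\oplus R\oplus R' \;\cong_{\gr}\; Q\oplus R\oplus R' \;\cong_{\gr}\; Q\oplus A^n(\overline\alpha),
\end{equation*}
which is the desired conclusion. The only real work is this replacement of an arbitrary projective complement by a graded free one, which is exactly what~\eqref{medickiue} provides; everything else is a direct transcription of the general group-completion lemma into the present $\Gamma$\!-equivariant setting.
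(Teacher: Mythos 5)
Your proposal is correct and follows essentially the same route as the paper's proof: part (1) via Lemma~\ref{shealing}(2), part (2) by completing $Q$ to a graded free module using~\eqref{medickiue}, and part (3) by applying Lemma~\ref{shealing}(1) to get $P\oplus T\conggr Q\oplus T$ and then replacing the projective $T$ with a graded free complement, with the converse immediate since $K^{\gr}_0(A)$ is a group. The only difference is cosmetic — you write out the computation in (2) that the paper leaves implicit.
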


\begin{proof}
(1) This follows immediately from Lemma~\ref{shealing}(2) by considering $\mathcal V^{\gr}(A)$ as the monoid and the fact that $[P]$ also represent the image of $[P] \in \mathcal V^{\gr}(A)$ in $K^{\gr}_0(A)$. 

(2) This follows from (1) and the fact that for a graded finitely generated projective $A$-module $Q$, there is a graded finitely generated projective module $Q'$ such that $Q\oplus Q'\conggr A^n(\overline \alpha)$, for some $\overline \alpha=(\alpha_1,\dots,\alpha_n)$  (see Proposition~\ref{grprojectivethm} and~(\ref{medickiue})). 

(3) Suppose $[P]=[Q]$ in $K^{\gr}_0(A)$. Then by Lemma~\ref{shealing}(1) (for $V=\mathcal V^{\gr}(A)$) there is a graded finitely generated projective $A$-module $T$ such that $P\oplus T \conggr Q\oplus T$. Since $T$ is graded finitely generated projective,  there is an $S$ such that $T\oplus S\conggr A^n(\overline \alpha)$, for some $\overline \alpha=(\alpha_1,\dots,\alpha_n)$ (see~(\ref{medickiue})). Thus 
\[ P\oplus A^n(\overline \alpha) \conggr  P\oplus T \oplus S \conggr Q\oplus T \oplus S\conggr Q\oplus A^n(\overline \alpha).\]

Since $K^{\gr}_0(A)$ is a group, the converse is immediate. 
\end{proof}

For graded finitely generated projective $A$-modules $P$ and $Q$, we say 
$P$ and $Q$ are \emph{graded stably  isomorphic}  if $[P]=[Q]$ in $K^{\gr}_0(A)$ or equivalently by Lemma~\ref{rhodas}(3), if 
$P\oplus A^n(\overline \alpha) \conggr Q\oplus A^n(\overline \alpha)$, for some $\overline \alpha=(\alpha_1,\dots,\alpha_n)$.  \index{graded stably isomorphic}

\begin{corollary}\label{hghtt1}
Let $A$ be a $\Gamma$\!-graded ring with the graded stable rank $1$. Then the natural map $\mathcal V^{\gr}(A) \rightarrow K^{\gr}_0(A)$ is injective. 
\end{corollary}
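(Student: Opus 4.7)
The plan is to combine Lemma~\ref{rhodas}(3), which translates equality in $K_0^{\gr}(A)$ into a stable isomorphism of graded modules, with the Graded Cancellation result of Corollary~\ref{jussiej}, which is available precisely under the graded stable rank $1$ hypothesis.

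More concretely, I would argue as follows. Suppose $[P], [Q] \in \mathcal V^{\gr}(A)$ have the same image in $K_0^{\gr}(A)$. Then by Lemma~\ref{rhodas}(3) there exist $\overline{\alpha} = (\alpha_1, \ldots, \alpha_n)$ with
\[
P \oplus A^n(\overline{\alpha}) \cong_{\gr} Q \oplus A^n(\overline{\alpha}).
\]
Since $A^n(\overline{\alpha})$ is a graded finitely generated projective $A$-module and $A$ has graded left stable rank $1$, Corollary~\ref{jussiej} applies with the cancellable module taken to be $A^n(\overline{\alpha})$, yielding $P \cong_{\gr} Q$. Hence $[P] = [Q]$ already in $\mathcal V^{\gr}(A)$, proving injectivity of $\mathcal V^{\gr}(A) \to K_0^{\gr}(A)$.

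There is essentially no obstacle here: all the work has already been done in Theorem~\ref{jussiej4} and its Corollary~\ref{jussiej}, where the graded stable rank $1$ hypothesis is used to force cancellation of graded free modules with an arbitrary shift tuple $\overline{\alpha}$. The only minor point to double-check is that Corollary~\ref{jussiej} is stated for an arbitrary graded finitely generated projective summand and not only for $A$ itself, which is indeed the case (the proof in the excerpt reduces the general case to $P = A(\alpha)$ via $P \oplus Q \cong_{\gr} A^n(\overline{\alpha})$ and iterates). Thus the proof is a one-line invocation of the two previous results and does not require any further calculation.
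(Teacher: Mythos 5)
Your proposal is correct and follows exactly the paper's own argument: the paper likewise deduces $P\oplus A^n(\overline \alpha) \cong_{\gr} Q\oplus A^n(\overline \alpha)$ from Lemma~\ref{rhodas}(3) and then cancels via Corollary~\ref{jussiej} to conclude $P\cong_{\gr} Q$, hence $[P]=[Q]$ in $\mathcal V^{\gr}(A)$. Your check that Corollary~\ref{jussiej} applies to an arbitrary graded finitely generated projective summand is also exactly how the paper's setup is arranged, so nothing is missing.
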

\begin{proof}
Let $P$ and $Q$ be graded finitely generated projective $A$-modules such that $[P]=[Q]$ in $K^{\gr}_0(A)$.  Then by Lemma~\ref{rhodas}(3), $P\oplus A^n(\overline \alpha) \conggr Q\oplus A^n(\overline \alpha)$, for some $\overline \alpha=(\alpha_1,\dots,\alpha_n)$. Now by Corollary~\ref{jussiej}, $P\cong_{\gr} Q$. Thus $[P]=[Q]$ in $\mathcal V^{\gr}(A)$. 
\end{proof}

\subsection{$K_0^{\gr}$ of strongly graded rings}\label{jijigogo} \index{strongly graded ring}
\index{0-component ring of a graded ring} \index{$\Prr A$, category of finitely generated projective $A$-modules}

Let $A$ be a strongly $\Gamma$\!-graded ring. By Dade's Theorem~\ref{dadesthm} and Remark~\ref{cafejen}, the functor $(-)_0:\Pgrp A\rightarrow \Prr A_0$, $M\mapsto M_0$, is an additive functor with an  inverse $-\otimes_{A_0} A: \Prr A_0 \rightarrow \Pgrp A$ so that it induces an equivalence between the category of graded finitely generated projective $A$-modules and the category of finitely generated projective $A_0$-module.   
 This implies
that 
\begin{equation}\label{hgyta2}
\boxed{K_0^{\gr}(A)  \cong K_0(A_0).}
\end{equation} (In fact, this implies that $K_i^{\gr}(A)  \cong K_i(A_0)$, for all $i\geq 0$, where $K_i^{\gr}(A)$ and  $K_i(A_0)$ are Quillen's $K$-groups. These groups will be discussed in~\S\ref{waraya}.) Moreover, since $A_\alpha \otimes_{A_0}A_\beta \cong A_{\alpha +\beta}$ as $A_0$-bimodule, the functor $\mathcal T_\alpha$ on $\mbox{gr-}A$ induces a functor on the level of
$\mbox{mod-}A_0$, $\mathcal T_\alpha:\mbox{mod-}A_0 \rightarrow \mbox{mod-}A_0$, $M\mapsto  M \otimes_{A_0} A_\alpha$ such that $\mathcal T_\alpha \mathcal T_\beta\cong\mathcal T_{\alpha +\beta}$, $\alpha,\beta\in \Gamma$, so that the following diagram is commutative up to isomorphism. 
\begin{equation}\label{veronaair}
\xymatrix{
\Pgrp A \ar[r]^{\mathcal T_\alpha} \ar[d]_{(-)_0}& \Pgrp A \ar[d]^{(-)_0}\\
\Prr A_0 \ar[r]^{\mathcal T_\alpha}  & \Prr A_0
}
\end{equation}
Therefore $K_i(A_0)$ is also a $\mathbb Z[\Gamma]$-module and
\begin{equation} \label{dade}
K_i^{\gr}(A) \cong K_i(A_0),
\end{equation} as $\mathbb Z[\Gamma]$-modules.

Also note that if $A$ is a graded commutative ring then the isomorphism~(\ref{dade}), for $i=0$, is a ring isomorphism.

\begin{example}\scm[$K^{\gr}_0$ of crossed products] \label{gptwnow} \index{crossed product ring}
\vspace{0.2cm}
\vspace{0.2cm}

Let $A$ be a $\Gamma$\!-graded crossed product ring. Thus $A={A_0}_{\psi}^{\phi}[\Gamma]$ (see~\S\ref{khgfewa1}). 
By Proposition~\ref{crossedproductstronglygradedprop}(3), $A$ is strongly graded, and so $\Gr A  \approx \Modd A_0$ and $K^{\gr}_0(A)\cong K_0(A_0)$ (see~(\ref{hgyta2})). 

On the other hand, by Corollary~\ref{andrewmathas}(4), the restriction of the shift functor on $\Pgrp  A$, $\mathcal T_\alpha:\Pgrp A\rightarrow \Pgrp A$, is isomorphic to the trivial functor. 
This shows that the action of $\Gamma$ on $K^{\gr}_0(A)\cong K_0(A_0)$ (and indeed on all $K$-groups $K^{\gr}_i(A)$) is trivial. 
\end{example}

\begin{example}\label{poiurp}\scm[$K^{\Gamma}_0$ verses $K^{\Gamma/\Omega}_0$]
\vspace{0.2cm}

Let $A$ be a $\Gamma$\!-graded ring and $\Omega$ be a subgroup of $\Gamma$. Recall the construction of $\Gamma/\Omega$-graded ring $A$ 
from\S~\ref{mconfi1}. The canonical forgetful functor \[U:\Gr[\Gamma] A \rightarrow \Gr[\Gamma/\Omega] A\] is an exact functor (see~\S\ref{bill100}). Proposition~\ref{grprojectivethm} guarantees that $U$ restricts to the categories of graded finitely generated projective modules, \ie \[U:\Pgr[\Gamma] A \rightarrow \Pgr[\Gamma/\Omega] A.\] 
This induces a group homomorphism 
\[ \theta:K^{\Gamma}_0(A) \longrightarrow K^{\Gamma/\Omega}_0(A),\]
such that for $\alpha \in \Gamma$ and $a\in K^{\Gamma}_0(A)$, we have  $\theta (\alpha a) =(\Omega+\alpha) \theta(a).$
Here, to distinguish the grade groups, we denote by $K^{\Gamma}_0$ and $K^{\Gamma/\Omega}_0$  the graded Grothendieck groups of $A$ as $\Gamma$ and $\Gamma/\Omega$-graded rings, respectively (we will use this notation again in~\S\ref{waraya}).  


Now let $A$ be a strongly $\Gamma$\!-graded ring. Then $A$ is also a strongly $\Gamma/\Omega$-graded (Example~\ref{mominjianguomen}). 
Using~(\ref{hgyta2}), we have 
\begin{align*}
K^{\Gamma}_0(A) & \cong K_0(A_0),\\
K^{\Gamma/\Omega}_0(A) & \cong K_0(A_\Omega),
\end{align*}
where $A_\Omega=\bigoplus_{\gamma \in \Omega} A_\gamma$. 
\end{example}

\begin{example}\scm[$K^{\Gamma}_0$ verses $K^{\Omega}_0$]\label{tonyat11}
\vspace{0.2cm}

Let $A$ be a $\Gamma$\!-graded ring and $\Omega$ be a subgroup of $\Gamma$ such that $\Gamma_A \subseteq \Omega$. By Theorem~\ref{mhgft42} 
the category $\Gr[\Gamma] A$ is equivalent to $\bigoplus_{\Gamma/\Omega} \Gr[\Omega] A_\Omega$. Since this equivalence preserves the projective modules (Remark~\ref{enjoythemomentt}), we have 
\begin{equation*}
K^{\Gamma}_0(A)\cong \bigoplus_{\Gamma/\Omega}K^{\Omega}_0(A).
\end{equation*}
In fact, the same argument shows that for any (higher) $K$-groups (see~\S\ref{waraya}), if $A$ is a $\Gamma$\!-graded ring such that for $\Omega:=\Gamma_A$, $A_\Omega$ is strongly $\Omega$-graded, then we have 
\begin{equation}\label{diniat6}
\boxed{K^{\Gamma}_i(A)\cong \bigoplus_{\Gamma/\Omega}K^{\Omega}_i(A) \cong \bigoplus_{\Gamma/\Omega}K_i(A_0).}
\end{equation}

Moreover, if $A_\Omega$ is a crossed product, we can determine the action of $\Gamma$ on $K^{\gr}_i$ concretely. First, using Corollary~\ref{andrewmathas}, in (\ref{hnhdheye}) for any $\omega \in \Omega$, 
$M(w)_{\Omega+\alpha_i} \cong M_{\Omega+\alpha_i}$  as graded $A_\Omega$-module. Thus the functor $\overline \rho_\beta$ in (\ref{hnhdheye}) reduces to permutations of categories. Representing $\bigoplus_{\Gamma/\Omega}K_i(A_0)$ by the additive group of the group ring 
$K_i(A_0)[\Gamma/\Omega]$, by Theorem~\ref{mhgft42}, the $\mathbb Z[\Gamma]$-module structure of $K^{\Gamma}_i(A)$, can be descried as $K_i(A_0)[\Gamma/\Omega]$ with the natural $\mathbb Z[\Gamma]$-module structure.

This can be used to calculate the (lower) graded $K$-theory of graded division algebras (see Proposition~\ref{k0grof}, Remark~\ref{hghghgt4} and Example~\ref{lobbyat12}). 
\end{example}

\subsection{The reduced graded Grothendieck group $\widetilde {K^{\gr}_0}$}\label{pearlf} 
\index{$\widetilde{K^{\gr}_0}(A)$, reduced graded Grothendieck group}

For a ring $A$, the element $[A]\in K_0(A)$ generates the ``obvious'' part of the Grothendieck group, \ie all the elements presented by $\pm [A^n]$ for some $n\in \mathbb N$.  
The \emph{reduced Grothendieck group}, \index{reduced Grothendieck group} denoted by $\widetilde {K_0}(A)$ is the quotient of $K_0(A)$ by this cyclic subgroup. There is a unique ring homomorphism $\Z \rightarrow A$, which induces the group homomorphism $K_0(\Z) \rightarrow K_0(A)$. Since the ring of integers $\Z$ is a PID and finitely generated projective modules over a PID ring
are free of unique rank (see~\cite[Theorem~1.3.1]{rosenberg}), we have 
\begin{equation}
\boxed{\widetilde {K_0}(A)=\coker \big (K_0(\Z) \rightarrow K_0(A)\big). }
\end{equation}

This group appears quite naturally. For example when $A$ is a Dedekind domain,  $\widetilde {K_0}(A)$ coincides with $C(A)$, the class group of $A$ (see~\cite[Chap.~1,\S4]{rosenberg}). 

We can thus write an exact sequence, 
\[ 0\longrightarrow \ker \theta \longrightarrow K_0(\mathbb Z) \stackrel{\theta}{\longrightarrow} K_0(A) \longrightarrow \widetilde {K_0}(A) \longrightarrow 0.\]

For a commutative ring $A$,  one can show that $\ker \theta =0$ and the exact sequence is split, \ie 
\begin{equation}\label{subwaynow}
K_0(A)\cong \Z \bigoplus \widetilde {K_0}(A).
\end{equation}

In fact, $\ker \theta$ determines if a ring has IBN (see~\S\ref{gtr5654}). Namely, $\ker \theta=0$ if and only if 
$A$ has IBN. For, suppose $\ker \theta=0$.  If $A^n\cong A^m$ as $A$-modules, for $n,m \in \mathbb N$, then 
$(n-m)[A]=0$. It follows $n=m$. Thus $A$ has IBN. On the other hand suppose $A$ has IBN. If $n \in \ker \theta$, then we can consider $n\geq 0$. Then $[A^n]=0$ implies $A^{n+k} \cong A^k$, for some $k \in \mathbb N$. Thus $n=0$ and so $\ker \theta=0$.

Here we develop the graded version of reduced Grothendieck groups. Let $A$ be a $\Gamma$\!-graded ring. In this setting the obvious part of $K^{\gr}_0(A)$ is not only the subgroup generated by $[A]$ but by all the shifts of $[A]$, \ie the $\Z[\Gamma]$-module generated by $[A]$. The \emph{reduced graded Grothendieck group} \index{reduced graded Grothendieck group} of the $\Gamma$\!-graded ring $A$, denoted by $\widetilde {K^{\gr}_0}(A)$, is the $\mathbb Z[\Gamma]$-module defined by the quotient of $K^{\gr}_0(A)$ by the submodule generated by $[A]$. Similarly to the ungraded case, considering $\Z$ as a  $\Gamma$\!-graded ring concentrated in degree zero, the unique graded ring homomorphism $\Z \rightarrow A$, indices a $\mathbb Z[\Gamma]$-module homomorphism $K^{\gr}_0(\Z) \rightarrow K^{\gr}_0(A)$ (see Example~\ref{pogbiaue}(2)). Then 
\begin{equation}
\widetilde {K^{\gr}_0}(A)=\coker \big (K^{\gr}_0(\Z) \rightarrow K^{\gr}_0(A)\big). 
\end{equation}

In order to obtain a graded version of the splitting formula~(\ref{subwaynow}), we need to take out a part of $\Gamma$ which its action on $[A]$ via shift would 
not change $[A]$ in $K^{\gr}_0(A)$. 

First note that the $\Z[\Gamma]$-module homomorphism $K^{\gr}_0(\Z) \rightarrow K^{\gr}_0(A)$ can be written as 
\begin{align}\label{mozart1}
\phi:\mathbb Z[\Gamma]&\longrightarrow K^{\gr}_0(A),\\
\sum_\alpha n_\alpha \alpha &\longmapsto \sum_\alpha n_\alpha [A(\alpha)].\notag
\end{align}

Moreover, since by Corollary~\ref{rndcongcori}, $A(\alpha)\conggr A(\beta)$ as right $A$-modules if and only if $\alpha -\beta \in \Gamma_A^*$, the above map induces 
\begin{align*}
\psi:\mathbb Z[\Gamma/\Gamma_A^*]&\longrightarrow K^{\gr}_0(A),\\
\sum_\alpha n_\alpha (\Gamma_A^*+\alpha) &\longmapsto \sum_\alpha n_\alpha [A(\alpha)],
\end{align*}
so that the following diagram is naturally commutative. 
\begin{equation}\label{stonrybridge}
\xymatrix{
\mathbb Z[\Gamma] \ar[rr]^{\phi} \ar[rd]_{\pi} &&K^{\gr}_0(A),\\
& \mathbb Z[\Gamma/\Gamma_A^*] \ar[ur]_{\psi}
}
\end{equation}
Since $\pi$ is surjective, $\widetilde {K^{\gr}_0}(A)$ is also the cokernel of the map $\psi$. 

In Example~\ref{gaul1} we calculate the reduced Grothendieck group of the graded division algebras. In Example~\ref{gaul2} we show that for a strongly graded ring, this group does not necessarily coincide with the reduced Grothendieck group of its 0-component ring.


\subsection{$K_0^{\gr}$ as a $\mathbb Z[\Gamma]$-algebra}\label{hhyyuvy}

Let $A$ be a $\Gamma$\!-graded commutative ring. Then, as in the ungraded case, $K^{\gr}_0(A)$ forms a commutative ring, with multiplication defined on generators by tensor products, \ie $[P].[Q]=[P\otimes_A Q]$ (see~\S\ref{grtensie}). Consider $\mathbb Z[\Gamma]$ as a group ring. To get the natural group ring operations, here we use the multiplicative notation for the group $\Gamma$. 
With this convention, since for any $\alpha, \beta \in \Gamma$, by~(\ref{inenbuild}), $A(\alpha)\otimes_A A(\beta) \conggr A(\alpha\beta)$ , the map 
\begin{align*}
\mathbb Z[\Gamma]&\longrightarrow K^{\gr}_0(A),\\
\sum_\alpha n_\alpha \alpha &\longmapsto \sum_\alpha n_\alpha [A(\alpha)]
\end{align*}  induces a ring homomorphism.  This makes $K^{\gr}_0(A)$ a 
$\mathbb Z[\Gamma]$-algebra. In fact the argument before Diagram~\ref{stonrybridge} shows that $K^{\gr}_0(A)$ can be considered as $\mathbb Z[\Gamma/\Gamma_A^*]$-algebra. 

In Examples~\ref{drring} and \ref{lrring} we calculate these algebras for graded fields and graded local rings.

\section{$K^{\gr}_0$ from idempotents} \label{hhidmi}

One can describe the $K_0$-group of a ring $A$ with identity in terms
of idempotent matrices of $A$. This description is quite helpful as we can work with the  conjugate classes of idempotent matrices instead of isomorphism classes of finitely generated projective modules. For example we will use this description to show that the graded $K_0$ is a continuous map (Theorem~\ref{kcontis}.)  Also, this description allows us to define the monoid $\mathcal V$ for rings without identity in a natural way (see~\S\ref{wedk0}). We briefly recall the construction of $K_0$ from idempotents here.

Let $A$ be a ring with identity. In the following, we can always enlarge matrices of different sizes over $A$ by adding zeros in the lower right hand corner, so that they can be considered in a ring $\M_k(A)$ for a suitable $k\in \mathbb N$. 
This means that we work in the matrix ring $\M_{\infty}(A)=\varinjlim_n \M_n(A)$, where the connecting maps are 
the non-unital ring homomorphism  \index{$\M_{\infty}(A)$}
\begin{align}\label{cafejen25}
\M_n(A)&\longrightarrow \M_{n+1}(A)\\ 
p &\longmapsto 
\left(\begin{matrix} p & 0\\ 0 & 0
\end{matrix}\right). \notag
\end{align}

Any idempotent matrix $p \in \M_n(A)$ (\ie $p^2=p$) \index{idempotent matrix} gives rise to the finitely generated 
projective right $A$-module $pA^n$. On the other hand any finitely generated  projective right 
module $P$ gives rise to an idempotent matrix $p$ such that $pA^n \cong P$. We say two idempotent
matrices $p$ and $q$ are equivalent if (after suitably enlarging
them) there are matrices $x$ and $y$ such that $xy=p$ and $yx=q$.
One can show that $p$ and $q$ are equivalent if and only if they are
conjugate if and only if the corresponding finitely generated   projective modules
are isomorphic.  Therefore $K_0(A)$ can be defined as the group
completion of the monoid of equivalence classes of idempotent
matrices with addition defined by, $[p]+[q]=\Big [\left(\begin{matrix} p & 0\\ 0 & q
\end{matrix}\right) \Big ]$. In fact, this is the definition that one adapts for
$\mathcal V(A)$ when the ring $A$ does not have identity (see for example~\cite[Chapter~1]{mcuntz} or~\cite[p.296]{menal}).

A similar construction can be given in the setting of graded rings.
This does not seem to be documented in literature and we provide the
details here.

Let $A$ be a $\Gamma$\!-graded ring with identity and let $\ol
\alpha=(\alpha_1,\dots,\alpha_n)$, where $\alpha_i \in \Gamma$. Recall that $\M_n(A)(\ol \alpha)$ is a graded ring (see~\S\ref{matgrhe}). In
the following if we need to enlarge a homogeneous matrix $p\in
\M_n(A)(\ol \alpha)$, by adding zeroes in the lower right hand
corner (and call it $p$), then we add zeros in the right hand side of $\ol
\alpha=(\alpha_1,\dots,\alpha_n)$ as well accordingly (and call it
$\ol \alpha$ again) so that $p$ is a homogeneous matrix in
$\M_k(A)(\ol \alpha)$, where $k\geq n$. Recall also the definition of $ \M_k(A)[\ol \alpha][\ol \delta]$ from \S\ref{kjujidw} and note that if $x \in  \M_k(A)[\ol \alpha][\ol \delta]$ and $y \in  \M_k(A)[\ol \de][\ol \alpha]$, then by Lemma~\ref{smallhandy} and (\ref{bequitet}), $xy \in \M_k(-\ol \alpha)_0$ and $yx\in \M_k(-\ol\de)_0$. 

\begin{definition}\label{equison}
Let $A$ be a $\Gamma$\!-graded ring and let $\ol
\alpha=(\alpha_1,\dots,\alpha_n)$ and $\ol
\delta=(\delta_1,\dots,\delta_m)$, where $\alpha_i, \delta_j \in
\Gamma$. Let $p\in \M_n(A)(\ol \alpha)_0$ and $q\in \M_m(A)(\ol
\delta)_0$ be idempotent matrices. Then $p$ and $q$ are \emph{graded equivalent}, \index{graded equivalent idepomtents} denoted by $p\sim q$, if (after suitably
enlarging them) there are $x \in \M_k(A)[-\ol \alpha][-\ol \delta]$ and
$y \in \M_k(A)[-\ol \delta][-\ol \alpha]$ such that $xy=p$ and $yx=q$. Moreover, we say $p$ and $q$ are  \emph{graded conjugate} \index{graded conjugate}
if there is an invertible matrix $g\in \M_{k}(A)[-\ol \delta][-\ol \alpha]$ with $g^{-1} \in \M_{k}(A)[-\ol \alpha][-\ol \delta]$ such that $gpg^{-1}=q$. 
\end{definition}

The relation $\sim$ defined above is an equivalence relation. Indeed, if $p\in \M_n(A)(\ol \alpha)_0$ is a homogeneous idempotent, then
considering \[x=p \in \M_n(A)[-\ol \alpha][-\ol \alpha],\] (see~(\ref{bequitet})) and \[y=1\in  \M_n(A)[-\ol \alpha][-\ol \alpha],\] Definition~\ref{equison} shows that $p\sim p$. Clearly $\sim$ is reflexive. The following trick shows that $\sim$ is also transitive. Suppose $p\sim q$ and $q\sim r$. Then 
$p=xy$, $q=yx$ and $q=vw$ and $r=wv$. Thus $p=p^2=xyxy=xqy=(xv)(wy)$ and 
$r=r^2=wvwv=wqv=(wy)(xv)$. This shows that $p\sim r$.

Let  $p\in \M_n(A)(\ol \alpha)_0$ be a homogeneous idempotent. Then one can easily see that for any $\beta=(\beta_1,\dots,\beta_m)$, 
 the idempotent $\left(\begin{matrix} p & 0\\ 0 & 0 \end{matrix}\right) \in \M_{n+m}(A)(\ol \alpha, \ol \beta)_0$ is graded equivalent to $p$. We call this element an  \emph{enlargement} of $p$. \index{enlargement of an homogeneous idempotent}  

\begin{lemma}\label{ijhlordjes}
Let $p\in \M_n(A)(\ol \alpha)_0$ and $q\in \M_m(A)(\ol \delta)_0$ be idempotent matrices. Then $p$ and $q$ are graded equivalent if and only if some enlargements of $p$ and $q$ are conjugate. 
\end{lemma}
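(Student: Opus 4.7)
The plan is to prove the two directions of the lemma separately, using the standard Whitehead-style trick from ungraded $K$-theory carefully adapted so that every intermediate matrix lives in the correct mixed-shift graded component.

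For the forward direction, I will start from an equivalence witnessed by $x \in \M_k(A)[-\ol\alpha][-\ol\delta]$ and $y \in \M_k(A)[-\ol\delta][-\ol\alpha]$ with $xy = p$ and $yx = q$ (after enlargement). The first step is to replace $x$ by $pxq$ and $y$ by $qyp$; this preserves the mixed shifts as well as the identities $xy = p$ and $yx = q$, and additionally yields the normalisations $px = x = xq$ and $qy = y = yp$. I then form the block matrix
\[ u = \begin{pmatrix} 1 - p & x \\ y & 1 - q \end{pmatrix} \]
and observe, block by block, that its four entries lie in $\M_k(A)[-\ol\alpha][-\ol\alpha]$, $\M_k(A)[-\ol\alpha][-\ol\delta]$, $\M_k(A)[-\ol\delta][-\ol\alpha]$ and $\M_k(A)[-\ol\delta][-\ol\delta]$ respectively. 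Consequently $u$ is homogeneous of degree zero in $\M_{2k}(A)(\ol\alpha,\ol\delta)$. A direct calculation using $p^2=p$, $q^2=q$, the hypotheses $xy=p$, $yx=q$ and the normalisations gives $u^2 = I_{2k}$, so $u^{-1}=u$ and $u$ is invertible in the degree-zero component. A second short calculation then yields
\[ u \begin{pmatrix} p & 0 \\ 0 & 0 \end{pmatrix} u^{-1} = \begin{pmatrix} 0 & 0 \\ 0 & q \end{pmatrix}, \]
and both diagonal matrices are enlargements of $p$ and $q$ inside $\M_{2k}(A)(\ol\alpha,\ol\delta)_0$, placed in the blocks whose shifts are $\ol\alpha$ and $\ol\delta$ respectively.

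For the converse, suppose some enlargements $\tilde p$ and $\tilde q$ are graded conjugate via an invertible $g$ with $g\tilde p g^{-1}=\tilde q$ and with the mixed-shift structure on $g$ and $g^{-1}$ required by Definition~\ref{equison}. I will set $x:=\tilde p g^{-1}$ and $y:=g\tilde p$; Lemma~\ref{smallhandy} then gives $x$ and $y$ the required mixed-shift degrees, and one checks $xy = \tilde p^{\,2} = \tilde p$ together with $yx = g\tilde p^{\,2} g^{-1} = \tilde q$. Since an idempotent is trivially $\sim$-equivalent to any of its enlargements (take $x=y$ equal to the enlargement), transitivity of $\sim$ yields $p\sim q$.

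The main obstacle, and really the only place the graded setting demands genuine care beyond the ungraded argument, will be the bookkeeping of shifts: verifying the four block memberships for $u$ above, and confirming that $\mathrm{diag}(p,0)$ and $\mathrm{diag}(0,q)$ genuinely qualify as enlargements of $p$ and $q$ in the ring $\M_{2k}(A)(\ol\alpha,\ol\delta)_0$. This requires reading the convention on enlargement flexibly enough that added zero rows and columns may appear in either block, with the shift tuple extended correspondingly. Once these graded identifications are in place the remaining computations are formally identical to the classical Whitehead trick.
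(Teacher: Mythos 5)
Your proposal is correct and follows essentially the same route as the paper's proof: the same normalisation $x\mapsto pxq$, $y\mapsto qyp$, the same order-two Whitehead block matrix $\left(\begin{smallmatrix} 1-p & x\\ y & 1-q \end{smallmatrix}\right)$ of degree zero in $\M_{2k}(A)(\ol\alpha,\ol\delta)$, and the same converse construction taking $x,y$ built from $g^{\pm 1}$ and the enlarged idempotent together with transitivity of $\sim$. The one cosmetic difference is that where you invoke a ``flexible'' reading of the enlargement convention to accept $\mathrm{diag}(0,q)$, the paper instead conjugates once more by the shift-compatible permutation matrix $\left(\begin{smallmatrix} 0 & 1\\ 1 & 0 \end{smallmatrix}\right)\in \M_{2k}(A)[-\ol\delta,-\ol\alpha][-\ol\alpha,-\ol\delta]$ to move $q$ to the upper-left block, so that the result is literally an enlargement in $\M_{2k}(A)(\ol\delta,\ol\alpha)_0$ as Definition~\ref{equison} requires.
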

\begin{proof}
Let  $p$ and $q$  be graded equivalent idempotent matrices. By
Definition~\ref{equison}, there are $x' \in \M_k(A)[-\ol \alpha][-\ol
\delta]$ and $y' \in \M_k(A)[-\ol \delta][-\ol \alpha]$ such that
$x'y'=p$ and $y'x'=q$. Let $x=px'q$ and $y=qy'p$. Then
\[xy=px'qy'p=p(x'y')^2p=p\] and similarly $yx=q$. Moreover, 
$x=px=xq$ and $y=yp=qy$. Using Lemma~\ref{smallhandy}, one can check that 
$x \in \M_k(A)[-\ol \alpha][-\ol
\delta]$ and $y \in \M_k(A)[-\ol \delta][-\ol \alpha]$. 
We now use the standard argument, taking into account the shift of the matrices. Consider the matrix
\[\left(\begin{matrix} 1-p & x\phantom{-1}\\ y\phantom{-1}  & 1-q \end{matrix}\right) \in \M_{2k}(A)[-\ol \alpha, -\ol \delta][-\ol \alpha, -\ol \delta].\]
This matrix has order two and thus is its own inverse. Then
\begin{equation*}
\left(\begin{matrix} 1-p & x\phantom{-1}\\ y\phantom{-1} & 1-q \end{matrix}\right) \left(\begin{matrix} p & 0\\ 0 & 0 \end{matrix}\right)
\left(\begin{matrix} 1-p & x\phantom{-1}\\ y\phantom{-1} & 1-q \end{matrix}\right)=\left(\begin{matrix} 1-p & x\phantom{-1}\\ y\phantom{-1} & 1-q \end{matrix}\right)
\left(\begin{matrix} 0 & x\\ 0 & 0 \end{matrix}\right) =\left(\begin{matrix} 0 & 0\\ 0 & q \end{matrix}\right).
\end{equation*}
Moreover, 
\begin{equation*}
\left(\begin{matrix} 0 & 1\\ 1 & 0 \end{matrix}\right) \left(\begin{matrix} 0 & 0\\ 0 & q \end{matrix}\right)
\left(\begin{matrix} 0 & 1\\ 1 & 0 \end{matrix}\right)=\left(\begin{matrix} q & 0\\ 0 & 0 \end{matrix}\right).
\end{equation*}
Now considering the enlargements  of $p$ and $q$ \[\left(\begin{matrix} p & 0\\ 0 & 0 \end{matrix}\right) \in \M_{2k}(A)(\ol \alpha, \ol \delta)_0\] 
 \[\left(\begin{matrix} q & 0\\ 0 & 0 \end{matrix}\right) \in \M_{2k}(A)(\ol \delta, \ol \alpha)_0,\] we have 
 $g  \left(\begin{matrix} p & 0\\ 0 & 0 \end{matrix}\right) g^{-1} = \left(\begin{matrix} q & 0\\ 0 & 0 \end{matrix}\right)$, where 
 \[g= \left(\begin{matrix} 0 & 1\\ 1 & 0 \end{matrix}\right) \left(\begin{matrix} 1-p & x\phantom{-1}\\ y\phantom{-1} & 1-q \end{matrix}\right) 
  \in \M_{2k}(A)[-\ol \delta, -\ol \alpha ][-\ol \alpha,-\ol \delta].\] This shows that some enlargements of $p$ and $q$ are conjugates. Conversely, suppose some enlargements of $p$ and $q$ are conjugates. Thus there is a \[g\in \M_{2k}(A)[-\ol \delta,-\ol \mu][-\ol \alpha, -\ol \beta]\] such that 
  $g\left(\begin{matrix} p & 0\\ 0 & 0 \end{matrix}\right) g^{-1} = \left(\begin{matrix} q & 0\\ 0 & 0 \end{matrix}\right)$, where
$\left(\begin{matrix} p & 0\\ 0 & 0 \end{matrix}\right) \in \M_{k}(A)(\alpha,\beta)_0$ and 
$\left(\begin{matrix} q & 0\\ 0 & 0 \end{matrix}\right) \in \M_{k}(A)(\delta,\mu)_0$. Now setting $x=g\left(\begin{matrix} p & 0\\ 0 & 0 \end{matrix}\right)$ and $y=g^{-1}$, gives that the enlargements of $p$ and $q$ and consequently $p$ and $q$ are graded equivalent. 
\end{proof} 

The following lemma relates graded finitely generated projective modules to homogeneous idempotent matrices which eventually leads to an equivalent definition of the graded Grothendieck group by idempotents. 

\begin{lemma}\label{kzeiogr} 
Let $A$ be a $\Gamma$\!-graded ring.

\begin{enumerate}[\upshape(1)]

\item Any graded finitely generated  projective $A$-module $P$ gives rise to a homogeneous idempotent matrix $p \in \M_n(A)(\ol \alpha)_0$, for some $n \in \mathbb N$ and $\ol
\alpha=(\alpha_1,\dots,\alpha_n)$, such that $P\conggr pA^n(-\ol \alpha)$.

\smallskip 

\item Any homogeneous idempotent matrix  $p\in \M_n(A)(\ol \alpha)_0$ gives rise to a graded finitely generated  projective $A$-module $pA^n(-\ol \alpha).$

\smallskip 

\item Two  homogeneous idempotent matrices  are graded equivalent if and only if the corresponding graded finitely generated  
projective $A$-modules are graded isomorphic.
\end{enumerate}
\end{lemma}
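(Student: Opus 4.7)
My plan is to treat (1) and (2) quickly using the characterizations of graded projective modules already established, and to spend most of the effort on (3), which amounts to a careful dictionary between matrix relations and module maps while keeping track of shift tuples.

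For (1), given a graded finitely generated projective $P$, I would invoke Proposition~\ref{grprojectivethm} together with the boxed equation~\eqref{medickiue} to obtain $Q$ and a tuple $\ol\alpha=(\alpha_1,\dots,\alpha_n)$ with $P\oplus Q\conggr A^n(-\ol\alpha)$. The projection $A^n(-\ol\alpha)\to A^n(-\ol\alpha)$ onto $P$ along $Q$ is a graded endomorphism of degree $0$ which is idempotent, so under the graded ring isomorphism $\End_A(A^n(-\ol\alpha))\conggr \M_n(A)(\ol\alpha)$ of~\eqref{fatloss} it corresponds to an element $p\in\M_n(A)(\ol\alpha)_0$ with $pA^n(-\ol\alpha)\conggr P$. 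For (2), reading the same identification backwards, a homogeneous idempotent $p\in\M_n(A)(\ol\alpha)_0$ acts as a graded idempotent endomorphism of $A^n(-\ol\alpha)$, so $A^n(-\ol\alpha)=pA^n(-\ol\alpha)\oplus (1-p)A^n(-\ol\alpha)$ realises $pA^n(-\ol\alpha)$ as a graded direct summand of a graded free module, hence as a graded finitely generated projective module by Proposition~\ref{grprojectivethm}.

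For (3), the forward direction goes as follows. Suppose $p\sim q$ via $x\in\M_k(A)[-\ol\alpha][-\ol\delta]$ and $y\in\M_k(A)[-\ol\delta][-\ol\alpha]$ with $xy=p$, $yx=q$ (after enlargements, which by part~(2) does not change the isomorphism class of the associated module). The discussion following~\eqref{fatso1} assigns to $x$ and $y$ graded degree-zero $A$-module maps $\phi_x:A^k(-\ol\delta)\to A^k(-\ol\alpha)$ and $\phi_y:A^k(-\ol\alpha)\to A^k(-\ol\delta)$ with $\phi_x\phi_y=p$ and $\phi_y\phi_x=q$ (using Lemma~\ref{smallhandy} to see the products live in the correct graded $0$-component). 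Restricting $\phi_y$ to $pA^k(-\ol\alpha)$ lands in $qA^k(-\ol\delta)$, and the relations above show that this restriction is a graded isomorphism with inverse $\phi_x|_{qA^k(-\ol\delta)}$, giving $pA^n(-\ol\alpha)\conggr qA^m(-\ol\delta)$. For the converse, given a graded isomorphism $f:pA^n(-\ol\alpha)\stackrel{\cong}{\to} qA^m(-\ol\delta)$ with inverse $g$, enlarge $p$ and $q$ so both live in $\M_k(A)$ with tuples of length $k$, and define $x,y$ to be the matrices (via the correspondence of \S\ref{kjujidw}) representing the compositions
\[
A^k(-\ol\delta)\xrightarrow{q} qA^k(-\ol\delta)\xrightarrow{g} pA^k(-\ol\alpha)\hookrightarrow A^k(-\ol\alpha),
\]
\[
A^k(-\ol\alpha)\xrightarrow{p} pA^k(-\ol\alpha)\xrightarrow{f} qA^k(-\ol\delta)\hookrightarrow A^k(-\ol\delta).
\]
A direct chase shows $xy=p$ and $yx=q$ in the appropriate $0$-components, so $p\sim q$.

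The main obstacle, and the only genuinely delicate point, is keeping the shift bookkeeping consistent: one must verify at each step that the matrices produced lie in $\M_k(A)[-\ol\alpha][-\ol\delta]$ versus $\M_k(A)[-\ol\delta][-\ol\alpha]$ in the correct order, that enlargement (adding zeros and padding the shift tuple) is compatible with the isomorphism classes on both the matrix side and the module side, and that composing maps corresponds to matrix multiplication in the direction expected from~\eqref{fatso1}. Lemma~\ref{smallhandy} and the boxed formula~\eqref{bequitet} are exactly what is needed to keep this bookkeeping under control.
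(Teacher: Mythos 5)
Your proposal is correct and follows essentially the same route as the paper's proof: (1) via the degree-zero projection idempotent under the identification $\End_A(A^n(-\ol\alpha))\conggr\M_n(A)(\ol\alpha)$, (2) via the graded direct-summand decomposition $A^n(-\ol\alpha)=pA^n(-\ol\alpha)\oplus(1-p)A^n(-\ol\alpha)$, and (3) via the dictionary of \S\ref{kjujidw} between mixed-shift matrices and graded maps. The only cosmetic difference is that the paper first replaces $x,y$ by $px'q$, $qy'p$ before restricting, whereas you restrict directly using the identities $yp=qy$ and $xq=px$ (which indeed follow from $xy=p$, $yx=q$ by associativity), and your converse phrases the paper's extension-by-zero maps as composites $q\hookrightarrow$, $g$, $\hookrightarrow$ — the same construction.
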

\begin{proof}
(1) Let $P$ be a graded finitely generated   projective (right) $A$-module. Then there is a graded module $Q$ such that 
$P\oplus Q\cong_{\gr} A^n(-\ol \alpha)$ for some $n \in \mathbb N$ and $\ol
\alpha=(\alpha_1,\dots,\alpha_n)$, where $\alpha_i \in \Gamma$  (see~(\ref{medickiue})).
Identify $A^n(-\ol \alpha)$ with $P\oplus
Q$ and define the homomorphism $p \in \End_A(A^n(-\ol \alpha))$ which sends
$Q$ to zero and acts as identity on $P$. Clearly, $p$ is an
idempotent and graded homomorphism of degree $0$. By~(\ref{hgd543p}) $p \in \End_A(A^n(-\ol \alpha))_0$. 
By~(\ref{mmkkhh}), the homomorphism $p$ can be represented by a matrix $p\in  \M_n(A)(\ol \alpha)_0$ acting from the left so, \[P\conggr pA^n(-\ol \alpha).\] 

(2) Let $p\in \M_n(A)(\ol \alpha)_0$, $\ol
\alpha=(\alpha_1,\dots,\alpha_n)$, where $\alpha_i \in \Gamma$. 
Since for any $\gamma \in \Gamma$, $pA^n(-\ol \alpha)_\gamma \subseteq A^n(-\ol\alpha)_\gamma$,  we have \[pA^n(-\ol \alpha) =\bigoplus_{\gamma \in \Gamma} pA^n(-\ol\alpha)_\gamma.\] This shows that $pA^n(-\ol \alpha)$ is a graded finitely generated $A$-module. Moreover, 
$1-p \in \M_n(A)(\ol \alpha)_0$ and
\[A^n(-\ol \alpha) = pA^n(-\ol \alpha) \bigoplus (1-p)A^n(-\ol \alpha).\] Thus $pA^n(-\ol \alpha)$
is a graded finitely generated   projective $A$-module.

\smallskip 

(3) Let  $p\in \M_n(A)(\ol \alpha)_0$ and $q\in \M_m(A)(\ol
\delta)_0$ be graded equivalent idempotent matrices. The first part is similar to the proof of Lemma~\ref{ijhlordjes}. By
Definition~\ref{equison}, there are $x' \in \M_k(A)[-\ol \alpha][-\ol
\delta]$ and $y' \in \M_k(A)[-\ol \delta][-\ol \alpha]$ such that
$x'y'=p$ and $y'x'=q$. Let $x=px'q$ and $y=qy'p$. Then
$xy=px'qy'p=p(x'y')^2p=p$ and similarly $yx=q$. Moreover, 
$x=px=xq$ and $y=yp=qy$. Now the left multiplication by $x$ and $y$
induce graded right $A$-homomorphisms $qA^k(-\ol \delta)\rightarrow pA^k(-\ol \alpha)$ and 
$pA^k(-\ol \alpha)\rightarrow qA^k(-\ol
\delta)$, respectively, which are inverse of each other. Therefore $pA^k(-\ol
\alpha)\cong_{\gr} qA^k(-\ol \delta)$.

On the other hand if $f:pA^k(-\ol \alpha)\cong_{\gr} qA^k(-\ol
\delta)$, then extend $f$ to $A^k(-\ol \alpha)$ by sending $(1-p)A^k(-\ol
\alpha)$ to zero and thus define a map
\[\theta:A^k(-\ol \alpha)=pA^k(-\ol \alpha)\oplus (1-p)A^k(-\ol \alpha) \longrightarrow qA^k(-\ol \delta)\oplus (1-q)A^k(-\ol \delta)=A^k(-\ol \delta).\]
Similarly, extending $f^{-1}$ to $A^k(-\ol \delta)$, we get a map
\[\phi:A^k(-\ol \delta)=qA^k(-\ol \delta)\oplus (1-q)A^k(-\ol \delta) \longrightarrow pA^k(-\ol \alpha)\oplus (1-p)A^k(-\ol \alpha)=A^k(-\ol \alpha)\]
such that $\phi \theta=p$ and $\theta \phi=q$. It follows  that $\theta \in \M_k(A)[-\ol \alpha][-\ol \delta]$, whereas  $\phi \in
\M_k(A)[-\ol \delta][-\ol \alpha]$ (see~\S\ref{kjujidw}). This gives that $p$ and $q$ are graded  equivalent.
\end{proof}

For a homogeneous idempotent matrix $p$ of degree zero, we denote the graded equivalence class of $p$, by $[p]$ (see Definition~\ref{equison})  and we define $[p]+[q]=\Big [\left(\begin{matrix} p & 0\\ 0 & q \end{matrix}\right)\Big ]$. This makes the set of equivalence classes of homogeneous idempotent matrices of degree zero a monoid.  Lemma~\ref{kzeiogr}  shows that this monoid is isomorphic to $\mathcal V^{\gr}(A)$, via $[p]\mapsto [pA^n(-\ol\alpha)]$, where $p\in \M_n(A)(\ol \alpha)_0$ is a homogeneous matrix. Thus 
$K_0^{\gr}(A)$ can be defined as the group completion of this monoid. In fact, this is the definition we adopt for $\mathcal V^{\gr}$ when the graded ring $A$ does not have identity (see~\S\ref{wedk0}). 

\subsection{Stability of idempotents}\label{didood}
Recall that two graded finitely generated projective $A$-modules $P$ and $Q$ are stably isomorphic if $[P]=[Q] \in K^{\gr}_0(A)$ (see Lemma~\ref{rhodas}(3)). Here we describe this stability when the elements of $K^{\gr}_0$ are represented by idempotents.

Suppose $p\in \M_n(A)(\ol \alpha)_0$ and $q \in \M_m(A)(\ol \delta)_0 $ are homogeneous idempotent matrices of degree zero such that $[p]=[q]$ in $K^{\gr}_0(A)$.  As in Definition~\ref{equison}, we add zeros to the lower right hand corner of $p$ and $q$ so that $p,q$ can be considered as matrices in $\M_k(A)$ for some $k \in \mathbb N$. 
Now $[p]$ represents the isomorphism class $[pA^k(-\ol \alpha)]$ and $[q]$ represents $[qA^k(-\ol \delta)]$ in $K^{\gr}_0(A)$ (see Lemma~\ref{kzeiogr}).  Since $[p]=[q]$, $[pA^k(-\ol \alpha)]=[qA^k(-\ol \delta)]$ which by Lemma~\ref{rhodas}(3), implies 
\[pA^k(-\ol \alpha)\oplus A^n(\overline \beta) \conggr qA^k(-\ol \delta)\oplus A^n(\overline \beta),\] for some $n\in \mathbb N$. Again by Lemma~\ref{kzeiogr} this implies $p\oplus I_n=
\left(\begin{matrix} p & 0\\ 0 & I_n \end{matrix}\right)$
is graded equivalent to  $q\oplus I_n=\left(\begin{matrix} q & 0\\ 0 & I_n \end{matrix}\right),$
 where $I_n$ is the identity element of the ring $\M_n(A)$. (This can also be seen directly from Definition~\ref{equison}.) This observation will be used in Theorem~\ref{kcontis} and later in Lemma~\ref{fdpahf2}.  

\subsection{Action of $\Gamma$ on idempotents}\label{beegees}
We define the action of $\Gamma$ on the idempotent matrices as follows: For $\ga \in \Ga$ and $p\in \M_n(A)(\ol \alpha)_0$, $\ga p$ is represented by the same matrix as $p$ but considered in $\M_n(A)(\ol \alpha-\ga)_0$, where 
$\overline \alpha-\ga=(\alpha_1-\ga,\cdots, \alpha_n-\ga)$. Note that if $p \in \M_n(A)(\ol \alpha)_0$ and $q\in \M_m(A)(\ol\delta)_0$ are equivalent, then there are $x \in \M_k(A)[-\ol \alpha][-\ol \delta]$ and
$y \in \M_k(A)[-\ol \delta][-\ol \alpha]$ such that $xy=p$ and $yx=q$ (Definition~\ref{equison}). Since $x \in \M_k(A)[\gamma -\ol \alpha][\gamma -\ol \delta]$
and $y \in \M_k(A)[\gamma -\ol \delta][\gamma -\ol \alpha]$, it follows that $\gamma p$ is equivalent to $\gamma q$. Thus $K^{\gr}_0(A)$ becomes a $\mathbb Z[\Gamma]$-module with this definition.

Now a quick inspection of the proof of Lemma~\ref{kzeiogr} shows that the action of $\Gamma$ is compatible in both definitions of $K^{\gr}_0$. 

Let $A$ and $B$ be $\Gamma$\!-graded rings and $\phi:A\rightarrow B$ be a $\Gamma$\!-graded homomorphism. Using the graded homomorphism $\phi$, one can consider $B$ as a graded 
$A\!-\!B$-bimodule in a natural way (\S\ref{bimref}). Moreover, if $P$ is a graded right $A$-module, then $P\otimes_A B$ is a graded $B$-module (\S\ref{grtensie}). Moreover, if $P$ is finitely generated projective, so is $P\otimes_A B$. 
Thus one can define  a group homomorphism $\ol \phi:K^{\gr}_0(A) \rightarrow K^{\gr}_0(B)$, where $[P]\mapsto [P\otimes_A B]$ and extended to all $K^{\gr}_0(A)$. On the other hand, if 
$p \in \M_n(A)(\ol \alpha)_0$ is an idempotent matrix over $A$  then 
$\phi(p) \in \M_n(B)(\ol \alpha)_0$ is an idempotent matrix over $B$ obtained by applying $\phi$ to each entry of $p$.  This also induces a homomorphism on the level of $K^{\gr}_0$ using the idempotent presentations.  
Since \[pA^n(-\ol\alpha) \otimes_A B \conggr \phi(p)  B^n(-\ol\alpha),\] we have the following commutative diagram 
\begin{equation}\label{v4531}
\xymatrix{
[p] \ar[r] \ar[d]& [\phi(p)] \ar[d]\\
[pA^n(-\ol\alpha) ] \ar[r] & [pA^n(-\ol\alpha) \otimes_A B].
}
\end{equation}

This shows that the homomorphisms induced on $K^{\gr}_0$ by $\phi$ are compatible, whether  using the idempotent presentation or the module presentation for the graded Grothendieck groups. 

\subsection{$K^{\gr}_0$ is a continuous functor}\label{younghan}

Recall the construction of direct limit of graded rings from Example~\ref{penrith123}.  We are in a position to determine their graded Grothendieck groups. 

\begin{theorem}\label{kcontis}
Let $A_i$, $i\in I$, be a direct system of $\Gamma$\!-graded rings and $A=\varinjlim A_i$ be a $\Gamma$\!-graded ring. Then $K^{\gr}_0(A)\cong \varinjlim K^{\gr}_0(A_i)$ as $\mathbb Z[\Gamma]$-modules. 
\end{theorem}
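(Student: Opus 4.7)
The plan is to use the idempotent description of $K^{\gr}_0$ developed in \S\ref{hhidmi} together with the fact that direct limits commute with finite data. Since the transition maps $\phi_{ij}: A_i \to A_j$ are graded homomorphisms, they induce, by the functoriality established in \S\ref{beegees}, $\mathbb Z[\Gamma]$-module homomorphisms $\ol{\phi_{ij}}: K^{\gr}_0(A_i) \to K^{\gr}_0(A_j)$ compatible with the directed system. The canonical maps $\psi_i: A_i \to A$ likewise induce $\mathbb Z[\Gamma]$-module homomorphisms $\ol{\psi_i}: K^{\gr}_0(A_i) \to K^{\gr}_0(A)$ that factor through the system, so by the universal property of direct limits there is a well-defined $\mathbb Z[\Gamma]$-module homomorphism
\[
\theta: \varinjlim K^{\gr}_0(A_i) \longrightarrow K^{\gr}_0(A).
\]

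First I would show $\theta$ is surjective. By Lemma~\ref{kzeiogr}, every generator of $K^{\gr}_0(A)$ has the form $[p]$ for some homogeneous idempotent matrix $p = (p_{kl}) \in \M_n(A)(\ol\alpha)_0$. Each entry $p_{kl}$ is a homogeneous element of $A$, and since $A_\gamma = \varinjlim {A_i}_\gamma$ for every $\gamma \in \Gamma$, there is some index $i_{kl}$ and a homogeneous preimage of $p_{kl}$ in $A_{i_{kl}}$. Because the finitely many entries can be brought to a common index by traversing the directed system, we obtain a homogeneous matrix $\tilde p \in \M_n(A_i)(\ol\alpha)_0$ mapping to $p$ under $\psi_i$. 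The idempotent relation $\tilde p^2 = \tilde p$ need not hold in $A_i$, but its image $p^2 - p$ vanishes in $A$; since the directed system is filtered and the entries of $\tilde p^2 - \tilde p$ are finitely many homogeneous elements whose image is zero in $A$, we can pass to some $j \geq i$ where $\phi_{ij}(\tilde p)^2 = \phi_{ij}(\tilde p)$. Then $[\phi_{ij}(\tilde p)] \in K^{\gr}_0(A_j)$ maps to $[p]$ via~(\ref{v4531}), proving surjectivity.

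Next I would prove injectivity. Suppose $x \in K^{\gr}_0(A_i)$ satisfies $\ol{\psi_i}(x) = 0$ in $K^{\gr}_0(A)$. Writing $x = [p_i] - [q_i]$ for homogeneous idempotent matrices $p_i, q_i$ over $A_i$ (Lemma~\ref{rhodas}(1)), and using the stability principle described in \S\ref{didood}, the hypothesis means that after suitable enlargements, $\psi_i(p_i) \oplus I_n$ is graded equivalent to $\psi_i(q_i) \oplus I_n$ over $A$ for some $n$. By Definition~\ref{equison} this equivalence is witnessed by homogeneous matrices $x, y$ over $A$ with $xy = \psi_i(p_i) \oplus I_n$ and $yx = \psi_i(q_i) \oplus I_n$. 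Again the entries of $x, y$ are finitely many homogeneous elements of $A$, so they admit homogeneous preimages in some $A_j$ with $j \geq i$; the finitely many equations $xy = p_i \oplus I_n$ and $yx = q_i \oplus I_n$ then hold after passing to a still larger index $k \geq j$. This produces a graded equivalence in $A_k$ showing $\ol{\phi_{ik}}(x) = 0$, hence $x$ is already zero in the direct limit, proving injectivity.

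The hard part will be careful bookkeeping: ensuring that the shift tuples $\ol\alpha$ are preserved when pulling idempotents back through the system (they are, since $\ol\alpha$ is a tuple of grade-group elements independent of the ring), and that enlargements and equivalences—which involve passing repeatedly to larger indices—can be carried out within a single filtered colimit step. Both issues are handled by the standard fact that a finite amount of data and a finite number of relations in a filtered colimit can always be realised at a single stage. That $\theta$ is a $\mathbb Z[\Gamma]$-module homomorphism is automatic from~\S\ref{beegees}, since the $\Gamma$-action on an idempotent only relabels the shift tuple and commutes with the ring homomorphisms $\psi_i$.
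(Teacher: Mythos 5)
Your proposal is correct and follows essentially the same route as the paper's proof: construct the map $\varinjlim K^{\gr}_0(A_i)\rightarrow K^{\gr}_0(A)$ from the universal property, then prove surjectivity and injectivity via the idempotent description of $K^{\gr}_0$ from \S\ref{hhidmi}, lifting the finitely many homogeneous matrix entries (and the finitely many defining equations) to a single stage of the directed system. In fact you are slightly more careful than the paper, which glosses over the step where the lifted matrix need not be idempotent (and the lifted equivalence equations need not hold) until one passes to a larger index.
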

\begin{proof}
First note that $K^{\gr}_0(A_i)$, $i\in I$, is a direct system of abelian groups so $\varinjlim K^{\gr}_0(A_i)$ exists with $\mathbb Z[\Gamma]$-module homomorphisms \[\phi_i:K^{\gr}_0(A_i) \rightarrow \varinjlim K^{\gr}_0(A_i).\] 
On the other hand, for any $i\in I$, there is a map $\psi_i: A_i \rightarrow A$ which induces $\overline \psi_i : K^{\gr}_0(A_i) \rightarrow K^{\gr}_0(A)$. Due to the universality of the direct limit, we have a  $\mathbb Z[\Gamma]$-module homomorphism $\phi:\varinjlim K^{\gr}_0(A_i) \rightarrow K^{\gr}_0(A)$ such that for any $i\in I$, the diagram 
\begin{equation}\label{relaxim}
\xymatrix{
\varinjlim K^{\gr}_0(A_i)  \ar[r]^-{\phi} & K^{\gr}_0(A) \\
K^{\gr}_0(A_i) \ar[u]^{\phi_i} \ar[ur]_{\overline \psi_i} & 
}
\end{equation}
is commutative. We show that $\phi$ is in fact  an isomorphism. We use the idempotent description of $K^{\gr}_0$ group to show this. 
Note that if $p$ is an idempotent matrix over $A_i$ for some $i\in I$, which gives the element $[p]\in K^{\gr}_0(A_i)$, then 
$\overline \psi_i ([p])=[\psi_i(p)]$, where $\psi_i(p)$ is an idempotent matrix over $A$ obtained by applying $\psi_i$ to each entry of $p$.  

Let $p$ be an idempotent matrix over $A=\varinjlim A_i$. Then $p$ has a finite number of entires and each is coming from some $A_i$, $i\in I$. Since $I$ is directed, there is a $j\in I$, such that $p$ is the image of an idempotent matrix in $A_j$. Thus the class $[p]$ in $K^{\gr}_0(A)$ is the image of an element of $K^{\gr}_0(A_j)$.  Since Diagram~\ref{relaxim} is commutative, there is an element in $\varinjlim K_0(A_i)$ which maps to [p] in $K^{\gr}_0(A)$. Since $K^{\gr}_0(A)$ is generated by elements $[p]$, this shows that $\phi$ is surjective. We are left to show that $\phi$ is injective. Suppose $x\in \varinjlim K^{\gr}_0(A_i)$ such that $\phi(x)=0$. Since there is $j\in I$ such that $x$ is the image of an element of $K^{\gr}_0(A_j)$, we have $[p]-[q] \in K^{\gr}_0(A_j)$ such that $\phi_j([p]-[q])=x$, where $p$ and $q$ are idempotent matrices over $A_j$. Again, since Diagram~\ref{relaxim} is commutative, we have $\overline \psi_j([p]-[q])=0$. Thus 
\[ [\psi_j(p)]=\overline \psi_j([p])= \overline \psi_j([q])=[\psi_j(q)] \in K^{\gr}_0(A).\] 
This shows that  $a=\left(\begin{matrix} \psi_j(p) & 0\\ 0 & I_n \end{matrix}\right)$ is equivalent to  $b=\left(\begin{matrix} \psi_j(q) & 0\\ 0 & I_n \end{matrix}\right)$ in $R$ (see \S\ref{younghan}). Thus there are matrices $x$ and $y$ over $R$ such that $xy=a$ and $yx=b$. Since the entires of $x$ and $y$ are finite, one can find $k\geq j$ such that $x$ and $y$ are images of matrices from $R_k$.  Thus 
$\left(\begin{matrix} \psi_{jk}(p) & 0\\ 0 & I_n \end{matrix}\right)$ is equivalent to  $\left(\begin{matrix} \psi_{jk}(q) & 0\\ 0 & I_n \end{matrix}\right)$ in $R_k$. This shows that $\overline \psi_{jk}([p])=\overline \psi_{jk}([q])$, \ie the image of $[p]-[q]$  in $K^{\gr}_0(A_j)$ is zero. Thus $x$ being the image of this element, is also zero. This finishes the proof. 
\end{proof}

\subsection{The Hattori-Stallings (Chern) trace map} \index{trace map}

Recall that, for a ring $A$, one can relate $K_0(A)$ to the Hochschild homology $HH_0(A)=A/[A,A]$, where $[A,A]$ is the subgroup generated by additive commutators $ab-ba$, $a,b \in A$. (Or more generally, if $A$ is a $k$-algebra, where $k$ is a commutative ring, then $HH_0(A)$ is a $k$-module.) The construction is as follows. \index{Hochschild homology} \index{additive commutator subgroup}

Let $P$ be a finitely generated projective right $A$-module. Then as in the introduction of \S\ref{hhidmi}, there is an  idempotent matrix $p\in \M_n(A)$ such that $P\cong pA^n$. Define 
\begin{align*}
T:\mathcal V(A) &\longrightarrow A/[A,A],\\
[P] &\longmapsto [A,A]+\Tr(p),
\end{align*}
where $\Tr$ is the trace map of the matrices.  If $P$ is isomorphic to $Q$ then the idempotent matrices associated to them, call them $p$ and $q$, are equivalent, i.e, $p=xy$ and $q=yx$. This shows that $\Tr(p)-\Tr(q) \in [A,A]$, so the map $T$ is a well-define homomorphism of groups. Since $K_0$ is the universal group completion (\S\ref{hghgti1}), the map $T$ induces a map on the level of $K_0$, which is called $T$ again, \ie \[T:K_0(A) \rightarrow A/[A,A].\] This map is called \emph{Hattori-Stallings trace map} or \emph{Chern map}. \index{Hattori-Stallings trace map} \index{Chern map} 

We carry out a similar construction in the graded setting. Let $A$ be a $\Gamma$\!-graded ring and $P$ be a graded finitely generated projective $A$-module. Then there is a homogeneous idempotent $p \in \M_n(A)(\ol \alpha)_0$, where $\alpha=(\alpha_1,\dots,\alpha_n)$, such that 
$P\conggr pA^n(-\ol \alpha)$. Note that by~(\ref{mmkkhh4}), $\Tr(p) \in A_0$. Set $[A,A]_0:=A_0\cap [A,A]$ and define 
\begin{align*}
T:\mathcal V^{\gr}(A) &\longrightarrow A_0/[A,A]_0,\\
[P] &\longmapsto [A,A]_0+\Tr(p).
\end{align*}
Similar to the ungraded case, Lemma~\ref{kzeiogr} applies to show that $T$ is a well-defined homomorphism. Note that the description of action of $\Gamma$ on idempotents (\S\ref{beegees}) shows that $T([P])=T([P(\alpha)])$ for any $\alpha \in \Gamma$. Again, this map induces a group homomorphism \[T:K^{\gr}_0(A) \rightarrow A_0/[A,A]_0.\] 
Further, the forgetful functor $U:\Gr A \rightarrow \Modd A$ (\S\ref{forgetful}) induces the right hand side of the following commutative diagram, whereas the left hand side is the natural map induces by inclusion $A_0 \subseteq A$, 
\begin{equation*}
\begin{split}
\xymatrix{
K^{\gr}_0(A) \ar[r]^-{T} \ar[d]_{U}& A_0/[A,A]_0 \ar[d]\\
K_0(A) \ar[r]^-{T} & A/[A,A]. & 
}
\end{split}
\end{equation*}

\section{$K^{\gr}_0$ of graded $*$-rings}\label{broods} \index{graded $*$-ring}

Let $A$ be a graded $*$-ring, \ie  $A$ is a $\Gamma$\!-graded ring with an involution ${}^*$ such that for $a\in A_\gamma$, 
$a^* \in A_{-\gamma}$, where $\gamma \in \Gamma$ (\S\ref{involudool}).  \index{Veronese subring} \index{Veronese module} \index{involutary graded ring}

Recall that for a graded finitely generated projective right $A$-module $P$, the dual module $P^*=\Hom_A(P,A)$ is a graded finitely generated projective  left $A$-module. Thus  
${P^*}^{(-1)}$ is a graded right $A$-module (see~(\ref{ontheair3})). Since $P^{**} \cong_{\gr} P$ as graded right $A$-modules, the map 
\begin{equation}\label{hhffwwq4}
[P] \mapsto [{P^*}^{(-1)}],
\end{equation}
 induces a $\Z_2$-action on $\mathcal V^{\gr}(A)$. 

Since $P(\alpha)^* = P^* (-\alpha)$ (see~(\ref{terrace1})), one can easily check that 
\[ {P(\alpha)^*}^{(-1)}= {P^*}^{(-1)} (\alpha). \]
This shows that the actions of $\Gamma$ and $\Z_2$ on $\mathcal V^{\gr}(A)$ commutes. This makes $\mathcal V^{\gr}(A)$ a $\Gamma \!-\! \Z_2$-bimodule or equivalently 
a $\Gamma\times \Z_2$-module. 

The graded Grothendieck group $K_0^{\gr}(A)$, being the group completion of  the monoid $\mathcal V^{\gr}(A)$, naturally inherits 
the $\Gamma$\!-module structure via~(\ref{hhffwwq}) and $\mathbb Z_2$-module structure via~(\ref{hhffwwq4}).  This makes $K_0^{\gr}(A)$ a $\Z[\Gamma]$-module and a $\Z[\Z_2]$-module. In particular if $A$ is a $\mathbb Z$-graded $*$-ring then $K_0^{\gr}(A)$ is a
$\mathbb Z[x,x^{-1}]$-module and a $\Z[x]/(x^2-1)$-module.

\begin{example}\scm[Action of $\Z_2$ on $K_0$]
\begin{enumerate}

\medskip 
\item 
If $A=F\times F$, where $F$ is a field, with ${}^*:(a,b)\mapsto (b,a)$ then the involution induces a homomorphism on the level of $K_0$ as follows, 
\begin{align*}
\overline {{}^*} :\Z\times \Z &\longrightarrow \Z\times \Z,\\
 (n,m) &\longmapsto (m,n).
 \end{align*}

\item Let $R$ be a $*$-ring and consider $\M_n(R)$ as an $*$-ring with the Hermitain transpose. Then there is a $\Z_2$-module isomorphism 
$K_0(R) \cong K_0(\M_n(R))$. 

\item  Let $R$ be a commutative von Neumann regular ring. It is known that any idempotent of $\M_n(R)$ is conjugate with the diagonal matrix with nonzero elements all $1$ \cite{camillio}. Thus if $R$ is a $*$-ring, then the action of $\Z_2$ induced on $K_0(R)$ is trivial. In particular if $F$ is a $*$-field, then the action of $\Z_2$ on $K_0(\M_n(F))$ is trivial (see (2)).

\end{enumerate}
\end{example}

\begin{example}\scm[Action of $\Z_2$ on $K_0$ of Leavitt path algebras is trivial]\label{hyhy6543}
\vspace{0.2cm}

Let $E$ be a finite graph and $A=\LL_K(E)$ be a Leavitt path algebra associated to $E$. Recall from Example~\ref{bfghrtd2} that 
$A$ is equipped with a graded $*$-involution. We know that the finitely generated projected $A$-modules are generated by modules of the form $uA$, where $u \in E^0$. To show that $\Z_2$  acts trivially on $\mathcal V(A)$ (and thus on $K_0(A)$), it is enough to show that ${uA}^*=\Hom_A(uA,A)$ as a right $A$-module is isomorphic to $uA$. Consider the map 
\begin{align*}
\Hom_A(uA,A) &\longrightarrow uA,\\
f &\longmapsto u {f(u)}^*.
\end{align*}
It is easy to see that this map is a right $A$-module isomorphism. 

\end{example}

\section{Relative $K^{\gr}_0$-group}\label{relkgp}

Let $A$ be a ring and $I$ be a two-sided ideal of $A$. The canonical epimorphism $f:A\rightarrow A/I$ induces a homomorphism on the level of $K$-groups. For example on the level of Grothendieck groups, we have 
\begin{align*}
\ol f:K_0(A) &\longrightarrow K_0(A/I),\\
[P]-[Q] &\longmapsto  [P/PI]-[Q/QI].
\end{align*}
  In order to complete this into a long exact sequence, one needs to introduce the relative $K$-groups, $K_i(A,I)$, $i\geq 0$. This is done for lower $K$-groups by Bass and Milnor and the following long exact sequence has been established (see~\cite[Theorem~4.3.1]{rosenberg}),  
\begin{multline*}
K_2(A) \longrightarrow K_2(A/I) \longrightarrow  K_1(A,I)  \longrightarrow K_1(A) \longrightarrow K_1(A/I) \longrightarrow\\  K_0(A,I) \longrightarrow  K_0(A) \longrightarrow K_0(A/I).
\end{multline*}

In this section, we define the relative  $K^{\gr}_0$ group and establish the sequence only on the level of graded Grothendieck groups. As we will see this requires a careful arrangements of the degrees of the matrices.

Let $A$ be a $\Gamma$ graded ring and $I$ be a graded two-sided ideal of $A$. Define the  \emph{graded double ring} of $A$ along $I$ \index{graded double ring} 
\[D^{\Gamma}(A,I)=\big \{\, (a,b) \in A\times A \mid a-b \in I \,\big \}.\] One can check that $D^{\Gamma}(A,I)$ is a  $\Gamma$\!-graded ring with 
\[D^{\Gamma}(A,I)_\gamma =  \big \{ \, (a,b) \in A_\gamma \times A_\gamma \mid a-b \in I_\gamma \, \big \},\] for any $\gamma \in \Gamma$.  

Let \[\pi_1,\pi_2: D^{\Gamma}(A,I) \longrightarrow A\] be the projections of the first and second components of $D^{\Gamma}(A,I)$ to $A$, respectively. The maps $\pi_i$, $i=1,2$, are $\Gamma$\!-graded ring homomorphisms and induce $\mathbb Z[\Gamma]$-module homomorphisms 
\[\ol \pi_1,\ol \pi_2: K^{\gr}_0(D^{\Gamma}(A,I)) \longrightarrow K^{\gr}_0(A).\]  Define the  \emph{relative graded Grothendieck group} \index{relative graded Grothendieck group} of the ring $A$ with respect to $I$ as 
\begin{equation}\label{hgmne2}
\boxed{K^{\gr}_0(A,I) := \ker \big( K^{\gr}_0(D^{\Gamma}(A,I)) \stackrel{\ol \pi_1}{\longrightarrow} K^{\gr}_0(A)\big).}
\end{equation}

The restriction of $\ol \pi_2$ to $K^{\gr}_0(A,I)$ gives a $\mathbb Z[\Gamma]$-module homomorphism \[\ol \pi_2: K^{\gr}_0(A,I) \longrightarrow K^{\gr}_0(A).\] The following theorem relates these groups. 

\begin{theorem}
Let $A$ be a $\Gamma$\!-graded ring and $I$ be a graded two-sided ideal of $A$. Then there is an exact sequence of $\mathbb Z[\Gamma]$-modules 
\[ K^{\gr}_0(A,I)  \stackrel{\ol \pi_2}{\longrightarrow} K^{\gr}_0(A) \stackrel{\ol f}{\longrightarrow} K^{\gr}_0(A/I).\]
\end{theorem}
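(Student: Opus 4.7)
My plan is to imitate the classical Milnor-patching proof of exactness of the lower $K$-theory sequence for a Cartesian square, adapted to the graded setting. The key observation is that the commutative diagram
\[
\xymatrix{
D^{\Gamma}(A,I) \ar[r]^-{\pi_2} \ar[d]_{\pi_1} & A \ar[d]^{f}\\
A \ar[r]_-{f} & A/I
}
\]
is a pullback square of $\Gamma$\!-graded rings, since $(a,b)\in D^{\Gamma}(A,I)$ iff $f(a)=f(b)$. In particular $f\pi_1=f\pi_2$ as graded ring homomorphisms. The inclusion $\bar f\circ \bar\pi_2=0$ on $K_0^{\gr}(A,I)$ then follows immediately: for any $x\in K_0^{\gr}(A,I)=\ker\bar\pi_1$ we have $\bar f\bar\pi_2(x)=\bar f\bar\pi_1(x)=0$.

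For the harder inclusion, let $x\in K_0^{\gr}(A)$ with $\bar f(x)=0$. By Lemma~\ref{rhodas}(1) write $x=[P]-[Q]$ for graded finitely generated projective $A$-modules $P,Q$. The hypothesis says $[P/PI]=[Q/QI]$ in $K_0^{\gr}(A/I)$, so by Lemma~\ref{rhodas}(3) there is a shift $\overline\alpha=(\alpha_1,\dots,\alpha_n)$ with $P/PI\oplus (A/I)^n(\overline\alpha)\cong_{\gr} Q/QI\oplus (A/I)^n(\overline\alpha)$. Replacing $P$ by $P\oplus A^n(\overline\alpha)$ and $Q$ by $Q\oplus A^n(\overline\alpha)$ (which does not change $x$), I may assume outright that there is a graded $A/I$-module isomorphism $\varphi:P/PI\stackrel{\sim}{\lra} Q/QI$.

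The main construction is the graded Milnor patching module
\[
M(P,Q,\varphi)\;=\;\big\{\,(p,q)\in P\oplus Q \;\big|\; \varphi(p+PI)=q+QI\,\big\},
\]
graded by $M(P,Q,\varphi)_\gamma=\{(p,q)\in P_\gamma\oplus Q_\gamma:\varphi(\bar p)=\bar q\}$ (this is a direct sum decomposition precisely because $\varphi$ is a graded homomorphism), and endowed with the $D^{\Gamma}(A,I)$-module structure $(p,q)\cdot (a,b)=(pa,qb)$. The crucial claim is that $M(P,Q,\varphi)$ is graded finitely generated projective over $D^{\Gamma}(A,I)$, and satisfies
\[
M(P,Q,\varphi)\otimes_{D^{\Gamma}(A,I)} A \;\cong_{\gr}\; P \quad\text{via }\pi_1,\qquad
M(P,Q,\varphi)\otimes_{D^{\Gamma}(A,I)} A \;\cong_{\gr}\; Q \quad\text{via }\pi_2.
\]
One proves this first for graded free modules $P=A^m(\overline\delta)$, $Q=A^m(\overline\delta)$ with $\varphi$ given by an invertible matrix over $A/I$: in that case $M\cong_{\gr} D^{\Gamma}(A,I)^m(\overline\delta)$ because any element of $\GL$ over $A/I$ can, after stabilising, be lifted to $D^{\Gamma}(A,I)$ (use the standard trick of writing an invertible matrix as a product of elementary factors and lifting each factor, which preserves the degree-zero homogeneity of the relevant matrices). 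The general case follows by passing to direct summands inside a graded free patching, since splittings of projectives over $A$ and $A/I$ that are compatible with $\varphi$ patch to a splitting over $D^{\Gamma}(A,I)$.

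Granting the construction, set $M_0:=M(P,P,\mathrm{id}_{P/PI})\cong_{\gr} P\otimes_A D^{\Gamma}(A,I)$ via the diagonal, and consider $y:=[M_0]-[M(P,Q,\varphi)]\in K_0^{\gr}(D^{\Gamma}(A,I))$. By the tensor identities above, $\bar\pi_1(y)=[P]-[P]=0$, so $y\in K_0^{\gr}(A,I)$, while $\bar\pi_2(y)=[P]-[Q]=x$. This completes exactness in the middle. The main obstacle will be the graded-projectivity and tensor-identity verification for the patching module $M(P,Q,\varphi)$; the action of $\Gamma$ on both sides matches automatically since all shifts are built from shifts of $P$ and $Q$ via the graded identification~\eqref{inenbuild}.
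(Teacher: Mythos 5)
Your argument is correct in substance, but it takes a genuinely different route from the paper's. The paper never leaves the world of homogeneous idempotent matrices: it writes $x=[p]-[q]$ with $p,q$ idempotents in some $\M_n(A)(\ol \alpha)_0$, stabilises until $\ol p$ and $\ol q$ are graded conjugate over $A/I$, lifts the stabilised conjugating matrix via the factorisation of $\left(\begin{smallmatrix} g & 1\\ 0 & g^{-1}\end{smallmatrix}\right)$ into four liftable factors (with careful bookkeeping of the shifts $\M_{4k}(A/I)[-\ol \delta,-\ol \alpha][-\ol \alpha,-\ol \delta]$), replaces $p$ by its conjugate so that $\ol p=\ol q$, and then exhibits a preimage directly as $[(p,p)]-[(p,q)]\in K^{\gr}_0(A,I)$ --- no modules over $D^{\Gamma}(A,I)$ are ever constructed. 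You instead prove a graded version of Milnor patching for the Cartesian square, which is strictly more than the theorem needs: you must establish graded projectivity of $M(P,Q,\varphi)$ and the two base-change identities, whereas the paper only needs the matrix-lifting lemma. What your route buys is reusability --- the patching theorem essentially describes graded projectives over $D^{\Gamma}(A,I)$ and is the input for a longer Mayer--Vietoris-type sequence; what the paper's route buys is economy. Note that both proofs share the identical core step (stabilise, then lift $\mathrm{diag}(g,g^{-1})$), and your observation that $f\pi_1=f\pi_2$ forces $\ol f\,\ol \pi_2=\ol f\,\ol \pi_1$ makes the easy inclusion one line, slightly slicker than the paper's componentwise check.

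Two points in your sketch need repair, though both are standard. First, lifting the stabilised matrix $\mathrm{diag}(g,g^{-1})$ only shows that $M(F,F,g)\oplus M(F,F,g^{-1})$ is graded free, hence that $M(F,F,g)$ is graded \emph{projective}; your assertion $M\conggr D^{\Gamma}(A,I)^m(\ol \delta)$ holds only when $g$ itself lifts. Projectivity suffices for the $K_0$ argument, so this is harmless once restated. Second, the reduction of the general case ``by passing to direct summands'' glosses over the fact that graded complements $P'$, $Q'$ of $P$, $Q$ inside graded free modules need not have isomorphic reductions modulo $I$; one needs the usual complement trick --- e.g.\ sum $M(P,Q,\varphi)$ with $M(Q,P,\varphi^{-1})$ and with identity-patchings of the complements, so that both sides become graded free and the stabilised free case applies --- to realise $M(P,Q,\varphi)$ as a graded direct summand of a patching of graded free modules. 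All of these moves are compatible with the shift functors, so the $\Z[\Gamma]$-module structure is preserved, as you say.
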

\begin{proof}
We will use the presentation of $K^{\gr}_0$ by idempotents (\S\ref{hhidmi}) to prove the theorem. If $p$ is a matrix over the ring $A$, we will denote by $\ol p$ the image of $p$ under the canonical graded homomorphism $f:A\rightarrow A/I$. 
Let $[p]-[q]\in K^{\gr}_0(A,I).$ By the construction of $D^{\Gamma}(A,I)$, $p=(p_1,p_2)$, where $p_1,p_2$ are homogeneous idempotent matrices of $A$ such that $p_1-p_2$ is a matrix over $I$ (in fact over $I_0$), namely $\ol p_1 =\ol p_2$. Similarly $q=(q_1,q_2)$, where $q_1,q_2$ are homogeneous idempotent matrices and $\ol q_1 =\ol q_2$. Moreover, since \[[(p_1,p_2)]-[(q_1,q_2)]\in K^{\gr}_0(A,I),\] by the definition of the relative $K$-group, $[p_1]-[q_1]=0$. So 
\begin{equation}\label{plksq2}
\ol f([p_1]-[q_1])=[\ol p_1]-[\ol q_1]=0
\end{equation}
Taking into account that $\ol p_1 =\ol p_2$ and $\ol q_1 =\ol q_2$ we have 
\[\ol f \ol \pi_2([p]-[q])=\ol f\ol \pi_2([(p_1,p_2)]-[(q_1,q_2)])=[\ol p_2]-[\ol q_2]=[\ol p_1]-[\ol q_1]=0.\] 
This shows that $\Im(\ol \pi_2) \subseteq \ker \ol f$. 

Next we show that $ \ker \ol f \subseteq \Im(\ol \pi_2)$. Let $p\in \M_n(A)(\ol \alpha)_0$ and $q\in \M_m(A)(\ol
\delta)_0$ be idempotent matrices with $\ol
\alpha=(\alpha_1,\dots,\alpha_n)$ and $\ol
\delta=(\delta_1,\dots,\delta_m)$, where $\alpha_i, \delta_j \in
\Gamma$. Suppose $x=[p]-[q] \in K^{\gr}_0(A)$ and \[\ol f(x)=\ol f([p]-[q])=[\ol p]-[\ol q]=0.\] Since $[\ol p]=[\ol q]$ in $K^{\gr}_0(A/I)$, there is an $l \in \mathbb N$, such that 
$\ol p\oplus 1_l$ is graded equivalent to $\ol q\oplus 1_l$ in $A/I$ (see~\S\ref{didood}). We can replace $p$ by $p\oplus 1_l$ and $q$ by $q\oplus 1_l$ without changing $x$, and consequently we get that $\ol p$ is graded equivalent to $\ol q$. By Lemma~\ref{ijhlordjes} there is 
an invertible matrix $g \in \M_{2k}(A/I)[-\ol \delta][-\ol \alpha]$ such that 
\begin{equation}\label{joipn}
\ol q=g \ol p g^{-1}.
\end{equation}
 The following standard trick let us lift an invertible matrix over $A/I$ to an invertible matrix over $A$. Consider the invertible matrix 
\begin{equation}\label{jujufe3}
\left(\begin{matrix} g & 1\phantom{^{-1}}\\ 0 & g^{-1} \end{matrix}\right) \in \M_{4k}(A/I)[-\ol \delta,-\ol \alpha][-\ol \alpha,-\ol \delta].
\end{equation}
This matrix is a product of the following four matrices 
\begin{align*}
 \left(\begin{matrix} 1 & g\\ 0 & 1 \end{matrix}\right) \in \M_{4k}(A/I)[-\ol \delta,-\ol \alpha][-\ol \delta,-\ol \alpha], \, & 
\left(\begin{matrix} 1\phantom{^{-1}-} & 0\\ -g^{-1} & 1 \end{matrix}\right) \in \M_{4k}(A/I)[-\ol \delta,-\ol \alpha][-\ol \delta,-\ol \alpha], \\
\left(\begin{matrix} 1 & g\\ 0 & 1 \end{matrix}\right) \in \M_{4k}(A/I)[-\ol \delta,-\ol \alpha][-\ol \delta,-\ol \alpha], \, & 
 \left(\begin{matrix} 0 & -1\\ 1 & 0\phantom{-} \end{matrix}\right)\in \M_{4k}(A/I)[-\ol \delta,-\ol \alpha][-\ol \alpha,-\ol \delta], 
\end{align*}
where, each of them can be lifted from $A/I$ to an invertible matrix over $A$ with the same shift. Thus  
\begin{equation*}
\left(\begin{matrix} g & 1\\ 0 & g^{-1} \end{matrix}\right)= \left(\begin{matrix} 1 & g\\ 0 & 1 \end{matrix}\right)
\left(\begin{matrix} 1\phantom{^{-1}-}  & 0\\ -g^{-1} & 1 \end{matrix}\right)\left(\begin{matrix} 1 & g\\ 0 & 1 \end{matrix}\right)
 \left(\begin{matrix} 0 & -1\\ 1 & 0\phantom{-}  \end{matrix}\right).
\end{equation*}
Let \[h\in \M_{4k}(A)[-\ol \delta,-\ol \alpha][-\ol \alpha,-\ol \delta]\] be a matrix that lifts (\ref{jujufe3}) with 
\[h^{-1} \in \M_{4k}(A)[-\ol \alpha,-\ol \delta][-\ol \delta,-\ol \alpha].\] 

Consider the enlargements of $p$ and $q$ as \[\left(\begin{matrix} p & 0\\ 0 & 0 \end{matrix}\right) \in \M_{4k}(A)(\ol \alpha,\ol \delta)_0\] and 
\[ \left(\begin{matrix} q & 0\\ 0 & 0 \end{matrix}\right) \in \M_{4k}(A)(\ol
\delta,\alpha)_0.\] 
By Lemma~\ref{ijhlordjes}, $h \left(\begin{matrix} p & 0\\ 0 & 0 \end{matrix}\right) h^{-1}$ and $ \left(\begin{matrix} q & 0\\ 0 & 0 \end{matrix}\right)$ are
graded equivalent to $p$ and  $q$, respectively, so replacing them does not change $x$. Replacing $p$ and $q$ with the new representatives, from~(\ref{joipn}), it follows $\ol p=\ol q$. This means $p-q$ is a matrix over $I$, so $(p,q)$ is an idempotent matrix over $D^\Gamma(A,I)$. Now 
\[ [(p,p)]-[(p,q)] \in K^{\gr}_0(D^{\Gamma}(A,I))\] which maps to $x$ under $\ol \pi_2$. This completes the proof. 
\end{proof}

\section{$K^{\gr}_0$ of non-unital rings}\label{wedk0}

Let $A$ be a ring which does not necessarily have identity.  Let $R$ be a ring with identity such that $A$ is a two-sided ideal of $R$. Then $\mathcal V(A)$ is defined as 
\[\mathcal V(A):=\ker \big (\mathcal V(R) \longrightarrow \mathcal V(R/A) \big ).\] 
It is easy to see that 
\[
\mathcal V(A) = \big \{ \, [P] \mid P \text{ is a finitely generated projective $R$-module and } PA=P \, \big \},  
\]
where $[P]$ is the class of $R$-modules isomorphic to $P$ and addition is defined via direct sum as before (compare this with~(\ref{zhongshan})). One can show that this definition is independent of the choice of the ring $R$ by interpreting $P$ as an idempotent matrix of $\M_k(A)$ for a suitable $k\in \mathbb N$.  

In order to define $K_0(A)$ for a non-unital ring $A$, consider the  \emph{unitisation ring} \index{unitisation ring}  $\tilde A= \Z\times A$. The addition is component-wise and multiplication is defined as follows
\begin{equation}\label{gtai3}
(n,a)(m,b)=(nm,ma+nb+ab),
\end{equation}
where $m,n\in \Z$ and $a,b \in A$. 
This is a ring with $(1,0)$ as the identity element and $A$ a two-sided ideal of $\tilde A$ such that $\tilde A/A\cong \Z$. The canonical epimorphism $\tilde A\rightarrow \tilde A/A$ gives a natural homomorphism on the level of $K_0$ and then $K_0(A)$ is defined as 
\begin{equation}\label{thicold11}
K_0(A) :=\ker\big(K_0(\tilde A) \longrightarrow K_0(\tilde A /A)\big).
\end{equation}

This construction extends the functor $K_0$ from the category of rings with identity, to the category of rings (not necessarily with identity) to the category of abelian groups. 

\begin{remark}
Notice that, for a ring $A$ without unit,  we didn't define $K_0(A)$ as the group completion of $\mathcal V(A)$ (as defined in the unital case). This is because the group completion is not in general a left exact functor. For a non-unital ring $A$, let $R$ be a ring with identity containing $A$ as a two-sided ideal. Then we have the following exact sequence of $K$-theory:
\[K_1(R) \longrightarrow K_1(R/A) \longrightarrow  K_0(A) \longrightarrow  K_0(R) \longrightarrow K_0(R/A),\]
(see~\S\ref{relkgp} and~\cite[Theorem~2.5.4]{rosenberg}). There is yet another construction, the  \emph{relative Grothendieck group of $R$ and $A$}, $K_0(R,A)$, which one can show is isomorphic to $K_0(A)$ (see~\cite[\S7]{arafacchini} for  comparisons between 
the groups $K_0(A)$ and $\mathcal V(A)^+$ in the ungraded setting). 
\end{remark} 

A similar construction can be carried over to the graded setting as follows. Let $A$ be a $\Gamma$\!-graded ring which does not necessarily have identity (Remark~\ref{parknan}). Let $R$ be a $\Gamma$\!-graded ring with identity such that $A$ is a graded two-sided ideal of $R$. For example, consider $\tilde A=\mathbb Z \times A$ with multiplication given by (\ref{gtai3}).
 Moreover, $\tilde A=\mathbb Z \times A$ is a $\Gamma$\!-graded with 
 \begin{align}\label{thurattila}
\tilde A_0=\mathbb Z \times A_0, &\\
 \tilde A_\gamma=0\times A_\gamma, &\,\,\,\,\text{   for } \gamma\not =0. \notag
\end{align}

Define
\begin{multline*}
\mathcal V^{\gr}(A) = \big \{ \, [P] \mid P \text{ is a graded finitely generated projective $R$-module } \\ \text{ and } PA=P\,  \big \},  
\end{multline*}
where $[P]$ is the class of graded $R$-modules, graded isomorphic to $P$ and addition is defined via direct sum. 
Parallel to the ungraded setting we define
\begin{equation}\label{ytytew2}
\mathcal V^{\gr}(A):=\ker \big (\mathcal V^{\gr}(R) \longrightarrow \mathcal V^{\gr}(R/A) \big ).
\end{equation}

As in the proof of Lemma~\ref{kzeiogr}(1), since  $P\oplus Q \cong R^n(-\ol \alpha)$, this gives an idempotent matrix $p$ in $\M_n(R)(\ol \alpha)_0$. However, since $PA=P$, we have \[P\oplus QA=PA\oplus QA   \cong A^n(-\ol \alpha).\] This shows that $p\in  \M_n(A)(\ol \alpha)_0$. On the other hand, if \[p\in  \M_n(A)(\ol \alpha)_0\subseteq \M_n(R)(\ol \alpha)_0\] is an idempotent, then 
$pR^n(-\ol \alpha)$ is a graded finitely generated projective $R$-module such that 
\[pR^n(-\ol \alpha) A= pA^n(-\ol \alpha)=pR^n(-\ol \alpha).\] So $[pR^n(-\ol \alpha)] \in \mathcal V^{\gr}(A)$. Now a repeat  of Lemma~\ref{kzeiogr} shows that $\mathcal V^{\gr}(A)$ is isomorphic to the monoid of equivalence classes of graded idempotent
matrices over $A$ as in \S\ref{hhidmi}. This shows that the construction of $\mathcal V^{\gr}(A)$ is independent of the choice of the graded ring $R$.  It also shows that if $A$ has identity, the two constructions (using the graded projective $A$-modules versus the graded projective $R$-modules) coincide.

To define the graded Grothendieck group for non-unital rings, we use a similar approach as in~(\ref{thicold11}): Let $A$ be a ring (possibly without identity) and 
let $\tilde A=\mathbb Z \times A$ with the multiplication defined in~(\ref{gtai3}), so that $A$ is a graded two-sided ideal of $\tilde A$.  
The graded canonical epimorphism $\tilde A\rightarrow \tilde A/A$ gives a natural homomorphism on the level of $K^{\gr}_0$ and then $K^{\gr}_0(A)$ is defined as 
\begin{equation}\label{thicold}
\boxed{K^{\gr}_0(A) :=\ker\big(K^{\gr}_0(\tilde A) \longrightarrow K^{\gr}_0(\tilde A/A)\big).}
\end{equation}
Thus $K^{\gr}_0(A)$ is a $\mathbb Z[\Gamma]$-module. Since the graded homomorphism 
\begin{align*}
\phi:\tilde A &\longrightarrow \tilde A/A\cong \mathbb Z, \\
(n,a)&\longmapsto n
\end{align*}
splits, we obtain the split exact sequence 
\[0\longrightarrow K^{\gr}_0(A) \longrightarrow K^{\gr}_0(\mathbb Z\times A ) \longrightarrow K^{\gr}_0(\mathbb Z)\longrightarrow 0.\] 
By Example~\ref{pogbiaue}, $K^{\gr}_0(\mathbb Z)\cong \mathbb Z[x,x^{-1}]$ and  thus we get  
\[K^{\gr}_0(\tilde A) \cong K^{\gr}_0(A) \oplus \mathbb Z[x,x^{-1}].\]

Interpreting this in the language of the reduced $K^{\gr}_0$ (\S\ref{pearlf}), we have 
\[ \widetilde {K^{\gr}_0}(\tilde A) = K^{\gr}_0 (A).\]

Let $R$ and $S$ be non-unital graded rings and $\phi: R\rightarrow S$ a non-unital graded homomorphism. Then 
\begin{align*}
\tilde \phi: \tilde R &\longrightarrow \tilde S,\\
(r,n) &\longmapsto (\phi(r),n),
\end{align*}
 is a unital graded homomorphism and the following commutative diagram shows that there is an order preserving $\mathbb Z[\Gamma]$-module homomorphism between their $K^{\gr}_0$-groups. 
\begin{equation}\label{yhftgrte5421}
\xymatrix{
K^{\gr}_0(R) \ar@{^{(}->}[r] \ar@{.>}[d]^{\overline{\phi}} & K^{\gr}_0(\tilde R) \ar[r] \ar[d]^{\overline{\tilde \phi}} & K^{\gr}_0(\tilde R/R) \ar@{=}[d] \\ 
K^{\gr}_0(S) \ar@{^{(}->}[r] & K^{\gr}_0(\tilde S) \ar[r]  & K^{\gr}_0(\tilde S/S)
}
\end{equation}

This construction extends the functor $K^{\gr}_0$ from the category of graded rings with identity, to the category of graded rings (not necessarily with identity) to the category of $\mathbb Z[\Gamma]$-module. 

In Example~\ref{gor381}, we calculate the graded Grothendieck group of the non-unital ring of (countable) square matrices with finite number of nonzero entires using the idempotent representation. 

\begin{remark}\scm[Unitisation of a graded ring with identity]
\vspace{0.2cm}

If $\Gamma$\!-graded ring $A$ has identity, then the ring $\tilde A=\mathbb Z\times A$ defined by the multiplication~(\ref{gtai3}) is graded isomorphic to the cartesian product ring $\mathbb Z \times A$, where $\mathbb Z$ is a $\Gamma$\!-graded ring concentrated in degree zero (see Example~\ref{noisest}). 
Indeed the map 
\begin{align*}
\tilde A &\longrightarrow \mathbb Z \times A\\
(n,a) &\longmapsto (n,a+n 1_A)
\end{align*}
is a graded ring isomorphism of unital rings. This shows that if $A$ has an identity, the two definitions of $K^{\gr}_0$ for $A$ coincides. Note also that the unitisation ring $\tilde A$ is never  a strongly graded ring. 
\end{remark}

Similar to the ungraded setting (see from example~\cite[Theorem~1.5.9]{rosenberg}), one can prove that for a graded ideal $A$ of the graded ring $R$, \[K^{\gr}_0(R,A) \cong K^{\gr}_0(A)\] as a $\mathbb Z[\Gamma]$-modules. This shows that $K^{\gr}_0(R,A)$ depends only on the structure of the non-unital ring $A$.



\subsection{Graded inner automorphims} \label{kidenh}

Let $A$ be a graded ring and $f:A\rightarrow A$ an inner automorphism defined by $f(a)=r a r^{-1}$, where $r$ is a homogeneous
and invertible element of $A$ of degree $\delta$. Clearly $f$ is a graded automorphism. Then considering
$A$ as a graded left $A$-module via $f$, it is easy to observe, for any graded right $A$-module $P$, that there is a   
graded right $A$-module isomorphism 
\begin{align*}
P(-\delta) &\longrightarrow P \otimes_f A,\\
p &\longmapsto p \otimes r,
\end{align*}
(with the inverse $p\otimes a \mapsto p r^{-1}a $). 
This induces an isomorphism between the functors \[-\otimes_f A :\Pgrp A \longrightarrow \Pgrp A\] and 
$\delta$-suspension functor \[\mathcal T_\delta:\Pgrp A \longrightarrow \Pgrp A.\] Recall that a Quillen's $K_i$-group, $i\geq 0$,  is a functor from the category of exact categories with exact functors to the category of abelian groups (see~\S\ref{waraya}). Moreover, isomorphic functors induce the same map on the $K$-groups~\cite[p.19]{quillen}. Thus $K^{\gr}_i(f)=K^{\gr}_i(\mathcal T_\delta)$. Therefore if $r$ is a homogeneous element of degree zero, \ie $\delta=0$, then 
$K^{\gr}_i(f):K^{\gr}_i(A)\rightarrow K^{\gr}_i(A)$ is the identity map.

\section{$K^{\gr}_0$ is a pre-ordered module} \label{pregg51}
 
\subsection{$\mathbf \Gamma$\!-pre-ordered modules}  \label{pregg5}

An abelian group $G$ is called a \emph{pre-ordered abelian group} \index{pre-ordered abelian group} if there is a relation, denoted by $\geq$, on $G$ which is reflexive and transitive and it respects the group structure. It follows that the set $G_{+}:=\{\, x\in G \mid x\geq 0\,\}$ forms a monoid. Conversely, any monoid $C$ in $G$, induces a pre-ordering on $G$, by defining $x\geq y$ if $x-y\in C$. It follows that with this pre-ordering $G_{+}=C$, which is called the \emph{positive cone} of the ordering.
(Note that $G_{+}$ should not be confused by $G^{+}$ used for the group completion in~\S\ref{podong}.) \index{$G_{+}$, the cone of the group $G$} \index{positive cone of ordering}  \index{cone of ordering}

\begin{example}\label{windy1}
Let $V$ be a monoid and $V^+$ its completion (see~\S\ref{podong}). There exist a natural homomorphism $\phi:V \rightarrow V^+$ (see~(\ref{pogfris})), which makes $V^+$ a pre-ordered abelian group with the image of $V$ under this homomorphism as a positive cone of $V^+$. 
\end{example}

Let $G$ has a pre-ordering. An element $u\in G$ is called an \emph{order-unit} \index{order-unit} if $u\geq 0$ and for any $x\in G$, there is $n\in \mathbb N$, such that $nu \geq x$. 

For a ring $R$, by Example~\ref{windy1}, the Grothendieck group $K_0(R)$ is a pre-ordered abelian group. Concretely, consider the set of isomorphism classes of finitely generated projective $R$-modules in $K_0(R)$. This set forms a monoid which is considered the positive cone of $K_0(R)$ and is denoted by $K_0(R)_+$. This monoid induces a pre-ordering on $K_0(R)$. We check that with this pre-ordering $[R]$ is an order-unit. Let $u \in K_0(R)$. Then $u=[P]-[Q]$, where $P,Q$ are finitely generated projective $R$-modules. But there is a finitely generated projective module $P'$ such that $P\oplus P' \cong R^n$ as right $R$-modules, for some $n \in \mathbb N$. Then $n[R] \geq u$. Indeed, 
\begin{multline*}
n[R]-u=n[R]-[P]+[Q]=[R^n]-[P]+[Q]=\\ [P\oplus P']- [P]+[Q]=[P']+[Q] \in K_0(R)_{+}.
\end{multline*}

\begin{example}
Suppose $R$ is a ring such that $K_0(R)\not = 0$, but the order unit $[R]=0$. Since for any finitely generated projective module $P$, there is a finitely generated projective module $Q$ and 
$n\in \mathbb N$ such that $P\oplus Q\cong R^n$, it follows that \[-[P]=-n[R]+[Q]= [Q] \in K_0(R)_{+}.\] Thus  $K_0(R)=K_0(R)_{+}$. 
Therefore it is possible that $x\in K_0(R)$ with $x>0$ and $0>x$ simultaneously.  One instance of such a ring is constructed in Example~\ref{disini}
\end{example}

The Grothendieck group of a ring as a pre-ordered group is studied extensively in~\cite[\S15]{goodearlbook}. In particular it was established that 
 for the so called ultramatricial  algebras $R$, the abelian group $K_0(R)$ along with its positive cone $K_0(R)_+$ and the order-unit $[R]$ is a complete invariant (see~\cite[Theorem~15.26]{goodearlbook} and the introduction to \S\ref{ultriuy}). This invariant is also called the \emph{dimension group} \index{dimension group} in the literature as it coincides with an invariant called the dimension group by Elliot~\cite{elliot} to classify such algebras.  

Since we will consider the graded Grothendieck groups, which have an extra $\mathbb Z[\Gamma]$-module structure,  we need to adopt the above definitions on ordering to the graded setting.  For this reason, here we define the category of $\Gamma$\!-pre-ordered modules.

Let $\Gamma$ be a group and $G$ be a (left) $\Gamma$\!-module. Let $\geq$ be a reflexive and transitive relation on $G$ which respects the monoid and the module structures, \ie for $\gamma \in \Gamma$ and $x,y,z \in G$, if $x\geq y$, then $x+z\geq y+z$ and $\gamma x \geq \gamma y$. We call $G$ a \emph{$\Gamma$\!-pre-ordered module}. \index{$\Gamma$-pre-ordered module} We call $G$ a \emph{pre-ordered module} \index{pre-ordered module} when $\Gamma$ is clear from the context. The \emph{positive cone}  of $G$ is defined as $\{x \in G \mid x\geq 0\}$ and denoted by $G_{+}$.  The set $G_{+}$ is a $\Gamma$\!-submonoid of $G$, \ie a submonoid which is closed under the action of $\Gamma$.  In fact, $G$ is a $\Gamma$\!-pre-ordered module if and only if there exists a $\Gamma$\!-submonoid of $G$. (Since $G$ is a $\Gamma$\!-module, it can be  considered as a $\mathbb Z[\Gamma]$-module.)   \index{$G_{+}$, the cone of the group $G$}
An element $u \in G_{+}$ is called an \emph{order-unit} \index{order-unit} if for any $x\in G$, there are $\alpha_1,\dots,\alpha_n \in \Gamma$, $n \in \mathbb N$, such that 
\begin{equation}\label{meinchnhfr} \index{positive cone of ordering}  \index{cone of ordering}
\sum_{i=1}^n \alpha_i  u \geq x.
\end{equation}  
As usual, in this setting, we only consider homomorphisms which preserve the pre-ordering, \ie a $\Gamma$\!-homomorphism $f:G\rightarrow H$, such that $f(G_{+}) \subseteq H_{+}$. We refer to these homomorphisms as the \emph{order preserving homomorphisms}. We denote by $\mathcal P_{\Gamma}$ the category of pointed $\Gamma$\!-pre-ordered modules with pointed order preserving homomorphisms, \ie the objects are the pairs  $(G,u)$, where $G$ is a  $\Gamma$\!-pre-ordered module and $u$ is an order-unit, and $f:(G,u)\rightarrow (H,v)$ is an order preserving $\Gamma$\!-homomorphism such that $f(u)=v$. We sometimes write 
$f:(G,G_+,u)\rightarrow (H,H_+,v)$ to emphasis the ordering of $f$.
The focus of this book is to study the graded Gorthendieck group as a functor from the category of graded rings to the category $\mathcal P_{\Gamma}$ \index{order preserving homomorphism}

Note that when $\Gamma$ is a trivial group, we are in the classical setting of pre-ordered abelian groups. When $\Gamma=\mathbb Z$ and so $\mathbb Z[\Gamma]=\mathbb Z[x,x^{-1}]$, we simply write $\mathcal P$ for $\mathcal P_{\mathbb Z}$.

\begin{example}\label{pofsgtwn}\scm[$K^{\gr}_0(A)$ as a $\Gamma$\!-pre-ordered module]
\vspace{0.2cm} \index{$K_0^{\gr}(A)_+$, the positive cone of $K_0^{\gr}$}

Let $A$ be a $\Gamma$\!-graded ring. Then $K^{\gr}_0(A)$ is a $\Gamma$\!-pre-ordered module with  the set of isomorphic classes of  graded finitely generated  projective right $A$-modules  as the positive cone of ordering. This monoid is denoted by $K^{\gr}_0(A)_+$ which is 
the image of $\mathcal V^{\gr}(A)$ under the natural homomorphism $\mathcal V^{\gr}(A)$. We consider $[A]$ as an order-unit of this group. Indeed, if $x\in K^{\gr}_0(A)$, then by Lemma~\ref{rhodas}(1)  there are graded finitely generated  projective modules $P$ and $P'$ such that $x=[P]-[P']$. But there is a graded module $Q$ such that $P\oplus Q \cong A^n(\overline \alpha)$, where $\overline \alpha =(\alpha_1,\dots,\alpha_n)$, $\alpha_i \in \Gamma$ (see~(\ref{medickiue})).  Now 
\[ [A^n(\overline \alpha)] -x = [P]+[Q]-[P]+[P']=[Q]+[P']=[Q\oplus P'] \in K^{\gr}_0(A)_{+}.\] This shows that $\sum_{i=1}^n \alpha_i [A]=[A^n(\overline \alpha)]\geq x$. 
\end{example}

In~\S\ref{ultriuy} we will show that for the so called graded ultramatricial  algebras $R$, 
the $\mathbb Z[\Gamma]$-module  $K^{\gr}_0(R)$ along with its positive cone $K^{\gr}_0(R)_+$ and the order-unit $[R]$ is a complete invariant  (see Theorem~\ref{ujhosmr}). We denote this invariant $\big(K^{\gr}_0(R),K^{\gr}_0(R)_+,[R]\big)$ and we call it the \emph{graded dimension module} of $R$.  \index{graded dimension module}

\begin{theorem}\label{infroncov}
Let $A$ and $B$ be $\Gamma$\!-graded rings. If $A$ is graded Morita equivalent to $B$, then there is an order preserving $\mathbb Z[\Gamma]$-module isomorphism 
\[K^{\gr}_0(A) \cong K^{\gr}_0(B).\]
\end{theorem}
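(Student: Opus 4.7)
The plan is to use the structure theorem for graded Morita equivalences (Theorem~\ref{grmorim}) to produce explicit $K^{\gr}_0$ maps in both directions, and then verify that both are $\mathbb Z[\Gamma]$-linear and carry positive cones to positive cones.

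First, by Theorem~\ref{grmorim}, a graded equivalence $\Gr A \approx_{\gr} \Gr B$ is implemented by tensoring with a graded $A$-$B$-bimodule $Q$ (in fact $Q = P^*$ for a graded $A$-progenerator $P$), with inverse given by tensoring with a graded $B$-$A$-bimodule $P$. These tensor functors are exact and preserve direct sums; moreover, being a graded Morita equivalence, they send graded finitely generated projective modules to graded finitely generated projective modules (this follows from Proposition~\ref{grprojectivethm} together with the fact that equivalences preserve categorical properties such as being a direct summand of a finitely generated free object, and that their restriction to $\Pgrp$ remains an equivalence as noted in the discussion preceding Theorem~\ref{grmorim11}). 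Consequently we get induced monoid maps
\[
\overline{\phi}: \mathcal{V}^{\gr}(A) \longrightarrow \mathcal{V}^{\gr}(B), \qquad \overline{\psi}: \mathcal{V}^{\gr}(B) \longrightarrow \mathcal{V}^{\gr}(A),
\]
defined on generators by $[M]\mapsto [M\otimes_A Q]$ and $[N]\mapsto [N\otimes_B P]$, and these descend to group homomorphisms between $K^{\gr}_0(A)$ and $K^{\gr}_0(B)$ by the universal property of group completion.

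Next I would check $\mathbb Z[\Gamma]$-linearity. By Definition~\ref{grdeffsa}, a graded equivalence commutes with the suspension functors, so $\phi\circ \mathcal T_\alpha \cong \mathcal T_\alpha \circ \phi$ for every $\alpha\in\Gamma$. On classes this translates to $\overline\phi(\alpha\cdot[M]) = \overline\phi([M(\alpha)]) = [\phi(M)(\alpha)] = \alpha\cdot\overline\phi([M])$, and similarly for $\overline\psi$. So both maps are $\mathbb Z[\Gamma]$-module homomorphisms. That they are mutually inverse follows from the natural graded isomorphisms $\phi\psi\cong \id_{\Gr B}$ and $\psi\phi\cong \id_{\Gr A}$ provided by the equivalence, which on the monoid of isomorphism classes give $\overline\phi\overline\psi=\id$ and $\overline\psi\overline\phi=\id$.

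Finally, for order preservation, recall from Example~\ref{pofsgtwn} that $K^{\gr}_0(A)_+$ is precisely the image of $\mathcal V^{\gr}(A)$ in $K^{\gr}_0(A)$. Since $\overline\phi$ is induced from a functor $\Pgrp A \to \Pgrp B$, it sends $\mathcal V^{\gr}(A)$ into $\mathcal V^{\gr}(B)$, hence $K^{\gr}_0(A)_+$ into $K^{\gr}_0(B)_+$; the same holds for $\overline\psi$. Thus $\overline\phi$ is an order preserving $\mathbb Z[\Gamma]$-module isomorphism with order preserving inverse, giving the desired isomorphism $K^{\gr}_0(A)\cong K^{\gr}_0(B)$.

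The main subtlety (rather than a genuine obstacle) will be tracking that Theorem~\ref{grmorim} really does supply a functor between graded categories that commutes with suspensions on the nose up to natural isomorphism; this is exactly the content of Definition~\ref{grdeffsa}(2), so the $\mathbb Z[\Gamma]$-linearity is built into the hypothesis. Once this is in hand, everything else is formal.
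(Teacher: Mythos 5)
Your proposal is correct and follows essentially the same route as the paper: invoke Theorem~\ref{grmorim} to realise the equivalence as $-\otimes_A Q$ for a graded bimodule, restrict to the categories of graded finitely generated projective modules, and deduce an order preserving isomorphism on $K^{\gr}_0$ whose $\mathbb Z[\Gamma]$-linearity comes from compatibility of tensoring with the shift (the paper cites Equation~\eqref{izmir27june}, which is precisely the fact underlying your appeal to Definition~\ref{grdeffsa}). You simply spell out details the paper leaves implicit, such as the passage through $\mathcal V^{\gr}$ and the group completion.
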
 
\begin{proof}
By Theorem~\ref{grmorim}, there is a graded $A\!-\!B$-bimodule $Q$ such that the  functor $-\otimes_A Q:\Gr A \rightarrow \Gr B$ is a graded equivalence. The restriction to $\Pgrp B$ induces a graded equivalence 
$-\otimes_A Q:\Pgrp A \rightarrow \Pgrp B$. This in return induces an order preserving isomorphism $\phi:K^{\gr}_0(A) \rightarrow K^{\gr}_0(B)$ which the is also a $\Z[\Gamma]$-module isomorphism by Equation~\ref{izmir27june}.
\end{proof}

If $\Gamma$\!-graded rings $A$ and $B$ are graded Morita equivalent, \ie $\Gr A\approx \Gr B$, then by Theorem~\ref{grmorim}, $\Modd A\approx \Modd B$ and thus  not only by Theorem~\ref{infroncov}, $K^{\gr}_0(A) \cong K^{\gr}_0(B)$, but also $K_0(A) \cong K_0(B)$ as well. 

In a more  specific case, we have the following. 

\begin{lemma}\label{nnbok}
Let $A$ be a $\Gamma$\!-graded ring and $P$ be a $A$-graded progenerator.  Let $B=\End_A(P)$. Then $[P]$ is an order-unit in $K^{\gr}_0(A)$ and there is an order preserving $\mathbb Z[\Gamma]$-module isomorphism 
\[\big (K^{\gr}_0(B),[B] \big ) \cong \big(K^{\gr}_0(A),[P] \big ). \]
\end{lemma}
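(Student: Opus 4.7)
The plan is to combine the characterization of graded generators from Theorem~\ref{grgenth} with the graded Morita machinery developed in Theorems~\ref{grmorim} and~\ref{grmorim11}, and then track the image of $[B]$ under the induced map on $K_0^{\gr}$.

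First I would verify that $[P]$ is an order-unit in $K_0^{\gr}(A)$. Since $P$ is a graded progenerator, it is in particular a graded generator, so Theorem~\ref{grgenth}(3) supplies $\alpha_1, \ldots, \alpha_n \in \Gamma$ and a graded $A$-module $Q$ with $\bigoplus_{i=1}^n P(\alpha_i) \cong_{\gr} A \oplus Q$. Passing to $K_0^{\gr}(A)$ this gives $\sum_{i=1}^n \alpha_i [P] = [A] + [Q] \geq [A]$. Combined with the fact from Example~\ref{pofsgtwn} that $[A]$ is an order-unit, for any $x \in K_0^{\gr}(A)$ we may choose $\beta_1, \ldots, \beta_m \in \Gamma$ with $\sum_j \beta_j [A] \geq x$, and then
\[
\sum_{j=1}^m \sum_{i=1}^n (\beta_j + \alpha_i)[P] \;\geq\; \sum_{j=1}^m \beta_j [A] \;\geq\; x,
\]
so $[P]$ is an order-unit, as required.

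Next, since $P$ is a graded $A$-progenerator with $B \cong_{\gr} \End_A(P)$, Theorem~\ref{grmorim} (together with Theorem~\ref{grmorim11}) furnishes a graded equivalence $\psi \colon \Gr B \longrightarrow \Gr A$ given on objects by $N \longmapsto N \otimes_B P$, with quasi-inverse $\phi \cong \Hom_A(P, -) \cong - \otimes_A P^*$. This equivalence restricts to $\Pgrp B \to \Pgrp A$ since tensoring with the bimodule $P$ preserves graded finitely generated projectives, and by~(\ref{izmir27june}) it commutes with the shift functors $\mathcal T_\alpha$. Exactly as in the proof of Theorem~\ref{infroncov}, the functor $\psi$ therefore induces an order-preserving $\mathbb Z[\Gamma]$-module isomorphism
\[
\overline{\psi} \colon K_0^{\gr}(B) \longrightarrow K_0^{\gr}(A).
\]

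Finally I would identify the image of the distinguished order-unit. Since $\overline{\psi}([B]) = [B \otimes_B P] = [P]$, the map $\overline{\psi}$ is a morphism in the category $\mathcal{P}_\Gamma$ from $(K_0^{\gr}(B), [B])$ to $(K_0^{\gr}(A), [P])$, and it is an isomorphism of the underlying $\mathbb Z[\Gamma]$-modules, giving the desired pointed isomorphism. No serious obstacle is anticipated: the only delicate point is confirming that the Morita functor is graded in the sense of Definition~\ref{grdeffsa} (i.e.\ commutes with every $\mathcal T_\alpha$), which is immediate from the identity $(N \otimes_B P)(\alpha) \cong_{\gr} N(\alpha) \otimes_B P$.
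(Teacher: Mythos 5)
Your proof is correct and follows essentially the same route as the paper: establish that $[P]$ is an order-unit via Theorem~\ref{grgenth} together with $[A]$ being an order-unit, then use the graded Morita equivalence $-\otimes_B P$ of~(\ref{fgts688}) restricted to $\Pgrp B \to \Pgrp A$ and observe that $[B] \mapsto [B\otimes_B P] = [P]$. Your write-up merely spells out the order-unit estimate and the shift-compatibility~(\ref{izmir27june}) in more detail than the paper does.
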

\begin{proof}
Since $P$ is a graded generator, by Theorem~\ref{grgenth}, there are $\alpha_i \in \Gamma$, $1\leq i \leq n$,  such that $[A] \leq \sum_i \alpha_i [P]$. Since $[A]$ is an order-unit, this immediately implies that $[P]$ is an order-unit. By~(\ref{fgts688}), the functor $-\otimes_B P:\Gr B \rightarrow \Gr A$ is a graded equivalence. The restriction to $\Pgrp B$ induces a graded equivalence 
$-\otimes_B P:\Pgrp B \rightarrow \Pgrp A$. This in return induces an order preserving $\mathbb Z[\Gamma]$-module isomorphism $\phi:K^{\gr}_0(B) \rightarrow K^{\gr}_0(A)$, with 
\[\phi([Q]-[L])=[Q\otimes_B P] -[L\otimes_B P],\] where $Q$ and $L$ are graded finitely generated projective $B$-modules. In particular $\phi([B])=[P]$. This completes the proof.  
\end{proof}

\section{$K^{\gr}_0$ of graded division rings}\label{ghgwbsia} \index{graded division ring}

In the case of graded division rings, using the description of graded free modules, one can compute the graded Grothendieck group completely.

\begin{proposition} \label{k0grof}
Let $A$ be a $\Gamma$\!-graded division ring with the support the subgroup $\Gamma_A$. Then 
the monoid of  isomorphism classes of $\Gamma$\!-graded finitely generated  projective $A$-modules is isomorphic to $\mathbb N[\Gamma /\Gamma_A]$.
Consequently  there is a canonical $\mathbb Z[\Gamma]$-module isomorphism
\begin{align*}
K_0^{\gr}(A)  \, \, &\longrightarrow \mathbb Z[\Gamma /\Gamma_A]\\
[A^n(\delta_1,\dots,\delta_n)] & \longmapsto \sum_{i=1}^n \Gamma_A+\delta_i.
\end{align*}
 In particular if $A$ is a (trivially graded) division ring, $\Gamma$ a group and $A$ considered as 
a $\Gamma$\!-graded division ring concentrated in
degree zero, then $K_0^{\gr}(A) \cong \mathbb Z[\Gamma]$ and  $[A^n(\delta_1,\dots,\delta_n)]\in K_0^{\gr}(A)$
corresponds to
 $\sum_{i=1}^n \delta_i$ in $\mathbb Z[\Gamma]$.
\end{proposition}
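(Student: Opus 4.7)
The plan is to first identify $\mathcal{V}^{\gr}(A)$ explicitly with $\mathbb{N}[\Gamma/\Gamma_A]$ and then pass to group completions. By Proposition~\ref{gradedfree}, every graded finitely generated projective $A$-module is graded free, and by Proposition~\ref{kj351} it has a well-defined homogeneous rank; hence every class in $\mathcal{V}^{\gr}(A)$ is represented by some $A^n(\delta_1,\dots,\delta_n)$. I would define
\[
\phi:\mathcal{V}^{\gr}(A)\longrightarrow \mathbb{N}[\Gamma/\Gamma_A],\qquad [A^n(\delta_1,\dots,\delta_n)]\longmapsto \sum_{i=1}^n(\Gamma_A+\delta_i),
\]
and show that $\phi$ is a well-defined isomorphism of $\Gamma$-monoids.

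The key invariant to make $\phi$ well-defined is the dimension of each homogeneous component as a (right) $A_0$-module. Since $A$ is a graded division ring, for any $\beta\in\Gamma_A$ and any nonzero $a\in A_\beta$ the map $A_0\to A_\beta$, $x\mapsto xa$, is an $A_0$-module isomorphism, so $A_\beta$ is $1$-dimensional over $A_0$; and $A_\beta=0$ when $\beta\notin\Gamma_A$. Therefore, for $M=A^n(\overline\delta)$,
\[
\dim_{A_0}M_\alpha \;=\; \sum_{i=1}^n \dim_{A_0}A_{\alpha+\delta_i} \;=\; \bigl|\{\,i : \Gamma_A+\delta_i=\Gamma_A-\alpha\,\}\bigr|,
\]
which is precisely the multiplicity of the coset $\Gamma_A-\alpha$ in $\phi([M])$. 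Since any graded isomorphism preserves each homogeneous component (as an $A_0$-module), these multiplicities are invariants of $[M]$; this proves $\phi$ is well-defined and, simultaneously, injective. Surjectivity is immediate as each coset $\Gamma_A+\alpha$ is the image of $[A(\alpha)]$, and additivity on direct sums is built into the definition. (An alternative route for well-definedness and injectivity is to invoke Proposition~\ref{rndconggrrna} together with Theorem~\ref{clasigr} applied via $\End_A\bigl(A^n(\overline\delta)\bigr)\cong_{\gr}\M_n(A)(-\overline\delta)$, but the dimension-counting argument is more direct in this setting.)

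Next I would check $\Gamma$-equivariance: the shift action on $\mathcal{V}^{\gr}(A)$ sends $[A^n(\overline\delta)]$ to $[A^n(\overline\delta)(\alpha)]=[A^n(\delta_1+\alpha,\dots,\delta_n+\alpha)]$, whose image is $\sum_i(\Gamma_A+\delta_i+\alpha)$, which is exactly $\alpha\cdot\phi([A^n(\overline\delta)])$ under the natural $\mathbb{Z}[\Gamma]$-action on $\mathbb{Z}[\Gamma/\Gamma_A]$ through the quotient map $\Gamma\to\Gamma/\Gamma_A$. Finally, passing to group completions and using the universal property described in \S\ref{hghgti1} yields the desired $\mathbb{Z}[\Gamma]$-module isomorphism $K_0^{\gr}(A)\cong\mathbb{Z}[\Gamma/\Gamma_A]$, and the trivially graded case is just the specialization $\Gamma_A=0$.

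The main obstacle is the well-definedness/injectivity step, i.e.\ extracting a graded invariant from the tuple $\overline\delta$ that only depends on $[A^n(\overline\delta)]$ up to graded isomorphism; once one observes that $A_0$ is a division ring and each $A_\beta$ with $\beta\in\Gamma_A$ is one-dimensional over $A_0$, this reduces to a clean dimension count on homogeneous components, and the remaining verifications are formal.
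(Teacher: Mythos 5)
Your proof is correct and takes essentially the same approach as the paper's: both rest on graded freeness (Proposition~\ref{gradedfree}) together with the fact that $A(\delta)\cong_{\gr}A(\delta')$ exactly when $\delta-\delta'\in\Gamma_A$ (Proposition~\ref{rndconggrrna}), and both prove uniqueness/injectivity by an $A_0$-dimension count on homogeneous components --- the paper normalises to distinct coset representatives and compares the single component $M_{-\delta_{i_1}}\cong A_0^{r_1}$, while you package all components into the invariant $\alpha\mapsto\dim_{A_0}M_\alpha$, a mild rephrasing of the same mechanism. One cosmetic slip: for the right $A_0$-module structure relevant to restricting graded isomorphisms, the isomorphism $A_0\to A_\beta$ should be left multiplication $x\mapsto ax$ (your map $x\mapsto xa$ is a \emph{left} $A_0$-module homomorphism), though this does not affect the dimension count.
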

\index{complete set of coset representative}

\begin{proof}
By Proposition~\ref{rndconggrrna}, $A(\delta_1) \cong_{\gr} A(\delta_2)$ as graded $A$-module if and only if $\delta_1-\delta_2 \in \Gamma_A$. 
Thus any graded free module of rank $1$ is graded isomorphic to some $A(\delta_i)$, where $\{\delta_i\}_{i \in I}$ is a complete set of coset representative of the subgroup $\Gamma_A$ in $\Gamma$, \ie $\{\Gamma_A+\delta_i, i\in I\}$ represents $\Gamma/\Gamma_A$. Since any graded finitely generated  module $M$ over $A$ is graded free (Proposition~\ref{gradedfree}),  it follows that 
\begin{equation}\label{m528h}
M \cong_{\gr} A(\delta_{i_1})^{r_1} \oplus \dots \oplus A(\delta_{i_k})^{r_k},
\end{equation}
where $\delta_{i_1},\dots \delta_{i_k}$ are distinct elements of the coset representative. Now suppose 
\begin{equation}\label{m23gds}
M \cong_{\gr} A(\delta_{{i'}_1})^{{s}_1} \oplus \dots \oplus A(\delta_{{i'}_{k'}})^{s_{k'}}.
\end{equation}
Considering the $A_0$-module $M_{-\delta_{i_1}}$, from~(\ref{m528h}) we have $M_{-\delta_{i_1}}\cong A_0^{r_1}$. Comparing this with $M_{-\delta_{i_1}}$ from~(\ref{m23gds}), it follows that one of $\delta_{{i'}_j}$, $1\leq j \leq k'$, say, $\delta_{{i'}_1}$, has to be $\delta_{i_1}$ and so $r_1=s_1$ as $A_0$ is a division ring.  Repeating  the same argument for each $\delta_{i_j}$, $1\leq j \leq k$, we see  
 $k=k'$, $\delta_{{i'}_j}=\delta_{i_j}$ and $r_j=s_j$, for all $1\leq j \leq k$ (possibly after suitable permutation).  Thus any graded finitely generated  projective $A$-module can be written uniquely as $M \cong_{\gr} A(\delta_{i_1})^{r_1} \oplus \dots \oplus A(\delta_{i_k})^{r_k}
$,  where $\delta_{i_1},\dots \delta_{i_k}$ are distinct elements of the coset representative. The (well-defined) map 
\begin{align}
\mathcal V^{\gr}(A) & \rightarrow \mathbb N[\Gamma/\Gamma_A] \label{attitude}\\
[A(\delta_{i_1})^{r_1} \oplus \dots \oplus A(\delta_{i_k})^{r_k}
] & \mapsto r_1(\Gamma_A+\de_{i_1})+\dots+ r_k(\Gamma_A+\de_{i_k}), \notag
\end{align}
gives a $\mathbb N[\Gamma]$-monoid isomorphism between the monoid of  isomorphism classes of $\Gamma$\!-graded finitely generated  projective $A$-modules  $\mathcal V^{\gr} (A)$ and  $\mathbb N[\Gamma /\Gamma_A]$. The rest of the proof follows easily. 
\end{proof}

\begin{remark}\label{hghghgt4}
One can use Equation~\ref{diniat6} to calculate the graded $K$-theory of graded division algebras as well. One can also compute this group via $G^{\gr}_0$-theory of Artinian rings (see Corollary~\ref{ooppoo}).
\end{remark}

\begin{example}\label{upst}
Using Proposition~\ref{k0grof}, we calculate the graded $K_0$ of two types of graded fields and we determine the action of $\mathbb Z[x,x^{-1}]$ on these groups. These are graded fields obtained from  Leavitt path algebras of acyclic and $C_n$-comet graphs, respectively (see Theorem~\ref{gra1} and~\ref{grCn1}). 
\begin{enumerate}
\item Let $K$ be a field. Consider $A=K$ as a $\mathbb Z$-graded field with  support $\Gamma_A=0$, \ie $A$ is concentrated in degree $0$.  By Proposition~\ref{k0grof}, $K_0^{\gr}(A)\cong \mathbb Z[x,x^{-1}]$ as a $\mathbb Z[x,x^{-1}]$-module. With this presentation $[A(i)]$ corresponds to $x^i$ in $\mathbb Z[x,x^{-1}]$. 

\smallskip 

\item Let $A=K[x^n,x^{-n}]$ be a $\mathbb Z$-graded field with $\Gamma_A=n\mathbb Z$. By Proposition~\ref{k0grof}, 
\[ K_0^{\gr}(A)\cong \mathbb Z \big [ \mathbb Z / n \mathbb Z \big ]\cong \bigoplus_n \mathbb Z,\] is a $\mathbb Z[x,x^{-1}]$-module. The action of $x$ on 
$(a_1,\dots,a_n) \in \bigoplus_n \mathbb Z$ is \[x (a_1,\dots,a_n)= (a_n,a_1,\dots,a_{n-1}).\]
With this presentation $[A]$ corresponds to $(1,0,\dots,0)$ in $\bigoplus_n \mathbb Z$. Moreover, the map \[U:K^{\gr}_0(A) \longrightarrow K_0(A)\] induced by the forgetful functor (\S\ref{forgetful}), gives a group homomorphism 
\begin{align*}
U:\bigoplus_n \mathbb Z &\longrightarrow \Z, \\
(a_i) &\longmapsto \sum_i a_i.
\end{align*}
In fact, the sequence 
\begin{equation*}
\bigoplus_n \mathbb Z  \stackrel{f}{\longrightarrow} \bigoplus_n \mathbb Z \stackrel{U}{\longrightarrow} \Z \longrightarrow 0, 
\end{equation*}
where $f(a_1,\dots,a_n)=(a_n-a_1,a_1-a_2,\dots,a_{n-1}-a_n)$
is exact. In~\S\ref{vandengg} we systematically related $K^{\gr}_0$ to $K_0$ for certain rings. 

\end{enumerate}
\end{example}

\begin{example}\label{eggrktheory}
Consider the Hamilton quaternion algebra \[\mathbb H = \mathbb R \oplus
\mathbb R i \oplus \mathbb R j \oplus \mathbb R k.\] By Example~\ref{egofgrdivisionrings}, $\mathbb H$ is a  $\mathbb Z_2
\times \mathbb Z_2$-graded  division ring with $\Gamma_{\H}=\mathbb Z_2 \times \mathbb Z_2$. By  Proposition~\ref{k0grof}, 
$K_0^{\gr} (\mathbb H)\cong \mathbb Z$. In fact since $\mathbb H$ is a strongly $\mathbb Z_2
\times \mathbb Z_2$-graded  division ring, one can deduce the result using the Dade's theorem (see~\S\ref{jijigogo}), \ie 
$K_0^{\gr} (\mathbb H)\cong K_0(\mathbb H_0)=K_0(\mathbb R)\cong \mathbb Z$. 
\end{example}

\begin{example}\label{whyso8} \index{associated graded ring}
Let $(D,v)$ be a valued division algebra, where $v:D^*\rightarrow \Gamma$ is the valuation homomorphism. 
By  Example~\ref{grdiviwad}, there is a $\Gamma$\!-graded division algebra $\gr(D)$ associated to $D$, where $\Gamma_{\gr(D)}=\Gamma_D$.
By  Proposition~\ref{k0grof}, \[K^{\gr}_0(\gr(D))\cong \mathbb Z[\Gamma/\Gamma_D].\]
\end{example}

The following example generalises Example~\ref{eggrktheory} of the Hamilton quaternion algebra 
$\H$ as a $\Z_2 \times \Z_2$-graded ring.

\begin{remark}
We saw in Example~\ref{egofgrdivisionrings} that $\mathbb H$ can
be considered as a $\mathbb Z_2$-graded division ring. So
$\mathbb H$ is also strongly $\mathbb Z_2$-graded, and \[K_0^{\gr}
(\mathbb H) \cong K_0 (\mathbb H_0) = K_0 (\mathbb C) \cong \mathbb
Z.\] Then $Z(\mathbb H)= \mathbb R,$ which we can consider as a
trivially $\mathbb Z_2$-graded field, so by
Proposition~\ref{k0grof}, $K_0^{\gr} (\mathbb R) =
\mathbb Z \oplus \mathbb Z$. We note that for both grade groups,
$\Z_2$ and $\Z_2 \times \Z_2$, we have $K_0^{\gr} (\H) \cong \Z$, but
the $K_0^{\gr} (\R)$ are different. So the graded $K$-theory of a
graded ring depends not only on the ring, but also on its grade
group.
\end{remark}

\begin{example}\label{eggrktheorynotsameasktheory}
Let $K$ be a field and let $R= K[x^2 , x^{-2}]$. Then $R$ is a
$\mathbb Z$-graded field, with  support $2\mathbb Z$, where $R$ can be written as $R=
\bigoplus_{n \in \mathbb Z} R_n$, with $R_n = Kx^n$ if $n$ is even
and $R_n = 0$ if $n$ is odd.
Consider the shifted graded matrix ring $A = \M_3 (R)(0,1,1)$, which
has support $\mathbb Z$. Then we will show that $A$ is a graded
central simple algebra over $R$.


\medskip 
It is clear that the centre of $A$ is $R$, and $A$ is finite
dimensional over $R$. Recall that a graded ideal is generated by homogeneous elements. If $J$ is a nonzero graded ideal of $A$, then 
using the elementary matrices, we can show that $J=A$ (see
\cite[\S III, Exercise~2.9]{hungerford}), so $A$ is graded simple.

By Theorem~\ref{grCn1}, $A$ is the Leavitt path algebra of the following graph. 
\vspace{0.25cm}
\begin{equation*}
{\def\labelstyle{\displaystyle}
\xymatrix{ 
E:   \bullet \ar[r] & \bullet \ar@/^1.5pc/[r] & \bullet \ar@/^1.5pc/[l] &
}}
\end{equation*}

\vspace{0.25cm}
\noindent Thus by Theorem~\ref{sthfin3} $A$ is a strongly $\mathbb Z$-graded ring.

Here, we also show that $A$ is a strongly $\mathbb Z$-graded
ring by checking  the conditions of  Proposition~\ref{crossedproductstronglygradedprop}. Since  $\mathbb Z$ is finitely generated, it
is sufficient to show that $I_3 \in A_1 A_{-1}$ and $I_3 \in A_{-1} A_1$. We have 
\begin{multline*}
I_3 =
\begin{pmatrix}
0 & 1 & 0 \\
0 & 0 & 0 \\
0 & 0 & 0
\end{pmatrix} 
\begin{pmatrix}
0 & 0 & 0 \\
1 & 0 & 0 \\
0 & 0 & 0
\end{pmatrix}
+  \begin{pmatrix}
0\phantom{^{2}} & 0 & 0 \\
x^2 & 0 & 0 \\
0\phantom{^{2}} & 0 & 0
\end{pmatrix} 
\begin{pmatrix}
0 & x^{-2} & 0 \\
0 & 0\phantom{^{-2}} & 0 \\
0 & 0\phantom{^{-2}} & 0
\end{pmatrix} \! 
+  \\
\begin{pmatrix} 
0\phantom{^{2}} & 0 & 0 \\
0\phantom{^{2}} & 0 & 0 \\
x^2  & 0 & 0
\end{pmatrix} 
\begin{pmatrix}
0 & 0 &  x^{-2} \\
0 & 0 & 0\phantom{^{-2}} \\
0 & 0 & 0\phantom{^{-2}}
\end{pmatrix} 
\end{multline*}
and 
\begin{multline*}
 I_3 =  
\begin{pmatrix}
0 &  x^{-2}  & 0 \\
0 & 0\phantom{^{-2}} & 0 \\
0 & 0\phantom{^{-2}} & 0
\end{pmatrix} 
\begin{pmatrix}
0\phantom{^{2}} & 0 & 0 \\
x^2 & 0 & 0 \\
0\phantom{^{2}} & 0 & 0
\end{pmatrix} 
+  \begin{pmatrix}
0 & 0 & 0 \\
1 & 0 & 0 \\
0 & 0 & 0
\end{pmatrix} 
\begin{pmatrix}
0 & 1 & 0 \\
0 & 0 & 0 \\
0 & 0 & 0
\end{pmatrix} 
+ \\ \begin{pmatrix}
0 & 0 & 0 \\
0 & 0 & 0 \\
1 & 0 & 0
\end{pmatrix}  
\begin{pmatrix}
0 & 0 & 1 \\
0 & 0 & 0 \\
0 & 0 & 0
\end{pmatrix}.
\end{multline*}

As in the previous examples, using
the Dade's theorem (see~\S\ref{jijigogo}), we have $K_0^{\gr} (A) \cong
K_0 (A_0)$. Since $R_0 = K$, by~(\ref{mmkkhh4}) there is a ring isomorphism
$$
A_0 = 
\begin{pmatrix} R_0\phantom{-} & R_1 & R_1 \\
R_{-1} & R_0 & R_0  \\
R_{-1} & R_0 & R_0
\end{pmatrix}=
\begin{pmatrix} K & 0 & 0 \\
0 & K & K  \\
0 & K & K
\end{pmatrix}
\cong K \times \M_2(K).
$$
Then
$$
K_0^{\gr} (A) \cong K_0 (A_0) \cong K_0 (K) \oplus K_0
\big(\M_2(K)\big) \cong \mathbb Z \oplus \mathbb Z,
$$
since $K_0$ respects Cartesian products and Morita equivalence. Note
that  
\[K_0(A)=K_0 \big( \M_3 (R)(0,1,1)
\big) \cong K_0 (R) = K_0 \big( K[x^2 , x^{-2}] \big)\cong \mathbb Z.\] 
The isomorphism $K_0(K[x^2 , x^{-2}]) \cong \mathbb Z$ comes from 
the fundamental theorem of algebraic $K$-theory
\cite[Theorem~3.3.3]{rosenberg} (see also \cite[p.~484]{magurn}), \[K_0
\big( K[x , x^{-1}] \big) \cong K_0 (K) \cong \mathbb Z,\] and that $K[x^2 , x^{-2}] \cong
K[x , x^{-1}]$ as rings. So the $K$-theory of $A$ is isomorphic to
one copy of $\mathbb Z$, which is not the same as the graded
$K$-theory of $A$.
\end{example}

\begin{example}\label{gaul1}\scm[Reduced $K^{\gr}_0$ of graded central simple algebras]
\vspace{0.2cm}

Recall from~\S\ref{pearlf}, that for a $\Gamma$\!-graded ring $A$, $\widetilde{K^{\gr}_0}$ is the cokernel of the homomorphism 
\begin{align*}\label{mozart1}
\phi:\mathbb Z[\Gamma]&\longrightarrow K^{\gr}_0(A),\\
\sum_\alpha n_\alpha \alpha &\longmapsto \sum_\alpha n_\alpha [A(\alpha)]. 
\end{align*}

Let $A$ be a $\Gamma$\!-graded division ring. We calculate $\widetilde{K^{\gr}_0}(\M_n(A))$, where $\M_n(A)$ is a $\Gamma$\!-graded ring (with no shift). The $\mathbb Z[\Gamma]$-module homomorphism $\phi$ above takes the form 
\begin{align*}
\mathbb Z[\Gamma]&\longrightarrow K^{\gr}_0(\M_n(A))\stackrel{\cong}{\longrightarrow} K^{\gr}_0(A)\stackrel{\cong}{\longrightarrow} \Z[\Gamma/\Gamma_A],\\
\alpha& \longmapsto [\M_n(A)(\alpha)]\longmapsto \bigoplus_n [A(\alpha)] \longmapsto n (\Gamma_A+\alpha),
\end{align*}
where the second map is induced by the Morita theory (see Proposition~\ref{grmorita}) and the third map is induced by Proposition~\ref{k0grof}. The cokernel of the composition maps gives $\widetilde{K^{\gr}_0}(\M_n(A))$, which one can immediately calculate 
\[\widetilde{K^{\gr}_0}(\M_n(A))\cong \frac{\Z}{n\Z}\big[\frac{\Gamma }{\Gamma_A}\big].\] 
In particular, for a graded division ring $A$, we have 
\[\widetilde{K^{\gr}_0}(A)\cong 0. \]
\end{example}

\begin{example}\label{gor381}
Let $K$ be a field.  Consider the following sequence of graded matrix rings  
\begin{equation}\label{plosneid1}
K\subseteq \M_2(K)(0,1) \subseteq \M_3(K)(0,1,2) \subseteq \M_4(K)(0,1,2,3)\subseteq \cdots 
\end{equation}
where the inclusion comes from the non-unital (graded) ring homomorphism of~(\ref{cafejen25}). 
Let $R$ be the graded ring 
\[R=\bigcup_{i=1}^{\infty} \M_{i}(K)(\ol \alpha_i),\]
where $\ol \alpha_i=(0,1,\dots,i-1)$. Note that $R$ is a non-unital ring. (This is a non-unital graded ultramatricial algebra which will be studied in~\S\ref{ultriuy}).  
We calculate $K^{\gr}_0(R)$. Since $K^{\gr}_0$ respects the direct limit (Theorem~\ref{kcontis}), we have 
\[K^{\gr}_0(R)=K^{\gr}_0(\varinjlim R_i)=\varinjlim K^{\gr}_0(R_i),\]
where $R_i=\M_i(K)(\ol\alpha_i)$. The non-unital homomorphism 
\[\phi_i:R_i \rightarrow R_{i+1},\] (see~(\ref{cafejen25})) induces the homomorphism $\ol \phi_i:K^{\gr}_0(R_i) \rightarrow K^{\gr}_0(R_{i+1})$ (see (\ref{yhftgrte5421})). We will observe that each of these $K$-groups is $\mathbb Z[x,x^{-1}]$ and each map $\phi_i$ is identity. First, note that the ring $R_i$, $i\in \mathbb N$, is unital, so using 
the Morita theory (see Proposition~\ref{grmorita}) and Example~\ref{upst}(1), $K^{\gr}_0(R_i)=K^{\gr}_0(K)\cong \mathbb Z[x,x^{-1}]$. 
Switching to the idempotent representation of the graded Grothendieck group, observe that the idempotent matrix $p_i$ having $1$ on the upper left corner and zero everywhere else is in ${R_i}_0=\M_i(K)(0,1,\dots,i-1)_0$ and \[p_iR_i\conggr K\oplus K(1)\oplus \dots \oplus K(i-1)\] (see Example~\ref{hygfdbte65}). Again Proposition~\ref{grmorita} (see also Proposition~\ref{instancegg}) shows that $[p_iR_i]=[K\oplus K(1)\oplus \dots \oplus K(i-1)]$ corresponds to $1 \in \mathbb Z[x,x^{-1}]$. Now $p_{i+1}:=\phi_i(p_i)$ gives again a matrix with $1$ on the upper left corner and zero everywhere else. So 
 $p_{i+1}R_{i+1}\conggr K\oplus K(1)\oplus \dots \oplus K(i)$ and consequently $[p_{i+1}R_{i+1}]=1$. So $\ol \phi_i(1)=1$. Since $\ol \phi_i$ respects the shift, this shows that $\ol\phi_i$ is the identity map. Thus $K^{\gr}_0(R)\cong \Z[x,x^{-1}]$. 

A similar argument shows that for the graded ring 
\[S=\bigcup_{i=1}^{\infty} \M_{i}(K)(\ol \alpha_i),\]
where $\ol \alpha_i=(0,1,1,\dots,1)$, with one zero and $1$ repeated $i-1$ times, we have an ordered  $\mathbb Z[x,x^{-1}]$-module isomorphism 
\[K^{\gr}_0(R) \cong  K^{\gr}_0(S).\]

Using a similar argument as in the proof of Theorem~\ref{gra1}, one can show that $R$ is isomorphic to the Leavitt path algebra associated to the infinite graph 
\begin{equation*}
\xymatrix{
E: &  \ar@{.>}[r] & \bullet \ar[r] & \bullet \ar[r] &  \bullet   \\
}
\end{equation*}
whereas $S$ is the Leavitt path algebra of a graph $F$ consisting of infinite vertices, one in the middle and the rest are connected to this vertex with an edge. 
\begin{equation*}
\xymatrix{
&& \bullet \ar[d] & \bullet \ar[dl]\\
F: & \ar@{.>}[r] & \bullet  &  \bullet \ar[l]   \\
&\bullet \ar@{.>}[ru] & \bullet \ar[u] &  \bullet \ar[ul]   
}
\end{equation*}
Clearly the rings $R$ and $S$ are not graded isomorphic, as the support of $R$ is $\mathbb Z$ whereas the support of $S$ is $\{-1,0,1\}$. See Remark~\ref{idontcare} in regard to classification of graded ultramatricial algebras. 
\end{example}

\begin{example}\scm[$K_0^{\gr}$ ring of graded fields]\label{drring}

\vspace{0.2cm}

If $A$ is a $\Gamma$\!-graded field, then by~\S\ref{hhyyuvy}, $K_0^{\gr}(A)$ is a ring. One can easily check that by Proposition~\ref{k0grof} and (\ref{inenbuild}), $K_0^{\gr}(A)\cong \Gamma/\Gamma_A$ as algebras.
\end{example}

\section{$K^{\gr}_0$ of graded local rings}\label{attila1}

Recall from \S\ref{merrychirstmass} that a $\Gamma$\!-graded ring $A$ is called a graded local ring \index{graded local ring} if the two-sided ideal $M$ generate by 
noninvertible homogeneous elements of $A$ is a proper ideal. 

In Proposition~\ref{gdhmeisl}, we will explicitly calculate the graded Grothendieck group of graded local rings. There are two ways to do this. One can prove that a graded finitely generated projective module over a graded local ring is a graded free with a unique rank, as in the case of graded division rings (see \S\ref{mochihge}) and adopt the same approach  to calculate the graded Grothendieck group (see \S\ref{ghgwbsia}). However, there is a more direct way to do so, which we develop here.

Recall the definition of graded Jacobson radical of a graded ring $A$, $J^{\gr}(A)$, from~\S\ref{jghjrye}. We also need a graded version of the Nakayama lemma \index{graded Nakayama Lemma} which holds in this setting as well. Namely, if $J\subseteq J^{\gr}(A)$ is a graded right ideal of $A$ and $P$ is a graded finitely generated right $A$-module, and $Q$ is a graded submodule of $P$ such that $P=Q+PJ$, then $P=Q$ (see~\cite[Corollary~2.9.2]{grrings}).  \index{graded Jacobson radical}

We need the following two lemmas in order to calculate $K_0^{\gr}$ of a graded local ring. 

\begin{lemma}\label{thuphan}
Let $A$ be a $\Gamma$\!-graded ring, $J\subseteq J^{\gr}(A)$ a homogeneous two-sided ideal and $P$ and $Q$ be graded finitely generated projective $A$-modules. 
If $\overline P=P\otimes_A A/J=P/PJ$ and  $\overline Q=Q\otimes_A A/J=Q/QJ$ are isomorphic as graded $A/J$-modules, then $P$ and $Q$ are isomorphic as graded $A$-module. 
\end{lemma}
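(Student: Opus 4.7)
The plan is to imitate the classical lifting argument for projectives over local rings, being careful that every homomorphism we construct is graded of degree zero and that the graded Nakayama Lemma applies at each use.

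First I would fix a graded $A/J$-module isomorphism $\overline{\phi}\colon \overline{P}\to\overline{Q}$, and consider the composition
\[
P \stackrel{\pi_P}{\longrightarrow} \overline{P} \stackrel{\overline{\phi}}{\longrightarrow} \overline{Q},
\]
where $\pi_P$ is the canonical graded projection. Since $P$ is graded projective and the graded projection $\pi_Q\colon Q\to \overline{Q}$ is a graded surjection, Proposition~\ref{grprojectivethm} gives a graded $A$-module homomorphism $f\colon P\to Q$ with $\pi_Q\circ f = \overline{\phi}\circ \pi_P$. In particular, the induced map $\overline{f}\colon \overline{P}\to \overline{Q}$ equals $\overline{\phi}$, hence is a graded isomorphism.

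Next I would show $f$ is surjective. Because $\overline{f}$ is surjective, we have $f(P) + QJ = Q$. Since $Q$ is graded finitely generated and $J\subseteq J^{\gr}(A)$, the graded Nakayama Lemma (see~\cite[Corollary~2.9.2]{grrings}, quoted in this section) yields $f(P)=Q$.

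Finally, since $Q$ is graded projective, the graded surjection $f$ splits via a graded homomorphism $s\colon Q\to P$ with $fs=1_Q$, giving a graded decomposition $P\conggr s(Q)\oplus \ker f$, with $s(Q)\conggr Q$. Tensoring with $A/J$ yields $\overline{P}\conggr \overline{Q}\oplus \overline{\ker f}$, and under this decomposition $\overline{f}$ is the identity on the first summand and zero on the second; since $\overline{f}$ is an isomorphism, this forces $\overline{\ker f}=0$. Now $\ker f$ is a direct summand of the graded finitely generated module $P$, hence itself graded finitely generated, so a second application of the graded Nakayama Lemma gives $\ker f = 0$. Therefore $f\colon P\to Q$ is a graded $A$-module isomorphism.

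The argument is essentially routine once the graded Nakayama Lemma is in hand; the only real point requiring attention is the lifting step, where one must ensure that Proposition~\ref{grprojectivethm} is invoked in its graded form so that $f$ (and later the splitting $s$) are genuinely graded homomorphisms of degree zero rather than merely module maps produced by the ungraded projectivity of $P$.
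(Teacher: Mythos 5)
Your proof is correct and follows essentially the same route as the paper: lift $\overline{\phi}$ through the graded projectivity of $P$ to a graded map $f$ with $\overline{f}=\overline{\phi}$, deduce surjectivity of $f$ from the graded Nakayama Lemma, split $f$ via the graded projectivity of $Q$, and kill $\ker f$ with a second application of Nakayama. The only immaterial difference is in the final step: the paper notes directly that $\ker f\subseteq \ker(\overline{\phi}\circ\pi_P)=\ker \pi_P=PJ$ and applies Nakayama to $P=s(Q)+PJ$, whereas you tensor the splitting with $A/J$ to conclude $\overline{\ker f}=0$ and apply Nakayama to the finitely generated summand $\ker f$ itself.
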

\begin{proof}
Let $\phi:\overline P \rightarrow \overline Q$ be a graded $A/J$-module isomorphism. Clearly $\phi$ is also a $A$-module isomorphism.  Consider Diagram~\ref{relyhea}. Since $\pi_2$ is an epimorphism, and $P$ is a graded finitely generated projective, there is a graded homomorphism $\psi:P\rightarrow Q$ which makes the diagram commutative. 
 \begin{equation}\label{relyhea}
\xymatrix{
P \ar[r]^{\pi_1} \ar@{.>}[d]_{\psi} & \overline P \ar[d]^{\phi} \ar[r]  &0\\
Q\ar[r]^{\pi_2}  &  \overline Q \ar[r] & 0
}
\end{equation}

We will show that $\psi$ is a graded $A$-module isomorphism. Since $\pi_2 \psi$ is an epimorphism, 
$Q=\psi(P)+\ker\pi_2=\psi(P)+QJ$. The Nakayama lemma then implies that $Q=\psi(P)$, \ie $\psi$ is an epimorphism. Now since $Q$ is graded projective, there is a graded homomorphism $i:Q\rightarrow P$ such that $\psi i=1_Q$ and $P=i(Q)\oplus \ker\psi$. But 
\[\ker\psi \subseteq \ker \pi_2\psi=\ker\phi \pi_1 =\ker\pi_1=PJ.\] 
Thus $P=i(Q)+PJ$. Again the Nakayama lemma implies $P=i(Q)$. Thus $\ker\psi=0$, and so $\psi:P \rightarrow Q$ is an isomorphism. 
\end{proof}

\begin{lemma}\label{akhtar1}
Let $A$ and $B$ be $\Gamma$\!-graded rings and $\phi:A\rightarrow B$ be a graded epimorphism such that $\ker\phi \subseteq  J^{\gr}(A)$. Then the induced homomorphism 
\[\overline \phi :K^{\gr}_0(A) \rightarrow K^{\gr}_0(B)\] is a $\mathbb Z[\Gamma]$-module monomorphism. 
\end{lemma}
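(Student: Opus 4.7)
The plan is to unwind the hypothesis $\ker\phi\subseteq J^{\gr}(A)$ to reduce an equality in $K_0^{\gr}(B)$ to a graded isomorphism of projectives over $A$, using Lemma~\ref{thuphan} as the key tool. Set $J=\ker\phi$, so that $B\cong_{\gr}A/J$, and for any graded right $A$-module $P$ we have $P\otimes_A B \cong_{\gr}P/PJ$. Under this identification, $\overline\phi$ sends $[P]$ to $[P/PJ]$, and since $(P(\alpha))/(P(\alpha))J\cong_{\gr}(P/PJ)(\alpha)$, the map $\overline\phi$ is automatically a $\mathbb Z[\Gamma]$-module homomorphism.

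To prove injectivity, take an element $x\in\ker\overline\phi$. By Lemma~\ref{rhodas}(1), write $x=[P]-[Q]$ with $P,Q$ graded finitely generated projective $A$-modules. Then $[P/PJ]=[Q/QJ]$ in $K_0^{\gr}(B)$, so by Lemma~\ref{rhodas}(3) there exist $\overline\alpha=(\alpha_1,\dots,\alpha_n)$ with
\[
P/PJ\oplus B^n(\overline\alpha)\cong_{\gr}Q/QJ\oplus B^n(\overline\alpha).
\]
Since $A^n(\overline\alpha)/A^n(\overline\alpha)J\cong_{\gr}B^n(\overline\alpha)$, and taking quotients by $J$ commutes with finite direct sums, this isomorphism can be rewritten as
\[
\bigl(P\oplus A^n(\overline\alpha)\bigr)\big/\bigl(P\oplus A^n(\overline\alpha)\bigr)J \;\cong_{\gr}\; \bigl(Q\oplus A^n(\overline\alpha)\bigr)\big/\bigl(Q\oplus A^n(\overline\alpha)\bigr)J.
\]
Replacing $P$ by $P\oplus A^n(\overline\alpha)$ and $Q$ by $Q\oplus A^n(\overline\alpha)$ does not change $[P]-[Q]$, so we may assume from the outset that $P/PJ\cong_{\gr}Q/QJ$ as graded $A/J$-modules.

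At this point the hypothesis $J\subseteq J^{\gr}(A)$ enters decisively: Lemma~\ref{thuphan} applies to the graded finitely generated projective $A$-modules $P$ and $Q$ and upgrades the graded $A/J$-isomorphism $P/PJ\cong_{\gr}Q/QJ$ to a graded $A$-isomorphism $P\cong_{\gr}Q$. Consequently $x=[P]-[Q]=0$ in $K_0^{\gr}(A)$, and $\overline\phi$ is a monomorphism.

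The only real subtlety lies in the bookkeeping of step where we absorb the stabilising shift $B^n(\overline\alpha)$ coming from Lemma~\ref{rhodas}(3): we must lift it to $A^n(\overline\alpha)$ so that the resulting modification of $P$ and $Q$ remains projective over $A$ (not merely over $B$) and leaves $[P]-[Q]$ unchanged. Once this reduction is in place, Lemma~\ref{thuphan} does all the work, and the $\mathbb Z[\Gamma]$-module structure is preserved automatically because the shift functor commutes with the quotient $-/{-J}$.
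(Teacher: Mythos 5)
Your proposal is correct and follows essentially the same route as the paper's own proof: write $x=[P]-[Q]$, use Lemma~\ref{rhodas}(3) to get $\overline{P}\oplus B^n(\overline\alpha)\cong_{\gr}\overline{Q}\oplus B^n(\overline\alpha)$, absorb the stabilising summand by observing $\overline{P\oplus A^n(\overline\alpha)}\cong_{\gr}\overline{Q\oplus A^n(\overline\alpha)}$, and then invoke Lemma~\ref{thuphan} to lift this to a graded isomorphism over $A$, killing $x$. Your extra remarks on the $\mathbb Z[\Gamma]$-module compatibility and the lifting of $B^n(\overline\alpha)$ to $A^n(\overline\alpha)$ are accurate elaborations of steps the paper leaves implicit.
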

\begin{proof}
Let $J=\ker(\phi)$. Without the loss of generality we can assume $B=A/J$. Let $x\in \ker\overline \phi $. Write $x=[P]-[Q]$ for two graded finitely generated projective $A$-modules $P$ and $Q$. So $\phi(x)=[\overline P]-[\overline Q]=0$, where $\overline P=P\otimes_A A/J=P/PJ$ and  $\overline Q=Q\otimes_A A/J=Q/QJ$ are graded $A/J$-modules. 
Since $\phi(x)=0$, by  Lemma~\ref{rhodas}(3), $\overline P \bigoplus B^n (\overline \alpha)\conggr \overline Q \bigoplus B^n (\overline \alpha)$, where $\overline \alpha=(\alpha_1,\dots,\alpha_n)$. So \[\overline{P \bigoplus A^n(\overline \alpha)}\conggr \overline{Q \bigoplus A^n(\overline \alpha)}.\] By Lemma~\ref{thuphan},  
$P \bigoplus A^n(\overline \alpha)\conggr Q \bigoplus A^n(\overline \alpha)$. Therefore $x=[P]-[Q]=0$ in $K^{\gr}_0(A)$. This completes the proof. 
\end{proof}

For a $\Gamma$\!-graded ring $A$, recall that $\Gamma_A$ is the support of $A$ and \[\Gamma^*_A=\{ \alpha \in \Gamma \mid A^*_\alpha \not = \varnothing\}\] is a subgroup of $\Gamma_A$. 

\begin{proposition}\label{gdhmeisl}
Let $A$ be a $\Gamma$\!-graded local ring. Then there is a $\mathbb Z[\Gamma]$-module isomorphism  \[K^{\gr}_0(A) \cong \mathbb Z[\Gamma/\Gamma^*_A].\] 
\end{proposition}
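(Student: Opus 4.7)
The plan is to exploit the graded quotient $A/M$, where $M$ is the graded ideal generated by the noninvertible homogeneous elements of $A$, and reduce the computation to the already-known case of graded division rings (Proposition~\ref{k0grof}).

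First I would establish that $A/M$ is a $\Gamma$-graded division ring with support exactly $\Gamma_A^*$. By the definition of a graded local ring (\S\ref{merrychirstmass}), $M$ is the unique graded maximal two-sided ideal, and every homogeneous element outside $M$ is invertible in $A$; passing to the quotient, every nonzero homogeneous element of $A/M$ is invertible, so $A/M$ is a graded division ring. A homogeneous component $(A/M)_\alpha$ is nonzero precisely when $A_\alpha$ contains an invertible element, i.e., when $\alpha \in \Gamma_A^*$. Hence $\Gamma_{A/M} = \Gamma_A^*$, and Proposition~\ref{k0grof} yields a $\mathbb{Z}[\Gamma]$-module isomorphism
\[
K^{\gr}_0(A/M) \;\cong\; \mathbb{Z}[\Gamma/\Gamma_A^*].
\]

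Next I would analyse the graded ring homomorphism $\pi : A \twoheadrightarrow A/M$. Since $M$ is contained in every graded maximal left (and right) ideal of $A$, we have $M \subseteq J^{\gr}(A)$. Therefore Lemma~\ref{akhtar1} applies and the induced $\mathbb{Z}[\Gamma]$-module homomorphism
\[
\overline{\pi} : K^{\gr}_0(A) \longrightarrow K^{\gr}_0(A/M) \cong \mathbb{Z}[\Gamma/\Gamma_A^*]
\]
is injective. It then remains to verify that $\overline{\pi}$ is surjective. Because $A/M$ is a graded division ring, Proposition~\ref{gradedfree} implies that every graded finitely generated projective $A/M$-module is graded free, so $K^{\gr}_0(A/M)$ is generated as an abelian group by classes of the form $[(A/M)(\alpha)]$ with $\alpha \in \Gamma_A^*$. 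Each such class is the image of $[A(\alpha)] \in K^{\gr}_0(A)$ under $\overline{\pi}$, since $A(\alpha)\otimes_A A/M \cong_{\gr}(A/M)(\alpha)$. Hence $\overline{\pi}$ is onto, and combining with injectivity gives the desired $\mathbb{Z}[\Gamma]$-module isomorphism $K^{\gr}_0(A)\cong \mathbb{Z}[\Gamma/\Gamma_A^*]$.

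The main technical point to be careful about is the identification $\Gamma_{A/M} = \Gamma_A^*$, which relies on the defining feature of graded local rings that a homogeneous element is either invertible or lies in $M$; the rest of the argument is a straightforward assembly of Proposition~\ref{k0grof} with Lemma~\ref{akhtar1} plus the observation that graded free modules already witness surjectivity on generators, so no idempotent-lifting argument is required.
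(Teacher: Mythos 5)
Your proof is correct and takes essentially the same route as the paper: Lemma~\ref{akhtar1} applied to $A\rightarrow A/M$ for injectivity, the identification $\Gamma_{A/M}=\Gamma_A^*$ together with Proposition~\ref{k0grof} for the graded division ring $A/M$, and surjectivity via $A(\alpha)\otimes_A A/M \conggr (A/M)(\alpha)$. The only slip is that the abelian-group generators $[(A/M)(\alpha)]$ of $K^{\gr}_0(A/M)$ require $\alpha$ to range over a complete set of coset representatives of $\Gamma/\Gamma_A^*$ rather than over $\Gamma_A^*$ (for $\alpha\in\Gamma_A^*$ all such classes equal $[A/M]$), but since your image computation $\overline{\pi}([A(\alpha)])=[(A/M)(\alpha)]$ holds for every $\alpha\in\Gamma$, the argument is unaffected.
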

\begin{proof}

Let $M=J^{\gr}(A)$ be the unique graded maximal ideal of $A$. Then by Lemma~\ref{akhtar1} the homomorphism 
\begin{align}\label{jgeakd}
\phi:K^{\gr}_0(A) &\longrightarrow K^{\gr}_0\big(A/M\big) \\
[P] &\longmapsto [P\otimes _A A/M], \notag
\end{align}
is a monomorphism. Since $A/M$ is a graded division ring, by Proposition~\ref{k0grof},  $K^{\gr}_0(A/M)\cong \mathbb Z [\Gamma\big /\Gamma_{A/M}]$. 
Observe that $\Gamma_{A/M}=\Gamma^*_A$. On the other hand, for the graded $A$-module $A(\alpha)$, where $\alpha \in \Gamma$, we have 
\begin{equation}\label{penrithti}
A(\alpha)\otimes_A A/M \conggr (A/M)(\alpha),
\end{equation}
as the graded $A/M$-module. 
Since \[K^{\gr}_0(A/M)\cong \mathbb Z [\Gamma\big /\Gamma^*_A]\] is generated by $[(A/M)(\alpha_i)]$, where $\{ \alpha_i\}_{i\in I} $ is a complete set of coset representative of
 $\Gamma/\Gamma^*_A$ (see Proposition~\ref{k0grof}), from (\ref{jgeakd}) and (\ref{penrithti}) we get $\phi([A(\alpha_i)]=[(A/M)(\alpha_i)]$. So $\phi$ is an epimorphism as well. This finishes the proof.
\end{proof}

\begin{example}\scm[$K_0^{\gr}$ ring of commutative graded local rings]\label{lrring}

\vspace{0.2cm}

If $A$ is a commutative $\Gamma$\!-graded local ring,
 it follows from Proposition~\ref{gdhmeisl} and Example~\ref{drring} that $K_0^{\gr}(A)\cong \Gamma/\Gamma_A^*$ as algebras.
\end{example}

\section{$K^{\gr}_0$ of Leavitt path algebras}\label{kleaviti} \index{Leavitt path algebra}

For a graph $E$, its associated path algebra $\mathcal P(E)$ is a positively graded ring. In~\S\ref{todaytalkbit}, we will use Quillen's theorem on graded $K$-theory of such rings to calculate the graded Grothendieck group of paths algebras (see Theorem~\ref{hgrsiny}).

In this section we calculate the $K_0$ and $K^{\gr}_0$ of Leavitt path algebras. For one thing, they provide very nice examples. We first calculate the ungraded $K_0$ of the Leavitt path algebras in~\S\ref{lktbuy} and in~\S\ref{xuejun}, we determine their $K^{\gr}_0$-groups. 

\subsection{$K_0$ of Leavitt path algebras}\label{lktbuy}

For a Leavitt path algebra $\LL_K(E)$, the monoid $\mathcal V(\LL_K(E))$ is studied in~\cite{amp}. In particular using~\cite[Theorem~3.5]{amp}, one can calculate the Grothendieck group of a Leavitt path algebra  from the adjacency matrix of a graph (see~\cite[p.1998]{aalp}). We present the calculation of the Grothendieck group of a Leavitt path algebra here.

Let $F$ be a free abelian monoid generated by a countable set $X$. The nonzero elements of $F$ can be written as $\sum_{t=1}^n x_t$, where $x_t \in X$. 
Let $I\subseteq \mathbb N$ and $r_i, s_i$ be elements of $F$, where $i\in I$. We define an equivalence relation on $F$ denoted by $\langle r_i=s_i\mid i\in I \rangle$ as follows: Define a binary relation $\rightarrow$ on $F\backslash \{0\}$ by,  \[r_i+\sum_{t=1}^n x_t \rightarrow s_i+\sum_{t=1}^n x_t, i\in I\] and generate the equivalence relation on $F$ using this binary relation. Namely, $a \sim a$ for any $a\in F$ and for $a,b \in F \backslash \{0\}$, $a \sim b$ if there is a sequence \[a=a_0,a_1,\dots,a_n=b\] such that for each $t=0,\dots,n-1$ either $a_t \rightarrow a_{t+1}$ or $a_{t+1}\rightarrow a_t$. We denote the quotient monoid by
$F/\langle r_i=s_i\mid i\in I\rangle$.
Completing the monoid (see~\S\ref{hghgti1}),  one can see that there is a canonical group isomorphism 
\begin{equation} \label{monio}
\Big (\frac{F}{\langle r_i=s_i\mid i\in I \rangle}\Big)^{+} \cong \frac{F^{+}}{\langle r_i-s_i\mid i\in I \rangle}.
\end{equation}

Let $E$ be a graph (as usual we consider only graphs with no sinks) and  $A_E$ be the \emph{adjacency  matrix} \index{adjacency matrix}
$(n_{ij}) \in \mathbb Z^{E^0\oplus E^0}$, where $n_{ij}$ is the
number of edges from $v_i$ to $v_j$. 
Clearly the adjacency matrix depends on the ordering we put on
$E^0$. We usually fix
 an ordering on $E^0$.

 Multiplying the matrix $A_E^t-I$ from the left defines a
homomorphism \[\mathbb Z^{E^0} \longrightarrow
\mathbb Z^{E^0},\] where  $\mathbb Z^{E^0} $ is the direct sum of copies of $\mathbb Z$
indexed by $E^0$. The next
theorem shows that the cokernel of this map gives the Grothendieck
group of Leavitt path algebras. 

\begin{theorem}\label{wke}
Let $E$ be finite graph with no sinks and $\LL(E)$ be the Leavitt path algebra associated to $E$. Then
\begin{equation}
K_0(\LL(E))\cong \coker\big(A_E^t-I:\mathbb Z^{E^0} \longrightarrow \mathbb Z^{E^0}\big).
\end{equation}
\end{theorem}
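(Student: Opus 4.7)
The plan is to reduce the computation of $K_0(\LL(E))$ to a computation on the monoid $\mathcal V(\LL(E))$ via group completion, and then to identify the defining relations of that monoid with the image of the map $A_E^t-I$.

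First I would invoke the Ara--Moreno--Pardo description of the monoid of isomorphism classes of finitely generated projective $\LL(E)$-modules (cited in the paragraph preceding the statement): for a finite graph $E$ with no sinks, $\mathcal V(\LL(E))$ is isomorphic to the quotient of the free abelian monoid $F$ on the set $E^0$ by the congruence
\[
\Big\langle \, v \,=\, \sum_{\alpha \in s^{-1}(v)} r(\alpha) \;\Bigm|\; v \in E^0 \,\Big\rangle,
\]
where each generator $v\in E^0$ corresponds to the class $[vA] \in \mathcal V(\LL(E))$, and the relations are the Cuntz--Krieger type identities in Definition~\ref{llkas}(4) applied to $v = \sum_{s(\alpha)=v}\alpha\alpha^*$, giving $[vA] = \bigoplus_{s(\alpha)=v}[r(\alpha)A]$.

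Next I would pass to the group completion. By the general identity \eqref{monio}, we obtain
\[
K_0(\LL(E)) \;=\; \mathcal V(\LL(E))^{+} \;\cong\; \frac{\mathbb Z^{E^0}}{\big\langle v - \sum_{\alpha \in s^{-1}(v)} r(\alpha) \,:\, v\in E^0 \big\rangle}.
\]
Collecting parallel edges, the relation at vertex $v$ becomes $v - \sum_{w\in E^0} n_{vw}\, w = 0$, where $n_{vw}$ is the number of edges from $v$ to $w$.

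The final step is to identify the subgroup generated by these relations with the image of $A_E^t-I$. Viewing elements of $\mathbb Z^{E^0}$ as column vectors in the basis $\{e_v\}_{v\in E^0}$, a direct computation gives
\[
(A_E^t - I)\, e_v \;=\; \sum_{w\in E^0} \big((A_E^t)_{wv} - \delta_{wv}\big)\, e_w \;=\; \sum_{w\in E^0} n_{vw}\, e_w - e_v,
\]
which is precisely (the negative of) the defining relator at $v$. As $v$ runs over $E^0$, these elements generate the image of $A_E^t-I$, so the displayed quotient is exactly $\coker(A_E^t-I)$, completing the proof.

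The main obstacle here is the first step: the monoid description of $\mathcal V(\LL(E))$ is a nontrivial theorem requiring the confluence analysis of the rewriting system defined by the Cuntz--Krieger relations, but since the paper explicitly cites (amp, Theorem 3.5) for this fact, we may invoke it as a black box. Everything else is a transparent bookkeeping exercise with the adjacency matrix, where the transpose appears because of the convention that the relation at vertex $v$ expresses $[vA]$ as a sum indexed by edges \emph{emitted} by $v$, while the matrix $A_E^t - I$ acts by left multiplication on column vectors indexed by $E^0$.
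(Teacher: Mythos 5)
Your proposal is correct and takes essentially the same route as the paper's own proof: both invoke \cite[Theorem~3.5]{amp} for the monoid presentation $\mathcal V(\LL(E))\cong F/\langle v=\sum_{s(\alpha)=v}r(\alpha)\rangle$, pass to the group completion via~(\ref{monio}), and identify the subgroup generated by the relators with the image of $A_E^t-I$ acting on $\mathbb Z^{E^0}$. The only difference is cosmetic, in that you make the matrix bookkeeping (the sign of the relator and the role of the transpose) slightly more explicit than the paper does.
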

\begin{proof}
Let $M_E$ be the abelian monoid generated by $\{v \mid v \in E^0 \}$ subject to the relations
\begin{equation}\label{phgqcu}
v=\sum_{\{\alpha\in E^{1} \mid s(\alpha)=v \}} r(\alpha),
\end{equation}
for every $v\in E^0$.  The
relations~(\ref{phgqcu}) can be then written as $A_E^t \overline v_i= I
\overline v_i$, where  $v_i \in E^0$ and
$\overline v_i$ is the $(0,\dots,1,0,\dots)$ with $1$ in the $i$-th
component.   Therefore,
\[M_E\cong \frac{F}{ \langle A_E^t \overline v_i = I\  \overline  v_i , v_i \in E^0  \rangle},\] where $F$ is the free abelian monoid generated by the vertices of $E$.
By~\cite[Theorem~3.5]{amp} there is a natural monoid isomorphism \[\V(\LL_K(E)) \cong M_E.\] So using~(\ref{monio}) we have,
\begin{equation}\label{pajd}
K_0(\LL(E))\cong \V(\LL_K(E))^{+}\cong M_E^{+}\cong \frac{F^+}{ \langle (A_E^t-I) \overline v_i, v_i \in E^0  \rangle}.
\end{equation}
Now $F^{+}$ is $\mathbb Z^{E^0}$ and it is easy to see that the
denominator in~(\ref{pajd}) is the image of $A_E^t-I:\mathbb
Z^{E^0} \longrightarrow \mathbb Z^{E^0}$.
\end{proof}

\begin{example}\label{ffexpnon}
Let $E$ be the following graph. 
\[
\xymatrix{
   \bullet \ar@{.}@(l,d) \ar@(ur,dr)^{y_{1}} \ar@(r,d)^{y_{2}} \ar@(dr,dl)^{y_{3}} 
\ar@(l,u)^{y_{n}}
}\]

Then the Leavitt path algebra associated to $E$, $\LL(E)$, is the algebra constructed by Leavitt in~(\ref{jh54320}).
By Theorem~\ref{wke}, 
\[K_0(\LL(E)) \cong \mathbb Z \big /(n-1) \mathbb Z.\]  
\end{example}

\begin{example}\label{disini}
Here is an example of a ring $R$ such that $K_0(R)\not=0$ but $[R]=0$. Let $A$ be the Leavitt path algebra associated to the graph 
\[
\xymatrix{
   \bullet \ar@(d,l)^{y_{3}}  \ar@(r,d)^{y_{1}} \ar@(dr,dl)^{y_{2}} 
}\]
Thus as a right $A$-module, $A^3\cong A$ (see Example~\ref{levisuji}). By Example~\ref{ffexpnon}, $K_0(A)=\mathbb Z/2\mathbb Z$ and $2[A]=0$. The ring $R=\M_2(A)$ is Morita equivalent to $A$ (using the assignment 
$P_{\M_2(A)} \mapsto P\otimes_{\M_2(A)} A^2$, see Proposition~\ref{instancegg}). Thus $K_0(R)\cong K_0(A)\cong   \mathbb Z/2\mathbb Z$. Under this assignment, $[R]=[\M_2(A)]$ is sent to $2[A]$ which is zero, thus $[R]=0$.
\end{example}

\subsection{Action of $\Z$ on $K^{\gr}_0$ of Leavitt path algebras} \label{xuejun}

Recall that Leavitt path algebras have a natural $\mathbb Z$-graded structure (see~\S\ref{paohdme}).   The graded Grothendieck group as a possible invariant for these algebras was first considered in~\cite{hazgr}. In the case of finite graphs with no sinks, there is a good description of the action of $\mathbb Z$ on the graded Grothendieck group which we give here.

Let $E$ be a finite graph with no sinks. Set $\mathcal  A=\LL(E)$ which is a strongly $\mathbb Z$-graded ring by Theorem~\ref{sthfin}.  For any $u \in E^0$ and $i \in \mathbb Z$, $u\mathcal  A(i)$ is a graded finitely generated projective right $\mathcal  A$-module and any graded finitely generated projective $\mathcal  A$-module is generated by these modules up to isomorphism, \ie 
\begin{equation}\label{roxjhhre}
\boxed{\mathcal V^{\gr}(\mathcal  A)=\Big \langle \, \big [u\mathcal  A(i)\big ]  \mid u \in E^0, i \in \mathbb Z \, \Big \rangle. 
}
\end{equation}
By~\S\ref{podong},   $K_0^{\gr}(\mathcal  A)$ is the group completion of $\mathcal V^{\gr}(\mathcal  A)$. The action of $\mathbb N[x,x^{-1}]$ on $\mathcal V^{\gr}(\mathcal A)$ and thus the action of $\mathbb Z[x,x^{-1}]$ on $K_0^{\gr}(\mathcal  A)$ is defined on generators \[x^j [u\mathcal  A(i)]=[u\mathcal  A(i+j)],\] where $i,j \in \mathbb Z$. We first observe that for $i\geq 0$, 
\begin{equation}\label{hterw}
\boxed{
x[u\mathcal  A(i)]=[u\mathcal  A(i+1)]=\sum_{\{\alpha \in E^1 \mid s(\alpha)=u\}}[r(\alpha)\mathcal  A(i)].
}
\end{equation}

First notice that for $i\geq 0$, $\mathcal  A_{i+1}=\sum_{\alpha \in E^1} \alpha \mathcal  A_i$. It follows \[u \mathcal  A_{i+1}=\bigoplus_{\{\alpha \in E^1 \mid s(\alpha)=u\}} \alpha \mathcal  A_i\] as $\mathcal  A_0$-modules. Using the fact that $\mathcal  A_n\otimes_{\mathcal  A_0}\mathcal  A\cong \mathcal  A(n)$, $n \in \mathbb Z$, and the fact that $\alpha \mathcal  A_i \cong r(\alpha) \mathcal  A_i$ as $\mathcal  A_0$-module,
we get \[u\mathcal  A(i+1) \cong \bigoplus_{\{\alpha \in E^1 \mid s(\alpha)=u\}} r(\alpha) \mathcal  A(i)\] as graded $\mathcal  A$-modules. This gives~(\ref{hterw}).

Recall that for a $\Gamma$\!-graded ring $A$, $K^{\gr}_0(A)$ is a pre-ordered abelian group with  the set of isomorphic classes of  graded finitely generated  projective right $A$-modules  as the positive cone of ordering, denoted by $K^{\gr}_0(A)_+$  (\ie the image of $\mathcal V^{\gr}(A)$ under the natural homomorphism $\mathcal V^{\gr}(A)\rightarrow K^{\gr}_0(A)$). Moreover, $[A]$ is an order-unit. We call the triple, 
$(K^{\gr}_0(A),K^{\gr}_0(A)_+,[A])$ the \emph{graded dimension group} (see~\cite[\S15]{goodearlbook} for some background on dimension groups). \index{graded dimension group}

In~\cite{hazgr} it was conjectured that the graded dimension group is a complete invariant for Leavitt path algebras. Namely,  for graphs  $E$ and $F$,  $\LL(E)\cong_{\gr} \LL(F)$ if and only if 
there is an order preserving $\mathbb Z[x,x^{-1}]$-module  isomorphism 
\begin{equation}\label{ookjh2}
\phi: K_0^{\gr}(\LL(E)) \rightarrow  K_0^{\gr}(\LL(F))
\end{equation}
such that $\phi([\LL(E)]=\LL(F)$. 
 
\subsection{$K^{\gr}_0$ of a Leavitt path algebra via its 0-component ring}\label{sarahbright}
\index{0-component ring of a graded ring}

In this section we calculate the graded Grothendieck group of Leavitt path algebras. 
For a finite graph with no sinks, $\LL(E)$ is strongly graded (Theorem~\ref{sthfin}) and thus $K^{\gr}_0(\LL(E)) \cong K_0(\LL(E)_0)$ (Section~\ref{jijigogo}).
The structure of the ring of homogeneous elements of degree zero, $\LL(E)_0$, is  known. We recall the description of $\LL(E)_0$ in the setting of finite graphs with no sinks (see the proof of
Theorem~5.3 in~\cite{amp} for the general case). We will then use it to calculate $K^{\gr}_0(\LL(E))$. 

Let $A_E$ be the adjacency matrix of $E$. Let $L_{0,n}$ be the
linear span of all elements of the form $pq^*$ with $r(p)=r(q)$ and
$|p|=|q|\leq n$. Then 
\begin{equation}\label{ppooii}
\LL(E)_0=\bigcup_{n=0}^{\infty}L_{0,n},
\end{equation} with
the transition inclusion 
\begin{align}\label{curltoday}
L_{0,n}& \longrightarrow L_{0,n+1},\\
pq^* &\longmapsto \sum_{\{ \alpha | s(\alpha)=v\}} p\alpha (q\alpha)^*, \notag
\end{align}
where $r(p)=r(q)=v$ and extended linearly. Note that since $E$ does not have sinks, 
for any $v\in E_0$, the set $\{ \alpha | s(\alpha)=v\}$ is not
empty.

For a fixed $v \in E^0$, let
$L_{0,n}^v$ be the linear span of all elements of
the form $pq^*$ with $|p|=|q|=n$ and $r(p)=r(q)=v$. Arrange the paths
of length $n$ with the range $v$ in a fixed order
$p_1^v,p_2^v,\dots,p_{k^v_n}^v$, and observe that the correspondence
of  $p_i^v{p_j^v}^*$ to the matrix unit $\e_{ij}$ gives rise to a ring
isomorphism $L_{0,n}^v\cong\M_{k^v_n}(K)$. Moreover, $L_{0,n}^v$, $v\in E^0$ form a direct sum.  This implies that
\[L_{0,n}\cong
\bigoplus_{v\in E^0}\M_{k_n^v}(K),\] where $k_n^v$, $v \in E^0$, 
 is the number of paths of length $n$ with the range
$v$. The inclusion map $L_{0,n}\rightarrow L_{0,n+1}$ of (\ref{curltoday})  can be represented by 
\begin{equation}\label{volleyb}
A_E^t: \bigoplus_{v\in E^0} \M_{k^v_n}(K) \longrightarrow \bigoplus_{v\in E^0}
\M_{k^v_{n+1}}(K).
\end{equation}
  This
means $(A_1,\dots,A_l)\in \bigoplus_{v\in E^0} \M_{k^v_n}(K) $ is sent to
\[\Big (\sum_{j=1}^l n_{j1}A_j,\dots,\sum_{j=1}^l n_{jl}A_j \Big) \in
\bigoplus_{v\in E^0} \M_{k^v_{n+1}}(K),\] where $n_{ji}$ is the number of
edges connecting $v_j$ to $v_i$ and
\[\sum_{j=1}^lk_jA_j=\left(
\begin{matrix}
A_1 &             &             & &\\
       & \ddots   &             &  &\\ 
       &              & A_1            &  & \\
       &              &               & \ddots  &\\
       &         &            &       & A_l\\
       &         &            &       &  & \ddots \\
              &         &            &       &  & & A_l \\    
\end{matrix}
\right)
\]
in which each matrix is repeated $k_j$ times down the leading
diagonal and if $k_j=0$, then $A_j$ is omitted. This shows that $\LL(E)_0$ is an \emph{ultramatricial algebra}, \ie it is isomorphic to the union of an increasing chain of a finite product of matrix algebras over a field $K$. These algebras will be studied in~\S\ref{ultriuy}. \index{ultramatricial algebra}

Writing $\LL(E)_0=\varinjlim_{n} L_{0,n}$, since the Grothendieck group $K_0$ respects the
direct limit, we have \[K_0(\LL(E)_0)\cong
\varinjlim_{n}K_0(L_{0,n}).\] Since $K_0$ of  (Artinian) simple
algebras are $\mathbb Z$, the ring homomorphism \[L_{0,n}\longrightarrow
L_{0,n+1}\] induces the group homomorphism \[\mathbb Z^{E^0} \stackrel{A_E^t}{\longrightarrow}
\mathbb Z^{E^0},\]
where $A_E^t:\mathbb Z^{E^0} \rightarrow \mathbb Z^{E^0}$ is multiplication  from left which is induced by the homomorphism~(\ref{volleyb}). 

For a finite graph $E$ with no sinks, with $n$ vertices and the adjacency matrix $A$, by Theorem~\ref{sthfin}, $K^{\gr}_0(\LL(E))\cong K_0(\LL(E)_0)$. Thus  $K^{\gr}_0(\LL(E))$ is the direct limit  of the ordered direct system 
\begin{equation}\label{thu3}
\mathbb Z^n \stackrel{A^t}{\longrightarrow} \mathbb Z^n \stackrel{A^t}{\longrightarrow}  \mathbb Z^n \stackrel{A^t}{\longrightarrow} \cdots,
\end{equation}
where the ordering in $\mathbb Z^n$ is defined point-wise.

In general,  the direct limit of the system, $\varinjlim_{A} \mathbb Z^n$, where $A\in \M_n(\mathbb Z)$, is an ordered group and  can be described as follows. Consider the pair $(a,k)$, where $a\in \mathbb Z^n$ and $k\in \mathbb N$, and define the equivalence relation $(a,k)\sim (b,k')$ if $A^{k''-k}a=A^{k''-k'}b$ for some $k'' \in \mathbb N$.  Let $[a,k]$ denote the equivalence class of $(a,k)$. Clearly $[A^na,n+k]=[a,k]$. Then it is not difficult to show that the direct limit $\varinjlim_{A} \mathbb Z^n$ is the abelian group consists of equivalent classes $[a,k]$, $a\in \mathbb Z^n$, $k \in \mathbb N$, with addition defined by
\begin{equation}\label{peeme}
[a,k]+[b,k']=[A^{k'}a+A^kb,k+k'].
\end{equation}
The positive cone of this ordered group is the set of elements $[a,k]$, where $a\in {\mathbb Z^+}^n$, $k\in \mathbb N$. 
Moreover, there is automorphism $\delta_A: \varinjlim_{A} \mathbb Z^n \rightarrow \varinjlim_{A} \mathbb Z^n$ defined by $\delta_A([a,k])=[Aa,k]$. 

There is another presentation for  $\varinjlim_{A} \mathbb Z^n$ which is sometimes easier to work with. Consider the set 
\begin{equation}\label{q11}
\Delta_A=\big \{\, v\in  A^n \mathbb  Q^n \mid A^k v \in \mathbb Z^n, \text { for some } k \in \mathbb N\, \big \}.
\end{equation}
 The set $\Delta_A$ forms an ordered abelian group with the usual addition of vectors and  the positive cone  
\begin{equation}\label{q22}
\Delta_A^+=\big \{\, v\in  A^n \mathbb Q^n \mid A^k v\in {\mathbb Z^+}^n, \text { for some } k \in \mathbb N\, \big \}.
\end{equation}
Moreover, there is automorphism $\delta_A:\Delta_A\rightarrow \Delta_A$ defined by $\delta_A(v)=A v$. The map 
\begin{align}\label{kkjjhhga}
\phi:\Delta_A&\rightarrow \varinjlim_{A} \mathbb Z^n\\
v &\mapsto [A^kv,k], \notag
\end{align} 
where $k\in \mathbb N$ such that $A^k v \in \mathbb Z^n$, is an isomorphism which respects the action of $A$ and the ordering, \ie $\phi(\Delta_A^+)= (\varinjlim_{A} \mathbb Z^n)^+$ and $\phi(\delta_A(v))=\delta_A\phi(v)$.

\begin{example}\label{ffexp}
Let $E$ be the following graph. 
\[
\xymatrix{
   \bullet \ar@{.}@(l,d) \ar@(ur,dr)^{y_{1}} \ar@(r,d)^{y_{2}} \ar@(dr,dl)^{y_{3}} 
\ar@(l,u)^{y_{n}}
}\]

The ungraded $K_0$ of $\LL(E)$ was computed in Example~\ref{ffexpnon}.
The graph $E$ has no sinks, and so by (\ref{thu3}),  
\[K_0^{\gr}(\LL(E)) \cong \varinjlim \mathbb Z,\]  
of the inductive system $\mathbb Z
\stackrel{n}{\longrightarrow} \mathbb Z
\stackrel{n}\longrightarrow \mathbb Z
\stackrel{n}\longrightarrow \cdots$. This gives that 
\[K_0^{\gr}(\LL(E)) \cong \mathbb Z[1/n].\]

\end{example}

\begin{example} \label{gfrt}
For the graph 
\begin{equation*}
E:\,\,\,\,\,\,\,\,\,\,\,\,{\def\labelstyle{\displaystyle}
\xymatrix{
  \bullet \ar@(lu,ld) \ar@/^0.9pc/[r] \ar@/^1.4pc/[r]  & \bullet \ar@/^0.9pc/[l]   &   
}}
\end{equation*}
with the adjacency 
$A_E=\left(
\begin{array}{cc}
 1 & 2 \\
 1 & 0
\end{array}
\right),$
the ring of homogeneous element of degree zero,  $\LL(E)_0$, is the direct limit of the system 
\begin{align*} 
K \oplus K
 \stackrel{A_{E}^t}{\longrightarrow}  \M_2(K) &\oplus \M_2(K)
\stackrel{A_{E}^t}\longrightarrow  \M_4(K)\oplus \M_4(K)
\stackrel{A_{E}^t}\longrightarrow \cdots \\
(a,b)\mapsto \left(
\begin{array}{cc}
 a & 0 \\
 0 & b
\end{array}
\right)
&\oplus
\left(
\begin{array}{cc}
 a & 0 \\
 0 & a
\end{array}
\right)
\end{align*} 
So $K^{\gr}_0(\LL(E))$ is the direct limit of the direct system 
\begin{equation*}
\mathbb Z^2 \stackrel{A_E^t}{\longrightarrow} \mathbb Z^2 \stackrel{A_E^t}{\longrightarrow}  \mathbb Z^2 \stackrel{A_E^t}{\longrightarrow} \cdots,
\end{equation*}
Since $\det(A^t_E)=-2$, one can easily calculate that \[K^{\gr}_0(\LL(E))\cong \mathbb Z[1/2]\bigoplus  \mathbb Z[1/2].\] Moreover 
$[\LL(E)] \in K^{\gr}_0(\LL(E))$ is represented by $(1,1)\in  \mathbb Z[1/2]\bigoplus  \mathbb Z[1/2]$. Adopting~(\ref{q11}) for the description of $K^{\gr}_0(\LL(E))$, since the action of $x$ on $K^{\gr}_0(\LL(E))$ represented by action of $A_E^t$ from the left, we have \[x (a,b)=(a+b,2a).\] Moreover, considering 
~(\ref{q22}) for the positive cone,  $\big({A_E^t}\big)^k(a,b)$ is eventually positive, if $v (a,b) >0$, where $v=(2,1)$ is the Perron eigenvector of $A_E$ (see~\cite[Lemma~7.3.8]{lindmarcus}). It follows that 
\[K^{\gr}_0(\LL(E))^+=\Delta_{A_E^t}^+=\big \{(a,b) \in \mathbb Z[1/2]\oplus  \mathbb Z[1/2] \mid 2a+b > 0\big \} \cup \{(0,0)\}.\]
\end{example}

\begin{example}\label{gaul2} \scm[Reduced $K^{\gr}_0$ of strongly graded rings]
\vspace{0.2cm} \index{0-component ring of a graded ring}

When $A$ is a strongly graded ring, the graded $K$-groups coincide with $K$-groups of its 0-component ring (see~(\ref{dade})). However this example shows that this is not the case for the reduced graded Grothendieck groups. 

Let $A$ be the Leavitt algebra generated by $2n$ symbols (which is associated to a graph with one vertex and $n$-loops) (see~\ref{levisuji}). By Theorem~\ref{sthfin}, this is a strongly graded ring. The homomorphism~\ref{mozart1}, \ie 
\begin{align*}
\phi:\mathbb Z[\Gamma]&\longrightarrow K^{\gr}_0(A),\\
\sum_\alpha n_\alpha \alpha &\longmapsto \sum_\alpha n_\alpha [A(\alpha)].
\end{align*}
takes the form
\begin{align*}
\phi:\mathbb Z[x,x^{-1}]&\longrightarrow K^{\gr}_0(A)\cong \Z[1/n],\\
\sum_i n_i x^i &\longmapsto \sum_i n_i [A(i)].
\end{align*}
This shows that $\phi$ is surjective (see~(\ref{roxjhhre})) and thus $\widetilde {K^{\gr}_0}(A)$ is trivial. On the other hand, by Example~\ref{ffexp}, $K_0(A_0)\cong K^{\gr}_0(A)\cong \Z[1/n]$. But 
\begin{align*}
\phi_0:\mathbb Z &\longrightarrow K_0(A_0)\cong \Z[1/n],\\
n &\longmapsto n[A_0].
\end{align*}

This shows that $\widetilde{K_0}(A_0)$ is a nontrivial torsion group $\Z[1/n] / \Z$. Thus \[\widetilde {K^{\gr}_0}(A)\not \cong \widetilde{K_0}(A_0).\] 
\end{example}

\begin{remark}\scm[$K^{\gr}_0$ of Weyl algebras] \index{Weyl algebra}
\vspace{0.2cm}

Let $A=K(x,y)/\langle xy-yx-1\rangle $ be the Weyl algebra, where $K$ is an algebraically closed field of characteristic $0$. By Example~\ref{weyl}, this is a $\mathbb Z$-graded ring. The graded Grothendieck group of this ring is calculated in~\cite{sierra2}. It is shown that $K^{\gr}_0(A)\cong \bigoplus_{\mathbb Z}\mathbb Z$ (\ie a direct sum of a countably many $\mathbb Z$), $\widetilde {K^{\gr}_0}(A)=0$ and  $K_0(A)=0$. 
\end{remark}

\section{$G_0^{\gr}$ of graded rings}\label{gggzero} \index{$G_0^{\gr}$}
Recall that for a $\Gamma$\!-graded ring $A$ with identity, the graded Grothendieck group $K_0^{\gr}$ was defined as the group completion of the monoid (see~(\ref{zhongshan1})),
\begin{equation*}
\mathcal V^{\gr}(A)=\big \{\, [P] \mid  P  \text{ is graded finitely generated projective A-module} \, \big \}.
\end{equation*}
If instead of isomorphism classes of graded finitely generated projective $A$-modules, we consider the isomorphism classes of all graded finitely generated $A$-modules, the group completion of this monoid is denoted by $G_0^{\gr}(A)$. 

If $A$ is graded Noetherian, then the category of graded finitely generated (right) modules over $A$, $\grr A$, is an abelian category (but not necessarily the category $\Pgrp A$). Several of $K$-theory techniques work only over such categories (such as D\'evissage and localisations (\S\ref{klikhf4})). For this reason, it is beneficial to develop the $G_0^{\gr}$-theory. 

For a $\Gamma$\!-graded ring $A$, $G_0^{\gr}(A)$ can equivalently be defined as the free abelian group generated by isomorphism classes $[M]$, where $M$ is a graded finitely generated right $A$-module, subject to the relation
 $[M]=[K]+[N]$ if there is an exact sequence 
  \[0\longrightarrow K \longrightarrow M \longrightarrow N \longrightarrow 0.\] This definition will be extended to exact categories in~\S\ref{hhyyhhyy6}.

\subsection{$G_0^{\gr}$ of graded Artinian rings}

The theory of composition series for modules is used to calculate the $G_0$ group of  Artinian rings. A similar theory in the graded setting is valid and we briefly recall the concepts.  \index{composition series} \index{graded composition series}

Let $M$ be a nonzero graded right $A$-module. A finite chain of graded submodules of $M$
\[M=M_0\supset M_1 \supset \dots \supset M_n =0 \]
is called \emph{a graded composition series of length $n$} for $M$ if $M_i/M_{i+1}$ is a graded simple $A$-module, where $0\leq i\leq n-1$. The graded simple modules $M_i/M_{i+1}$ are called \emph{graded composition factors} of the series. \index{graded composition factors} Two graded composition series 
\begin{align*}
M&=M_0\supset M_1 \supset \dots \supset M_n =0, \\
M&=N_0\supset N_1 \supset \dots \supset N_p =0 
\end{align*}
are called \emph{equivalent} if $n=p$ and for a suitable permutation $\sigma\in S_n$ 
\[M_i/M_{i+1} \cong N_{\sigma(i)}/N_{\sigma(i)+1},\]
\ie there is a one to one correspondence between composition factors of these two chains such that the corresponding factors are isomorphic as graded $A$-modules. 

We also need the~\emph{graded Jordan-H\"older theorem} which is valid with a similar proof as in the ungraded case. Namely, if a graded module $M$ has a graded composition series, then all graded composition series of $M$ are equivalent. \index{graded Jordan-H\"older theorem}
\index{Jordan-H\"older theorem}

\index{$S_n$, symmetric group on $n$ objects}
\index{permutation group}
\index{symmetric group}

If a module is graded Artinian and Noetherian, then it has a graded composition series. In particular,  if $A$ is a graded right Artinian ring, then any graded finitely generated right $A$-module has a graded composition series. The proofs of these statements are similar to the ungraded case (see for example~\cite[\S11]{anderson}). 

Let $A$ be a $\Gamma$\!-graded right Artinian ring and let $V_1$ be a graded right simple $A$-module. 
Suppose $V_2$ is a graded right simple module which is not graded isomorphic to $V_1(\gamma)$ for any $\gamma \in \Gamma$. Continuing in this fashion, using the graded Jordan-H\"older theorem one can prove that there are a finite number of graded simple modules 
$\{V_1,\dots, V_s \}$ such that any graded simple module is isomorphic to some shift of one of $V_i$. We call $\{V_1,\dots, V_s \}$ a \emph{basic set of graded simple right $A$-modules}. 
\index{basic set of graded simple modules} 

\begin{theorem}\label{artgth} \index{graded Artinian ring}
Let $A$ be a graded Artinian ring and $\{V_1,\dots, V_s \}$ a basic set of graded simple right $A$-modules. Then 
\begin{equation}\label{rainevere}
G^{\gr}_0(A) \cong \bigoplus_{i=1}^s \frac{\big \langle \, V_i(\gamma) \mid \gamma \in \Gamma \, \big \rangle}{
\big \langle \, V_i(\alpha) -V_i(\beta) \mid V_i(\alpha)\cong_{\gr} V_i(\beta), \alpha,\beta \in \Gamma\,  \big \rangle} ,
\end{equation}
as $\mathbb Z [\Gamma]$-modules. Here $\langle \, V_i(\gamma) \mid \gamma \in \Gamma \, \rangle$ is the free abelian group on generators $V_i(\gamma)$ and the action of $\Gamma$ on the generators defined as $\alpha.V_i(\gamma)=V_i(\alpha+\gamma)$.
\end{theorem}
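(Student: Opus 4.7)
The plan is to construct mutually inverse $\mathbb Z[\Gamma]$-module homomorphisms between $G_0^{\gr}(A)$ and the right-hand side of~\eqref{rainevere}, using graded composition series and the graded Jordan--H\"older theorem as the main tool. Since $A$ is graded Artinian, every graded finitely generated right $A$-module $M$ is both graded Artinian and graded Noetherian, so it admits a graded composition series
\[ M = M_0 \supset M_1 \supset \cdots \supset M_n = 0,\]
in which each factor $M_k/M_{k+1}$ is graded simple and thus graded isomorphic to some shift $V_{i_k}(\gamma_k)$ of a module in the basic set $\{V_1,\dots,V_s\}$.

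First I would define a map $\phi : G_0^{\gr}(A) \to \bigoplus_{i=1}^s H_i$, where $H_i$ denotes the $i$-th summand on the right-hand side, by setting
\[ \phi([M]) \;=\; \sum_{k=0}^{n-1} \bigl[\,M_k/M_{k+1}\,\bigr],\]
after identifying each graded simple factor with the appropriate class $[V_{i_k}(\gamma_k)]$ in $H_{i_k}$. The graded Jordan--H\"older theorem guarantees that any two graded composition series of $M$ have equivalent factors up to permutation and graded isomorphism, so $\phi([M])$ depends only on the graded isomorphism class of $M$ (and the quotient in the definition of $H_i$ absorbs all ambiguities from graded isomorphisms between different shifts $V_i(\alpha) \cong_{\gr} V_i(\beta)$). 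To see that $\phi$ respects the defining relations $[M] = [K]+[N]$ coming from a short exact sequence $0 \to K \to M \to N \to 0$ of graded finitely generated $A$-modules, I would pick a graded composition series of $K$ and lift a graded composition series of $N \cong M/K$ to a chain between $K$ and $M$; splicing these gives a graded composition series of $M$ whose factors are the disjoint union of those of $K$ and $N$. Thus $\phi$ is a well-defined group homomorphism, and compatibility with the shift functor $\mathcal T_\alpha$ (which sends a composition series of $M$ to one of $M(\alpha)$ with each factor $V_{i_k}(\gamma_k)$ replaced by $V_{i_k}(\alpha+\gamma_k)$) makes $\phi$ a $\mathbb Z[\Gamma]$-module homomorphism.

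Conversely, I would define $\psi: \bigoplus_{i=1}^s H_i \to G_0^{\gr}(A)$ on generators by $V_i(\gamma) \mapsto [V_i(\gamma)]$. This is well-defined on the free abelian group on symbols $V_i(\gamma)$, and one sees immediately from the definition of $G_0^{\gr}$ that the defining relations $V_i(\alpha) - V_i(\beta)$ (for $V_i(\alpha) \cong_{\gr} V_i(\beta)$) map to zero; moreover $\psi$ is $\mathbb Z[\Gamma]$-linear. The composition $\phi\psi$ is the identity on generators, essentially by inspection: a graded simple module has a trivial composition series of length one. The composition $\psi\phi$ is the identity on any class $[M]$ since $[M] = \sum_k [M_k/M_{k+1}]$ in $G_0^{\gr}(A)$ via the exact sequences $0 \to M_{k+1} \to M_k \to M_k/M_{k+1} \to 0$ combined by induction on $n$.

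The main obstacle I anticipate is the bookkeeping needed to verify that $\phi$ respects short exact sequences --- i.e.\ that the multiset of graded composition factors of the middle term equals the union of those of the sub and quotient --- and the accompanying verification that these composition factors land in the right summand and the right equivalence class of $H_i$, independently of the choice of representative $V_j$ in the basic set. Both points reduce to straightforward applications of the graded Jordan--H\"older theorem stated just before the theorem, once one notes that each graded simple module is graded isomorphic to a uniquely determined $V_i$ up to shift.
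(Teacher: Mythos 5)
Your proposal is correct and follows essentially the same route as the paper's proof: graded composition series exist because $A$ is graded Artinian, the graded Jordan--H\"older theorem gives well-definedness of the composition-factor map, splicing a lifted composition series of $N\cong M/K$ onto one of $K$ handles the relations from short exact sequences, and shifting composition series gives $\mathbb Z[\Gamma]$-linearity. The only difference is that you spell out the inverse $\psi$ and verify both composites explicitly, whereas the paper compresses this into the remark that the map ``is not difficult to observe'' to be an isomorphism --- your version is simply a more complete write-up of the same argument.
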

\begin{proof}
Let $M$ be a graded finitely generated $A$-module with a graded composition series 
\begin{equation}\label{comopigt5}
M=M_0\supset M_1 \supset \dots \supset M_n =0, 
\end{equation}
and composition factors $M_i/M_{i+1}$. From the exact sequences 
\[0 \longrightarrow M_{i+1} \longrightarrow M_i \longrightarrow M_i/M_{i+1} \longrightarrow 0,\] we get
\[[M]=[M]-0=\sum_{i=0}^{n-1}\big( [M_i]-[M_{i+1}] \big)=\sum_{i=0}^{n-1}[M_i/M_{i+1}].\]
Since $M_i/M_{i+1}$, $0\leq i \leq n-1$, are graded simple, we can write 
\[[M]=\sum_{t=1}^s\sum_{\gamma \in \Gamma}r_{t,\gamma}(M)[V_t(\gamma)] \in G_0^{\gr}(A),\]
where $r_{t,\gamma}(M)$ is the multiplicity of $V_t(\gamma)$ in the composite factors of $M$ (which are all but a finite number are nonzero). Since any other graded composition series is equivalent to the above composition series, the graded Jordan-H\"older theorem guarantees that $r_{t,\gamma}(M)$ are independent of the choice of the graded composition series. This defines a homomorphism from the free abelian group generated by isomorphism classes of graded finitely generated right $A$-modules to the right hand side of (\ref{rainevere}).

Consider an exact sequence of graded finitely generated modules \[0 \rightarrow K \stackrel{\phi}{\rightarrow} M \stackrel{\psi}{\rightarrow} N \rightarrow 0,\] and the graded composition series 
\begin{align*}
K=K_0\supset K_1 \supset \dots \supset K_n =0, \\
N=N_0\supset N_1 \supset \dots \supset N_p =0. 
\end{align*}
Then one obtains a graded composition series 
\[M=N_0'\supset N_1' \supset \dots \supset N_p' =K_0\supset K_1 \supset \dots \supset K_n =0,\]
 where $N_i'=\psi^{-1}(N_i)$. This shows that the homomorphism above extends to the well-defined homomorphism 
\[f:G^{\gr}_0(A) \longrightarrow \bigoplus_{i=1}^s \frac{\langle \, V_i(\gamma) \mid \gamma \in \Gamma \, \rangle}{
\langle \, V_i(\alpha) -V_i(\beta) \mid V_i(\alpha)\cong_{\gr} V_i(\beta)\,  \rangle}.\]
The fact that $f$ is group isomorphism is not difficult to observe. 

To show that $f$ is a $\mathbb Z[\Gamma]$-module, note that if~(\ref{comopigt5}) is a composition series for $M$, then clearly 
\[M(\alpha)=M_0(\alpha)\supset M_1(\alpha) \supset \dots \supset M_n(\alpha) =0,\] is a composition series for $M(\alpha)$, where $\alpha\in \Gamma$. Since \[M_i(\alpha)/M_{i+1}(\alpha)=M_i/M_{i+1}(\alpha),\] it follows that 
\begin{multline*}
f(\alpha.[M])=f([M(\alpha)])=\sum_{t=1}^s\sum_{\gamma \in \Gamma}r_{t,\gamma}(M)[V_t(\gamma+\alpha)]=\\
\alpha. \sum_{t=1}^s\sum_{\gamma \in \Gamma}r_{t,\gamma}(M)[V_t(\gamma)]=\alpha f([M]).
\end{multline*}

Thus $f$ is $\mathbb Z[\Gamma]$-module as well. 
\end{proof}

We obtain the graded Grothendieck group of a graded division ring (Proposition~\ref{k0grof}) as a consequence of Theorem~\ref{artgth}.

\begin{corollary}\label{ooppoo}
Let $A$ be a $\Gamma$\!-graded division ring with the support $\Gamma_A$. Then 
\[K_0^{\gr}(A) \cong \mathbb Z[\Gamma /\Gamma_A].\]
\end{corollary}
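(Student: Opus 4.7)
The plan is to deduce the corollary directly from Theorem~\ref{artgth} together with Proposition~\ref{gradedfree}. First I would observe that a graded division ring $A$ is graded Artinian: any nonzero graded right ideal contains a homogeneous element which is invertible, so the only graded right ideals are $0$ and $A$. Hence Theorem~\ref{artgth} applies. Next, by Proposition~\ref{gradedfree} every graded finitely generated $A$-module is graded free and therefore graded projective, so $\Pgrp A = \grr A$ and consequently $K_0^{\gr}(A)=G_0^{\gr}(A)$.

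The second step is to identify a basic set of graded simple right $A$-modules. Since every graded finitely generated right $A$-module is graded free, the graded simple right $A$-modules are precisely the $1$-dimensional graded free modules, i.e.\ the shifts $A(\gamma)$ for $\gamma \in \Gamma$. By Corollary~\ref{rndcongcori}, $A(\alpha)\cong_{\gr} A(\beta)$ as right $A$-modules if and only if $A(\alpha-\beta)\cong_{\gr} A$, if and only if $\alpha-\beta \in \Gamma_A^*=\Gamma_A$ (the last equality holds because $A$ is a graded division ring, so every nonzero homogeneous element is invertible). Thus every graded simple right $A$-module is a shift of $A$ itself, and we may take the basic set to be $\{V_1\}=\{A\}$ with $s=1$.

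Plugging this into Theorem~\ref{artgth} gives
\[
K_0^{\gr}(A)=G_0^{\gr}(A)\cong \frac{\big\langle\, A(\gamma)\mid \gamma\in\Gamma\,\big\rangle}{\big\langle\, A(\alpha)-A(\beta)\mid \alpha-\beta\in\Gamma_A\,\big\rangle}
\]
as $\mathbb Z[\Gamma]$-modules. The numerator is the free abelian group on symbols indexed by $\Gamma$ with the regular $\Gamma$-action $\alpha\cdot A(\gamma)=A(\alpha+\gamma)$, which is canonically identified with $\mathbb Z[\Gamma]$. Under this identification the denominator becomes the $\mathbb Z[\Gamma]$-submodule generated by $\{\,\gamma-0 \mid \gamma\in\Gamma_A\,\}$, so the quotient is $\mathbb Z[\Gamma/\Gamma_A]$ as a $\mathbb Z[\Gamma]$-module, which is the desired isomorphism.

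There is no serious obstacle here; the only thing to be careful about is the bookkeeping of the $\mathbb Z[\Gamma]$-action when passing through the quotient to confirm that the identification with $\mathbb Z[\Gamma/\Gamma_A]$ is $\mathbb Z[\Gamma]$-linear (rather than merely an isomorphism of abelian groups). It is also worth double-checking that the generator $[A]\in K_0^{\gr}(A)$ maps to the distinguished element $\Gamma_A+0 \in \mathbb Z[\Gamma/\Gamma_A]$, recovering the explicit formula $[A^n(\delta_1,\dots,\delta_n)]\mapsto \sum_i(\Gamma_A+\delta_i)$ from Proposition~\ref{k0grof}.
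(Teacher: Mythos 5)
Your proposal is correct and follows essentially the same route as the paper's own proof: graded Artinian via invertibility of homogeneous elements, $K_0^{\gr}(A)=G_0^{\gr}(A)$ by Proposition~\ref{gradedfree}, the presentation from Theorem~\ref{artgth} with basic set $\{A\}$, and Corollary~\ref{rndcongcori} to identify when $A(\alpha)\cong_{\gr}A(\beta)$. The only difference is that you spell out the final identification of the quotient with $\mathbb Z[\Gamma/\Gamma_A]$ and its $\mathbb Z[\Gamma]$-linearity, which the paper leaves as ``easy to show.''
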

\begin{proof}
First note that $A$ is a graded left and right Artinian ring with $\{A\}$ as a basic set of graded simple modules.  By Proposition~\ref{gradedfree}, $G^{\gr}_0(A)=K^{\gr}_0(A)$. Thus by Theorem~\ref{artgth}
\begin{equation}\label{jhjhuiop}
K^{\gr}_0(A) \cong \frac{\langle \, A(\gamma) \mid \gamma \in \Gamma \, \rangle}{
\langle \, A(\alpha) -A(\beta) \mid A(\alpha)\cong_{\gr} A(\beta)\,  \rangle}. 
\end{equation}
By Corollary~\ref{rndcongcori} $A(\alpha)\cong_{\gr} A(\beta)$ if and only if $\alpha - \beta \in \Gamma_A$. It is now easy to show that~(\ref{jhjhuiop}) reduces to $K_0^{\gr}(A) \cong \mathbb Z[\Gamma /\Gamma_A].$
\end{proof}

\section{Symbolic dynamics and $K_0^{\gr}$}\label{symbolii}

One of the central objects in the theory of symbolic dynamics is a \emph{shift of finite type} (\ie a topological Markov chain). \index{shift of finite type} \index{topological Markov chain}
Every finite directed graph $E$ with no sinks and sources gives rise to a shift of finite type $X_E$ by considering the set of bi-infinite paths and the natural shift of the paths to the left. This is called an \emph{edge shift}. \index{edge shift} Conversely any shift of finite type is conjugate to an edge shift (for a comprehensive introduction to symbolic dynamics see~\cite{lindmarcus}). Several invariant have been proposed in order to classify shifts of finite type, among them Krieger's dimension group.  In this section we see that Krieger's invariant can be expressible as the graded Grothendieck group of a Leavitt path algebra. 

We briefly recall the objects of our interest. Let $\mathcal A$ be a finite alphabet (\ie a finite set). A \emph{full shift space} is defined as \index{full shift space}
\begin{equation*}
\mathcal A^{\mathbb Z} := \big \{ \, (a_i)_{i\in \mathbb Z} \mid a_i \in \mathcal A \, \big \}, 
\end{equation*}
and a \emph{shift map} \index{shift map} $\sigma :  \mathcal A^{\mathbb Z} \rightarrow \mathcal A^{\mathbb Z}$ is defined as 
\begin{equation*}
\sigma \big ((a_i)_{i\in \mathbb Z}\big)=(a_{i+1})_{i\in \mathbb Z}.
\end{equation*}
Moreover, a \emph{subshift} \index{subshift} $X\subseteq \mathcal A^{\mathbb Z}$ is a closed $\sigma$-invariant subspace of $\mathcal A^{\mathbb Z}$. 

Given a finite graph $E$ (see~\S\ref{paohdme3} for terminologies related to graphs), a \emph{subshift of finite type associated to $E$} is defined as 
\index{subshift of finite type associated to a graph} 
\begin{equation*}
X_{E}:=\big \{ \, (e_i)_{i\in \mathbb Z} \in (E^1)^{\mathbb Z} \mid r(e_i)=s(e_{i+1}) \, \big \}. 
\end{equation*}

We say $X_E$ is essential if the graph $E$ has no sinks and sources. Moreover, $X_E$ is called \emph{irreducible} \index{irreducible subshift} if the adjacency matrix $A_E$ is irreducible.
For a square nonnegative integer matrix $A$, we denote by $X_A$ the subshift of finite type associated to the graph with the adjacency matrix $A$.  
Finally, two shifts of finite type $X_A$ and $X_B$ are called \emph{conjugate} \index{conjugate of subshifts} (or \emph{topologically conjugate of subshifts})  \index{topologically conjugate of subshifts} and denoted by $X_A\cong X_B$, if there exists a homeomorphism 
$h:X_A\rightarrow X_B$ such that $\sigma_B\, h=h\, \sigma_A$.

The notion of the shift equivalence for matrices was introduced by Williams \cite{williams} (see also~\cite[\S7]{lindmarcus}) in an attempt to provide a computable machinery for determining the conjugacy between two shifts of finite type. Two square nonnegative integer matrices $A$ and $B$ are called \emph{elementary shift equivalent}, \index{elementary shift equivalent} and denoted by $A\sim_{ES} B$, if there are nonnegative matrices $R$ and $S$ such that $A=RS$ and $B=SR$. 
The equivalence relation $\sim_S$  on square nonnegative integer matrices generated by elementary shift equivalence is called \emph{strong shift equivalence}. \index{strong shift equivalence} 
\begin{example}
Let  
\(A={\left(
\begin{array}{cc}
 1 & 2 \\
 1 & 0
\end{array}
\right)}\) 
and 
\(A^T={\left(
\begin{array}{cc}
 1 & 1 \\
 2 & 0
\end{array}
\right)}\). We show that $A$ is strongly shift equivalent to $A^T$. We have  
\begin{align*}
A=\left(
\begin{array}{ccc}
 1 & 1 & 0 \\
 0 & 0 & 1
\end{array}
\right)
&
\left(
\begin{array}{cc}
 1 & 1 \\
 0 & 1 \\
 1 & 0
\end{array}
\right)\\
\left(
\begin{array}{cc}
 1 & 1 \\
 0 & 1 \\
 1 & 0
\end{array}
\right)
&
\left(
\begin{array}{ccc}
 1 & 1 & 0 \\
 0 & 0 & 1
\end{array}
\right)=
\left(
\begin{array}{ccc}
 1 & 1 & 1 \\
 0 & 0 & 1 \\
 1 & 1 & 0
\end{array}
\right)
=E_1
\end{align*}
Moreover, we have 
\begin{align*}
E_1=
\left(
\begin{array}{ccc}
 1 & 1 & 1 \\
 0 & 0 & 1 \\
 1 & 1 & 0
\end{array}
\right)=
\left(
\begin{array}{ccc}
 0 & 1 & 1 \\
 1 & 0 & 0 \\
 0 & 0 & 1
\end{array}
\right)
&
\left(
\begin{array}{ccc}
 0 & 0 & 1 \\
 0 & 0 & 1 \\
 1 & 1 & 0
\end{array}
\right)\\
\left(
\begin{array}{ccc}
 0 & 0 & 1 \\
 0 & 0 & 1 \\
 1 & 1 & 0
\end{array}
\right)
&
\left(
\begin{array}{ccc}
 0 & 1 & 1 \\
 1 & 0 & 0 \\
 0 & 0 & 1
\end{array}
\right)=\left(
\begin{array}{ccc}
 0 & 0 & 1 \\
 0 & 0 & 1 \\
 1 & 1 & 1
\end{array}
\right)=E_2
\end{align*}
Finally, 
\begin{align*}
E_2=
\left(
\begin{array}{ccc}
 0 & 0 & 1 \\
 0 & 0 & 1 \\
 1 & 1 & 1
\end{array}
\right)=
\left(
\begin{array}{cc}
 1 & 0 \\
 1 & 0 \\
 1 & 1
\end{array}
\right)  
&
\left(
\begin{array}{ccc}
 0 & 0 & 1 \\
 1 & 1 & 0
\end{array}
\right)\\
\left(
\begin{array}{ccc}
 0 & 0 & 1 \\
 1 & 1 & 0
\end{array}
\right)
&
\left(
\begin{array}{cc}
 1 & 0 \\
 1 & 0 \\
 1 & 1
\end{array}
\right) =\left(
\begin{array}{cc}
 1 & 1 \\
 2 & 0
\end{array}
\right)=A^T
\end{align*}
This shows that \[A\sim_{ES} E_1 \sim_{ES} E_2 \sim_{ES} A^T.\]
Thus $A\sim_S A^T$. 
\end{example}

Besides elementary and strongly shift equivalence, there is a weaker notion, called shift equivalence defined as follows. 
The nonnegative integer square matrices $A$ and $B$ are called \emph{shift equivalent} \index{shift equivalence} if there are nonnegative matrices $R$ and $S$ such that $A^l=RS$ and $B^l=SR$, for some $l\in \mathbb N$, and 
$AR=RB$ and $SA=BS$. Clearly the strongly shift equivalence implies the shift equivalence, but the converse, an open question for almost 20 years, does not hold~\cite{lindmarcus}.  

Before stating Williams' main theorem, we need to recall the concept of out-splitting and in-splitting of a graph. 

\begin{definition} \index{out-spiltting of a graph}
Let $E=(E^0,E^1,r,s)$ be a finite graph. For each $v \in E^0$
which is not a sink, partition $s^{-1}(v)$ into disjoint nonempty
subsets $\mathcal{E}_v^1, \ldots , \mathcal{E}_v^{m(v)}$, where
$m(v) \geq 1$. (If $v$ is a sink then put $m(v)=0$.) Let
$\mathcal{P}$ denote the resulting partition of $E^1$. We form the
\emph{out-split graph} $E_s(\mathcal{P})$ \emph{from} $E$ \emph{using}
$\mathcal{P}$ as follows: Let
\begin{eqnarray*}
E_s(\mathcal{P})^0 &=& \big \{\, v^i \mid v \in E^0, 1 \leq i \leq m(v) \, \big \}
\cup \{\, v \mid m(v) = 0\,\},\\
E_s(\mathcal{P})^1 &=& \big \{\, e^j \mid e \in
E^1, 1 \leq j \leq m(r(e)) \, \big \} \cup \{\, e \mid m(r(e)) = 0\, \},
\end{eqnarray*}
and define $r_{E_s(\mathcal{P})}$, $s_{E_s(\mathcal{P})}: E_s(\mathcal{P})^1 \rightarrow E_s(\mathcal{P})^0$
for $e \in \mathcal{E}_{s(e)}^i$ and $1\leq j \leq m(r(e))$, by
\begin{eqnarray*}
s_{E_s(\mathcal{P})}(e^j) = s(e)^i &\text{ and }& s_{E_s(\mathcal{P})}(e) = s(e)^i,\\
r_{E_s(\mathcal{P})}(e^j) = r(e)^j &\text{ and }& r_{E_s(\mathcal{P})}(e) = r(e).\\
\end{eqnarray*}
\end{definition}

\begin{definition} \index{in-spiltting of a graph}
Let $E=(E_0,E_1,r,s)$ be a finite graph. For each $v \in E^0$ which is not a source, 
partition the set $r^{-1}(v)$ into disjoint nonempty subsets $\mathcal{E}^v_1, \ldots , \mathcal{E}^v_{m(v)}$,
 $m(v) \geq 1$. (If $v$ is a source then put $m(v)=0$.)
Let
$\mathcal{P}$ denote the resulting partition of $E^1$. We form the
\emph{in-split graph} $E_r(\mathcal{P})$ \emph{from} $E$ \emph{using}
$\mathcal{P}$ as follows: Let
\begin{eqnarray*}
E_r(\mathcal{P})^0 &=& \big \{ \, v_i \mid v \in E^0, 1 \leq i \leq m(v)\, \big  \}
\cup \{\, v \mid m(v) = 0\, \},\\
E_r(\mathcal{P})^1 &=& \{\, e_j \mid e \in
E^1, 1 \leq j \leq m(s(e)) \, \} \cup \{\, e \mid m(s(e)) = 0\, \},
\end{eqnarray*}
and define $r_{E_r(\mathcal{P})}$, $s_{E_r(\mathcal{P})}: E_r(\mathcal{P})^1 \rightarrow E_r(\mathcal{P})^0$
for $e \in \mathcal{E}^{r(e)}_i$ and $1\leq j \leq m(s(e))$, by 
\begin{eqnarray*}
s_{E_r(\mathcal{P})}(e_j) = s(e)_j &\text{ and }& s_{E_r(\mathcal{P})}(e) = s(e),\\
r_{E_r(\mathcal{P})}(e_j) = r(e)_i &\text{ and }& r_{E_r(\mathcal{P})}(e) = r(e)_i.\\
\end{eqnarray*}
\end{definition}

\begin{example}\scm[Out-splitting of a graph]\label{outsplitexample}
\vspace{0.2cm}

Consider the graph
$$
{E}:\quad {
\def\labelstyle{\displaystyle}
\xymatrix{ \bullet \uloopr{}\dloopr{} & \bullet \ar[l]}}.
$$
Let $\mathcal{P}$ be the partition of the edges of $E$ containing only one edge in each partition.
Then the out-split graph of $E$ using $\mathcal{P}$ is
$${
\def\labelstyle{\displaystyle}E_s(\mathcal P):\quad \xymatrix{ & \bullet
\ar[ddl] \ar[ddr]& \\ & & \\
{\bullet} \ar@(ul,dl) \ar@/^1pc/ [rr] & & {\bullet} \ar@(ur,dr) \ar@/^1pc/ [ll]  }}$$
\vspace{.2truecm}
\end{example}

\begin{theorem}[{\sc Williams}~\cite{williams,lindmarcus}]
Let $A$ and $B$ be two square nonnegative integer matrices and let $E$ and $F$ be two essential graphs.

\begin{enumerate}[\upshape(1)]

\item  $X_A$ is conjugate to $X_B$ if and only if $A$ is strongly shift equivalent to $B$. 

\item  $X_E$ is conjugate to $X_F$ if and only if $E$ can be obtained from $F$ by a sequence of in/out-splittings and their converses. 

\end{enumerate}

\end{theorem}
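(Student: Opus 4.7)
The plan is to prove both parts simultaneously, exploiting the precise correspondence between graph splittings and elementary shift equivalences of adjacency matrices. I would first establish the ``easy'' directions (splittings yield both conjugacies and elementary shift equivalences), then attack the converse via the Decomposition Theorem.

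First I would check that an out-splitting $E_s(\mathcal{P})$ induces a factorisation $A_E = RS$ with $A_{E_s(\mathcal{P})} = SR$, where $R$ is the $E^0 \times \mathcal{P}$ matrix with $R_{v,\mathcal{E}_v^i} = 1$ and $S$ is the $\mathcal{P} \times E^0$ matrix recording ranges of edges within each partition class; a symmetric factorisation handles in-splittings. Conversely, given an elementary shift equivalence $A = RS$, $B = SR$ with $R,S$ nonnegative integer, I would form the bipartite auxiliary graph with adjacency matrix $\left(\begin{smallmatrix} 0 & R \\ S & 0 \end{smallmatrix}\right)$, whose square is block diagonal with blocks $A$ and $B$, and exhibit this auxiliary graph (after passing to its ``second-power'' representation) as a common splitting of both $E_A$ and $E_B$. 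This matches each elementary shift equivalence with a pair of splittings.

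Next I would construct explicit conjugacies from splittings. For an out-splitting, define $\phi\colon X_{E_s(\mathcal{P})} \to X_E$ by the $1$-block code $e^j \mapsto e$; its inverse is the $2$-block code on $X_E$ sending the pair $(e_i, e_{i+1})$ to $e_i^{j}$, where $j$ is determined by the partition class containing $e_{i+1}$. Using the definition of $E_s(\mathcal{P})$, a direct check shows these are shift-commuting mutual inverses and hence a conjugacy. The in-splitting case is symmetric, using a $2$-block code that looks one step into the past. Combined with the previous paragraph, this gives the ``if'' direction of both (1) and (2).

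The main obstacle is the converse direction, which requires the Decomposition Theorem: every topological conjugacy between edge shifts factors as a finite sequence of splittings and amalgamations. By the Curtis--Hedlund--Lyndon theorem, any conjugacy $\varphi\colon X_E \to X_F$ is a sliding block code of some memory $m$ and anticipation $n$. I would induct on $m+n$: the base case $m=n=0$ is a bijective relabelling of edges, hence a graph isomorphism (a degenerate composition of splittings). For the inductive step, I would show that a conjugacy with anticipation $n>0$ factors through a single out-splitting whose partition of edges is dictated by the last coordinate of the window; the residual map is then a sliding block code of strictly smaller window. The symmetric argument handles memory $m>0$ via an in-splitting. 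Iterating reduces any conjugacy to the base case and exhibits it as a composition of splittings and their inverses, proving (2). Finally, combining (2) with the first paragraph gives (1), since strong shift equivalence is by definition a chain of elementary shift equivalences and each such step corresponds via the first paragraph (together with the auxiliary-graph construction) to a chain of splittings/amalgamations of the underlying essential graphs. The delicate bookkeeping in the Decomposition Theorem, especially verifying that the partition extracted from the window is compatible with the shift structure on both sides, is the technical heart of the proof.
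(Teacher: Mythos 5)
Note first that the paper does not prove this theorem at all: it is quoted from Williams and Lind--Marcus, so your proposal has to be measured against the standard proof in those sources, whose overall architecture (splittings give block-code conjugacies and elementary shift equivalences; the converse rests on the Decomposition Theorem) you have correctly reproduced.

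There is, however, a genuine gap in your inductive scheme for the Decomposition Theorem, and it sits exactly at the ``technical heart'' you defer. You propose to induct on $m+n$, the memory plus anticipation of the conjugacy $\varphi$ alone, with base case the claim that a conjugacy with $m=n=0$ is a bijective relabelling of edges, hence a graph isomorphism. This base case is false: the out-amalgamation code $X_{E_s(\mathcal{P})} \to X_E$, $e^j \mapsto e$, which you yourself construct two paragraphs earlier, is a $1$-block conjugacy (memory and anticipation both zero) whose symbol map is many-to-one; it is not a graph isomorphism, and its inverse has anticipation $1$. The window size of $\varphi$ alone simply cannot detect how far the process is from terminating, so the induction as stated cannot close. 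The correct bookkeeping, as in Williams and in Lind--Marcus, is two-sided: one first recodes $\varphi$ to a $1$-block code by passing to a higher-block presentation (which is itself a composition of complete out-splittings, so this recoding is legitimate within the theorem), and then inducts on the anticipation of $\varphi^{-1}$, using the key lemma that a $1$-block conjugacy whose inverse has memory zero and anticipation $n \geq 1$ factors as an out-amalgamation followed by a $1$-block conjugacy whose inverse has anticipation $n-1$; the true base case is a conjugacy that is $1$-block \emph{in both directions}, and only such a map is a relabelling. A secondary, milder inaccuracy: your claim that each elementary shift equivalence $A=RS$, $B=SR$ is matched with ``a pair of splittings'' via the bipartite graph overstates what that construction gives. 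The bipartite graph with adjacency $\left(\begin{smallmatrix} 0 & R \\ S & 0 \end{smallmatrix}\right)$ directly yields a conjugacy $X_A \cong X_B$ (by reading alternate coordinate pairs), but its square restricted to a vertex class \emph{is} $E_A$ rather than a splitting of it; realising a single elementary equivalence as splittings may require a longer chain, and in the standard proof one never needs this, since the implication from elementary shift equivalence to conjugacy is proved directly and the passage back through splittings is handled wholesale by the Decomposition Theorem.
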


Krieger in~\cite{krieger} defined an invariant for classifying the irreducible shifts of finite type up to shift equivalence. Later Wagoner systematically used this invariant to relate it with higher $K$-groups. Surprisingly,  Krieger's dimension group  and  Wagoner's dimension module in symbolic dynamics turn out to be expressible as the graded Grothendieck groups of Leavitt path algebras. Here, we briefly describe this relation.

In general, a nonnegative integral $n\times n$ matrix $A$ gives rise to a stationary system. This in turn gives a direct system of order free abelian groups with $A$ acting as an order preserving group homomorphism as follows
\[\mathbb Z^n \stackrel{A}{\longrightarrow} \mathbb Z^n \stackrel{A}{\longrightarrow}  \mathbb Z^n \stackrel{A}{\longrightarrow} \cdots,
\]
where the ordering in $\mathbb Z^n$ is defined point-wise (\ie the positive cone is $\mathbb N^n$). The direct limit of this system, $\Delta_A:= \varinjlim_{A} \mathbb Z^n$, (i.e, the $K_0$ of the stationary system,) along with its positive cone, $\Delta^+$, and the automorphism which induced by $A$ on the direct limit, 
$\delta_A:\Delta_A \rightarrow \Delta_A$, is the invariant considered by Krieger, now known as \emph{Krieger's dimension group}. \index{Krieger's dimension group} Following~\cite{lindmarcus}, we denote this triple by $(\Delta_A, \Delta_A^+, \delta_A)$. 

The following theorem was proved by Krieger (\cite[Theorem~4.2]{krieger}, and~\cite[Theorem~7.5.8]{lindmarcus}, see also~\cite[\S7.5]{lindmarcus} for a detailed algebraic treatment). 

\begin{theorem}
Let $A$ and $B$ be two square nonnegative integer matrices. Then $A$ and $B$ are shift equivalent if and only if 
\[(\Delta_A, \Delta_A^+, \delta_A) \cong (\Delta_B, \Delta_B^+, \delta_B).\]
\end{theorem}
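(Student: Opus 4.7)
The plan is to prove both implications by exploiting the two equivalent descriptions of $\Delta_A$ recorded in (\ref{peeme})--(\ref{kkjjhhga}): the direct-limit presentation with representatives $[a,k]$ for $a \in \mathbb{Z}^n, k \in \mathbb{N}$, and the ``stabilising vectors'' presentation as a subgroup of $A^n \mathbb{Q}^n$. In both, $\delta_A$ is left-multiplication by $A$, and $\Delta_A^+$ consists of classes representable by a nonnegative integer vector at some stage of the direct limit.

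For the forward direction, assume $A$ and $B$ are shift equivalent, witnessed by nonnegative integer matrices $R, S$ and $l \in \mathbb{N}$ with $A^l = RS$, $B^l = SR$, $AR = RB$, and $SA = BS$. Define $\bar R : \Delta_B \to \Delta_A$ by $\bar R[b,k] = [Rb, k]$ and $\bar S : \Delta_A \to \Delta_B$ by $\bar S[a,k] = [Sa, k]$. The intertwining relations guarantee well-definedness on equivalence classes, while nonnegativity of $R$ and $S$ sends positive cones into positive cones. Direct computation gives
\[\bar R \bar S[a, k] = [RSa, k] = [A^l a, k] = \delta_A^l[a, k], \qquad \bar S \bar R = \delta_B^l,\]
so $\bar R$ and $\bar S$ are mutually inverse up to the automorphisms $\delta_A^l, \delta_B^l$. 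Hence $\bar R$ is an ordered group isomorphism, and $\bar R\, \delta_B = \delta_A\, \bar R$ is immediate from $AR = RB$, producing the required isomorphism of triples.

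The converse is more delicate. Given an isomorphism $\phi : (\Delta_A, \Delta_A^+, \delta_A) \to (\Delta_B, \Delta_B^+, \delta_B)$ intertwining the $\delta$'s, the strategy is to realise $\phi$ and $\phi^{-1}$ by nonnegative integer matrices after passing to a common stage of the direct limit. Apply $\phi$ to the standard basis classes $[e_1,0], \ldots, [e_n,0] \in \Delta_A^+$, obtaining positive elements of $\Delta_B$; each admits a nonnegative integer representative $[r_i, k_i]$, and after acting by suitable powers of $\delta_B$ we may arrange a common level $k$. Assembling the $r_i$ as columns produces a nonnegative integer matrix $R$ inducing $\phi$, in the sense that $\phi[a, 0] = [Ra, k]$ for every $a \in \mathbb{Z}^n$. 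Applying the same construction to $\phi^{-1}$ yields a nonnegative integer matrix $S$ with $\phi^{-1}[b, 0] = [Sb, k']$. The hypothesis $\phi \delta_A = \delta_B \phi$ transcribes, after possibly enlarging $k$, to $BR = RA$, and symmetrically $AS = SB$. Composing, $\phi^{-1}\phi[a, 0] = [SRa, k + k']$ must equal $[a, 0] = [A^{k+k'}a, k+k']$, giving $SR = A^{k+k'}$ after passing far enough into the direct system; symmetrically $RS = B^{k+k'}$. Setting $l = k + k'$ supplies the shift equivalence.

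The main obstacle lies in this converse direction, specifically in the passage from identities among equivalence classes in $\Delta_A$ and $\Delta_B$ to genuine matrix identities in $\mathbb{Z}^n$ and $\mathbb{Z}^m$. The standard device is that any finite list of relations in a direct limit lifts to honest relations at a sufficiently large stage; one must apply this uniformly to the entries of $R$, $S$, $RS - A^l$, $SR - B^l$, $BR - RA$, and $AS - SB$, choosing a single level large enough that all four shift-equivalence identities hold on the nose rather than merely modulo further applications of $A$ and $B$. That this is possible, given only the hypotheses on $\phi$, is the essence of the argument.
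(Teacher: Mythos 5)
The paper itself offers no proof of this statement: it is quoted as Krieger's theorem, with pointers to Krieger's original paper and to Lind--Marcus, Theorem~7.5.8, so your attempt can only be compared with that standard argument --- which it reproduces faithfully, in both directions. Your outline is correct; the forward direction is complete modulo the routine remark that $\bar R^{-1}=\bar S\,\delta_A^{-l}$ is also order-preserving (because $\delta_A^{-1}$ preserves the cone: positivity of $[a,k]$ does not depend on $k$), so that $\bar R$ carries $\Delta_B^+$ \emph{onto} $\Delta_A^+$ rather than merely into it.

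The one step you flag but do not carry out --- making the four identities hold on the nose --- does close, but be careful with the phrase ``passing far enough into the direct system'': that by itself changes nothing, since $R$ and $S$ are fixed matrices; what works is \emph{replacing} them. From $\phi\delta_A=\delta_B\phi$ you get $[BRe_i,k]=[RAe_i,k]$ for each $i$, hence $B^{N+1}R=B^NRA$ for a single large $N$; replacing $R$ by $B^NR$ (still nonnegative, still inducing $\phi$ at level $k+N$) gives $BR=RA$ exactly, and symmetrically $AS=SB$. With these exact intertwinings, $\phi^{-1}\phi[e_i,0]=[e_i,0]$ yields $A^M SR=A^{M+k+k'}$ for some $M$, and the other composite yields $B^{M'}RS=B^{M'+k+k'}$ for some $M'$. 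Now replace $S$ by $A^MS$ with $M\geq M'$: this preserves nonnegativity and $AS=SB$, and gives $SR=A^l$ with $l=k+k'+M$, while for the other composite $RA^MS_{\mathrm{old}}=B^MRS_{\mathrm{old}}=B^{M-M'}\bigl(B^{M'}RS_{\mathrm{old}}\bigr)=B^l$, using $RA=BR$. After swapping the names of $R$ and $S$ this is exactly the paper's normalisation $A^l=RS$, $B^l=SR$, $AR=RB$, $SA=BS$. So the bookkeeping you correctly identified as ``the essence of the argument'' is a sequence of one-line verifications, and your proof is sound.
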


 Wagoner noted that the induced structure on $\Delta_A$ by the automorphism $\delta_A$ makes $\Delta_A$ a $\mathbb Z[x,x^{-1}]$-module which was systematically used in~\cite{wago1,wago2} (see also~\cite[\S3]{boyle}).

Recall that the graded Grothendieck group of a $\mathbb Z$-graded ring has a natural $\mathbb Z[x,x^{-1}]$-module structure  and 
the following observation (Theorem~\ref{mmpags}) shows that the graded Grothendieck group of the Leavitt path algebra associated to a matrix $A$ coincides with the Krieger dimension group of the shift of finite type associated to $A^t$, \ie the graded dimension group of a Leavitt path algebra coincides with Krieger's dimension group,
\[\big(K_0^{\gr}(\LL(E)),(K_0^{\gr}(\LL(E))^+\big ) \cong (\Delta_{A^t}, \Delta_{A^t}^+).\] 
This will provide a link between the theory of Leavitt path algebras and symbolic dynamics.  \index{Leavitt path algebra}
\begin{theorem}\label{mmpags}
Let $E$ be a finite graph with no sinks with the  adjacency matrix $A$. Then there is an isomorphism 
$\phi:K^{\gr}_0(\LL(E)) \longrightarrow \Delta_{A^t}$ such that $\phi(x \alpha)=\delta_{A^t}\phi(\alpha)$, $\alpha\in \LL(E)$, $x\in \mathbb Z[x,x^{-1}]$ and $\phi(K^{\gr}_0(\LL(E))^+)=\Delta_{A^t}^+$. 
\end{theorem}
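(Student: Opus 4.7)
The plan is to assemble the theorem from the pieces already developed in \S\ref{xuejun} and \S\ref{sarahbright}, identifying each successively with the direct-limit description of Krieger's dimension group recorded in \eqref{kkjjhhga}.

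First, since $E$ is finite with no sinks, Theorem~\ref{sthfin} gives that $\LL(E)$ is strongly $\mathbb Z$-graded. Hence Dade's theorem, in the form of the isomorphism \eqref{dade}, yields a $\mathbb Z[x,x^{-1}]$-module isomorphism
\[
K^{\gr}_0(\LL(E)) \cong K_0(\LL(E)_0).
\]
Next, by the analysis recalled in \S\ref{sarahbright}, $\LL(E)_0 = \varinjlim_n L_{0,n}$ with $L_{0,n}\cong \bigoplus_{v\in E^0}\M_{k_n^v}(K)$, and the transition inclusion $L_{0,n}\hookrightarrow L_{0,n+1}$ induces the map $A^t:\mathbb Z^{E^0}\to\mathbb Z^{E^0}$ on $K_0$, as displayed in \eqref{volleyb}. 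Since $K_0$ is a continuous functor (Theorem~\ref{kcontis}), we conclude that
\[
K_0(\LL(E)_0)\;\cong\;\varinjlim\Big(\mathbb Z^{E^0}\xrightarrow{A^t}\mathbb Z^{E^0}\xrightarrow{A^t}\cdots\Big),
\]
and the canonical map in \eqref{kkjjhhga} identifies this direct limit with $\Delta_{A^t}$ as an ordered abelian group, sending the positive cone (which at each finite stage is $\mathbb N^{E^0}$) onto $\Delta_{A^t}^+$ as defined in \eqref{q22}. Composing these isomorphisms gives the desired $\phi$, and the positive cone condition $\phi(K^{\gr}_0(\LL(E))^+)=\Delta_{A^t}^+$ is immediate since both are read off from isomorphism classes of graded finitely generated projective modules.

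The step that requires genuine care is the compatibility of the $\mathbb Z[x,x^{-1}]$-action, namely $\phi(x\alpha)=\delta_{A^t}\phi(\alpha)$. For this I would not try to trace the shift functor through Dade's equivalence abstractly; instead I would compute on the explicit generators $[v\LL(E)]$, $v\in E^0$, which span $\mathcal V^{\gr}(\LL(E))$ by \eqref{roxjhhre}. Formula \eqref{hterw} yields
\[
x\,[v\LL(E)]\;=\;[v\LL(E)(1)]\;=\;\sum_{\{\alpha\in E^1\mid s(\alpha)=v\}}[r(\alpha)\LL(E)],
\]
and under the identification of generators $[v\LL(E)]$ with the standard basis vectors of $\mathbb Z^{E^0}$ at the first stage of the direct system, this is precisely left multiplication by $A^t$, which is the definition of $\delta_{A^t}$ on the image of the first stage. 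Because the shift $\mathcal T_1$ is an additive functor commuting with direct limits and the generators $[v\LL(E)]$ (together with their shifts) generate $K^{\gr}_0(\LL(E))$ as a $\mathbb Z[x,x^{-1}]$-module, the agreement on generators forces $\phi(x\alpha)=\delta_{A^t}\phi(\alpha)$ throughout.

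The main obstacle, as this sketch suggests, is the bookkeeping around the $\mathbb Z[x,x^{-1}]$-action: one must check that the shift on $K^{\gr}_0(\LL(E))$, the transported shift on $K_0(\LL(E)_0)$ via diagram \eqref{veronaair}, and multiplication by $A^t$ on $\varinjlim\mathbb Z^{E^0}$ all agree. The cleanest route is the computational one above via \eqref{hterw}, avoiding explicit manipulation of the bimodules $\LL(E)_\alpha\otimes_{\LL(E)_0}-$; the ordering and the injectivity/surjectivity of $\phi$ are then formal consequences of the continuity of $K_0^{\gr}$ and the explicit description of $\Delta_{A^t}$ in \eqref{q11}--\eqref{q22}.
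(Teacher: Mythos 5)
Your proposal is correct and follows essentially the same route as the paper: both use Theorem~\ref{sthfin} together with Dade's theorem to obtain the ordered isomorphism $K^{\gr}_0(\LL(E))\cong K_0(\LL(E)_0)$, identify the latter with $\Delta_{A^t}$ via the direct-limit description of $\LL(E)_0$ from \S\ref{sarahbright} (the system~\eqref{thu3}), and verify the $\mathbb Z[x,x^{-1}]$-compatibility by computing on the vertex generators $[v\LL(E)]$ using~\eqref{hterw}, exactly as in the paper's proof. The only cosmetic difference is that the paper works with the presentation $[a,k]$ of the direct limit from~\eqref{peeme} rather than the subgroup model $\Delta_{A^t}\subseteq A^n\mathbb Q^n$ of \eqref{q11}--\eqref{q22}, which changes nothing of substance.
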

\begin{proof}
Since $\LL(E)$ is strongly graded by Theorem~\ref{sthfin}, there is an ordered isomorphism \[K^{\gr}_0(\LL(E)) \rightarrow K_0(\LL(E)_0).\] 
Thus  the ordered group $K^{\gr}_0(\LL(E))$ coincides with the ordered group $\Delta_{A^t}$ (see (\ref{thu3})). We only need to check that their module structures are compatible. It is enough to  show that the action of $x$ on $K^{\gr}_0$ coincides with the action of $A^t$ on $K_0(\LL(E)_0)$, \ie $\phi(x \alpha)=\delta_{A^t} \phi(\alpha)$. 

Set $\mathcal A=\LL(E)$. Since graded finitely generated projective modules are generated by $u\mathcal A(i)$, where $u\in E^0$ and $i \in \mathbb Z$, it suffices to show that $\phi(x [u\mathcal  A])=\delta_{A^t}\phi([u \mathcal A])$. 
Since the image of $u\mathcal A$ in $K_0(\mathcal A_0)$ is $[u\mathcal A_0]$, and $\mathcal A_0= \bigcup_{n=0}^{\infty}L_{0,n},$ (see~(\ref{ppooii})) using the presentation of $K_0$ given in~(\ref{peeme}), we have 
\[\phi([u\mathcal  A])=[u\mathcal  A_0]=[uL_{0,0},1]=[u,1].\] Thus 
\[\delta_{A^t}\phi([u\mathcal  A])=\delta_{A^t}([u,1])=[A^t u,1]=\sum_{\{\alpha \in E^1 \mid s(\alpha)=u\}}[r(\alpha),1].\]

On the other hand, 
\begin{multline}
\phi(x[u\mathcal  A])=\phi([u\mathcal  A(1)])=\phi\Big(\sum_{\{\alpha \in E^1 \mid s(\alpha)=u\}}[r(\alpha)\mathcal  A]\Big)=\\ 
\sum_{\{\alpha \in E^1 \mid s(\alpha)=u\}}[r(\alpha)\mathcal  A_0]=
\sum_{\{\alpha \in E^1 \mid s(\alpha)=u\}}[r(\alpha)L_{0,0},1]=\sum_{\{\alpha \in E^1 \mid s(\alpha)=u\}}[r(\alpha),1].
\end{multline}
Thus $\phi(x [u\mathcal  A])=\delta_{A^t}\phi([u \mathcal A])$. This finishes the proof. 
 \end{proof}

It is easy to see that two matrices $A$ and $B$ are shift equivalent if and only if $A^t$ and $B^t$ are shift equivalent. Combining this with Theorem~\ref{mmpags} and the fact that Krieger's dimension group is a complete invariant for shift equivalent we have the following corollary.

\begin{corollary}\label{h99}
Let $E$ and $F$ be finite graphs with no sinks and $A_E$ and $A_F$ be their adjacency matrices, respectively. Then
$A_E$ is shift equivalent to $A_F$ if and only if there is an order preserving $\mathbb Z[x,x^{-1}]$-module  isomorphism
$K_0^{\gr}(\LL(E)) \cong K_0^{\gr}(\LL(F))$.
\end{corollary}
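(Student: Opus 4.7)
The plan is to deduce the corollary by combining Theorem~\ref{mmpags}, Krieger's theorem on shift equivalence, and the elementary symmetry of shift equivalence under transposition. First, I would apply Theorem~\ref{mmpags} to both $E$ and $F$, obtaining order-preserving $\Z[x,x^{-1}]$-module isomorphisms
\[
K_0^{\gr}(\LL(E))\ \cong\ \Delta_{A_E^t},\qquad K_0^{\gr}(\LL(F))\ \cong\ \Delta_{A_F^t},
\]
where on the right-hand sides the action of $x$ is implemented by $\delta_{A_E^t}$ and $\delta_{A_F^t}$ respectively, and the positive cones correspond to $\Delta_{A_E^t}^+$ and $\Delta_{A_F^t}^+$. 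These isomorphisms are natural enough that the existence of an order-preserving $\Z[x,x^{-1}]$-module isomorphism $K_0^{\gr}(\LL(E))\cong K_0^{\gr}(\LL(F))$ is equivalent to the existence of an isomorphism between the triples
\[
(\Delta_{A_E^t},\Delta_{A_E^t}^+,\delta_{A_E^t})\quad\text{and}\quad (\Delta_{A_F^t},\Delta_{A_F^t}^+,\delta_{A_F^t}).
\]

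The key observation that makes the translation clean is that a $\Z[x,x^{-1}]$-module isomorphism is exactly a group isomorphism that intertwines the action of $x$, which on $\Delta_{A^t}$ is precisely $\delta_{A^t}$; together with the order-preserving hypothesis this is the same data as an isomorphism of Krieger's dimension triple. Once this identification is in place, I would invoke Krieger's theorem (stated in the excerpt preceding Theorem~\ref{mmpags}) to conclude that such an isomorphism exists if and only if $A_E^t$ is shift equivalent to $A_F^t$. Finally, shift equivalence of matrices is preserved by transposition: if $(A_E^t)^\ell = RS$, $(A_F^t)^\ell = SR$, $A_E^tR = RA_F^t$, $SA_E^t = A_F^t S$, then taking transposes yields the analogous relations between $A_E$ and $A_F$ with $R^t$ and $S^t$. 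Hence $A_E^t\sim A_F^t$ in the shift-equivalence sense if and only if $A_E\sim A_F$, completing the chain of equivalences.

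The main obstacle, which is really more of a bookkeeping issue than a genuine difficulty, is verifying that Theorem~\ref{mmpags} gives an isomorphism at the level of ordered $\Z[x,x^{-1}]$-modules in a manner functorial enough that an abstract isomorphism of the $K$-groups transports to an isomorphism of Krieger triples (and vice versa). Since Theorem~\ref{mmpags} already packages the order structure, the $\Z[x,x^{-1}]$-action (as $\delta_{A^t}$), and the group isomorphism into a single statement, this transport is essentially immediate; no further computation is needed beyond noting that the composition of order-preserving, $x$-equivariant isomorphisms remains so. Thus the corollary follows formally from the three ingredients above.
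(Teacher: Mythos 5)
Your proposal is correct and follows exactly the paper's route: the paper derives Corollary~\ref{h99} in one line by combining Theorem~\ref{mmpags}, Krieger's theorem that the dimension triple $(\Delta_A,\Delta_A^+,\delta_A)$ is a complete invariant for shift equivalence, and the observation that shift equivalence is preserved under transposition. The only difference is that you spell out the (routine) verification that an order-preserving $\mathbb Z[x,x^{-1}]$-module isomorphism is the same data as an isomorphism of Krieger triples, which the paper leaves implicit.
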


\section{$K^{\gr}_1$-theory}\label{hhyyhhyy6}

For a $\Gamma$\!-graded ring $A$,  the category of  finitely generated
 $\Gamma$\!-graded projective right $A$-modules, $\Pgr[\Gamma] A$, is an exact category. Thus using Quillen's $Q$ construction
 one defines \[K_i^{\gr}(A):=  K_i(\Pgr[\Gamma] A), \quad i\geq 0.\] 
 Moreover, the shift functors (\ref{RBAday1}) \index{shift functor} induce auto-equivalences (in fact, automorphisms) $\mathcal T_\alpha:\Pgrp A \rightarrow \Pgrp A$. These in return give a group homomorphism  
 $\Gamma \rightarrow \Aut (K_i^{\gr}(A))$ or equivalently, a $\mathbb Z[\Gamma]$-module on $K_i^{\gr}(A)$. 
 This general construction will be used in~\S\ref{waraya}. 
 
 In this section we concretely construct the graded $K_1$-group, using a graded version of Bass' construction of $K_1$-group.  For a concise introduction of the (ungraded) groups $K_0$ and $K_1$ see~\cite{lamsiu}.
 
 \index{$K_0, K_1$ of an exact category}
 
 \begin{definition}{\sc $K_0$ and $K_1$ of an exact category}\label{hgygtw2}
 
 Let $\mathcal P$ be an \emph{exact category}, \index{exact category} \ie a full additive subcategory of an abelian category $\mathcal A$ such that, if 
 \[0\longrightarrow P_1 \longrightarrow P \longrightarrow P_2 \longrightarrow 0,\]
 is an exact sequence in $\mathcal A$ and $P_1,P_2 \in \mathcal P$, then $P \in \mathcal P$ (\ie $\mathcal P$ is closed under extension). 
 Moreover, we assume $\mathcal P$ has a small skeleton, \ie $\mathcal P$ has a full subcategory $\mathcal P_0$ which is small and $\mathcal P_0 \hookrightarrow \mathcal P$ is an equivalence. 
 
 The groups $K_0(\mathcal P)$ and $K_1(\mathcal P)$ are defined as follows. 
 \begin{enumerate}
 \item $K_0(\mathcal P)$ is the free abelian group generated by objects of $\mathcal P_0$, subject to the relation
 $[P]=[P_1]+[P_2]$ if there is an exact sequence 
  \[0\longrightarrow P_1 \longrightarrow P \longrightarrow P_2 \longrightarrow 0,\] in $\mathcal P$. 
  
  \item $K_1(\mathcal P)$ is  the free abelian group generated by pairs $(P,f)$, where $P$ is an object of $\mathcal P_0$ and $f \in \Aut(P)$,
subject to the relations 
 \[ [P,f]+[P,g]=[P,fg],\]
 and 
\[ [P,f]=[P_1,g]+[P_2,h], \] 
if there is a commutative diagram in $\mathcal P_0$
 \begin{equation*}
\xymatrix{
0 \ar[r] & P_1 \ar[r]^{i} \ar[d]^{g}& P \ar[r]^{\pi} \ar[d]^{f} & P_2 \ar[r] \ar[d]^{h} & 0 \\
0 \ar[r] & P_1 \ar[r]^{i} & P \ar[r]^{\pi}  & P_2 \ar[r]  & 0.
}
\end{equation*}
 \end{enumerate}
 \end{definition}

Note that from the relations of $K_1$ it follows
\begin{align*}
[P,fg]&=[P,gf],\\
[P,fg]&=[P\oplus P, f\oplus g].
\end{align*}

If $\Gamma$ is a group and $\mathcal T_\alpha:\mathcal P\rightarrow \mathcal P$, $\alpha \in \Gamma$, are auto-equivalences such that $\mathcal T_\beta \mathcal T_\alpha \cong \mathcal  T_{\alpha+\beta}$, then $K_0(\mathcal P)$ and $K_1(\mathcal P)$ have a $\Gamma$\!-module structures.

 One can easily see that $K_0^{\gr}(A)=K_0(\Pgr[\Gamma] A)$. Following Bass, one defines 
 \[K^{\gr}_1(A)=K_1(\Pgr[\Gamma] A).\] 

 Since the category $\Gr A$ is an abelian category and $\Pgrp A$ is an exact category, the main theorems of $K$-theory are valid for the grade Grothendieck group, such as D\'evissage, Resolution theorem and the localisation exact sequences of $K$-theory (see \S\ref{klikhf4} and~\cite[Chapter~3]{rosenberg} and~\cite{quillen,weibelk}).

 \begin{example}
 Let $F$ be a field and $F[x_1,\dots,x_r]$ be the polynomial ring with $r$ variables, which is considered as a $\mathbb Z$-grade ring with support $\mathbb N$, namely, $\deg(x_i)=1, 1\leq i \leq r$.  Then we will prove in~\S\ref{waraya} (see Theorem~\ref{quillen11}) that  
 \[ K^{\gr}_1(F[x_1,\dots,x_r])=F^*\otimes_{\mathbb Z}\mathbb Z[x,x^{-1}].\] 
 \end{example}

 \begin{example}\scm[$K^{\gr}_1$ of graded division algebras]\label{lobbyat12}
 \vspace{0.2cm}
 
 Let $A$ be a $\Gamma$\!-graded division ring. Then $A_0$  is a division ring and $\Omega:=\Gamma_A$  is a group (\S\ref{khgfewa1}). 
 By (\ref{diniat6}), for $i=1$, and the description of 
 $K_1$-group of division ring due to Dieudonn\'{e} (\cite[\S~20]{draxl}), we have 
 \begin{equation}\label{walktony9}
K^{\Gamma}_1(A)\cong \bigoplus_{\Gamma/\Omega}K^{\Omega}_1(A) \cong \bigoplus_{\Gamma/\Omega}K_1(A_0)= \bigoplus_{\Gamma/\Omega} A^*_0/[A^*_0,A^*_0],
\end{equation}
where $A^*_0$ is the group of invertible elements of division ring $A_0$ and $[A^*_0,A^*_0]$ is the multiplicative commutator subgroup.  \index{multiplicative commutator subgroup}
Representing $ \bigoplus_{\Gamma/\Omega}K_1(A_0)$ as the additive group of the group ring $K_1(A_0)[\Gamma/\Omega]$, (see Example~\ref{tonyat11}) the action of $\Gamma$ can be described as follows: for $\beta \in \Gamma$,
\[\beta  \, \Big ( \bigoplus_{\Omega+\alpha \in \Gamma/\Omega} K_1(A_0)(\Omega+\alpha) \Big )=  \bigoplus_{\Omega+\alpha \in \Gamma/\Omega} K_1(A_0)(\Omega+\alpha+\beta).\]
 
As a concrete example, let $A=K[x^n,x^{-n}]$ be a $\mathbb Z$-graded field with $\Gamma_A=n\mathbb Z$. Then by~(\ref{walktony9}), 
\[ K_1^{\gr}(A)\cong \bigoplus_n K^*,\] is a $\mathbb Z[x,x^{-1}]$-module. The action of $x$ on 
$(a_1,\dots,a_n) \in \bigoplus_n  K^*$ is \[x (a_1,\dots,a_n)= (a_n,a_1,\dots,a_{n-1}).\]

Compare this with the computation of $K^{\gr}_0(A)$ in Example~\ref{upst}(2). 
 
  \end{example}
 
 \begin{remark}\scm[The matrix description of $K^{\gr}_1$-group]
 \vspace{0.2cm}
 
 Let $A$ be a strongly $\Gamma$\!-graded ring. Then  the map 
 \begin{align}\label{ghzuhi}
 K^{\gr}_1(A) &\longrightarrow K_1(A_0),\\
 [P,f] &\longmapsto [P_0,f_0] \notag
 \end{align}
 is an isomorphism of groups (see~\S\ref{dadestmal}).  Here for a graded isomorphism $f:P\rightarrow P$, we denote by $f_\alpha$, the restriction of $f$ to $P_\alpha$, where $\alpha \in \Gamma$. Note that $K^{\gr}_1(A)$ is a $\mathbb Z[\Gamma]$-graded module, where the action of $\Gamma$ on the generators is defined by $\alpha [P,f]=[P(\alpha),f]$. 
  
 Since $K_1(A_0)$ has a matrix description (\cite[Theorem~3.1.7]{rosenberg}), from (\ref{ghzuhi}) we get a matrix representation 
 \[K^{\gr}_1(A) \cong K_1(A_0) \cong \GL(A_0) /E(A_0).\]
 We don't know whether for an arbitrary graded ring, one can give a matrix description for  $K^{\gr}_1(A)$. For some work in this direction see~\cite{zhang2013}. 
 
 \end{remark} 

\chapter{Graded Picard Groups} \label{picle}

Let $A$ be a commutative ring. If $M$ is a finitely generated projective $A$-module of constant rank $1$, then there is an $A$-module $N$ such that $M\otimes_A N\cong A$. In fact this is an equivalent condition. The module $M$ above is called an \emph{invertible module}. \index{invertible module} The isomorphism classes of invertible modules with the tensor product form an abelian group, denoted by $\Pic(A)$ and called the \emph{Picard group} \index{Picard group} of $A$. On the other hand, since $A$ is commutative, $K_0(A)$ is a ring with the multiplication defined by the tensor product and one can prove that there is an exact sequence 
\begin{equation}\label{lrss083}
1 \longrightarrow \Pic(A) \stackrel{\phi}\longrightarrow K_0(A)^*,
\end{equation}
where  $K_0(A)^*$ is the group of invertible elements of $K_0(A)$ and  $\phi([A])=[A]$.

When $A$ is a graded commutative ring, a parallel construction, using the graded modules, gives the graded Picard group $\Pic^{\gr}(A)$. As one expects when $A$ is strongly graded commutative ring then, using Dade's theorem~\ref{dadesthm} one immediately gets 
\begin{equation}\label{jhepee}
\Pic^{\gr}(A) \cong \Pic(A_0).
\end{equation}

However when $A$ is a noncommutative ring, the situation is substantially more involved. One needs to define the invertible bimodules in order to define the Picard group. Moreover, since for bimodules $M$ and $N$, $M\otimes_A N$ is not necessarily isomorphic to $N\otimes_A M$ as bimodules, the Picard group is not abelian. 
Even when $A$ is strongly graded, the identities such as (\ref{jhepee}) does not hold in the noncommutative setting (see~\cite{haefrio}). 

By Morita theory, an auto-equivalence of  $\Modd A$ gives rise to an invertible $A$-bimodule. This shows that the isomorphism classes of auto-equivalence of $\Modd A$ under composition form a group which is isomorphic to $\Pic(A)$ (\cite[Theorem~18.29 and Corollary 18.29]{lam}). 

\section{$\Pic^{\gr}$ of a graded commutative ring}\label{ffggyy1}   \index{$\Pic^{\gr}$, graded Picard group}

Let $A$ be a commutative $\Gamma$\!-graded ring. A graded $A$-module $P$ is called a \emph{graded invertible} module \index{graded invertible module} if there is a graded module $Q$ such that 
$P\otimes_A Q\conggr A$ as graded $A$-modules. It is clear that if $P$ is a graded invertible module, so is $P(\alpha)$ for any $\alpha \in \Gamma$. The \emph{graded Picard group}, \index{graded Picard group} $\Pic^{\gr}(A)$, is defined as the set of graded isomorphism classes of graded invertible $A$-modules with tensor product as multiplication. This is a well-defined binary operation and makes 
$\Pic^{\gr}(A)$ an abelian group with the isomorphism class of $A$ as identity element. Since a graded invertible module is an invertible module, we have a group homomorphism 
\begin{align*}
\Pic^{\gr}(A)&\longrightarrow \Pic(A),\\
[P]&\longmapsto [P],
\end{align*} where $[P]$ represents the isomorphism class of $P$ in either group. 

Since for any $\alpha, \beta \in \Gamma$, $A(\alpha)\otimes_A A(\beta) \conggr A(\alpha+\beta)$ (see~\S\ref{grtensie}), the map 
\begin{align}\label{campbell}
\phi:\Gamma &\longrightarrow \Pic^{\gr}(A),\\
 \alpha &\longmapsto [A(\alpha)]\notag
\end{align} 
is a group homomorphism. We use this map in the next lemma to calculate the graded Picard group of  graded fields.

For a $\Gamma$\!-graded ring $A$, recall from~\S\ref{scrosshg} that $\Gamma_A$ is the support of $A$ and $\Gamma^*_A=\{ \alpha \in \Gamma \mid A^*_\alpha \not = \varnothing\}$. Moreover, for a graded field $A$, $\Gamma_A$ is a subgroup of $\Gamma$. 

\begin{proposition}\label{peenei}
Let $A$ be a $\Gamma$\!-graded field with support $\Gamma_A$. Then \[\Pic^{\gr}(A)\cong \Gamma/\Gamma_A.\] 
\end{proposition}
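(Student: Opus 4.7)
The plan is to show that the homomorphism $\phi:\Gamma\to \Pic^{\gr}(A)$ from~\eqref{campbell} is surjective with kernel exactly $\Gamma_A$, which immediately yields the desired isomorphism. Since $A$ is a graded field, $\Gamma_A$ is a subgroup of $\Gamma$, so the quotient $\Gamma/\Gamma_A$ makes sense, and moreover $\Gamma_A=\Gamma_A^*$ because every nonzero homogeneous element is invertible.

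First I would argue surjectivity. Let $[P]\in \Pic^{\gr}(A)$ with graded inverse $Q$, so $P\otimes_A Q\cong_{\gr} A$. In particular $P$ is a graded finitely generated projective $A$-module. By Proposition~\ref{gradedfree}, every graded $A$-module is graded free, so there is an isomorphism of the form $P\cong_{\gr}\bigoplus_i A(\delta_i)$ and similarly for $Q$. Comparing graded dimensions via Proposition~\ref{grdimensionprop} in the relation $P\otimes_A Q\cong_{\gr}A$ (which has dimension $1$ over $A$), and using that tensor product of graded free modules multiplies dimensions, forces $\dim_A P=\dim_A Q=1$. Hence $P\cong_{\gr}A(\alpha)$ for some $\alpha\in\Gamma$, so $[P]=\phi(\alpha)$, proving that $\phi$ is surjective.

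Next I would compute the kernel. By definition $\alpha\in\ker\phi$ if and only if $A(\alpha)\cong_{\gr} A$ as graded $A$-modules. By Corollary~\ref{rndcongcori}, this happens if and only if there is an invertible homogeneous element of degree $\alpha$, that is, $\alpha\in\Gamma_A^*$. Since $A$ is a graded field, $\Gamma_A^*=\Gamma_A$. Therefore $\ker\phi=\Gamma_A$, and the first isomorphism theorem gives the claimed isomorphism $\Pic^{\gr}(A)\cong\Gamma/\Gamma_A$.

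The only place where one has to be a touch careful is verifying that the tensor-product decomposition forces the rank-one conclusion; this is a routine but necessary bookkeeping step using that dimensions of graded free modules over a graded field are well-defined (Proposition~\ref{kj351}) and multiply under tensor products (which follows from the isomorphism $A(\alpha)\otimes_A A(\beta)\cong_{\gr}A(\alpha+\beta)$ of~\eqref{inenbuild}). Everything else is a direct application of the results already established for graded fields.
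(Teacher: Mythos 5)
Your proof is correct and follows essentially the same route as the paper: the homomorphism $\phi$ of~\eqref{campbell}, surjectivity via Proposition~\ref{gradedfree}, and the kernel computation via Corollary~\ref{rndcongcori}. The only difference is that you make explicit, via the dimension count using Propositions~\ref{kj351} and~\ref{grdimensionprop} together with~\eqref{inenbuild}, the rank-one step that the paper's proof leaves implicit in the assertion that graded invertible modules must be of the form $A(\alpha)$ --- a worthwhile piece of bookkeeping, but not a different argument.
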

\begin{proof}
Consider the map $\phi:\Gamma \rightarrow \Pic^{\gr}(A)$ from (\ref{campbell}). Since any graded invertible module is graded projective and graded projective modules over graded fields are graded free  (Proposition~\ref{gradedfree}), it follows that the graded invertible modules must be of the form $A(\alpha)$ for some $\alpha\in \Gamma$. This shows that $\phi$ is an epimorphism. Now if $\phi(\alpha)=[A(\alpha)]=[A]$, then $A(\alpha) \conggr A$, which implies by Corollary~\ref{rndcongcori} that $\alpha \in \Gamma_A$. 
Conversely, if $\alpha \in \Gamma_A$, then there is an element of degree $\alpha$, which has to be invertible, as $A$ is a graded field. Thus $A(\alpha)\conggr A$, again by Corollary~\ref{rndcongcori}. This shows that the kernel of $\phi$ is $\Gamma_A$. This completes the proof. 
\end{proof}

The graded Grothendieck group of a graded local ring was determined in Proposition~\ref{gdhmeisl}. Here we determine its graded Picard group.

\begin{proposition}
Let $A$ be a commutative $\Gamma$\!-graded local ring with support $\Gamma_A$. Then \[\Pic^{\gr}(A)\cong \Gamma/\Gamma_A^*.\] 
\end{proposition}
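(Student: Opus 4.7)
The plan is to mimic the strategy used for the graded Grothendieck group of graded local rings in Proposition~\ref{gdhmeisl}, namely to push the computation down to the graded residue field $A/M$, where $M=J^{\gr}(A)$ is the unique graded maximal ideal, and then pull the answer back via a graded Nakayama-type argument. Concretely, I would analyse the group homomorphism
\[
\phi:\Gamma\longrightarrow \Pic^{\gr}(A),\qquad \alpha\longmapsto [A(\alpha)]
\]
of~\eqref{campbell} and show that it is surjective with kernel exactly $\Gamma_A^*$.

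For surjectivity, let $P$ be a graded invertible $A$-module, so there is a graded module $Q$ with $P\otimes_A Q\cong_{\gr} A$. This forces $P$ to be graded finitely generated projective (it is a direct summand, up to shift, of a graded free module of finite rank). Reducing modulo $M$, the graded $A/M$-module $\overline P=P/PM=P\otimes_A A/M$ is then graded invertible over the graded field $A/M$, whose support is exactly $\Gamma_A^*$ (the invertible homogeneous elements of $A$ descend to nonzero homogeneous elements of $A/M$, and conversely every nonzero homogeneous element of $A/M$ lifts to a unit in $A$ since non-units live in $M$). By Proposition~\ref{peenei} applied to the graded field $A/M$, there is some $\alpha\in\Gamma$ with $\overline P\cong_{\gr}(A/M)(\alpha)=\overline{A(\alpha)}$. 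Since both $P$ and $A(\alpha)$ are graded finitely generated projective over $A$ and their reductions mod $M\subseteq J^{\gr}(A)$ are graded isomorphic, Lemma~\ref{thuphan} lifts this to a graded isomorphism $P\cong_{\gr}A(\alpha)$. Hence $[P]=\phi(\alpha)$ and $\phi$ is surjective.

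For the kernel, $\phi(\alpha)=[A]$ means $A(\alpha)\cong_{\gr}A$ as graded $A$-modules, and by Corollary~\ref{rndcongcori} this is equivalent to the existence of an invertible homogeneous element of degree $\alpha$, i.e.\ $\alpha\in\Gamma_A^*$. Thus $\ker\phi=\Gamma_A^*$ and the first isomorphism theorem yields
\[
\Pic^{\gr}(A)\cong \Gamma/\Gamma_A^*.
\]

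The main obstacle is justifying the surjectivity step: one has to know that every graded invertible module over $A$ is in fact graded free of rank one with a single shift, and the proper conceptual tool for this is the graded Nakayama lemma packaged as Lemma~\ref{thuphan}, exactly as was done for $K^{\gr}_0$. Once that lifting is in place the rest is formal, and the identification of the kernel via Corollary~\ref{rndcongcori} is immediate.
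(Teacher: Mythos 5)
Your proof is correct and follows essentially the same route as the paper: both arguments reduce modulo the graded maximal ideal $M=J^{\gr}(A)$, invoke Lemma~\ref{thuphan} (graded Nakayama) to lift graded isomorphisms of reductions, use the identification $\Gamma_{A/M}=\Gamma_A^*$, and rely on Proposition~\ref{peenei} for the graded field $A/M$. The only difference is organizational --- you compute the kernel of $\Gamma\to\Pic^{\gr}(A)$ directly via Corollary~\ref{rndcongcori}, whereas the paper shows the reduction map $\Pic^{\gr}(A)\to\Pic^{\gr}(A/M)$ is an isomorphism and lets Proposition~\ref{peenei} (whose own proof uses the same corollary) do that bookkeeping at the residue-field level.
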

\begin{proof}
Let $M$ be the unique graded maximal ideal of $A$. By Lemma~\ref{thuphan}, if for graded projective $A$-modules $P$ and $Q$, $\overline P=P/PM$ is isomorphic to  
$\overline Q=Q/QM$ as graded $A/M$-modules, then $P$ is isomorphic to $Q$ as graded $A$-modules. This immediately implies that the natural map 
\begin{align*}
\phi:\Pic^{\gr}(A) & \longrightarrow \Pic^{\gr}(A/M),\\ 
[P] &\longmapsto [\overline P]
\end{align*}
is a monomorphism. But by the proof of Proposition~\ref{peenei}, any graded invertible $A/M$-module is of the form $(A/M)(\alpha)$ for some $\alpha \in \Gamma$. Since \[\phi([A(\alpha)])=[(A/M)(\alpha)],\] $\phi$ is an isomorphism. 
Since $\Gamma_{A/M}=\Gamma^*_A$, by Proposition~\ref{peenei}, $\Pic^{\gr}(A/M)=\Gamma/\Gamma_A^*$. This completes the proof. 
\end{proof}

In the next theorem (Theorem~\ref{rahalife}) we will be using the calculus of exterior algebras in the graded setting. Recall that if $M$ is an $A$-module, the $n$-th \emph{exterior power}  \index{exterior power}   of $M$ is the quotient of  the tensor product of $n$ copies of $M$ over $A$, denoted by $\bigotimes^n M$ (or $T_n(M)$ as in Example~\ref{stevevanzandt}), by the submodule generated by $m_1\otimes \dots \otimes m_n$, where $m_i=m_j$ for some $1\leq i\not = j \leq n$. The $n$-th exterior power of $M$ is denoted by $\bigwedge^n M$. We set $\bigwedge^0M=A$ and clearly $\bigwedge^1M=M$. 

If $A$ is a commutative $\Gamma$\!-graded ring and $M$ is a graded $A$-module, then $\bigotimes^n M$ is a graded $A$-module (\S\ref{grtensie}) and the submodule generated by $m_1\otimes \dots \otimes m_n$, where $m_i=m_j$ for some $1\leq i\not = j \leq n$ coincides with the submodule generated by 
$m_1\otimes \dots \otimes m_n$, where all $m_i$ are homogeneous and $m_i=m_j$ for some $1\leq i\not = j \leq n$, and
$m_1\otimes \dots \otimes m_n+m'_1\otimes \dots \otimes m'_n$, where all $m_i$ and $m'_i$ are homogeneous, $m_i=m'_i$ for all $1\leq i  \leq n$ except for two 
indices $i,j$, where $i\not = j$ and $m_i=m'_j$ and $m_j=m'_i$. Thus this submodule is a graded submodule of  $\bigotimes^n M$ and therefore $\bigwedge^n M$ is a graded $A$-module as well. We will use the isomorphism 
\begin{equation}\label{hchihying}
\bigwedge^n(M\oplus N) \conggr \bigoplus_{r=0}^n \Big ( \bigwedge^r M \otimes_A \bigwedge^{n-r} N \Big),
\end{equation}
which is valid in the ungraded setting, and one also checks that it respects the grading.  

\begin{lemma}\label{bachnie}
Let $A$ be a commutative $\Gamma$\!-graded ring. Then 
\begin{equation}\label{poolis}
\bigwedge^n A^m(\alpha_1,\dots,\alpha_m) \conggr \bigoplus_{1\leq i_1<i_2<\dots<i_n \leq m} A(\alpha_{i_1}+\alpha_{i_2}+\dots+\alpha_{i_n}).
\end{equation}
\end{lemma}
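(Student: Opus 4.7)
My plan is to prove the isomorphism by induction on $m$, using the identity~\eqref{hchihying} stated just before the lemma together with the basic computation $A(\alpha)\otimes_A A(\beta)\conggr A(\alpha+\beta)$ from~\eqref{inenbuild}. The base case $m=1$ reduces to observing that $\bigwedge^0 A(\alpha_1)=A$, $\bigwedge^1 A(\alpha_1)=A(\alpha_1)$, and $\bigwedge^n A(\alpha_1)=0$ for $n\ge 2$ (since a rank-one free module has vanishing higher exterior powers). This matches the right-hand side, where the indexing set $\{(i_1,\dots,i_n):1\le i_1<\dots<i_n\le 1\}$ is empty for $n\ge 2$.

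For the inductive step, write $A^m(\alpha_1,\dots,\alpha_m)=M\oplus N$ where $M=A^{m-1}(\alpha_1,\dots,\alpha_{m-1})$ and $N=A(\alpha_m)$. Applying~\eqref{hchihying} and using that $\bigwedge^r N=0$ for $r\ge 2$ collapses the sum to
\[
\bigwedge^n(M\oplus N)\conggr \Big(\bigwedge^n M\otimes_A A\Big)\oplus\Big(\bigwedge^{n-1} M\otimes_A A(\alpha_m)\Big).
\]
By the inductive hypothesis, the two summands become respectively $\bigoplus_{1\le i_1<\dots<i_n\le m-1}A(\alpha_{i_1}+\dots+\alpha_{i_n})$ and, after applying~\eqref{inenbuild}, $\bigoplus_{1\le i_1<\dots<i_{n-1}\le m-1}A(\alpha_{i_1}+\dots+\alpha_{i_{n-1}}+\alpha_m)$. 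These account precisely for the two disjoint classes of index tuples $1\le i_1<\dots<i_n\le m$ according to whether $i_n<m$ or $i_n=m$, yielding the claimed decomposition.

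The only subtle point is to confirm that the identity~\eqref{hchihying} and the inductive isomorphisms are genuinely graded. The isomorphism $\bigwedge^n(M\oplus N)\cong\bigoplus_r\bigwedge^r M\otimes\bigwedge^{n-r}N$ is constructed on homogeneous generators of the form $m_{i_1}\wedge\dots\wedge m_{i_r}\wedge n_{j_1}\wedge\dots\wedge n_{j_{n-r}}$, and since these have degree equal to the sum of the component degrees, the underlying map respects the grading defined in~\S\ref{grtensie}. This is the step that requires the most care, but it is essentially bookkeeping: one verifies that the standard basis element $e_{i_1}\wedge\dots\wedge e_{i_n}$ of $\bigwedge^n A^m(\alpha_1,\dots,\alpha_m)$ is homogeneous of degree $-(\alpha_{i_1}+\dots+\alpha_{i_n})$, since each $e_i$ has degree $-\alpha_i$ by the convention following~\eqref{excampp}, and hence generates a graded cyclic submodule isomorphic to $A(\alpha_{i_1}+\dots+\alpha_{i_n})$.

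Alternatively, one could skip the induction entirely and give a direct proof by verifying that $\{e_{i_1}\wedge\dots\wedge e_{i_n}:1\le i_1<\dots<i_n\le m\}$ is a homogeneous $A$-basis of $\bigwedge^n A^m(\alpha_1,\dots,\alpha_m)$ with the degrees above, and then invoking the discussion of graded free modules from~\S\ref{hefeijuly} to identify the resulting graded module with the stated direct sum. Either route is short; the inductive one has the virtue of making the split into ``$m$ appears'' and ``$m$ absent'' completely transparent.
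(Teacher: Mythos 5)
Your proof is correct and follows essentially the same route as the paper's: induction on $m$ via the graded splitting~\eqref{hchihying}, with the sum collapsing to two terms because $\bigwedge^r A(\alpha)=0$ for $r\geq 2$; splitting off $A(\alpha_m)$ rather than $A(\alpha_1)$ is an immaterial variation. Your added degree bookkeeping for $e_{i_1}\wedge\dots\wedge e_{i_n}$ (degree $-(\alpha_{i_1}+\dots+\alpha_{i_n})$, matching the convention after~\eqref{excampp}) is a welcome explicit verification of the gradedness the paper leaves implicit.
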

\begin{proof}
We prove the lemma by induction on $m$. For $m=1$ and $n=1$ we clearly have $\bigwedge^1 A(\alpha_1)\conggr A(\alpha_1)$. For $n\geq 2$, since $\bigwedge^2 A=0$ it follows that $\bigwedge^n A(\alpha)=0$. This shows that~(\ref{poolis}) is valid for $m=1$. Now by induction, by~(\ref{hchihying}), since $\bigwedge^n A(\alpha) =0$ for any  $\alpha \in \Gamma$ and $n\geq 2$, we have 
\begin{align*}
\bigwedge^n & A^m(\alpha_1,\dots,\alpha_m)  \conggr \bigwedge^n \Big ( A(\alpha_1) \oplus A^{m-1}(\alpha_2,\dots,\alpha_m)\Big )\\
&\conggr A\otimes_A \bigwedge^n A^{m-1}(\alpha_2,\dots,\alpha_m) \oplus  A(\alpha_1) \otimes_A \bigwedge^{n-1} A^{m-1}(\alpha_2,\dots,\alpha_m)\\
\\
&\conggr \bigoplus_{2\leq i_1<i_2<\dots<i_n \leq m} A(\alpha_{i_1}+\alpha_{i_2}+\dots+\alpha_{i_n}) \, \oplus\\
& \qquad \qquad \qquad \qquad A(\alpha_1) \otimes_A  \bigoplus_{2\leq i_1<i_2<\dots<i_{n-1} \leq m} A(\alpha_{i_1}+\alpha_{i_2}+\dots+\alpha_{i_n}) \\
\\
&\conggr \bigoplus_{2\leq i_1<i_2<\dots<i_n \leq m} A(\alpha_{i_1}+\alpha_{i_2}+\dots+\alpha_{i_n}) \, \oplus\\
&  \qquad \qquad \qquad \qquad  \bigoplus_{2\leq i_1<i_2<\dots<i_{n-1} \leq m} A(\alpha_1+\alpha_{i_1}+\alpha_{i_2}+\dots+\alpha_{i_n}) \\
\\
& \conggr \bigoplus_{1\leq i_1<i_2<\dots<i_n \leq m} A(\alpha_{i_1}+\alpha_{i_2}+\dots+\alpha_{i_n}).\qedhere
\end{align*}
\end{proof}

  The following corollary is immediate and will be used in Theorem~\ref{rahalife}.

\begin{corollary}\label{bachnie2}
Let $A$ be a commutative $\Gamma$\!-graded ring. Then 
\begin{equation*}
\bigwedge^n A^n(\alpha_1,\dots,\alpha_n) \conggr  A(\alpha_{1}+\alpha_{2}+\dots+\alpha_{n}).
\end{equation*}
\end{corollary}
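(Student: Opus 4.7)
The plan is to obtain this as an immediate specialization of Lemma~\ref{bachnie}. That lemma already provides the general formula
\[
\bigwedge^n A^m(\alpha_1,\dots,\alpha_m) \conggr \bigoplus_{1\leq i_1<i_2<\dots<i_n \leq m} A(\alpha_{i_1}+\alpha_{i_2}+\dots+\alpha_{i_n}),
\]
so the only thing to do is to set $m=n$ and observe that the indexing set on the right-hand side collapses to a single element.

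First I would invoke Lemma~\ref{bachnie} with $m=n$. Next I would note that the set of strictly increasing tuples $1 \leq i_1 < i_2 < \dots < i_n \leq n$ has exactly one element, namely $(i_1,\dots,i_n)=(1,2,\dots,n)$, because each $i_k$ is forced to equal $k$. Consequently the direct sum indexed over this set reduces to the single graded summand $A(\alpha_1+\alpha_2+\dots+\alpha_n)$, yielding the desired graded isomorphism.

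There is essentially no obstacle here; the corollary is purely combinatorial bookkeeping on top of Lemma~\ref{bachnie}. The only thing to be slightly careful about is confirming that the isomorphism in Lemma~\ref{bachnie} is indeed a graded isomorphism (so that the grading shift $\alpha_1+\dots+\alpha_n$ is preserved in the specialization), but this is already guaranteed by the statement of the lemma and by the fact established earlier in the text that $\bigwedge^n M$ inherits a graded $A$-module structure from $M$ whenever $M$ is graded.
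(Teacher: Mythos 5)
Your proposal is correct and is exactly the route the paper takes: the text states the corollary as an immediate consequence of Lemma~\ref{bachnie}, since setting $m=n$ forces the only increasing tuple $1\leq i_1<\dots<i_n\leq n$ to be $(1,2,\dots,n)$, collapsing the direct sum to the single summand $A(\alpha_1+\dots+\alpha_n)$. Your remark that the isomorphism is graded is already built into the lemma, so nothing further is needed.
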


Recall from \S\ref{hhyyuvy} that if $A$ is a $\Gamma$\!-graded commutative ring, then $K^{\gr}_0(A)$ is a commutative ring. Denote by $K^{\gr}_0(A)^*$ the group of invertible elements of this ring. The following theorem establishes the graded version of the exact sequence~(\ref{lrss083}). 

\begin{theorem}\label{rahalife}
Let $A$ be a  commutative $\Gamma$\!-graded  ring. Then there is an exact sequence, \[1 \longrightarrow \Pic^{\gr}(A) \stackrel{\phi}\longrightarrow K_0^{\gr}(A)^*,\] where $\phi([P])=[P]$. 
\end{theorem}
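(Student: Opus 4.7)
The plan is to verify in turn that $\phi$ is well-defined as a map into $K_0^{\gr}(A)^*$, that it is a group homomorphism, and finally, as the heart of the matter, that it has trivial kernel.

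Well-definedness and the homomorphism property are immediate from the tensor-product structure on $K_0^{\gr}(A)$ discussed in~\S\ref{hhyyuvy}: if $[P]\in \Pic^{\gr}(A)$ with graded inverse $Q$, then $[P][Q]=[P\otimes_A Q]=[A]=1$ in $K_0^{\gr}(A)$, so $[P]\in K_0^{\gr}(A)^*$; and $\phi([P][Q])=\phi([P\otimes_A Q])=[P\otimes_A Q]=[P][Q]$.

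For injectivity, suppose $[P]\in \Pic^{\gr}(A)$ satisfies $\phi([P])=[A]$ in $K_0^{\gr}(A)$. By Lemma~\ref{rhodas}(3) there exist $\overline\alpha=(\alpha_1,\dots,\alpha_n)$ and a graded isomorphism
\[
P\oplus A^n(\overline\alpha)\;\cong_{\gr}\;A\oplus A^n(\overline\alpha)\;\cong_{\gr}\;A^{n+1}(0,\alpha_1,\dots,\alpha_n).
\]
I would apply the $(n+1)$-th exterior power to both sides. By Corollary~\ref{bachnie2}, the right-hand side becomes $A(\alpha_1+\cdots+\alpha_n)$. Expanding the left-hand side via the graded version of~(\ref{hchihying}) gives
\[
\bigwedge\nolimits^{n+1}\bigl(P\oplus A^n(\overline\alpha)\bigr)\;\cong_{\gr}\;\bigoplus_{r=0}^{n+1}\;\bigwedge\nolimits^{r}P\;\otimes_A\;\bigwedge\nolimits^{n+1-r}A^n(\overline\alpha).
\]
Since $P$ is graded invertible, the forgetful functor shows $P$ is an invertible $A$-module in the classical sense, hence locally free of rank one, so $\bigwedge^{r}P=0$ for $r\geq 2$. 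On the other hand Lemma~\ref{bachnie} gives $\bigwedge^{n+1}A^n(\overline\alpha)=0$, because there are no strictly increasing sequences of length $n+1$ in $\{1,\dots,n\}$. Only the $r=1$ term survives, and applying Corollary~\ref{bachnie2} to it yields
\[
\bigwedge\nolimits^{n+1}\bigl(P\oplus A^n(\overline\alpha)\bigr)\;\cong_{\gr}\;P\otimes_A A(\alpha_1+\cdots+\alpha_n)\;\cong_{\gr}\;P(\alpha_1+\cdots+\alpha_n),
\]
using~(\ref{medeltaotel}). Comparing the two sides gives $P(\beta)\cong_{\gr}A(\beta)$ with $\beta=\alpha_1+\cdots+\alpha_n$, and since the shift $\mathcal T_\beta:\Gr A\to\Gr A$ is an isomorphism of categories, we conclude $P\cong_{\gr}A$, i.e.\ $[P]=[A]$ in $\Pic^{\gr}(A)$.

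The only subtle point is the vanishing $\bigwedge^{r}P=0$ for $r\geq 2$: one needs graded invertibility to give ungraded invertibility (automatic from $P\otimes_A Q\cong A$ after forgetting the grading), and then the classical fact that invertible modules over a commutative ring are locally free of rank one and hence have trivial higher exterior powers; the ensuing zero module is of course zero also as a graded module. Everything else is a bookkeeping exercise with the graded tensor/exterior calculus already established in Lemma~\ref{bachnie} and Corollary~\ref{bachnie2}.
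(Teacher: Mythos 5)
Your proposal is correct and follows essentially the same route as the paper's proof: reduce via Lemma~\ref{rhodas}(3) to a graded stable isomorphism, apply the $(n+1)$-th exterior power using~(\ref{hchihying}), Lemma~\ref{bachnie} and Corollary~\ref{bachnie2}, and kill the higher exterior powers of $P$ by noting that graded invertible implies invertible of constant rank one. The only cosmetic difference is that you verify triviality of the kernel (taking $Q=A$) while the paper compares two arbitrary invertible modules $P$ and $Q$ directly, which is an equivalent formulation for a group homomorphism.
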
 
\begin{proof}
It is clear that $\phi: \Pic^{\gr}(A) \rightarrow K_0^{\gr}(A)^*$ is a well-defined group homomorphism. We only need to show that $\phi$ is injective. Suppose $\phi([P])=\phi([Q])$. Thus $[P]=[Q]$ in $K^{\gr}_0(A)$. By Lemma~\ref{rhodas}, $P\oplus A^n(\overline \alpha) \conggr Q\oplus A^n(\overline \alpha)$, where $\overline \alpha=(\alpha_1,\dots,\alpha_n)$. Since $P$ and $Q$ are graded invertible, they are in particular invertible, and so are of constant rank $1$. Thus $\bigwedge^n(P)=0$, for $n\geq 2$. Now by~(\ref{hchihying})
\begin{align*}
 \bigwedge^{n+1}\big (P\oplus A^n(\overline \alpha) \big)&\conggr  \bigoplus_{i+j=n+1}\bigwedge^i A^n(\overline \alpha)  \otimes \bigwedge^j P\\
 &\conggr \bigwedge^n A^n(\overline \alpha) \otimes \bigwedge^1P\\
 \\
 &\conggr A(\alpha_1+\dots+\alpha_n)\otimes P\conggr P(\alpha_1+\dots+\alpha_n),
 \end{align*}
 thanks to Corollary~\ref{bachnie2}. Similarly \[\bigwedge^{n+1}\big (Q\oplus A^n(\overline \alpha) \big)\conggr Q(\alpha_1+\dots +\alpha_n).\] Thus 
 \[P(\alpha_1+\dots +\alpha_n)\conggr Q(\alpha_1+\dots +\alpha_n)\] which implies $P\conggr Q$. So $\phi$ is an injective map. 
 \end{proof}

\section{$\Pic^{\gr}$ of a graded noncommutative ring}

When $A$ is a noncommutative ring, the definition of the (graded) Picard group is more involved 
(see~\cite[Chapter~2]{bass},~\cite{basst},~\cite[\S55]{curtisra}, and~\cite{frohlich} for ungraded Picard groups of noncommutative rings). Note that if $P$  are $Q$ are 
$A\!-\!A$-bimodules then $P\otimes_A Q$ is not necessarily isomorphic to $Q\otimes_A P$ as $A$-bimodules. This is an indication that the  Picard group, in this setting, is not necessarily an abelian group. 

 Let $A$ and $B$ be $\Gamma$\!-graded rings and $P$ be a graded $A\!-\!B$-bimodule. Then $P$ is called a \emph{graded invertible} $A\!-\!B$-bimodule, if there is a graded $B\!-\!A$-bimodule $Q$ such that $P\otimes_B Q\conggr A$ as $A\!-\!A$-bimodules and $Q\otimes_A P \conggr B$ 
 as $B\!-\!B$-bimodules and the following diagrams are commutative.  \index{graded invertible bimodule}

\begin{equation}\label{verm1111}
{\def\labelstyle{\displaystyle}
\xymatrix{
P\otimes_B Q\otimes_A P \ar[r] \ar[d]& A \otimes_A P \ar[d]\\
P\otimes_B B \ar[r] & P
}}\qquad 
{\def\labelstyle{\displaystyle}
\xymatrix{
Q\otimes_A P\otimes_B Q \ar[r] \ar[d]& B \otimes_B Q \ar[d]\\
Q\otimes_A A \ar[r] & Q.
}}
\end{equation}

As in the commutative case (\S\ref{ffggyy1}), for a noncommutative graded ring $A$, the \emph{graded Picard group}, $\Pic^{\gr}(A)$, \index{graded Picard group}  is defined as the set of graded isomorphism classes of graded invertible $A\!-\!A$-bimodules with tensor product as multiplications.  The graded isomorphism class of the graded bimodule $P$ is denoted by $[P]$. Since $P$ is invertible, it has an inverse $[Q]$ in $\Pic^{\gr}(A)$ and Diagram~\ref{verm1111} guarantees that $\big([P][Q]\big)[P]=[P]\big([Q][P]\big)=[P]$. 

\begin{theorem}\label{hgy7d53} \index{graded Morita equivalent} 
Let $A$ and $B$ be $\Gamma$\!-graded rings. If $A$ is graded Morita equivalent to $B$, then 
\[
\Pic^{\gr}(A) \cong  \Pic^{\gr}(B). 
\]
\end{theorem}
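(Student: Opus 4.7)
The strategy is to use the bimodules implementing the graded Morita equivalence to transport graded invertible bimodules between $A$ and $B$. By Theorem~\ref{grmorim}, a graded equivalence $\Gr A \approx_{\gr} \Gr B$ is given by a tensor functor $-\otimes_A Q$ for some graded $A\!-\!B$-bimodule $Q$, with quasi-inverse $-\otimes_B P$ for a graded $B\!-\!A$-bimodule $P$ (concretely, $P = Q^* = \Hom_A(Q,A)$, or dually $P = \psi(B)$ in the notation of the proof of Theorem~\ref{grmorim}). Evaluating the unit and counit of this adjoint equivalence at $A \in \Gr A$ and $B \in \Gr B$, and noting that these canonical maps are bimodule maps (arising from the Morita context as in (\ref{bsbsbs1}) and (\ref{bsbsbs2})), one obtains graded bimodule isomorphisms $Q \otimes_B P \cong_{\gr} A$ and $P \otimes_A Q \cong_{\gr} B$.

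Define a map
\[
\Phi : \Pic^{\gr}(A) \longrightarrow \Pic^{\gr}(B), \qquad [M] \longmapsto [P \otimes_A M \otimes_A Q],
\]
which is well-defined on graded isomorphism classes since the tensor product preserves graded bimodule isomorphisms. If $M^{-1}$ is a graded inverse of $M$ in $\Pic^{\gr}(A)$, associativity of the tensor product together with the bimodule isomorphism $Q \otimes_B P \cong_{\gr} A$ gives
\[
(P \otimes_A M \otimes_A Q) \otimes_B (P \otimes_A M^{-1} \otimes_A Q) \cong_{\gr} P \otimes_A M \otimes_A M^{-1} \otimes_A Q \cong_{\gr} P \otimes_A Q \cong_{\gr} B,
\]
and symmetrically in the other order, so $\Phi([M]) \in \Pic^{\gr}(B)$ with $\Phi([M])^{-1} = \Phi([M^{-1}])$.

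The same kind of calculation, inserting $Q \otimes_B P \cong_{\gr} A$ in the middle, shows that $\Phi$ is a group homomorphism:
\[
\Phi([M])\Phi([N]) = [P \otimes_A M \otimes_A Q \otimes_B P \otimes_A N \otimes_A Q] \cong_{\gr} [P \otimes_A M \otimes_A N \otimes_A Q] = \Phi([M][N]).
\]
Running the construction in reverse produces $\Psi : \Pic^{\gr}(B) \to \Pic^{\gr}(A)$, $[N] \mapsto [Q \otimes_B N \otimes_B P]$, and a third tensor-product calculation shows $\Psi \circ \Phi = \id$ and $\Phi \circ \Psi = \id$, so $\Phi$ is the desired group isomorphism.

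The only genuinely delicate point is the first step: one must be sure that the natural isomorphisms $Q \otimes_B P \cong A$ and $P \otimes_A Q \cong B$ produced by the graded equivalence hold as graded \emph{bimodule} isomorphisms, not merely as one-sided graded modules. This is essentially repackaging the graded Morita context and follows from the argument in Theorem~\ref{grmorim} (where $P$ arises as an endomorphism object over both $A$ and $B$), but it is the step where grading and both-sided structure have to be tracked simultaneously; everything else reduces to the associativity and unit coherence of the tensor product over graded rings as recorded in~\S\ref{grtensie}.
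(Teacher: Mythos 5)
Your proposal is correct and follows essentially the same route as the paper: the paper's proof invokes Theorem~\ref{grmorim} to obtain the graded $B\!-\!A$-bimodule $P=\psi(B)$ with $\phi\cong -\otimes_A P^*$ and $\psi\cong -\otimes_B P$, and then defines exactly your map $[M]\mapsto [P\otimes_A M\otimes_A P^*]$ (your $Q$ is $P^*$), leaving the verification as an easy check. Your write-up merely makes explicit the tensor-product calculations and the bimodule nature of the Morita-context isomorphisms from~(\ref{bsbsbs1}) and~(\ref{bsbsbs2}) that the paper's ``one can easily check'' elides, so there is no substantive difference in approach.
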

\begin{proof}
Let $\Gr A \approx_{\gr} \Gr B$. Then there is a graded equivalence $\phi:\Gr A\rightarrow \Gr B$ with an inverse $\psi:\Gr B \rightarrow \Gr A$.
By Theorem~\ref{grmorim} (and its proof), $\psi(B)=P$ is a graded $B\!-\!A$-bimodule, $\phi\cong -\otimes_A P^*$ and $\psi\cong-\otimes_B P$. Now one can easily check that the map 
\begin{align*}
\Pic^{\gr}(A) &\longrightarrow  \Pic^{\gr}(B),\\
[M]  &\longmapsto [P\otimes_A M\otimes_A P^*]
\end{align*}
is an isomorphism of groups. 
\end{proof}

In fact, in view of the fact that the Picard group coincides with the group of isomorphism classes of auto-equivalences of a given module category, Theorem~\ref{hgy7d53} can be established directly. 

Let $A$ be a $\Gamma$\!-graded ring. Consider the group $\Aut^{\gr}(A)$ of all the graded automorphisms and its subgroup $\Inn_{A_0}^{\gr}(A)$ which consists of  graded inner automorphisms induced by the homogeneous elements of degree zero as follows. 
\[ \Inn_{A_0}^{\gr}(A) =\big \{\, f:A\longrightarrow A \mid f(x)=uxu^{-1}, u \in A_0^* \, \big \}.\] 
The graded $A$-bimodule structure on $A$  induced by $f,g\in \Aut^{\gr}(A)$ will be denoted  by ${}_f A_g$. Namely, $A$ acts on ${}_f A_g$ as follows, $a.x.b=f(a)xg(b)$. 
 For $f,g,h \in \Aut^{\gr}(A)$, one can prove the following graded $A$-bimodule isomorphisms. 
\begin{align}\label{gdn22}
{}_fA_g & \conggr {}_{hf}A_{hg} \\
{}_fA_1 \otimes _A {}_gA_1 & \conggr {}_{fg}A_{1} \notag \\
 {}_fA_1 \otimes _A {}_1A_f & \conggr   {}_1A_f \otimes _A {}_fA_1 \conggr A.  \notag
\end{align}

The following theorem provides two exact sequences  between the groups $\Gamma$, $\Inn_{A_0}^{\gr}(A)$ and $\Aut^{\gr}(A)$ and the graded Picard group. The first sequence belongs to the graded setting, whereas the second sequence is a graded version of a similar result in the ungraded setting  (see~\cite[Theorem~55.11]{curtisra}). 

\index{centre of a ring} \index{$C(A)$, centre of $A$}
Recall that $C(A)$ stands for the centre of the ring $A$ which is a graded subring of $A$ (when the grade group $\Gamma$ is abelian, see Example~\ref{millarexi}). Moreover, it is easy to see that the map 
\begin{align}\label{jujuj1}
\phi: \Gamma &\longrightarrow \Pic^{\gr}(A),\\
\alpha &\longmapsto [A(\alpha)]\notag
\end{align} 
(which was considered in the case of commutative graded rings in~(\ref{campbell})) is well-defined and is a homomorphism. 

\begin{theorem}\label{picadoli}
Let $A$ be a $\Gamma$\!-graded ring. Then the following sequences are exact. 

\begin{enumerate}[\upshape(1)]

\item The sequence  $1\longrightarrow \Ga_{C(A)}^* \longrightarrow \Gamma \stackrel{\phi}\longrightarrow \Pic^{\gr}(A),$ where $\phi(\alpha)=[A(\alpha)]$. 

\item The sequence $1\longrightarrow \Inn_{A_0}^{\gr}(A) \longrightarrow \Aut^{\gr}(A) \stackrel{\phi}{\longrightarrow} \Pic^{\gr}(A),$
where $\phi(f)=[{}_fA_1]$. 
\end{enumerate}
\end{theorem}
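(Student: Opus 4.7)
The plan is to handle the two exact sequences in parallel, since in both cases the map $\phi$ is a homomorphism whose kernel is characterised by the existence of a graded $A$-bimodule isomorphism with one side being a shift/twist of $A$, and such an isomorphism is determined by the image of $1_A$.

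For part (1), first I would note that $\phi$ is a well-defined homomorphism: the isomorphism $A(\alpha)\otimes_A A(\beta)\conggr A(\alpha+\beta)$ of graded $A$-bimodules from~\eqref{inenbuild} gives $\phi(\alpha)\phi(\beta)=\phi(\alpha+\beta)$. To compute $\ker\phi$, suppose $\psi:A\to A(\alpha)$ is a graded $A$-bimodule isomorphism and set $u=\psi(1_A)$. Since $\psi$ is graded of degree $0$ and $A(\alpha)_0=A_\alpha$, we have $u\in A_\alpha$. $A$-bilinearity gives $\psi(a)=\psi(1\cdot a)=ua$ and $\psi(a)=\psi(a\cdot 1)=au$, so $au=ua$ for all $a\in A$, i.e.\ $u\in C(A)$. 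Finally $\psi$ is an isomorphism if and only if $u$ is invertible. Thus $\alpha\in\ker\phi$ exactly when there exists an invertible homogeneous element of degree $\alpha$ in $C(A)$, which is precisely $\alpha\in\Gamma^*_{C(A)}$. The reverse direction is immediate: given such $u$, the map $a\mapsto ua$ furnishes the required isomorphism.

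For part (2), the relations in~\eqref{gdn22}, namely ${}_fA_1\otimes_A{}_gA_1\conggr {}_{fg}A_1$ and ${}_fA_1\otimes_A{}_{f^{-1}}A_1\conggr A$, show that $[{}_fA_1]\in\Pic^{\gr}(A)$ and that $\phi$ is a homomorphism. To analyse $\ker\phi$, suppose $\psi:A\to {}_fA_1$ is a graded $A$-bimodule isomorphism. Setting $u=\psi(1)$, the degree-$0$ condition forces $u\in A_0$, and bilinearity gives $ua=\psi(a)=f(a)u$, so $f(a)=uau^{-1}$ for all $a\in A$, while the invertibility of $\psi$ forces $u\in A_0^*$. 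Hence $f\in\Inn^{\gr}_{A_0}(A)$. Conversely, given $f(a)=uau^{-1}$ with $u\in A_0^*$, the assignment $a\mapsto ua$ defines a graded $A$-bimodule isomorphism $A\to{}_fA_1$, since the left action on ${}_fA_1$ is twisted by $f$: $a\cdot(ub)=f(a)ub=uau^{-1}ub=uab$.

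The arguments are largely routine computations with bimodule actions; the only delicate point, and the step to handle carefully, is matching the grading of the distinguished element $u=\psi(1)$ with the correct shifted or twisted component (degree $\alpha$ in part (1) versus degree $0$ in part (2)), and verifying that invertibility of $\psi$ translates into two-sided invertibility of $u$ in the appropriate subring ($C(A)$ in the first case, $A_0$ in the second). Once this dictionary between graded bimodule isomorphisms and distinguished invertible homogeneous elements is firmly in place, both exact sequences fall out immediately.
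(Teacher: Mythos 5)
Your proposal is correct and follows essentially the same route as the paper: in both parts one translates a graded $A$-bimodule isomorphism into a distinguished invertible homogeneous element (central of degree $\alpha$ for $A\conggr A(\alpha)$, a unit in $A_0$ for $A\conggr {}_fA_1$), using the relations~\eqref{inenbuild} and~\eqref{gdn22} for well-definedness. The only cosmetic difference is that in part (2) the paper works with an isomorphism $\theta:{}_fA_1\to A$ and the element $u$ with $\theta(u)=1$, while you work with $\psi:A\to{}_fA_1$ and $u=\psi(1)$; the computations are equivalent.
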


\begin{proof}

(1) Consider the group homomorphism $\phi$ defined in~(\ref{jujuj1}). 
If $u \in C(A)^*$ is a homogeneous element of degree $\alpha$, then the map $\psi: A\rightarrow A(\alpha)$, $a\mapsto ua$, is a graded $A$-bimodule isomorphism. This shows that $ \Ga_{C(A)}^* \subseteq \ker(\phi)$. On the other hand if $\phi(\alpha)=[A(\alpha)]=1_{\Pic^{\gr}(A)}=[A]$, then there is a graded $A$-bimodule isomorphism $\psi:A\rightarrow A(\alpha)$. 
 From this it follows that there is an invertible homogeneous element  $u\in C(A)$ of degree $\alpha$ such that $\psi(x)=ux$ (see also Corollary~\ref{rndcongcori} and Proposition~\ref{rndconggrrna}).  This completes the proof. 

(2) The fact that the map $\phi$ is well-defined and is a homomorphism follows from (\ref{gdn22}). We only need to show that $\ker\phi$ coincides with $\Inn_{A_0}^{\gr}(A)$.
Let $f\in \Inn_{A_0}^{\gr}(A)$, so that $f(x)=uxu^{-1}$, for any $x\in A$, where $u \in A^*_0$. Then the $A$-graded bimodules ${}_fA_1$ and $A$ are isomorphic. Indeed for the map 
\begin{align*}
\theta:{}_fA_1&\longrightarrow A,\\
x&\longmapsto u^{-1}x
\end{align*}
 we have
\[ \theta(a.x.b)=\theta(uau^{-1}xb)=au^{-1}xb=a.\theta(x).b,\]
which gives a graded $A$-bimodule isomorphisms. This shows that \[\Inn_{A_0}^{\gr}(A) \subseteq \ker \phi.\] 
Conversely, suppose $\phi(f)=[{}_fA_1]=[A]$. Thus there is a graded $A$-bimodule isomorphism $\theta:{}_fA_1\rightarrow A$ such that 
\begin{equation}\label{arum}
 \theta(a.x.b)=\theta(f(a)xb)=a\theta(x)b.
 \end{equation}
 Since $\theta$ is bijective, there is a $u\in A$ such that $\theta(u)=1$. Also, since $\theta$ is graded and $1\in A_0$ it follows that $u\in A_0$. But $A=\theta(A)=\theta(A1)=Au$ and similarly $A=uA$. This implies $u\in A_0^*$. 
Plugging $x=a=1$ in (\ref{arum}), we have $\theta(b)=ub$, for any $b\in A$. Using this identity, by plugging $x=b=1$ in (\ref{arum}), we obtain $uf(a)=au$. Since $u$ is invertible, we get $f(a)=u^{-1}au$, so $f\in \Inn_{A_0}^{\gr}(A)$ and we are done. 
\end{proof}

For a graded noncommutative ring $A$ the problem of determining the Picard group $\Pic(A)$ is difficult. There are several cases in the literature where $\Picent(A):=\Pic_R(A)$ is determined. Here $R$ is a centre of $A$ and $\Pic_R(A)$ is a subgroup of $\Pic(A)$ consisting of all isomorphism classes of invertible bimodules $P$ of $A$ which are centralised by $R$, (\ie $rp=pr$, for all $r\in R$ and $p\in P$) \ie $P$ is a $A\otimes_R A^{\op}$-left module
(see~\cite{frohlich}, and~\cite[\S 55]{curtisra}). For a graded division algebra $A$ (\ie a graded division ring which is finite dimensional over its centre) we will be able to determine $\Pic^{\gr}(A)$. In fact, since graded division algebras are graded Azumaya algebras, we first determine $\Pic^{\gr}_R(A)$, where $A$ is a graded Azumaya algebra over its centre $R$. The approach follows the idea in Bass~\cite[\S3, Corollary~4.5]{basst}, where it is shown that for an Azumaya algebra $A$ over $R$, the Picard group $\Pic_R(A)$ coincides with $\Pic(R)$. \index{Azumaya algebra}

Recall that a graded algebra $A$ over a commutative graded ring $R$ is called a \emph{graded Azumaya algebra} \index{graded Azumaya algebra} if $A$ is a graded faithfully projective $R$-module and the natural map 
\begin{align*}
\phi:A\otimes_R A^{\op} &\longrightarrow \End_R(A),\\
a\otimes b &\longmapsto \phi(a\otimes b)(x)=axb,
\end{align*} where $a,x\in A$ and $b\in A^{\op}$, is a graded $R$-isomorphism. This implies that $A$ is an Azumaya algebra over $R$. Conversely, if $A$ is a graded algebra over graded ring $R$ and $A$ is an Azumaya algebra over $R$, then $A$ is faithfully projective as an $R$-module and the natural map $\phi:A\otimes_R A^{\op} \rightarrow \End_R(A)$ is naturally graded. Thus $A$ is also a graded Azumaya algebra. 

A graded division algebra is a graded central simple algebra. We first define these type of rings and show that they are graded Azumaya algebras. 

A graded algebra $A$ over a graded commutative ring $R$ is said to be a 
\emph{graded central simple algebra}\index{graded central simple algebra} over $R$ if $A$ is graded simple ring, \ie $A$ does not have any proper graded two-sided ideals, $C(A) \conggr R$ and $A$ is finite dimensional as an $R$-module. Note that since
$A$ is graded simple, the centre of $A$ (identified with $R$), is a graded field. Thus $A$ is graded
free as a graded module over its centre by
Proposition~\ref{gradedfree}, so the dimension of $A$ over $R$ is
uniquely defined. \index{graded simple ring}

\begin{proposition}\label{gcsaazumayaalgebra}
Let $A$ be a $\Ga$\!-graded central
simple algebra over a graded field $R$. Then $A$ is a graded Azumaya
algebra over $R$.
\end{proposition}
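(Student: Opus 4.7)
The plan is to verify the two conditions in the definition of a graded Azumaya algebra: that $A$ is graded faithfully projective over $R$, and that the natural map $\phi \colon A \otimes_R A^{\op} \to \End_R(A)$, $a \otimes b \mapsto (x \mapsto axb)$, is a graded $R$-isomorphism. The first condition is essentially free of charge: since $A$ is graded simple, its centre $R$ is a graded field, and by Proposition~\ref{gradedfree} every graded module over a graded field is graded free. So $A$ is a graded free $R$-module, which in particular is graded faithfully projective, and by Proposition~\ref{kj351} there is a well-defined graded dimension $n = [A:R] < \infty$.

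For the second condition, I would first observe that $\phi$ is a homomorphism of graded $R$-algebras (the grading is preserved because $\deg(\phi(a \otimes b)) = \deg(a) + \deg(b)$ when $a, b$ are homogeneous, matching the grading of $\End_R(A)$ from Theorem~\ref{crazhorn}). Injectivity of $\phi$ will follow once we know that $A \otimes_R A^{\op}$ is a graded simple ring, since $\ker(\phi)$ is then a proper graded two-sided ideal (proper because $\phi(1 \otimes 1) = \id_A \neq 0$), which forces $\ker(\phi) = 0$.

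The key technical step, which I expect to be the main obstacle, is therefore showing that $A \otimes_R A^{\op}$ is graded simple whenever $A$ is graded central simple over $R$. The strategy is the graded analogue of the classical Artin-type argument: let $I$ be a nonzero graded two-sided ideal of $A \otimes_R A^{\op}$, and pick a nonzero homogeneous element $z = \sum_{i=1}^m a_i \otimes b_i \in I$ with $m$ minimal, where the $b_i \in A^{\op}$ are chosen $R$-linearly independent. Multiplying $z$ by elements of the form $c \otimes 1$ and $1 \otimes c'$ on both sides, together with the fact that $A$ is graded simple (so the two-sided ideal generated by $a_1$ is all of $A$), allows one to manoeuvre $z$ into the form $1 \otimes b$ for some homogeneous $b$; the minimality hypothesis and a centralisation argument identify $b$ as lying in the centre $R$ of $A^{\op}$. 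Since $b$ is a nonzero homogeneous element of the graded field $R$, it is invertible, so $1 \otimes 1 \in I$ and $I = A \otimes_R A^{\op}$. This is exactly the ungraded Artin argument, but one needs to check at each step that the element produced remains homogeneous, which works because all the operations (multiplication, commutators, scaling) preserve the grading.

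Once injectivity is in hand, surjectivity comes by a dimension count. Because $A$ is graded free over $R$ of graded dimension $n$, Theorem~\ref{nanjite} and the identification $\End_R(A) \cong_{\gr} \M_n(R)(\overline{\delta})$ give $[\End_R(A) : R] = n^2$. On the other hand $[A \otimes_R A^{\op} : R] = n^2$ as well, since tensor product of graded free modules over a graded field is graded free with multiplicative rank. Both sides are therefore finite-dimensional graded $R$-vector spaces of equal dimension, and Proposition~\ref{grdimensionprop} (applied to the graded submodule $\im(\phi)$ of $\End_R(A)$) forces an injective graded $R$-linear map between them to be surjective. This completes the proof that $\phi$ is a graded isomorphism, and hence that $A$ is a graded Azumaya algebra over $R$.
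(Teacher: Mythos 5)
Your outline is essentially the paper's proof: graded freeness of $A$ over the graded field $R$ (Proposition~\ref{gradedfree}) gives faithful projectivity and a finite graded dimension; injectivity of $\phi$ is reduced to graded simplicity of $A \otimes_R A^{\op}$; and surjectivity follows by dimension count via Proposition~\ref{grdimensionprop}. The one genuine difference is that the paper simply cites \cite[Chapter~2]{tignolwadsworth} for the graded simplicity of $A \otimes_R A^{\op}$ (after noting that graded ideals of $A^{\op}$ coincide with those of $A$, so $A^{\op}$ is graded simple), whereas you supply the graded Artin-type argument in-line. Supplying it is a reasonable choice, and most of your sketch is sound: one can choose $z \in I$ homogeneous with homogeneous coefficients, use graded simplicity of $A$ to normalise $a_1 = 1$ (projecting onto the degree-zero component to keep homogeneity), and then the commutator-with-$c \otimes 1$ step, combined with minimality of $m$, legitimately forces $a_i \in C(A) = R$ for $i \geq 2$, so $z = 1 \otimes b$ with $b$ homogeneous and nonzero.

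There is, however, one faulty step in your finish: the claim that ``the minimality hypothesis and a centralisation argument identify $b$ as lying in the centre $R$ of $A^{\op}$.'' Once $z = 1 \otimes b$ has length $m = 1$, the commutator $(1 \otimes c')z - z(1 \otimes c') = 1 \otimes (c'b - bc')$ also has length at most $1$, so minimality does not force it to vanish, and indeed $b$ need not be central. Fortunately centrality (and hence invertibility of $b$ in $R$) is not needed: since $A^{\op}$ is graded simple, the graded two-sided ideal of $A^{\op}$ generated by the nonzero homogeneous element $b$ is all of $A^{\op}$, so $I \supseteq 1 \otimes A^{\op} \ni 1 \otimes 1$ and $I = A \otimes_R A^{\op}$ directly. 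With that repair your argument is complete, and the rest (injectivity from graded simplicity plus $\phi(1 \otimes 1) = \id_A \neq 0$, and the count $[A \otimes_R A^{\op} : R] = n^2 = [\End_R(A) : R]$ using the identification $\End_R(A) \cong_{\gr} \M_n(R)(\overline{\delta})$) matches the paper.
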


\begin{proof}
Since $A$ is graded free of finite dimension over $R$, it follows
that $A$ is faithfully projective over $R$. Consider the natural
graded $R$-algebra homomorphism $\psi: A \otimes_R A^{\op} \ra
\End_R (A)$ defined by $\psi(a\otimes b)(x)=axb$ where $a,x \in A$,
$b \in A^{\op}$. Since graded ideals of $A^{\op}$ coincide with 
graded ideals of $A$, $A^{\op}$ is also graded simple. Thus $A \otimes A^{\op}$ is also
graded simple (see~\cite[Chapter~2]{tignolwadsworth}, so $\psi$ is injective. Hence the map is surjective
by dimension count, using Theorem~\ref{grdimensionprop}. This shows
that $A$ is an Azumaya algebra over $R$, as required.
\end{proof}

Let $A$ be a graded algebra over a graded commutative ring $R$. For a graded $A$-bimodule $P$ centralised by $R$, define \[P^A=\{\,p \in P \mid ap=pa, \text{ for all } a\in A \, \}.\] 
Denote by $\Gr A_R \rGr$ the category of graded $A$-bimodules centralised by $R$. When $A$ is a graded Azumaya algebra, the functors \index{$\Gr A_R \rGr$}
\begin{align*}
\Gr A_R \rGr &\longrightarrow \Gr R,  & \Gr R &\longrightarrow \Gr A_R \rGr,\\
P &\longmapsto P^A& N &\longmapsto N\otimes_R A
\end{align*}
are inverse equivalence of categories which graded projective modules correspond to graded projective modules and moreover, invertible modules correspond to invertible modules (see~\cite[Proposition~III.4.1]{can} and~\cite[Theorem~5.1.1]{knus} for the ungraded version). 
This immediately implies that 
\begin{equation}\label{mofbe72}
\Pic^{\gr}_R(A)\cong \Pic^{\gr}(R).
\end{equation}

\begin{lemma}\label{campbell3} \index{graded division algebra}
Let $A$ be a $\Gamma$\!-graded division algebra with centre $R$. Then \[\Pic^{\gr}_R(A)\cong \Gamma/\Gamma_R.\] 
\end{lemma}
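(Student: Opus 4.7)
The plan is to chain together three results that are already in place: the Azumaya property of $A$, the Morita-type reduction of $\Pic^{\gr}_R$ to the centre, and the explicit computation of the graded Picard group of a graded field.

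First, I would observe that since $A$ is a graded division algebra, it is in particular a graded central simple algebra over its centre $R$ (it is graded simple because every nonzero homogeneous element is invertible, $C(A) = R$ by assumption, and $A$ is finite dimensional over $R$ by the definition of a graded division algebra). Hence Proposition~\ref{gcsaazumayaalgebra} applies and $A$ is a graded Azumaya algebra over $R$.

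Next, I would invoke the equivalence of categories between $\Gr A_R \rGr$ and $\Gr R$ that holds for any graded Azumaya algebra, via the mutually inverse functors $P \mapsto P^A$ and $N \mapsto N \otimes_R A$. This equivalence preserves invertible modules, which is precisely the content of the isomorphism~\eqref{mofbe72}, giving
\[
\Pic^{\gr}_R(A) \cong \Pic^{\gr}(R).
\]
Finally, since $A$ is a graded division ring, its centre $R$ is a graded field (nonzero homogeneous elements of $R$ are central homogeneous elements of $A$, hence invertible), with support $\Gamma_R$. Proposition~\ref{peenei} then yields
\[
\Pic^{\gr}(R) \cong \Gamma/\Gamma_R,
\]
and combining the two isomorphisms gives the desired identification $\Pic^{\gr}_R(A) \cong \Gamma/\Gamma_R$.

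There is no real obstacle here: the lemma is essentially a bookkeeping corollary that assembles Proposition~\ref{gcsaazumayaalgebra}, equation~\eqref{mofbe72}, and Proposition~\ref{peenei}. The only subtlety worth spelling out is the verification that the centre of a graded division ring is a graded field (needed to apply Proposition~\ref{peenei} with support $\Gamma_R$), but this is immediate from Proposition~\ref{basicsofgradedrings}(4) together with the fact that nonzero homogeneous central elements are invertible in $A$ and their inverses are automatically central and homogeneous, hence lie in $R$.
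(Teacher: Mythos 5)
Your proposal is correct and follows exactly the paper's own route: Proposition~\ref{gcsaazumayaalgebra} to get the graded Azumaya property, the equivalence~\eqref{mofbe72} to reduce $\Pic^{\gr}_R(A)$ to $\Pic^{\gr}(R)$, and Proposition~\ref{peenei} to compute the latter as $\Gamma/\Gamma_R$. Your extra remark verifying that the centre $R$ of a graded division ring is a graded field is a sound addition that the paper leaves implicit.
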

\begin{proof}
By Proposition~\ref{gcsaazumayaalgebra},  $A$ is a graded Azumaya algebra over $R$. By~(\ref{mofbe72}), $\Pic^{\gr}_R(A)\cong \Pic^{\gr}(R)$. Since $R$ is a graded field,  by Proposition~\ref{peenei}, 
$ \Pic^{\gr}(R)=\Gamma/\Gamma_R$. 
\end{proof}

\begin{example}\label{donjendafe}
Recall from Example~\ref{egofgrdivisionrings} that the quaternion algebra  $\H = \R \+ \R i \+ \R j \+ \R k$
is a graded division algebra having two different gradings, \ie $\Z_2$ and $\Z_2
\times \Z_2$, respectively. Since the centre $\R$ is concentrated in degree $0$ in either grading, by Lemma~\ref{campbell3}, 
$\Pic^{\gr}_{\R}(\H)=\mathbb Z_2$ or $\Pic^{\gr}_{\R}(\H)= \mathbb Z_2\oplus \mathbb Z_2$, depending on the grading. This also shows that in contrast to the graded Grothendieck group, $\Pic^{\gr}_{\R}(\H)\not \cong \Pic(\H_0)$, although $\H$ is a strongly graded ring in either grading. 
\end{example}

\begin{example}\label{whyso9} \index{associated graded ring}
Let $(D,v)$ be a tame valued division algebra over a henselian field $F$, where $v:D^*\rightarrow \Gamma$ is the valuation homomorphism. By  Example~\ref{grdiviwad}, there is a $\Gamma$\!-graded division algebra $\gr(D)$ associated to $D$  with the centre $\gr(F)$, where $\Gamma_{\gr(D)}=\Gamma_D$ and $\Gamma_{\gr(F)}=\Gamma_F$. Since by Lemma~\ref{gcsaazumayaalgebra}, graded division algebras are graded Azumaya, by Lemma~\ref{campbell3}, \[\Pic^{\gr}_{\gr(F)}\big(\gr(D)\big)=\Gamma/\Gamma_F.\]
\end{example}

Let $A$ be a strongly graded $\Gamma$\!-graded ring. Then for any $\alpha\in \Gamma$, $A_\alpha$ is a finitely generated projective invertible $A_0$-bimodule (see Theorem~\ref{mozsace}) and the map 
\begin{align}\label{redflaximel}
\psi:\Gamma&\longrightarrow \Pic(A_0),\\
\alpha&\longmapsto [A_\alpha] \notag
\end{align}
is a group homomorphism.  We then have a natural commutative diagram (see Theorem~\ref{picadoli}) 
\begin{equation}\label{redflaxim}
\begin{split}
\xymatrix{
1 \ar[r] &  C(A) \cap A_\alpha^* \ar@{^{(}->}[d] \ar[r] & \Gamma  \ar@{=}[d] \ar[r]^-{\phi} & \Pic^{\gr}(A) \ar[d]^{(-)_0}\\
1 \ar[r] &  C_A(A_0) \cap A_\alpha^* \ar[r] &  \Gamma \ar[r]^-{\psi}  &  \Pic(A_0)
}
\end{split}
\end{equation}

\begin{remark}\scm[Relating $\Pic^{\gr}(A)$ to $\Pic(A_0)$ for a strongly graded ring $A$] \index{strongly graded ring}
\vspace{0.2cm}

Example~\ref{donjendafe} shows that for a strongly graded ring $A$, $\Pic^{\gr}(A)$ is not necessarily isomorphic to $\Pic(A_0)$. 
However one can relate these two groups with an exact sequence as follows.
\[  1\longrightarrow H^1(\Gamma, Z(A_0)^*) \longrightarrow \Pic^{\gr}(A) \stackrel{\Psi}{\longrightarrow} \Pic(A_0)^{\Gamma} \longrightarrow H^2(\Gamma, Z(A_0)^*). \]
Here $\Gamma$ acts on $\Pic(A_0)$ by \[\gamma [P]=[A_\gamma \otimes_{A_0} P \otimes_{A_0} A_{-\gamma}],\] so the notation 
$\Pic(A_0)^{\Gamma}$ refers to the group of $\Gamma$\!-invariant elements of $\Pic(A_0)$. Moreover, $Z(A_0)^*$ denotes the units of the centre of $A_0$ and $H^1,H^2$  denote the first and second cohomology groups. The map $\Psi$ is defined by $\Psi([P])=[P_0]$ (see~\cite{marcus1} for details. Also see~\cite{beatrio1}). 

\end{remark}

We include a result from~\cite{ibno} which describes if $A_0$ has IBN, then $A$ has gr-IBN, based on a condition on $\Pic(A_0)$. In general, one can produce  an example of strongly graded ring $A$ such that $A_0$ has IBN whereas $A$ is a non-IBN ring. 

For a ring $R$, define \[\Pic_n(R)=\big \{\, [X] \in \Pic(R) \mid X^n \cong R^n \text{ as right $R$-module} \, \big \}.\] This is a subgroup of $\Pic(R)$ and $\Pic_n(R)$ and $\Pic_m(R)$ are subgroups of $\Pic_{nm}(R)$. Thus 
\[\Pic_{\infty}(R)=\bigcup_{n\geq 1}\Pic_n(R)\] is a subgroup of $\Pic(R)$.

\begin{proposition}
Let $A$ be a strongly $\Gamma$\!-graded ring such that $\{[A_\alpha]\mid \alpha \in \Gamma\} \subseteq \Pic_{\infty}(A_0)$. If $A_0$ has IBN then $A$ has gr-IBN. Moreover, if $\Gamma$ is finite, then $A$ has IBN if and only if $A_0$ has IBN. 
\end{proposition}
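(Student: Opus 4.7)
My plan is to use Dade's theorem (Theorem~\ref{dadesthm}), which provides an equivalence $\Gr A \approx \Modd A_0$ via the zero-component functor together with its inverse $-\otimes_{A_0} A$, to translate graded (resp.\ ungraded) isomorphisms over $A$ into isomorphisms of $A_0$-modules, and then exploit $[A_\alpha]\in\Pic_\infty(A_0)$ to trivialise the invertible bimodules $A_\alpha$ by raising to a high enough direct-sum power. The key observation that gets used repeatedly is $A(\alpha)_0 = A_\alpha$, together with the fact that for any positive integer $k$ one has $\Pic_N(A_0)\subseteq \Pic_{kN}(A_0)$, so that a witness $N_\alpha$ for $[A_\alpha]\in\Pic_\infty(A_0)$ can always be enlarged; when only finitely many $\alpha$ are in play, their witnesses admit a common multiple $N$ with $A_\alpha^N\cong A_0^N$ uniformly.

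For the gr-IBN assertion, I would suppose $A^m(\overline\alpha)\cong_{\gr}A^n(\overline\delta)$ with $\overline\alpha=(\alpha_1,\dots,\alpha_m)$ and $\overline\delta=(\delta_1,\dots,\delta_n)$, and apply $(-)_0$ to obtain
\[
A_{\alpha_1}\oplus\cdots\oplus A_{\alpha_m}\;\cong\;A_{\delta_1}\oplus\cdots\oplus A_{\delta_n}
\]
as right $A_0$-modules. Choosing $N$ to be a common multiple of witnesses for the finitely many $[A_{\alpha_i}],[A_{\delta_j}]\in\Pic_\infty(A_0)$, I would then take the $N$-fold direct sum of this isomorphism, distribute the power through the finite direct sum, and replace each $A_{\alpha_i}^N$ and $A_{\delta_j}^N$ by $A_0^N$, arriving at $A_0^{mN}\cong A_0^{nN}$. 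The IBN hypothesis on $A_0$ then forces $mN=nN$, so $m=n$.

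For the final equivalence, one direction is immediate: if $A$ has IBN and $A_0^m\cong A_0^n$, then applying the induction functor $-\otimes_{A_0}A$ yields $A^m\cong_{\gr}A^n$ and hence $A^m\cong A^n$, forcing $m=n$; this direction requires neither the finiteness of $\Gamma$ nor the $\Pic_\infty$ hypothesis. For the converse, assume $\Gamma$ is finite with $|\Gamma|=k$ and $A_0$ has IBN, and suppose $A^m\cong A^n$ as right $A$-modules. Restricting scalars to $A_0$ and using the $A_0$-module decomposition $A=\bigoplus_{\alpha\in\Gamma}A_\alpha$, I would choose a single $N$ with $A_\alpha^N\cong A_0^N$ for all $\alpha\in\Gamma$ (possible because $\Gamma$ is finite), take the $N$-fold direct sum of $A^m\cong A^n$, and use the finite-sum distributivity to obtain $A_0^{mNk}\cong A_0^{nNk}$; IBN of $A_0$ then yields $m=n$.

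The main obstacle I anticipate is purely bookkeeping: one must verify that the distributivity $(\bigoplus_{\alpha\in\Gamma} A_\alpha)^{mN}\cong\bigoplus_{\alpha\in\Gamma}A_\alpha^{mN}$ is legitimate (true for arbitrary $\Gamma$ since $mN$ is finite, though finiteness of $\Gamma$ is used separately in the final step to collect $k$ copies of $A_0^{mN}$ into a free module of finite rank), and that the simultaneous-witness $N$ can be chosen so that every relevant $A_\alpha$ satisfies $A_\alpha^N\cong A_0^N$. Neither step is genuinely hard; the real content is the soft observation that the obstruction to freeness carried by the invertible bimodules $A_\alpha$ is exactly the sort of torsion in $\Pic(A_0)$ that the $\Pic_\infty$ hypothesis is designed to kill after passing to a suitable power.
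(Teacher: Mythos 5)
Your proof is correct, and its first half (gr-IBN) and the easy direction of the final equivalence coincide with the paper's argument: restrict a graded isomorphism $A^m(\overline\alpha)\cong_{\gr}A^n(\overline\delta)$ to degree-zero components (the paper cites Dade's theorem for this), pick a common witness $t$ with $[A_{\alpha_i}],[A_{\delta_j}]\in\Pic_t(A_0)$, replace each $A_{\alpha_i}^t$, $A_{\delta_j}^t$ by $A_0^t$, and invoke IBN of $A_0$; conversely tensor $A_0^m\cong A_0^n$ up with $-\otimes_{A_0}A$. Where you genuinely diverge is the direction ``$A_0$ has IBN $\Rightarrow$ $A$ has IBN'' for finite $\Gamma$. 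The paper does \emph{not} revisit the $\Pic_\infty$ hypothesis there: it applies the right adjoint $F$ of the forgetful functor $U:\Gr A\rightarrow \Modd A$, so that $A^n\cong A^m$ yields $F(A)^n\conggr F(A)^m$ with $F(A)=\bigoplus_{\gamma\in\Gamma}A(\gamma)$, and then quotes the gr-IBN already established to compare homogeneous bases of cardinalities $n|\Gamma|$ and $m|\Gamma|$. You instead restrict scalars to $A_0$, use finiteness of $\Gamma$ to choose a single $N$ with $A_\alpha^N\cong A_0^N$ for \emph{all} $\alpha\in\Gamma$ (legitimate, since $\Pic_n(A_0)\subseteq\Pic_{nm}(A_0)$), and collapse $A^{mN}\cong\bigoplus_{\alpha\in\Gamma}A_\alpha^{mN}$ to $A_0^{mN|\Gamma|}$, concluding directly from IBN of $A_0$. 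Both are sound; the paper's route is shorter and reuses the first part as a black box (with $\Gamma$ finite needed so that $F(A)$ is a graded free module of finite rank), while yours is more elementary and self-contained, avoiding the adjoint functor machinery at the cost of invoking the $\Pic_\infty$ hypothesis a second time and a bit more direct-sum bookkeeping.
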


\begin{proof}
Suppose that $A^n(\overline \alpha)\conggr A^m(\overline \beta)$ as graded right $A$-modules, where $\overline \alpha=(\alpha_1,\dots,\alpha_n)$ and $\overline \beta=(\beta_1,\dots,\beta_m)$. Using Dade's theorem~\ref{dadesthm}, we have 
\begin{equation}\label{younguk}
A_{\alpha_1}\oplus\dots\oplus A_{\alpha_n} \cong A_{\beta_1}\oplus \dots\oplus A_{\beta_m},
\end{equation}
 as right $A_0$-modules. Since each $[A_{\alpha_i}]$ and $[A_{\beta_j}]$ are in some $\Pic_{n_i}(A_0)$ and $\Pic_{n_j}(A_0)$, respectively, there is a large enough $t$ such that $[A_{\alpha_i}], [A_{\beta_j}] \in \Pic_t(A_0)$, for all $1\leq i\leq n$ and $1\leq j \leq m$. Thus $A_{\alpha_i}^t\cong A_0^t$ and $A_{\beta_j}^t\cong A_0^t$, for all $i$ and $j$. Replacing this into (\ref{younguk}) we get $A_0^{nt}\cong A_0^{mt}$. Since $A_0$ has IBN, it follows $n=m$. 

Now suppose $\Gamma$ is finite. If $A_0$ has IBN, then we will prove that $A$ has IBN. From what we proved above, it follows that $A$ has gr-IBN. If $A^n\cong A^m$, then $F(A^n)\conggr F(A^m)$, where $F$ is the adjoint to the forgetful functor (see~\S\ref{forgetful}). Since $F(A^n)=F(A)^n$ and $F(A)=\bigoplus_{\gamma \in \Gamma}A(\gamma)$, using the fact that $A$ has gr-IBN, it follows immediately that $n=m$. Conversely, suppose $A$ has IBN. If $A_0^n\cong A_0^m$, then tensoring with $A$ over $A_0$ we have 
\[A^n\cong_{\gr} A_0^n \otimes_{A_0} A \cong A_0^m \otimes_{A_0} A \cong_{\gr}A^m.\] So $n=m$ and we are done. 
\end{proof}

Here is another application of Picard group, relating the ideal theory of $A_0$ to $A$. This proposition is taken from~\cite[Theorem~3.4]{oystae}. Recall the group homomorphism 
\begin{align*}
\psi:\Gamma &\longrightarrow \Pic(A_0)\\
\alpha&\longmapsto [A_\alpha],
\end{align*}

 from~(\ref{redflaximel}). For nonzero element $a \in A$, define the \emph{length} of $a$, $l(a)=n$, where $n$ is the number of nonzero homogeneous elements appear in the decomposition of $a$ into the homogeneous sum. \index{length of an element}

\index{simple ring}
\begin{proposition}\label{balmainmine} \index{Picard group}
Let $A$ be a strongly $\Gamma$\!-graded ring such that the homomorphism $\psi:\Gamma \rightarrow \Pic(A_0)$ is injective. If $A_0$ is a simple ring then so is $A$. 
\end{proposition}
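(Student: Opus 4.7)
The plan is to show $A$ is simple by verifying the two criteria of Theorem~\ref{plif63}: namely, that $A$ is graded simple and that the centre $C(A)$ is a field. The injectivity hypothesis on $\psi$ will only be needed to establish the second of these.

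First I would show that $A_0$ simple together with $A$ strongly graded forces $A$ to be graded simple. If $I$ is a nonzero graded two-sided ideal of $A$, then $I_0 = I \cap A_0$ is a two-sided ideal of $A_0$, so $I_0 = 0$ or $I_0 = A_0$. In the first case, applying the equivalence $(-)_0 : \Gr A \approx \Modd A_0$ from Dade's theorem (Theorem~\ref{dadesthm}) to the graded right $A$-module $I$ gives $I = 0$. In the second case $1 \in I$ and $I = A$.

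The main step is then to show $C(A)$ is a field, and this is where the hypothesis on $\psi$ enters. Because $\Gamma$ is abelian, $C(A)$ is a graded subring of $A$ (cf.\ Example~\ref{millarexi}), so it suffices to prove that every nonzero homogeneous element $c \in C(A) \cap A_\gamma$ is invertible and has degree $\gamma = 0$. Centrality of $c$ makes $cA = Ac$ a graded two-sided ideal, which is nonzero and hence equal to $A$ by graded simplicity; thus $c$ has a two-sided inverse, necessarily homogeneous of degree $-\gamma$. Now consider
\[
\phi_c : A_0 \longrightarrow A_\gamma, \qquad b \longmapsto cb.
\]
Centrality of $c$ makes $\phi_c$ an $A_0$\!-bimodule homomorphism, and invertibility of $c$ makes it bijective (the inverse is $x \mapsto c^{-1}x$). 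Hence $[A_\gamma] = [A_0]$ in $\Pic(A_0)$, i.e.\ $\psi(\gamma) = 0$. Injectivity of $\psi$ forces $\gamma = 0$, so $c \in A_0$. Thus $C(A) \subseteq A_0$, and since every nonzero element of $C(A)$ is now homogeneous and invertible, the commutative ring $C(A)$ is a field.

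Combining the two steps with Theorem~\ref{plif63}, $A$ is simple. The most delicate point is the $A_0$\!-bimodule identification $A_\gamma \cong A_0$ via multiplication by the central element $c$: the argument crucially relies on $c$ being central (not just homogeneous) to ensure $\phi_c$ respects both actions, and on graded simplicity to promote $c$ from merely nonzero to invertible. No further obstacle beyond this arises, and the rest is bookkeeping.
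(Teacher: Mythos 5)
Your proof is correct, but it takes a genuinely different route from the paper's. The paper argues directly on an arbitrary \emph{ungraded} nonzero ideal $I$: it picks $x\in I$ of minimal length, uses simplicity of $A_0$ to normalise the degree-zero component to $x_0=1$, deduces from minimality that every other homogeneous component $x_\alpha$ commutes with $A_0$, uses simplicity of $A_0$ a second time (via the ideals $A_{-\alpha}x_\alpha$ and $x_\alpha A_{-\alpha}$ of $A_0$, nonzero by the strong grading) to make $x_\alpha$ invertible, and thus produces an element of $C_A(A_0)\cap A_\alpha^*$ with $\alpha\neq 0$ --- exactly the kernel description of $\psi$ in Diagram~(\ref{redflaxim}) --- contradicting injectivity. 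You instead factor the problem through Theorem~\ref{plif63}: you prove graded simplicity from $A_0$ simple plus the strong grading (correct, whether via Dade's Theorem~\ref{dadesthm} as you do, or directly from $I_\gamma\subseteq I_0A_\gamma$), and you observe, rightly, that this half needs no injectivity; then injectivity of $\psi$ enters only to force central homogeneous units into degree zero via your bimodule isomorphism $\phi_c$, making $C(A)$ a field. Both proofs exploit the same underlying mechanism --- an invertible homogeneous element of nonzero degree centralising $A_0$ yields $[A_\gamma]=[A_0]$ in $\Pic(A_0)$ --- but your route is more structural and cleanly decouples the roles of the two hypotheses, while the paper's is self-contained (no appeal to Theorem~\ref{plif63} and its minimal-support machinery) and applies injectivity in its sharpest form, to the centraliser $C_A(A_0)$ rather than the smaller centre $C(A)$. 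Two cosmetic remarks: writing $\psi(\gamma)=0$ for the identity of $\Pic(A_0)$ is a harmless abuse of notation; and your invertibility step shortens further, since for central $c$ the relation $cb=1$ gives $bc=cb=1$ at once, so a right inverse is automatically two-sided.
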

\begin{proof}
Let $I$ be a nonzero ideal of $A$ and $x$ a nonzero element of $I$ which has the minimum length among all elements of $I$.  Suppose $x_\alpha$ is the homogeneous element of degree $\alpha$ appearing in the decomposition of $x$ into homogeneous sum. If $x_\alpha\not =0 $, then multiplying $x$ with a homogeneous element of degree $-\alpha$, we can assume that $x_0 \not = 0$. Since $A_0$ is a simple ring, we have $\sum_i y_i x_0 z_i =1$, where $y_i,z_i \in A_0$. Consider $\sum_i y_i x z_i \in I$, and note that its length is equal to the length of x. Replacing $x$ with this element, we can assume $x_0=1$. If $x=x_0=1$ then $I=A$ and we are done. Otherwise, suppose there is $x_\alpha\not = 0$ in the decomposition of $x$. Note that for each $b \in A_0$, 
$bx-xb \in I$ with $l(bx-xb) < l(x)$. The minimality of $x$ implies that $bx=xb$ and thus $b x_\alpha=x_\alpha b$ for any $b\in A_0$, \ie  $x_\alpha \in C_A(A_0)$. We show that $x_\alpha \in A_\alpha^*$. Note that $A_{-\alpha}x_\alpha$  and $x_\alpha A_{-\alpha}$ are nonzero (ungraded) two-sided ideals of $A_0$. Again since $A_0$ is simple, this implies $x_\alpha$ is invertible. Therefore $x_\alpha \in C_A(A_0) \cap A_\alpha^*$. This is a contradiction with the assumption that 
$\psi$ is injective (see~\ref{redflaxim}). This finishes the proof. 
\end{proof}

   \chapter[Classification of Graded Ultramatricial Algebras]{Graded Ultramatricial Algebras\\ Classification via $K^{\gr}_0$}\label{ultriuy}
 
 Let $F$ be a field. An $F$-algebra is called an  \emph{ultramatricial} \index{ultramatricial algebra} algebra, if it is isomorphic to the union of an increasing chain of a finite product of matrix algebras over $F$. When $F$ is the field of complex numbers, these algebras are also called  \emph{locally semisimple} \index{locally semisimple algebra} algebras (or LS-algebras for short), as they are isomorphic to a union of a chain of semisimple $\mathbb C$-algebras. An important example of such rings is the group ring $\mathbb C[S_{\infty}]$, where $S_{\infty}$ is the infinite symmetric group. 
These rings appeared in the setting of $C^*$-algebras and then von Neumann regular algebras in the work of Grimm, Bratteli, Elliott, Goodearl, Handelman and many others after them.  \index{$S_{\infty}$, infinite symmetric group} \index{permutation group} \index{LS-algebras}

Despite their simple constructions,  the study of ultramatricial algebras are far from over.  As it is noted in~\cite{vershik} ``The current state of the theory of LS-algebras and its applications should be considered as the initial one; one has discovered the first fundamental facts and noted a general circle of questions. To estimate it in perspective, one must consider the enormous number of diverse and profound examples of such algebras. In addition one can observe the connections with a large number of areas of mathematics.''

One of the sparkling examples of the Grothendieck group as a complete invariant is in the setting of ultramatricial algebras (and AF $C^*$-algebras). It is by now a classical result that the $K_0$-group along with its positive cone ${K_0}_+$ (the dimension group) and the position of identity  is a complete invariant for such algebras 
(\cite[Bratteli-Elliott Theorem~15.26]{goodearlbook}). To be precise, let $R$ and $S$ be (unital) ultramatricial algebras. Then $R\cong S$ if and only if there is an order isomorphism $\big(K_0(R),[R]\big)\cong \big(K_0(S),[S]\big)$, that is, an isomorphism from $K_0(R)$ to $K_0(S)$ which sends the positive cone onto the positive cone and $[R]$ to $[S]$.  To emphasis the ordering, this isomorphism is also denoted by 
$\big(K_0(R),K_0(R)_+,[R]\big)\cong \big(K_0(S),K_0(S)_+,[S]\big)$ (see~\S\ref{pregg5}).

\index{AF-algebras}  \index{dimension group}

The theory has also been worked out for the non-unital ultramatricial algebras (see Remark~\ref{idontcare} and~\cite[Chapter XII]{goodhand}). 
Two valuable surveys on ultramatricial algebras and their relations with other branches of mathematics are~\cite{vershik, zalesski}. The lecture notes by Effros~\cite{effros}  also gives an excellent detailed account of this theory. 
 
 In this section we initiate the graded version of this theory. We define the graded ultramatricial algebras. We then show that the graded Grothendieck group, equipped with its module structure and its ordering is a complete invariant for such algebras (Theorem~\ref{ujhosmr}).  When the grade group considered to be trivial, we retrieve the Bratteli-Elliott Theorem (Corollary~\ref{marriotcitywall}). 
 
When the grading is present, $K^{\gr}_0$ seems to capture more details than $K_0$.  The following example demonstrates  this: the Grothendieck group classifies matricial algebras, however it is not a complete invariant if we enlarge the class of algebras to include matrices over Laurent rings.  
Consider the following graphs and their associated Leavitt path algebras (see Theorems~\ref{gra1} and \ref{grCn1}). 
 \begin{equation*}
\xymatrix{
F & \bullet \ar[r] & \bullet \ar[r] & \bullet, & \LL_K(F)\cong \M_3(K),\\
E&  \bullet \ar[r]  & \bullet \ar@/^1.5pc/[r] & \bullet, \ar@/^1.5pc/[l] & \LL_K(E)\cong \M_3(K[x,x^{-1}]).\\
}
\end{equation*}

\vspace{0.25cm}

\noindent Since $K_0$ is Morita invariant, one can calculate that 
$$\Big(K_0( \LL(F)),K_0( \LL(F))_{+}, [ \LL(F)]\Big ) \cong \big(\mathbb Z, \mathbb N, 3\big),$$
and 
$$\Big(K_0(\LL(E)), K_0(\LL(E))_{+}, [\LL(E)]\Big ) \cong \big(\mathbb Z, \mathbb N, 3\big).$$
 However  $$\M_3(K) \not \cong \M_3(K[x,x^{-1}]).$$

But as we will prove in this Chapter, $K^{\gr}_0$ is a complete invariant for this class of algebras (Theorem~\ref{catgrhsf}).

\section{Graded matricial algebras}\label{grmat43}

We begin with the graded version of matricial algebras. \index{graded matrix ring}

\begin{definition}\label{ggthu}
Let  $A$ be a $\Gamma$\!-graded field. A $\Gamma$\!-\emph{graded matricial} \index{graded matricial algebra} $A$-algebra is a graded $A$-algebra of the form \[\M_{n_1}(A)(\overline \delta_1) \times \dots \times   \M_{n_l}(A)(\overline \delta_l),\] where $\overline \delta_i=(\delta_{1}^{(i)},\dots,\delta_{n_i}^{(i)})$, $\delta_{j}^{(i)} \in \Gamma$, $1\leq j \leq n_i$ and $1\leq i \leq l$. 
\end{definition}

If $\Gamma$ is a trivial group, then 
 Definition~\ref{ggthu} reduces to the definition of matricial algebras (\cite[\S15]{goodearlbook}). Note that if $R$ is a graded matricial $A$-algebra, then $R_0$ is a matricial $A_0$-algebra (see~\S\ref{wadi}). 

In general, when two graded finitely generated projective modules represent the same element in the graded Grothendieck group, then they are not necessarily graded isomorphic but rather graded stably isomorphic (see Lemma \ref{rhodas}(3)). However for graded matricial algebras one can prove that the graded stably isomorphic modules are in fact graded isomorphic. Later in Lemma~\ref{fdpahf2} we show this is also the case in the larger category of  graded ultramatricial algebras.

\begin{lemma}\label{fdpahf} \index{graded Morita equivalent} 
Let $A$ be a $\Gamma$\!-graded field and $R$ be a $\Gamma$\!-graded matricial $A$-algebra.  Let $P$ and $Q$ be graded finitely generated  projective $R$-modules. Then $[P]=[Q]$ in $K^{\gr}_0(R)$, if and only if  $P \cong _{\gr} Q$. 
\end{lemma}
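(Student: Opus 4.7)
The forward direction is immediate from the definition of $K^{\gr}_0$, so the task is to show that $[P]=[Q]$ in $K^{\gr}_0(R)$ forces $P\cong_{\gr} Q$. My plan is to reduce everything, component by component, to the case of a graded division ring, where Proposition~\ref{k0grof} tells us that isomorphism classes of graded finitely generated projective modules embed into $K^{\gr}_0$.

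First I would use the product decomposition $R=\prod_{i=1}^l \M_{n_i}(A)(\overline{\delta_i})$. Writing $e_i$ for the identity of the $i$-th factor, each $e_i$ is a homogeneous central idempotent of degree $0$, so $P=\bigoplus_i Pe_i$ and $Q=\bigoplus_i Qe_i$ are decompositions of $P$ and $Q$ as graded $R$-modules, with $Pe_i$ and $Qe_i$ graded finitely generated projective over the graded ring $R_i:=\M_{n_i}(A)(\overline{\delta_i})$. Correspondingly,
\[
K^{\gr}_0(R) \;\cong\; \bigoplus_{i=1}^l K^{\gr}_0(R_i)
\]
as $\Z[\Gamma]$-modules, and the equality $[P]=[Q]$ splits componentwise into $[Pe_i]=[Qe_i]$ in $K^{\gr}_0(R_i)$ for each $i$. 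This reduces the problem to the single-factor case $R_i = \M_{n_i}(A)(\overline{\delta_i})$.

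Next, for each $i$ I invoke the graded Morita equivalence of Corollary~\ref{grmorita}: the functor
\[
\psi_i:\Pgrp R_i \longrightarrow \Pgrp A,\qquad M\longmapsto M\otimes_{R_i} A^{n_i}(-\overline{\delta_i})
\]
is an equivalence of categories commuting with suspensions, and hence induces an order-preserving $\Z[\Gamma]$-module isomorphism $K^{\gr}_0(R_i)\cong K^{\gr}_0(A)$. Thus $[Pe_i]=[Qe_i]$ in $K^{\gr}_0(R_i)$ translates into $[\psi_i(Pe_i)]=[\psi_i(Qe_i)]$ in $K^{\gr}_0(A)$.

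Finally, since $A$ is a graded field, every graded finitely generated projective $A$-module is graded free of uniquely determined isomorphism type by Propositions~\ref{gradedfree} and~\ref{kj351} (or equivalently via the monoid isomorphism $\mathcal V^{\gr}(A)\cong \mathbb N[\Gamma/\Gamma_A]$ established in the proof of Proposition~\ref{k0grof}). In particular the natural map $\mathcal V^{\gr}(A)\to K^{\gr}_0(A)$ is injective, so $[\psi_i(Pe_i)]=[\psi_i(Qe_i)]$ in $K^{\gr}_0(A)$ forces $\psi_i(Pe_i)\cong_{\gr}\psi_i(Qe_i)$, whence $Pe_i\cong_{\gr} Qe_i$ by applying the inverse equivalence. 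Summing over $i$ yields $P\cong_{\gr} Q$. There is no serious obstacle here; the only point requiring care is making sure that the Morita equivalence $\psi_i$ preserves the positive cone and genuinely intertwines $[\,\cdot\,]$ with isomorphism of modules, which is exactly the content of Corollary~\ref{grmorita} together with the injectivity of $\mathcal V^{\gr}(A)\hookrightarrow K^{\gr}_0(A)$ for a graded division ring.
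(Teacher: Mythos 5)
Your proof is correct and follows essentially the same route as the paper's: reduce to a single matrix factor (the paper does this by noting that $K^{\gr}_0$ respects direct sums, you via the homogeneous central idempotents $e_i$), transfer to $A$ by the graded Morita equivalence of Corollary~\ref{grmorita}, and conclude from the unique decomposition of graded finitely generated projective modules over a graded field established in the proof of Proposition~\ref{k0grof}. The only difference is cosmetic: you make the componentwise splitting and the injectivity of $\mathcal V^{\gr}(A)\rightarrow K^{\gr}_0(A)$ explicit, both of which the paper's proof uses implicitly.
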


\begin{proof}
Since the functor $K^{\gr}_0$ respects the direct sum, it suffices to prove the statement for a graded matricial algebra of the form 
$R=\M_{n}(A)(\overline {\delta})$. Let $P$ and $Q$ be graded finitely generated  projective $R$-modules such that $[P]=[Q]$ in $K^{\gr}_0(R)$. By Proposition~\ref{grmorita},  $R=\M_{n}(A)(\overline {\delta})$ is graded Morita equivalent to $A$. So there are equivalent functors $\psi$ and $\phi$ such that $\psi\phi\cong 1$ and $\phi \psi \cong 1$, which also induce an isomorphism $K^{\gr}_0(\psi):K^{\gr}_0(R) \rightarrow K^{\gr}_0(A)$ such that $[P]\mapsto [\psi(P)]$. Now since $[P]=[Q]$, it follows $[\psi(P)]=[\psi(Q)]$  in $K^{\gr}_0(A)$. But since $A$ is a graded field, by the proof of Proposition~\ref{k0grof}, any graded finitely generated  projective $A$-module can be written uniquely 
as a direct sum of appropriate shifted $A$. Writing $\psi(P)$ and $\psi(Q)$ in this form, the homomorphism~(\ref{attitude}) shows that $\psi(P)\cong_{\gr} \psi(Q)$. Now applying the functor $\phi$ to this we obtain $P\cong_{\gr} Q$.
\end{proof}

Let $A$ be a $\Gamma$\!-graded field (with the support $\Gamma_A$) and let $\mathcal C$ be a category consisting of $\Gamma$\!-graded matricial $A$-algebras as objects and  
$A$-graded algebra homomorphisms  as morphisms. We consider the quotient category $\mathcal C^{\out}$ obtained from $\mathcal C$ by identifying homomorphisms which differ up to a degree zero graded inner automorphim. That is, the graded homomorphisms $\phi, \psi \in \Hom_{\mathcal C}(R,S)$ represent the same morphism in $\mathcal C^{\out}$ if there is an inner automorpshim $\theta:S\rightarrow S$, defined by $\theta(s)=xsx^{-1}$, where $\deg(x)=0$, such that $\phi=\theta\psi$. The following theorem shows that $K^{\gr}_0$ as a graded dimension group (see Example~\ref{pofsgtwn}) ``classifies'' the category of $\mathcal C^{\out}$. 
This is a graded analog of a similar result for matricial algebras (see~\cite[Lemma~15.23]{goodearlbook}). Recall from \S\ref{pregg5} that $\mathcal P$ is a category with objects consisting of the pairs  $(G,u)$, where $G$ is a  $\Gamma$\!-pre-ordered module and $u$ is an order-unit, and $f:(G,u)\rightarrow (H,v)$ is an order preserving $\Gamma$\!-homomorphism such that $f(u)=v$.
\index{quotient category} \index{positive cone of ordering} \index{order preserving homomorphism}

\begin{theorem}\label{catgrhsf}
Let $A$ be a $\Gamma$\!-graded field and $\mathcal C^{\out}$ be the category consisting of $\Gamma$\!-graded matricial $A$-algebras as objects and  
$A$-graded algebra homomorphisms modulo graded inner automorphisms as morphisms.  Then \[K^{\gr}_0: \mathcal C^{\out}\rightarrow \mathcal P\] is a fully faithful functor. Namely,

\begin{enumerate}[\upshape(1)]

\item (well-defined and faithful) For any graded matricial $A$-algebras $R$ and $S$ and $\phi,\psi\in \Hom_{\mathcal C}(R, S)$,  we have  $\phi(r)=x\psi(r)x^{-1}$, $r\in R$, for some invertible homogeneous element $x$ of degree $0$ in $S$, if and only if $K^{\gr}_0(\phi)=K^{\gr}_0(\psi)$. 

\smallskip

\item(full) For any graded matricial $A$-algebras $R$ and $S$ and  the morphism 
\[f:\big(K^{\gr}_0(R),[R]\big)\rightarrow \big(K^{\gr}_0(S),[S]\big)\] in $\mathcal P$, there is $\phi\in \Hom_{\mathcal C}(R, S)$ such that $K^{\gr}_0(\phi)=f$.

\smallskip

\end{enumerate} 

\end{theorem}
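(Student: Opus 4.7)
The plan is to treat the two parts separately, with a common strategy of reducing both to statements about graded projective modules and matrix units, which for graded matricial algebras is controlled by $K_0^{\gr}$ via Lemma~\ref{fdpahf}. Throughout, I would write $R = \prod_{i=1}^l \M_{n_i}(A)(\overline{\delta}_i)$ and fix the standard graded matrix units $\e_{jk}^{(i)}$ of $R$, which are homogeneous of degree $\delta_j^{(i)} - \delta_k^{(i)}$, with the $\e_{jj}^{(i)}$ forming a full orthogonal set of homogeneous idempotents of degree zero. The structural fact to exploit is $\e_{jj}^{(i)} R \cong_{\gr} \e_{11}^{(i)} R(\delta_1^{(i)} - \delta_j^{(i)})$, so that in $K_0^{\gr}(R)$ one has $[\e_{jj}^{(i)} R] = (\delta_1^{(i)} - \delta_j^{(i)}) \cdot [\e_{11}^{(i)} R]$, and the classes $[P_i] := [\e_{11}^{(i)} R]$ generate $K_0^{\gr}(R)$ as a $\mathbb Z[\Gamma]$-module.

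For the easy direction of (1), if $\phi = \theta \psi$ with $\theta(s) = x s x^{-1}$ for some $x \in S_0^*$, then by Section~\ref{kidenh} the map $K_0^{\gr}(\theta)$ is the identity, giving $K_0^{\gr}(\phi) = K_0^{\gr}(\psi)$ at once. For the harder direction, assuming $K_0^{\gr}(\phi) = K_0^{\gr}(\psi)$, apply this equality to the classes $[\e_{jj}^{(i)} R]$ to obtain $[\phi(\e_{jj}^{(i)}) S] = [\psi(\e_{jj}^{(i)}) S]$ in $K_0^{\gr}(S)$; Lemma~\ref{fdpahf} promotes these equalities to graded isomorphisms $\phi(\e_{jj}^{(i)}) S \cong_{\gr} \psi(\e_{jj}^{(i)}) S$. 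I would then assemble these summand-wise across $S = \bigoplus_{i,j} \phi(\e_{jj}^{(i)}) S = \bigoplus_{i,j} \psi(\e_{jj}^{(i)}) S$ into a graded right $S$-module automorphism of $S$, which is necessarily left multiplication by a unit $x \in S_0^*$; matching the summands gives $x\phi(\e_{jj}^{(i)}) = \psi(\e_{jj}^{(i)})x$. The subtle part, and the main obstacle in (1), is that to conclude $x\phi(r)x^{-1} = \psi(r)$ for \emph{all} $r \in R$ rather than only for the diagonal idempotents, the individual isomorphisms must be chosen coherently: one fixes graded isos $\phi(\e_{11}^{(i)}) S \cong_{\gr} \psi(\e_{11}^{(i)}) S$ for each $i$ and propagates them to the other summands by conjugating through the images of the matrix units $\e_{1j}^{(i)}$ and $\e_{j1}^{(i)}$ under $\phi$ and $\psi$; the idempotent calculus of Section~\ref{idemptis} then guarantees that the assembled automorphism intertwines $\phi$ with $\psi$ on all of $R$, not just on its diagonal.

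For part (2), given $f: (K_0^{\gr}(R), [R]) \to (K_0^{\gr}(S), [S])$, positivity of $f$ provides, for each $i$, a graded finitely generated projective $S$-module $Q_i$ with $[Q_i] = f([P_i])$, unique up to graded isomorphism by Lemma~\ref{fdpahf}. By $\mathbb Z[\Gamma]$-linearity of $f$, we get $f([\e_{jj}^{(i)} R]) = [Q_i(\delta_1^{(i)} - \delta_j^{(i)})]$, and the order-unit condition $f([R]) = [S]$ then translates, again via Lemma~\ref{fdpahf} in $S$, to a graded right $S$-module isomorphism
\[
S \cong_{\gr} \bigoplus_{i,j} Q_i(\delta_1^{(i)} - \delta_j^{(i)}).
\]
This decomposition supplies a full orthogonal system of homogeneous idempotents $p_{jj}^{(i)} \in S_0$ with $p_{jj}^{(i)} S \cong_{\gr} Q_i(\delta_1^{(i)} - \delta_j^{(i)})$; combining these with the graded isos $p_{jj}^{(i)} S \cong_{\gr} p_{11}^{(i)} S(\delta_1^{(i)} - \delta_j^{(i)})$, the idempotent calculus of Section~\ref{idemptis} promotes the diagonal idempotents to a full set of graded matrix units $p_{jk}^{(i)} \in S$ of degree $\delta_j^{(i)} - \delta_k^{(i)}$ satisfying $p_{jk}^{(i)} p_{k'l}^{(i')} = \delta_{ii'}\delta_{kk'} p_{jl}^{(i)}$ and $\sum_{i,j} p_{jj}^{(i)} = 1$. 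Setting $\phi(\e_{jk}^{(i)}) := p_{jk}^{(i)}$ and extending $A$-linearly defines a graded $A$-algebra homomorphism $\phi: R \to S$; compatibility with the $A$-algebra structure is automatic because $A$ is central in both $R$ and $S$ with grading preserved, and $K_0^{\gr}(\phi)$ agrees with $f$ on the $\mathbb Z[\Gamma]$-module generators $[P_i]$, hence everywhere. The remaining technicality is careful bookkeeping of degrees and shifts when extracting the matrix units; once the decomposition of $S$ above is in hand, this is routine.
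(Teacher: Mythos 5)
Your proposal is correct and follows essentially the same route as the paper's proof: part (1) hinges, exactly as in the paper, on choosing degree-zero elements implementing the graded isomorphism $\phi(\e_{11}^{(i)})S \cong_{\gr} \psi(\e_{11}^{(i)})S$ only at the $(1,1)$ corners and propagating them through the images of the matrix units $\e_{j1}^{(i)}$, $\e_{1j}^{(i)}$ to build the conjugating unit $x=\sum_{i,j}\phi(\e_{j1}^{(i)})x_i\psi(\e_{1j}^{(i)})$, and part (2) uses the same combination of positivity, $\mathbb Z[\Gamma]$-linearity and Lemma~\ref{fdpahf} to obtain the graded decomposition of $S$ into shifted copies of the $Q_i$ and then extracts a full set of graded matrix units via the idempotent calculus of \S\ref{idemptis}. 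Your only deviation is cosmetic bookkeeping (you pass directly to the full decomposition $S\cong_{\gr}\bigoplus_{i,j}Q_i(\delta_1^{(i)}-\delta_j^{(i)})$ rather than first splitting $S$ into blocks $g_iS$ as the paper does), which changes nothing of substance.
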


\begin{proof}
(1) (well-defined) Let $\phi,\psi\in \Hom_{\mathcal C}(R, S)$  such that $\phi=\theta\psi$, where $\theta(s)=xsx^{-1}$ for some invertible homogeneous element $x$ of $S$ of degree $0$. Then \[K^{\gr}_0(\phi)=K^{\gr}_0(\theta\psi)=K^{\gr}_0(\theta)K^{\gr}_0(\psi)=K^{\gr}_0(\psi),\] since $K^{\gr}_0(\theta)$ is the identity map (see \S\ref{kidenh}). 

(faithful) The rest of the proof is similar to the ungraded version with an extra attention given to the grading (cf. \cite[p.218]{goodearlbook}). Let $K^{\gr}_0(\phi)=K^{\gr}_0(\psi)$. Let $R=\M_{n_1}(A)(\overline \delta_1) \times \dots \times   \M_{n_l}(A)(\overline \delta_l)$ and set $g_{jk}^{(i)}=\phi(\e^{(i)}_{jk})$ and 
$h_{jk}^{(i)}=\psi(\e^{(i)}_{jk})$ for  $1\leq i \leq l$ and $1\leq j,k \leq n_i$, where $\e^{(i)}_{jk}$ are the graded matrix units basis for $\M_{n_i}(A)$. Since $\phi$ and $\psi$ are graded homomorphism, 
\[\deg(\e^{(i)}_{jk})=\deg(g^{(i)}_{jk})=\deg(h^{(i)}_{jk})=\delta^{(i)}_j-\delta^{(i)}_k.\]

Since $\e^{(i)}_{jj}$ are pairwise graded orthogonal idempotents (of degree $0$)  in $R$  and  
$\sum_{i=1}^l \sum_{j=1}^{n_i} \e^{(i)}_{jj}=1$ (\ie graded full matrix units),  the same is also the case for $g^{(i)}_{jj}$ and $h^{(i)}_{jj}$. 
Then 
\[[g^{(i)}_{11}S]=K^{\gr}_0(\phi)([\e^{(i)}_{11}R])=K^{\gr}_0(\psi)([\e^{(i)}_{11}R])=[h^{(i)}_{11}S].\]
By Lemma~\ref{fdpahf}, $g^{(i)}_{11}S \cong_{\gr} h^{(i)}_{11}S$. Thus there are homogeneous elements $x_i$ and $y_i$ of degree $0$ such that $x_iy_i=g^{(i)}_{11}$ and $y_ix_i=h^{(i)}_{11}$ (see \S\ref{idemptis}). Let 
\[x=\sum_{i=1}^{l}\sum_{j=1}^{n_i}g^{(i)}_{j1}x_ih^{(i)}_{1j},\, \,  \text{ and }  \, \, 
y=\sum_{i=1}^{l}\sum_{j=1}^{n_i}h^{(i)}_{j1}y_ig^{(i)}_{1j}.\] Note that $x$ and $y$ are homogeneous elements of degree zero. One checks easily that $xy=yx=1$. Now for  $1\leq i \leq l$ and $1\leq j,k \leq n_i$, we have 
\begin{align*}
x h^{(i)}_{jk}&=\sum_{s=1}^{l}\sum_{t=1}^{n_s}g^{(s)}_{t1}x_sh^{(s)}_{1t}h^{(i)}_{jk}\\
& = g^{(i)}_{j1}x_ih^{(i)}_{1j}h^{(i)}_{jk}=g^{(i)}_{jk}g^{(i)}_{k1}x_ih^{(i)}_{1k}\\
&= \sum_{s=1}^{l}\sum_{t=1}^{n_s}g^{(i)}_{jk}g^{(s)}_{t1}x_sh^{(i)}_{1t}= g^{(i)}_{jk}x
\end{align*}
Let $\theta:S\rightarrow S$ be the graded inner automorphism $\theta(s)=xsy$. Then \[\theta\psi(e^{(i)}_{jk}))=
xh^{(i)}_{jk}y=g^{(i)}_{jk}=\phi(e^{(i)}_{jk}).\] Since $\e^{(i)}_{jk}$, $1\leq i \leq l$ and $1\leq j,k \leq n_i$, 
form a homogeneous $A$-basis for $R$,  $\theta\psi=\phi$. 

\smallskip

(2) Let $R=\M_{n_1}(A)(\overline \delta_1) \times \dots \times   \M_{n_l}(A)(\overline \delta_l).$
Consider \[R_i= \M_{n_i}(A)(\overline \delta_i), 1\leq i \leq l.\] 
Each $R_i$ is a graded finitely generated   projective $R$-module, so  $f([R_i])$ is in the positive cone of $K^{\gr}_0(S)$, \ie there is a graded finitely generated  projective $S$-module $P_i$ such that $f([R_i])=[P_i]$. Then 
\[[S]=f([R])=f([R_1]+\dots+[R_l])=[P_1]+\dots+[P_l]=[P_1\oplus\dots\oplus P_l].\] Since $S$ is a graded matricial algebra, by Lemma~\ref{fdpahf}, $P_1\oplus\dots\oplus P_l \cong_{\gr} S$ as right $S$-modules. So there are homogeneous orthogonal idempotents $g_1,\dots,g_l$ in $S$ such that $g_1+\dots+g_l=1$ and $g_iS\cong_{\gr} P_i$ (see \S\ref{idemptis}). 
Note that each of $R_i=\M_{n_i}(A)(\overline \delta_i)$ is a graded simple algebra. Set $\overline \delta_i=(\delta_1^{(i)},\dots,\delta_n^{(i)})$ (here $n=n_i$). Let $\e_{jk}^{(i)}$, $1\leq j,k\leq n$, be the graded matrix units of $R_i$ and consider 
the graded finitely generated   projective (right) $R_i$-module, \[V=\e_{11}^{(i)} R_i=A(\de_1^{(i)} - \de_1^{(i)}) \oplus  A(\de_2^{(i)}  - \de_1^{(i)}) \oplus \cdots \oplus
A(\de_n^{(i)} - \de_1^{(i)}).\] Then~(\ref{pjacko}) shows that \[R_i \cong_{\gr} V(\de_1^{(i)} - \de_1^{(i)})\oplus V(\de_1^{(i)}  - \de_2^{(i)}) \oplus \dots \oplus V(\de_1^{(i)} - \de_n^{(i)}),\] as graded $R$-module. Thus 
\begin{multline}\label{lkheohg}
[P_i]=[g_iS]=f([R_i])=f([V(\de_1^{(i)} - \de_1^{(i)})])+f([V(\de_1^{(i)} - \de_2^{(i)})])+\dots\\+f([V(\de_1^{(i)} - \de_n^{(i)})]).
\end{multline}
There is a graded finitely generated  projective $S$-module $Q$ such that \[f([V])=f([V(\de_1^{(i)} - \de_1^{(i)})])=[Q].\] Since $f$ is a $\mathbb Z[\Gamma]$-module homomorphism, for $1\leq k \leq n$, 
\begin{multline*}
f([V(\de_1^{(i)} - \de_k^{(i)})])=f((\de_1^{(i)} - \de_k^{(i)})[V])=(\de_1^{(i)} - \de_k^{(i)})f([V])=\\(\de_1^{(i)} - \de_k^{(i)})[Q]=[Q(\de_1^{(i)} - \de_k^{(i)})].
\end{multline*}

From ~(\ref{lkheohg}) and Lemma~\ref{fdpahf} now follows 
\begin{equation}\label{bvgdks}
g_iS\cong_{\gr} Q(\de_1^{(i)} - \de_1^{(i)})\oplus Q(\de_1^{(i)}  - \de_2^{(i)}) \oplus \dots \oplus Q(\de_1^{(i)} - \de_n^{(i)}).
\end{equation}
Let \[g^{(i)}_{jk}\in \End(g_iS)\cong_{\gr} g_iSg_i\]  map the $j$-th summand of the right hand side of~(\ref{bvgdks}) to its $k$-th summand and everything else to zero. Observe that $\deg(g^{(i)}_{jk})=\de^{(i)}_j-\de^{(i)}_k$ and $g^{(i)}_{jk}$, $1\leq j,k\leq n$, form  the matrix units. Moreover, $g^{(i)}_{11}+\dots+g^{(i)}_{nn}=g_i$ and \[g^{(i)}_{11}S=Q(\delta^{(i)}_1-\delta^{(i)}_1)=Q.\] Thus $[g^{(i)}_{11}S]=[Q]=f([V])=f([e_{11}^{(i)}R_i])$.

Now for any $1\leq i\leq l$, define the $A$-algebra homomorphism 
\begin{align*}
R_i&\longrightarrow g_iSg_i,\\
e_{jk}^{(i)} &\longmapsto g_{jk}^{(i)}.
\end{align*}
 This is a graded homomorphism, and induces a graded homomorphism $\phi:R\rightarrow S$ such that 
$\phi(\e_{jk}^{(i)})= g_{jk}^{(i)}$. Clearly \[K_0^{\gr}(\phi)([\e_{11}^{(i)}R_i])=[\phi(\e_{11}^{(i)})S]=[g_{11}^{(i)}S]=f([\e_{11}^{(i)}R_i]),\] for $1\leq i \leq l$. Now $K^{\gr}_0(R)$ is generated by $[\e_{11}^{(i)}R_i]$, $1\leq i \leq l$, as $\mathbb Z[\Gamma]$-module. This implies that $K^{\gr}_0(\phi)=f$. 
\end{proof}
 
\begin{remark}\label{helphgnr}
 In Theorem~\ref{catgrhsf}, both parts (1) and (2) are valid when the ring $S$ is a graded ultramatricial algebra as well. 
  In fact, in the proofs of (1) and (2), the only property of $S$ which is used is that if $[P]=[Q]$ in $K^{\gr}_0(S)$, then $P\cong_{\gr} Q$, where 
$P$ and $Q$ are  graded finitely generated projective $S$-modules. Lemma~\ref{fdpahf2} below shows that this is the case for ultramatricial algebras. 
\end{remark}

\begin{lemma}\label{fdpahf2}
Let $A$ be a $\Gamma$\!-graded field and $R$ be a $\Gamma$\!-graded ultramatricial $A$-algebra.  Let $P$ and $Q$ be graded finitely generated  projective $R$-modules. Then $[P]=[Q]$ in $K^{\gr}_0(R)$, if and only if  $P \cong _{\gr} Q$. 
\end{lemma}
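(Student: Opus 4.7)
The ``only if'' direction is immediate from the definition of $K^{\gr}_0$, so the content lies in the converse. My plan is to reduce to the matricial case (Lemma~\ref{fdpahf}) by exploiting the continuity of $K^{\gr}_0$ under direct limits (Theorem~\ref{kcontis}) together with the idempotent picture of $K^{\gr}_0$ developed in \S\ref{hhidmi}. Write $R=\varinjlim R_i$ as the directed union of graded matricial $A$-subalgebras (this is what being ultramatricial means, cf.\ Definition~\ref{kjhwal}). By Lemma~\ref{kzeiogr}, after enlarging the idempotents with zero blocks in the bottom-right corner and padding the shift tuples with zeros accordingly, we may represent $P$ and $Q$ by homogeneous idempotents $p,q \in \M_n(R)(\ol\alpha)_0$ for a common $n$ and common shift $\ol\alpha=(\alpha_1,\dots,\alpha_n)$, with $P\cong_{\gr} pR^n(-\ol\alpha)$ and $Q\cong_{\gr} qR^n(-\ol\alpha)$.

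Next I would descend to a finite level. Since each entry of $p$ and $q$ involves only finitely many elements of $R=\bigcup_i R_i$ and the index set is directed, there is some $N$ with $p,q \in \M_n(R_N)(\ol\alpha)_0$. Then $p$ and $q$ also define graded finitely generated projective $R_N$-modules $pR_N^n(-\ol\alpha)$ and $qR_N^n(-\ol\alpha)$, and by the base-change calculation in \S\ref{beegees} (see the commutative diagram~(\ref{v4531})) we have graded isomorphisms $pR_N^n(-\ol\alpha)\otimes_{R_N}R\cong_{\gr} pR^n(-\ol\alpha)\cong_{\gr} P$ and similarly for $Q$. The hypothesis says $[p]=[q]$ in $K^{\gr}_0(R)$. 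Theorem~\ref{kcontis} gives $K^{\gr}_0(R)\cong \varinjlim K^{\gr}_0(R_i)$, and an element of such a direct limit vanishes (or two elements coincide) only if they already coincide at some finite level; hence there exists $M\geq N$ such that the images of $[p]$ and $[q]$ in $K^{\gr}_0(R_M)$ agree, i.e.\ $[pR_M^n(-\ol\alpha)]=[qR_M^n(-\ol\alpha)]$ in $K^{\gr}_0(R_M)$.

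Now $R_M$ is a graded matricial $A$-algebra, so Lemma~\ref{fdpahf} applies and yields a graded $R_M$-module isomorphism
\[
pR_M^n(-\ol\alpha)\cong_{\gr} qR_M^n(-\ol\alpha).
\]
Tensoring this isomorphism with $R$ over $R_M$ (a graded exact functor) gives $pR^n(-\ol\alpha)\cong_{\gr} qR^n(-\ol\alpha)$, that is, $P\cong_{\gr} Q$, as required.

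The routine bookkeeping is the alignment step where $p$ and $q$ are brought into a common matrix ring $\M_n(R_N)(\ol\alpha)$ at the same level; this is just a matter of enlarging by zero blocks as discussed in \S\ref{hhidmi}. The only real content is the direct-limit step: stability of the identity $[p]=[q]$ at a finite level is precisely what the continuity statement Theorem~\ref{kcontis} provides, and this is the place where the argument would collapse without it. Once the equality is realised at level $M$, Lemma~\ref{fdpahf} does the rest, and graded isomorphisms at the matricial level extend to $R$ via the base-change functor $-\otimes_{R_M}R$.
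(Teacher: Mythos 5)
Your proof is correct and follows essentially the same route as the paper's: represent $P$ and $Q$ by homogeneous idempotents (Lemma~\ref{kzeiogr}), descend to a finite matricial level where $[p]=[q]$ holds, apply Lemma~\ref{fdpahf} there, and carry the graded isomorphism back up to $R$. The only packaging difference is that you obtain the finite-level equality abstractly from the direct-limit property via Theorem~\ref{kcontis}, whereas the paper unfolds the same descent by hand, passing a stable graded equivalence $p\oplus 1_n \sim q\oplus 1_n$ (\S\ref{didood}) down to some $R_k$; the two mechanisms are identical in substance.
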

\begin{proof}
We will use the description of $K^{\gr}_0$ based on homogeneous idempotents to prove this Lemma (see~\S\ref{hhidmi}). Let $p$ and $q$ be homogeneous idempotents matrices over $R$ corresponding to the graded finitely generated $R$-modules $P$ and $Q$, respectively (see Lemma~\ref{kzeiogr}(2)). Suppose $[P]=[Q]$ in $K^{\gr}_0(R)$. We will show that $p$ and $q$ are graded equivalent in $R$, which by Lemma~\ref{kzeiogr}(3), implies that $P\conggr Q$.
Since by Definition~\ref{kjhwal}, $R=\bigcup_{i\in I} R_i$, there is $j\in I$ such that $p$ and $q$ are homogeneous idempotent  matrices over $R_j$. 
But since $[p]=[q]$ in $K^{\gr}_0(R)$, there is an $n \in \mathbb N$, such that $p\oplus 1_n$ is graded equivalent to $q\oplus 1_n$ in $R$ 
(see~\S\ref{didood}). So there is $k\in I$, $k\geq j$, such that  $p\oplus 1_{n}$ and $q\oplus 1_{n}$ are graded equivalent in $R_k$. 
Thus $[p\oplus 1_{n}]= [q\oplus 1_{n}]$ in $K^{\gr}_0(R_k)$. This implies $[p]=[q]$  in $K^{\gr}_0(R_k)$. Since $R_k$ is a graded matricial algebra, by Lemma~\ref{fdpahf2}, $p$ is graded equivalent to $q$ in $R_k$. So $p$ is graded equivalent to $q$ in $R$ and consequently $P\cong_{\gr}Q$ as $R$-module. The converse is immediate. 
\end{proof}
 
\section{Graded ultramatricial algebras, classification via $K^{\gr}_0$}

 The direct limit of a direct system of graded rings is a ring with a graded structure (see Example~\ref{penrith123}). In this section, we study a particular case of such graded rings, namely the direct limit of graded matricial algebras. 
 Recall from Definition~\ref{ggthu} that a $\Gamma$\!-graded matricial algebra over the graded field $A$ is of the form 
 \[\M_{n_1}(A)(\overline \delta_1) \times \dots \times   \M_{n_l}(A)(\overline \delta_l),\] for some shifts  $\overline \delta_i$, $1\leq i \leq l$.

 \begin{definition}\label{kjhwal}
 Let $A$ be a $\Gamma$\!-graded field. A ring $R$ is called a $\Gamma$\!-\emph{graded ultramatricial} \index{graded ultramatricial algebra} $A$-algebra if $R=\bigcup_{i=1}^{\infty} R_i$, where $R_1 \subseteq R_2 \subseteq \cdots$ is a sequence of graded matricial $A$-subalgebras. 
 Here the inclusion respects the grading, \ie ${R_i}_{\alpha} \subseteq {R_{i+1}}_{\alpha}$. Moreover, under the inclusion $R_i \subseteq R_{i+1}$, we have $1_{R_i}=1_{R_{i+1}}$.
 \end{definition}
 
 Clearly $R$ is also a $\Gamma$\!-graded $A$-algebra with $R_\alpha= \bigcup_{i=1}^{\infty} {R_i}_{\alpha}$. If $\Gamma$ is a trivial group, then 
 Definition~\ref{kjhwal} reduces to the definition of ultramatricial algebras (\cite[\S15]{goodearlbook}). Note that if $R$ is a graded ultramatricial $A$-algebra, then $R_0$ is a ultramatricial $A_0$-algebra (see~\S\ref{wadi}).

 
 \begin{example}\label{hfoxrw}
Let $A$ be a ring. We identify $\M_n(A)$ as a subring of  $\M_{2n}(A)$ under the monomorphism 
\begin{equation}\label{fridayatmaxi}
X\in \M_n(A) \longmapsto \left(\begin{matrix} X & 0\\ 0 & X 
\end{matrix}\right) \in \M_{2n}(A).
\end{equation}
With this identification, we have a sequence \[\M_2(A) \subseteq \M_4(A) \subseteq \cdots.\] 

Now let $A$ be a $\Gamma$\!-graded field, $\alpha_1,\alpha_2 \in \Gamma$  and consider the sequence of graded subalgebras 
\[\M_2(A)(\alpha_1,\alpha_2) \subseteq \M_4(A)(\alpha_1,\alpha_2,\alpha_1,\alpha_2) \subseteq \cdots,\]
with the same embedding as (\ref{fridayatmaxi}).  
Then \[R=\bigcup_{i=1}^{\infty} \M_{2^i}(A)\big((\alpha_1,\alpha_2)^{2^{i-1}}\big),\] where $(\alpha_1,\alpha_2)^k$ stands for $k$ copies of $(\alpha_1,\alpha_2)$,  is a graded ultramatricial algebra.  
 \end{example}

\begin{example}
Let $A=K[x^3,x^{-3}]$ be a $\mathbb Z$-graded field with support $3\Z$, where $K$ is a field.  Consider the following sequence of graded matricial algebras with the embedding as in (\ref{fridayatmaxi}).
\begin{equation}\label{hgqq2}
\M_2(A)(0,1) \subseteq \M_4(A)(0,1,1,2) \subseteq \M_8(A)(0,1,1,2,0,1,1,2)\subseteq \cdots 
\end{equation}

Let $R$ be a graded ultramatricial algebra constructed as in Example~\ref{hfoxrw}, from the union of matricial algebras of sequence~\ref{hgqq2}.  We calculate $K^{\gr}_0(R)$. Since $K^{\gr}_0$ respects the direct limit (Theorem~\ref{kcontis}), we have 
\[K^{\gr}_0(R)=K^{\gr}_0(\varinjlim R_i)=\varinjlim K^{\gr}_0(R_i),\]
where $R_i$ corresponds to the $i$-th algebra in the sequence~\ref{hgqq2}. Since all the matricial algebras $R_i$ are strongly graded, we get 
\[\varinjlim K^{\gr}_0(R_i)=\varinjlim K_0(R_{i_0}).\]

Recall that (see Proposition~\ref{wadi}) if 
\begin{equation*}\label{ytsnf}
T\cong_{\gr} \M_{m}\big(K[x^n,x^{-n}] \big)\big (p_1,\dots,
p_m\big),
\end{equation*}
then letting $d_l$, $0 \leq l \leq n-1$, be the number of $i$ such that
$p_i$ represents $\overline l$ in $\mathbb Z/n \mathbb Z$, we have
\begin{equation*}
T \cong_{\gr} \M_{m}\big(K[x^n,x^{-n}] \big)\big (0,\dots,0,1,\dots,1,\dots,
n-1,\dots,n-1\big),
\end{equation*}
where $0\leq l \leq n-1$ occurring $d_l$ times. It is now easy to see
\begin{equation*}
T_0 \cong \M_{d_0}(K)\times \dots \times \M_{d_{n-1}}(K).
\end{equation*}

Using this, the 0-component rings of the sequence~\ref{hgqq2} takes the form  
\begin{align*}
K\oplus K &\subseteq K\oplus \M_2(K) \oplus K \subseteq \M_2(K) \oplus \M_4(K) \oplus \M_2(K) \\
(x,y) &\mapsto (x, \left(\begin{matrix} y & 0\\ 0 & x \end{matrix}\right),y)   \\
&\qquad \quad (x,y,z) \mapsto (\left(\begin{matrix} x & 0\\ 0 & x \end{matrix}\right),\left(\begin{matrix} y & 0\\ 0 & y \end{matrix}\right),\left(\begin{matrix} z & 0\\ 0 & z \end{matrix}\right)).
\end{align*}
Thus 
\[K^{\gr}_0(R)\cong \varinjlim K_0(R_{i_0})\cong \mathbb Z[1/2] \oplus \mathbb Z[1/2] \oplus \mathbb Z[1/2]. \]
\end{example} 

We are now in a position to classify the graded ultramatricial algebras via  $K^{\gr}_0$-group. The following theorem shows that the 
\emph{dimension module} \index{dimension module} is a complete invariant for the category of graded ultramatricial algebras. 
 
 \begin{theorem}\label{ujhosmr} \index{order preserving homomorphism}
 Let $R$ and $S$ be $\Gamma$\!-graded ultramatricial algebras over a graded field $A$. Then $R \conggr S$ as graded $A$-algebras if and only if there is an order preserving $\mathbb Z[\Gamma]$-module isomorphism
 \[\big (K^{\gr}_0(R),[R] \big ) \cong \big (K^{\gr}_0(S),[S]\big ). \]
 \end{theorem}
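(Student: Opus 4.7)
The forward direction is immediate: a graded $A$-algebra isomorphism $R \conggr S$ induces a functorial order preserving $\mathbb Z[\Gamma]$-module isomorphism on $K_0^{\gr}$ that sends $[R]$ to $[S]$, so I would dispatch this in a single sentence. All the work is in the converse, for which the plan is a standard graded Elliott intertwining argument, built on the machinery already assembled in the chapter.

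Write $R = \bigcup_{i\geq 1} R_i$ and $S = \bigcup_{j\geq 1} S_j$ with $R_i$, $S_j$ graded matricial $A$-subalgebras, and let $f \colon (K_0^{\gr}(R),[R]) \to (K_0^{\gr}(S),[S])$ be the given order preserving $\mathbb Z[\Gamma]$-module isomorphism (with inverse $g$). By Theorem~\ref{kcontis} we have $K_0^{\gr}(R) = \varinjlim K_0^{\gr}(R_i)$ and $K_0^{\gr}(S) = \varinjlim K_0^{\gr}(S_j)$, and each $K_0^{\gr}(R_i)$ is finitely generated as a $\mathbb Z[\Gamma]$-module by the classes $[\e^{(i)}_{11}R_i^{(t)}]$ coming from the simple factors of $R_i$. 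Hence the image under $f$ of any such finite generating set lies in some $K_0^{\gr}(S_{j(i)})$, and after passing to cofinal subsequences I may assume $f(K_0^{\gr}(R_i)) \subseteq K_0^{\gr}(S_i)$ and $g(K_0^{\gr}(S_i)) \subseteq K_0^{\gr}(R_{i+1})$ for every $i$, with all the resulting triangles commuting on $K_0^{\gr}$.

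Next I would lift these maps of dimension modules to actual graded $A$-algebra homomorphisms. By Theorem~\ref{catgrhsf}(2), together with Remark~\ref{helphgnr} which extends the argument to the case where the codomain is a graded ultramatricial algebra, there exist graded $A$-algebra homomorphisms
\begin{equation*}
\phi_i \colon R_i \longrightarrow S_i, \qquad \psi_i \colon S_i \longrightarrow R_{i+1}
\end{equation*}
realising the restrictions of $f$ and $g$ on $K_0^{\gr}$. In particular, the four composites $\psi_i\phi_i$ and $\iota_{R_i \hookrightarrow R_{i+1}}$, and $\phi_{i+1}\psi_i$ and $\iota_{S_i \hookrightarrow S_{i+1}}$, induce the same map on $K_0^{\gr}$, because $gf = \id$ and $fg = \id$ are compatible with the inclusion maps of the direct systems. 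Now Theorem~\ref{catgrhsf}(1), again in the extended form of Remark~\ref{helphgnr}, tells me that each such pair of maps differs by conjugation by an invertible homogeneous element of degree zero in the target.

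The main technical obstacle, and the heart of the argument, is the back-and-forth correction: I must replace the $\phi_i$ and $\psi_i$ by conjugates so that the staircase diagram
\begin{equation*}
\xymatrix{
R_1 \ar[r] \ar[d]_{\phi_1} & R_2 \ar[r] \ar[d]_{\phi_2} & R_3 \ar[r] \ar[d]_{\phi_3} & \cdots \\
S_1 \ar[ur]^{\psi_1} \ar[r] & S_2 \ar[ur]^{\psi_2} \ar[r] & S_3 \ar[r] & \cdots
}
\end{equation*}
commutes on the nose, not merely up to inner automorphism. The standard trick is to build the correcting inner automorphisms inductively: having already fixed $\phi_1,\psi_1,\dots,\phi_n,\psi_n$ making the diagram commute up to stage $n$, the maps $\psi_n\phi_n$ and $R_n \hookrightarrow R_{n+1}$ agree on $K_0^{\gr}$, so after replacing $\phi_{n+1}$ by $\theta \circ \phi_{n+1}$ for a suitable degree zero inner automorphism $\theta$ of $S_{n+1}$ the square involving $\phi_n$ and $\phi_{n+1}$ commutes exactly; the same adjustment, applied alternately to the $\psi$'s, takes care of the triangles. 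Passing to direct limits, the coherent families $\{\phi_i\}$ and $\{\psi_i\}$ yield mutually inverse graded $A$-algebra homomorphisms $R \to S$ and $S \to R$, completing the proof.
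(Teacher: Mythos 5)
Your proposal is correct in outline, but it follows the classical two-sided Elliott intertwining, which is genuinely different from the paper's argument. The paper never builds the staircase $\phi_i\colon R_i\to S_i$, $\psi_i\colon S_i\to R_{i+1}$: instead it constructs a single increasing family of graded $A$-algebra \emph{injections} $\rho_k\colon R_{n_k}\to S$ (with target the whole ultramatricial algebra $S$, not a finite stage) such that $\rho_{k+1}$ extends $\rho_k$, $S_k\subseteq \rho_k(R_{n_k})$ and $\overline{\rho_k}=\bar f\,\overline{\phi_{n_k}}$; the union of the $\rho_k$ is then automatically the desired isomorphism, with no passage to a limit of two coherent families. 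The point of this one-sided design is that Theorem~\ref{catgrhsf} (via Remark~\ref{helphgnr} and Lemma~\ref{fdpahf2}) is always applied to homomorphisms \emph{into $S$ or $R$ themselves}, where the $K^{\gr}_0$-data genuinely agree on the nose, and the only objects ever factored through finite stages are \emph{algebra} homomorphisms, which factor trivially because each $R_n$ is finite dimensional over $A$. Your route, by contrast, factors the $K$-theory map $f$ itself through finite stages, and this is the one step you gloss over: since the maps $K^{\gr}_0(S_j)\to K^{\gr}_0(S)$ need not be injective, the assertion that ``after passing to cofinal subsequences I may assume $f(K^{\gr}_0(R_i))\subseteq K^{\gr}_0(S_i)$ with all the resulting triangles commuting'' requires an argument, as does your subsequent appeal to Theorem~\ref{catgrhsf}(1), which needs $\overline{\psi_i\phi_i}$ and $\overline{\iota}$ to be equal as maps $K^{\gr}_0(R_i)\to K^{\gr}_0(R_{i+1})$, not merely after composing into the limit $K^{\gr}_0(R)$.

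Both gaps are repairable by standard direct-limit bookkeeping, but you should make it explicit: by Proposition~\ref{k0grof} and Morita theory, $K^{\gr}_0(R_i)$ is a finitely generated free $\mathbb Z[\Gamma/\Gamma_A^{}]$-module (the $\mathbb Z[\Gamma]$-action factors through $\mathbb Z[\Gamma/\Gamma_A]$), so a homomorphism into $\varinjlim K^{\gr}_0(S_j)$ factors through some finite stage after moving far enough along the sequence, and two finite-stage homomorphisms out of such a module which agree in the limit become equal after composing with a connecting map $K^{\gr}_0(S_j)\to K^{\gr}_0(S_{j'})$ for suitable $j'\geq j$ (each of finitely many discrepancies dies at a finite stage). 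With these insertions your alternating inner-automorphism correction goes through and yields mutually inverse limit maps, exactly as in the ungraded Bratteli--Elliott argument; what the paper's formulation buys is that none of this finite-stage $K$-theoretic bookkeeping is needed, at the mild cost of the slightly less symmetric extension-and-surjectivity induction encoded in its auxiliary facts I and II.
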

 \begin{proof}
 One direction is clear.  Let $R=\bigcup_{i=1}^{\infty} R_i$ and $S=\bigcup_{i=1}^{\infty} S_i$, where $R_1 \subseteq R_2 \subseteq \cdots$ and $S_1 \subseteq S_2 \subseteq \cdots$ are sequences of graded matricial $A$-algebras. Let $\phi_i:R_i\rightarrow R$ and $\psi_i:S_i\rightarrow S$, $i\in \mathbb N$, be inclusion maps. 
 In order to show that the isomorphism \[f:\big (K^{\gr}_0(R),[R] \big ) \longrightarrow \big (K^{\gr}_0(S),[S]\big )\]
 between the graded Grothendieck groups give rise to an isomorphism between the rings  $R$ and $S$, we will find a sequence $n_1<n_2<\cdots$ of positive numbers and graded $A$-module injections $\rho_k:R_{n_k}\rightarrow S$ such that $\rho_{k+1}$ is an extension of $\rho_k$ and 
 $\bigcup_{k=1}^{\infty} R_{n_k}=R$. To achieve this we repeatedly use  Theorem~\ref{catgrhsf} (and Remark~\ref{helphgnr}) (\ie a ``local'' version of this theorem) and the fact that since $R_n$, $n \in \mathbb N$, is a finite dimensional $A$-algebra, for any $A$-graded homomorphism $\rho:R_n \rightarrow S$, we have $\rho(R_n) \subseteq S_i$, for some positive number $i$. Throughout the proof, for simplicity, we write 
 $\overline \theta$ for the $\mathbb Z[\Gamma]$-homomorphism $K^{\gr}_0(\theta)$ induced by a graded $A$-algebra homomorphism $\theta:R\rightarrow S$.

 We first prove two auxiliary facts. 
 
 {\bf I.} If $\sigma:S_k\rightarrow R_n$ is a graded $A$-algebra homomorphism such that 
 \begin{equation}\label{pokji}
 \overline \phi_n \overline \sigma={\bar f}^{-1}\overline \psi_k,
 \end{equation}
 (see Diagram~\ref{thelosj})  then there exist an integer $j>k$ and a graded $A$-algebra homomorphism $\rho:R_n\rightarrow S_j$ such that $\psi_j\rho\sigma=\psi_k$ and $\overline \psi_j \overline \rho =\overline f \, \overline \phi_n$.  
 \begin{equation}\label{thelosj}
 {\def\labelstyle{\displaystyle}
\xymatrix{
S_k \ar[rr]^{\sigma} \ar[dd]_{\psi_k}&& R_n  \ar[dd]^{\phi_n} \ar@{.>}[ld]^{\rho}\\
& S_j \ar@{.>}[ld]^{\psi_j}\\
S  && R.
}}\qquad \quad
{\def\labelstyle{\displaystyle}
\xymatrix{
K^{\gr}_0(S_k) \ar[rr]^{\overline \sigma} \ar[dd]_{\overline \psi_k}&& K^{\gr}_0(R_n)  \ar[dd]^{\overline \phi_n} \ar@{.>}[ld]^{\overline \rho}\\
& K^{\gr}_0(S_j) \ar@{.>}[ld]^{\overline \psi_j}\\
K^{\gr}_0(S) \ar[rr]^{\bar f^{-1}}  && K^{\gr}_0(R).
}}
\end{equation}
 
\noindent {\it Proof of I.} Consider  $\overline f \, \overline \phi_n :K^{\gr}_0(R_n)  \rightarrow K^{\gr}_0(S)$. By Theorem~\ref{catgrhsf} (and Remark~\ref{helphgnr}), there is an $A$-graded homomorphism $\rho':R_n\rightarrow S$ such that $\overline {\rho'}= \overline f \, \overline \phi_n$. Since $R_n$ (a matricial $A$-algebra) is a finite dimensional $A$-algebra, $\rho'(R_n) \subseteq S_i$ for some $i$. Thus $\rho'$ gives a graded homomorphism $\rho'':R_n \rightarrow S_i$ such that $\psi_i \rho ''=\rho'$ (recall that $\psi_i$ is just an inclusion). Moreover 
 \begin{equation}\label{start2}
 \overline \psi_i  \overline {\rho''} =\overline {\rho'}= \overline f \, \overline \phi_n.
 \end{equation}
 Then, using~(\ref{pokji}), \[\overline \psi_i  \overline {\rho''}  \overline \sigma =\overline f \, \overline \phi_n  \overline \sigma =\overline \psi_k.\] Theorem~\ref{catgrhsf} implies that there is a graded inner automorphism $\theta$ of $S$ such that 
 \begin{equation}\label{kjanr}
  \theta \psi_i \rho '' \sigma=\psi_k.
  \end{equation}
  The restriction of $\theta$ on $S_i$ gives $\theta|_{S_i}:S_i \rightarrow S$. Since $S_i$ is finite dimensional $A$-algebra, it follows that there is $j$ such that $\theta(S_i) \subseteq S_j$. So $\theta$ gives a graded homomorphism $\theta':S_i \rightarrow S_j$ such that $\psi_j \theta'=\theta \psi_i$. 
  Set 
  \begin{equation}\label{satgabriel}
  \rho:=\theta' \rho'':R_n \rightarrow S_j.
  \end{equation}
   We get, using~(\ref{kjanr}) 
 \[\psi_j\rho \sigma=\psi_j\theta'\rho''\sigma=\theta\psi_i\rho''\sigma=\psi_k.\] This gives the first part of I. 
 Using Theorem~\ref{catgrhsf},~(\ref{start2}) and~(\ref{satgabriel}) we have 
 \[\overline \psi_j \overline \rho= \overline\psi_j\overline\theta'\overline\rho''=\overline\theta \, \overline\psi_i \overline\rho''=\overline\psi_i \overline\rho'' =\overline f \,\overline\phi_n.\] This completes the proof of I.

The second auxiliary fact we need is the to replace $R_i$'s and $S_i$'s in I as follows. The proof is similar to I. 

{\bf II.}  If $\rho:R_n\rightarrow S_k$ is a graded $A$-algebra such that 
 \[\overline \psi_k \overline \rho=\bar f\, \overline \phi_n.\] Then there exist an integer $m>n$ and a graded $A$-algebra homomorphism $\sigma:S_k\rightarrow R_m$ such that 
 $\phi_m\sigma\rho=\phi_n$ and $\overline \phi_m \overline \sigma ={\bar f}^{-1} \overline \psi_k$.

Now we are in a position to construct positive numbers $n_1<n_2<\cdots$ and graded $A$-algebra homomorphisms $\rho_k:R_{n_k}\rightarrow S$ such that 
\begin{enumerate}

\item $S_k \subseteq \rho_k(R_{n_k})$ and $\overline \rho_k= \bar f \, \overline \phi_{n_k}$, for all $k\in \mathbb N$.

\item $\rho_{k+1}$ is an extension of $\rho_{k}$ and for all $k\in \mathbb N$, i.e, the following diagram commutes.
\begin{equation*}
\xymatrix{
R_{n_k} \ar@{^{(}->}[d] \ar[r]^{\rho_k} & S \\
R_{n_{k+1}} \ar[ru]_{\rho_{k+1}}
}
\end{equation*}
Moreover,  $\rho_k$ is injective for all  $k\in \mathbb N$.
\end{enumerate}

Consider the morphism ${\bar f}^{-1} \overline \psi_1 : K^{\gr}_0(S_1) \rightarrow K^{\gr}_0(R)$. By Theorem~\ref{catgrhsf}, there is a graded $A$-algebra homomorphism  $\sigma':S_1 \rightarrow R$ such that $\overline \sigma'={\bar f}^{-1} \overline \psi_1$. Since $S_1$ is a finite dimensional $A$-algebra, $\sigma'(S_1) \subseteq R_{n_1}$ for some positive number $n_1$. So 
$\sigma'$ gives a graded $A$-algebra homomorphism $\sigma:S_1\rightarrow  R_{n_1}$ such that $\phi_{n_1}\sigma =\sigma'$ and $\overline \phi_{n_1} \overline \sigma ={\bar f}^{-1} \overline \psi_1$. 
Thus $\sigma$ satisfies the conditions of part I. Therefore there is $j>1$ and a graded $A$-algebra homomorphism $\rho:R_{n_1}\rightarrow S_j$ (see Diagram~\ref{hhhh}) such that $\psi_j\rho\sigma=\psi_1$ and $\overline \psi_j \overline \rho=\bar f \, \overline \phi_{n_1}.$
 \begin{equation}\label{hhhh}
 \xymatrix{
S_1 \ar[rr]^{\sigma} \ar[dd]_{\psi_1}&& R_{n_1}  \ar@{.>}[ld]^{\rho}\\
& S_j \ar@{.>}[ld]^{\psi_j}\\
S  
}
\end{equation}

So $\rho_1=\psi_j \rho:R_{n_1} \rightarrow S$ is a graded $A$-homomorphism such that $\rho_1 \sigma=\psi_1$ and $\overline \rho_1= \bar f \, \overline \phi_{n_1}$. 
But 
\begin{equation*}
S_1=\psi_1(S_1)=\rho_1\sigma(S_1)\subseteq \rho_1(R_{n_1}). 
\end{equation*}
This proves (1). We now proceed by induction. Suppose there are $\{n_1,\dots,n_k\}$ and $\{\rho_1,\dots,\rho_k\}$, for some positive integer $k$ such that (1) and (2) above are satisfied.  Since $R_{n_k}$ is a finite dimensional $A$-algebra, there is $i\geq k+1$ such that $\rho_k(R_{n_k}) \subseteq S_i$. So $\rho_k$ gives a graded $A$-homomorphism 
$\rho':R_{n_k}\rightarrow S_i$ such that $\psi_i \rho'=\rho_k$ and $\overline \psi_i \overline{\rho'} =\bar f \, \overline \phi_{n_k}$. By II, there is 
$n_{k+1}>n_k$ and $\sigma:S_i \rightarrow R_{n_{k+1}}$ such that 
$\phi_{n_{k+1}}\sigma \rho'=\phi_{n_k}$ and $\overline \phi_{n_{k+1}} \overline \sigma= {\bar f}^{-1} \overline \phi_i$. 
Since $\phi_{n_{k+1}}\sigma \rho'=\phi_{n_k}$, and $\phi_{n_k}$ (being an inclusion) is injective, $\rho'$ is injective and so $\rho_k=\psi_i\rho'$ is injective. Since
$\sigma:S_i \rightarrow R_{n_{k+1}}$ satisfies conditions of Part I,  there is $j>i$ such that $\rho: R_{n_{k+1}}\rightarrow S_j$ such that $\psi_j \rho \sigma =\psi_i$ and $\overline \psi_j \overline \rho =\bar  f \, \overline \phi_{n_{k+1}}$. Thus 
for $\rho_{k+1}=\psi_j \rho:R_{n_{k+1}} \rightarrow S$ we have  $\overline \rho_{k+1}=\overline \psi_j \overline \rho =\bar f \,\overline   \phi_{n_{k+1}}$. 
Since $\rho_{k+1}\sigma =\psi_j\rho \sigma=\psi_i$ and $i\geq k+1$, we have 
\[S_{k+1}=\psi_i(S_{k+1})=\rho_{k+1}\sigma(S_{k+1}) \subseteq \rho_{k+1}\sigma(S_i) \subseteq \rho_{k+1}(R_{n_{k+1}}).\] 
Finally, since $\phi_{n_{k+1}} \sigma \rho '=\phi_{n_k}$ and $\rho_{k+1}\sigma \rho'=\psi_i \rho'=\rho_k$, it follows that $\rho_{k+1}$ is an extension of $\rho_k$. Then by induction (1) and (2) follows. 

We have $n_1<n_2<\cdots$ and $k\leq n_k$. So $\bigcup R_{n_k}=R$. So $\rho_k$ induces an injection $\rho:R\rightarrow S$ such that $\rho\phi_{n_k}=\rho_k$, for any $k$. 
But 
\[S_k \subseteq \rho_k(R_{n_k}) =\rho (R_{n_k}) \subseteq \rho (R).\] It follows that $\rho$ is an epimorphism as well. This finishes the proof. 
 \end{proof}

\begin{theorem}\label{meinchina} \index{graded Morita equivalent} 
Let $R$ and $S$ be $\Gamma$\!-graded ultramatricial algebras over a graded field $A$.
Then $R$ and $S$ are graded Morita equivalent if and only if there is an order preserving $\mathbb Z[\Gamma]$-module isomorphism 
$K^{\gr}_0(R) \cong K^{\gr}_0(S)$.
\end{theorem}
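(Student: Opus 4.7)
The plan is to reduce this to Theorem~\ref{ujhosmr} by replacing $R$ with a graded Morita equivalent ring whose canonical class in its graded Grothendieck group is matched with $[S]$ by the given isomorphism. The forward direction is immediate from Theorem~\ref{infroncov}. So assume $f:K^{\gr}_0(R)\to K^{\gr}_0(S)$ is an order preserving $\mathbb Z[\Gamma]$-module isomorphism. Since $[S]$ is an order-unit of $K^{\gr}_0(S)$ (Example~\ref{pofsgtwn}) and $f$ is an order isomorphism, the element $u:=f^{-1}([S])$ is an order-unit of $K^{\gr}_0(R)$ lying in the positive cone, so $u=[P]$ for some graded finitely generated projective $R$-module $P$.

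Next I would verify that $P$ is a graded progenerator. Because $u=[P]$ is an order-unit, there exist $\alpha_1,\dots,\alpha_n\in\Gamma$ with $\sum_i\alpha_i[P]\ge[R]$ in $K^{\gr}_0(R)$, equivalently $[\bigoplus_i P(\alpha_i)]=[R]+[Q]$ for some graded finitely generated projective $Q$. Applying Lemma~\ref{fdpahf2} to the graded ultramatricial algebra $R$ promotes this equality in $K^{\gr}_0(R)$ to an actual graded isomorphism $\bigoplus_i P(\alpha_i)\conggr R\oplus Q$, so $R$ is a graded direct summand of $\bigoplus_i P(\alpha_i)$. By Theorem~\ref{grgenth}(3), $P$ is a graded generator, hence a graded progenerator of $R$.

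Set $B:=\End_R(P)$. By Theorem~\ref{grmorim11}, $B$ is graded Morita equivalent to $R$, and Lemma~\ref{nnbok} furnishes an order preserving $\mathbb Z[\Gamma]$-module isomorphism $(K^{\gr}_0(B),[B])\cong(K^{\gr}_0(R),[P])=(K^{\gr}_0(R),u)$. Composing with $f$ gives an isomorphism $(K^{\gr}_0(B),[B])\cong(K^{\gr}_0(S),[S])$ of pointed ordered $\mathbb Z[\Gamma]$-modules. If $B$ can be shown to be a graded ultramatricial $A$-algebra, then Theorem~\ref{ujhosmr} yields $B\conggr S$ as graded $A$-algebras, whence $R$ is graded Morita equivalent to $S$ via $B$.

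The main obstacle is therefore establishing the ultramatricial structure on $B$. Write $R=\bigcup_k R_k$ with each $R_k$ a graded matricial $A$-subalgebra sharing the common identity. By Lemma~\ref{kzeiogr}, $P\conggr eR^n(-\overline\alpha)$ for some homogeneous idempotent $e\in\M_n(R)(\overline\alpha)_0$; since $e$ has only finitely many entries, we may assume $e\in\M_n(R_{k_0})(\overline\alpha)_0$ for some $k_0$. Using the graded ring isomorphism $\End_R(P)\conggr e\M_n(R)(\overline\alpha)e$ from \S\ref{idemptis}, one gets $B=\bigcup_{k\ge k_0} e\M_n(R_k)(\overline\alpha)e$. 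Each $\M_n(R_k)(\overline\alpha)$ is a finite direct product of graded matrix algebras over the graded field $A$, hence graded matricial; the degree-zero homogeneous corner $e\M_n(R_k)(\overline\alpha)e$ of such an algebra is again graded matricial by the graded Artin--Wedderburn structure (Remark~\ref{jussise}) applied to each graded simple factor. Thus $B$ is an ascending union of graded matricial $A$-subalgebras with a common identity, i.e., a graded ultramatricial $A$-algebra, which completes the argument.
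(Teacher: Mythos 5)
Your proposal is correct and takes essentially the same route as the paper's proof: pull back $[S]$ to an order-unit $[P]$ in the positive cone, use Lemma~\ref{fdpahf2} and Theorem~\ref{grgenth} to see $P$ is a graded progenerator, pass to $\End_R(P)$ via Lemma~\ref{nnbok} and Theorem~\ref{grmorim11}, and conclude with Theorem~\ref{ujhosmr}. The only difference is that you explicitly verify the step the paper leaves as a bare observation --- that $\End_R(P)\conggr e\M_n(R)(\overline\alpha)e$ is again a graded ultramatricial $A$-algebra, via the ascending union of corners $e\M_n(R_k)(\overline\alpha)e$ and the graded Artin--Wedderburn structure --- and that verification is sound.
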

\begin{proof}
Let $R$ be graded Morita equivalent to $S$. Then by Theorem~\ref{grmorim11}, there is a $R$-graded progenerator $P$, such that 
$S\cong_{\gr} \End_R(P)$. Now using Lemma \ref{nnbok}, we have 
\[\big(K^{\gr}_0(S),[S]\big ) \cong \big(K^{\gr}_0(R),[P]\big ).\] In particular $K^{\gr}_0(R) \cong K^{\gr}_0(S)$ as ordered $\mathbb Z[\Gamma]$-modules.   (Note that the ultramatricial assumptions on rings are not used in this direction.)

For the converse, suppose $K^{\gr}_0(S)\cong K^{\gr}_0(R)$ as ordered $\mathbb Z[\Gamma]$-modules. Denote the image of $[S]\in K^{\gr}_0(S)$ under this isomorphism by $[P]$. Then   
\[\big(K^{\gr}_0(S),[S]\big )  \cong \big (K^{\gr}_0(R),[P]\big ).\] 
Since $[P]$ is an order-unit (see~(\ref{meinchnhfr})), there is $\overline \alpha=(\alpha_1,\dots,\alpha_n)$, $\alpha_i \in \Gamma$, such that 
\[\sum_{i=1}^n \alpha_i [P]=[P^n(\overline \alpha)]\geq [R].\] This means that there is a graded finitely generated projective  $A$-module, $Q$ such that 
$[P^n(\overline \alpha)]=[R\oplus Q]$. 
Since $R$ is a graded ultramatricial algebra, by Lemma~\ref{fdpahf2}, $P^n(\overline \alpha)\conggr R\oplus Q$. Using Theorem~\ref{grgenth}, it follows that $P$ is a graded progenerator. Let $T=\End_R(P)$. Using Lemma~\ref{nnbok}, we get an order preserving $\mathbb Z[\Gamma]$-module isomorphism 
\[\big(K^{\gr}_0(T),[T]\big ) \cong \big(K^{\gr}_0(R),[P]\big ) \cong \big(K^{\gr}_0(S),[S]\big ).\] 
Observe that $T$ is also a graded ultramatricial algebras. By the main Theorem~\ref{ujhosmr}, $\End_R(P) =T \conggr S$. By Theorem~\ref{grmorim11}, this implies that $R$ and $S$ are graded Morita equivalent. 
\end{proof}

Considering a trivial group as the grade group,  we retrieve the classical Bratteli-Elliott Theorem (see the introduction of this chapter). 

\begin{corollary}[The Bratteli-Elliott Theorem]\label{marriotcitywall}

Let $R$ and $S$ be ultramatricial algebras over a field $F$.
Then $R$ and $S$ are Morita equivalent if and only if there is an order preserving isomorphism 
$K_0(R) \cong K_0(S)$. Moreover, $R \cong S$ as $F$-algebras if and only if there is an order preserving isomorphism
 \[\big (K_0(R),[R] \big ) \cong \big (K_0(S),[S]\big ). \]
 \end{corollary}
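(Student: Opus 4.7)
The plan is to specialise the graded theorems \ref{ujhosmr} and~\ref{meinchina} to the case where the grade group $\Gamma$ is trivial, and observe that every object and morphism appearing in the corollary is the $\Gamma = \{0\}$ incarnation of its graded counterpart. Once this dictionary is in place, the corollary is immediate.

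First I would set $\Gamma = \{0\}$ and reinterpret the basic data. With this choice, a $\Gamma$\!-graded field is just a field, and Definition~\ref{ggthu} of a graded matricial $A$-algebra collapses to the usual notion of a matricial algebra over the field $F$, since all shifts $\overline{\delta}_i$ lie in the trivial group and can be suppressed. Consequently, Definition~\ref{kjhwal} of a graded ultramatricial $A$-algebra reduces to the definition of an ordinary ultramatricial $F$-algebra appearing in the statement. A $\Gamma$\!-graded $A$-algebra homomorphism in this setting is nothing but an $F$-algebra homomorphism, and graded Morita equivalence (Definition~\ref{grdeffsa}) collapses to ordinary Morita equivalence because the suspension functors $\mathcal T_\alpha$ reduce to the identity and the commutativity condition $\phi \mathcal T_\alpha \cong \mathcal T_\alpha \phi$ becomes vacuous.

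Next I would identify the $K$-theoretic invariant. By construction (see~\S\ref{podong}), $K^{\gr}_0(R)$ is the group completion of $\mathcal V^{\gr}(R)$, and when $\Gamma$ is trivial this coincides with the usual Grothendieck group $K_0(R)$; likewise $\mathbb Z[\Gamma] = \mathbb Z$, so the $\mathbb Z[\Gamma]$-module structure on $K^{\gr}_0(R)$ carries no extra information, and an order preserving $\mathbb Z[\Gamma]$-module isomorphism is simply an order preserving group isomorphism. The positive cone $K^{\gr}_0(R)_+$ of Example~\ref{pofsgtwn} becomes the usual cone $K_0(R)_+$ of isomorphism classes of finitely generated projective $R$-modules, and the order-unit $[R]$ is the same element in either picture.

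With these translations, the second assertion of the corollary is exactly Theorem~\ref{ujhosmr} applied to the trivial grade group: an order preserving isomorphism $(K_0(R),[R]) \cong (K_0(S),[S])$ is the same data as an order preserving $\mathbb Z[\Gamma]$-module isomorphism $(K^{\gr}_0(R),[R]) \cong (K^{\gr}_0(S),[S])$, and the graded theorem then yields a graded $A$-algebra isomorphism $R \conggr S$, which is simply an $F$-algebra isomorphism. The first assertion is obtained by the same specialisation of Theorem~\ref{meinchina}. There is no real obstacle here beyond making the identifications precise; the only point worth checking carefully is that the notions of positive cone and order-unit agree under the forgetful passage from the graded to the ungraded setting, which is immediate from the definitions in~\S\ref{pregg5} and Example~\ref{pofsgtwn}.
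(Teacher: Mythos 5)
Your proposal is correct and is essentially identical to the paper's own proof, which simply observes that the corollary follows by taking $\Gamma$ to be the trivial group in Theorems~\ref{ujhosmr} and~\ref{meinchina}; you have merely spelled out the routine dictionary between the graded and ungraded notions that the paper leaves implicit.
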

\begin{proof}
The corollary follows by considering $\Gamma$  to be a trivial group in Theorems~\ref{ujhosmr} and~\ref{meinchina}. 
\end{proof}

\begin{remark}\label{idontcare}
Theorem~\ref{ujhosmr} can also been extend to the non-unital rings. If graded rings $R$ and $S$ are not unital, then instead of the order units in the Theorem~\ref{ujhosmr}, one considers the generating interval \[\big \{\, 0 \leq x \leq [\tilde R] \mid x\in K^{\gr}_0(R) \, \big \},\] where $[\tilde R]$ is the order unit in the group $K^{\gr}_0(\tilde R)$. Then the order preserving map $f: K^{\gr}_0(R) \rightarrow K^{\gr}_0(S)$ should be \emph{contractive}, \ie preserving the generating intervals (see~\cite[Chapter XII]{goodhand}) for the ungraded version of this theorem). \index{contractive}

As a demonstration, in Example~\ref{gor381}, there is an order preserving $\Z[x,x^{-1}]$-module isomorphism $K^{\gr}_0(R) \cong K^{\gr}_0(S)$. However, one calculates that the generating interval for $K^{\gr}_0(R)$ is $\big \{\sum_{i \in \mathbb N} x^i \big\}$, whereas for $S$ the interval is 
$\big \{\delta+nx \mid \delta \in \{0,1\}, n \in \mathbb N\big \}$. Thus there is no contractive order preserving isomorphism between $K^{\gr}_0(R)$ and $K^{\gr}_0(S)$. Therefore by the non-unital version of Theorem~\ref{ujhosmr}, $\LL(F)$ and $\LL(E)$ are not graded isomorphic. 

\end{remark}


\chapter[Graded Versus Ungraded $K$-Theory]{Graded Versus Ungraded\\ (Higher) $K$-Theory}\label{waraya}

Recall that for a $\Gamma$\!-graded ring $A$,  the category of  finitely generated
$\Gamma$\!-graded  projective right $A$-modules is denoted by $\Pgr[\Gamma] A$. This 
is an exact category with the usual notion of (split) short exact
sequence. Thus one can apply Quillen's $Q$-construction~\cite{quillen} to obtain $K$-groups
\[K_i(\Pgr[\Gamma] A),\] for $i\geq 0$, which we denote by $K_i^{\gr}(A)$. If more than one grading is involve (which is the case in this chapter) we denote it  
by $K_i^{\Gamma}(A)$. 
 The group $\Gamma$ acts on the category
$\Pgr[\Gamma] A$ from the right via $(P,\alpha) \mapsto P(\alpha)$. By functoriality of $K$-groups this gives $K_i^{\gr}(A)$ the structure of a right $\mathbb Z[\Gamma]$\nbd-module.

The relation between graded $K$\nbd-groups and ungraded
$K$\nbd-groups is not always apparent. For example consider the
$\mathbb Z$\nbd-graded matrix ring \[A=\mathbb{M}_5(K)(0,1,2,2,3),\] 
where $K$ is a field. Using
graded Morita theory one can show that
$K_0^{\mathbb Z}(A)\cong \mathbb Z[x,x^{-1}]$ (see Example~\ref{upst}), whereas  $K_0(A) \cong \mathbb{Z}$ and 
$K_0(A_0)\cong \mathbb Z \times \mathbb Z \times \mathbb Z\times
\mathbb Z$ (see Example~\ref{fgfdgs}).  

In this chapter we describe a theorem due to van den Bergh~\cite{vandenbergh}  relating the graded $K$-theory to ungraded $K$-theory. For this we need a generalisation of a result of Quillen on relating $K^{\gr}_i$-groups of a positively graded rings to their  ungraded $K_i$-groups (\S\ref{todaytalkbit}). We also need to recall a proof of the fundamental theorem of $K$-theory, \ie for a regular Noetherian ring $A$, $K_i(A[x])\cong K_i(A)$ (\S\ref{shitnome}). We will see that in the proof of this theorem, graded $K$-theory appears in a crucial way. Putting all these together we can relate the graded $K$-theory of regular Noetherian $\mathbb Z$-graded rings to their ungraded $K$-theory (\S\ref{vandengg}). 

As a conseqeunce, we will see that if $A$ is a $\mathbb Z$-graded right regular Noetherian ring, then one has an exact sequence (Corollary~\ref{ght9laks}),
\[ 
\begin{split}
\xymatrix{
K^{\gr}_0(A) \ar[rr]^{[P]\mapsto [P(1)]-[P]} && K^{\gr}_0(A) \ar[r]^{U} &  K_0(A) \ar[r] & 0,
}
\end{split}
\] where $U$ is induced by the forgetful functor (\S\ref{forgetful}). 
This shows that 
\[K^{\gr}_0(A) /\langle [P]-[P(1)] \rangle \cong K_0(A),\]
where $P$ is a graded finitely generated  projective $A$-module. 
Further the action of $\mathbb Z[x,x^{-1}]$ on the quotient group is trivial. This means, as soon as we discard the shift in $K^{\gr}_0$, this group reduces to the usual $K_0$-group. 

However this is not always the case. For example consider the group ring $\mathbb Z[G]$, where $G$ is an abelian group. 
Since $\mathbb Z[G]$ has IBN, the canonical homomorphism $\theta: K_0(\Z) \rightarrow K_0(\mathbb Z[G])$ is injective (see~\S\ref{pearlf}). The augmentation map $\Z[G]\rightarrow \Z, g\mapsto 1$, induces $\vartheta: K_0(\Z[G])\rightarrow K_0(\Z)$, such that $\vartheta \theta=1$. Thus 
\[K_0(\Z[G]) \cong \Z \bigoplus \widetilde{K_0}(\Z[G]). \]
Now since $\Z[G]$ is a group ring (so a crossed product), \[K^{\gr}_0(\Z[G])\cong K_0(\Z)\cong \Z\] and the action of $\Z[x,x^{-1}]$ on the $K^{\gr}_0(\Z[G])$ is trivial (see Example~\ref{gptwnow}). So \[K^{\gr}_0(\Z[G]) / \langle [P]-[P(1)] \rangle \cong \Z.\] This shows that $K^{\gr}_0(\Z[G])$ does not shed any light on the group $K_0(\Z[G])$.
The Grothendieck group of group rings is not easy to compute (see for example~\cite{bassmurthy} and~\cite[Example~2.4]{weibelk}). 

In this chapter we  compare the graded $K$-theory to its ungraded counterpart.  We start with a generalisation of Quillen's theorem.

\section{$K^{\gr}_*$ of positively graded rings}\label{todaytalkbit}

For a $\mathbb Z$-graded ring $A$ with support in $\mathbb N$, in his seminal paper~\cite{quillen},  Quillen calculated the graded $K$-theory of $A$ in terms of $K$-theory of its zero homogeneous component ring. \index{positively graded ring}

\begin{theorem}{\sc \cite[\S3, Proposition]{quillen}} \label{quillen11}
Let $A$ be a $\mathbb Z$-graded ring with support in $\mathbb N$. Then for $i\geq 0$, there is a $\mathbb  Z[t,t^{-1}]$-module isomorphism, 
\begin{equation}\label{lusttti}
K^{\gr}_i(A)\cong K_i(A_0)\otimes_{\mathbb Z} \mathbb Z[t,t^{-1}].
\end{equation}
Moreover, the elements of $K_i(A_0) \otimes t^r$ maps to $K^{\gr}_i(A)$ by $\ol f_r$, where $f_r$ is the exact functor 
\begin{align}\label{macitywall}
f_r:\Prr A_0 &\longrightarrow \Pgrp A, \\
P &\longmapsto (P\otimes_{A_0}A)(-r) \notag. 
\end{align}
\end{theorem}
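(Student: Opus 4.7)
The plan is to build the claimed isomorphism from the functors $f_r$ and then verify it via a filtration-and-d\'evissage argument. First, I would assemble the $f_r$'s into a single exact functor
\begin{equation*}
F: \bigoplus_{r\in\mathbb Z} \Prr A_0 \longrightarrow \Pgr[\mathbb Z] A, \qquad (P_r)_r \longmapsto \bigoplus_r (P_r \otimes_{A_0} A)(-r).
\end{equation*}
Each $f_r$ lands in $\Pgrp A$ (tensoring a projective $A_0$-module with $A$ yields a graded projective $A$-module and shifting is harmless) and is exact, so applying Quillen's $K$-functor produces $\bigoplus_r K_i(A_0) \to K^{\gr}_i(A)$. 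Under the identification $\bigoplus_r K_i(A_0) \cong K_i(A_0)\otimes_{\mathbb Z}\mathbb Z[t,t^{-1}]$ that places the $r$-th summand at $t^r$, this map is $\mathbb Z[t,t^{-1}]$-linear since $\mathcal T_1 f_r \cong f_{r-1}$, which matches (up to a harmless reindexing) the shift action of $t$.

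The remaining task is to show $F$ induces an isomorphism on all $K_i$. For this I would filter $\Pgr[\mathbb Z] A$ by the full exact subcategories
\begin{equation*}
\mathcal P_n = \big\{\, P \in \Pgrp A \mid P_k = 0 \text{ for all } k<n\,\big\}.
\end{equation*}
Because $A$ is positively graded and every $P\in\Pgrp A$ is a summand of a finite direct sum $A^m(\overline\alpha)$, the support of $P$ is bounded below; hence $\Pgrp A = \bigcup_{n\in\mathbb Z}\mathcal P_n$ with $\mathcal P_{n+1}\subseteq \mathcal P_n$. Since Quillen's $K$-theory commutes with filtered colimits of exact categories, one obtains $K_i^{\gr}(A) = \varinjlim_{n\to -\infty}K_i(\mathcal P_n)$, and the shift equivalence $\mathcal T_1:\mathcal P_n \xrightarrow{\sim}\mathcal P_{n-1}$ lets us treat every level uniformly. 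Inside $\mathcal P_n$ I would apply a d\'evissage: for $P\in\mathcal P_n$, the degree-$n$ component $P_n$ is a f.g.\ projective $A_0$-module (extracted as a summand at level $n$ from a chosen presentation $P\oplus Q\cong A^m(\overline\alpha)$), and the multiplication map $\mu:f_n(P_n)\to P$ fits into a short exact sequence
\begin{equation*}
0 \longrightarrow f_n(P_n) \longrightarrow P \longrightarrow P' \longrightarrow 0
\end{equation*}
with $P'\in\mathcal P_{n+1}$. Iterating produces a resolution of $P$ by modules in the image of the $f_r$'s, and the Resolution Theorem should then identify $K_i(\mathcal P_n) \cong \bigoplus_{r\geq n}K_i(A_0)$; passing to the colimit in $n$ delivers the desired isomorphism together with its $\mathbb Z[t,t^{-1}]$-module structure.

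The main obstacle is making the iterated filtration behave correctly on higher $K$-groups. Since $P\in\Pgrp A$ may have unbounded support (already $A$ itself does), the successive kernels $P^{(j)}\in\mathcal P_{n+j}$ do not terminate after finitely many steps, so the naive d\'evissage does not directly apply. One must either first restrict to ``degree-bounded'' exact subcategories on which a finite resolution does work and then verify compatibility under the colimits $n\to -\infty$ and upper bound $\to +\infty$, or else refine Quillen's filtration arguments to accommodate infinite but degreewise-eventually-stable resolutions. Concurrently, one must verify that the degree-$n$ extraction $P\mapsto P_n$ genuinely produces an $A_0$-projective module and that $\mu$ is an admissible monomorphism for the exact structure on $\Pgrp A$; these two technical inputs are what make the d\'evissage-plus-colimit machinery function and constitute the delicate part of the argument.
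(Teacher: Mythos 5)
Your comparison map is the right one (it agrees with the functor $\Theta_q$ in the paper's proof of the generalisation, Theorem~\ref{thm:main}, which specialises to this statement for trivial~$G$), and your bottom-stripping sequence $0 \to f_n(P_n) \to P \to P' \to 0$ is genuinely the mechanism the paper exploits. But the $K$-theoretic step fails with the tools you invoke, and the obstacle you flag at the end is fatal rather than technical. D\'evissage is a statement about abelian categories and subcategories closed under subobjects and quotients; $\Pgrp A$ and your $\mathcal P_n$ are merely exact, so d\'evissage does not apply at all. The Resolution Theorem requires \emph{finite} resolutions, and, as you yourself observe, the iterated stripping never terminates (already for $P=A$), so neither theorem delivers $K_i(\mathcal P_n)\cong\bigoplus_{r\geq n}K_i(A_0)$. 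Nor can the escape be ``degree-bounded'' subcategories in the sense of bounded support: over a positively graded ring essentially no nonzero graded finitely generated projective module has support bounded above, so those subcategories are nearly empty.

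The correct finiteness lives at the level of generators, not support, and the correct $K$-theoretic tool is additivity, not d\'evissage or resolution. For $P\in\Pgrp A$ let $F^\lambda P$ be the submodule generated by homogeneous elements of degree $\leq\lambda$; since $P$ is finitely generated, this filtration is \emph{finite} ($F^{-n}P=0$ and $F^nP=P$ for some $n$) even though the support of $P$ is unbounded above -- this two-sided bound is exactly what the paper's category $\nPgr[\mathbb Z] A$ encodes. The Swan-type Proposition~\ref{swanii}, applied to the retraction $\pi\colon A\to A_0$ with $A_+=\ker\pi$, shows that $T(P)=P/PA_+$ is a graded finitely generated projective $A_0$-module with $P\cong T(P)\otimes_{A_0}A$ noncanonically; this is also the clean route to your two ``technical inputs,'' since it gives $P_n\cong T(P)_n$ projective for $P\in\mathcal P_n$ and the splitting of $\mu$. (Your proposed extraction of $P_n$ from an arbitrary presentation $P\oplus Q\conggr A^m(\overline\alpha)$ does not work unless the generators are chosen in degrees $\geq n$, because the homogeneous components $A_k$ need not be projective $A_0$-modules; see Example~\ref{meinmein}.) One then identifies $F^{k+1}P/F^kP\cong T(P)_{k}\otimes_{A_0}A(-k)$, so every $P$ carries a finite characteristic filtration with subquotients in the image of the $f_r$, and the additivity theorem for characteristic filtrations shows that $\Theta_q\circ\Psi_q$ induces the identity on $K_i$, where $\Psi_q(P)=\bigoplus_\omega T(P)_\omega$ is the explicit inverse functor your proposal lacks; $\Psi_q\circ\Theta_q\cong\mathrm{id}$ holds naturally, and letting $q\to\infty$ finishes. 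Your colimit bookkeeping and the $\mathbb Z[t,t^{-1}]$-linearity via $\mathcal T_1 f_r\cong f_{r-1}$ are correct and agree with the paper.
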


In Theorem~\ref{thm:main}, we prove a generalised version of this proposition. Theorem~\ref{quillen11}  was used in an essential way to prove the fundamental theorem of $K$-theory (see~\S\ref{bolgona}). In particular, Theorem~\ref{quillen11}  gives a powerful tool to calculate the graded Grothendieck group of positively graded rings. We demonstrate this by calculating $K^{\gr}_0$ of path algebras.  \index{path algebra}

\begin{theorem}\label{hgrsiny}
Let $E$ be a finite graph and $\PP_K(E)$ be the path algebra with coefficients in the field $K$. Then 
\[K^{\gr}_0(\PP_K(E))\cong \bigoplus_{|E^0|}\mathbb Z[t,t^{-1}]. \]
\end{theorem}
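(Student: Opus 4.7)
The plan is to apply Quillen's Theorem~\ref{quillen11} directly, once the degree-zero component of $\PP_K(E)$ has been identified. Recall from~\S\ref{paohdme} that $\PP_K(E)$ is given the natural $\mathbb{Z}$-grading in which $\deg(v)=0$ for every vertex $v\in E^0$ and $\deg(\alpha)=1$ for every edge $\alpha\in E^1$, with the defining relations being homogeneous. Since paths of positive length have positive degree and those of length zero (the vertices) have degree zero, the support of $\PP_K(E)$ lies in $\mathbb{N}$, so $\PP_K(E)$ is a positively graded ring in the sense required by Theorem~\ref{quillen11}.

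First I would identify $\PP_K(E)_0$. Because $E$ is finite, the defining relations $v_iv_j=\delta_{ij}v_i$ together with the fact that the vertices are exactly the paths of length zero show that $\PP_K(E)_0$ has $E^0$ as a $K$-basis of pairwise orthogonal idempotents, with unit $\sum_{v\in E^0} v$. Hence there is a $K$-algebra isomorphism
\[
\PP_K(E)_0 \;\cong\; \prod_{v\in E^0} Kv \;\cong\; K^{|E^0|}.
\]

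Next I would invoke Theorem~\ref{quillen11} for $i=0$ to obtain the $\mathbb{Z}[t,t^{-1}]$-module isomorphism
\[
K^{\gr}_0(\PP_K(E)) \;\cong\; K_0(\PP_K(E)_0) \otimes_{\mathbb{Z}} \mathbb{Z}[t,t^{-1}].
\]
Since $K_0$ respects finite products and $K_0(K)\cong \mathbb{Z}$, we get $K_0(\PP_K(E)_0)\cong \mathbb{Z}^{|E^0|}$. Tensoring over $\mathbb{Z}$ with $\mathbb{Z}[t,t^{-1}]$ then yields
\[
K^{\gr}_0(\PP_K(E)) \;\cong\; \bigoplus_{|E^0|} \mathbb{Z}[t,t^{-1}],
\]
which is the desired conclusion.

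There is essentially no obstacle here: the proof is a bookkeeping exercise once one observes the two inputs (positivity of the grading and the structure of the zero component). The only minor point to double-check is that Theorem~\ref{quillen11} was stated for rings with support in $\mathbb{N}$ and that our grading on $\PP_K(E)$ really satisfies this, which is immediate from the fact that ghost edges are absent in the path algebra (in contrast to the Leavitt path algebra, whose $\mathbb{Z}$-grading has support all of $\mathbb{Z}$ and for which Theorem~\ref{quillen11} does not apply). Under the isomorphism, the generator $[v]\otimes t^r$ corresponds, via the functor $f_r$ of~\eqref{macitywall}, to the class of the graded projective $\PP_K(E)$-module $(v\PP_K(E))(-r)$, giving an explicit description of the $\mathbb{Z}[t,t^{-1}]$-basis.
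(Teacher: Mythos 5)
Your proof is correct and follows essentially the same route as the paper: observe that the canonical grading on $\PP_K(E)$ has support in $\mathbb N$ with $\PP_K(E)_0\cong \bigoplus_{|E^0|}K$, then apply Theorem~\ref{quillen11} for $i=0$ together with $K_0(K^{|E^0|})\cong \mathbb Z^{|E^0|}$. Your extra remarks (the explicit basis of orthogonal idempotents, the contrast with Leavitt path algebras, and the description of the generators via the functor $f_r$) are accurate elaborations of what the paper leaves implicit.
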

\begin{proof}
Recall from~\S\ref{paohdme}, that $\PP_K(E)$ is a graded ring with support $\mathbb N$. Moreover $\PP_K(E)_0=\bigoplus_{|E^0|} K$. Then by~(\ref{lusttti})
\begin{multline*}
K^{\gr}_0(\PP_K(E))\cong K_0(\bigoplus_{|E^0|} K)\otimes_{\mathbb Z} \mathbb Z[t,t^{-1}]\cong  \\\bigoplus_{|E^0|} \mathbb Z \otimes_{\mathbb Z} \mathbb Z[t,t^{-1}]\cong \bigoplus_{|E^0|}\mathbb Z[t,t^{-1}].\qedhere
\end{multline*} 
\end{proof}

The proof of Proposition~\ref{hgrsiny} shows that, in the case of $i=0$, $[A]\in K^{\gr}_i(A)$ is sent to $[A_0]\otimes 1 \in K_i(A_0)\otimes_{\mathbb Z} \mathbb Z[t,t^{-1}]$ under the isomorphism~(\ref{lusttti}). Thus for the graphs 
\medskip
\begin{equation*}
{\def\labelstyle{\displaystyle}
\xymatrix{ 
E: &  \bullet \ar[r] & \bullet  
}}
{\def\labelstyle{\displaystyle}
\qquad 
\xymatrix{ 
F:  &\bullet \ar@/^1.5pc/[r] & \bullet \ar@/^1.5pc/[l] &
}}
\end{equation*}
We have an order isomorphism 
\begin{multline*}
\big( K^{\gr}_0(\PP_K(E),[\PP_K(E)]\big) \cong \big( \mathbb Z[t,t^{-1}] \oplus \mathbb Z[t,t^{-1}], (1,1)\big)\cong \\ \big( K^{\gr}_0(\PP_K(E),[\PP_K(E)]\big).
\end{multline*}
This shows in particular that the ordered group $K^{\gr}_0$ in itself does not classify the path algebras. 

In contrast to other fundamental theorems in the subject, such as the fundamental theorem of $K$-theory (\ie $K_n(R[x,x^{-1}])=K_n(R)\times K_{n-1}(R)$, for  a regular ring $R$), one cannot use an easy induction on~(\ref{lusttti}) to write a similar statement for multivariables rings. For example, it appears that there is no obvious inductive approach to
generalise~(\ref{lusttti}) to $\mathbb Z^m \times G$\nbd-graded
rings. However, by generalising Quillen's argument to take
account of gradings on both sides of the isomorphism~(\ref{lusttti}), such a
procedure becomes feasible. The details have been worked out in~\cite{hazhu} and we present it here.

We will prove the following statement.

\begin{theorem}\label{thm:main}
  Let $G$ be an arbitrary group, and let $A$ be a ${\mathbb Z} \times
  G$\nbd-graded ring with support in ${\mathbb N} \times G$. Then
  there is a $\mathbb Z[{\mathbb Z}\times G]$-module isomorphism
  \[K_i^{{\mathbb Z} \times G}(A) \cong K_i^{G}(A_{(0,G)}) \otimes_{\mathbb Z[G]} \mathbb Z[{\mathbb Z}\times G],\] 
  where $A_{(0,G)}=\bigoplus_{g\in G} A_{(0,g)}$.
\end{theorem}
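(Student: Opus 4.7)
The plan is to adapt Quillen's original argument for Theorem~\ref{quillen11}, tracking the additional $G$-grading at every step. Write $B = A_{(0,G)}$, a $G$-graded ring, and for each $n \geq 0$ set $A^{(\geq n)} = \bigoplus_{i \geq n,\, g \in G} A_{(i,g)}$, a $\mathbb Z \times G$-graded two-sided ideal of $A$ with $A / A^{(\geq 1)} \cong B$ as $G$-graded rings. Since the support of $A$ lies in $\mathbb N \times G$, the ring $A$ decomposes as $A = \bigoplus_{n \geq 0} A_{(n,G)}$ as a $G$-graded $B$-bimodule, where each $A_{(n,G)}$ is concentrated in $\mathbb Z$-degree $n$. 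For each $(r,h) \in \mathbb Z \times G$, I would consider the induction-and-shift functor
\[
f_{r,h} : \Pgr[G] B \longrightarrow \Pgr[\mathbb Z \times G] A, \qquad Q \longmapsto (Q \otimes_B A)(-r,-h);
\]
this is exact because $Q$ is projective, and the family $\{f_{r,h}\}$ assembles into a $\mathbb Z[\mathbb Z \times G]$-module homomorphism
\[
\Psi : K_i^G(B) \otimes_{\mathbb Z[G]} \mathbb Z[\mathbb Z \times G] \longrightarrow K_i^{\mathbb Z \times G}(A),
\]
which is the proposed isomorphism. A candidate inverse on generators is the zero-slice functor $P \mapsto P / P A^{(\geq 1)}$, viewed as a $G$-graded $B$-module in $\mathbb Z$-degree $0$.

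The main step is proving $\Psi$ is an isomorphism. I would follow Quillen's filtration approach: any $P \in \Pgr[\mathbb Z \times G] A$ admits a finite decreasing $\mathbb Z$-filtration $P = P^{(n_0)} \supseteq P^{(n_0+1)} \supseteq \cdots \supseteq P^{(N)} = 0$ with $P^{(n)} = \bigoplus_{i \geq n,\, g} P_{(i,g)}$. Each successive quotient $P^{(n)} / P^{(n+1)}$ is a $G$-graded $B$-module concentrated in $\mathbb Z$-degree $n$, and, using the $A$-projectivity of $P$, one verifies it is $G$-graded finitely generated projective over $B$ and arises as the image under $f_{n,0}$ of a class in $K_i^G(B)$. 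Quillen's additivity theorem applied to this filtration then produces
\[
[P] = \sum_n \bigl[P^{(n)}/P^{(n+1)}\bigr] \in K_i^{\mathbb Z \times G}(A),
\]
showing that every class in $K_i^{\mathbb Z \times G}(A)$ lies in the image of $\Psi$. For the other direction, the composition of the zero-slice functor with $f_{r,h}$ is naturally isomorphic to the identity up to the $(-r,-h)$-shift, which splits $\Psi$ and establishes bijectivity on all higher $K$-groups through the $Q$-construction.

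The main obstacle will be the careful equivariance bookkeeping under the $G$-action: whereas Quillen's one-parameter $\mathbb Z$-grading allows a direct reduction to ungraded $A_0$-modules, every functor and filtration in our setting must be compatible with the $G$-grading so that the resulting map $\Psi$ respects the full $\mathbb Z[\mathbb Z \times G]$-module structure rather than merely the $\mathbb Z[t,t^{-1}]$-action. A related technical subtlety is verifying that the zero-slice functor actually lands in $\Pgr[G] B$—that is, that $P / P A^{(\geq 1)}$ is $G$-graded finitely generated \emph{projective} rather than merely $G$-graded finitely generated—together with formalising Quillen's additivity theorem in the $\mathbb Z \times G$-graded exact category $\Pgr[\mathbb Z \times G] A$, ensuring that each step of the filtration produces a genuine admissible short exact sequence of $\mathbb Z \times G$-graded projective $A$-modules.
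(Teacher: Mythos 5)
Your overall architecture (induction functors plus a filtration plus additivity) has the right shape, but the filtration you chose breaks the argument at its central step. The truncation filtration $P^{(n)}=\bigoplus_{i\geq n,\,g}P_{(i,g)}$ is indeed a filtration by graded $A$-submodules (since the support of $A$ lies in ${\mathbb N}\times G$), but its layers $P^{(n)}/P^{(n+1)}\cong P_{(n,G)}$ are concentrated in a single $\mathbb Z$-degree, and such modules are essentially never projective over $A$: already for $A=k[x]$ and $B=k$ the layer $k$ placed in degree $n$ is not a projective $k[x]$-module. So your filtration does not live in the exact category $\Pgr[\mathbb Z \times G] A$, Quillen's additivity theorem does not apply there, and the identity $[P]=\sum_n[P^{(n)}/P^{(n+1)}]$ is not even a statement about classes in $K_i^{{\mathbb Z}\times G}(A)$. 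Two further claims fail for the same filtration: the layers need not be projective over $B$ either (take $B=k[s]$ trivially graded, $M=k[s]/(s)$, and $A=B\oplus M$ the square-zero extension with $M$ in $\mathbb Z$-degree $1$; then $P=A$ is graded free, yet $P^{(1)}/P^{(2)}\cong M$ is not projective over $B$, contrary to your ``one verifies'' step), and a layer concentrated in degree $n$ is never of the form $f_{n,0}(Q)=(Q\otimes_B A)(-n,0)$ unless $A_+$ acts trivially, since induced modules have components in all $\mathbb Z$-degrees $\geq n$. Hence surjectivity of $\Psi$ does not follow. Your proposed inverse is also lossy as stated: $P/PA^{(\geq 1)}$ carries a nontrivial $\mathbb Z$-grading, and declaring it to sit in $\mathbb Z$-degree $0$ forgets exactly the $t$-exponents that the ${\mathbb Z}[{\mathbb Z}\times G]$-module structure must record.

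The missing idea is a graded Swan-type theorem, and this is how the paper proceeds. Since a finitely generated graded module with $MA_+=M$ must vanish (look at the minimal $\mathbb Z$-degree in its support), the functors $T=-\otimes_A A_{(0,G)}$ and $S=-\otimes_{A_{(0,G)}}A$ induce inverse bijections on isomorphism classes of graded finitely generated projectives; in particular every $P$ is \emph{noncanonically extended}: $P\cong T(P)\otimes_{A_{(0,G)}}A$. One then filters not by degree truncation but by \emph{generation degree}: $F^{\lambda}P$ is the $A$-submodule generated by the components of $\mathbb Z$-degree at most $\lambda$, and the extension isomorphism identifies $F^{k+1}P/F^{k}P\cong T(P)_{(\lambda_k,G)}\otimes_{A_{(0,G)}}A(-\lambda_k,G)$ --- an induced, hence projective, module, and precisely a value of your induction functors. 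With projective layers in hand, additivity for characteristic filtrations applies; the paper packages this as mutually inverse (on $K$-groups) exact functors $\Theta_q$ and $\Psi_q(P)=\bigoplus_{\omega}T(P)_{(\omega,G)}$ --- each slice kept in its own degree $\omega$ --- on the subcategories of modules with support in $[-q,q]\times G$, lets $q\to\infty$, and finally invokes a lemma for trivially $\mathbb Z$-graded rings to convert $K_i^{{\mathbb Z}\times G}(A_{(0,G)})$ into $K_i^{G}(A_{(0,G)})\otimes_{{\mathbb Z}[G]}{\mathbb Z}[{\mathbb Z}\times G]$. Without the extension theorem and the generation-degree filtration, the surjectivity half of your argument cannot be repaired.
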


By a straightforward induction this now implies:

\begin{corollary}\label{jhyhgtewqs}
  For a $\mathbb Z^m \times G$\nbd-graded ring~$A$ with support in
  $\mathbb N^m \times G$ there is a $\mathbb Z[t_1^{\pm1}, \cdots, t_m^{\pm1}]$-module isomorphism
  \[K_i^{\mathbb
    Z^m \times G}(A) \cong K_i^G(A_{(0,G)}) \otimes_{\mathbb Z} \mathbb Z[t_1^{\pm1}, \cdots, t_m^{\pm1}].\]
\end{corollary}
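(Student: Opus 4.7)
The plan is to proceed by induction on $m$, using Theorem~\ref{thm:main} at each step to peel off one copy of $\mathbb Z$ from the grading. For the base case $m=1$, Theorem~\ref{thm:main} applied directly with the given group $G$ yields
\[K_i^{\mathbb Z \times G}(A) \cong K_i^G(A_{(0,G)}) \otimes_{\mathbb Z[G]} \mathbb Z[\mathbb Z \times G],\]
and since $\mathbb Z[\mathbb Z \times G] \cong \mathbb Z[G] \otimes_{\mathbb Z} \mathbb Z[t_1^{\pm 1}]$ as $\mathbb Z[G]$-bimodules, the right-hand side collapses to $K_i^G(A_{(0,G)}) \otimes_{\mathbb Z} \mathbb Z[t_1^{\pm 1}]$, which is the desired form.

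For the inductive step, assume the result for $m-1$. The key move is to regroup the grade group as $\mathbb Z^m \times G = \mathbb Z \times G'$ where $G' := \mathbb Z^{m-1} \times G$. Since the support of $A$ lies in $\mathbb N^m \times G = \mathbb N \times (\mathbb N^{m-1} \times G) \subseteq \mathbb N \times G'$, Theorem~\ref{thm:main} applies and gives
\[K_i^{\mathbb Z^m \times G}(A) \cong K_i^{G'}(A_{(0,G')}) \otimes_{\mathbb Z[G']} \mathbb Z[\mathbb Z \times G'].\]
Here $A_{(0,G')} = \bigoplus_{(k,g) \in \mathbb Z^{m-1} \times G} A_{(0,k,g)}$, which inherits a natural $\mathbb Z^{m-1} \times G$-grading with support inside $\mathbb N^{m-1} \times G$.

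Now I would apply the inductive hypothesis to the graded ring $A_{(0,G')}$, viewed as $\mathbb Z^{m-1} \times G$-graded, to obtain
\[K_i^{\mathbb Z^{m-1} \times G}(A_{(0,G')}) \cong K_i^G\bigl((A_{(0,G')})_{(0,G)}\bigr) \otimes_{\mathbb Z} \mathbb Z[t_2^{\pm 1}, \ldots, t_m^{\pm 1}],\]
and note that $(A_{(0,G')})_{(0,G)} = A_{(0,G)}$ (the iterated truncation recovers the truncation at the whole $\mathbb Z^m$-part). Substituting back and using the identifications $\mathbb Z[G'] \cong \mathbb Z[G][t_2^{\pm 1}, \ldots, t_m^{\pm 1}]$ and $\mathbb Z[\mathbb Z \times G'] \cong \mathbb Z[G][t_1^{\pm 1}, \ldots, t_m^{\pm 1}]$, the tensor product simplifies via the associativity of tensor products to $K_i^G(A_{(0,G)}) \otimes_{\mathbb Z} \mathbb Z[t_1^{\pm 1}, \ldots, t_m^{\pm 1}]$.

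The only step requiring any care is verifying that the module structures match up through the two applications, so that the compatible $\mathbb Z[\mathbb Z^m \times G]$-module structures on both sides coincide under the chain of isomorphisms. This is bookkeeping rather than a genuine obstacle: one checks that the action of $t_1$ arises from the outer application of Theorem~\ref{thm:main} and the actions of $t_2, \ldots, t_m$ arise from the inductive hypothesis, and that these commute with the $\mathbb Z[G]$-action. Because everything is compatible grading-wise, no subtlety beyond the standard base-change identity $(M \otimes_{\mathbb Z} S) \otimes_{R \otimes S} (R \otimes S \otimes T) \cong M \otimes_{\mathbb Z} (S \otimes T)$ is required to conclude.
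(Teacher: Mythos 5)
Your proof is correct and is precisely the argument the paper intends: the Corollary is stated there with only the remark ``by a straightforward induction this now implies,'' and your regrouping $\mathbb Z^m \times G = \mathbb Z \times (\mathbb Z^{m-1}\times G)$ followed by Theorem~\ref{thm:main} and the inductive hypothesis is that induction, written out. Your closing observation about module structures is the right one to make explicit --- in effect one carries the stronger, fully $\mathbb Z[\mathbb Z^{m-1}\times G]$-equivariant form of the isomorphism through the induction, which Theorem~\ref{thm:main} supplies at each stage --- but this matches the paper's route, so nothing further is needed.
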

For the trivial group~$G$ this is a direct generalisation of
Quillen's theorem to $\mathbb Z^m$\nbd-graded rings.

As in Quillen's calculation the proof of the Theorem~\ref{thm:main} is based
on a version of Swan's Theorem, modified to the present
situation: it provides a correspondence between isomorphism classes of
$\mathbb Z \times G$\nbd-graded finitely generated  projective
$A$\nbd-modules and of $G$\nbd-graded finitely generated  projective
$A_{(0,G)}$-modules.

\begin{proposition}
  \label{swanii} Let $\Ga$ be a (possibly non-abelian)
  group. Let $A$ be a $\Ga$\!-graded ring, $A_0$ a graded subring
  of~$A$ and $\pi \colon A\rightarrow A_0$ a graded ring homomorphism
  such that $\pi |_{A_0}=1$. (In other words, $A_0$~is a retract
  of~$A$ in the category of $\Ga$\nbd-graded rings.) We denote the
  kernel of~$\pi$ by~$A_+$.

  Suppose that for any graded finitely generated  right $A$\nbd-module
  $M$ the condition $MA_+=M$ implies $M=0$. Then the natural
  functor
  \[S = - \otimes_{A_0} A \colon \Pgr[\Ga] A_0  \longrightarrow
  \Pgr[\Gamma] A\] induces a bijective correspondence between the
  isomorphism classes of graded finitely generated  projective
  $A_0$\nbd-modules and of graded finitely generated  projective
  $A$\nbd-modules. An inverse of the bijection is given by the functor
  \[T = -\otimes_A A_0 \colon \Pgr[\Ga] A \longrightarrow
  \Pgr[\Ga] A_0.\] There is a natural isomorphism $T \circ S
  \cong \mathrm{id}$, and for each $P \in \Pgr[\Ga] A$ a
  noncanonical isomorphism $S \circ T(P) \cong P$. The latter is
  given by
  \begin{equation}
    \label{eq:iso_lemma}
    T(P) \otimes_{A_0} A \longrightarrow P, \quad x \otimes a \longmapsto
    g(x) \cdot a,
  \end{equation}
  where $g$~is an $A_0$\nbd-linear section of the epimorphism $P
  \rightarrow T(P)$.
\end{proposition}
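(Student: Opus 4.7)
The plan is to follow the classical proof of Swan's lemma for positively graded rings, adapted to keep track of the $\Gamma$-grading at every step. The hypothesis on modules satisfying $MA_+=M$ plays the role of a graded Nakayama-type lemma and will be invoked twice: once to force the constructed comparison map to be surjective, and once to kill its kernel.

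First I would verify the easy direction, the natural isomorphism $T\circ S\cong\mathrm{id}$. For $N\in\Pgr[\Ga] A_0$ one has
\[
T\circ S(N)=(N\otimes_{A_0}A)\otimes_A A_0\cong N\otimes_{A_0}(A\otimes_A A_0)\cong N\otimes_{A_0}A_0\cong N,
\]
and all these isomorphisms are graded of degree zero. In particular, $S$ sends nonzero objects to nonzero objects. Next I would check that $T$ and $S$ both preserve the classes of graded finitely generated projective modules: $S$ does so by a general tensor argument, while for $T$ one observes $T(A(\alpha))\cong A_0(\alpha)$, so applying $T$ to a decomposition $P\oplus Q\cong\bigoplus_i A(\alpha_i)$ shows $T(P)$ is a graded direct summand of a graded free $A_0$-module. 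Finite generation of $T(P)=P/PA_+$ over $A_0$ follows from finite generation of $P$ over $A$, since the $A$-action on $T(P)$ factors through $\pi$.

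The heart of the proof is the isomorphism $S\circ T(P)\cong P$. Since $T(P)$ is graded projective over $A_0$ by the previous step, the graded surjection $\pi_P\colon P\twoheadrightarrow T(P)$ admits a graded $A_0$-linear section $g\colon T(P)\to P$ of degree zero. Define
\[
\phi\colon T(P)\otimes_{A_0}A\longrightarrow P,\qquad x\otimes a\longmapsto g(x)\cdot a;
\]
this is a graded $A$-linear map because $g$ is a graded $A_0$-map. For surjectivity, note that for any $p\in P$ one has $p-g(\pi_P(p))\in\ker(\pi_P)=PA_+$, whence $P=\operatorname{Im}(\phi)+PA_+$. The cokernel $C=P/\operatorname{Im}(\phi)$ is a graded finitely generated $A$-module with $C=CA_+$, so the hypothesis yields $C=0$.

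Finally I would argue injectivity by the standard retraction trick. Because $P$ is graded projective, the graded short exact sequence
\[
0\longrightarrow\ker\phi\longrightarrow T(P)\otimes_{A_0}A\stackrel{\phi}{\longrightarrow}P\longrightarrow 0
\]
splits in $\Gr A$, giving a graded decomposition $T(P)\otimes_{A_0}A\cong P\oplus K$ with $K=\ker\phi$. Applying the exact functor $T$ and using the natural isomorphism $T\circ S\cong\mathrm{id}$ already established, we obtain $T(P)\cong T(P)\oplus T(K)$, forcing $T(K)=K/KA_+=0$, i.e.\ $K=KA_+$. Since $K$ is a direct summand of the finitely generated $A$-module $T(P)\otimes_{A_0}A$, the hypothesis gives $K=0$. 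Hence $\phi$ is a graded isomorphism and $S,T$ induce mutually inverse bijections on isomorphism classes. The main subtlety to watch is keeping everything in the graded category of degree-zero morphisms, which is precisely why the graded projectivity of $T(P)$ is essential for producing the graded section $g$; everything else is formal once that is in place.
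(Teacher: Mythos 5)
Your construction matches the paper's: the same functors $S,T$, the same comparison map built from a graded $A_0$\nbd-linear section $g$ of $P \twoheadrightarrow T(P)$, and the same Nakayama-style use of the hypothesis. Your surjectivity argument is fine (indeed slightly more direct than the paper's, which deduces $\coker\psi = 0$ from right-exactness of $T$). But your injectivity step contains a genuine gap. From the split decomposition $T(P)\otimes_{A_0}A \cong P \oplus K$ you apply $T$ and conclude that $T(P) \cong T(P)\oplus T(K)$ \emph{forces} $T(K)=0$. That cancellation is invalid: over a general ring, a finitely generated module $M$ can satisfy $M \cong M \oplus N$ with $N \neq 0$. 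This failure is not exotic in the present context --- it is the defining feature of the non-IBN rings studied throughout this book (e.g.\ the Leavitt algebra $\LL(1,2)$ satisfies $A \cong A^2$ as right $A$-modules, so $A \cong A \oplus A$ with the second summand nonzero). Since the proposition puts no Noetherian, commutativity or IBN hypothesis on $A_0$, the abstract isomorphism alone yields nothing. (A side remark: $T$ is only right exact, not exact, but since you apply it to a split sequence this inaccuracy is harmless.)

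What is needed --- and what the paper's proof supplies --- is to identify the \emph{specific} map induced by your $\phi$ under $T$, not merely the abstract isomorphism type of its source and target. Applying the additive functor $T$ to the split sequence gives a split exact sequence $0 \to T(K) \to TST(P) \xrightarrow{T(\phi)} T(P) \to 0$, so $T(K) = \ker T(\phi)$, and one must show $T(\phi)$ is injective. A direct computation does this: for $q \otimes a \otimes b_0 \in T(P)\otimes_{A_0} A \otimes_A A_0$ one has $T(\phi)(q \otimes a \otimes b_0) = g(q)a \otimes b_0 = f\bigl(g(q)\bigr)\pi(a)b_0 = q\,\pi(a)b_0$, using precisely the section property $f \circ g = \mathrm{id}$; hence $T(\phi)$ coincides with the natural isomorphism $\nu_{T(P)}\colon TST(P) \to T(P)$ of the identification $T\circ S \cong \mathrm{id}$. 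This is exactly the diagram-chase the paper performs (there phrased as $T(\psi) = \nu_{T(P)}$, verified after composing with the isomorphism $T(f)$). With $T(\phi)$ an isomorphism you get $T(K)=0$, i.e.\ $K = KA_+$, and since $K$ is a direct summand of the finitely generated module $ST(P)$ the hypothesis kills it. Inserting this one computation repairs your proof and makes it essentially identical to the paper's; note also that the paper does not even need your splitting for this part, since $T(\ker\psi) = \ker T(\psi)$ already follows from left-exactness considerations once $T(\psi)$ is known to be an isomorphism.
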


\begin{proof}
  For any graded finitely generated  projective $A_0$\nbd-module~$Q$ we
  have a natural isomorphism $TS(Q) \cong Q$ given by
  \begin{equation}
    \label{eq:nuQ}
    \nu_Q \colon TS(Q) = Q \otimes_{A_0} A \otimes_A {A_0} \longrightarrow
    Q, \quad q \otimes a \otimes a_0 \mapsto q \pi(a)a_0.
  \end{equation}
  We will show that for a graded projective $A$\nbd-module~$P$
  there is a noncanonical graded isomorphism $ST(P)\cong_{\gr}
  P$. The lemma then follows.

  Consider the natural graded $A$\nbd-module epimorphism
  \[f \colon P\longrightarrow T(P)=P\otimes_{A} A_0, \quad p \longmapsto p
  \otimes 1.\] Here $T(P)$ is considered as an $A$\nbd-module via
  the map~$\pi$. Since $T(P)$ is a graded projective $A_0$\nbd-module,
  the map $f$~has a graded $A_0$\nbd-linear section $g \colon T(P)
  \rightarrow P$. This section determines an $A$\nbd-linear graded map
  \[\psi \colon ST(P) = P \otimes_A A_0 \otimes_{A_0} A \longrightarrow P, \quad p \otimes a_0 \otimes a \longmapsto g(p \otimes a_0) \cdot a, \] and we will show that $\psi$ is an isomorphism. First note
  that the map
  \[T(f) \colon T(P) \longrightarrow TT(P), \quad p \otimes a_0 \longmapsto
  f(p) \otimes a_0 = p \otimes 1 \otimes a_0\] is an isomorphism
  (here we consider $T(P)$~as an $A$\nbd-module via~$\pi$). In fact the
  inverse is given by the isomorphism $TT(P) = P \otimes_A A_0 \otimes_A
  A_0 \rightarrow P \otimes_A A_0$ which maps $p \otimes a_0 \otimes b_0$ to
  $p \otimes(a_0 b_0)$.  Tracing the definitions now shows that both
  composites
  
  \[
  \xymatrix{ 
  TST(P) \ar@<0.75ex>[r]^{T(\psi)} \ar@<-0.75ex>[r]_{\nu_{T(P)}} & T(P) \ar[r]^{T(f)}_{\cong}   &TT(P)
  }
  \]

  map $p \otimes a_0 \otimes a \otimes b_0
  \in P \otimes_A A_0 \otimes_{A_0} A \otimes_A A_0 = TST(P)$ to the
  element \[f \big( g(p \otimes a_0) \cdot a \big) \otimes b_0 = p
  \otimes (a_0 \pi(a) b_0) \otimes 1 \in TT(P).\] This implies that
  $T(\psi) = \nu_{T(P)}$, which is an isomorphism.

  The exact sequence
  \begin{equation}
    \label{gfds} 0 \longrightarrow \ker \psi \longrightarrow ST(P)
    \overset{\psi}{\longrightarrow} P \longrightarrow \coker \psi \longrightarrow
    0
  \end{equation}
  gives rise, upon application of the right exact functor~$T$, to an
  exact sequence
  \[TST(P) \overset{T(\psi)}{\longrightarrow} T(P) \longrightarrow T(\coker
  \psi) \longrightarrow 0.\] Since $T(\psi)$ is an isomorphism we have
  $T(\coker \psi) = \coker T(\psi) = 0$. Since $\coker \psi$ is
  finitely generated by~\eqref{gfds} this implies $\coker \psi = 0$
  (note that $T(M) = MA_+$ for every finitely generated
  module~$M$). In other words, $\psi$~is surjective and
  \eqref{gfds}~becomes the short exact sequence \[0 \rightarrow \ker
  \psi \rightarrow ST(P) \overset{\psi}{\rightarrow} P \rightarrow
  0.\] This latter sequence splits since $P$~is projective; this
  immediately implies that $\ker \psi$ is finitely generated, and
  since $T(\psi)$ is injective we also have $T(\ker \psi) = \ker
  T(\psi) = 0$. The hypotheses guarantee $\ker \psi = 0$ now so that
  $\psi$~is injective as well as surjective, and thus is an
  isomorphism as claimed.
\end{proof}


\begin{lemma}\label{dido13}
  \label{hggf}
  Let $G$ and~$\Ga$ be groups, and let $A$ be a $G$\nbd-graded
  ring. Then, considering $A$ as a $\Ga \times G$\nbd-graded ring in a
  trivial way where necessary, the functorial assignment $(M, \gamma)
  \mapsto M(\gamma,0)$ induces a $\mathbb Z[\Ga\times G]$\nbd-module
  isomorphism
  \[K_i^{\Ga \times G}(A) \cong K_i^{G}(A) \otimes_{\mathbb Z[G]}
  \mathbb Z[\Ga \times G].\]
\end{lemma}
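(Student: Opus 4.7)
The plan is to reduce to Corollary~\ref{zuhoingding}. Since $A$ is $G$-graded, viewing it as $\Gamma \times G$-graded trivially in the $\Gamma$-direction means $A$ is concentrated in the subgroup $\{0\} \times G$ of $\Gamma \times G$. Corollary~\ref{zuhoingding} (taking the subgroup there to be $G$) then provides an equivalence of categories
\[\Gr[\Gamma \times G] A \, \approx \, \bigoplus_{\Gamma} \Gr[G] A,\]
sending a $\Gamma \times G$-graded module $N$ to the tuple $(N^{(\gamma)})_{\gamma \in \Gamma}$ with $N^{(\gamma)}_{g} := N_{(\gamma, g)}$, and the action of $(\beta, \omega) \in \Gamma \times G$ on the right hand side is given by $(M_\alpha)_\alpha \mapsto (M_{\alpha+\beta}(\omega))_\alpha$.

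First I would restrict this equivalence to graded finitely generated projective modules: a finitely generated $\Gamma \times G$-graded $A$-module is concentrated in only finitely many $\Gamma$-slices (since $A$ sits in $\{0\} \times G$, so its action preserves $\Gamma$-slices), and each slice $N^{(\gamma)}$ is an $A$-direct summand of $N$, hence projective over $A$ whenever $N$ is. This gives $\Pgr[\Gamma \times G] A \approx \bigoplus_\Gamma \Pgr[G] A$ as $\Gamma \times G$-equivariant exact categories, the right hand side being the categorical coproduct of exact categories (finitely supported tuples). Applying Quillen's $K$-functor, which takes such coproducts to direct sums of $K$-groups, yields
\[K_i^{\Gamma \times G}(A) \, \cong \, \bigoplus_\Gamma K_i^G(A).\]
Since $\mathbb Z[\Gamma \times G]$ is free as a right $\mathbb Z[G]$-module on the basis $\{(\gamma, 0) : \gamma \in \Gamma\}$, there is a canonical identification
\[K_i^G(A) \otimes_{\mathbb Z[G]} \mathbb Z[\Gamma \times G] \, \cong \, \bigoplus_\Gamma K_i^G(A), \qquad [M] \otimes (\gamma, 0) \longmapsto [M] \text{ in the } \gamma\text{-slot}.\]
Composing the two identifications gives the desired abstract isomorphism, and tracking a generator $[M] \otimes (\gamma, 0)$ back through the equivalence recovers the class $[M(\gamma, 0)]$, since $M(\gamma, 0)_{(\alpha, g)} = M_{(\gamma+\alpha, g)}$ is supported in the single $\Gamma$-slice $\alpha = -\gamma$ (with $M$ sitting there); this realises the map on the level of generators as $(M, \gamma) \mapsto M(\gamma, 0)$ as claimed.

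The main obstacle is the bookkeeping required to verify that the $\mathbb Z[\Gamma \times G]$-module structures on both sides agree under the composite identification. Concretely, one must match the action $\overline \rho_{(\beta, \omega)}(M_\alpha)_\alpha = (M_{\alpha+\beta}(\omega))_\alpha$ from Corollary~\ref{zuhoingding} with the right-multiplication action of $(\beta, \omega)$ on the tensor product. Using the factorisation $(\gamma, 0)(\beta, \omega) = (0, \omega)(\gamma + \beta, 0)$ in $\Gamma \times G$ (abelian) together with $\mathbb Z[G]$-balancedness gives
\[([M] \otimes (\gamma, 0)) \cdot (\beta, \omega) \, = \, [M(\omega)] \otimes (\gamma + \beta, 0),\]
which, after accounting for the $-\gamma$ reindexing of the slice placement, matches Corollary~\ref{zuhoingding}'s formula. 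Beyond this careful sign- and shift-tracking, no further $K$-theoretic input beyond additivity of Quillen $K$-groups over direct sums of exact categories is needed.
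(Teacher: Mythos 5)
Your proof is correct and takes essentially the same route as the paper's: both rest on the slice decomposition $\Pgr[\Gamma\times G] A \cong \bigoplus_{\Gamma}\Pgr[G] A$ coming from Corollary~\ref{zuhoingding} (restricted to projectives as in Remark~\ref{enjoythemomentt}), the additivity of Quillen's $K$-groups over direct sums of exact categories, and the identification $\bigoplus_{\Gamma} K_i^G(A)\cong K_i^G(A)\otimes_{\mathbb Z[G]}\mathbb Z[\Gamma\times G]$ via the basis $\{(\gamma,0)\}$. Your explicit equivariance computation, including the $-\gamma$ reindexing, is simply a spelled-out version of the paper's one-line observation that $\Psi\big(P(\gamma,g)\big)=\Psi(P)(\gamma,g)$.
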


\begin{proof}
  Let $P = \bigoplus_{(\gamma,g) \in \Ga \times G} P_{(\gamma,g)}$ be
  a $\Gamma\times G$\nbd-graded finitely generated  projective
  $A$\nbd-module. Since the support of $A$ is $G = 1 \times G$, there
  is a unique decomposition $P=\bigoplus_{\ga \in \Ga} P_\ga$, where
  the $P_\ga = \bigoplus_{g \in G} P_{(\gamma,g)}$ are finitely
  generated $G$\nbd-graded projective  $A$\nbd-modules. This gives a
  natural isomorphism of categories (see Corollary~\ref{zuhoingding} and Remark~\ref{enjoythemomentt})
  \[\Psi \colon \Pgr[\Gamma \times G] A
  \overset{\cong}{\longrightarrow} \bigoplus_{\gamma \in \Gamma}
  \Pgr[G] A.\]
  The natural right action of $\Ga \times G$ on these categories is
  described as follows: for a given module $P \in \Pgr[\Ga \times G]
  A$ as above and elements $(\gamma,g) \in \Ga \times G$ we have
  \[P(\gamma,g)_{(\delta,h)} = P_{(\gamma+\delta,g+h)} \text{ and }
  \Psi(P)_\delta = P_{\gamma + \delta}(g)\] so that $\Psi\big(
  P(\gamma,g) \big) = \Psi(P)(\gamma,g)$. Since $K$-groups respect
  direct sums we thus have a chain of $\mathbb Z[\Ga \times
  G]$\nbd-linear isomorphisms
  \begin{multline*}
    K_i^{\Ga\times G}(A) = K_i(\Pgr[\Gamma \times G] A )\cong
    K_i\big(\bigoplus_{\gamma \in \Gamma} \Pgr[G] A\big) \\
    = \bigoplus_{\ga \in \Ga} K_i^G(A) \cong K_i^G(A) \otimes_{\mathbb
      Z} \mathbb Z[\Ga] = K_i^G(A) \otimes_{\mathbb
      Z[G]} \mathbb Z[\Ga \times G].\qedhere
  \end{multline*}
\end{proof}

We are in a position to prove  Theorem~\ref{thm:main}. 

\begin{proof} [Proof of Theorem~\ref{thm:main}]

Let $A$ be a ${\mathbb Z} \times G$\nbd-graded ring with
support in ${\mathbb N} \times G$. That is, $A$~comes equipped with a
decomposition
\[A=\bigoplus_{\omega\in{\mathbb N}} A_{(\omega,G)} \quad \text{where}
\quad A_{(\omega,G)} = \bigoplus_{g \in G} A_{(\omega,g)}.\] The
ring~$A$ has a ${\mathbb Z} \times G$ graded subring~$A_{(0,G)}$ (with
trivial grading in ${\mathbb Z}$\nbd-direction). The projection map $A
\rightarrow A_{(0,G)}$ is a $\mathbb Z\times G$-graded  ring homomorphism; its
kernel is denoted~$A_+$. Explicitly, $A_+$~is the two-sided ideal
\[A_+=\bigoplus_{\omega > 0} A_{(\omega,G)}.\] We
identify the quotient ring~$A/A_+$ with the subring~$A_{(0,G)}$ via
the projection.

If $P$ is a graded finitely generated  projective $A$\nbd-module, then
$P\otimes_A A_{(0,G)}$ is a finitely generated ${\mathbb Z}\times
G$\nbd-graded projective $A_{(0,G)}$\nbd-module. Similarly, if $Q$ is a
graded finitely generated  projective $A_{(0,G)}$\nbd-module then $Q
\otimes_{A_{(0,G)}} A$ is a $\mathbb Z\times G$- graded finitely generated  projective
$A$\nbd-module. We can thus define functors
\begin{align*}
  T & =-\otimes_A A_{(0,G)} \colon \Pgr[\mathbb Z \times G] A
  \longrightarrow \Pgr[\mathbb Z \times G] A_{(0,G)}, \\
  \text{and} \quad S &= -\otimes_{A_{(0,G)}}A \colon \Pgr[\mathbb Z \times
    G] A_{(0,G)} \longrightarrow \Pgr[\mathbb Z \times G]A.
\end{align*}
Since $T(P) = P/PA_+$ we see that \emph{the support of~$T(P)$ is
  contained in the support of~$P$}.

Observe now that \emph{if $M$ is a finitely generated ${\mathbb Z}\times
  G$-graded $A$-module and $MA_+=M$ then $M=0$}; for if $M \neq 0$
there is a minimal $\omega \in {\mathbb Z}$ such that $M_{(\omega,G)} \neq
0$, but $(MA_+)_{(\omega,G)} = 0$.  It follows from
Proposition~\ref{swanii} that for each graded finitely generated
projective $A$\nbd-module~$P$ there is a noncanonical isomorphism $P
\cong T(P) \otimes_{A_{(0,G)}} A$ as in \eqref{eq:iso_lemma} which
respects the ${\mathbb Z} \times G$\nbd-grading. Explicitly, for a given
$(\omega,g) \in {\mathbb Z} \times G$ we have an isomorphism of
abelian groups
\begin{equation}
  \label{eq:noncan_iso}
  P_{(\omega,g)} \cong \bigoplus_{(\kappa,h)} T(P)_{(\kappa,h)}
  \otimes A_{(-\kappa+\omega,-h+g)};
\end{equation}
the tensor product $T(P)_{(\kappa,h)} \otimes
A_{(-\kappa+\omega,-h+g)}$ denotes, by convention, the
abelian subgroup of $T(P) \otimes_{A_{(0,G)}} A$ generated by
primitive tensors of the form $x \otimes y$ with homogeneous elements
$x \in T(P)$ of degree~$(\kappa,h)$ and $y \in A$ of
degree~$(-\kappa+\omega,-h+g)$.

For a ${\mathbb Z} \times G$\nbd-graded $A$\nbd-module~$P$ write $P =
\bigoplus_{\omega \in {\mathbb Z}} P_{(\omega,G)}$, where $P_{(\omega,G)} =
\bigoplus_{g \in G} P_{(\omega,g)}$. For $\lambda \in {\mathbb Z}$ let
$F^\lambda P$ denote the $A$\nbd-submodule of~$P$ generated by the
elements of $\bigcup_{\omega \leq \lambda} P_{(\omega,G)}$; this is
${\mathbb Z} \times G$\nbd-graded again. As an explicit example, we have
\[F^\lambda A(\omega,g) = \begin{cases} A(\omega,g) & \text{if }
  \lambda \geq -\omega, \\ 0 & \text{else.} \end{cases}\] 

Suppose that $P$ is a graded finitely generated  projective
$A$-module. Since the support of~$A$ is contained in ${\mathbb N}
\times G$ there exists $n \in {\mathbb Z}$ such that $F^{-n}P = 0$ and
$F^{n}P = P$. Write $\nPgr[\mathbb Z \times G] A$ for the full
subcategory of $\Pgr[\mathbb Z \times G] A$ spanned by those
modules~$P$ which satisfy $F^{-n}P = 0$ and $F^{n}P = P$. Then
$\Pgr[\mathbb Z \times G] A$ is the filtered union of the
$\nPgr[\mathbb Z \times G]A$.

Let $P \in \nPgr[\mathbb Z \times G] A$; we want to identify
$F^\lambda P$. By definition, the $A$\nbd-module $F^\lambda P$ is
generated by the elements of~$P_{(\omega, g)}$ for $\omega \leq
\lambda$, with $P_{(\omega,g)}$ having been identified
in~\eqref{eq:noncan_iso}. We remark that the direct summands
in~\eqref{eq:noncan_iso} indexed by $\kappa > \omega$ are trivial as
$A$~has support in ${\mathbb N} \times G$. On the other hand, for
$\omega \geq \kappa$ a given primitive tensor $x \otimes y \in
P_{(\omega,g)}$ with $x \in T(P)_{(\kappa,h)}$ and $y \in
A_{(-\kappa+\omega, -h+g)}$ can always be re-written, using the right
$A$\nbd-module structure of $T(P) \otimes_{A_{(0,G)}} A$, as
\[x \otimes y = (x \otimes 1) \cdot y, \quad \text{where } \quad  x \otimes 1
\in T(P)_{(\kappa,h)} \otimes A_{(0,0)} \subseteq P_{(\kappa, h)}.\] That is, the $A$\nbd-module $F^\lambda P$ is generated by those
summands of \eqref{eq:noncan_iso} with $\kappa = \omega \leq
\lambda$. We claim now that $F^\lambda P$ is isomorphic to
\begin{equation}
  \label{eq:M}
  M^{(\lambda)} = \bigoplus_{\kappa \leq \lambda} T(P)_{(\kappa,-)}
  \otimes_{A_{(0,G)}} A(-\kappa, 0),
\end{equation}
considering $T(P)_{(\kappa,-)}$ as a ${\mathbb Z} \times G$\nbd-graded
$A_{(0,G)}$\nbd-module with support in $\{0\}\times G$.  The
homogeneous components of~$M^{(\lambda)}$ are given by
\[M^{(\lambda)}_{(\omega,g)} = \bigoplus_{\kappa \leq \lambda}
\bigoplus_{h \in G} T(P)_{(\kappa,h)} \otimes
A(-\kappa,0)_{(\omega,-h+g)}.\] Now elements of the form
\[x \otimes 1 \in T(P)_{(\kappa,h)} \otimes
A(-\kappa,0)_{(\kappa,-h+h)} \subseteq M^{(\lambda)}_{(\kappa,h)}\]
clearly form a set of $A$\nbd-module generators for~$M^{(\lambda)}$ so
that, by the argument given above, $F^\lambda P$ and~$M^{(\lambda)}$
have the same generators in the same degrees. The claim follows. 
The module $F^\lambda P$ is finitely generated (\emph{viz.}, by those
generators of~$P$ that have ${\mathbb Z}$\nbd-degree at
most~$\lambda$). Since $T(P)$~is a finitely generated projective
$A_{(0,G)}$\nbd-module so is its summand $T(P)_{(\lambda_k,G)}$;
consequently, $P \mapsto F^kP$~is an endofunctor of $\nPgr[\mathbb Z
  \times G]  A$. It is exact as can be deduced from the
(noncanonical) isomorphism in~\eqref{eq:M}, using exactness of tensor
products.

From the isomorphism $F^k P \cong M^{(\lambda_k)}$, cf.~\eqref{eq:M},
we obtain an isomorphism
\begin{equation}
  \label{eq:Q}
  F^{k+1}P / F^kP \cong T(P)_{(\lambda_k,G)} \otimes_{A_{(0,G)}}
  A(-\lambda_k, G);
\end{equation}
in particular, $F^{k+1}P/F^kP \in \nPgr[\mathbb Z \times G] A $.

The isomorphism~\eqref{eq:Q} depends on the isomorphism~\eqref{eq:M},
and thus ultimately on~\eqref{eq:iso_lemma}. The latter depends on a
choice of a section~$g$ of $P \rightarrow T(P)$. Given another
section~$g_0$ the difference $g-g_0$ has image in $\ker (P \rightarrow
T(P)) = PA_+$. Since $A_+$~consists of elements of positive $\mathbb
Z$\nbd-degree only, this implies that the isomorphism $F^{k+1} P \cong
M^{(\lambda_{k+1})}$ does not depend on~$g$ up to elements in~$F^kP$;
in other words, the quotient $F^{k+1}P/F^kP$ is independent of the
choice of~$g$. Thus the isomorphism~\eqref{eq:Q} is, in fact,
a natural isomorphism of functors.

We are now in a position to perform the $K$\nbd-theoretical
calculations. First define the exact functor
\begin{align*} \Theta_q: \qPgr[\mathbb Z \times G] A_{(0,G)} &
\longrightarrow \qPgr[\mathbb Z \times G]A\\ P = \bigoplus_\omega
P_{(\omega,G)} & \longmapsto \bigoplus_\omega P_{(\omega,G)}
\otimes_{A_{(0,G)}} A(-\omega,G);
\end{align*}
here $\qPgr[\mathbb Z \times G] A_{(0,G)}$ denotes the full
subcategory of $\Pgr[\mathbb Z \times G]  A_{(0,G)}$ spanned by
modules with support in $[-q,q] \times G$, and $P_{(\omega,G)}$~on the
right is considered as a ${\mathbb Z} \times G$\nbd-graded
$A_{(0,G)}$\nbd-module with support in~$\{0\} \times G$.

Next define the exact functor
\begin{align*} \Psi_q: \qPgr[\mathbb Z \times G] A & \longrightarrow
\qPgr[\mathbb Z \times G] A_{(0,G)} \\ P & \longmapsto \bigoplus _{\omega
  \in {\mathbb Z}} T(P)_{(\omega,G)};
\end{align*}
here $T(P)_{(\omega,G)}$~is considered as an $A_{(0,G)}$\nbd-module
with support in~$\{\omega\} \times G$.

Now $\Psi_q \circ \Theta_q \cong \mathrm{id}$; indeed, the composition
sends the summand~$P_{(\omega,G)}$ of~$P$ to the $\kappa$\nbd-indexed
direct sum of
\[T\big(P_{(\omega,G)} \otimes_{A_{(0,G)}} A(-\omega,G)\big)_{(\kappa,G)}
\cong
\begin{cases}
  P_{(\omega,G)} & \text{if } \kappa = \omega, \\ 0 & \text{else.}
\end{cases}
\] In particular, $\Psi_q \circ \Theta_q$ induces the identity on
$K$\nbd-groups. As for the other composition, we have
\[\Theta_q \circ \Psi_q (P) = \bigoplus_{\omega} T(P)_{(\omega,G)}
\otimes_{A_{(0,G)}} A(-\omega,G) \underset{\eqref{eq:Q}}=
\bigoplus_{j=-q}^{q-1} F^{j+1}P / F^jP.\] Since $F^q =
\mathrm{id}$, additivity for characteristic filtrations \cite[p.~107,
Corollary~2]{quillen} implies that $\Theta_q \circ \Psi_q$ induces the
identity on $K$\nbd-groups.

For any $P \in \Pgr[\mathbb Z \times G] A_{(0,G)}$ we have
\[\big (P
\otimes_{A_{(0,G)}} A(-\omega,G)\big )(0,g)=P(0,g)\otimes _{A_{(0,G)}}
A(-\omega,G),\] by direct calculation. Hence the functor $\Theta_q$
induces a $\mathbb Z[G]$\nbd-linear isomorphism on
$K$\nbd-groups. Since $K$\nbd-groups are compatible with direct
limits, letting $q \rightarrow \infty$ yields a $\mathbb
Z[G]$\nbd-linear isomorphism 
\[K_i^{{\mathbb Z} \times G} (A) \cong K_i^{{\mathbb Z}\times G}(A_{(0,G)})\]
and thus, by Lemma~\ref{hggf}, a $\mathbb Z[{\mathbb Z} \times
G]$\nbd-module isomorphism
\[K_i^{{\mathbb Z} \times G}(A) \cong K_i^{G}(A_{(0,G)}) \otimes_{\mathbb Z[G]} \mathbb Z[{\mathbb Z}\times G]. \qedhere\] 
\end{proof}

\section{The fundamental theorem of $K$-theory}\label{shitnome}

\subsection{Quillen's $K$-theory of exact categories}\label{klikhf4}

\index{$K_n$-functors}

Recall that an exact category $\mathcal P$ is a full additive subcategory of an abelian category $\mathcal A$ which is closed under extension (see Definition~\ref{hgygtw2}). An \emph{exact functor} \index{exact functor} $\mathcal F: \mathcal P \rightarrow \mathcal P'$, between exact categories $\mathcal P$ and $\mathcal P'$, is an additive functor which preserves the short exact sequences. 

From an exact category $\mathcal P$, Quillen constructed abelian groups $K_n(\mathcal P)$, $n\geq 0$ (see~\cite{quillen}). 
In fact, for a fixed $n$, $K_n$ is a functor from the category of exact categories, with exact functors as morphisms to the category of abelian groups. If 
$f:\mathcal A \rightarrow \mathcal B$ is an exact functor, then we denote the corresponding group homomorphism by 
$\ol f :K_n(\mathcal A) \longrightarrow K_n(\mathcal B)$. 
\index{$\Prr A$, category of finitely generated projective $A$-modules}

When $\mathcal P=\Prr A$, the category of finitely generated projective $A$-modules,  then $K_0(\mathcal P)=K_0(A)$. Quillen  proved that basic theorems established for $K_0$-group, such as D\'evissage, resolution and localisation theorems (\cite{swan,weibelk}), can be established for these higher $K$-groups.  We briefly mention the main theorems of $K$-theory which will be used in \S\ref{bolgona}. 
The following statements were established by Quillen in~\cite{quillen} (see also~\cite{srinivas,weibelk}). 
\begin{enumerate}

\item{{\bf The Grothendieck group.}} For an exact category $\mathcal P$, the abelian group $K_0(\mathcal P)$ coincides with the construction given in Definition~\ref{hgygtw2}(1). In particular, Quillen's construction gives the Grothendieck group $K_0(A)=K_0(\Prr A)$ and graded Grothendieck group  
$K^{\gr}_0(A)=K_0(\Pgrp A)$, for ungraded and graded rings, respectively.  

\medskip

\item{{\bf D\'evissage.}} Let $\mathcal A$ be an abelian category and $\mathcal B$ be a nonempty full subcategory closed under subobjects, quotient objects and finite products in $\mathcal A$. Thus $\mathcal B$ is an abelian category and the inclusion $\mathcal B \hookrightarrow \mathcal A$ is exact. If any element $A$ in $\mathcal A$, has a finite filtration 
\[0=A_0 \subseteq A_1 \subseteq \dots \subseteq A_n= A,\]
with $A_i/A_{i-1}$ in $\mathcal B$, $1\leq i \leq n$, then the inclusion $\mathcal B \hookrightarrow \mathcal A$ induces isomorphisms $K_n(\mathcal B) \cong K_n(\mathcal A)$, $n\geq 0$. 
 
 \medskip
 
\item {{\bf Resolution.}}
Let $\mathcal M$ be an exact category and $\mathcal P$ be a full subcategory closed under extensions in $\mathcal M$. Suppose that if 
$0\rightarrow M \rightarrow P \rightarrow P' \rightarrow 0$ is exact in $\mathcal M$ with $P$ and $P'$ in $\mathcal P$, then $M$ is in $\mathcal P$. Moreover, 
for every $M$ in $\mathcal M$, there is a finite $\mathcal P$ resolution of $M$,
\[0\longrightarrow P_n \longrightarrow \cdots \longrightarrow P_1 \longrightarrow P_0 \longrightarrow M \longrightarrow 0.\]
Then the inclusion $\mathcal P \hookrightarrow \mathcal M$ induces isomorphisms $K_n(\mathcal P) \cong K_n(\mathcal M)$, $n\geq 0$. 

\medskip

\item{{\bf Localisation.}} Let $\mathcal S$ be a \emph{Serre subcategory} \index{Serre subcategory} of abelian category $\mathcal A$ (\ie $\mathcal S$ is an abelian subcategory of $\mathcal A$, closed under subobjects, quotient objects and extensions in $\mathcal A$), and let $\mathcal A /\mathcal S$ be the quotient abelian category (\cite[Exercise~10.3.2]{weibel}). Then there is a natural long exact sequence 
\[\cdots \longrightarrow K_{n+1}(\mathcal A/\mathcal S) \stackrel{\delta}{\longrightarrow} K_n(\mathcal S) \longrightarrow K_n(\mathcal A) \longrightarrow K_n(\mathcal A/\mathcal S) \longrightarrow \cdots.\]

\medskip

\item{{\bf Exact Sequence of Functors.}} Let $\mathcal P$ and $\mathcal P'$ be exact categories and 
\[0\longrightarrow  f' \longrightarrow f \longrightarrow f'' \longrightarrow 0,\]
be an exact sequence of exact functors from $\mathcal P$ to $\mathcal P'$. Then 
\[\ol f = \ol f' +\ol f'' : K_n(\mathcal P) \longrightarrow K_n(\mathcal P').\] 

\end{enumerate}

\subsection{Base change and transfer functors}\label{monashnick}

Let $A$ be a ring with identity. If $A$ is a right Noetherian ring then the category of finitely generated right $A$-modules, $\modd A$, form an abelian category. Thus one can define the $K$-groups of this category, $G_n(A):=K_n(\modd A)$, $n\in \mathbb N$. If further $A$ is regular, then by the  
resolution theorem (\S\ref{klikhf4}), $G_n(A)=K_n(A)$.   \index{$G_n$-functors}

Let $A\rightarrow B$ be a ring homomorphism. This makes $B$ an $A$-module. If $B$ is a finitely generated $A$-module, then any finitely generated 
$B$-module can be considered as a finitely generated $A$-module. This induces an exact functor, called a \emph{transfer functor},  \index{transfer functor}
 $\modd B \rightarrow \modd A$. Since $K$-theory is a functor from the category of exact categories with exact functors as morphisms, we get a transfer map $G_n(B) \rightarrow G_n(A)$, $n\in \mathbb N$. A similar argument shows that, if 
$A\rightarrow B$ is a graded homomorphism, then we have a transfer map $G^{\gr}_n(B) \rightarrow G^{\gr}_n(A)$, $n\in \mathbb N$.

On the other hand, if under the ring homomorphism  $A\rightarrow B$, $B$ is a flat $A$-module, then the \emph{base change functor} \index{base change functor} $-\otimes_A B:\Modd A \rightarrow \Modd B$ is exact, which restricts to $-\otimes_A B:\modd A \rightarrow \modd B$. This in turn induces the group homomorphism $G_n(A) \rightarrow G_n(B)$, $n\in \mathbb N$.  Note that any ring homomorphism $A\rightarrow B$, induces an exact functor $-\otimes_A B:\Prr A \rightarrow \Prr B$, which induces the homomorphism $K_n(A) \rightarrow K_n(B)$. Similar statements can be written for graded rings $A$ and $B$ and the graded homomorphism $A\rightarrow B$. However, to prove the fundamental theorem of $K$-theory (see~(\ref{schmerz})), we need to use the D\'evissage and localisation theorems, which are only valid for abelian categories. This forces us to work in the abelian category $\grr A$ (assuming $A$ is right Noetherian), rather than the exact category $\Pgrp A$.

\subsection{A localisation exact sequence for graded rings}\label{jensgty4}

Let $A$ be a right regular Noetherian $\Gamma$\!-graded ring, \ie $A$ is graded right regular and graded right Noetherian. We assume the ring is graded regular so that we could work with $K$-theory instead of $G$-theory. We recall the concept of graded regular rings. For a graded right module $M$, the minimum length of all graded projective resolutions of $M$ is defined as the \emph{graded projective dimension} of $M$ and denoted by $\pdim^{\gr}(M)$. \index{graded projective dimension} If $M$ does not admit a finite graded projective resolution, then we set $\pdim^{\gr}(M)=\infty$. The \emph{graded right global dimension} of the graded ring $A$ is the supremum of the graded projective dimension of all the graded right modules over $A$ and denoted by $\gldim^{\gr}(A)$. \index{graded global dimension} \index{global dimension} We say $A$ is a \emph{graded right regular}  if the graded right global dimension is finite. As soon as $\Gamma$ is a trivial group, the above definitions become the standard definitions of projective dimension, denoted by, $\pdim$, and the global dimension, denoted by, $\gldim$, in ring theory. In the case of $\Z$-graded ring, we have the following relation between the dimensions (see~\cite[Theorem~II.8.2]{grrings2}) \index{$\gldim^{\gr}$, graded global dimension} \index{graded right regular ring} \index{graded regular ring}
\begin{equation}
\gldim^{\gr}(A) \leq \gldim(A) \leq 1+\gldim^{\gr}(A).
\end{equation}
This shows, if $A$ is graded regular, then $A$ is regular. This will be used in~\S\ref{vandengg}. 

Let $S$ be a central multiplicative closed subsets of $A$, consisting of homogeneous elements. Then $S^{-1}A$ is a $\Gamma$\!-graded ring (see 
Example~\ref{meinbalmain}). 
Let $\grs A$ be the category of $S$-torsion graded finitely generated right $A$-modules, \ie a graded finitely generated  $A$-module $M$ such that $Ms=0$ for some $s\in S$. 
 Clearly $\grs A$ is a Serre subcategory of the abelian category $\grr A$ and 
 \begin{equation}\label{rohaniii}
 \grr A/ \grs A \approx \grr \, (S^{-1} A),
 \end{equation}
(see~\cite[Exercise~10.3.2]{weibel}).  \index{Serre subcategory}

Now let $S=\{s^n \mid n\in \mathbb N \}$, where $s$ is a homogeneous element in the centre of $A$. Then any module $M$ in $\grs A$ has a filtration 
\[ 0=Ms^k \subseteq \dots \subseteq Ms\subseteq M,\]
and clearly the consecutive quotients in the filtration are $\Gamma$\!-graded $A/sA$-modules. Considering $\grr A/sA$ as a full subcategory of $\grs A$ via the transfer map $A\rightarrow A/sA$ (see~\S\ref{monashnick}),  and using  D\'evissage theorem, we get, for any $n\in \mathbb N$, 
\[K_n(\grs A) \cong K^{\gr}_n(A/sA).\]

Now using the localisation theorem for (\ref{rohaniii}), we obtain a long localisation exact sequence 
\begin{equation}\label{anniversary14june3}
\cdots \longrightarrow K^{\gr}_{n+1}(A_s) \longrightarrow K^{\gr}_n(A/sA) \longrightarrow K^{\gr}_n(A) \longrightarrow K^{\gr}_n(A_s) \longrightarrow \cdots ,
\end{equation} where $A_s:=S^{-1}A$.  

In particular, consider the $\Gamma$\!-graded rings $A[y]$ and $A[y,y^{-1}]$, where $\deg(y)=\alpha$, $\deg(y^{-1})=-\alpha$, $\alpha \in \Gamma$. For $A[y]$ and $S=\{y^n \mid n\in \mathbb N\}$,
the sequence~\ref{anniversary14june3} reduces to 
 \begin{equation}\label{anniversary14june}
\cdots \longrightarrow K^{\gr}_{n+1}(A[y,y^{-1}]) \longrightarrow K^{\gr}_n(A) \longrightarrow K^{\gr}_n(A[y]) \longrightarrow K^{\gr}_n(A[y,y^{-1}]) \longrightarrow \cdots .
\end{equation}
This long exact sequence will be used in~\S\ref{vandengg} in two occasions (with $\deg(y)=1 \in \Z$ and $\deg(y)=(1,-1)\in \Z\times\Z$) to relate $K^{\gr}_n$-groups  to $K_n$-groups.

\subsection{The fundamental theorem}\label{bolgona}

Let $A$ be a right Noetherian and regular ring. Consider the polynomial ring $A[x]$. The fundamental theorem of $K$-theory gives that, for any $n \in \mathbb N$,  \index{Noetherian ring}
\begin{equation}\label{schmerz}
K_n(A[x])\cong K_n(A).
\end{equation}   

In this section we prove that if $A$ is a positively graded $\Z$-graded ring, then 
\[\boxed{K_n(A) \cong K_n(A_0).}\] This, in particular, gives~(\ref{schmerz}). We will follow Gersten's treatment in~\cite{gersten}. The proof shows how the graded $K$-theory is effectively used to prove a theorem on (ungraded) $K$-theory.

Let $A$ be a positively $\mathbb Z$-graded ring, \ie $A=\bigoplus_{i\in \mathbb N} A_i$. Define a $\Z$-grading on the polynomial ring $A[x]$ as follows (see Example~\ref{hyg61}):
\begin{equation}\label{hnhg5}
A[x]_n=\bigoplus_{i+j=n}A_i x^j.
\end{equation}
Clearly this is also  a positively graded ring and $A[x]_0=A_0$.

Throughout the proof of the fundamental theorem we consider two evaluation maps on $A[x]$ as follows.

\begin{enumerate}

\item Consider the evaluation homomorphism at $0$, \ie 
\begin{align}\label{hgyht31}
e_0:A[x] &\longrightarrow A,\\
f(x)&\longmapsto f(0). \notag
\end{align}
This is a graded homomorphism (with the grading defined in~(\ref{hnhg5})), and $A$ becomes a graded finitely generated $A[x]$-module.  This induces an exact transfer functor (see~\S\ref{monashnick})
\begin{align}\label{gbdh98}
i: \grr A &\longrightarrow \grr A[x],\\
M &\longmapsto M \notag
\end{align}
and in turn a homomorphism \[\ol i:G^{\gr}_n(A) \rightarrow G^{\gr}_n(A[x]).\]
Let $M$ be a graded $A$-module. Under the homomorphism \ref{gbdh98}, $M$ is also a grade $A[x]$-module. 
On the other hand, the canonical graded homomorphism $A\rightarrow A[x]$, gives the graded $A[x]$-module $M\otimes_A A[x]\cong_{\gr} M[x]$. Then we have the following short exact sequence of graded $A[x]$-modules,
\begin{equation}\label{bulgim}
0\longrightarrow M[x](-1) \stackrel{x}{\longrightarrow} M[x] \stackrel{e_0}{\longrightarrow} M \longrightarrow 0.
\end{equation}
Note that this short exact sequence is not an split sequence.

\item Consider the evaluation homomorphism at $1$, \ie 
\begin{align}\label{hgyht313}
e_1:A[x] &\longrightarrow A,\\
f(x)&\longmapsto f(1). \notag
\end{align}
Note that this homomorphism is not graded. Clearly $\ker(e_1)=(1-x)$ (\ie the ideal generated by $1-x$). Thus 
$A[x]/(1-x) \cong A$. If $M$ is a (graded) $A[x]$-module, then 
\begin{equation*}
M \otimes_{A[x]} A \cong M \otimes_{A[x]} A[x]/(1-x) \cong M/M(1-x),
\end{equation*}
 is an $A$-module. 
Thus the homomorphism $e_1$ induces the functor
\begin{align}\label{gbg42ki}
\mathcal F:=-\otimes_{A[x]} A : \grr A[x] &\longrightarrow \modd A \\
M &\longmapsto M/M(1-x). \notag
\end{align}
\end{enumerate}

We need the following Lemma (see~\cite[Lemma~II.8.1]{grrings2}). 

\begin{lemma}\label{hghss1} Consider the functor $\mathcal F$ defined in~\eqref{gbg42ki}. Then we have the following. 

\begin{enumerate}[\upshape(1)]

\item $\mathcal F$ is an exact functor. 

\item For a graded finitely generated $A[x]$-module $M$, $\mathcal F(M)=0$ if and only if $Mx^n=0$ for some $n\in \mathbb N$. 
\end{enumerate}
\end{lemma}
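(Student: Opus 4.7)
The proof will hinge on understanding how multiplication by the non-homogeneous element $1-x$ behaves on a graded finitely generated $A[x]$-module, where $A[x]$ is positively graded with $\deg(x)=1$. The crucial observation is that any homogeneous decomposition $m=\sum_i m_i$ of an element of such an $M$ has support bounded below (and above, for a fixed element), because $M$ is finitely generated over a positively graded ring.

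For part (1), I would first show that $1-x$ acts injectively on every graded finitely generated $A[x]$-module $M$. Suppose $m(1-x)=0$ with $m\neq 0$, write $m=m_{d_0}+m_{d_0+1}+\cdots+m_{d_1}$ in homogeneous components with $m_{d_0}\neq 0$. Comparing components in degree $d_0$ in the equation $\sum_i m_i - \sum_i m_i x = 0$ gives $m_{d_0}=0$, a contradiction. Using the standard resolution $0\to A[x]\xrightarrow{1-x} A[x]\to A[x]/(1-x)\to 0$, this injectivity says $\operatorname{Tor}_1^{A[x]}(M,A[x]/(1-x))=0$, so the right-exact functor $\mathcal F=-\otimes_{A[x]}A[x]/(1-x)$ is in fact exact on $\grr A[x]$.

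For part (2), the easy direction is $(\Leftarrow)$: if $Mx^n=0$, then $(1-x)(1+x+\cdots+x^{n-1})$ and $(1+x+\cdots+x^{n-1})(1-x)$ both act as $1-x^n=1$ on $M$, so $1-x$ acts as a bijection on $M$; hence $M(1-x)=M$ and $\mathcal F(M)=0$. For the harder direction $(\Rightarrow)$, assume $M=M(1-x)$, and let $N=\{m\in M : mx^n=0\text{ for some }n\}$. Since $M$ is graded, $N$ is a graded submodule, and the quotient $M/N$ still satisfies $(M/N)(1-x)=M/N$. After replacing $M$ by $M/N$, I want to show $M=0$. If not, choose a nonzero homogeneous $m$ of minimal degree $d$, and write $m=m'(1-x)$ with $m'\in M$. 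Decomposing $m'=m'_{e}+m'_{e+1}+\cdots$ into homogeneous components and matching degrees in $m=m'-m'x$ forces (by induction on degree from below) that $m'_i=0$ for $i<d$, then $m'_d=m$, then $m'_{d+k}=mx^k$ for all $k\geq 0$; but $m'$ has only finitely many nonzero components, so $mx^k=0$ for some $k$, placing $m$ in $N=0$, a contradiction. Hence $N=M$; since $M$ is finitely generated, taking the maximum of the annihilating exponents on a finite homogeneous generating set yields a single $n$ with $Mx^n=0$.

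The main obstacle is the $(\Rightarrow)$ direction of (2): one must convert the non-homogeneous condition $M(1-x)=M$ into the graded conclusion $Mx^n=0$. The key device is reducing to the quotient $M/N$ where the would-be $x$-torsion has been removed, and then exploiting the asymmetry between $1$ and $x$ in the positively graded setting — namely that a finite-length homogeneous expansion cannot support the infinite cascade $m,mx,mx^2,\dots$ unless it terminates. Everything else is formal Tor/exactness bookkeeping.
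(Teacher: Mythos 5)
Your proposal is correct, and for part (1) it takes a genuinely different route from the paper, while for part (2) it supplies a proof that the paper omits entirely (``elementary and left to the reader''). The paper proves (1) by a direct element computation: given an injection $0 \rightarrow M' \rightarrow M$, it verifies $M' \cap M(1-x) = M'(1-x)$ by writing $m' = m(1-x)$, comparing homogeneous components to get $m_i - m_{i-1}x = m'_i$, and deriving a contradiction from the smallest degree $j$ in the support of $m$ with $m_j \notin M'_j$. You instead isolate the cleaner statement that right multiplication by $1-x$ is injective (same lowest-degree trick, applied to $\ker(1-x)$ rather than to a relative containment) and deduce exactness from the Tor long exact sequence attached to $0 \rightarrow A[x] \stackrel{1-x}{\longrightarrow} A[x] \rightarrow A[x]/(1-x) \rightarrow 0$; this is legitimate even though $A$ may be noncommutative, because $x$ is central, so $1-x$ is central and $\operatorname{Tor}_1^{A[x]}(M'', A[x]/(1-x)) \cong \ker\big(M'' \stackrel{1-x}{\longrightarrow} M''\big)$. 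Your packaging buys a conceptual bonus the paper's does not make explicit: since any single element of a graded module has only finitely many homogeneous components, the injectivity of $1-x$ --- hence the vanishing of $\operatorname{Tor}_1$ --- holds for \emph{all} graded $A[x]$-modules, with finite generation playing no role in part (1); the paper's version buys a self-contained verification free of homological machinery. Your proof of (2) is the intended elementary argument: the direction $Mx^n = 0 \Rightarrow \mathcal F(M) = 0$ via the geometric series $1 + x + \cdots + x^{n-1}$, and the converse via the cascade $m'_i = 0$ for $i < d$, $m'_d = m$, $m'_{d+k} = mx^k$, which must terminate because $m'$ has finite homogeneous support; finite generation then correctly enters only at the end, to extract a uniform exponent from a finite homogeneous generating set using centrality of $x$. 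One small streamlining: the passage to $M/N$ is unnecessary. Applying your cascade to an \emph{arbitrary} homogeneous $m \in M$ of degree $d$ already forces the components of $m'$ below degree $d$ to vanish (bottom-up comparison, since $m$ is concentrated in degree $d$), and then yields $mx^k = 0$ directly; so every homogeneous element of $M$ is $x$-torsion, $N = M$ follows at once, and neither the torsion-free quotient nor the bounded-below support of $M$ is needed. This is a stylistic remark, not a gap --- your argument as written is complete.
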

\begin{proof}
(1)  Since the tensor product is a right exact functor, we are left to show that if $0\rightarrow M' \rightarrow M$ is exact, then by~(\ref{gbg42ki}),
\[0\longrightarrow M'/M'(1-x) \longrightarrow M/M(1-x),\] is exact, \ie 
$M' \cap M(1-x) =M'(1-x).$ 

Let $m' \in M' \cap M(1-x)$. Then 
\begin{equation}\label{poifrp}
m'=m(1-x),
\end{equation} for some $m\in M$. Since $M'$ and $M$ are graded modules, $m'=\sum m'_i$ and $m=\sum m_i$, with 
$\deg(m'_i)=\deg(m_i)=i$. Comparing the degrees in the Equation~\ref{poifrp}, we have $m_i-m_{i-1}x=m'_i \in M'_i$. 
Let $j$  be the smallest $i$ in the support of $m$ such that $m_j \not \in M'_j$. Then $m_{j-1}x \in M'_j$ which implies that $m_j=m'_j- m_{j-1}x\in M'_j$,  which is a contradiction. Thus $m\in M'$, and so $m' \in M'(1-x).$

(2) The proof is elementary and left to the reader.
\end{proof}

By Lemma~\ref{hghss1}(2), $\ker(\mathcal F)$ is an (abelian) full subcategory of $\grr A[x]$, with objects the graded $A[x]$-modules $M$ such that $Mx^n=0$, for some $n\in \mathbb N$. 
Define $\mathcal C_1$ the (abelian) full subcategory of $\grr A[x]$ with objects the graded $A[x]$-modules $M$ such that $Mx=0$. Thus 
$\mathcal C_1 \hookrightarrow \ker(\mathcal F)$ is an exact functor. Moreover, for any object $M$ in $\ker(\mathcal F)$, we have a finite filtration of graded $A[x]$-modules 
\[0=Mx^n \subseteq Mx^{n-1} \subseteq \dots \subseteq Mx \subseteq M,\] such that
$Mx^i/Mx^{i+1} \in \mathcal C_1$.  Thus by D\'evissage theorem (\S\ref{klikhf4}), we have 
\begin{equation}\label{hnlofde3}
K_n(\mathcal C_1) \cong K_n(\ker(\mathcal F)).
\end{equation}

Next recall the exact functor
$i: \grr A \longrightarrow \grr A[x]$ from~(\ref{gbdh98}).  It is easy to see that this functor induces an equivalence between the categories 
\begin{equation}\label{scute32}
i: \grr A \longrightarrow \mathcal C_1,
\end{equation}
and thus on the level of $K$-groups. Combing this with~(\ref{hnlofde3}) we get 
\begin{equation}\label{hnlofde38}
K^{\gr}_n(A)=K_n(\mathcal C_1) \cong K_n(\ker(\mathcal F)).
\end{equation}
Since by Lemma~\ref{hghss1}, $\mathcal F$ is exact, $\ker(\mathcal F)$ is closed under subobjects, quotients and extensions, i.e, it is a  Serre subcategory of $\grr A[x]$. We next prove that 
\begin{equation}\label{sbfh3}
\grr A[x] /\ker(\mathcal F) \cong \modd A.
\end{equation}

Any finitely generated module over a Noetherian ring is finitely presented. We need the following general lemma to invoke the localisation. 
\begin{lemma}\label{brek66}
Let $M$ be a finitely presented $A$-module. Then there is a graded finitely presented $A[x]$-module $N$ such that $\mathcal F(N) \cong M$, where $\mathcal F$ is the functor defined in~\eqref{gbg42ki}. 
\end{lemma}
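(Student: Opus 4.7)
The plan is to \emph{homogenise} a given presentation of $M$. Since $M$ is finitely presented, fix an exact sequence
\[
A^m \xrightarrow{\;\phi\;} A^n \longrightarrow M \longrightarrow 0,
\]
where $\phi$ corresponds to multiplication by a matrix $(a_{ij}) \in \M_{n\times m}(A)$. Each $a_{ij}$ admits a unique finite decomposition into homogeneous components $a_{ij} = \sum_k a_{ij}^{(k)}$ with $a_{ij}^{(k)} \in A_k$. Let $d$ be an integer at least as large as all the degrees occurring in these decompositions, and define
\[
\tilde a_{ij} \;:=\; \sum_k a_{ij}^{(k)}\, x^{\,d-k} \;\in\; A[x].
\]
With the grading~\eqref{hnhg5} on $A[x]$, each $\tilde a_{ij}$ is homogeneous of degree exactly $d$, and evaluating at $x=1$ recovers $a_{ij}$.

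Next I would form the graded $A[x]$-module homomorphism
\[
\tilde\phi\colon A[x]^m(-d) \longrightarrow A[x]^n
\]
given by left multiplication by the matrix $(\tilde a_{ij})$; the shift $(-d)$ is inserted precisely so that $\tilde\phi$ is a graded map of degree $0$, since it sends the standard basis elements (which sit in degree $d$ after the shift) to columns whose entries are homogeneous of degree $d$. Set $N := \coker(\tilde\phi)$, so that
\[
A[x]^m(-d) \xrightarrow{\;\tilde\phi\;} A[x]^n \longrightarrow N \longrightarrow 0
\]
is a graded finite presentation of $N$ as an $A[x]$-module.

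Finally, I would apply the functor $\mathcal F = -\otimes_{A[x]} A$ of~\eqref{gbg42ki}, which is right exact (indeed exact, by Lemma~\ref{hghss1}(1)) and satisfies $\mathcal F(A[x]^k) \cong A^k$ and $\mathcal F(A[x]^k(-d)) \cong A^k$ since the shift is invisible after tensoring with $A[x]/(1-x) \cong A$. This yields the exact sequence
\[
A^m \xrightarrow{\;\mathcal F(\tilde\phi)\;} A^n \longrightarrow \mathcal F(N) \longrightarrow 0,
\]
and because tensoring with $A[x]/(1-x)$ is evaluation at $x=1$, the matrix $(\tilde a_{ij})$ specialises to $(a_{ij})$, giving $\mathcal F(\tilde\phi) = \phi$. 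Hence $\mathcal F(N) \cong \coker(\phi) = M$, as required. There is no serious obstacle here; the only things to check carefully are that the homogenisation really produces a degree-$0$ graded map with the indicated shift, and that the evaluation-at-$1$ description of $\mathcal F$ interacts correctly with the cokernel construction — both are routine.
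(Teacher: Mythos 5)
Your proposal is correct and follows essentially the same route as the paper's own proof: the paper likewise homogenises the presentation matrix by writing $a_{ij}=\sum_k {a_{ij}}_k$, choosing a uniform bound $N$ on the degrees, forming the degree-zero graded map $\bigoplus_m A[x](-N)\rightarrow \bigoplus_n A[x]$ given by $e_i \mapsto \sum_j \big(\sum_k x^{N-k}{a_{ij}}_k\big)e_j$, and then applying the right-exact functor $\mathcal F$ to the cokernel, exactly as you do with $d$ in place of $N$. The only differences are notational, and your explicit checks (homogeneity of degree $d$, the shift making $\tilde\phi$ graded of degree zero, and $\mathcal F$ collapsing the shift) are the same routine verifications implicit in the paper.
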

\begin{proof}
Since $\mathcal F=-\otimes_{A[x]} A$, for any free $A$-module $M$ of rank $k$ we have $\mathcal F(\bigoplus_k A[x])\cong \bigoplus_k A\cong M$.
Let $M$ be a finitely presented $A$-module. Then there is an exact sequence 
\[\bigoplus_m A \stackrel{f}{\longrightarrow} \bigoplus_n A \longrightarrow M \longrightarrow 0.\]
If we show that there are graded free $A[x]$-modules $F_1$ and $F_2$ and a graded homomorphism $g:F_1\rightarrow F_2$ 
such that the following  diagram  is commutative, 
\begin{equation*}
\xymatrix{
\mathcal F(F_1) \ar[r]^{\mathcal F(g)} \ar[d]_{\cong}& \mathcal F(F_2)\ar[d]^{\cong}\\
\bigoplus_m A \ar[r]^f & \bigoplus_n A,
}
\end{equation*}
then since $\mathcal F$ is right exact, one can complete the diagram 
\begin{equation*}
\xymatrix{
\mathcal F(F_1) \ar[r]^{\mathcal F(g)} \ar[d]_{\cong}& \mathcal F(F_2) \ar[r] \ar[d]^{\cong}& \mathcal F(\coker(g)) \ar[r] \ar@{.>}[d]^{\cong} & 0\\
\bigoplus_m A \ar[r]^f & \bigoplus_n A \ar[r]&  M \ar[r] & 0.
}
\end{equation*}
This shows $\mathcal F(\coker(g)) \cong M$ and we will be done. 

Thus, suppose $f:\bigoplus_m A \rightarrow \bigoplus_n A$ is an $A$-module homomorphism. With the misuse of notation, denote the standard basis for both $A$-modules $A^m$ and $A^n$, by $e_i$. Then 
\[f(e_i)=\sum_{j=1}^n a_{ij} e_j,\]
where $1\leq i \leq m$ and  $a_{ij} \in A$. 
Decomposing  $a_{ij}$ into its homogeneous components, $a_{ij}=\sum_k {a_{ij}}_k$, since the number of $a_{ij}$, $1\leq i \leq m$, $1\leq j \leq n$, are finite, there is $N\in \mathbb N$ such that ${a_{ij}}_N=0$ for all $i,j$. Now consider the graded free $A[x]$-modules $\bigoplus_m A[x](-N)$ and  
$\bigoplus_n A[x]$, with the standard bases (which are of degrees $N$ and $0$ in  $\bigoplus_m A[x](-N)$ and $\bigoplus_n A[x]$, respectively). Define
\begin{align*}
g: \bigoplus_m A[x](-N) & \longrightarrow  \bigoplus_n A[x],\\
e_i & \longmapsto \sum_j \big(\sum_k x^{N-k}{a_{ij}}_k \big)\,  e_j. 
\end{align*}
This is a graded $A[x]$-module homomorphism. Further, $\mathcal F(g)$ amount to evaluation of $g$ at $x=1$ which coincides with the map $f$. This finishes the proof.
\end{proof}

\begin{remark}
The proof of Lemma~\ref{brek66}, in particular, shows that if $f \in \M_n(A)$, then there is a $g\in \M_n(A[x])(\ol \delta)$ such that evaluation of $g$ at $1$, gives $f$.  
\end{remark}

Lemma~\ref{brek66} along with standard results of localisation theory (see~\cite[p. 114, Theorem~5.11]{swan}) implies that 
\[ \grr A[x] /\ker(\mathcal F) \cong \modd A.\]

Now we are ready to apply the localisation theorem (\S\ref{klikhf4}) to the sequence 
\[\ker(\mathcal F) \hookrightarrow \grr A[x] \longrightarrow \grr A[x] /\ker(\mathcal F) \cong \modd A,\]
to obtain a long exact sequence 
\begin{equation*}
\cdots \longrightarrow K_{n+1}(A) \longrightarrow K_n(\ker(\mathcal F)) \longrightarrow K^{\gr}_n(A[x]) \longrightarrow K_n(A) \longrightarrow \cdots. 
\end{equation*}
Now using~(\ref{hnlofde38}) to replace $K_n(\ker(\mathcal F))$, we get
\begin{equation}\label{garbi1}
\cdots \longrightarrow K_{n+1}(A) \longrightarrow K^{\gr}_n(A) \stackrel{\ol i}{\longrightarrow} K^{\gr}_n(A[x]) \longrightarrow K_n(A) \longrightarrow \cdots. 
\end{equation}
Next, we show that the homomorphism $\ol i$ is injective. Note that the homomorphism $\ol i$ on the level of $K$-groups induced by the composition of exact functors (see~(\ref{gbdh98}) and~(\ref{scute32})),
\[
\Pgrp A \hookrightarrow \grr A \longrightarrow \mathcal C_1 \hookrightarrow \ker(\mathcal F) \hookrightarrow \grr A[x],
\]
which sends a graded finitely generated projective $A$-module $M$, to $M$ considered as graded $A[x]$-module. 
Define three exact functors $\phi_i: \Pgrp A \rightarrow \grr A[x]$, $1\leq i \leq 3$,  as follows,
\begin{align}\label{hgtgrete215}
\phi_1(M) &=M[x](-1)\\
\phi_2(M) &=M[x]\notag\\
\phi_3(M) &=M.\notag
\end{align}
Now the exact sequence~\ref{bulgim} and the Exact Sequence of Functors Theorem (\S\ref{klikhf4}) immediately give, 
\begin{equation}\label{jnhuye199}
\ol \phi_3= \ol \phi_2- \ol \phi_1. 
\end{equation}
Note that $\ol \phi_3=\ol i$. 

Now invoking Quillen's Theorem~\ref{quillen11}, from exact sequence~\ref{garbi1}, we get 
\begin{equation}\label{gfnksdb589}
\cdots \longrightarrow K_{n+1}(A) \longrightarrow \mathbb Z[t,t^{-1}]\otimes K_n(A_0) \stackrel{\ol i}{\longrightarrow} \mathbb Z[t,t^{-1}]\otimes K_n(A_0) \longrightarrow K_n(A) \longrightarrow \cdots 
\end{equation}
Further, the maps $\ol \phi_2$ and $\ol \phi_1$ becomes $1\otimes 1$ and $t\otimes 1$, respectively (see~(\ref{macitywall})).  Thus from~(\ref{jnhuye199}) we get 
\[\ol i=(1-t)\otimes 1 : \mathbb Z[t,t^{-1}]\otimes_{\Z} K_n(A_0) \longrightarrow \mathbb Z[t,t^{-1}] \otimes_{\Z} K_n(A_0).\]
This shows that $\ol i$ is an injective map. Thus the exact sequence~\ref{gfnksdb589} reduces to 
\begin{equation}\label{gfnksdb5}
0 \longrightarrow \mathbb Z[t,t^{-1}]\otimes_{\Z} K_n(A_0) \stackrel{\ol i}{\longrightarrow} \mathbb Z[t,t^{-1}]\otimes_{\Z} K_n(A_0) \longrightarrow K_n(A) \longrightarrow 0.
\end{equation}
Again, from the description of $\ol i$ it follows,  
\[ K_n(A) \cong K_n(A_0). \]

\section{Relating $K^{\gr}_*(A)$ to $K_*(A_0)$}\label{quoihes}

If $A$ is a strongly $\Gamma$\!-graded ring, then $\Gr A$ is equivalent to $\Modd A_0$. Thus we have $K^{\gr}_n(A) \cong K_n(A_0)$,  for any $n\in \mathbb N$ (see~\S\ref{jijigogo}). 
In general (for graded Noetherian regular rings) these two groups are related via a long exact sequence which we will establish in this section (see~(\ref{ghgtre54})).  \index{graded Noetherian ring}

Let $A$ be a $\Gamma$\!-graded ring. For a subset $\Omega \subseteq \Gamma$, consider the full subcategory 
$\Gromega A$ of $\Gr A$, of all graded right $A$-modules $M$ as objects such that $M_\omega=0$, for all $\omega \in \Omega$. This is a Serre subcategory of $\Gr A$.  We need the following Lemma.  \index{Serre subcategory}
\index{$\Gromega A$, $\Omega$ a subgroup of $\Gamma$}

\begin{lemma}
Let $A$ be a $\Gamma$\!-graded ring and $\Omega$ a subgroup of $\Gamma$. Then we have
\begin{equation}\label{dhge329}
\Gr[\Gamma] A /\Gromega A \cong \Gr[\Omega] A_\Omega. 
\end{equation}
In particular, 
$\Gr A / \Gro A \cong \Modd A_0$.
\end{lemma}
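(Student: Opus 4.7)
My plan is to apply Gabriel's theorem on Serre quotients. I will construct an exact functor $F: \Gr[\Gamma] A \to \Gr[\Omega] A_\Omega$ whose kernel is exactly $\Gromega A$ and exhibit a quasi-inverse to the induced functor $\bar F$ on the quotient.

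First I would define $F := (-)_\Omega : \Gr[\Gamma] A \to \Gr[\Omega] A_\Omega$ by $F(M) = M_\Omega = \bigoplus_{\omega \in \Omega} M_\omega$. Since $M_\omega \cdot A_{\omega'} \subseteq M_{\omega + \omega'}$ and $\Omega$ is a subgroup, $M_\Omega$ is indeed an $\Omega$-graded right $A_\Omega$-module. The functor $F$ is exact because taking a homogeneous component is exact in $\Gr[\Gamma] A$ (see the argument used repeatedly in Chapter~1), and clearly $F(M) = 0$ iff $M \in \Gromega A$. By the universal property of the Serre quotient, $F$ factors through an exact functor $\bar F : \Gr[\Gamma] A / \Gromega A \to \Gr[\Omega] A_\Omega$.

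Next I would construct a candidate inverse $G : \Gr[\Omega] A_\Omega \to \Gr[\Gamma] A / \Gromega A$ by $G(N) = Q(N \otimes_{A_\Omega} A)$, where $Q$ is the quotient functor. Here $A$ is viewed as a graded $A_\Omega$-$A$-bimodule, and $N \otimes_{A_\Omega} A$ inherits a natural $\Gamma$-grading from $A$ as in~\S\ref{grtensie}. For the composition $\bar F \circ G$, I would observe that $(N \otimes_{A_\Omega} A)_\omega = \bigoplus_{\omega' \in \Omega} N_{\omega'} \otimes A_{\omega - \omega'}$ for $\omega \in \Omega$; since $\omega - \omega' \in \Omega$, this is exactly $N \otimes_{A_\Omega} A_\Omega \cong N$, giving $\bar F \circ G \cong \id$.

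For the other composition $G \circ \bar F$, the key step is to show that for any $M \in \Gr[\Gamma] A$ the natural multiplication map $\mu : M_\Omega \otimes_{A_\Omega} A \to M$ is an isomorphism in the quotient category, \ie its kernel and cokernel lie in $\Gromega A$. The cokernel is easy: for $\omega \in \Omega$, the image of $\mu$ in degree $\omega$ already contains $M_\omega$ (via $m \otimes 1$), so $(\coker \mu)_\omega = 0$. For the kernel, any homogeneous element of $(M_\Omega \otimes_{A_\Omega} A)_\omega$ with $\omega \in \Omega$ can be rewritten as $m \otimes 1$ using the $A_\Omega$-bilinearity (since all the $A$-factors $a_{\omega - \omega'}$ lie in $A_\Omega$), and $\mu(m \otimes 1) = 0$ forces $m = 0$; thus $(\ker \mu)_\omega = 0$. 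The main obstacle is this last verification of the kernel computation, where one must be careful that the relevant tensor relations genuinely live inside $A_\Omega$ so that one can absorb them; everything else is bookkeeping on gradings and the universal property of Serre quotients.
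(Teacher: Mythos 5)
Your proof is correct and takes essentially the same approach as the paper: both pass the exact functor $(-)_\Omega$, whose kernel is precisely $\Gromega A$, through the universal property of the Serre quotient (the paper's $\psi$ is your $\bar F$). Your explicit quasi-inverse $N\mapsto Q(N\otimes_{A_\Omega}A)$, together with the degreewise check that $\mu\colon M_\Omega\otimes_{A_\Omega}A\to M$ has kernel and cokernel in $\Gromega A$, simply supplies the verification the paper compresses into ``one can now check that $\psi$ is an equivalence,'' and it coincides with the induction functor of Theorem~\ref{rhye}.
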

\begin{proof}
Consider a family $\Sigma$ of morphisms $f$ in $\Gr[\Gamma] A$ such that $\ker(f)$ and $\coker(f)$ are in $\Gromega A$. By definition 
\[
\Gr[\Gamma] A /\Gromega A = \Sigma^{-1} A.
\]
Note that $f:M\rightarrow N$ is in $\Sigma$ if and only if $f|_{M_\Omega}:M_\Omega \rightarrow N_\Omega$ is an $A_\Omega$-module isomorphism. 
Consider the (forgetful) functor 
\begin{align*}
(-)_{\Omega}:\Gr[\Gamma] A &\longrightarrow  \Gr[\Omega] A_\Omega,\\
M &\longmapsto M_\Omega
\end{align*}
 (see~\S\ref{bill100}). Under $(-)_{\Omega}$ the morphisms of $\Sigma$ are sent to invertible morphisms in $\Gr[\Omega] A_\Omega$. Thus by the property of quotient categories, there is an induced functor $\psi$ which makes the following diagram commutative: 
\begin{equation*}
\begin{split}
\xymatrix{
\Gr[\Gamma] A \ar[r]^-{\phi} \ar[d]_{(-)_{\Omega}} &  \Gr[\Gamma] A /\Gromega A \ar@{.>}[dl]^{\psi}\\
 \Gr[\Omega] A_\Omega
}
\end{split}
\end{equation*}

The functor $\phi$ is defined as follows: 

\begin{equation*}
M\stackrel{f}{\longrightarrow} N \longmapsto \xymatrix{
& M \ar[dl]_{\id} \ar[dr]^f\\
M && N
}
\end{equation*}

And the functor $\psi$ is defined as follows

\begin{equation*}
\xymatrix{
& M' \ar[dl]_{s} \ar[dr]^f\\
M && N
} \longmapsto  M_\Omega \stackrel{f_\Omega s^{-1}_\Omega}{\longrightarrow} N_\Omega .
\end{equation*}
One can now check that $\psi$ is an equivalence of categories. This finishes the proof.  
\end{proof}

 
 When $A$ is a (right) regular Noetherian graded ring,  the quotient category identity~(\ref{dhge329}) holds for the corresponding graded finitely generated modules, \ie  $\grr A / \gro A \cong \modd A_0$ and 
the localisation theorem gives a long exact sequence,
\begin{equation}\label{ghgtre54}
\cdots \longrightarrow K_{n+1}(A_0) \stackrel{\delta}{\longrightarrow} K_n(\gro A) \longrightarrow K^{\gr}_n(A) \longrightarrow K_n(A_0) \longrightarrow \cdots.
\end{equation} \index{quotient category}

Note that if $A$ is a strongly graded ring, then $\Gro A=0$, for any $\alpha \in \Gamma$. In particular, the long exact sequence~\ref{ghgtre54} gives 
$K^{\gr}_n(A) \cong K_n(A_0)$,  for any $n\in \mathbb N$.

\section{Relating $K^{\gr}_*(A)$ to $K_*(A)$}\label{vandengg}

In~\cite{vandenbergh}, van den Bergh, following the methods to prove the fundamental theorem of $K$-theory (\S\ref{bolgona}), established a long exact sequence relating 
graded $K$-theory of a ring to ungraded $K$-theory in the case of right regular Noetherian $\Z$-graded rings. In this section we will present van den Bergh's observation.  As a consequence, we will see that in the case of graded Grothendieck group, the shift of modules is all the difference between the $K^{\gr}_0$ and $K_0$-groups (Corollary~\ref{ght9laks}).

Let $A$ be a right regular Noetherian $\mathbb Z$-graded ring. Thus $G^{\gr}$-theory and $G$-theory become $K^{\gr}$-theory and $K$-theory, respectively (see~\S\ref{jensgty4}). Consider the $\Z$-graded ring $A[y]$, where $\deg(y)=1$ and $\Z\times\Z$-graded ring $B=A[s]$, where the homogeneous components defined by $B_{(n,m)}=A_ms^n$. Moreover, consider 
the $\Z\times\Z$-graded rings $B[z]$ and $B[z,z^{-1}]$, where $\deg(z)=(1,-1)$. It is easy to check that 
\begin{enumerate}

\item[(1)] The support of $B$ is $\mathbb N \times \Z$ and $B_{(0,\Z)}=A$ as $\Z$-graded rings.

\item[(2)] The support of $B[z]$ is $\mathbb N \times \Z$ and $B[z]_{(0,\Z)}=A$ as $\Z$-graded rings.

\end{enumerate}

\noindent Moreover 

\begin{enumerate}

\item[(3)] $B[z,z^{-1}]$ is a $\Z\times \Z$-graded ring such that $\Gr[\Z\times \Z] B[z,z^{-1}] \approx \Gr[\Z] A[y]$. 
\begin{proof}
Observe that for any $n \in \Z$, 
\[1 \in B[z,z^{-1}]_{(n,\Z)} B[z,z^{-1}]_{(-n,\Z)}.\]

Now by Example~\ref{yhyhew45},
\[\Gr[\Z\times \Z] B[z,z^{-1}]  \approx \Gr[\Z] B[z,z^{-1}]_{(0,\Z)}.\]
But it is easy to see that $B[z,z^{-1}]_{(0,\Z)} \cong_{\gr} A[y]$ as $\Z$-graded rings. 
\end{proof}
\end{enumerate}

By~\S\ref{jensgty4} (in particular~(\ref{anniversary14june}) for $\deg(z)=(1,-1)$), we have a long exact sequence 
\begin{equation}\label{janedouglas}
\cdots \longrightarrow K^{\gr}_{n+1}(B[z,z^{-1}])  \longrightarrow K^{\gr}_n(B) \stackrel{\ol i}{\longrightarrow} K^{\gr}_n(B[z]) \longrightarrow K^{\gr}_n(B[z,z^{-1}]) \longrightarrow \cdots .
\end{equation}

By Corollary~\ref{jhyhgtewqs}, (1) and (2) above, 
\[K^{\gr}_n(B) = K^{\Z\times \Z}_n(B) \cong  \Z[t,t^{-1}]\otimes K^{\Z}_n(B_{(0,\Z)})  \cong \Z[t,t^{-1}]\otimes K^{\gr}_n(A)\] and 
\[K^{\gr}_n(B[z])= K^{\Z\times \Z }_n(B[z]) \cong \Z[t,t^{-1}]\otimes K^{\Z}_n(B[z]_{(0,\Z)}) \cong \Z[t,t^{-1}]\otimes K^{\gr}_n(A).\] 
Moreover, from (3) we have 
\[K^{\gr}_n(B[z,z^{-1}]) = K^{\Z\times \Z}_n(B[z,z^{-1}]) \cong K^{\Z}_n(A[y]) =K^{\gr}_n(A[y]).\]
So, from the sequence~\ref{janedouglas}, we get  
\begin{multline}\label{gfnksdb5234}
\cdots \longrightarrow K^{\gr}_{n+1}(A[y]) \longrightarrow \mathbb Z[t,t^{-1}]\otimes K^{\gr}_n(A) \stackrel{\ol i}{\longrightarrow} \mathbb Z[t,t^{-1}]\otimes K^{\gr}_n(A) \\ \longrightarrow K^{\gr}_n(A[y]) \longrightarrow \cdots. 
\end{multline}
The rest is similar to the method used to prove the Fundamental Theorem in \S\ref{bolgona}. First note that we have a short exact sequence of $\mathbb Z\times \mathbb Z$-graded $B[z]$-modules  (compare this with (\ref{bulgim}))
\begin{equation}\label{bulgimzz}
0\longrightarrow M[z](-1,1) \stackrel{z}{\longrightarrow} M[z] \stackrel{e_0}{\longrightarrow} M \longrightarrow 0.
\end{equation}
 Here $M$ is a graded $B$-module which becomes a graded $B[z]$-module under the evaluation map $e_0:B[z]\rightarrow S, z\mapsto 0$. 
Again, similar to~(\ref{hgtgrete215}), for $1\leq i \leq 3,$ defining 
\[ \phi_i:\gr B \longrightarrow \gr B[z],\]
by
\begin{align*}
\phi_1(M) &=M[z](-1,1);\\
\phi_2(M) &=M[z];\\
\phi_3(M) &=M,
\end{align*}
the exact sequence~\ref{bulgimzz} and the Exact Sequence of Functors Theorem (\S\ref{klikhf4}) immediately give, 
\begin{equation}\label{jnhuye1}
\ol \phi_3= \ol \phi_2- \ol \phi_1,
\end{equation}
and $\ol \phi_3=\ol i$. 
One can then observe that $\ol i$ is injective, so the exact sequence~\ref{gfnksdb5234} reduces to 
\[
0 \longrightarrow \mathbb Z[t,t^{-1}]\otimes K^{\gr}_n(A) \stackrel{\ol i}{\longrightarrow} \mathbb Z[t,t^{-1}]\otimes K^{\gr}_n(A) \longrightarrow K^{\gr}_n(A[y]) \longrightarrow 0. 
\]
Finally, from this exact sequence it follows that 
\begin{equation}\label{thbbieh7}
K^{\gr}_n(A[y]) \cong K^{\gr}_n(A).
\end{equation}

We are in a position to relate graded $K$-theory to ungraded $K$-theory. 

\begin{theorem}\label{bgungdjr}
Let $A$ be a right regular Noetherian $\mathbb Z$-graded ring. Then there is a long exact sequence 
\begin{equation}\label{gelatobal}
\cdots \longrightarrow K_{n+1}(A)  \longrightarrow K^{\gr}_n(A) \stackrel{\ol i}{\longrightarrow} K^{\gr}_n(A) \stackrel{U}{\longrightarrow} K_n(A) \longrightarrow \cdots.
\end{equation}
Here $\ol i= \overline{\mathcal T_1}  - \overline{\mathcal T_0}=\overline{\mathcal T_1}  - 1$, where $\mathcal T$ is the shift functor~\eqref{RBAday1} and $U$ is the forgetful functor.  
\end{theorem}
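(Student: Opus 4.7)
The plan is to apply the graded localisation long exact sequence (\ref{anniversary14june}) of \S\ref{jensgty4} to the $\Z$-graded ring $A[y]$ (with $\deg y = 1$) localised at the central homogeneous multiplicative set $\{y^n\}_{n\geq 0}$. Since $A$ is right regular Noetherian and $\Z$-graded, so is $A[y]$, which lets us work with graded $K$-theory throughout and produces the sequence
\begin{equation*}
\cdots \to K^{\gr}_{n+1}(A[y, y^{-1}]) \to K^{\gr}_n(A) \stackrel{\bar i}{\to} K^{\gr}_n(A[y]) \to K^{\gr}_n(A[y, y^{-1}]) \to \cdots .
\end{equation*}

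The next step is to simplify the two outer terms. On one hand, $\deg y = 1$ gives $1 = y \cdot y^{-1} \in A[y,y^{-1}]_1 \cdot A[y,y^{-1}]_{-1}$, so $A[y, y^{-1}]$ is strongly $\Z$-graded; the assignment $a \mapsto a y^{-\deg a}$ on homogeneous elements identifies $A$ with $(A[y, y^{-1}])_0$ as ungraded rings, so Dade's theorem (see~(\ref{dade})) yields $K^{\gr}_n(A[y, y^{-1}]) \cong K_n(A)$, and the base change $K^{\gr}_n(A[y]) \to K^{\gr}_n(A[y, y^{-1}])$ is readily seen to coincide with the forgetful map $U$ under this identification. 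On the other hand, $K^{\gr}_n(A[y]) \cong K^{\gr}_n(A)$ is exactly (\ref{thbbieh7}), already established earlier in this section using the double-graded ring $B = A[s]$ and Corollary~\ref{jhyhgtewqs}. Substituting both isomorphisms into the localisation sequence yields the sought long exact sequence (\ref{gelatobal}).

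What remains -- and is the main technical hurdle -- is to identify the transfer map $\bar i$ with $\overline{\mathcal T_1} - 1$. Following the same device used in \S\ref{bolgona}, I would exploit the short exact sequence of graded $A[y]$-modules
\begin{equation*}
0 \to M[y](-1) \stackrel{y}{\to} M[y] \stackrel{e_0}{\to} M \to 0,
\end{equation*}
with $M \in \Pgrp A$ considered as an $A[y]$-module via $A \cong A[y]/yA[y]$, yielding three exact functors $\phi_1, \phi_2, \phi_3: \Pgrp A \to \grr A[y]$ whose $K$-theoretic images satisfy $\bar\phi_3 = \bar\phi_2 - \bar\phi_1$ by the Exact Sequence of Functors Theorem (\S\ref{klikhf4}). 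Since $\bar\phi_3$ is precisely the transfer $\bar i$, the remaining bookkeeping is to trace $\phi_2(M) = M[y]$ and $\phi_1(M) = M[y](-1)$ through the chain of identifications making up (\ref{thbbieh7}) -- namely, the passage to $B = A[s]$, Corollary~\ref{jhyhgtewqs}, and the $z$-direction localisation argument -- and to match the $\Z \times \Z$-shift of degree $(1,-1)$ appearing there with a shift of degree $1$ on the $A$-side. Once that matching is done correctly, $\bar\phi_2$ is seen to represent $\overline{\mathcal T_1}$ and $\bar\phi_1$ the identity on $K^{\gr}_n(A)$, whence $\bar i = \overline{\mathcal T_1} - 1$, as required.
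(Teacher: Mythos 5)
Your proposal follows essentially the same route as the paper's proof: the localisation sequence~(\ref{anniversary14june}) for $A[y]$ with $\deg(y)=1$, Dade's theorem to identify $K^{\gr}_n(A[y,y^{-1}])$ with $K_n(A)$, and the previously established isomorphism~(\ref{thbbieh7}) to replace $K^{\gr}_n(A[y])$ by $K^{\gr}_n(A)$. Your final step identifying $\ol i$ via the short exact sequence $0 \to M[y](-1) \stackrel{y}{\to} M[y] \to M \to 0$ and the Exact Sequence of Functors Theorem is exactly the device the paper uses in~(\ref{bulgim}) and~(\ref{bulgimzz}), and merely makes explicit what the paper's terse proof leaves implicit.
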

\begin{proof}
By~\S\ref{jensgty4} (in particular~(\ref{anniversary14june}) for $\deg(y)=1$), we have a long exact sequence 
\begin{equation*}
\cdots \longrightarrow K^{\gr}_{n+1}(A[y,y^{-1}])  \longrightarrow K^{\gr}_n(A) \stackrel{i}{\longrightarrow} K^{\gr}_n(A[y]) \longrightarrow K^{\gr}_n(A[y,y^{-1}]) \longrightarrow \cdots .
\end{equation*}
By~(\ref{thbbieh7}), \[K^{\gr}_n(A[y])\cong  K^{\gr}_n(A).\] Since $A[y,y^{-1}]$ is strongly graded, by Dade's Theorem~\ref{dadesthm},  \[K^{\gr}_n(A[y,y^{-1}]) \cong  K_n(A).\]
Replacing these into above long exact sequence, the theorem follows. 
\end{proof}

The following is an immediate corollary of the Theorem~\ref{bgungdjr}. It shows that for the graded regular Noetherian rings,  the shift of modules is all the difference between the graded Grothendieck group and the usual Grothendieck group.

\begin{corollary}\label{ght9laks}
Let $A$ be a right regular Noetherian $\mathbb Z$-graded ring. Then
\[K^{\gr}_0(A) / \langle [P(1)] -[P] \rangle \cong K_0(A),\] where $P$ is a graded finitely generated projective $A$-module. 
\end{corollary}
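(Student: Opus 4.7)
\medskip

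The plan is to read off the corollary directly from Theorem~\ref{bgungdjr} by truncating the long exact sequence~\eqref{gelatobal} at $n = 0$. Specialising that sequence to $n=0$, and using that $K_{-1}$ is not in play (the localisation sequences~\eqref{anniversary14june} and~\eqref{anniversary14june3} of Quillen $K$-theory for abelian categories terminate on the right at $K_0$), one obtains the three-term exact sequence
\[
K^{\gr}_0(A) \stackrel{\ol i}{\longrightarrow} K^{\gr}_0(A) \stackrel{U}{\longrightarrow} K_0(A) \longrightarrow 0,
\]
so that $U$ is surjective and $\ker U = \operatorname{im}\ol i$.

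Next I would identify $\operatorname{im}\ol i$ explicitly. By Theorem~\ref{bgungdjr}, $\ol i = \overline{\mathcal T_1} - 1$, so for any graded finitely generated projective $A$-module $P$ one has $\ol i([P]) = [P(1)] - [P]$. Since $K^{\gr}_0(A)$ is generated as an abelian group by the classes $[P]$, and $\ol i$ is $\mathbb Z$-linear, the image of $\ol i$ is precisely the subgroup $\langle [P(1)] - [P]\rangle$ of $K^{\gr}_0(A)$ generated by these relations as $P$ ranges over graded finitely generated projective $A$-modules. The first isomorphism theorem applied to $U$ then yields the desired identification
\[
K^{\gr}_0(A)\big/\langle [P(1)] - [P]\rangle \;\cong\; K_0(A).
\]

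There is essentially no obstacle once Theorem~\ref{bgungdjr} is in hand; the only small point to verify carefully is the surjectivity of $U$, i.e.\ that the long exact sequence~\eqref{gelatobal} indeed ends with $\cdots \to K_0(A) \to 0$ rather than continuing into negative $K$-theory. This is justified by tracing back through the proof of Theorem~\ref{bgungdjr}: the sequence~\eqref{gelatobal} is obtained by splicing together the localisation sequence~\eqref{anniversary14june} (with $\deg y = 1$), the identification $K^{\gr}_0(A[y])\cong K^{\gr}_0(A)$ of~\eqref{thbbieh7}, and Dade's identification $K^{\gr}_0(A[y,y^{-1}])\cong K_0(A)$, and Quillen's localisation sequence does terminate at the rightmost $K_0$ term by the description recalled in \S\ref{klikhf4}(4).
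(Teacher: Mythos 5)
Your proof is correct and takes essentially the same route as the paper: Corollary~\ref{ght9laks} is proved there by specialising the long exact sequence of Theorem~\ref{bgungdjr} at $n=0$ to the three-term sequence $K^{\gr}_0(A) \longrightarrow K^{\gr}_0(A) \stackrel{U}{\longrightarrow} K_0(A) \longrightarrow 0$, with the first map $[P]\mapsto [P(1)]-[P]$, and reading off the isomorphism exactly as you do. Your additional care in checking that the sequence really terminates in $K_0(A)\rightarrow 0$ (via the surjectivity at the $K_0$ end of Quillen's localisation sequence) is sound and fills in a point the paper declares ``immediate.''
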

\begin{proof}
The long exact sequence~\ref{bgungdjr}, for $n=0$, reduces to 
\[ 
\begin{split}
\xymatrix{
K^{\gr}_0(A) \ar[rr]^{[P]\mapsto [P(1)]-[P]} && K^{\gr}_0(A) \ar[r]^{U} &  K_0(A) \ar[r] & 0.
}
\end{split}
\] 
The corollary is now immediate. 
\end{proof}

\begin{remark}
Both the fundamental theorem (\S\ref{bolgona}) and Theorem~\ref{bgungdjr} can be written for the case of graded coherent regular rings, as it was demonstrated by Gersten  in~\cite{gersten} for the ungraded case. 
\end{remark}


\printindex

\end{document}